%% file: _main.tex
\documentclass[11pt,reqno]{amsart} 

\input{_preamble}
\input{_shorts}

\title{The Galois characterisation of $p$-adically closed fields \\ A modern perspective}

\author{Leo Gitin, Jochen Koenigsmann, Benedikt Stock}

\begin{document}

\begin{abstract}
In 1927, Artin and Schreier showed that a field is real closed if and only if its absolute Galois group has order two. Inspired by this characterisation and drawing on earlier work of Neukirch, Pop conjectured the following $p$-adic analogue: a field is $p$-adically closed if and only if its absolute Galois group is isomorphic to that of $\IQ_p$. In 1995, the conjecture was independently solved by Efrat for $p \ne 2$ and by Koenigsmann in full generality. Using novel techniques in the theory of valued fields developed over the last 25 years, we give a new, elementary, and self-contained proof of this theorem, with a Galois characterisation of henselianity at the heart of the proof and without relying on Galois cohomology. We further highlight connections to the recent work of Jahnke-Kartas on perfectoid fields and model-theoretic transfer techniques. We provide a systematic account of all of our methods to encourage further investigations.
\end{abstract}

\pagestyle{plain}
\maketitle

\setcounter{tocdepth}{3}
{\normalsize\tableofcontents}
\newpage

\input{part1_intro}
\newpage

\input{part2_val_th}
\newpage

\input{part3_galois}
\newpage

\input{part4_transfer}
\newpage

\input{part5_main}
\newpage

\bibliographystyle{alpha} 
\bibliography{references}

\end{document}

%% file: _preamble.tex
\usepackage[left=3cm,right=3cm,top=3cm,bottom=3cm]{geometry}
\usepackage{parskip}
\usepackage{amsmath, amssymb, amsthm, mathtools}
\usepackage{stmaryrd}
\usepackage[english]{babel}

\numberwithin{equation}{section}

\usepackage[shortlabels]{enumitem}
\setlist[enumerate]{topsep=1ex,itemsep=-0.3ex,leftmargin=6ex}

\usepackage[font=footnotesize]{caption} 

\usepackage{xcolor}
\definecolor{mygreen}{RGB}{27,110,27}

\usepackage[colorlinks=true]{hyperref}
\hypersetup{linkcolor=blue, citecolor=mygreen}

\usepackage{tikz-cd}
\newenvironment{diagram}{\begin{center}\begin{tikzcd}}{\end{tikzcd}\end{center}}

\newtheorem{thm}{Theorem}[section]
\newtheorem*{mainthm*}{Main Theorem}
\newtheorem{prop}[thm]{Proposition}
\newtheorem{lem}[thm]{Lemma}
\newtheorem*{transferlem*}{Transfer Lemma}
\newtheorem*{lem*}{Lemma}
\newtheorem{fact}[thm]{Fact}
\newtheorem{cor}[thm]{Corollary}
\newtheorem{cordef}[thm]{Corollary/Definition}
\newtheorem{cor*}[thm]{Corollary}

\newtheorem*{thm*}{Theorem}
\newtheorem*{prop*}{Proposition}
\newtheorem*{slogan*}{Slogan}

\theoremstyle{definition}
\newtheorem{defi}[thm]{Definition}
\newtheorem{rem}[thm]{Remark}
\newtheorem{example}[thm]{Example}

\newtheorem*{rem*}{Remark}
\newtheorem*{note*}{Note}
\newtheorem{idea}[thm]{Idea}
\newtheorem*{idea*}{Idea}
\newtheorem*{ingr*}{Ingredients}
\newtheorem*{leit*}{Leitfaden}

\newtheoremstyle{claim}
{}                
{}                
{\itshape}        
{}                
{\itshape}       
{.}               
{ }               
{}                
\theoremstyle{claim}

\newenvironment{claimproof}[1]{%
  \textit{Proof of Claim%
    \if\relax\detokenize{#1}\relax
    \else\ #1%
    \fi.}%
}{\hfill$\vcenter{\hbox{$\diamond$}}$}
\newenvironment{claimproof*}[1]{%
  \textit{Proof of Claim%
    \if\relax\detokenize{#1}\relax
    \else\ #1%
    \fi.}%
}{}

\makeatletter
\newcommand{\dbloverline}[1]{\overline{\mathrlap{\dbl@overline{#1}}\phantom{#1}}}
\newcommand{\dbl@overline}[1]{\mathpalette\dbl@@overline{#1}}
\newcommand{\dbl@@overline}[2]{%
  \begingroup
  \sbox\z@{$\m@th#1\overline{#2}$}%
  \ht\z@=\dimexpr\ht\z@-2\dbl@adjust{#1}\relax
  \box\z@
  \ifx#1\scriptstyle\kern-\scriptspace\else
  \ifx#1\scriptscriptstyle\kern-\scriptspace\fi\fi
  \endgroup
}
\newcommand{\dbl@adjust}[1]{%
  \fontdimen8
  \ifx#1\displaystyle\textfont\else
  \ifx#1\textstyle\textfont\else
  \ifx#1\scriptstyle\scriptfont\else
  \scriptscriptfont\fi\fi\fi 3
}
\makeatother

\makeatletter
\newcommand{\stepitem}[2]{
  \item[\textbf{Step #1.}]%
    \phantomsection%
    \def\@currentlabel{Step~#1}%
    \label{#2}%
}
\makeatother

\makeatletter
\def\@tocline#1#2#3#4#5#6#7{\relax
  \ifnum #1>\c@tocdepth 
  \else
    \par \addpenalty\@secpenalty\addvspace{#2}%
    \begingroup \hyphenpenalty\@M
    \@ifempty{#4}{%
      \@tempdima\csname r@tocindent\number#1\endcsname\relax
    }{%
      \@tempdima#4\relax
    }%
    \parindent\z@ \leftskip#3\relax \advance\leftskip\@tempdima\relax
    \rightskip\@pnumwidth plus4em \parfillskip-\@pnumwidth
    #5\leavevmode\hskip-\@tempdima
      \ifcase #1
       \or\or \hskip 1em \or \hskip 2em \else \hskip 3em \fi%
      #6\nobreak\relax
    \dotfill\hbox to\@pnumwidth{\@tocpagenum{#7}}\par
    \nobreak
    \endgroup
  \fi}
\makeatother

%% file: _shorts.tex
\renewcommand{\and}{\quad \text{and} \quad}
\newcommand{\laurent}[1]{(\!(#1)\!)}

\newcommand{\ifftext}{\quad \text{if and only if} \quad}

\newcommand{\mono}{\lhook\joinrel\longrightarrow}
\newcommand{\epi}{\relbar\joinrel\twoheadrightarrow}

\newcommand{\Uu}{\mathcal{U}}
\newcommand{\ultra}{\mbox{\larger\big{/}}}

\newcommand{\Aa}{\mathcal{A}}
\newcommand{\Ll}{\mathcal{L}}
\newcommand{\Mm}{\mathcal{M}}
\newcommand{\Qq}{\mathcal{Q}}
\newcommand{\Ss}{\mathcal{S}}

\DeclareMathOperator{\ring}{ring}
\DeclareMathOperator{\oag}{oag}
\DeclareMathOperator{\VF}{VF}
\DeclareMathOperator{\val}{val}
\DeclareMathOperator*{\ulim}{ulim}

\newcommand{\IN}{\mathbb{N}}
\newcommand{\IZ}{\mathbb{Z}}
\newcommand{\IQ}{\mathbb{Q}}
\newcommand{\IR}{\mathbb{R}}
\newcommand{\IC}{\mathbb{C}}
\newcommand{\IF}{\mathbb{F}}
\newcommand{\IP}{\mathbb{P}}

\DeclareMathOperator{\Hom}{Hom}
\DeclareMathOperator{\Fix}{Fix}
\DeclareMathOperator{\Aut}{Aut}

\DeclareMathOperator{\Gal}{Gal}
\DeclareMathOperator{\charK}{char}
\DeclareMathOperator{\sepc}{sep}

\DeclareMathOperator{\Frac}{Frac}
\DeclareMathOperator{\cd}{cd}
\DeclareMathOperator{\res}{res}
\DeclareMathOperator{\Red}{red}
\DeclareMathOperator{\ord}{ord}

\DeclareMathOperator{\im}{im}
\DeclareMathOperator{\alg}{alg}

\DeclareMathOperator{\Syl}{Syl}
\DeclareMathOperator{\Tr}{Tr}
\DeclareMathOperator{\rad}{rad}

\newcommand{\Oo}{\mathcal{O}}
\DeclareMathOperator{\ur}{ur}

\DeclareMathOperator{\Conv}{Conv}
\DeclareMathOperator{\Val}{Val}

%% file: part1_intro.tex
\section{Introduction} 

The main theme of Galois theory is the duality between extensions of fields and their associated Galois groups.
The \emph{absolute Galois group} of a field $K$, 
\[
    G_K \coloneq \Gal(K^{\sepc}/K),
\]
is the Galois group of the separable closure of $K$ over $K$. It is the largest possible Galois group associated to $K$, and moreover, a profinite group endowed with a natural (Krull) topology. Every Galois group of a finite Galois extension of $K$ occurs as a quotient of $G_K$ by some open subgroup of $G_K$.
Naturally, one might want to determine the structure of the absolute Galois group $G_K$ for fields $K$ of arithmetic interest---and conversely, perhaps more subtly, one might ask how much of the arithmetic of $K$ is captured by $G_K$. This question lies at the heart of \textit{anabelian geometry}. More generally, anabelian geometry seeks to understand how much of the information about a variety $X$ is encoded in its étale fundamental group.

Perhaps the first result in this direction, even though it is not usually construed as such, is the classical theorem of Artin and Schreier that answers the above question for the case of finite absolute Galois groups.

\begin{thm}[Artin-Schreier 1927, {\cite{Artin-Schreier27a,Artin-Schreier27b}}]
\label{thm:Artin-Schreier}
Let $K$ be a field. The absolute Galois group $G_K$ is finite if and only if
\begin{enumerate}[(i)]
    \item $|G_K| = 1$, in which case $K$ is separably closed, or
    \item $|G_K| = 2$, in which case $K$ is real closed, i.e., a field that admits an ordering which does not extend to any proper algebraic extension of $K$. Equivalently, $K$ is real closed iff no quadratic sum $a_1^2 + a_2^2 + \ldots + a_n^2$ in $K$ is equal to $-1$, any polynomial of odd degree has a root, and, for any $a \in K$, there exists $b \in K$ such that $a = b^2$ or $a = -b^2$.
\end{enumerate}
\end{thm}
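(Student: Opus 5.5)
The classical route is the Artin--Schreier argument, organised in three steps: reduce to the perfect case, show that $\sqrt{-1}$ generates everything, and then identify $K$ as real closed. Write $C = K^{\sepc}$ and assume $1 < |G_K| < \infty$.

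\textbf{Step 1: $K$ is perfect and $C = \overline{K}$.} Suppose $\charK K = p > 0$. Then $G_K$ has an element of prime order $\ell$, so replacing $K$ by a suitable finite extension $L$ we obtain a cyclic degree-$\ell$ extension $L' / L$. If $\ell = p$, Artin--Schreier theory (the polynomials $X^p - X - a$) yields cyclic extensions of $L$ of arbitrarily large $p$-power degree via Witt vectors. This contradicts finiteness. If $\ell \ne p$, we first adjoin the $\ell$-th roots of unity (a finite step) and apply Kummer theory. Then $L' = L(\sqrt[\ell]{a})$, and the key lemma below gives a degree-$\ell^2$ cyclic extension, iterating to unbounded degree. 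Hence $\charK K = 0$, or more precisely only the case $\ell \ne p$ survives. In either case, finiteness of $G_K$ forces $\overline{K} = C$, so $[\overline{K} : K] < \infty$ and $\overline{K} \ne K$.

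\textbf{Key lemma.} If $C$ is algebraically closed and $[C : L] = \ell$ is prime, then $\ell = 2$, $\charK L = 0$ and $C = L(i)$. The proof runs as follows. By the above, $\ell \ne \charK$, and $L$ contains $\mu_\ell$, since $[L(\mu_\ell) : L] < \ell$. So $C = L(\alpha)$ with $\alpha^\ell = a \in L$. Choose $\beta \in C$ with $\beta^\ell = \alpha$. Taking norms gives
\[
N_{C/L}(\beta)^\ell = N_{C/L}(\alpha) = (-1)^{\ell-1} a .
\]
If $\ell$ is odd, this makes $a$ an $\ell$-th power in $L$, which is a contradiction. If $\ell = 2$, we get $-a = b^2$, so $-1 = (\alpha/b)^2 \cdot (\ldots)$. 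The standard computation shows that $-1$ is not a square in $L$ while $C = L(i)$. Moreover, every element of $L$ is then of the form $\pm$ a square, and sums of squares are squares, by the norm map from $L(i)$.

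\textbf{Step 2: the general case.} Pick a Sylow subgroup argument with $G = G_K$. The order of $G$ cannot be divisible by an odd prime $\ell$. Otherwise a subgroup of order $\ell$ gives a field $L$ with $[C:L] = \ell$, contradicting the lemma. So $G$ is a $2$-group. It has a subgroup of index $2$ in any subgroup, so if $|G| \ge 4$ we find $L$ with $[C:L] = 4$ and $C \supsetneq M \supsetneq L$ a chain of degree-$2$ steps. Then $M = L(i)$, and $C = M(\sqrt{m})$ with $m \in M$. But in $M$ every element is a square: apply the lemma's conclusion to $L$, writing $m = x + iy$ and solving for a square root using that $x^2 + y^2$ is a square in $L$ and that signs work out. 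This is a contradiction, so $|G| = 2$.

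\textbf{Step 3: real closedness.} With $|G_K| = 2$, the lemma shows that $-1$ is not a sum of squares: sums of squares are squares and $-1$ is not a square. It also shows that $K^2 \cup -K^2 = K$, and that squares form the positive cone of an ordering. Odd-degree polynomials have roots because every proper finite extension has degree $2$. An ordering extending to an algebraic extension would make that extension (inside $C = K(i)$) formally real, which is impossible since $-1$ is a square there. The converse direction is the classical fundamental theorem of algebra argument for $K(i)$, via Galois theory and $2$-Sylow subgroups using the odd-degree root property.

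The main obstacle is the key lemma in positive characteristic: excluding $\ell = p$ via unboundedness of $p$-extensions, and the norm computation for $\ell = 2$ showing that all elements of $L(i)$ are squares.
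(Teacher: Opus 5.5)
The paper only quotes this theorem, so your outline has to stand on its own. It has the right classical ingredients, but two steps fail as written.

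The main problem is Step~2. From $[C:L]=4$ you assert $M=L(i)$, where $i=\sqrt{-1}$. You then use ``the lemma's conclusion for $L$'': every element is $\pm$ a square, and sums of squares are squares. The key lemma only applies to fields of \emph{prime} index in $C$, so at this point nothing is known about $L$. In effect you are running the fundamental-theorem-of-algebra argument for $L(i)$ as if $L$ were already real closed. Worse, $M=L(i)$ is actually false. Applying the lemma to $M$, where $[C:M]=2$, gives $C=M(i)$ with $i\notin M$. That observation is also the missing idea. Since $i\notin M\supseteq L$, the field $L(i)$ has $[L(i):L]=2$, hence $[C:L(i)]=2$. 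The lemma applied to $L(i)$ then says $i\notin L(i)$, which is absurd. This gives $|G_K|=2$.

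The second problem is circularity around the characteristic.
\begin{itemize}
\item The key lemma assumes $C$ is algebraically closed, but you only know that $C=K^{\sepc}$ is separably closed. The sentence ``finiteness of $G_K$ forces $\overline{K}=C$'' is exactly what needs proof. For imperfect $K$, the extension $\overline{K}/K^{\sepc}$ is infinite whatever $G_K$ is.
\item The $\ell\ne p$ branch of Step~1 invokes a conclusion the lemma does not provide, namely ``a degree-$\ell^2$ cyclic extension''.
\item The lemma's claim $\charK L=0$ is not established by its proof. The norm computation goes through verbatim for $\ell=2$ in odd characteristic and yields no contradiction.
\end{itemize}

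Reorder the argument as follows.
\begin{enumerate}
\item State the lemma for $C$ separably closed and $\ell\ne\charK L$. The element $\beta$ exists because $X^{\ell^2}-a$ is separable.
\item Exclude $\ell=\charK L=p$ using Lemma~\ref{lem:C_pInC_p^2Charp}. A $C_{p^2}$-extension of $L$ containing $C$ would be a proper separable extension of $C$. No Witt vectors or unbounded towers are needed.
\item Run the corrected Step~2 to get $|G_K|=2$, $C=K(i)$, sums of squares are squares, and $-1$ is not a square in $K$.
\item Only now deduce $\charK K=0$: in characteristic $p$ one has $-1=1^2+\cdots+1^2$ with $p-1$ summands.
\end{enumerate}
Then $K$ is perfect and $\overline{K}=K(i)$, which justifies the odd-degree root property used in Step~3.

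Finally, for the ``equivalently'' clause you never show the following: a field with an ordering that extends to no proper algebraic extension satisfies the algebraic conditions. This needs that an ordering $P$ extends to $K(\sqrt{a})$ for positive $a$ and to every odd-degree extension. One way is the criterion that $P$ extends to $L$ if and only if $-1$ is not of the form $\sum_j p_jx_j^2$ with $p_j\in P$ and $x_j\in L$.
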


Traditionally, the starting point of this investigation is considered to be the following result.

\begin{thm}[Neukirch-Uchida-Ikeda-Iwasawa]
\label{thm:Neukirch}
Let $K_1$ and $K_2$ be algebraic number fields. Any topological isomorphism $G_{K_1} \cong G_{K_2}$ is given by conjugation by a field isomorphism
\[
    \sigma : K_1^{\alg} \stackrel{\cong}{\longrightarrow} K_2^{\alg}
\]
restricting to $\sigma|_{K_1} : K_1 \stackrel{\cong}{\longrightarrow} K_2$.
In particular, working within a fixed algebraic closure $\IQ^{\alg}$ of $\IQ$, any two number fields $K_1$ and $K_2$ satisfy
\begin{equation} \label{eq:Neukirch}
    K_1 \cong K_2 \Longleftrightarrow G_{K_1} \cong G_{K_2}.
\end{equation}
\end{thm}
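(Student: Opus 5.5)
Before sketching anything, I should point out that the statement just above, Theorem~\ref{thm:Neukirch}, is not something this paper proves. It is a deep external result, the anabelian theorem of Neukirch, Uchida, Ikeda and Iwasawa, and the introduction only cites it. Nothing proved earlier in the paper is anywhere near strong enough to yield it. So instead of pretending the paper's tools suffice, I will lay out the classical strategy as I would carry it out.

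The plan has two stages: a \emph{local} recognition step and a \emph{global} reconstruction step.

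\textbf{Local step.} Starting from $G_K$ alone, I would recover the decomposition groups $D_\mathfrak{p}\subseteq G_K$ of the primes of $K^{\alg}$, together with the residue characteristic $p$ of each. The approach is to characterise the closed subgroups that are isomorphic to the absolute Galois group of some finite extension of some $\IQ_p$. Their group-theoretic invariants (cohomological dimension $2$, the dimensions of $H^1(-,\IZ/p)$, Demu\v{s}kin-type structure) pin down $p$ and the local degree.

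Two further ingredients are needed here. First, a Chebotarev density argument shows that these subgroups are exactly the decomposition groups, and that distinct primes have decomposition groups with trivial intersection. Second, local class field theory recovers, from $D_\mathfrak{p}$, the inertia group, the Frobenius element and the norm of $\mathfrak{p}$.

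\textbf{Global step.} Given an isomorphism $\phi\colon G_{K_1}\to G_{K_2}$, the local step shows that $\phi$ respects decomposition groups. So it induces a bijection of primes that preserves splitting data in every finite subextension.

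For Galois extensions $L/\IQ$, two number fields with the same sets of split primes coincide (Bauer, via Chebotarev). Applying this to the fields $\Fix(\phi^{-1}(H))$ for open subgroups $H$ shows that $\phi$ identifies these fields. Uchida's refinement then upgrades this to a field isomorphism $\sigma\colon K_1^{\alg}\to K_2^{\alg}$ inducing $\phi$ by conjugation. This is done by reconstructing the multiplicative group of $K^{\alg}$ via Kummer theory and class field theory, and then its additive structure.

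The main obstacle is this last reconstruction: showing that $\phi$ comes from an actual field isomorphism, rather than merely preserving arithmetic invariants. The equivalence \eqref{eq:Neukirch} then follows by restricting $\sigma$ to $K_1$.
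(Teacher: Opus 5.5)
You are right that the paper does not prove Theorem~\ref{thm:Neukirch}. It is quoted in the introduction, with attribution and no argument, purely as motivation, and nothing later depends on it. So there is no internal proof to compare with, and deferring to Neukirch and Uchida is exactly what the paper itself does. Read as a roadmap, however, your sketch misstates two steps.

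\emph{Local step.} The closed subgroups of $G_K$ isomorphic to absolute Galois groups of $p$-adic fields are not ``exactly the decomposition groups''. Every open subgroup of a decomposition group is again of this kind, so the correct statement is that the decomposition groups are the \emph{maximal} such subgroups.

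Neither half of that statement comes from Chebotarev. That every such subgroup lies in some decomposition group is a genuinely global input; in Neukirch's proof it is global class field theory, via the local--global principle for Brauer groups. The triviality of $D_{\mathfrak P}\cap D_{\mathfrak Q}$ for $\mathfrak P\neq\mathfrak Q$ is F.~K.~Schmidt's Theorem~\ref{thm:schmidt}: the fixed field of the intersection would be henselian for two distinct, hence independent, rank-one valuations.

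In the spirit of this paper, the containment can also be obtained valuation-theoretically. Let $Z\le G_K$ be closed with $Z\cong G_{\IQ_p}$. The Main Theorem makes $\Fix Z$ $p$-adically closed, so it carries a non-trivial henselian valuation. This valuation has rank one because $\Fix Z$ is algebraic over $\IQ$. Its unique prolongation to $K^{\alg}$ is a prime $\mathfrak P$ above $p$, and $Z\le D_{\mathfrak P}$. For decomposition groups at primes of higher local degree you need the version for open subgroups of $G_{\IQ_p}$ (\cite[Thm.~A]{Efrat95}, \cite[Thm.~4.1]{Koenigsmann95}). Together with F.~K.~Schmidt, this gives the maximality characterisation.

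\emph{Global step.} Your claim that applying Bauer's theorem to the fields $\Fix(\phi^{-1}(H))$, for open $H$, shows that $\phi$ identifies these fields is false as stated. Bauer's theorem only pins down fields that are Galois over $\IQ$. For general finite extensions, equal splitting data give only arithmetic equivalence, which does not imply isomorphism (Gassmann, Perlis).

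Equality $\Fix(\phi^{-1}(H))=\Fix(H)$ already fails in the simplest case. Take $K_1=K_2=\IQ$ and let $\phi$ be conjugation by some $\tau\in G_{\IQ}$. Then $\Fix(\phi^{-1}(H))=\tau^{-1}(\Fix H)$, which differs from $\Fix H$ for $H=G_{\IQ(\sqrt[3]{2})}$ and any $\tau$ not fixing $\sqrt[3]{2}$.

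What the Bauer argument actually proves is Neukirch's original theorem: $K_1=K_2$ when both are Galois over $\IQ$ inside $\IQ^{\alg}$. Producing $\sigma$ with $\phi(g)=\sigma g\sigma^{-1}$ is the entire Ikeda--Iwasawa--Uchida contribution, which your sketch only names. So what you have written is an outline of the literature proof rather than a proof. That is adequate here only because the paper, too, treats the theorem as a black box.
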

In this result, the absolute Galois group $G_K$ determines $K$ uniquely up to isomorphism within the (rather restrictive) class of number fields.
The search for larger classes for which (\ref{eq:Neukirch}) would hold leads---perhaps surprisingly---to considerations in mathematical logic.

Indeed, if we look back at Theorem~\ref{thm:Artin-Schreier}, the class of real closed fields may look unusual at first sight. However, the reader familiar with mathematical logic will recognise that being real closed is a first-order property. Indeed, at the advent of \emph{model theory}, Tarski identified the real closed fields as precisely those structures that are elementarily equivalent to $\IR$, i.e., satisfying all the same first-order sentences as the structure $\IR$ in a language with symbols for ring operations $\Ll_{\ring} = \{0, 1, +, -, \cdot\}$.

\begin{thm}[Tarski 1931/51]
A field $K$ is real closed if and only if $K \equiv \IR$, i.e., $K$ is elementarily equivalent to $\IR$.
\end{thm}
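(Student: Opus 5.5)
The plan is to prove both directions via Tarski's quantifier elimination for real closed fields, in the language $\Ll_{\ring}\cup\{<\}$ of ordered rings. Writing $\mathrm{RCF}$ for the theory of real closed ordered fields, I will show that $\mathrm{RCF}$ eliminates quantifiers. Since $\IQ$ embeds uniquely into every ordered field of characteristic zero and quantifier-free sentences are decided there, elimination makes $\mathrm{RCF}$ complete. Consequently all real closed fields, with their (unique) orderings, are elementarily equivalent to $\IR$. The order is $\Ll_{\ring}$-definable in a real closed field, since $x\ge 0 \iff \exists y\,(x=y^2)$. So elementary equivalence in the ordered language descends to $\Ll_{\ring}$, giving the direction ``$K$ real closed $\Rightarrow K\equiv\IR$''.

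For the converse, I will use the characterisation in Theorem~\ref{thm:Artin-Schreier}(ii): $K$ is real closed iff $-1$ is not a sum of squares, every odd-degree polynomial has a root, and every $a$ satisfies $a=b^2$ or $a=-b^2$. Each condition is expressed by an $\Ll_{\ring}$-sentence or by a countable scheme of such sentences (one for each number of squares $n$ and each odd degree $d$). All of these sentences hold in $\IR$, so any $K\equiv\IR$ satisfies them and is therefore real closed.

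The main work is the quantifier elimination. I will use the standard model-theoretic criterion: it suffices to show that for real closed ordered fields $M,N$, with $N$ $|M|^+$-saturated, and a common ordered substructure $A\subseteq M$, any embedding $A\to N$ extends to $M$. I will proceed in three steps.
\begin{enumerate}[(a)]
\item First, pass from the ordered domain $A$ to its fraction field; the order on the fraction field is forced.
\item Next, pass to the real closure of $A$ inside $M$. This uses existence and uniqueness up to unique isomorphism of real closures of an ordered field, which is Artin--Schreier theory: the real closure is the relative algebraic closure in any real closed extension. It embeds uniquely over $A$ into $N$.
\item Finally, for $A$ real closed and $b\in M\setminus A$, note that the order type of $b$ over $A$ is determined by the cut it realises. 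Find $b'\in N$ in the same cut by saturation, and extend via $A(b)\cong A(b')$ as ordered fields, then iterate by Zorn's lemma.
\end{enumerate}

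The obstacle I expect is justifying step (c). One must check that for a real closed $A$, the ordering of $A(b)$ is determined by the cut of $b$. This holds because every polynomial over $A$ splits into linear and irreducible quadratic factors, and the quadratic factors are positive-definite. Hence the sign of $f(b)$ depends only on the signs of $b-a_i$ for the roots $a_i\in A$.

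Alongside this, the uniqueness of real closures in step (b) needs Sturm's theorem or an equivalent root-counting argument. This is the key use of the intermediate value property for polynomials over real closed fields. That property itself follows from the Artin--Schreier characterisation via $K(\sqrt{-1})$ being algebraically closed.
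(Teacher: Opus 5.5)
Your proposal is correct and is essentially the standard argument the paper relies on; the paper gives no proof of its own and only refers to Marker, \S 3.3. That argument has the same structure as yours: quantifier elimination for real closed ordered fields via uniqueness of real closures and the fact that, over a real closed field, the sign of a polynomial at a new point depends only on its cut; completeness from the prime substructure; and passage to $\Ll_{\ring}$ via the definability of the order by squares and the Artin--Schreier axioms of Theorem~\ref{thm:Artin-Schreier}(ii). The only difference is technical: you extend whole embeddings into a saturated model by Zorn's lemma, whereas the one-formula test only needs a witness for a single quantifier-free condition, which can be taken between consecutive roots lying in the real closure of $A$, so saturation is not needed.
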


See \cite[3.3]{Marker02} for a modern presentation of Tarski's Theorem. Crucially, note that if $K$ is real closed, the formula $\exists z\, (y - x = z^2)$ defines the unique ordering $x \le y$ that makes $K$ an ordered field. This is an instance of the phenomenon that $G_K$ codes \textit{extra structure} on $K$.

\begin{cor}
For any field $K$,
\[
    1 < |G_K| < \infty \ifftext K \equiv \IR.
\]
\end{cor}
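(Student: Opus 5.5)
This corollary is obtained by chaining Theorem~\ref{thm:Artin-Schreier} with Tarski's Theorem, so the plan is to prove each direction by passing through the intermediate notion of a real closed field.

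For the forward direction, suppose $1 < |G_K| < \infty$. Since $G_K$ is finite, Theorem~\ref{thm:Artin-Schreier} says $|G_K| \in \{1,2\}$. The hypothesis $|G_K| > 1$ excludes case (i), so we are in case (ii): $|G_K| = 2$ and $K$ is real closed. Tarski's Theorem then gives $K \equiv \IR$.

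For the converse, suppose $K \equiv \IR$. Tarski's Theorem gives that $K$ is real closed. We then need $|G_K| = 2$. This is the only step where Theorem~\ref{thm:Artin-Schreier} does not literally apply. As stated, it gives ``$G_K$ finite $\Rightarrow$ ($|G_K|=2$ and $K$ real closed)'', not ``$K$ real closed $\Rightarrow$ $|G_K|=2$''.

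I would close this gap as follows. A real closed field is formally real, since it carries an ordering, so $-1$ is not a sum of squares. In particular $\sqrt{-1}\notin K$, so $K \neq K^{\sepc}$; note $\charK K = 0$, so $K^{\sepc}=K^{\alg}$. Hence $|G_K| > 1$. For the upper bound, use the classical fact underlying Artin--Schreier that $K(\sqrt{-1})$ is algebraically closed when $K$ is real closed. The proof is the Euler--Lagrange/Gauss argument: odd-degree polynomials have roots, every element is $\pm$ a square, and a Sylow $2$-subgroup argument handles the rest. This fact gives $|G_K| = [K(\sqrt{-1}):K] = 2$.

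Alternatively, and more cheaply, one can argue via $\IR$ itself. Since $|G_{\IR}|=2$, we have $\IR \equiv \IR$ with $1<|G_\IR|<\infty$, and by the forward direction every field with $1<|G|<\infty$ is elementarily equivalent to $\IR$. To transfer ``$|G_K|=2$'' to $K$, observe that it is expressed by first-order sentences true in $\IR$: $-1$ is not a square, every element is $\pm$ a square, and, for each $n$, every monic polynomial of degree $n$ factors into linear and quadratic factors. These sentences hold in $K$, and they force $K(\sqrt{-1})$ to be algebraically closed, so $|G_K|=2$. The main obstacle is exactly this converse bookkeeping, and I would cite the standard part of the Artin--Schreier theorem that real closed fields have $K(\sqrt{-1})$ algebraically closed.
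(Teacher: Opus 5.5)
Your proof is correct and takes the paper's route: the corollary is stated without proof as the immediate combination of Theorem~\ref{thm:Artin-Schreier} with Tarski's Theorem, with the Artin--Schreier statement tacitly read as including the converse ``real closed $\Rightarrow |G_K|=2$'', which you rightly supply via algebraic closedness of $K(\sqrt{-1})$. Your alternative argument for the converse is also valid: it is a hands-on instance of Klingen's Lemma~\ref{lem:small}, since $G_{\IR}$ is finite and hence small, so $K\equiv\IR$ forces $G_K\cong G_{\IR}$.
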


From a model-theoretic point of view, the arithmetic of $K$ is described by the set of first-order sentences that hold in $K$.
Viewed through this lens, the preceding result shows that $G_{\IR}$ fully determines the arithmetic of $\IR$.

Likewise, the arithmetic of the $p$-adic field $\IQ_p$ is well-understood by the following celebrated result of Ax, Kochen, and Ershov \cite{Ax-Kochen2, Ershov65}.
\begin{thm}[Ax-Kochen, Ershov 1965] \label{thm:Ax-Kochen-axioms}
A field $K$ is elementarily equivalent to $\IQ_p$ if and only if it admits a (Krull) valuation $v$ such that:
\begin{enumerate}[(i)]
    \item $(K,v)$ is henselian of mixed characteristic $(0,p)$;
    \item $v(p)$ is minimal positive;
    \item the value group $vK$ is a $\IZ$-group, i.e., $vK \equiv \IZ$;
    \item the residue field is equal to $\IF_p$.
\end{enumerate}
\end{thm}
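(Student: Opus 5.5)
Both directions will go through the Ax--Kochen--Ershov machinery for unramified (here: $p$-adically closed, absolutely unramified) henselian valued fields of mixed characteristic. The key input is the Ax--Kochen--Ershov principle for unramified henselian fields. Two such fields with elementarily equivalent residue fields and elementarily equivalent value groups, each with $v(p)$ the minimal positive element, are elementarily equivalent, even as valued fields.

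\emph{($\Leftarrow$).} Suppose $(K,v)$ satisfies (i)--(iv). Then $(\IQ_p, v_p)$ satisfies the same conditions with residue field $\IF_p$ and value group $\IZ$. The residue fields coincide, and the value groups are elementarily equivalent by (iii). Condition (ii), together with residue field $\IF_p$, says that $(K,v)$ is absolutely unramified: the maximal ideal of $\Oo_v$ restricted to the $p$-adic part is generated by $p$. So the AKE principle for unramified henselian valued fields applies and gives $(K,v) \equiv (\IQ_p, v_p)$ in the language of valued fields. Taking reducts to $\Ll_{\ring}$ yields $K \equiv \IQ_p$.

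\emph{($\Rightarrow$).} Suppose $K \equiv \IQ_p$. The plan is to show that the $p$-adic valuation ring is $\emptyset$-definable in $\IQ_p$ by a ring formula $\varphi(x)$. For $p \ne 2$ one can take
\[
\exists y\, (y^2 = 1 + p x^2),
\]
and for $p=2$ the analogous formula with cubes. The properties below are expressible as first-order sentences about $\varphi$, and all hold in $\IQ_p$, hence in $K$:
\begin{enumerate}[(a)]
\item $\varphi$ defines a valuation ring $\Oo$ containing $p$ in its maximal ideal;
\item every $x \in \Oo$ lies in exactly one class $i + p\Oo$ for $i = 0,\dots,p-1$, so the residue field is $\IF_p$;
\item $p$ generates the maximal ideal, giving (ii);
\item henselianity holds, via the axiom scheme ``every monic polynomial over $\Oo$ of degree $n$ with a simple residue root has a root in $\Oo$'', one sentence per $n$;
\item the value group is a $\IZ$-group.
\end{enumerate}

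Condition (e) is the main obstacle: we must transfer the theory of $\IZ$ to $vK$ by sentences about $K$. The axioms of $\IZ$-groups are: discretely ordered, with $vK/nvK \cong \IZ/n$ generated by the minimal element. The first is (c). For the second, I would express ``for every $x \ne 0$ there exist $k \in \{0,\dots,n-1\}$ and $y$ with $x = p^k y^n u$, where $u \in \Oo^\times$''. This holds in $\IQ_p$ and is one sentence per $n$, so it transfers to $K$. Henselianity (d) transfers the same way. Mixed characteristic $(0,p)$ is immediate, since $K$ has characteristic $0$ and $v(p)>0$.
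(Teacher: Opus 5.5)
Your $(\Rightarrow)$ direction is correct, and more explicit than the paper's one-line remark that (i)--(iv) are first-order. Robinson's formula (\ref{eq:JR-formula}) defines $\Oo_v$ uniformly, and (a)--(e) are sentences true in $\IQ_p$. In (e) you only show that $v(p)$ generates $vK/nvK$, but $(vK:nvK)=n$ then follows from discreteness, since $k\,v(p)=n\gamma$ with $0<k<n$ would force $0<\gamma<v(p)$.

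The gap is in $(\Leftarrow)$, which carries all the content of the theorem. You dispatch it by quoting an Ax--Kochen--Ershov principle for henselian mixed-characteristic fields with $v(p)$ minimal positive. Specialised to $(K',v')=(\IQ_p,v_p)$, that principle \emph{is} the implication you are asked to prove. So this direction assumes what it should show, unless you are explicitly allowed that principle as a black box. Separately, ``absolutely unramified'' in this paper means that every finite extension is unramified, which fails here (e.g.\ for $K(\sqrt{p})$). What (ii) gives is simply $\Mm_v=p\Oo_v$.

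The missing idea is the reduction to the equicharacteristic-zero principle, Theorem~\ref{thm:AKE}, the only AKE statement the paper takes as given (proof of Theorem~\ref{thm:char_Qp}).
\begin{enumerate}[(1)]
\item Since (i)--(iv) are first-order, it suffices to show $K^*\equiv K'$ for any two $\aleph_1$-saturated models $(K^*,v^*)$, $(K',v')$ of (i)--(iv). Apply this to saturated elementary extensions of $(K,v)$ and $(\IQ_p,v_p)$.
\item Coarsen by $\Conv(v^*p)$, i.e.\ pass to $v_0^*$ with $\Oo_{v_0^*}=\Oo_{v^*}[p^{-1}]$. The core field $(K^*v_0^*,\overline{v^*})$ has value group $\Conv(v^*p)=\IZ v^*p$ and residue field $\IF_p$. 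By $\aleph_1$-saturation it is complete, hence isomorphic to $\IQ_p$ by the structure theorem for complete discretely valued fields with uniformiser $p$ and residue field $\IF_p$. The same holds for $K'$.
\item The value groups $v_0^*K^*$ and $v_0'K'$ are divisible by Fact~\ref{fact:Z-groups} and non-trivial by saturation, hence elementarily equivalent.
\item The places $v_0^*$ and $v_0'$ are henselian (Fact~\ref{fact:hensel_preserve}) of equicharacteristic $(0,0)$. Theorem~\ref{thm:AKE} therefore gives $(K^*,v_0^*)\equiv(K',v_0')$, in particular $K^*\equiv K'$. Robinson's formula recovers $v^*$ and $v'$ if you want the valued-field version.
\end{enumerate}
You cannot compare a saturated model directly with $\IQ_p$ itself, because the $(0,0)$-place of $\IQ_p$ is trivial.
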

This theorem thus describes the class of \emph{$p$-adically closed fields}. Note that a valuation satisfying (i)--(iv) is always unique and given by a uniform formula in the language of rings, see (\ref{eq:JR-formula}). Hence, axioms (i)--(iv) can be rewritten as a set of $\Ll_{\ring}$-sentences.

Abstractly speaking, there are $p$-adically closed fields of any infinite cardinality by the Löwen\-heim-Skolem Theorem. However, this class also includes natural algebraic examples.
\begin{example} \label{ex:puiseux}
\begin{enumerate}
    \item The field of algebraic $p$-adic numbers $k = \IQ_p \cap \IQ^{\alg}$ is $p$-adically closed.
    \item The field of Puiseux series over $\IQ_p$,
    \[
        K = \bigcup_{n > 0} \IQ_p\laurent{t^{1/n}},
    \]
    is $p$-adically closed. This is perhaps the simplest example of rank two.
\end{enumerate}
\end{example}

In an early advance towards the Galois characterisation of $p$-adically closed fields, Pop obtained a formative result extending Neukirch's within a restriction of the class of $p$-adically closed fields \cite[Thm. E.11]{Pop88}.
\begin{thm}[Pop 1988]
\label{thm:Pop}
    Let $K$ be a field with $K^{\alg} = K\IQ^{\alg}$. Then
    \[
        G_K \cong G_{\IQ_p} \Longleftrightarrow K \equiv \IQ_p.
    \]
\end{thm}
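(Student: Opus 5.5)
The backward direction is the easy one. If $K \equiv \IQ_p$, then by Theorem~\ref{thm:Ax-Kochen-axioms} the field $K$ carries a henselian valuation $v$ with residue field $\IF_p$, $\IZ$-group value group and $v(p)$ minimal positive. The hypothesis $K^{\alg} = K\IQ^{\alg}$ lets us compare $K$ with its algebraic part $k = K \cap \IQ^{\alg}$: restriction gives an isomorphism $G_K \cong G_k$. Since $k$ is relatively algebraically closed in $K$, the restriction of $v$ is henselian on $k$. It has residue field $\IF_p$, and its value group is pure in $vK$, hence again a $\IZ$-group containing $v(p)$ as minimal positive element. So $k$ is an algebraic $p$-adically closed field, and hence $k \cong \IQ_p\cap\IQ^{\alg}$: it is an algebraic extension of $\IQ$, henselian with respect to a $p$-adic place, and the residue and value group data force it to be the relative algebraic closure of $\IQ$ in $\IQ_p$. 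Finally $G_{\IQ_p\cap\IQ^{\alg}} \cong G_{\IQ_p}$ by Krasner's lemma, since henselisation does not change the absolute Galois group. This gives $G_K \cong G_{\IQ_p}$.

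For the forward direction, again pass to $k = K\cap \IQ^{\alg}$, so that $G_k \cong G_K \cong G_{\IQ_p}$. Now $k$ is an algebraic extension of $\IQ$. The plan is to show that $k$ is henselian with respect to some $p$-adic prime, using the Neukirch-type local/global argument behind Theorem~\ref{thm:Neukirch}. First, $G_k$ is prosolvable, since $G_{\IQ_p}$ is. An algebraic extension of $\IQ$ with prosolvable absolute Galois group cannot be "global" in the sense of Hilbertian or PAC behaviour. Concretely:
\begin{enumerate}[(a)]
\item Fix a $p$-Sylow subgroup $P$ of $G_k$. Its fixed field $k_P$ has Galois group $P \cong G_{\IQ_p}(p)$-type Demushkin structure, which is not free pro-$p$.
\item By the local-global principle for Brauer groups (or, more elementarily, by Chebotarev density plus the fact that non-henselian algebraic fields have free-product-like decompositions of Galois groups), $k_P$ must be henselian at a unique prime $\mathfrak{p}$ above some rational prime $\ell$.
\item Comparing invariants of $G_{\IQ_p}$ shows that $\ell = p$. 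The invariants are the maximal pro-$\ell$ quotients of open subgroups, the rank $[k':\IQ_p]+2$ of $G_{k'}(p)^{ab}$, and the cyclotomic character read off from the action on torsion of the abelianised inertia.
\end{enumerate}
Next descend henselianity from $k_P$ to $k$. By uniqueness of the henselian prime for the various Sylow fixed fields, conjugation by $G_k$ permutes them compatibly, which yields a henselian $p$-adic valuation on $k$. Then $k$ embeds into $\IQ_p^{\alg}$ as a subfield of $\IQ_p\cap\IQ^{\alg}$ up to finite degree. Counting invariants, namely the order of roots of unity in the abelianisation and $\dim_{\IF_p} H^1$ given by the minimal number of generators of $G_k(p)$, which equals $[k_v:\IQ_p]+2$, forces $k = \IQ_p\cap\IQ^{\alg}$. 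Finally, since $K$ is a regular extension of $k$ with $G_K\cong G_k$, every finite extension of $K$ comes from one of $k$. Using the Ax–Kochen criterion (Theorem~\ref{thm:Ax-Kochen-axioms}) via the canonical henselian valuation extended from $k$, one checks that $K$ carries a henselian valuation with residue field $\IF_p$ and $\IZ$-group value group. The residue field cannot grow, because it would create unramified extensions not coming from $k$. The value group's $n$-divisibility quotients are controlled by Kummer theory against $G_k$. Hence $K \equiv \IQ_p$.

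The main obstacle is step (b): producing a henselian prime on $k$ purely from $G_k \cong G_{\IQ_p}$. I would first try Pop's route, recognising decomposition groups inside $G_k$ group-theoretically as the maximal subgroups with the local (Demushkin, prosolvable) structure, and then use Neukirch's result that for algebraic fields such subgroups are decomposition groups of primes. The final transfer from $k$ to $K$ is, by contrast, routine valuation theory.
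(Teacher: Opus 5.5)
\textbf{The main gap: passing from $k$ back to $K$.} This is the step you call routine, and it fails as written. Knowing that $k=K\cap\IQ^{\alg}$ is $p$-adically closed and that restriction $G_K\to G_k$ is an isomorphism does not give you a henselian valuation on $K$. All Chevalley provides is a prolongation of the $p$-adic valuation of $k$, and such a prolongation is in general not henselian.

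Already $K=\IQ_p$ satisfies your hypotheses. Take $t\in\IQ_p$ transcendental over $\IQ$, extend $v_p|_k$ to $k(t)$ by the Gauss valuation, and then extend arbitrarily to $\IQ_p$. The result is a non-trivial valuation whose residue field is transcendental over $\IF_p$. It therefore differs from $v_p$, and so it is not henselian by Example~\ref{ex:valuations on Qp}.

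The Galois data do not tell you which prolongation to take. A henselian valuation on $K$ has to be \emph{created} from $G_K$. That is exactly what the paper does: it uses $p$-rigid elements and Theorem~\ref{thm:Galois code henselianity}, applied to the Sylow $q$-subgroups $\IZ_q\rtimes\IZ_q$ for $q\neq p$, and then analyses $v_K$ in \ref{main_step:1}.

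Even granting a henselian $w$ on $K$ extending the $p$-adic valuation of $k$, your controls on $Kw$ and $wK$ are not enough.
\begin{enumerate}[(1)]
\item Let the residue field be a pseudo-finite $F$ with $F\cap\IF_p^{\alg}=\IF_p$. Then $F$ has exactly one extension of each degree and no roots of unity outside $\IF_p$. So it creates neither unramified extensions not coming from $k$ nor new roots of unity.
\item Kummer theory says nothing about whether $w(p)$ is minimal positive.
\end{enumerate}
The paper rules these out in stages. It first shows that the rank-one part of the Standard Decomposition is not $p$-divisible (\ref{main_step:2.1}, via the Transfer Lemma~\ref{cor:transfer_lemma1} and Lemma~\ref{lem:C_pInC_p^2Charp}). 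Hence that part is discrete by Lemma~\ref{lem:Pop}(ii), and the residue field is finite by Lemma~\ref{lem:Pop}(iii). Minimality of $v(p)$ comes from the degree count in \ref{main_step:3.3}.

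So your ``routine'' step is essentially all of Section~\ref{chap:main}. Once that is available, the detour through $k$ is unnecessary. The paper's argument works for arbitrary $K$, so Pop's theorem is a corollary of the Main Theorem, with the hypothesis $K^{\alg}=K\IQ^{\alg}$ never used. The backward direction is just Lemma~\ref{lem:small}, since $G_{\IQ_p}$ is small. Your own backward argument also works, provided you note two things: $vk\subseteq \IQ v(p)\cap vK=\IZ v(p)$, and $\IQ_p\cap\IQ^{\alg}$ is defectless, so it has no proper immediate algebraic extension.

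\textbf{Further problems in the algebraic half (repairable).}
\begin{enumerate}[(1)]
\item Step (b) is a black box. ``Non-henselian algebraic fields have free-product-like decompositions'' is not a usable fact, and the Brauer-group route needs an actual argument. Note also that for $p\neq2$ the maximal pro-$p$ quotient $G_{\IQ_p}(p)$ is free; only the Sylow subgroup $P$ is not.
\item In (c), the invariant $[k':\IQ_p]+2$ presupposes what is to be shown. What works is this: if $k_P$ were henselian at some $\ell\neq p$, its inertia group would be a non-trivial normal abelian subgroup of $P$, or else $P$ would be pro-cyclic. Either contradicts \ref{main_step:1.5}(1).
\item Your claim that $k$ sits inside $\IQ_p\cap\IQ^{\alg}$ up to finite degree is backwards. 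A henselian $p$-adic $k$ \emph{contains} a copy of the henselisation $\IQ_p\cap\IQ^{\alg}$. You must then prove that this extension is finite, which means excluding a $p$-divisible $vk$ and an infinite $kv$; that again needs Pop's Lemma and the Transfer Lemma.
\item ``Conjugation permutes them compatibly'' does not descend henselianity from $k_P$ to $k$. You need the Sylow-conjugacy argument inside $G_F$, for finite Galois $F/k$, from Theorem~\ref{thm:henseling down}(iii). It applies here because non-trivial valuations on algebraic extensions of $\IQ$ have rank $1$. By Theorem~\ref{thm:schmidt}, a non-separably-closed such field then has at most one non-trivial henselian valuation.
\end{enumerate}
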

Throughout, we will tacitly assume that isomorphisms between profinite groups are topological group isomorphisms.

Pop conjectured that his theorem holds without any extra restrictions on $K$ \cite[E.7]{Pop88}. The conjecture was independently solved by Efrat \cite{Efrat95} (for $p \ne 2$) and Koenigsmann \cite{Koenigsmann95} (for all primes $p$).

\begin{mainthm*}[Efrat/Koenigsmann 1995]
\label{thm:main theorem}
For any field $K$,
\[
    G_K \cong G_{\IQ_p} \ifftext \text{$K$ is $p$-adically closed.}
\]
\end{mainthm*}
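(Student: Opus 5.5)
The easy direction ($\Leftarrow$) I would handle by a transfer argument. If $K$ is $p$-adically closed, let $k = K\cap\IQ^{\alg}$ be its relative algebraic closure of $\IQ$. By Theorem~\ref{thm:Ax-Kochen-axioms} the valuation restricts to a henselian valuation on $k$ with residue field $\IF_p$ and $v(p)$ minimal positive. One checks that $k$ is $p$-adically closed and isomorphic to $\IQ_p\cap\IQ^{\alg}$. Moreover $k$ is existentially closed in $K$, since $p$-adically closed fields are model complete in $\Ll_{\ring}$ after naming the uniformly definable valuation. From this, $K^{\alg}=K\IQ^{\alg}$. Indeed, the number of extensions of each degree is finite and first-order expressible, so $K$ and $\IQ_p$ have the same finite number of extensions of each degree. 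The restriction map $G_K\to G_k$ is then surjective, and it is injective because $k$ is relatively algebraically closed and every extension of $K$ of degree $n$ is defined over $k$. Hence $G_K\cong G_k\cong G_{\IQ_p}$, the last isomorphism holding because $\IQ_p\cap\IQ^{\alg}$ is henselian with completion $\IQ_p$ (Krasner's lemma). Alternatively, for this direction one can simply invoke Theorem~\ref{thm:Pop} once $K^{\alg}=K\IQ^{\alg}$ is established.

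For the hard direction ($\Rightarrow$), suppose $G_K\cong G_{\IQ_p}$. The plan has three steps.

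\textbf{Step 1: a henselian valuation.} I would produce a nontrivial henselian valuation $v$ on $K$ from the group structure alone. Let $P$ be a Sylow pro-$p$ subgroup of $G_{\IQ_p}$. This is a Demushkin group containing noncommuting elements, yet $G_{\IQ_p}$ has nontrivial normal subgroups with procyclic quotients: the inertia and ramification groups. I would use a Galois characterisation of henselianity, for instance the existence of a nontrivial normal abelian-by-procyclic or "rigid" subgroup structure. One might aim for Koenigsmann's criterion: $K$ admits a nontrivial $p$-henselian valuation if the maximal pro-$p$ quotient of some finite extension containing $\zeta_p$ is non-free and has a nontrivial normal abelian subgroup. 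Passing to the finite extension $K'=K(\zeta_p)$ corresponding to the appropriate open subgroup, I would build such a valuation via rigid elements in the style of Arason--Elman--Jacob and Ware, then go down from $p$-henselian to henselian, using that henselianity descends from finite Galois extensions when the valuation is canonical.

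\textbf{Step 2: pinning down the valuation.} Replace $v$ by the canonical henselian valuation. Its decomposition group is all of $G_K$, and its inertia subgroup is a normal subgroup of $G_K$. The normal subgroup structure of $G_{\IQ_p}$ forces several facts. Since $G_K$ is not prosolvable, the residue field $Kv$ is not separably closed, so Galois theory of henselian fields gives $G_{Kv}$ as a quotient of $G_K$ by the inertia group. Comparing closed normal subgroups of $G_{\IQ_p}$, one should be able to show that the inertia group is either trivial or exactly the inertia of $\IQ_p$. The trivial case, where $Kv$ has $G_{Kv}\cong G_{\IQ_p}$, requires iteration: compose valuations, and use finiteness of $K^\times/(K^\times)^n$, readable off from $G_K$ via Kummer theory, to bound the process. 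The outcome should be a valuation with residue field having Galois group $\hat{\IZ}$. Comparing character counts, this residue field must be finite of characteristic $p$, so the valuation has mixed characteristic $(0,p)$. Characteristic $0$ of $K$ itself follows because $\cd_p G_{\IQ_p}=2$, whereas characteristic-$p$ fields have $\cd_p\le1$.

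\textbf{Step 3: the remaining axioms.} Using Kummer theory and the known order of $K^\times/(K^\times)^p$, namely $p^{2}$ or $p^{3}$ depending on $\zeta_p$, I would deduce that the residue field is exactly $\IF_p$, that $vK/n\,vK\cong\IZ/n$, making $vK$ a $\IZ$-group, and that $v(p)$ is minimal positive, with no extra ramification since the inertia groups agree. A possible coarsening by a divisible convex subgroup, as in the Puiseux example, does not affect these conclusions. Theorem~\ref{thm:Ax-Kochen-axioms} then finishes the proof. I expect Step 1, extracting a henselian valuation purely from Galois data without cohomology, to be the main obstacle.
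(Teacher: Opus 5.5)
Your easy direction is correct but roundabout. Since $G_{\IQ_p}$ is small (Corollary~\ref{cor:finitelyManyExtensionsQ_p}), Klingen's Lemma~\ref{lem:small} gives $G_K\cong G_{\IQ_p}$ for every $K\equiv\IQ_p$ directly, without model completeness or $K\cap\IQ^{\alg}$.

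The hard direction has a genuine gap already in Step~1: a criterion based on non-trivial normal abelian subgroups cannot be used at the prime $p$. By Theorem~\ref{thm:Galois code henselianity}, such a subgroup exists in a Sylow $p$-subgroup (not $C_2,\IZ_p,\IZ_2\rtimes C_2$) exactly when there is a henselian valuation tamely branching at $p$, which in particular has residue characteristic $\neq p$. But $\IQ_p$ has no such valuation (Example~\ref{ex:valuations on Qp}), so the Sylow $p$-subgroups of $G_{\IQ_p}$ contain no non-trivial normal abelian closed subgroup; the paper uses exactly this in Step~1.5. For the same reason the $p$-henselian analogue applied to $G_{K(\zeta_p)}(p)$ yields nothing, since the only non-trivial $p$-henselian valuation on $\IQ_p(\zeta_p)$ is the $p$-adic one.

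The working route is to apply the characterisation at every prime $q\neq p$, where Sylow $q$-subgroups are $\IZ_q\rtimes\IZ_q$ (Proposition~\ref{prop:Syl G_Q_p}). This gives henselian $v_q$ with $\charK Kv_q\neq q$, all coarsenings of $v_K$, so $(v_KK:qv_KK)=q$ and $Kv_K$ is not separably closed.

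The real work is then proving $\charK Kv_K=p$, and your Step~2 does not supply it. First, $G_{\IQ_p}$ \emph{is} prosolvable, by the ramification filtration. Second, an abstract isomorphism does not tell you which normal subgroup is the inertia group. Third, no count of characters can distinguish a finite field from $\IC\laurent{t}$, since both have absolute Galois group $\widehat{\IZ}$. The paper needs Steps~1.3--1.5 here:
\begin{itemize}
\item It identifies the prime-to-$p$ roots of unity of $K$ via Lemma~\ref{lem:prime-to-p roots}, checking its hypotheses with the auxiliary field $\IQ_p(\zeta_{q^{\infty}},\sqrt[q^{\infty}]{p})$.
\item It excludes $\charK Kv_K=q\neq p$: the cyclotomic $\IZ_q$-extension would make $Kv_K$ $q$-closed, forcing infinitely many roots of unity of prime order into it.
\item It excludes $\charK Kv_K=0$: then $v_KK$ would be $p$-divisible, which would force the $C_p\rtimes C_{p-1}$-extension transported from $\IQ_p(\zeta_p,\sqrt[p]{p})$ to be abelian (with a modified construction for $p=2$).
\end{itemize}

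Step~3 misses the infinitely ramified case. In the Standard Decomposition $v_K=\dbloverline{v}\circ\overline{v_p}\circ v_0$, Pop's Lemma~\ref{lem:Pop} only says the rank-1 middle place $\overline{v_p}$ is discrete \emph{or} $p$-divisible. Finiteness of $K^\times/K^{\times p}$ is compatible with both, so Kummer counting cannot exclude a $p$-divisible, perfectoid-type $\overline{v_p}$; ``the inertia groups agree'' presupposes exactly this. The paper rules it out with the Transfer Lemma (Corollary~\ref{cor:transfer_lemma1}). Otherwise $G_{Kv_0}$, and hence an open subgroup of it, would be an absolute Galois group in characteristic $p$. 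That contradicts the $C_p$-extension inherited from $\IQ_p(\zeta_p)$ that does not embed into a $C_{p^2}$-extension (Lemmas~\ref{lem:C_pNotInC_p^2} and~\ref{lem:C_pInC_p^2Charp}). Cohomologically, the $\cd_p$ argument you use for $\charK K=0$ would also work for $Kv_0$ once $v_0K$ is known to be $p$-divisible, but you never address this place. Only after this does Pop's Lemma give $\overline{v_p}(Kv_0)\cong\IZ$ with finite residue field, and then $Kv_K=\IF_p$ follows from the roots of unity.

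Finally, for $p\neq 2$ you cannot get $v_K(p)$ minimal positive from ``the known order of $K^\times/K^{\times p}$''. That order depends on how $G_K$ acts on $\mu_p$, which the abstract isomorphism does not give you. The paper instead counts over the group-theoretically distinguished $C_{p-1}\times C_{p-1}$-extension $L\ni\zeta_p$. There $\dim_{\IF_p}L^\times/L^{\times p}=(p-1)^2v_K(p)+2$ (Remark~\ref{rem:general_dim_formula}) must equal $(p-1)^2+2$, the value for $\IQ_p$.
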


To be more precise, both Efrat and Koenigsmann deal with the more general case of finite extensions of $\IQ_p$ ($p$-adic fields). In the literature, the notion of $p$-adically closed (in a wider sense) is sometimes used to denote a field that is elementarily equivalent to some $p$-adic field. It is shown in \cite[Thm.~A]{Efrat95} and \cite[Thm.~4.1]{Koenigsmann95} that a field $K$ is $p$-adically closed in this wider sense if and only if $G_K$ is an open subgroup of $G_{\IQ_p}$. One can show that the Main Theorem follows as a corollary \cite[Cor.~4.2]{Koenigsmann95}.

Perhaps the most notable feature of the Main Theorem is the reappearance of the same phenomenon we have encountered in the case of real closed fields: the absolute Galois group of $\IQ_p$ \emph{encodes extra structure on $K$}---namely, that of a valuation (of any potential rank), satisfying the axioms (i)--(iv) in Theorem~\ref{thm:Ax-Kochen-axioms}. This provides a striking piece of evidence for the point of view that higher rank valuations appear ``in nature''---even outside of model theory.

The main goal of our paper is to give a new and elementary proof of the Main Theorem. A first sketch of such an elementary proof was given by the second author in a lecture series ``Encoding arithmetic in Galois groups''\nocite{Koenigsmann22} at Oxford in Trinity Term 2022. Efrat and Koenigsmann's orig\-inal arguments~\cite{Efrat95, Koenigsmann95} build on Pop's earlier work~\cite{Pop88}, which analyses Galois groups using a range of methods, including Galois cohomology and class field theory.
Their new key ingredient is the ``creation'' of valuations from so-called rigid elements (an idea that had its origins in quadratic form theory).

There are several reasons why we propose a revised proof of the Main Theorem. First, we believe this fundamental result has remained somewhat inaccessible, in part due to its reliance on Galois cohomology and other techniques. This encouraged us to write a self-contained exposition. Secondly, some of the underlying methods were only recently understood, and modern developments shed new light on certain key steps. Additionally, we believe Lemmas~\ref{lem:prime-to-p roots} and \ref{lem:finite->perfectoid} are new; the proofs of Theorem~\ref{thm:Galois code henselianity} in \cite{Koenigsmann03} and \cite[Sec.~5.4]{Engler05} require additional arguments in the case $p = 2$.
Thirdly, we present a modern perspective that highlights interactions with recent developments, while giving a proof that avoids the use of cohomological methods entirely.

\subsection*{Proof strategy}

The main ingredients in our proof are:
\begin{enumerate}[(I)]
    \item A systematic use of coarsenings (Standard Decomposition).
    \item The Galois characterisation of henselianity; in particular,
    \begin{itemize}
        \item the creation of valuations, and
        \item going-down statements for henselianity.
    \end{itemize}
    \item Transfer results of Krasner-Kazhdan-Deligne type.
    \item Explicit constructions replacing abstract cohomological considerations.
    \item Pop's Lemma on mixed characteristic henselian valued fields with absolute Galois group of finite $p$-rank.
\end{enumerate}

The proof proceeds as follows: Let $K$ be a field such that $G_K \cong G_{\IQ_p}$. We begin by constructing a henselian valuation $v_K$ on $K$ using the Galois characterisation of henselianity. Combining explicit criteria inspired by Galois cohomology with a careful analysis of the roots of unity in $K$, we then show that $(K, v_K)$ has mixed characteristic $(0, p)$. This enables us to apply the Standard Decomposition, expressing $v_K$ as a composition of three valuations, with the middle component $\overline{v_p}$ having rank one and mixed characteristic. Utilising Pop's Lemma together with transfer principles of Krasner-Kazhdan-Deligne type, we demonstrate that $\overline{v_p}$ has value group $\IZ$ and residue field $\IF_p$. Finally, we translate these structural properties back to the original valuation $v_K$ and verify that it satisfies all four axioms of Theorem~\ref{thm:Ax-Kochen-axioms}.

\subsection*{Structure of this paper}

Chapter~\ref{chap:val_theory} is a survey of valuation theory, aimed at the reader familiar with rank 1 (i.e., real-valued) valuations---those that correspond to absolute values on fields---but not familiar with higher rank (i.e., Krull) valuations that take values in an ordered abelian group. Some parts of our exposition do not appear in standard textbooks; for example, the Standard Decomposition of a valuation (Theorem~\ref{thm:stand_decomp}), which has become increasingly relevant.

Chapter~\ref{chap:val+abs_Galois} connects valued fields to their absolute Galois groups. Section~\ref{sec:structure of Qp} deals with most of the computational aspects around $\IQ_p$. In Section~\ref{sec:Pop}, we prove Pop's Lemma. In Section~\ref{sec:cohomology disguise}, we present both explicit and elementary criteria for distinguishing fields of characteristic 0 from those of positive characteristic $p$. Although motivated by ideas from Galois cohomology, the results are self-contained and rely only on Hilbert's Theorem 90. In Section~\ref{sec:Galois_char_henselianity}, we prove the Galois characterisation of henselianity, providing a method to recover henselian valuations from absolute Galois groups.

Chapter~\ref{chap:transfer} contains a survey of transfer methods between fields of different characteristic. The main result that we will need for the proof of the Main Theorem is the Transfer Lemma (Corollary~\ref{cor:transfer_lemma1}), for which we give two proofs. In Section~\ref{sec:perfectoid}, we give a new proof based on the tilting construction.
In Section~\ref{sec:sat-decomp_method}, we introduce the Saturation-Decomposition Method---recently employed by Jahnke-Kartas~\cite{Jahnke-Kartas} in a spectacular fashion to understand the model theory of perfectoid fields---and explain how it yields a second proof of the Transfer Lemma. We will motivate this method using ramification theory and provide optimal conditions under which it applies (this is the Taming Theorem~\ref{thm:taming}). As a consequence, we obtain a new proof of a result of Kuhlmann-Rzepka \cite{Kuhlmann-Rzepka} on roughly deeply ramified fields (Corollary~\ref{cor:KR-almost-purity}).
In Section \ref{sec:JK}, we sketch how Jahnke and Kartas' proofs fit into this framework.

Chapter~\ref{chap:main} contains the proof of the Main Theorem.

\textbf{Acknowledgements.}
First and foremost, the authors wish to thank Sylvy Anscombe, Philip Dittmann, Franziska Jahnke, Konstantinos Kartas, and Franz-Viktor Kuhlmann for their continued interest in this project and many discussions. Special thanks go to P.\,D. and Margarete Ketelsen for suggesting counterexamples. We thank Lady Margaret Hall in Oxford for supporting a Mathematical Symposion in September 2024 in Goudargues, France, that inspired us to work on this paper.

The first and third authors warmly thank Franziska Jahnke for inviting them to the University of Münster in 2024/25 for two research visits, during which parts of this paper were written.

The first and second authors gratefully acknowledge the hospitality of the Hausdorff Research Institute in Bonn, where this paper was completed. Their stay was funded by the Deutsche Forschungsgemeinschaft (DFG, German Research Foundation) under Germany's Excellence Strategy -- EXC--2047/1 -- 390685813.

%% file: part2_val_th.tex
\section{Methods in valuation theory}
\label{chap:val_theory}

This chapter will provide us with our basic toolbox. It is not meant to replace textbook treatments such as Engler-Prestel \cite{Engler05} and Efrat \cite{Efrat06}. Rather, we aim for a transparent synopsis. Our main goals in this section are:
\begin{enumerate}[(a)]
    \item Introduce the three equivalent ways of thinking about valuations on a field $K$, as indicated by the following bijections:
    \begin{diagram}
        \{\text{$v$ valuation on $K$}\}/{\sim} \ar[rr, leftrightarrow] \ar[rd, leftrightarrow] &&\{\text{$\Oo$ valuation ring on $K$}\} \ar[ld, leftrightarrow]\\
        &\{\text{$\varphi$ place on $K$}\}/{\sim} &
    \end{diagram}
    Our treatment emphasises the places point-of-view (Section \ref{sec:val+val_ring+places}).
    \item Explain how valuations behave under algebraic extensions of fields and the associated ramification (Hilbert-Dedekind) theory (Section \ref{sec:ram_theory}).
    \item Examine how valuations interact with one another when several valuations are present on a given field (Section \ref{sec:coars&decomp}):
    \begin{enumerate}[(i)]
        \item Explain coarsenings, and how they give rise to the concept of decompositions (Section \ref{sec:comp_places}).
        \item Introduce natural choices of valuations in a general abstract setting---first, the coarsenings of the Standard Decomposition (Section \ref{sec:stand_decomp}) and,
        \item secondly, the canonical henselian valuation (Section \ref{sec:can_hens_val}).
    \end{enumerate}
    \item Explain the creation of abstract valuations from rigid and $p$-rigid elements (Section \ref{sec:rigid elements}).
\end{enumerate}

\subsection{Valuations, valuation rings, and places}
\label{sec:val+val_ring+places}

\subsubsection{Ordered abelian groups}
\label{sec:OAG}

Abstract valuations on fields take values in ordered abelian groups, generalising real-valued valuations.

\begin{defi}
A \emph{convex} subgroup $\Delta$ of an ordered abelian group $(\Gamma,\le)$ is a subgroup such that for any positive $\delta \in \Delta$ and $\gamma \in \Gamma$, $0 \le |\gamma| \le \delta$ implies $\gamma \in \Delta$.
We write $\Gamma_{>\gamma}$ for the interval $(\gamma,\infty)$ in $\Gamma$, and
\[
    \Conv(X) \coloneqq \{\gamma \in \Gamma : |\gamma| \le \delta \text{ for some $\delta \in \langle X \rangle$}\}
\]
for the \emph{convex hull} of a set $X \subseteq \Gamma$ (here, $\langle X \rangle$ denotes the group generated by $X$ in $\Gamma$).
We further write $\Conv(\gamma) \coloneqq \Conv(\{\gamma\})$ for $\gamma \in \Gamma$.
\end{defi}
Any convex subgroup $\Delta$ naturally gives rise to an ordering on the quotient group $\Gamma/\Delta$. Crucially, the set of convex subgroups of $\Gamma$ is linearly ordered by inclusion. The \emph{(archimedean) rank} of $\Gamma$ is the cardinality of the set of proper convex subgroups.
The following is classical:
\begin{fact}[Hölder]
Any ordered abelian group that is archimedean\footnote{Strictly speaking, Eudoxian: Archimedes attributes the theory of magnitudes to Eudoxus, but this has been lost to history.}, i.e., of rank 1, embeds into the ordered group of reals.
\end{fact}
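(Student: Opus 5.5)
Fix an ordered abelian group $(\Gamma,\le)$ of rank $1$, i.e. archimedean: for all $0<\gamma,\delta$ there is $n\in\IN$ with $n\gamma>\delta$. If $\Gamma=0$ there is nothing to show, so fix a positive element $e\in\Gamma$, which will be sent to $1$. The plan is to define $\iota:\Gamma\to\IR$ by measuring each $\gamma$ against rational multiples of $e$, in the spirit of Dedekind cuts:
\[
    \iota(\gamma)\coloneqq \sup\{\, m/n \in \IQ : n>0,\ m e \le n\gamma \,\}.
\]
By the archimedean property this set is nonempty and bounded above. Indeed, there is $k$ with $ke>-\gamma$, so $-k$ is in the set; and there is $k$ with $ke>\gamma$, so every $m/n$ in the set satisfies $me\le n\gamma< nke$, hence $m/n<k$. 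Thus $\iota$ is well defined.

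Next I will check order preservation and injectivity.
\begin{enumerate}[(1)]
    \item If $\gamma\le\gamma'$, every rational admissible for $\gamma$ is admissible for $\gamma'$, so $\iota$ is monotone.
    \item Suppose $\gamma<\gamma'$. Archimedeanity gives $n$ with $n(\gamma'-\gamma)>2e$.
    \item Choose $m$ maximal with $me\le n\gamma$; this exists since the admissible $m$ are bounded above. Then $n\gamma<(m+1)e$.
    \item Hence $n\gamma'>n\gamma+2e\ge(m+2)e$, while every $m'/n'$ admissible for $\gamma$ satisfies $m'/n'\le(m+1)/n$. The last point needs the same comparison argument, via cross-multiplying as in the additivity step below.
    \item Therefore $\iota(\gamma)\le (m+1)/n<(m+2)/n\le\iota(\gamma')$.
\end{enumerate}
So $\iota$ is strictly increasing, and in particular injective.

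The main obstacle is additivity, $\iota(\gamma+\delta)=\iota(\gamma)+\iota(\delta)$. The tool I would use is that for $n>0$ one has $x\le y\iff nx\le ny$. This holds because an ordered abelian group is torsion-free and the order is compatible with addition. It lets me bring admissible fractions to a common denominator: if $me\le n\gamma$ and $m'e\le n'\delta$, then $(mn'+m'n)e\le nn'(\gamma+\delta)$. This gives $\iota(\gamma)+\iota(\delta)\le\iota(\gamma+\delta)$.

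For the reverse inequality I would argue similarly with the complementary cuts $\{m/n: me>n\gamma\}$, whose infimum also equals $\iota(\gamma)$. That equality follows from the maximal-$m$ argument in step (3): the gap between the two cuts at denominator $n$ is $1/n$. Alternatively, one can observe that $\iota(-\gamma)=-\iota(\gamma)$ directly from the definition and apply superadditivity to $(\gamma+\delta)+(-\delta)$. Combining the two inequalities, $\iota$ is an injective order-preserving group homomorphism $\Gamma\to\IR$, which proves the Fact.
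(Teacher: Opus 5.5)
Your argument is correct and complete. It is the standard Dedekind-cut proof of Hölder's theorem. The paper gives no proof of its own to compare with: it records the result as a classical Fact and moves on.

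There is one small imprecision. In your alternative route, $\iota(-\gamma)=-\iota(\gamma)$ is not immediate from the definition. Unwinding it gives $\iota(-\gamma)=-\inf\{m/n : n>0,\ me\ge n\gamma\}$. So this identity is exactly the statement that the lower and upper cuts of $\gamma$ have the same boundary point, which is what your maximal-$m$ argument in step (3) establishes. Cross-multiplication shows that every element of the lower cut lies below every element of the upper cut, and at denominator $n$ the two cuts come within $1/n$ of each other. Both of your routes to the reverse inequality therefore rest on step (3), which you have supplied.

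Finally, if you want to tie your hypothesis to the paper's definition of rank via convex subgroups: rank $1$ means $\{0\}$ is the only proper convex subgroup. Hence $\Conv(\gamma)=\Gamma$ for every $\gamma\ne 0$, which is precisely the archimedean property you assume.
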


For ordered abelian groups, we have an intrinsic notion of infinitesimals:
\begin{defi}
Let $\varepsilon \in \Gamma$ and $\gamma \in \Gamma_{>0}$. We say $\varepsilon$ is \emph{infinitesimal with respect to} $\gamma$ (in symbols: $\varepsilon \ll \gamma$), if $n|\varepsilon| < \gamma$ for all $n \in \IN$.

To any $\gamma \in \Gamma_{>0}$, we associate two canonical convex subgroups: the minimal convex subgroup containing $\gamma$ and the maximal convex subgroup not containing $\gamma$. These are, respectively,
\begin{align*}
    \Gamma^+_{\gamma} & \coloneqq \Conv(\gamma) \\
    \Gamma^-_{\gamma} & \coloneqq \{\varepsilon \in \Gamma : |\varepsilon| \ll \gamma\}.
\end{align*}

\end{defi}
\begin{fact} \label{fact:rank1}
For any $\gamma \in \Gamma_{>0}$, the ordered quotient group $\Gamma^+_{\gamma}/\Gamma^-_{\gamma}$ is of rank 1.
\end{fact}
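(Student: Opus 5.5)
We prove Fact~\ref{fact:rank1} in three steps: check that $\Gamma^+_{\gamma}$ and $\Gamma^-_{\gamma}$ are convex subgroups with $\Gamma^-_{\gamma} \subsetneq \Gamma^+_{\gamma}$, identify the convex subgroups of the quotient with intermediate convex subgroups of $\Gamma$, and show that no proper such subgroup lies strictly between $\Gamma^-_{\gamma}$ and $\Gamma^+_{\gamma}$.

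\textbf{Step 1: the two subgroups.} By definition $\Gamma^+_{\gamma} = \Conv(\gamma) = \{\varepsilon : |\varepsilon| \le n\gamma \text{ for some } n \in \IN\}$. This set is closed under negation, and it is closed under addition because $|\varepsilon+\varepsilon'| \le |\varepsilon|+|\varepsilon'|$. It is convex by construction. For $\Gamma^-_{\gamma}$:
\begin{itemize}
    \item if $|\varepsilon|,|\varepsilon'| \ll \gamma$, then $n|\varepsilon+\varepsilon'| \le n|\varepsilon| + n|\varepsilon'| \le 2n\max(|\varepsilon|,|\varepsilon'|) < \gamma$, so it is closed under addition;
    \item it is closed under negation and convex, since the defining condition only involves $|\varepsilon|$ and is preserved when $|\varepsilon|$ decreases;
    \item it is contained in $\Gamma^+_{\gamma}$, since $|\varepsilon| < \gamma$ already;
    \item it does not contain $\gamma$, since $1\cdot\gamma \not< \gamma$.
\end{itemize}
Hence $\Gamma^-_{\gamma} \subsetneq \Gamma^+_{\gamma}$, and the quotient $Q \coloneqq \Gamma^+_{\gamma}/\Gamma^-_{\gamma}$ is nontrivial. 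It is ordered because $\Gamma^-_{\gamma}$ is convex.

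\textbf{Step 2: correspondence.} Convex subgroups of $Q$ correspond bijectively, via preimage under the projection, to convex subgroups $\Delta$ of $\Gamma^+_{\gamma}$ with $\Gamma^-_{\gamma} \subseteq \Delta$. This is routine: the projection is order-preserving, and the preimage of a convex set is convex because $\Gamma^-_{\gamma}$ is convex. Also, a convex subgroup of the convex subgroup $\Gamma^+_{\gamma}$ is convex in $\Gamma$. So it suffices to show that the only such $\Delta$ are $\Gamma^-_{\gamma}$ and $\Gamma^+_{\gamma}$. Then $Q$ has exactly one proper convex subgroup, the trivial one, i.e.\ rank $1$.

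\textbf{Step 3: no intermediate subgroup (the only real point).} Suppose $\Delta$ is as in Step 2 and $\Delta \ne \Gamma^-_{\gamma}$. Pick $\delta \in \Delta \setminus \Gamma^-_{\gamma}$. Since $\delta \notin \Gamma^-_{\gamma}$, the definition gives some $n$ with $n|\delta| \ge \gamma$. Now $n|\delta| \in \Delta$, so convexity gives $\gamma \in \Delta$. Since $\Gamma^+_{\gamma}$ is the smallest convex subgroup containing $\gamma$, this forces $\Delta = \Gamma^+_{\gamma}$.

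Alternatively, one can phrase this directly as archimedeanity of $Q$. Take $a,b \in Q$ with $b > 0$, and choose representatives $\alpha,\beta$ with $\beta \notin \Gamma^-_{\gamma}$. Then $m|\beta| \ge \gamma$ for some $m$, and $|\alpha| \le k\gamma$ for some $k$, so $|\alpha| \le km|\beta|$. It follows that $|a| \le km\, b$ in $Q$. No step is a serious obstacle; the only care needed is the choice of representatives modulo $\Gamma^-_{\gamma}$ in this last formulation.
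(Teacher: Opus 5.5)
Your proof is correct and complete. Step~1 establishes $\Gamma^-_{\gamma} \subsetneq \Gamma^+_{\gamma}$, so the quotient is nontrivial, which is needed for rank exactly $1$ under the paper's definition of rank as the number of proper convex subgroups. Step~3 carries the real content: a convex subgroup containing an element that is not infinitesimal with respect to $\gamma$ must contain $\gamma$, and hence all of $\Conv(\gamma)$.

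The paper states this as a classical fact without proof. It is used in the proof of Theorem~\ref{thm:stand_decomp} to see that $\overline{v_{\varpi}}$ has rank $1$. So there is no competing argument to compare against: yours is the standard verification. Your archimedean reformulation also matches the paper's identification of rank $1$ with being archimedean in Hölder's fact.
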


\subsubsection{Valuations and valuation rings}
\label{sec:val+val_ring}

Basic examples of discrete valuations are the $p$-adic valuation on $\IQ$ and the order of vanishing at $X$ in a field of rational functions $k(X)$. Abstract (Krull) valuations generalise this notion.
\begin{defi}
    A \emph{valuation} on a field $K$ with values in an ordered abelian group $(\Gamma,\le)$ is a surjective map
    \[
        v: K \epi \Gamma \cup \{\infty\}
    \]
    that satisfies the three properties
    \begin{enumerate}[(i)]
        \item $v(x) = \infty$ if and only if $x = 0$; \label{axiom_val1}
        \item $v(x \cdot y) = v(x) + v(y)$ (additivity);
        \item $v(x + y) \ge \min \{v(x), v(y)\}$ (ultrametric triangle inequality); \label{axiom_val3}
    \end{enumerate}
    where the relations $\gamma < \infty$ and $\gamma  + \infty = \infty + \gamma = \infty + \infty = \infty$ are assumed for all $\gamma \in \Gamma$. 
    The pair $(K,v)$ is called a \emph{valued field}.

    Associated to any $(K,v)$ are the following basic objects:
    \begin{enumerate}[(a)]
        \item $\Gamma$ is called the \emph{value group} of $v$ and is alternatively denoted by $vK$ or $\Gamma_v$. 
        \item $\Oo_v \coloneqq \{x \in K: v(x) \ge 0\}$ is the \emph{valuation ring} of $v$.
        \item $\Mm_v \coloneqq \{ x\in K: v(x)>0\}$ is the unique maximal ideal of $\Oo_v$.
        \item $\Oo_v/\Mm_v$ is the \emph{residue field} of $v$ and denoted by $Kv$. We sometimes write $xv$, $\overline{x}$, or $\res(x)$ for the residue class $x\Mm_v$ of $x \in \Oo_v$.
        \item The \emph{rank} of $(K,v)$ is the archimedean rank of its value group $\Gamma$.
        \item The pair $(\charK K, \charK Kv)$ is the \emph{characteristic} of $(K,v)$.
        \item The \emph{valuation topology} is induced by basic open sets of balls
        \[
            B_\gamma(x) \coloneqq \{y \in K : v(y - x) > \gamma\}, \quad \text{where $\gamma > 0$, $x \in K,$}
        \]
        making $K$ into a topological field. Whenever we refer to topological properties of a valued field, it is with respect to this topology.
    \end{enumerate}
\end{defi}
\begin{fact}
The invariants above form canonical short exact sequences
        \begin{equation} \label{eq:SES}
            \begin{gathered}
                1 \longrightarrow \Oo_v^\times \longrightarrow K^\times\longrightarrow vK\longrightarrow 1 \\
                1 \longrightarrow 1 + \Mm_v \longrightarrow \Oo_v^\times \longrightarrow (Kv)^\times\longrightarrow 1.
            \end{gathered}
        \end{equation}
\end{fact}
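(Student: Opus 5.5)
The statement to prove is the last Fact: for any valued field $(K,v)$ the two sequences in (\ref{eq:SES}) are exact. I will define the maps explicitly and check exactness at each spot, using only axioms (\ref{axiom_val1})--(\ref{axiom_val3}) and surjectivity of $v$.

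For the first sequence, the map $\Oo_v^\times \to K^\times$ is the inclusion, so it is injective. The map $K^\times \to vK$ is $v$ itself. By additivity it is a group homomorphism, and it is surjective because $v$ is assumed surjective onto $\Gamma\cup\{\infty\}$ and only $0$ maps to $\infty$. The kernel is $\{x\in K^\times : v(x)=0\}$, so it remains to show this set equals $\Oo_v^\times$.
\begin{itemize}
\item If $v(x)=0$, then $v(x^{-1}) = -v(x) = 0$, since $v(1)=0$ follows from $v(1)=v(1)+v(1)$. Hence both $x$ and $x^{-1}$ lie in $\Oo_v$, so $x\in\Oo_v^\times$.
\item Conversely, if $x,x^{-1}\in\Oo_v$, then $v(x)\ge 0$ and $-v(x)\ge 0$, so $v(x)=0$.
\end{itemize}

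For the second sequence, I first check that $\Oo_v$ is a ring and $\Mm_v$ is an ideal, which is where the ultrametric inequality enters.
\begin{itemize}
\item Note $v(-1)=0$, because $2v(-1)=v(1)=0$ in the torsion-free group $\Gamma$. Hence $v(-x)=v(x)$.
\item Closure of $\Oo_v$ under sums then follows from $v(x+y)\ge\min\{v(x),v(y)\}$, and closure under products from additivity.
\item $\Mm_v$ is closed under sums in the same way, and absorbs multiplication by $\Oo_v$ because $v(ax)=v(a)+v(x)>0$ whenever $v(a)\ge0$ and $v(x)>0$.
\item The non-units of $\Oo_v$ are exactly the elements of positive value, by the first part. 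So $\Mm_v$ is the unique maximal ideal and $Kv$ is a field.
\end{itemize}

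Next, $1+\Mm_v\subseteq\Oo_v^\times$: if $m\in\Mm_v$, then $v(1+m)=0$. This uses the standard consequence of the ultrametric inequality that $v(x+y)=\min\{v(x),v(y)\}$ when $v(x)\neq v(y)$, which I derive by applying the inequality to $x=(x+y)+(-y)$.

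The reduction map $\Oo_v^\times\to(Kv)^\times$, $x\mapsto x\Mm_v$, is a multiplicative homomorphism. Its kernel is $\{x : x-1\in\Mm_v\} = 1+\Mm_v$, so $1+\Mm_v$ is a subgroup. It is surjective because any nonzero residue class $\bar x$ has a representative $x\in\Oo_v\setminus\Mm_v$, i.e.\ $v(x)=0$, and such an $x$ is a unit.

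There is no real obstacle here. The only mildly non-formal point is the equality case of the ultrametric inequality together with $v(-1)=0$, and I would record these first as a small lemma.
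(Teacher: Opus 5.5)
Your argument is correct and complete. The paper records this as a standard Fact without proof, and your direct verification from the three valuation axioms and the surjectivity of $v$ is exactly the routine check it takes for granted, including the preliminary lemmas $v(\pm 1)=0$ and $v(x+y)=\min\{v(x),v(y)\}$ when $v(x)\neq v(y)$. One cosmetic point: you use $\Oo_v^\times$ in the first sequence before showing that $\Oo_v$ is a ring, so either do the ring and ideal check first, or read $\Oo_v^\times$ there as $\{x\in\Oo_v : x^{-1}\in\Oo_v\}$.
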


The valuation $v$ can be recovered solely from the ring $\Oo_v$, and, moreover, axioms \ref{axiom_val1}--\ref{axiom_val3} correspond to abstract properties of the ring $\Oo_v$.
\begin{defi}
An (abstract) \emph{valuation ring} is an integral domain $A$ such that for any non-zero element $a \in \Frac(A)$, $a \in A$ or $a^{-1} \in A$.
\end{defi}

Let $K = \Frac(A)$. If $A$ is an abstract valuation ring, then $K$ can be endowed with a canonical valuation with valuation ring $A$. This is the quotient map 
\[
    v_A : K^\times \epi K^{\times}/A^{\times},
\]
where the latter becomes an ordered abelian group by declaring
\[
    xA^{\times} \le yA^{\times} :\Longleftrightarrow \tfrac y x \in A
\]
and switching from multiplicative to additive notation (in particular, $1A^{\times}$ is identified with the neutral element 0). Clearly, $\Oo_{v_A} = A$. Conversely, given a valued field $(K,v)$, the valuations $v$ and $v_{\Oo_v}$ are \emph{equivalent} in the sense of the following commutative diagram:
\begin{center}
\begin{tikzcd}[row sep=small]
    & \Gamma \rlap{$\,\cup\,\{\infty\}$} \\
    K \ar[ru,"v",two heads] \ar[rd,"v_{\Oo_v}",two heads, swap, near end] & \\
    & K^{\times}/\Oo_v^{\times} \rlap{$\,\cup\,\{\infty\}$} \ar[uu,"\cong"]
\end{tikzcd}    
\end{center}

\subsubsection{Places}
\label{sec:places}

We give a third and final description of the concept.

\begin{defi}
A \emph{place} on $K$ is given by a field $k$ together with a surjective map
\[
    \varphi : K \epi k \cup \{\infty\}
\]
that satisfies the three ``ring homomorphism'' properties
\begin{gather*}
    \varphi(x + y) = \varphi(x) + \varphi(y), \\
    \varphi(x \cdot y) = \varphi(x) \cdot \varphi(y), \\
    \varphi(1) = 1,
\end{gather*}
where addition and multiplication on $k$ are extended to $k \cup \{\infty\}$ subject to the relations
\[
    a + \infty = \infty + a = \infty \quad \text{and} \quad a \cdot \infty = \infty \cdot a = \infty \cdot \infty = \infty
\]
for any $a \in k$. We leave $\infty + \infty$, $0 \cdot \infty$, and $\infty \cdot 0$ undefined and the conditions on $\varphi$ vacuous in these three cases.
\end{defi}

The relationship between valuations and places can be described as follows. Given a valuation $v$ on $K$ with residue field $Kv = \Oo_v/\Mm_v$, taking residues induces a place $\varphi_v$:
\begin{align*}
    \varphi_v : K & \longrightarrow Kv \cup \{\infty\} \\
    x & \longmapsto xv \coloneqq \begin{cases}
        x\Mm_v & \text{if $x \in \Oo_v$} \\
        \infty & \text{if $x \notin \Oo_v$.}
    \end{cases}
\end{align*}
Conversely, given a place $\varphi : K \epi k \cup \{\infty\}$, it induces a valuation
\[
    v_{\varphi} : K \epi K^{\times}/\Oo_{\varphi}^{\times} \cup \{\infty\}
\]
via the valuation ring $\Oo_{\varphi} \coloneqq \varphi^{-1}(k)$.

These two constructions are mutually inverse---except that we need to identify two valuations $v_1$, $v_2$ and places $\varphi_1$, $\varphi_2$ according to the following commutative diagrams:

\begin{center}
\begin{tikzcd}[row sep=small]
    & \Gamma_1 \rlap{$\,\cup\,\{\infty\}$} \ar[dd,"\cong"] \\
    K \ar[ru,"v_1",two heads] \ar[rd,"v_2",two heads,swap] & \\
    & \Gamma_2 \rlap{$\,\cup\,\{\infty\}$}
\end{tikzcd}
\hspace{0.2\textwidth}
\begin{tikzcd}[row sep=small]
    & k_1 \rlap{$\,\cup\,\{\infty\}$} \ar[dd,"\cong"] \\
    K \ar[ru,"\varphi_1",two heads] \ar[rd,"\varphi_2",two heads,swap] & \\
    & k_2 \rlap{$\,\cup\,\{\infty\}$}
\end{tikzcd}
\end{center}

Under this identification, $v$ and $v_{\varphi_v}$, respectively, $\varphi$ and $\varphi_{v_{\varphi}}$, become equivalent.

It will be crucial to translate between these three equivalent ways of viewing valuations.

\subsubsection{Extensions of valuations}
\label{sec:ext_of_val}

Let $L/K$ be a field extension, and assume that $L$ and $K$ come equipped with valuations $w$ and $v$, respectively. We say that $(L,w)/(K,v)$ is an \emph{extension of valued fields}, and that $w$ is a \emph{prolongation} of $v$, if $w|_K$ is equivalent to $v$, or, correspondingly, $\Oo_v = \Oo_w \cap K$. Conversely, any valuation on $(K,v)$ can be restricted to any subfield $F$ of $K$ such that $(K,v)/(F,v|_F)$ is an extension of valued fields.

The following facts are fundamental (for proofs, see e.g. \cite[Chap. 3]{Engler05}):

\begin{thm}[Chevalley] \label{thm:chevalley}
Let $L/K$ be an arbitrary extension of fields. Then any valuation $v$ on $K$ extends to some valuation $w$ on $L$.
\end{thm}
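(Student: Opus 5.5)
The plan is the classical Zorn's lemma argument in the language of places, since the place point of view makes the extension problem look like extending a ring homomorphism into an algebraically closed field. Let $\varphi_v : K \epi Kv \cup \{\infty\}$ be the place of $v$, and fix an algebraic closure $\Omega$ of $Kv$. Restricting $\varphi_v$ to $\Oo_v$ gives a ring homomorphism $\Oo_v \to Kv \subseteq \Omega$ whose kernel is $\Mm_v$. It suffices to find a valuation ring $\Oo_w$ of $L$ with $\Oo_w \cap K = \Oo_v$, since the corresponding valuation $w = v_{\Oo_w}$ is then a prolongation of $v$ in the sense above.

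\textbf{Step 1 (maximal pair).} Consider the set of pairs $(A, f)$, where $A$ is a subring of $L$ containing $\Oo_v$ and $f : A \to \Omega$ is a ring homomorphism extending $\varphi_v|_{\Oo_v}$. Order these pairs by inclusion together with extension of maps. Chains have upper bounds given by unions, so Zorn's lemma yields a maximal pair $(A, f)$.

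\textbf{Step 2 (localisation).} Let $\mathfrak p = \ker f$, which is a prime ideal. The map $f$ extends to the localisation $A_{\mathfrak p}$ by $a/s \mapsto f(a)/f(s)$. By maximality, $A = A_{\mathfrak p}$ is local with maximal ideal $\mathfrak p$.

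\textbf{Step 3 (the key lemma, the main obstacle).} For any $x \in L^\times$, I claim that either $\mathfrak p A[x] \ne A[x]$ or $\mathfrak p A[x^{-1}] \ne A[x^{-1}]$.

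Suppose both ideals are the unit ideal. Then we can write
\[
1 = \textstyle\sum_{i=0}^n a_i x^i \quad \text{and} \quad 1 = \sum_{j=0}^m b_j x^{-j},
\]
with $a_i, b_j \in \mathfrak p$, and we choose $n$ and $m$ minimal, with, say, $n \ge m$. Since $1 - b_0$ is a unit of the local ring $A$, multiplying the second relation by $x^n$ expresses $x^n$ as an $\mathfrak p$-combination of $x^{n-1}, \dots, x^{n-m}$. Substituting this into the first relation lowers $n$, contradicting minimality.

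Now suppose, without loss of generality, that $\mathfrak p A[x]$ is proper. Take a maximal ideal $\mathfrak m$ of $A[x]$ containing it. Then $\mathfrak m \cap A = \mathfrak p$, since $\mathfrak p$ is maximal in $A$. The field $A[x]/\mathfrak m$ is generated by the image of $x$ over $A/\mathfrak p$, so it is algebraic over $A/\mathfrak p$ (a field finitely generated as an algebra is algebraic, by Zariski's lemma). Since $\Omega$ is algebraically closed, the embedding $A/\mathfrak p \hookrightarrow \Omega$ induced by $f$ extends to $A[x]/\mathfrak m$. This extends $f$ to $A[x]$, so maximality forces $x \in A$. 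Hence $A$ is a valuation ring of $L$. Getting the minimal-degree argument right is the only genuinely delicate point.

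\textbf{Step 4 (restriction).} Put $\Oo_w = A$. Then $\Oo_w \cap K$ is a valuation ring of $K$ containing $\Oo_v$. Its maximal ideal $\mathfrak p \cap K$ contains $\Mm_v$, because $f$ kills $\Mm_v$. It also contains no units of $\Oo_v$, because $f$ agrees with $\varphi_v$ on $\Oo_v$ and $\varphi_v$ sends units to nonzero elements.

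It remains to show that $\Oo_w \cap K = \Oo_v$. Take $y \in (\Oo_w \cap K) \setminus \Oo_v$. Then $y^{-1} \in \Mm_v \subseteq \mathfrak p$, so $1 = y \cdot y^{-1} \in \mathfrak p$ because $y \in A$ and $\mathfrak p$ is an ideal of $A$. This is a contradiction. Hence $\Oo_w \cap K = \Oo_v$, and $w = v_{\Oo_w}$ prolongs $v$.
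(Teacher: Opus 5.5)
Your proof is correct and complete. The paper does not prove this statement itself: it lists it among the fundamental facts of Section~\ref{sec:ext_of_val} and refers to \cite[Chap.~3]{Engler05}, so there is no in-paper argument to compare with. What you give is the classical Chevalley extension argument in its place-theoretic form (essentially Atiyah--Macdonald, Thm.~5.21 and Lemma~5.22). The localisation step makes the maximal pair $(A,f)$ local. The minimal-degree argument shows that $\mathfrak pA[x]$ or $\mathfrak pA[x^{-1}]$ is proper. Extending the embedding $A/\mathfrak p\hookrightarrow\Omega$ to the algebraic extension $A[x]/\mathfrak m$ then forces $x\in A$ or $x^{-1}\in A$. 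The restriction step via $y^{-1}\in\Mm_v\subseteq\mathfrak p$ is also right.

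Two cosmetic points. Zariski's lemma is more than you need: if $k[\bar x]$ is a field, then $\bar x$ is algebraic over $k$, because $k[T]$ is not a field. And your remark that $\mathfrak p\cap K$ contains no units of $\Oo_v$ is never used.

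It is worth seeing what the auxiliary field $\Omega$ buys you. The same Zorn argument runs without it, on subrings $A\supseteq\Oo_v$ of $L$ with $\Mm_vA\neq A$. A maximal such $A$ is local, your degree lemma makes it a valuation ring, and $\Mm_vA\neq A$ gives $A\cap K=\Oo_v$ exactly as in your Step~4. That version is shorter but only yields \emph{some} prolongation.

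Yours produces a place $L\to\Omega\cup\{\infty\}$ extending $\varphi_v$, that is, a prolongation $w$ with $Lw/Kv$ algebraic. This is strictly more than the statement asks. For the trivial valuation on $K$ and $L=K(t)$, the $\Omega$-free argument returns the trivial valuation on $L$, whereas yours must return a non-trivial one. The place formulation also matches the paper's emphasis on places.
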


\begin{fact} \label{fact:pure_insep}
Let $K$ be a field of characteristic $p > 0$ and $L/K$ a purely inseparable extension. Then any valuation $v$ on $K$ extends uniquely to $L$ via $v(x^{1/p^n}) = \frac{1}{p^n} v(x)$ for any $x^{1/p^n} \in L$.
\end{fact}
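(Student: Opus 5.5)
Let $w$ be any valuation on $L$ extending $v$; one exists by Chevalley's Theorem~\ref{thm:chevalley}. The plan is to show that $w$ is forced to equal the formula in the statement. This gives existence and uniqueness together, up to the identification of equivalent valuations, once the formula is checked to define a valuation.

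\textbf{Step 1: values are forced.} Take $y \in L$. Since $L/K$ is purely inseparable, there is some $n$ with $x \coloneqq y^{p^n} \in K$. Additivity gives $p^n w(y) = w(x) = v(x)$. Here we identify $vK$ with its image in $wL$ via the equivalence $w|_K \sim v$. Ordered abelian groups are torsion-free, so $w(y)$ is the unique element $\gamma$ of the divisible hull $vK \otimes \IQ$ with $p^n \gamma = v(x)$, namely $\tfrac{1}{p^n}v(x)$. Moreover $wL \subseteq vK \otimes \IQ$, since every element of $wL$ has a multiple lying in $vK$. Hence $w$ is determined on $L$, and any two prolongations $w, w'$ satisfy $\Oo_w = \{y : w(y) \ge 0\} = \{y : v(y^{p^n}) \ge 0\} = \Oo_{w'}$. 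In valuation-ring language this already proves uniqueness: $\Oo_w = \{y \in L : y^{p^n} \in \Oo_v \text{ for some } n\}$.

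\textbf{Step 2: existence via the explicit formula.} Define $w(y) \coloneqq \tfrac{1}{p^n}v(y^{p^n})$ for any $n$ with $y^{p^n} \in K$. This is independent of $n$, because $v(y^{p^{n+1}}) = p\,v(y^{p^n})$. Axiom (i) and multiplicativity are immediate. For the ultrametric inequality, take $y, z \in L$ and a common $n$. Since Frobenius is additive in characteristic $p$, we have $(y+z)^{p^n} = y^{p^n} + z^{p^n}$. Applying the inequality for $v$ and dividing by $p^n$, which preserves order in the divisible hull, gives the result. Surjectivity holds onto the image group $w(L^\times)$, which is an ordered abelian group. Finally, $w|_K = v$ by construction.

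I expect no step to be a real obstacle; the fact is elementary. The only point needing care is the bookkeeping of value groups: $vK$ must embed canonically in its divisible hull, and this uses torsion-freeness of ordered groups. With that in place, Chevalley's Theorem is not even needed, because Step 2 constructs the extension directly.
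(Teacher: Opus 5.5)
Your argument is correct and complete. The paper gives no proof of its own here: the statement is listed as a Fact with a blanket reference to \cite[Chap.~3]{Engler05}, so there is nothing in the paper to match line by line.

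Within the paper's toolkit, the shortest route differs from yours:
\begin{itemize}
\item \emph{Existence} comes from Chevalley's Theorem~\ref{thm:chevalley}.
\item \emph{Uniqueness} comes from the Conjugacy Theorem~\ref{thm:Conjugacy}. A purely inseparable extension is normal, since each minimal polynomial $(X-y)^{p^n}$ splits in $L$, and it has $\Aut(L/K)=1$. So the transitive action on the non-empty set of prolongations forces that set to be a singleton.
\item The formula $w(x^{1/p^n}) = \frac{1}{p^n}v(x)$ is then read off from additivity, exactly as in your Step~1.
\end{itemize}

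Your approach avoids both theorems. Uniqueness follows because additivity and torsion-freeness force $\Oo_w = \{y \in L : y^{p^n} \in \Oo_v \text{ for some } n\}$. Existence is a direct check in which the only non-trivial axiom, the ultrametric inequality, reduces to that of $v$ via $(y+z)^{p^n} = y^{p^n} + z^{p^n}$.

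What your route buys is self-containedness and an explicit description of the valuation ring and the value group, namely the subgroup of the divisible hull of $vK$ generated by the $\frac{1}{p^n}\gamma$. The conjugacy route is shorter, but it rests on a considerably deeper theorem whose own proof uses Chevalley and integral closures.

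As you observe, your opening appeal to Chevalley is superfluous. The Step~1 argument compares any two prolongations without presupposing that one exists, and Step~2 supplies existence.
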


\begin{fact} \label{fact:finite_orders}
Let $(L,w)/(K,v)$ be an algebraic extension of valued fields. Then any element in $wL/vK$ has finite order, i.e., $wL$ is contained in the divisible hull of $vK$, and $Lw/Kv$ is an algebraic extension of residue fields.
\end{fact}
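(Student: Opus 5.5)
The statement to prove is Fact~\ref{fact:finite_orders}. Since an algebraic extension is the directed union of its finitely generated (hence finite) subextensions, and both claims concern single elements, I will first reduce to the case where $L/K$ is finite of degree $n$. Concretely, given $x \in L^\times$ or $\bar y \in Lw$, I replace $L$ by $K(x)$, respectively $K(y)$ for a lift $y \in \Oo_w$ of $\bar y$, and restrict $w$ to that subfield. The plan is then to prove the \emph{fundamental inequality} in its weak form,
\[
    (wL : vK) \cdot [Lw : Kv] \le n,
\]
where both factors may a priori be infinite. Once this holds, both claims follow at once. First, $wL/vK$ is finite, so every element has finite order (at most $n$), and $wL$ embeds into the divisible hull of $vK$. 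Second, $Lw/Kv$ is finite, so in the general case every residue $\bar y$ is algebraic over $Kv$.

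The inequality is shown by a linear independence argument. I choose elements $x_1,\dots,x_e \in L^\times$ whose values lie in pairwise distinct cosets of $vK$, and units $y_1,\dots,y_f \in \Oo_w^\times$ whose residues are linearly independent over $Kv$. The claim is that the $ef$ products $x_i y_j$ are $K$-linearly independent, which forces $ef \le n$.

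To prove the claim, take a nontrivial relation $\sum_{i,j} a_{ij} x_i y_j = 0$ with $a_{ij} \in K$ and group it as $\sum_i x_i s_i$, where $s_i = \sum_j a_{ij} y_j$. For each $i$ with some $a_{ij}\ne 0$, pick $j_0$ minimising $v(a_{ij})$ and divide by $a_{ij_0}$. The residues of the normalised coefficients are not all zero, so independence of the $\bar y_j$ gives $\overline{s_i/a_{ij_0}} \ne 0$. Hence $w(s_i) = \min_j v(a_{ij}) \in vK$. It follows that the nonzero terms $x_i s_i$ have values in the pairwise distinct cosets $w(x_i)+vK$, so these values are pairwise distinct. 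By the strict form of the ultrametric inequality, $w\bigl(\sum_i x_i s_i\bigr) = \min_i w(x_i s_i) < \infty$, which contradicts the relation.

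The main care point is this last step. It uses the standard consequence of axiom~(iii) that $v(x+y)=\min\{v(x),v(y)\}$ when $v(x)\ne v(y)$, extended to finite sums with pairwise distinct values. It also uses the identification of $w|_K$ with $v$, so that values of elements of $K$ genuinely lie in $vK \subseteq wL$. Everything else is bookkeeping.
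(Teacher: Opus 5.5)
Your proof is correct and follows the standard route. The paper does not prove this fact itself but cites Engler--Prestel, Chap.~3, where it is deduced as you do: reduce to finite subextensions, then use the $K$-linear independence of the products $x_iy_j$, which the paper records immediately afterwards as the fundamental inequality (Fact~\ref{fact:fund_ineq}), and your reduction step (lifting $\bar y$ to $y\in\Oo_w$ and passing to $K(y)$) as well as your use of the strict ultrametric inequality are sound.
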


\begin{fact} \label{fact:compatibility}
Let $(K,v)$ be a valued field and $L/K$ an algebraic extension. If $w_1$ and $w_2$ denote valuations on $L$ extending $v$, then $\Oo_{w_1} \subseteq \Oo_{w_2}$ implies $\Oo_{w_1} = \Oo_{w_2}$.
\end{fact}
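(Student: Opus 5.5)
We prove the contrapositive of the non-trivial direction. Suppose $\Oo_{w_1} \subsetneq \Oo_{w_2}$; the aim is to derive a contradiction from the fact that both valuation rings contract to the same ring $\Oo_v$ on $K$. Since $\Oo_{w_1} \subseteq \Oo_{w_2}$, the maximal ideals satisfy the reverse inclusion $\Mm_{w_2} \subseteq \Mm_{w_1}$. This is the standard fact for nested valuation rings: if $x \in \Mm_{w_2}$ and $x \notin \Mm_{w_1}$, then $x^{-1} \in \Oo_{w_1} \subseteq \Oo_{w_2}$, which contradicts $x \in \Mm_{w_2}$. A strict inclusion of rings forces a strict inclusion of maximal ideals. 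Indeed, choose $y \in \Oo_{w_2} \setminus \Oo_{w_1}$ with $y \neq 0$. Then $y^{-1} \in \Mm_{w_1}$, but $y^{-1} \notin \Mm_{w_2}$, since otherwise $y \notin \Oo_{w_2}$.

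The key step is to transport such an element down to $K$. Since $L/K$ is algebraic, $z \coloneqq y^{-1}$ satisfies a minimal equation
\[
  z^n + a_{n-1} z^{n-1} + \dots + a_1 z + a_0 = 0, \quad a_i \in K,\ a_0 \neq 0.
\]
Dividing by $z$ gives $a_0 z^{-1} = -(z^{n-1} + a_{n-1} z^{n-2} + \dots + a_1)$. I will instead argue more robustly by considering the ideal $\Mm_{w_1} \cap \Oo_{w_2}$, so as to avoid coefficient juggling.

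The plan is to pass to residue fields. The ring $\Oo_{w_1}/\Mm_{w_2}$ is a valuation ring of the residue field $Lw_2$, and it is nontrivial precisely because the inclusion is strict. Its restriction to the subfield $Kv$ is the image of $\Oo_{w_1} \cap K = \Oo_v$ modulo $\Mm_{w_2} \cap K = \Mm_v$, which is all of $Kv$. So it is the trivial valuation on $Kv$.

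By Fact~\ref{fact:finite_orders}, the extension $Lw_2/Kv$ is algebraic. Thus it suffices to show the following: a valuation ring on a field $E$ that is algebraic over a subfield $F$, and is trivial on $F$, must itself be trivial. To see this, take $x \in E^\times$ and an equation $x^n + c_{n-1}x^{n-1} + \dots + c_0 = 0$ with $c_i \in F$. If $x$ were outside the valuation ring, then $x^{-1}$ would lie in its maximal ideal. Dividing by $x^n$ gives $1 = -(c_{n-1}x^{-1} + \dots + c_0 x^{-n})$. Every term on the right lies in the maximal ideal, because the $c_i$ are units or zero, and that is a contradiction. Hence $\Oo_{w_1}/\Mm_{w_2} = Lw_2$, so $\Oo_{w_1} = \Oo_{w_2}$.

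The main obstacle is verifying that $\Oo_{w_1}/\Mm_{w_2}$ is indeed a valuation ring of $Lw_2$, and that its contraction to $Kv$ is what I claimed. The first point holds because it contains $\Mm_{w_2}$ and is a valuation ring of $L$. For the second, note that $\Mm_v = \Mm_{w_2}\cap K$, so the map $\Oo_v/\Mm_v \to Lw_2$ is injective.
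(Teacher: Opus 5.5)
Your proof is correct. The paper gives no argument of its own for this fact (it cites Engler--Prestel, Chap.~3), and yours is the standard one. You observe that $\Oo_{w_1}/\Mm_{w_2}$ is a valuation ring of $Lw_2$ containing the image of $Kv$, because $\Oo_v\subseteq\Oo_{w_1}$. Then algebraicity of $Lw_2/Kv$ (Fact~\ref{fact:finite_orders}) together with your integrality argument forces $\Oo_{w_1}/\Mm_{w_2}=Lw_2$, hence $\Oo_{w_1}=\Oo_{w_2}$.

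The paragraph with the minimal polynomial of $y^{-1}$ and the ideal $\Mm_{w_1}\cap\Oo_{w_2}$ (which is just $\Mm_{w_1}$) is never used and should be cut.

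Alternatively, the value-group half of Fact~\ref{fact:finite_orders} gives a shorter proof. The convex subgroup $w_1(\Oo_{w_2}^\times)$ meets $w_1(K^\times)\cong vK$ trivially, since any $x\in K$ with $w_1(x)$ in it lies in $\Oo_{w_2}^\times\cap K=\Oo_v^\times$. But this subgroup is torsion over $w_1(K^\times)$, so it is zero.
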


In particular, it follows from the above that if $w_1$ and $w_2$ are not equivalent, then $\Oo_{w_1} \not\subseteq \Oo_{w_2}$ and $\Oo_{w_2} \not\subseteq \Oo_{w_1}$, i.e., the valuation rings of $w_1$ and $w_2$ are \emph{incomparable}.
Incomparable valuation rings satisfy the following form of the Chinese remainder theorem:
\begin{fact}[Weak approximation theorem] \label{fact:weak_approx}
Let $\Oo_{v_1}, \ldots, \Oo_{v_n}$ be pairwise incomparable valuation rings on $K$. Then, for any tuple $(x_i)_{i = 1}^n \in \prod_{i = 1}^n \Oo_{v_i}$, there exists $x \in K$ such that $x - x_i \in \Mm_{v_i}$ for $i = 1, \ldots, n$.
\end{fact}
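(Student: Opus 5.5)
The plan is to reduce the statement to the classical Chinese remainder theorem, applied in the intersection ring $R \coloneqq \Oo_{v_1} \cap \cdots \cap \Oo_{v_n}$. The key input is the standard structure theorem for finite intersections of valuation rings: $R$ is a semilocal Prüfer domain, its maximal ideals are exactly $\mathfrak m_i \coloneqq \Mm_{v_i} \cap R$, and the localisations satisfy $R_{\mathfrak m_i} = \Oo_{v_i}$. I would obtain the two parts of this as follows.

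\textbf{Step 1 (approximating elements by elements of $R$).} For each $x_i \in \Oo_{v_i}$ I want some $r_i \in R$ with $r_i - x_i \in \Mm_{v_i}$. First I show that for every $i$ there is $a_i \in R$ with $a_i \in \Mm_{v_j}$ for all $j \neq i$ and $a_i - 1 \in \Mm_{v_i}$.

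By incomparability, for each $j \neq i$ there is $b_j \in \Oo_{v_j} \setminus \Oo_{v_i}$. Then $c_j \coloneqq b_j^{-1}$ satisfies $v_i(c_j) > 0$. I combine these using the usual trick: form $y = \prod_{j \neq i} c_j$, or a suitable sum of such products, and then take $a_i = \frac{1}{1+y}$ or $\frac{y}{1+y}$.

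The key fact used here is that for any $t \in K$, both $\frac{1}{1+t}$ and $\frac{t}{1+t}$ lie in every valuation ring $\Oo_v$ with $-1 \not\equiv t$. More precisely, if $v(t) > 0$ then $\frac{1}{1+t} \equiv 1$ and $\frac{t}{1+t} \in \Mm_v$, while if $v(t) < 0$ then $\frac{1}{1+t} \in \Mm_v$.

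The main obstacle is the case $v_j(t) = 0$, where $1+t$ may fail to be a unit. The standard fix is to first find $t$ with $v_i(t) > 0$ and $v_j(t) < 0$ for all $j \neq i$. I would do this by induction on $n$, replacing $t$ by $t + t^{m}$ or by powers of $t$ to push all the bad indices to negative value. This is the content of the Artin–Whaples-style lemma, and I expect it to be the fiddliest part. Once such a $t$ exists, $a_i \coloneqq \frac{1}{1+t}$ works.

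\textbf{Step 2 (assembling $x$).} Given the $a_i$, I would like to set $x = \sum_i a_i x_i$. The difficulty is that $a_i x_i$ need not lie in $\Oo_{v_j}$, since $x_i$ may have negative $v_j$-value.

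To handle this, I replace $a_i$ by $a_i^{N}$. Because $v_j(a_i) > 0$ for $j \neq i$, a large power makes $v_j(a_i^N x_i) > 0$ for every $j \neq i$. The trouble is that when $v_j$ has higher rank, no single $N$ need suffice. So instead I choose $t$ in Step 1 with $v_j(t)$ more negative than $v_j(x_i)$ for all $j \neq i$, which is achievable by multiplying $t$ by suitable elements.

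Concretely, I approximate each $x_i$ separately. I find $z_i$ with $z_i - x_i \in \Mm_{v_i}$ and $z_i \in \Mm_{v_j}$ for $j \neq i$ by setting $z_i = x_i/(1+s)$, where $s$ satisfies $v_i(s) > 0$ and $v_j(s) < v_j(x_i)$. Such an $s$ is obtained as the element $t$ from the induction, multiplied by a suitable power of itself, and incomparability is what guarantees that these exist. Then $x = \sum_i z_i$ satisfies $x - x_i \in \Mm_{v_i}$ for all $i$, by the ultrametric triangle inequality.
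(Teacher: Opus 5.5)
There is a genuine gap, and it sits exactly in the two constructions you defer. The mechanisms you propose for them need archimedean value groups, which this Fact does not assume.

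\textbf{Where it fails.} In Step 1, Artin--Whaples-type combinations such as $a^m b$ or $\frac{a^m}{1+a^m}b$ only work because $m\,v_j(a)$ eventually outweighs $v_j(b)$. If $v_j(a)$ and $v_j(b)$ lie in different archimedean classes, no $m$ does this. In Step 2 you correctly notice this obstruction for $a_i^N$, but then fall back on ``$t$ multiplied by a suitable power of itself'', which has the same defect. Here is a concrete instance.
\begin{itemize}
\item On $K=\IQ(X,Y)$, let $v_1$ be the $(X-1)$-adic valuation that is trivial on $\IQ(Y)$.
\item Let $v_2$ be the rank-$2$ valuation with $v_2(Y)=(1,0)$ and $v_2(X)=(0,1)$ in $\IZ\times\IZ$, ordered lexicographically.
\item These valuation rings are incomparable. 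The element $t=(X-1)/X$ has $v_1(t)=1>0$ and $v_2(t)=(0,-1)<0$, and $x_1=Y^{-1}\in\Oo_{v_1}$.
\item Yet $v_2(t^m)=(0,-m)>(-1,0)=v_2(x_1)$ for every $m$, so $x_1/(1+t^m)\notin\Oo_{v_2}$.
\end{itemize}
Your alternative, ``multiply $t$ by suitable elements'', is left unspecified. The natural choice $t(1+x_i)$ fails whenever $\overline{x_i}=-1$ in some $Kv_j$.

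The paper states this as a Fact without proof and defers to Engler--Prestel. The standard textbook route is your first paragraph: the structure of $R=\bigcap_i\Oo_{v_i}$ together with the Chinese remainder theorem. Quoting that structure theorem would finish the proof at once, but it then carries all the content, and your Steps do not establish it.

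\textbf{The missing idea.} You only ever need signs and non-cancellation, never magnitudes.
\begin{itemize}
\item \emph{Pairwise separation.} Use both directions of incomparability: pick $d\in\Oo_{v_i}\setminus\Oo_{v_j}$ and $b\in\Oo_{v_j}\setminus\Oo_{v_i}$. Then $v_i(d/b)>0>v_j(d/b)$.
\item \emph{Induction.} Suppose $v_1(a)>0$, $v_j(a)<0$ for $2\le j\le n-1$, and $v_n(a)\ge 0$, while $v_1(b)>0>v_n(b)$. Choose $k\in\{1,\ldots,n-1\}$ with $k\,v_j(a)\ne v_j(b)$ for all $2\le j\le n-1$. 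Such $k$ exists because value groups are torsion-free and $v_j(a)\neq 0$, so each $j$ excludes at most one $k$. Then $t=a^k+b$ is positive at $v_1$ and negative at every $v_j$ with $j\ge 2$.
\item \emph{Dominating $x_i$.} Given such a $t$ for index $i$, set $s=t+t^kx_i$, with $k\in\{1,\ldots,n\}$ chosen so that $v_j(t)\ne k\,v_j(t)+v_j(x_i)$ for all $j\ne i$. Then $v_i(s)>0$, and $v_j(s)=\min\{v_j(t),\,k\,v_j(t)+v_j(x_i)\}<\min\{0,v_j(x_i)\}$ for $j\ne i$.
\end{itemize}
With this $s$, your $z_i=x_i/(1+s)$ and $x=\sum_i z_i$ work verbatim. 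The elements $a_i$ from Step 1 are then not needed at all.
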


The following describes the most basic connection to Galois theory:

\begin{thm}[Conjugacy Theorem] 
\label{thm:Conjugacy}
    Let $(K,v)$ be a valued field and $L/K$ a normal algebraic extension. Then $\Aut(L/K)$ acts transitively on the set of extensions $w$ of $v$, i.e., for any two extensions $w$ and $w'$ on $L$, there exists an automorphism $\sigma \in \Aut(L/K)$ such that $\Oo_{w'} = \sigma\Oo_w$.
\end{thm}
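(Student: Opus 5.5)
Assume two extensions $w, w'$ of $v$ to $L$ such that no $\sigma \in \Aut(L/K)$ satisfies $\sigma\Oo_w = \Oo_{w'}$; we derive a contradiction. First reduce to $L/K$ finite. For each finite normal subextension $M/K$ of $L/K$, let $S_M$ be the set of $\sigma \in \Aut(L/K)$ with $\sigma(\Oo_w \cap M) = \Oo_{w'} \cap M$. Each $S_M$ is closed in the Krull topology, being a union of cosets of the open subgroup $\Aut(L/M)$. The $S_M$ are directed downward under inclusion of $M$. Since $\Aut(L/K)$ is compact, it suffices to show that each $S_M$ is nonempty. Their intersection then yields $\sigma$ with $\sigma\Oo_w \cap M = \Oo_{w'} \cap M$ for all $M$, hence $\sigma\Oo_w = \Oo_{w'}$, because $L$ is the union of the $M$. (Restriction $\Aut(L/K) \to \Aut(M/K)$ is surjective since $L/K$ is normal.) Moreover, passing to the maximal separable subextension costs nothing: a purely inseparable extension has unique prolongations by Fact~\ref{fact:pure_insep}, and automorphisms of the separable part extend uniquely. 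So we may assume $L/K$ is finite Galois with group $G$.

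In the finite case, suppose $\Oo_{w'} \ne \sigma\Oo_w$ for all $\sigma \in G$. Each $\sigma\Oo_w$ is again an extension of $v$, since $\sigma$ fixes $K$. Hence the distinct rings among $\Oo_{w'}$ and the $\sigma\Oo_w$ are pairwise incomparable by Fact~\ref{fact:compatibility}. By Fact~\ref{fact:weak_approx}, choose $x \in L$ with $x - 1 \in \Mm_{w'}$ and $x \in \Mm_{\sigma w}$ for all $\sigma$; here $\sigma w$ denotes the valuation with ring $\sigma\Oo_w$. Consider the norm
\[
    N(x) = \prod_{\sigma \in G} \sigma(x) \in K^{\times}.
\]
Each $\sigma^{-1}\Oo_w = \tau\Oo_w$ for some $\tau \in G$, so $x \in \Mm_{\sigma^{-1}w}$, i.e.\ $\sigma(x) \in \Mm_w$ for all $\sigma$. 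Thus $N(x) \in \Mm_w \cap K = \Mm_v$.

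On the other hand, $w'(x) = 0$. For each $\sigma \ne \mathrm{id}$, I claim $w'(\sigma x) \ge 0$ would not suffice on its own, so I instead arrange more in the approximation. The cleanest choice is to also demand $x \in \Oo_{\sigma^{-1}w'}^{\times}$ for all $\sigma$, which gives $w'(\sigma x) = 0$ for every $\sigma$ and hence $w'(N(x)) = 0$. This contradicts $N(x) \in \Mm_v \subseteq \Mm_{w'}$. Conflicts arise when some $\sigma^{-1}\Oo_{w'}$ coincides with some $\tau\Oo_w$. This is impossible: otherwise $\Oo_{w'} = \sigma\tau\Oo_w$, contrary to our assumption. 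So we apply weak approximation to the set of distinct rings
\[
    \{\tau\Oo_w\}_{\tau \in G} \cup \{\sigma\Oo_{w'}\}_{\sigma \in G},
\]
which are pairwise incomparable. We prescribe residue $0$ on the first family and residue $1$ on the second.

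The main obstacle is exactly this bookkeeping: ensuring the two orbits are disjoint and that the approximation conditions are consistent. Both are resolved by the assumption together with Fact~\ref{fact:compatibility}. The compactness step in the infinite case is routine.
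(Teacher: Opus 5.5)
The paper gives no proof of this theorem and refers to \cite[Chap.~3]{Engler05}. Your argument is correct and is the standard one: weak approximation plus the norm in the finite Galois case, uniqueness of prolongations for the purely inseparable part, and compactness of $\Aut(L/K)$ for the infinite case. The only twist is that you approximate on both orbits $\{\tau\Oo_w\}_\tau$ and $\{\sigma\Oo_{w'}\}_\sigma$, which controls every $w'(\sigma x)$ directly; the usual version approximates at a single ring and handles the remaining conjugates by integrality over $\Oo_v$. When writing it up, drop the abandoned first choice of $x$ and state the final approximation conditions from the start.
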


Beyond that, one can show that the conjugate roots of a polynomial $f(X)$ vary continuously in the coefficients of $f(X)$.

\begin{fact}[Continuity of roots] \label{fact:cont_roots}
Let $(K,v)$ be a valued field and $\tilde{v}$ an extension of $v$ to $K^{\sepc}$. Assume $f(X) = a_nX^n + \ldots + a_1X + a_0$ is a polynomial over $K$ with pairwise distinct roots $x_1, \ldots, x_n$. Then for any $\gamma \in vK$, there exists $\delta \in vK$ and a permutation $\sigma \in \mathfrak{S}_n$, such that for any polynomial $g(X) = b_nX^n + \ldots + b_1X + b_0 \in K[X]$ with roots $y_1, \ldots, y_n$ satisfying
\[
    \min_{1 \le i \le n} v(a_i - b_i) \ge \delta,
\]
we have
\[
    \min_{1 \le i \le n} \tilde{v}(x_i - y_{\sigma(i)}) \ge \gamma.
\]
\end{fact}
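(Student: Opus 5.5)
We argue directly with valuations in an algebraic closure, comparing $f$ and $g$ at each other's roots. First fix the setup. The roots of $g$ need not lie in $K^{\sepc}$, so we extend $\tilde v$ to $K^{\alg}$; this is possible uniquely by Fact~\ref{fact:pure_insep}, since $K^{\alg}/K^{\sepc}$ is purely inseparable. We still write $\tilde v$ for the extension. We may enlarge $\gamma$ freely, since that only strengthens the conclusion. Set
\[
    \mu \coloneqq \max_{i \ne j} \tilde v(x_i - x_j),
\]
which is finite because the $x_i$ are pairwise distinct. Assume $\gamma > \mu$ and $\gamma \ge 0$. We will choose $\delta > v(a_n)$, so that $v(b_n) = v(a_n)$ and in particular $b_n \ne 0$. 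Then $g$ has exactly $n$ roots $y_1, \ldots, y_n$ in $K^{\alg}$, counted with multiplicity.

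\textbf{Step 1: uniform lower bound on the roots.} Put $\beta \coloneqq \min\bigl(0, \min_k (v(a_k) - v(a_n))\bigr)$. We claim that once $\delta$ is large, every root $y$ of $g$ and every $x_i$ satisfies $\tilde v(y), \tilde v(x_i) \ge \beta$. For $y$: if $\tilde v(y) < \beta$, then for each $k < n$ the term $b_n y^n$ has strictly smaller value than $b_k y^k$. This uses $v(b_k) \ge \min(v(a_k), \delta)$ and $v(b_n) = v(a_n)$, with $\delta$ larger than all $v(a_k)$ that are finite. By the ultrametric inequality we would get $g(y) \ne 0$, a contradiction. The same Cauchy-type argument applied to $f$ gives the bound for the $x_i$.

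\textbf{Step 2: each $x_i$ is close to some root of $g$.} Evaluate
\[
    g(x_i) = g(x_i) - \tfrac{b_n}{a_n} f(x_i) = \sum_{k<n} \Bigl(b_k - \tfrac{b_n}{a_n} a_k\Bigr) x_i^k .
\]
Write $b_k - \tfrac{b_n}{a_n}a_k = (b_k - a_k) + \tfrac{a_n - b_n}{a_n}a_k$. Its value is at least $\delta - c$ for a constant $c$ depending only on $f$. Combined with Step 1, this gives $\tilde v(g(x_i)) \ge \delta - c + n\beta$. On the other hand $g(x_i) = b_n \prod_j (x_i - y_j)$, so some factor satisfies
\[
    \tilde v(x_i - y_j) \ge \tfrac1n\bigl(\delta - c + n\beta - v(a_n)\bigr).
\]
Choosing $\delta$ large enough that this is $\ge \gamma$, we may define $\sigma(i)$ to be such an index $j$.

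\textbf{Step 3: $\sigma$ is a permutation.} Suppose $\sigma(i) = \sigma(k) = j$ with $i \ne k$. Then
\[
    \tilde v(x_i - x_k) \ge \min\bigl(\tilde v(x_i - y_j), \tilde v(x_k - y_j)\bigr) \ge \gamma > \mu,
\]
contradicting the definition of $\mu$. So $\sigma$ is an injection from $\{1, \ldots, n\}$ to the $n$ root slots of $g$, hence a bijection. This gives $\min_i \tilde v(x_i - y_{\sigma(i)}) \ge \gamma$, as required. Note that the permutation $\sigma$ depends on $g$; this is the natural reading of the statement, and $\delta$ depends only on $f$ and $\gamma$.

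\textbf{Main obstacle.} The delicate step is bookkeeping with multiplicities: $g$ may have repeated roots, and a root $y_j$ might a priori be close to several $x_i$. Assuming $\gamma > \mu$ is exactly what rules this out. The second nontrivial point is the uniform bound of Step 1, which makes the constant $c - n\beta$ in Step 2 independent of $g$.
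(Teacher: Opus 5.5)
The paper records this as a standard Fact and gives no proof, so there is no argument of the authors' to compare with. Your proof is correct and self-contained. A few small points are worth tightening.

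First, you silently use $v(b_0 - a_0) \ge \delta$, whereas the displayed hypothesis in the statement reads $\min_{1 \le i \le n}$. Your reading is the right one: without control of the constant term the statement fails. For instance, take $g = f + b$ with $v(b)$ very negative; then the roots of $g$ are far from those of $f$. Likewise, reading $\sigma$ as depending on $g$ is forced, since the labelling of the $y_j$ is arbitrary.

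Second, choosing $\gamma \in vK$ with $\gamma > \mu$ and $\delta \in vK$ with $\delta > v(a_n)$ requires $v$ to be non-trivial. In that case $\tilde v(K^{\sepc})$ lies in the divisible hull of $vK$ (Fact~\ref{fact:finite_orders}), and $vK$ is cofinal in that hull: if $m\mu \in vK$ with $\mu > 0$, take $\gamma = 2m\mu$. This justification should be stated. The trivially valued case has to be set aside separately. It is harmless: there $\gamma = 0$ and every value is $0$ or $\infty$, so the conclusion holds for any $\sigma$.

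Third, the bound on the roots of $g$ in Step~1 is never used. Step~2 only needs $\tilde v(x_i) \ge \beta$, which comes from $f$ alone. This is precisely what your choice to evaluate $g$ at the $x_i$ buys, rather than $f$ at the $y_j$. You get the lower bound on $\tilde v(g(x_i))$ with constants depending only on $f$. The separation $\gamma > \mu$ then makes $\sigma$ injective, hence bijective, so repeated roots of $g$ cause no trouble. In fact your argument shows a posteriori that the $y_{\sigma(i)}$ are pairwise distinct, so $g$ is separable.
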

\begin{rem} \label{rem:cont_roots}
Note that if $\gamma > \max_{i \ne j} \tilde{v}(x_i - x_j)$, then additionally,
\[
    \tilde{v}(y_{\sigma(i)} - y_{\sigma(j)}) = \tilde{v}(x_i - x_j).
\]
\end{rem}

The following refines Fact~\ref{fact:finite_orders} into a quantitative form:
\begin{fact}[Fundamental inequality, simple form] \label{fact:fund_ineq}
Let $(L,w)/(K,v)$ be a finite extension of valued fields. Assume that
\[
    A = \{a_i\}_{1 \le i \le e} \subseteq L^\times \and B = \{b_j\}_{1 \le j \le f} \subseteq \Oo_w^\times
\]
are given such that $wA$ is a set of representatives for $wL/vK$ and $Bw \subseteq Lw$ is $Kv$-linearly independent. Then
\[
    A \cdot B = \{a_ib_j\}_{\substack{1 \le i \le e \\ 1 \le j \le f}}
\]
is $K$-linearly independent.
In particular,
\begin{equation} \label{eq:fund_ineq}
    [L : K] \ge (wL : vK) [Lw : Kv].
\end{equation}
\end{fact}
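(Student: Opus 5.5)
The plan is to prove linear independence first and then deduce the inequality \eqref{eq:fund_ineq} from it. The independence proof is the standard valuative one: in a suitable linear combination, a single term strictly dominates.

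Suppose we have a nontrivial relation $\sum_{i,j} c_{ij} a_i b_j = 0$ with $c_{ij} \in K$, not all zero. Group it as $\sum_i a_i s_i$ with $s_i \coloneqq \sum_j c_{ij} b_j$.

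\textbf{Step 1: inner sums.} For each $i$ with $(c_{ij})_j \neq 0$, I claim
\[
    w(s_i) = \min_j v(c_{ij}).
\]
Pick $j_0$ attaining this minimum and divide by $c_{ij_0}$. Every coefficient $c_{ij}/c_{ij_0}$ then lies in $\Oo_v$, and the one at $j_0$ equals $1$. Taking residues, $\overline{s_i/c_{ij_0}} = \sum_j \overline{(c_{ij}/c_{ij_0})}\, b_j w$ is a nontrivial $Kv$-linear combination of the $b_j w$. It is nonzero by the hypothesis that $Bw$ is $Kv$-linearly independent. Hence $w(s_i/c_{ij_0}) = 0$, which gives the claim. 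In particular $w(s_i) \in vK$.

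\textbf{Step 2: outer sum.} For such $i$, $w(a_i s_i) = w(a_i) + w(s_i)$ lies in the coset $w(a_i) + vK$. Since $wA$ is a set of representatives for $wL/vK$, the $a_i$ lie in pairwise distinct cosets. So the nonzero terms $a_i s_i$ have pairwise distinct values. By the strict ultrametric inequality (equality case when the minimum is attained once), $w\bigl(\sum_i a_i s_i\bigr)$ equals the minimum of these values. This is finite, contradicting $\sum_i a_i s_i = 0$.

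\textbf{Step 3: the inequality.} If $e$ and $f$ are finite, the $ef$ products $a_i b_j$ are pairwise distinct: a coincidence $a_ib_j = a_{i'}b_{j'}$ would contradict the independence just shown. Hence $ef \le [L:K]$. For arbitrary index and residue degree, apply this to finite subsets $A$, $B$: finitely many coset representatives and finitely many independent residues. This shows $(wL:vK)$ and $[Lw:Kv]$ are finite and bounded as in \eqref{eq:fund_ineq}.

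There is no real obstacle here. The one point needing care is Step 1, specifically the normalisation by a coefficient of minimal value so that the residue map applies, together with the observation that $w(s_i) \in vK$, which is what makes the coset argument in Step 2 work.
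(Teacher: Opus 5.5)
Your proof is correct. Normalising each inner sum by a coefficient of minimal value, so that the $Kv$-independence of $Bw$ forces $w(s_i) \in vK$, and then using that the nonzero terms $a_is_i$ have values in pairwise distinct cosets of $vK$, is exactly the standard argument; the paper gives no proof of this Fact and defers to Engler--Prestel \cite[Chap.~3]{Engler05}, where the same computation appears. Your Step~3 is also sound: Steps~1--2 only use that the cosets are distinct, so they apply to finite partial systems of representatives, which yields finiteness of $(wL:vK)$ and $[Lw:Kv]$ together with the inequality.
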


We have the following nomenclature for extensions of valued fields.
\begin{defi} \label{def:ram_terminology}
Let $(L,w)/(K,v)$ be a finite extension of valued fields. The basic ramification invariants of the extension are:
\begin{enumerate}[(a)] 
    \item the \emph{ramification index} $e(w/v) \coloneqq (wL : vK) < \infty$, and
    \item the \emph{inertia degree} $f(w/v) \coloneqq [Lw : Kv] < \infty$.
\end{enumerate}
We say that the extension $(L,w)/(K,v)$ is
\begin{enumerate}[(a)]
    \setcounter{enumi}{2}
    \item \emph{immediate} if $e(w/v) = f(w/v) = 1$;
    \item \emph{inert} if $f(w/v) = [L : K]$;
    \item \emph{unramified} if it is inert and $Lw/Kv$ is separable;
    \item \emph{totally ramified} if $e(w/v) = [L : K]$;
    \item \emph{tame} if $e(w/v)f(w/v) = [L : K]$, $p \nmid e(w/v)$, and $Lw/Kv$ is separable ($\charK Kv = p > 0$);
    \item \emph{totally tamely ramified} if $e(w/v) = [L : K]$ and $p \nmid e(w/v)$ whenever $\charK Kv = p > 0$;
    \item \emph{purely wild} if $e(w/v)$ is a power of $p$ and $Lw/Kv$ is purely inseparable ($\charK Kv = p > 0$).
\end{enumerate}
For any of the above properties $P$, we say that an algebraic extension $(L,w)/(K,v)$ \emph{is $P$} if all finite subextensions are $P$. In residue characteristic 0, by definition, all algebraic extensions are tame, and there are no non-trivial purely wild extensions.
\end{defi}

\subsection{Structure theory} \label{sec:ram_theory}

Ramification theory (sometimes also called Dedekind-Hilbert theory) relates the structure of Galois groups of extensions of valued fields to the structure of their basic invariants, the value group and residue field. For proofs, we point the reader to \cite[Chap. 15--17]{Efrat06}. For valuations of higher rank, the theory is due to Krull and Deuring.

\subsubsection{General ramification theory} \label{sec:Hilbert theory}

We associate to any Galois extension $(L,w)/(K,v)$ of valued fields a filtration
\[
    \Gal(L/K) \supseteq D_w(L/K) \supseteq I_w(L/K) \supseteq R_w(L/K) \qquad \text{(shortened: $G \supseteq D \supseteq I \supseteq R$)}
\]
of the Galois group $G = \Gal(L/K)$.
\begin{defi}
    We define
    \begin{enumerate}[(a)]
        \item the \emph{decomposition group}
        \[
            D \coloneqq \{\sigma \in G: \sigma \Oo_w = \Oo_w\} = \{\sigma \in G: \sigma(x) -x \in \Oo_w \text{ for all $x \in \Oo_w$}\};
        \]
        \item the \emph{inertia group} $I \coloneqq \{ \sigma \in G: \sigma(x)-x \in \Mm_w \text{ for all $x \in \Oo_w$}\}$;
        \item the \emph{ramification group} $R \coloneqq \{ \sigma \in G: \sigma(x)-x \in x\Mm_w \text{ for all $x \in \Oo_w$}\}$.
    \end{enumerate}
    We denote the corresponding fixed fields by $L^D$, $L^I$, and $L^R$, respectively.
\end{defi}

The groups $D$, $I$, and $R$ encode structural information about the extension $(L,w)/(K,v)$.

\begin{fact}[Decomposition group $D$]
\label{fact:decomposition subgroup}
Let $(L,w)/(K,v)$ be a Galois extension of valued fields. Then
\begin{enumerate}[(i)]
    \item $(L^D, w|_{L^D})/(K,v)$ is immediate. \label{item:D_immediate}
    \item $w$ is the \emph{unique} prolongation of $w|_{L^D}$ to $L$.
    \item $v$ has exactly $r = [L^D : K] = (G : D)$ prolongations of $v$ to $L$.
\end{enumerate}    
\end{fact}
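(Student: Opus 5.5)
The plan is to prove the three items in the order (ii), (iii), (i), relying on the Conjugacy Theorem~\ref{thm:Conjugacy} and the Fundamental inequality~\ref{fact:fund_ineq}. Throughout, we first reduce to the case $[L:K] < \infty$. For infinite $L/K$, each finite Galois subextension $M/K$ satisfies $D_{w|_M}(M/K) = D_w(L/K)|_M$, and immediacy and the counting of prolongations are both detected on finite levels.

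\textbf{Items (ii) and (iii).} Apply the Conjugacy Theorem to $L/L^D$, which is Galois with group $D$: every prolongation $w'$ of $w|_{L^D}$ to $L$ has the form $\sigma\Oo_w$ with $\sigma \in D$. By the definition of $D$, this means $\Oo_{w'} = \Oo_w$, which gives (ii). For (iii), apply the Conjugacy Theorem to $L/K$. The map $\sigma \mapsto \sigma\Oo_w$ from $G$ onto the set of prolongations of $v$ is surjective, and its fibres are exactly the cosets $\sigma D$. Hence the number of prolongations is $r = (G:D) = [L^D:K]$.

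\textbf{Item (i).} This is the main obstacle. Write $w_1 = w, w_2, \dots, w_r$ for the prolongations of $v$ to $L$, and let $u_i = w_i|_{L^D}$. I first show that $u_1$ is not equal to $u_i$ for any $i \ne 1$. Suppose $w_i = \sigma w$ and $u_i = u_1$. Then $w$ and $\sigma w$ both extend $u_1$, so by (ii) we have $\sigma w = w$.

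The strategy is then to use weak approximation on $L^D$ to show that $(L^D,u_1)$ has the same value group and residue field as $(K,v)$. Take $x \in L^D$ with $u_1(x) \ge 0$. Since $u_1$ and $\sigma^{-1}u_1$ (for $\sigma \notin D$) are pairwise incomparable by Fact~\ref{fact:compatibility}, the Weak approximation theorem~\ref{fact:weak_approx} provides $y \in L^D$ with $y \equiv x \bmod \Mm_{u_1}$ and $y \in \Mm$ of all the other relevant restrictions. The other restrictions to consider are those of the conjugates $\tau w$, for $\tau$ running over coset representatives of $G/D$.

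Next, consider the norm or trace $\Tr_{L^D/K}(y)$, taken via the embeddings of $L^D$ into $L$. Under $w$, the term $y$ itself contributes residue $\bar x$, while every other conjugate $\tau^{-1}y$ with $\tau \notin D$ has positive $w$-value by the choice of $y$. This yields $\overline{\Tr(y)} = \bar x$, so the residue extension is trivial. The same argument, applied to $\mathrm{N}_{L^D/K}(y)$ with $y$ approximating an element of prescribed value and being a unit at the other places, shows that $u_1(L^D) = vK$.

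The delicate points are two. First, one must check carefully that the embeddings $L^D \to L$ correspond bijectively to the cosets, with the value of $w$ on $\tau^{-1}(y)$ equal to $(\tau w)|_{L^D}(y)$. Second, weak approximation must be done on $L^D$ with the right finite set of pairwise incomparable valuations; when $L^D/K$ is not Galois, one may need to pass to $L$ and apply approximation there instead.
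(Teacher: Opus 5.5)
Your proofs of (ii) and (iii) via the Conjugacy Theorem~\ref{thm:Conjugacy} and orbit--stabiliser are fine, as are the reduction to finite $L/K$ and the trace argument for the residue field. Weak approximation only needs the distinct restrictions of the conjugates of $w$ to $L^D$ to be pairwise incomparable, and Fact~\ref{fact:compatibility} gives this. (The paper itself does not prove this Fact; it refers to Efrat's book.)

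The gap is in the value-group half of (i). There you need $y \in L^D$ with $u_1(y) = u_1(x)$ and $y$ a unit for every other restriction. Fact~\ref{fact:weak_approx} controls only residues, not values. The Approximation Theorem~\ref{fact:approx} would produce such a $y$, but only for \emph{independent} valuations. For Krull valuations of higher rank the restrictions can be dependent, and then the required $y$ does not exist at all. Indeed, if $\Oo_0 \coloneqq \Oo_{u_1}\Oo_{u_j} \ne L^D$ and $x \notin \Oo_0^\times$, then $u_1(y) = u_1(x)$ forces $y \in x\Oo_0^\times$, so $y \notin \Oo_0^\times \supseteq \Oo_{u_j}^\times$. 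Concretely, take $K = \IQ\laurent{t}$ with $v = v_2 \circ v_t$ and $L = K(\sqrt{-7})$. Since $2$ splits in $\IQ(\sqrt{-7})$, we have $D = 1$. The two prolongations both refine the unique extension of $v_t$, and for $x = t$ no such $y$ exists.

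One repair needs only weak approximation. Let $\sigma_1 = \mathrm{id}, \sigma_2, \ldots, \sigma_r$ represent $G/D$. As in your residue step, choose $y \in L^D$ with $y - 1 \in \Mm_{u_1}$ and $w(\sigma_j y) > 0$ for all $j \ge 2$. For each $j \ge 2$, at most one integer $N$ satisfies $w(\sigma_j x) + N\, w(\sigma_j y) = w(x)$. Hence for some $N \in \{0, \ldots, r-1\}$ the element $z \coloneqq xy^N$ has $w(z) = w(x)$ and $w(\sigma_j z) \ne w(z)$ for all $j \ge 2$.

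Suppose exactly $k-1$ of the conjugates $\sigma_j z$ have value $< w(z)$. Consider the elementary symmetric functions $e_{k-1}$ and $e_k$ of $\sigma_1 z, \ldots, \sigma_r z$; up to sign they are coefficients of the characteristic polynomial of $z$ over $K$, so they lie in $K$. Each has a unique summand of minimal value, so $w(x) = w(z) = v(e_k) - v(e_{k-1}) \in vK$. Your trace argument is the special case where all other conjugates have strictly larger value. Replacing the norm step by this argument makes the proposal go through in full generality.
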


\begin{fact}[Inertia group $I$]
    \label{fact:inertia group}
    The homomorphism
    \begin{align*}
        \Phi: D & \longrightarrow \Aut(Lw/Kv)\\
        \sigma & \longmapsto \bigl(\overline{x} \longmapsto \overline{\sigma(x)}\bigr)
    \end{align*}
    is well-defined. It induces a short exact sequence
    \begin{equation}
        \label{eq: 1st Exact Sequence}
        \begin{tikzcd}
          1 \ar[r] & I \ar[r] & D \ar[r, "\Phi"] & \Aut(Lw/Kv) \ar[r] & 1,
        \end{tikzcd}
    \end{equation}
    which we call the \emph{First Exact Sequence}. In particular, $I$ is normal and closed in $D$.
    Moreover,
    \begin{enumerate}[(i)]
        \item $L^I w|_{L^I}/Kv$ is relatively separably closed in $Lw/Kv$.
        \item $L^I/L^D$ is unramified.
        In particular, $w|_{L^I}L^I = vK$.
    \end{enumerate}
\end{fact}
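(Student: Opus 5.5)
The plan is to deduce everything from the Decomposition Group Fact~\ref{fact:decomposition subgroup} together with the field-theoretic description of $D$, $I$ and $\Phi$. The three assertions are best handled in a fixed order. First I check that $\Phi$ is well-defined and a homomorphism. Second I prove that its kernel is $I$ and that it is surjective. Only then do I read off the statements about the fixed field $L^I$.

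For well-definedness, take $\sigma\in D$. Then $\sigma\Oo_w=\Oo_w$, so also $\sigma\Mm_w=\Mm_w$, and $\sigma$ therefore induces a ring automorphism of $\Oo_w/\Mm_w=Lw$. This automorphism fixes $Kv$, because $\sigma$ fixes $K$. Multiplicativity of $\Phi$ is immediate from the construction. The kernel consists of those $\sigma$ with $\sigma(x)-x\in\Mm_w$ for all $x\in\Oo_w$, which is exactly $I$. Consequently $I$ is normal in $D$. It is also closed, being an intersection of the closed conditions indexed by $x\in\Oo_w$.

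Surjectivity is the main step. I first reduce to the case $D=G$. By Fact~\ref{fact:decomposition subgroup}, $(L^D,w)/(K,v)$ is immediate, so $Kv=L^Dw$. Hence I may replace $K$ by $L^D$, in which case $w$ is the unique prolongation of $v$ to $L$. By a compactness/inverse-limit argument over finite Galois subextensions, it suffices to treat $L/K$ finite. Next I show that $Lw/Kv$ is normal and that each $\bar\alpha\in Lw$ is the root of the reduction $\bar f$ of $f=\prod_{\sigma\in G}(X-\sigma\alpha)$, where $\alpha\in\Oo_w$ is a lift; note $f\in\Oo_v[X]$ because $G=D$ preserves $\Oo_w$. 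Thus every $Kv$-conjugate of $\bar\alpha$ has the form $\overline{\sigma\alpha}$.

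Now let $\tau\in\Aut(Lw/Kv)$. I choose $\bar\theta$ generating the maximal separable subextension of $Lw/Kv$; this subextension is finite by the fundamental inequality (Fact~\ref{fact:fund_ineq}). Lift $\bar\theta$ to $\theta\in\Oo_w$. By the previous step $\tau\bar\theta=\overline{\sigma\theta}$ for some $\sigma\in G$, so $\Phi(\sigma)$ and $\tau$ agree on the separable part. They then agree on all of $Lw$, since an automorphism is determined on a purely inseparable extension of its restriction. This is the obstacle, and it rests on two ingredients: the primitive element theorem applied to the separable closure inside $Lw$, and the normality argument just given.

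Finally, the two claims about $L^I$ follow from Galois correspondence. For (i), apply the surjectivity result to the extension $L/L^I$. Its inertia group equals its whole Galois group, so $\Aut(Lw/L^Iw)$ is trivial. Since $Lw/L^Iw$ is normal, it must be purely inseparable, and so $L^Iw$ is relatively separably closed in $Lw$. For (ii), the surjection identifies $D/I\cong\Aut(Lw/Kv)$, and $|D/I|=[L^I:L^D]$. Moreover $L^Iw$ is the separable closure of $Kv$ in $Lw$, so $[L^Iw:L^Dw]=|\Aut(Lw/Kv)|$ equals this degree. Hence $L^I/L^D$ is inert with separable residue extension, i.e. unramified. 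Equality in the fundamental inequality (Fact~\ref{fact:fund_ineq}) then forces $e=1$, giving $w|_{L^I}L^I=vK$ by immediacy of $L^D/K$.
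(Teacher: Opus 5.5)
Your route (reduce to $D=G$ via immediacy of $L^D/K$, show $Lw/Kv$ is normal with every $Kv$-conjugate of $\bar\alpha$ of the form $\overline{\sigma\alpha}$, lift a primitive element of the separable part, and pass to infinite $L/K$ by an inverse limit) is the standard textbook argument; the paper gives no proof of its own and refers to Efrat. Everything up to and including ``$Lw/L^Iw$ is purely inseparable'' is correct. The gap is in your last paragraph. Let $S$ be the relative separable closure of $Kv$ in $Lw$. Pure inseparability of $Lw/L^Iw$ only gives $S\subseteq L^Iw$. You then assert that $L^Iw$ \emph{is} $S$, i.e.\ that $L^Iw/Kv$ is separable, and use this to obtain $[L^Iw:L^Dw]=|\Aut(Lw/Kv)|$. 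Nothing you have proved yields that separability, and $L^Iw=S$ is exactly what (i) asserts (cf.\ the figure). For finite $L/K$ the fix is to count degrees before drawing conclusions:
\[
    [L^Iw:Kv] \le [L^I:L^D] = (D:I) = |\Aut(Lw/Kv)| = [S:Kv] \le [L^Iw:Kv].
\]
Here the first inequality is the fundamental inequality together with $L^Dw=Kv$, the next two equalities come from your surjectivity and the normality of $Lw/Kv$, and the last inequality is $S\subseteq L^Iw$. So all terms are equal, which gives $L^Iw=S$ and $f(L^I/L^D)=[L^I:L^D]$ at the same time. Only after that does $e=1$ follow, as you say.

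Second, this counting has no meaning when $L/K$ is infinite. That is the case the paper actually uses ($L=K^{\sepc}$), where ``unramified'' means every finite subextension is unramified. Your compactness reduction covered only the surjectivity of $\Phi$, not (ii) or the separability half of (i). After reducing to $G=D$, you need that for every finite Galois $L'/K$ inside $L$ the restriction $I\to I_w(L'/K)$ is onto. Then $L'\cap L^I$ is the inertia field of $L'/K$, and every finite $E\subseteq L^I$ lies in such an inertia field, so it inherits the finite-case conclusions. The surjectivity does hold: lift $\sigma'\in I_w(L'/K)$ to $\sigma_1\in G$. Then $\Phi(\sigma_1)$ is trivial on $L'w$, so by surjectivity of $\Phi$ for $L/L'$ there is $\rho\in\Gal(L/L')$ with $\Phi(\rho)=\Phi(\sigma_1)$. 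Now $\rho^{-1}\sigma_1\in I$ and it restricts to $\sigma'$. With these two additions your argument is complete.
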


\begin{fact}[Ramification group $R$]
    \label{fact:ramification group}
    Let $\mu_{Lw}$ denote the set of roots of unity in $Lw$. Then, the homomorphism 
    \begin{align*}
        \Psi : I & \longrightarrow \Hom(wL/vK, \mu_{Lw})\\
        \sigma & \longmapsto \Bigl(wx + vK \longmapsto \overline{\sigma(x)/x}\Bigr)
    \end{align*}
    is well-defined and induces a short exact sequence
    \begin{equation}
        \label{eq: 2nd Exact Sequence}
        \begin{tikzcd}
          1 \ar[r] & R \ar[r] & I \ar[r, "\Psi"] & \Hom(wL/vK, \mu_{Lw}) \ar[r] & 1,
        \end{tikzcd}
    \end{equation}
    which we call the \emph{Second Exact Sequence}. Moreover,
    \begin{enumerate}[(i)]
        \item $R$ is a normal closed subgroup of $I$;
        \item $R$ is the unique normal Sylow $p$-subgroup of $I$, whenever $\charK Kv = p > 0$. It is trivial if $\charK Kv = 0$. \label{item:R_trivial}
        \item $L^R/L^I$ is totally tamely ramified and maximal with this property.
    \end{enumerate}
\end{fact}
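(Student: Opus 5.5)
The plan is to prove the three parts of Fact~\ref{fact:ramification group} together by analysing $\Psi$ directly. We may pass to finite Galois subextensions and take inverse limits, since all objects are compatible with restriction; so assume $L/K$ is finite.

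\textbf{Well-definedness and homomorphism property.} Let $\sigma\in I$ and $x\in L^\times$. First we show $\sigma(x)/x\in\Oo_w^\times$. Since $\sigma\in D$, we have $w(\sigma x)=wx$. We then show that $\overline{\sigma(x)/x}$ depends only on the class $wx+vK$:
\begin{itemize}
\item[$\cdot$] if $x=ua$ with $a\in K^\times$ and $u\in\Oo_w^\times$, then $\sigma(x)/x=\sigma(u)/u$;
\item[$\cdot$] for $u\in\Oo_w^\times$, the definition of $I$ gives $\sigma(u)-u\in\Mm_w$, so $\sigma(u)/u\equiv 1$.
\end{itemize}
Multiplicativity in $x$ is clear. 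The value is a root of unity: $wL/vK$ is finite by Fact~\ref{fact:finite_orders}, so $nwx\in vK$ for some $n$, and hence the $n$-th power is $1$.

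The homomorphism property in $\sigma$ follows from $\sigma\tau(x)/x=\sigma(\tau x/x)\cdot \tau(x)/x$, together with the fact that $\sigma$ acts trivially on residues of units. So $\Psi$ is a homomorphism. Its kernel is exactly the set of $\sigma$ with $\sigma(x)/x\in 1+\Mm_w$ for all $x$, which is $R$ after rewriting $\sigma(x)-x\in x\Mm_w$. Hence $R$ is normal, and it is closed as a kernel of a continuous map in the profinite setting.

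\textbf{Sylow statement.} Next we show that $R$ is a pro-$p$ group, or trivial in residue characteristic $0$. Take $\sigma\in R$ of prime order $\ell\ne p$, set $F=L^{\langle\sigma\rangle}$, and use that $L/F$ is cyclic of degree $\ell$. Since $\sigma\in I(L/F)$, the fundamental inequality~(\ref{eq:fund_ineq}) forces $L/F$ to have $f=1$. Since $\Psi$ kills $\sigma$, we want to force $e=1$ as well.

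I would argue via a trace: by the normal basis theorem pick $y$ with $\Tr_{L/F}(y)\ne0$. If $\sigma(x)\equiv x \bmod x\Mm_w$ for all $x$, then $\Tr(x)=\sum_i\sigma^i x\equiv \ell x\pmod{x\Mm_w}$, so $w(\Tr x)=wx$ for all $x$ because $\ell$ is a unit in the residue field. Then every value of $L$ is attained in $F$, so $wL=vF$, i.e.\ $e=1$, and similarly the residue map is onto. So $L/F$ is immediate of degree $\ell$ with unique extension. This contradicts "defectlessness" in degree prime to $p$, which I would prove via the same trace argument applied to an $F$-basis $1,x,\dots$, showing that $\Tr$ yields $[L:F]=\ell\mid$ a $p$-power.

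This step is the main obstacle. The clean route is to show that a tame-degree, immediate, unique-extension Galois extension is trivial, or alternatively to use that $I/R$ is abelian of order prime to $p$ and that higher ramification groups are $p$-groups via Sylow counting.

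\textbf{Surjectivity and $L^R/L^I$.} Given $\chi\in\Hom(wL/vK,\mu_{Lw})$, we want $\sigma$ with $\Psi(\sigma)=\chi$. Over $L^I$ the value group is $vK$ and the residue field is separably closed in $Lw$ (Fact~\ref{fact:inertia group}). Since $R$ is Sylow-$p$ and $\ker\Psi=R$, the group $I/R$ is abelian of order prime to $p$, and $L^R/L^I$ is tame.

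For $[L^R:L^I]$, the injection $I/R\hookrightarrow \Hom(wL^R/vK,\mu)$ combined with the fundamental inequality for the totally ramified extension $L^R/L^I$ gives $|I/R|=[L^R:L^I]\ge (wL^R:vK)=|\Hom(wL^R/vK,\mu)|$. The last equality uses that $\mu$ contains enough prime-to-$p$ roots of unity, since $L^Iw$ is separably closed in $Lw$ and $X^n-1$ is separable for $p\nmid n$. So $\Psi$ is onto, and the extension is totally tamely ramified.

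Maximality follows because any totally tamely ramified subextension $M\supseteq L^I$ has $[M:L^I]$ prime to $p$, so it is fixed by the Sylow-$p$ subgroup $R$, giving $M\subseteq L^R$.
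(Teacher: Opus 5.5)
The paper only cites Efrat for this Fact, so I am judging your argument on its own terms. Well-definedness, the homomorphism property, $\ker\Psi=R$ and the maximality step are fine, but there are two genuine gaps.

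\textbf{First gap: the core of (ii).} You leave open why $R$ has no element $\sigma$ of prime order $\ell\neq\charK Lw$. The routes you suggest would be circular here. Defectlessness in prime-to-$p$ degree is the Lemma of Ostrowski, which the paper deduces from this very Fact. The appeal to higher ramification groups presupposes the conclusion. In fact your own trace computation already closes the gap. Every $\sigma^i$ lies in $R$, so $\Tr_{L/F}(x)=x(\ell+m)$ with $m\in\Mm_w$, and $\ell+m\in\Oo_w^\times$. Hence $\Tr_{L/F}(x)\neq 0$ for all $x\neq 0$. But $\Tr_{L/F}$ is an $F$-linear form on the $\ell$-dimensional $F$-space $L$, so it has nonzero kernel; for instance $x-\ell^{-1}\Tr_{L/F}(x)$ with $x\in L\setminus F$. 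Together with Cauchy's theorem this shows $R$ is a $p$-group, trivial if $\charK Kv=0$. No discussion of $e$, $f$ or defect is needed. For ``Sylow'' you must also show $p\nmid|I/R|$. This follows from $I/R\hookrightarrow\Hom(wL/vK,\mu_{Lw})$ and the absence of $p$-torsion in $\mu_{Lw}$. It does not follow from ``$R$ is Sylow'', which is what is being proved.

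\textbf{Second gap: surjectivity.} Your equality $(wL^R:vK)=|\Hom(wL^R/vK,\mu)|$ amounts to $\mu_n\subseteq Lw$ for $n$ the exponent of $wL^R/vK$. Relative separable closedness of $L^Iw$ in $Lw$ gives no such thing; normality of $L/K$ has to enter. For example, the non-normal extension $\IQ_5(\sqrt[3]{5})/\IQ_5$ has $e=3$, yet $\zeta_3\notin\IF_5$. You also never relate $\Hom(wL^R/vK,\mu)$ to the actual target $\Hom(wL/vK,\mu_{Lw})$.

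Both issues disappear if you run the count the other way. Elements of $I\subseteq D$ preserve $w$, so taking norms gives $|R|\cdot wL\subseteq wL^R$ and $|I/R|\cdot wL^R\subseteq wL^I=vK$. Hence $wL^R/vK$ is the prime-to-$p$ part $A'$ of $A\coloneqq wL/vK$ (all of $A$ if $\charK Kv=0$), and
\[
    |A'|=(wL^R:vK)\le[L^R:L^I]=|I/R|\le|\Hom(A,\mu_{Lw})|\le|A'| .
\]
The three inequalities come from the fundamental inequality, injectivity of the map induced by $\Psi$, and the fact that finite subgroups of $\mu_{Lw}$ are cyclic without $p$-torsion. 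So all of them are equalities. Consequently $\Psi$ is onto, $e(L^R/L^I)=[L^R:L^I]$ is prime to $p$, and $Lw$ contains the required roots of unity.

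That last by-product is also what your inverse-limit reduction silently needs. It guarantees that every $\chi\in\Hom(wL/vK,\mu_{Lw})$ restricts into $\Hom(wL_i/vK,\mu_{L_iw})$ on finite Galois subextensions $L_i$.
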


These findings are summarised in the diagram on the next page.
\begin{figure}[ht]
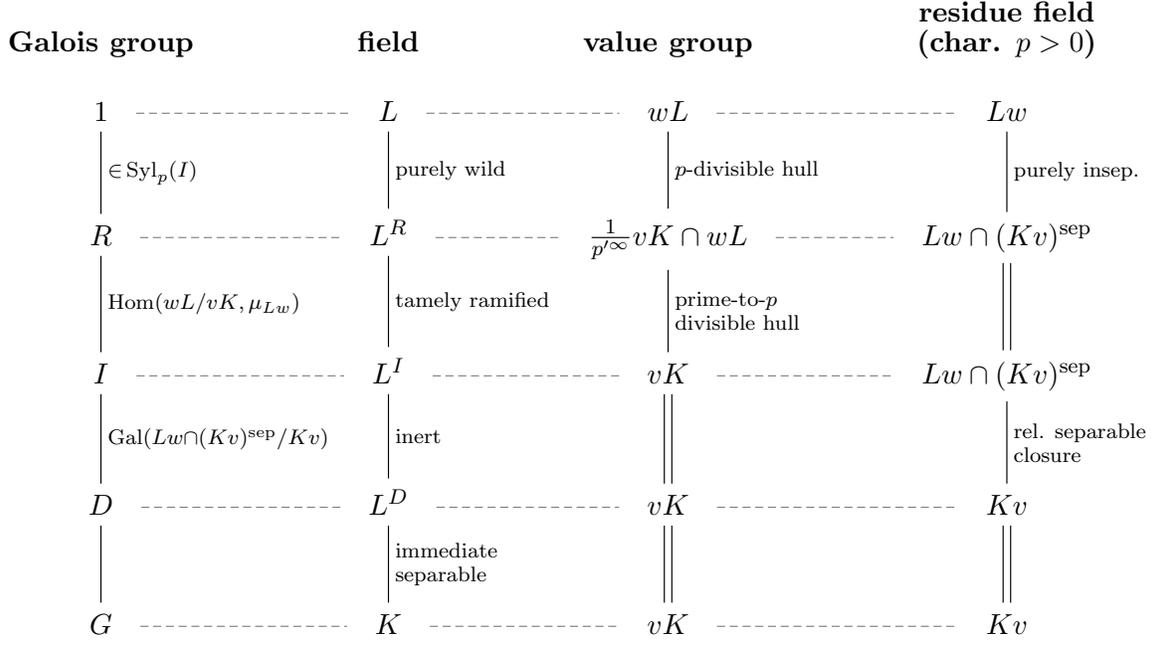

\centering
\newcommand{\shorten}{0.2cm}
\newcommand{\equshift}{0.05cm}
\newcommand{\parboxlength}{2cm}
\tikzcdset{semi-dash/.style={dashed, no head, draw opacity=0.5}}
\begin{diagram}[column sep=huge, row sep= large]
    \text{\textbf{Galois group}} & \text{\textbf{field}} & \text{\textbf{value group}} &
    \text{\textbf{\shortstack{residue field\\ (char. $p > 0$)}}} \\ \\[-1.8cm]
        1 \ar[d, dash, "\in\,\Syl_p(I)"] \ar[r, semi-dash, shorten = \shorten]
            & L \ar[d, dash, "\text{purely wild}"] \ar[r, semi-dash, shorten = \shorten]
                & wL \ar[d, dash, "\text{$p$-divisible hull}"] \ar[r, semi-dash, shorten = \shorten]
                    & Lw  \ar[d, dash, "\text{purely insep.}"] \\
     R \ar[d, dash, "\Hom(wL/vK{,\,}\mu_{Lw})"]  \ar[r, semi-dash, shorten = \shorten]
            & L^R \ar[d, dash, "\text{\parbox{\parboxlength}{\setlength{\baselineskip}{0.7\baselineskip} totally tamely\\ ramified}}"] \ar[r, semi-dash, shorten = \shorten]
                &  \frac{1}{p'^\infty}vK \cap wL \ar[d, dash, "\text{\parbox{\parboxlength}{\setlength{\baselineskip}{0.7\baselineskip} prime-to-$p$\\ divisible hull}}"] \ar[r, semi-dash, shorten = \shorten]
                    & Lw \cap (Kv)^{\sepc} \ar[d, dash, xshift = \equshift] \ar[d, dash, xshift = -\equshift] \\
     I \ar[d, dash, "\Gal(Lw\cap (Kv)^{\sepc}/Kv)"] \ar[r, semi-dash, shorten = \shorten]
        & L^I  \ar[d, dash, "\text{unramified}"] \ar[r, semi-dash, shorten = \shorten]
            & vK \ar[d, dash, xshift = \equshift] \ar[d, dash, xshift = -\equshift] \ar[r, semi-dash, shorten = \shorten]
                & Lw \cap (Kv)^{\sepc} \ar[d, dash, "\text{\parbox{\parboxlength}{\setlength{\baselineskip}{0.7\baselineskip} rel. separable\\ closure}}"] \\
     D \ar[d, dash] \ar[r, semi-dash, shorten = \shorten]
        & L^D  \ar[d, dash, "\text{\parbox{\parboxlength}{\setlength{\baselineskip}{0.7\baselineskip} immediate\\ separable}}"] \ar[r, semi-dash, shorten = \shorten]
            & vK \ar[d, dash, xshift = \equshift] \ar[d, dash, xshift = -\equshift] \ar[r, semi-dash, shorten = \shorten]
                & Kv \ar[d, dash, xshift = \equshift] \ar[d, dash, xshift = -\equshift] \\
     G \ar[r, semi-dash, shorten = \shorten]
        & K \ar[r, semi-dash, shorten = \shorten]
            & vK \ar[r, semi-dash, shorten = \shorten]
                & Kv
\end{diagram}
\caption*{\textsc{Figure.} Diagram summarising the properties of the decomposition subgroup $D$, the inertia group $I$, and the ramification group $R$, as well as their respective fixed fields and corresponding value groups and residue fields (Facts~\ref{fact:decomposition subgroup}, \ref{fact:inertia group}, \ref{fact:ramification group}). For $\charK Kv = 0$, the first two rows collapse. See also \cite[p.~171]{Endler72}.}
\end{figure}

Not all extensions $L/K$ of valued fields are tame, or, more generally, satisfy $[L : K] = ef$.
\begin{example}
Consider $K = \IF_p(t)^{1/p^{\infty}} = \IF_p(t, \ldots, t^{1/p^n}, \ldots)$ with the $t$-adic valuation and $L = K(\alpha)$ with $\alpha^p - \alpha - t^{-1} = 0$. Then $L/K$ is an immediate Artin-Schreier extension of degree $p$, so $[L : K] > ef = 1$.

\end{example}
Ramification theory gives rise to yet another multiplicative invariant called the \emph{defect}, which measures the failure of the fundamental inequality. It does not prominently appear in local class field theory, because separable extensions of discretely valued fields always satisfy the fundamental equality $[L : K] = ef$, cf. \cite[Chap. II, (6.8)]{Neukirch68}.

\begin{cordef}[Lemma of Ostrowski] \label{cor/def:defect}
Let $(K,v)$ be a valued field and $L/K$ a finite extension such that $v$ has a unique prolongation $w$ to $L$. Then
\[
    d(w/v) \coloneqq \frac{[L : K]}{e(w/v) f(w/v)}
\]
is a power of $p$, whenever $\charK Kv = p > 0$, and equal to 1 if $\charK Kv = 0$. The invariant $d(w/v)$ is called the \emph{defect} of $L/K$.
The extension is called \emph{defectless} if $d(w/v) = 1$.
\end{cordef}

One proves the Lemma of Ostrowski by passing to a splitting field and applying the following consequence of the Conjugacy Theorem \ref{thm:Conjugacy} and ramification theory:

\begin{cor}[Fundamental equality]
\label{cor:fundamental equality}
Let $(L,w)/(K,v)$ be a finite Galois extension of valued fields. Let $g$ be the number of prolongations of $v$ to $L$ (all of which are conjugate).
Then
\[
    [L : K] = d\;\!efg,
\]
where $d, e, f$ are the ramification invariants of $(L,w)/(L^D,w|_{L^D})$.
\end{cor}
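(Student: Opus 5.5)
The plan is to decompose the Galois extension along its decomposition group and handle the two resulting pieces separately. Let $G = \Gal(L/K)$ and $D = D_w(L/K)$.

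\textbf{Step 1: the factor $g$.} By the Conjugacy Theorem~\ref{thm:Conjugacy}, $G$ acts transitively on the set of prolongations of $v$ to $L$. The stabiliser of $\Oo_w$ under this action is precisely $D$. Hence $g = (G:D) = [L^D : K]$, which is also recorded in Fact~\ref{fact:decomposition subgroup}(iii).

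\textbf{Step 2: reduce to $L/L^D$.} Multiplicativity of degrees gives $[L:K] = [L:L^D]\cdot g$, so it remains to show
\[
    [L:L^D] = d\,e\,f,
\]
where $d,e,f$ are the invariants of $(L,w)/(L^D,w|_{L^D})$. By Fact~\ref{fact:decomposition subgroup}(ii), $w$ is the unique prolongation of $w|_{L^D}$ to $L$. Therefore the defect $d = [L:L^D]/(ef)$ makes sense as in Corollary/Definition~\ref{cor/def:defect}, and the displayed identity holds by the definition of $d$.

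\textbf{The real content.} The substantive claim, and the part that needs argument, is that $d$ is a power of $p$ (respectively equal to $1$ in residue characteristic $0$). Since the paper proves Ostrowski's Lemma \emph{from} this corollary, the argument must not circularly invoke Ostrowski for $L/L^D$. Instead I would read $d$ off the ramification filtration, working inside $D = \Gal(L/L^D)$.

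\textbf{Step 3: the filtration $D \supseteq I \supseteq R$.}
\begin{itemize}
    \item By the First Exact Sequence~\eqref{eq: 1st Exact Sequence}, $(D:I) = |\Aut(Lw/Kv)|$. Using Fact~\ref{fact:inertia group}(i) and the normality of $Lw/Kv$, this equals the separable degree $[Lw:Kv]_s$.
    \item By the Second Exact Sequence~\eqref{eq: 2nd Exact Sequence}, $(I:R) = |\Hom(wL/vK,\mu_{Lw})|$. This equals the prime-to-$p$ part of $(wL:vK)$, because $Lw$ contains enough roots of unity: the extension $L^R/L^I$ is totally tamely ramified and realises that part of the value group.
    \item By Fact~\ref{fact:ramification group}(ii), $R$ is a $p$-group, and it is trivial in residue characteristic $0$.
\end{itemize}

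\textbf{Step 4: collect the prime-to-$p$ parts.} Immediacy of $L^D/K$ (Fact~\ref{fact:decomposition subgroup}(i)) gives $e = (wL:vK)$ and $f = [Lw:Kv]$. The ratios $e/(\text{prime-to-}p\text{ part of } e)$ and $f/[Lw:Kv]_s$ are both powers of $p$. Combining this with Step 3,
\[
    [L:L^D] = |D| = (D:I)(I:R)|R| = e\,f\cdot(\text{a power of } p),
\]
so $d$ is a power of $p$.

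\textbf{Main obstacle.} The difficult step is the identification in Step 3 of $(I:R)$ with the prime-to-$p$ part of $e$. This requires checking that the character group $\Hom(wL/vK,\mu_{Lw})$ has full size, i.e.\ that the necessary roots of unity lie in $Lw$. I would argue this via normality of $L/K$: adjoin $n$-th roots of elements of $L^I$ of prime-to-$p$ value, and use that these radicals already lie in $L$.
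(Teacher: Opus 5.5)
Your Steps 1--2 are exactly what the paper has in mind. The paper gives no written proof and only calls the corollary a consequence of the Conjugacy Theorem and ramification theory: $g=(G:D)=[L^D:K]$ by orbit--stabiliser, and the remaining identity is the definition of $d$, which is legitimate because $w$ is the unique prolongation of $w|_{L^D}$ to $L$.

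Your Steps 3--4 go beyond the statement but follow the intended ramification-theoretic route to Ostrowski's lemma, with two small fixes. First, the count only gives $d=|R|/(e_p\,[Lw:Kv]_i)$, where $e_p$ is the $p$-part of $e$ and $[Lw:Kv]_i$ the inseparable degree; this is a ratio of powers of $p$, so you still need the fundamental inequality (Fact~\ref{fact:fund_ineq}) to get $d\ge 1$. Second, read off the equality of $(I:R)$ with the prime-to-$p$ part of $e$ from Fact~\ref{fact:ramification group}(iii) and the value group of $L^R$ recorded in the figure, since your radical argument would need a Hensel-type lemma in $L$, which is not available.
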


As our main concern is with absolute Galois groups, we now consider the case $L = K^{\sepc}$.

\begin{defi} \label{def:abs_ram}
For a fixed prolongation $\tilde{v}$ of $v$ to $K^{\sepc}$, write
\begin{alignat*}{2}
    D_K & \coloneqq D_{\tilde{v}}&&(K^{\sepc}/K) \\
    I_K & \coloneqq \, I_{\tilde{v}}&&(K^{\sepc}/K) \\
    R_K & \coloneqq R_{\tilde{v}}&&(K^{\sepc}/K)
\end{alignat*}
to denote the \emph{absolute} decomposition, inertia, and ramification groups, respectively. Furthermore, we say that $(K,v)$ is
\begin{enumerate}[(a)]
    \item \emph{henselian} if $D_K = G_K$ (equivalently, $K = L^D$);
    \item \emph{algebraically maximal} if every finite immediate extension is trivial;
    \item \emph{henselian defectless} if it is henselian and any finite extension is defectless;
    \item \emph{separably tame} if $K$ is henselian and $R_K = 1$ (equivalently, any finite separable extension is tame);
    \item \emph{tame} if $K$ is perfect, henselian, and $R_K = 1$ (equivalently, any finite extension is tame);
    \item \emph{absolutely unramified}\footnote{It is customary to use the term \emph{unramified} to denote a henselian valued field of mixed characteristic $(0,p)$ with $vp$ minimal positive; hence we avoid this terminology here.} if $K$ is perfect, henselian, and $I_K = 1$ (equivalently, any finite extension is unramified).
\end{enumerate}
\end{defi}

In the absolute case, we have the following splitting result (cf. \cite[Cor. 22.1.2, Prop. 22.1.3]{Efrat06}):
\begin{thm} \label{thm:Splitting of D/R}
    Let $L = K^{\sepc}$. There is a semi-direct product decomposition
    \[
        D_K/R_K \cong  \Hom(wL/vK, \mu_{Lw}) \rtimes G_{Kv},
    \]
    where $G_{Kv}$ acts naturally on the left. Moreover,
    \[
        \Hom(wL/vK, \mu_{Lw}) \cong \prod_{q \neq p} \IZ_q^{r_q},
    \]
    where $q$ runs over primes away from $p = \charK Kv$, and
    \[
        r_q \coloneqq \dim_{\IF_q}(vK/qvK)
    \]
    denotes the \emph{$q$-rank} of vK.
\end{thm}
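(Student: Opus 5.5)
The plan is to build the splitting from the two exact sequences (Facts~\ref{fact:inertia group} and \ref{fact:ramification group}), applied with $L = K^{\sepc}$ and $w = \tilde{v}$. Throughout, $p$ denotes $\charK Kv$, or $1$ if $\charK Kv = 0$. We may first replace $K$ by the decomposition field $L^D$. This changes neither $D$, $I$, $R$ nor the value group and residue field, by Fact~\ref{fact:decomposition subgroup}, so we may assume $K$ is henselian.

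\textbf{The inertia quotient $I_K/R_K$.} Since $L = K^{\sepc}$, the value group $wL$ is the $p'$-divisible hull of $vK$ (tensored appropriately), so $wL/vK \cong \bigoplus_{q \ne p} (\IQ_q/\IZ_q)^{(r_q)}$ up to the $p$-part. Its $p$-primary part contributes nothing to $\Hom(-,\mu_{Lw})$, because $\mu_{Lw}$ has no $p$-torsion. The residue field $Lw$ is the algebraic closure of $Kv$, so it contains all prime-to-$p$ roots of unity. The Pontryagin-type dual is therefore $\Hom(\IQ_q/\IZ_q,\mu_{q^\infty}) \cong \IZ_q$ for each copy, and we use the finite-index description $\Hom(wL/vK,\mu) = \varprojlim_n \Hom(\tfrac1n vK/vK,\mu_n) \cong \varprojlim_n \Hom(vK/nvK,\mu_n)$. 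Taking $q$-parts gives $\varprojlim_m \Hom(vK/q^m vK, \mu_{q^m})$. This is a torsion-free pro-$q$ group, and modulo $q$ it has $\IF_q$-rank equal to $r_q$; hence it is $\IZ_q^{r_q}$, at least when $r_q$ is finite. For infinite $r_q$ the exponent should be read as a product indexed by an $\IF_q$-basis, via a compatible choice of bases. By the Second Exact Sequence this identifies $I_K/R_K$.

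\textbf{The section $G_{Kv} \to D_K/R_K$.} The First Exact Sequence gives $D_K/I_K \cong \Aut(Lw/Kv) = G_{Kv}$ after passing to separable parts. The main task is to split $1 \to I_K/R_K \to D_K/R_K \to G_{Kv} \to 1$, and this is the step I expect to be the main obstacle. My first attempt is to construct a complement field: a tamely ramified extension $M \subseteq L^R$ of $K$ with $\tilde{v}M = \tfrac{1}{p'^\infty}vK$ and trivial residue extension, such that $L^R = M \cdot L^I$. Then $\Gal(L^R/M)$ maps isomorphically onto $\Gal(L^I/K) \cong G_{Kv}$ and intersects $I_K/R_K$ trivially, which gives the semidirect product.

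To build $M$, choose for each element of a basis of $vK/qvK$ and each $m$ a lift $\gamma$ with $q^m\gamma = v(a)$, and adjoin compatible radicals $a^{1/q^m}$. Compatibility is secured by choosing a Zorn-maximal system of elements $a_i$ whose values are independent modulo $q$, and taking nested roots $a_i^{1/q^{m+1}}$ with $(a_i^{1/q^{m+1}})^q = a_i^{1/q^m}$. Henselianity guarantees that adjoining such roots gives totally (tamely) ramified extensions with trivial residue extension, by the fundamental inequality (Fact~\ref{fact:fund_ineq}) and Ostrowski. One must also check that $M \cap L^I = K$, which holds by comparing value groups and residue fields. Finally, $ML^I = L^R$ holds by maximality of $L^R$ as a totally tamely ramified extension of $L^I$ (Fact~\ref{fact:ramification group}), since $ML^I$ already realises the full prime-to-$p$ divisible hull.

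\textbf{The action.} The conjugation action of $G_{Kv}$ on $I_K/R_K$ is computed through $\Psi$: for $\tau \in D_K$ and $\sigma \in I_K$, $\Psi(\tau\sigma\tau^{-1})(wx) = \overline{\tau}\bigl(\Psi(\sigma)(wx)\bigr)$. This follows by evaluating $\tau\sigma\tau^{-1}(x)/x$ on elements $x$ with prescribed value, using that $\tau$ preserves values. So $G_{Kv}$ acts naturally on the left through its action on $\mu_{Lw}$, as claimed.
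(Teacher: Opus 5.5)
Your proposal is correct. The paper gives no proof of this theorem and only cites Efrat (Cor.~22.1.2, Prop.~22.1.3), so your argument serves as a self-contained replacement.

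You obtain $I_K/R_K$ from the Second Exact Sequence by duality. You then split $1 \to I_K/R_K \to D_K/R_K \to G_{Kv} \to 1$ using an explicit tame complement $M$, generated by compatible prime-to-$p$ radicals of elements whose values form $\IF_q$-bases of $vK/qvK$. This is the standard construction. It needs only Facts~\ref{fact:inertia group}, \ref{fact:ramification group} and the fundamental inequality, and it makes both the complement and the natural action explicit. It would be worth writing out why $M/K$ has trivial residue extension. For a finite index set $F$, independence modulo $q$ shows that $vK + \sum_{i \in F} \IZ\, v(a_i)/q^m$ has index $q^{m|F|}$ over $vK$. Since $[K_F : K] \le q^{m|F|}$, the fundamental inequality then forces $e = [K_F:K]$ and $f = 1$.

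Two small corrections.

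First, for $L = K^{\sepc}$ the group $wL$ is the full divisible hull of $vK$, not the prime-to-$p$ divisible hull. As you note, the $p$-primary part of $wL/vK$ is killed in $\Hom(-,\mu_{Lw})$, so nothing downstream changes.

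Second, maximality of $L^R$ only gives the inclusion $ML^I \subseteq L^R$, and even that requires knowing that $R_K$ fixes $M$. That holds because for $\sigma \in R_K$ and $x = a_i^{1/q^m}$, the ratio $\sigma(x)/x$ is a prime-to-$p$ root of unity lying in $1 + \Mm_w$, hence equals $1$. The reverse inclusion should come from injectivity of $\Psi$ on $I_K/R_K$. Take $\sigma \in I_K$ fixing $ML^I$. Then $\Psi(\sigma)$ is trivial on $\tilde{v}M/vK = \tfrac{1}{p'^\infty}vK/vK$, which is all of the prime-to-$p$ part of $wL/vK$. Hence $\Psi(\sigma) = 1$ and $\sigma \in R_K$. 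With these adjustments, $\Gal(L^R/M)$ is a closed complement to $I_K/R_K$, and your computation of $\Psi(\tau\sigma\tau^{-1})$ correctly identifies the action as the natural one.
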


We can use the above results to calculate concrete absolute Galois groups.

\begin{example}
\label{ex:Galois calculations}
\begin{enumerate}
    \item \label{item: C((t))} Consider $K = \IC\laurent{t}$ with the $t$-adic valuation. It has value group $\IZ$ and residue field $\IC$. As a complete field of rank 1, $\IC\laurent{t}$ is henselian (see Section~\ref{sec:hens}).
    By Fact~\ref{fact:ramification group}\ref{item:R_trivial}, $R_{\IC\laurent{t}} = 1$. Moreover, $\Aut(\IC^{\sepc}/\IC) = 1$.
    Therefore, combining the First (\ref{eq: 1st Exact Sequence}) and Second Exact Sequences (\ref{eq: 2nd Exact Sequence}) with Theorem~\ref{thm:Splitting of D/R}, we obtain
    \[
        G_{\IC\laurent{t}} = D_K \cong I_K \cong \prod_{q} \IZ_q \cong \widehat{\IZ}. \\[-0.6em]
    \]
    Note that any finite field has absolute Galois group isomorphic to $\widehat{\IZ} \cong G_{\IC\laurent{t}}$!
    \item \label{item: K((Gamma))} Let $k$ be an arbitrary field of characteristic 0. Consider the field of Puiseux series
    \[
        K = \bigcup_{n > 0} k\laurent{t^{1/n}}
    \]
    with the $t$-adic valuation. The value group is $\IQ$ and the residue field is $k$. The valuation on $K$ is again henselian. Here, the First and Second Exact Sequences yield $I_K = 1$ and therefore
    \[
        G_K = D_K \cong \Aut(k^{\sepc}/k) = G_k.
    \]
\end{enumerate}
\end{example}

\begin{prop} \label{prop:Syl G_Q_p}
Fix rational primes $p \ne q$. Then any Sylow $q$-subgroup of $G_{\IQ_p}$ is isomorphic to $\IZ_q \rtimes \IZ_q$.
\end{prop}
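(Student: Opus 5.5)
The plan is to use the ramification filtration $G_{\IQ_p} = D \supseteq I \supseteq R$ for the $p$-adic valuation. This valuation is henselian, so $D_{\IQ_p} = G_{\IQ_p}$. We then read off a Sylow $q$-subgroup from Theorem~\ref{thm:Splitting of D/R}. Here $v\IQ_p = \IZ$ and the residue field is $\IF_p$. That theorem gives
\[
    G_{\IQ_p}/R_{\IQ_p} \cong \Hom(\IZ, \mu_{\IF_p^{\alg}}) \rtimes G_{\IF_p} \cong \Bigl(\prod_{\ell \ne p} \IZ_\ell\Bigr) \rtimes \widehat{\IZ},
\]
where the inertia quotient is $\prod_{\ell\ne p}\IZ_\ell$ because every $\ell$-rank of $\IZ$ equals $1$. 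The action of a generator of $\widehat{\IZ}$ (Frobenius) on the inertia quotient is $\tau \mapsto \tau^p$, since Frobenius acts on the roots of unity of $\IF_p^{\alg}$ by raising to the $p$-th power.

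By Fact~\ref{fact:ramification group}, $R = R_{\IQ_p}$ is a pro-$p$ group, and it is normal. Let $S$ be a Sylow $q$-subgroup of $G_{\IQ_p}$ with $q\ne p$. Then $S\cap R = 1$, so $S$ maps isomorphically onto its image in $G/R$. That image is a Sylow $q$-subgroup of $G/R$, because images of Sylow subgroups under continuous surjections of profinite groups are Sylow. All Sylow $q$-subgroups are conjugate, so it suffices to exhibit one Sylow $q$-subgroup of the semidirect product $T\rtimes \widehat{\IZ}$, where $T=\prod_{\ell\ne p}\IZ_\ell$.

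For this, write $T = \IZ_q \times T'$ with $T'$ of order prime to $q$, and $\widehat{\IZ} = \IZ_q \times \prod_{\ell\ne q}\IZ_\ell$. Both decompositions are canonical, and $T'$ is characteristic in $T$. Take the subgroup $\IZ_q \rtimes \IZ_q$ generated by the $q$-part $\IZ_q\le T$ and the closed subgroup $\IZ_q \le \widehat{\IZ}$. The factor $\IZ_q\le T$ is normal because it is characteristic in the normal subgroup $T$. The complement $\IZ_q\le\widehat{\IZ}$ acts on it through the restriction of $\tau\mapsto\tau^{p}$, i.e.\ via the character $\IZ_q \to \IZ_q^\times$ sending $1\mapsto p$ (continuous, since $p\in\IZ_q^\times$). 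This subgroup is pro-$q$. Its index is supernatural-prime to $q$: the index equals $|T'|\cdot|\prod_{\ell\ne q}\IZ_\ell|$ as a supernatural number. So it is a Sylow $q$-subgroup, isomorphic to $\IZ_q\rtimes\IZ_q$.

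The main obstacle is bookkeeping rather than depth. One point is justifying that Sylow theory (existence, conjugacy, behaviour under quotients, and the supernatural index count) works for profinite groups; I would cite these as standard. The other is identifying the Frobenius action on the tame inertia quotient explicitly from the map $\Psi$ of Fact~\ref{fact:ramification group}, namely $\sigma\mapsto\overline{\sigma(\pi^{1/n})/\pi^{1/n}}$. One also notes that the semidirect product in Theorem~\ref{thm:Splitting of D/R} restricts to the $q$-parts, since the Frobenius action preserves each $\IZ_\ell$ factor.
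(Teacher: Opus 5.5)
Your proof is correct, but it organises the computation differently from the paper.

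\textbf{The paper's route.} The paper first passes to the fixed field $F_q$ of a Sylow $q$-subgroup $G_q$. Since $G_q=G_{F_q}$ is pro-$q$, the pro-$p$ group $R_{F_q}$ is trivial. The invariants of $(F_q,w_q)$ are $w_qF_q=\IZ_{(q)}$ and $G_{F_qw_q}\cong\IZ_q$. Theorem~\ref{thm:Splitting of D/R} applied directly to $F_q$ then gives $G_q\cong\IZ_q\rtimes\IZ_q$ at once, because $\IZ_{(q)}$ is $\ell$-divisible for every $\ell\neq q$.

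\textbf{Your route.} You apply the splitting theorem once over $\IQ_p$, which gives the full tame quotient $\bigl(\prod_{\ell\ne p}\IZ_\ell\bigr)\rtimes\widehat{\IZ}$. You then do the Sylow bookkeeping inside it: $S\cap R=1$, images of Sylow subgroups are Sylow, conjugacy, and an explicit Sylow subgroup $T_q\rtimes Z_q$ identified by a supernatural index count.

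\textbf{What each buys.} In the paper, that bookkeeping is absorbed into the phrase ``by infinite Galois theory'', i.e.\ into computing the value group and residue field of $F_q$. Your version makes it explicit and exhibits the Sylow subgroup concretely inside the tame quotient. The cost is citing the profinite Sylow theorems.

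\textbf{Two small inaccuracies}, neither affecting the conclusion:
\begin{enumerate}[(1)]
    \item The inertia factor should be $\Hom(\IQ/\IZ,\mu_{\IF_p^{\alg}})$, not $\Hom(\IZ,\ldots)$, since the value group of $\IQ_p^{\sepc}$ is $\IQ$. The value $\prod_{\ell\ne p}\IZ_\ell$ you then write down is nevertheless right.
    \item A topological generator of $Z_q\le\widehat{\IZ}$ is not the Frobenius. For odd $q$ with $p\not\equiv 1\pmod q$, there is no continuous character $\IZ_q\to\IZ_q^\times$ with $1\mapsto p$, because $p^{q^n}\equiv p\not\equiv 1\pmod q$ for all $n$. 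The generator of $Z_q$ actually acts through the component of $p$ in the pro-$q$ part $1+q\IZ_q$ of $\IZ_q^\times$.
\end{enumerate}
Since the statement only asserts some semidirect product $\IZ_q\rtimes\IZ_q$, all you need is that $T_q$ is normal, $Z_q$ is closed, and $T_q\cap Z_q=1$, and you have all three.
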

\begin{proof}
Let $G_q$ be a Sylow $q$-subgroup of $G_{\IQ_p}$, and denote by $(F_q, w_q)$ the fixed field of $G_q$ with prolongation $w_q$ of $v_p$. As $G_q$ is a pro-$q$ group, we must have $R_{F_q} = 1$ by Fact~\ref{fact:ramification group}\ref{item:R_trivial}. Moreover, $w_qF_q = \IZ_{(q)}$ and $G_{F_qw_q} \cong \IZ_q$ by infinite Galois theory. Therefore, Theorem \ref{thm:Splitting of D/R} yields
\[
    G_q \cong G_q/R_{F_q} \cong  \IZ_q \rtimes \IZ_q. \qedhere
\]
\end{proof}
\begin{rem} \label{rem:Z_q-action}
The group action in $\IZ_q \rtimes \IZ_q$ is given by $\phi_{\tau}(\sigma) = \sigma^p$, where $\tau$ is the Frobenius element topologically generating the right copy of $\IZ_q$. This is the natural action given by Theorem~\ref{thm:Splitting of D/R}, cf. also \cite[Ex. 22.1.4]{Efrat06}. In particular, $\IZ_q \rtimes \IZ_q$ is non-abelian.
\end{rem}

The structure of Sylow $p$-subgroups of $G_{\IQ_p}$ is also known, though somewhat more complicated. It was determined by Labute \cite{Labute66}, building on earlier work of Shafarevich \cite{Shafarevitch47}, Demushkin \cite{Demushkin61}, and Serre \cite{Serre62}.

\subsubsection{Henselianity}
\label{sec:hens}

We have already introduced the single most important notion in valuation theory in passing: a valued field $(K,v)$ is called \emph{henselian} if $D_K = G_K$. Equivalently, $(K,v)$ is henselian if and only if $v$ extends uniquely to any fixed algebraic closure $K^{\alg}$ of $K$ (recall that by Chevalley's Theorem~\ref{thm:chevalley}, there exists at least one such extension, and valuations extend uniquely to inseparable extensions, see Fact~\ref{fact:pure_insep}). We point the reader to \cite[Sec. 4.1]{Engler05}.

It is possible to construct a henselian valued field from any given one in a universal fashion:
\begin{defi}
Let $(K,v)$ be a valued field. Fix a separable closure $K^{\sepc}$ of $K$ and a prolongation $\tilde{v}$ of $v$ to $K^{\sepc}$. We define $K^h \coloneqq (K^{\sepc})^D$ to be the fixed field of $K^{\sepc}$ with respect to $D = D_{\tilde{v}}(K^{\sepc}/K)$ and call it a \emph{henselisation} of $K$.
\end{defi}
By construction, $K^h$ is henselian.
A henselisation is only unique up to conjugation by an automorphism $\sigma \in G_K$---nonetheless, one often speaks of ``the'' henselisation of $K$. The extension $K^h/K$ is separable and immediate (Fact~\ref{fact:decomposition subgroup}\ref{item:D_immediate}), and further satisfies the universal property that for any henselian valued field $(L,w)$ extending $(K,v)$, there exists a factorisation
\begin{diagram}
    K^h \ar[r, "\exists", hook] & L \\
    K \ar[u, hook] \ar[ur, hook] &
\end{diagram}
of valued fields.
The terminology arises from the fact that henselian valued fields are precisely those valued fields that satisfy \emph{Hensel's Lemma}, which relates the roots of a polynomial $f(X) \in \Oo_v[X]$ with the roots of its coefficient-wise reduction $\overline{f}(X) \in (Kv)[X]$. Hensel orig\-inally proved his lemma for $\IQ_p$.
\begin{lem}[Hensel's Lemma] \label{lem:hensel}
A valued field $(K,v)$ is henselian if and only if it satisfies the following property: given a monic polynomial $f(X) \in \Oo_v[X]$ and $a \in \Oo_v$ satisfying
\[
    v(f(a)) > 2v(f'(a)),
\]
then $f(X)$ has a unique root $\alpha \in \Oo_v$ with $v(\alpha - a) > v(f'(a))$. In particular, if $f(X)$ is such that the reduced polynomial $\overline{f}(X) \in (Kv)[X]$ has a simple root $\overline{a} \in Kv$, then $f(X)$ has a unique root $\alpha \in \Oo_v$ lifting $\overline{a}$.
\end{lem}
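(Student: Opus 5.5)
We prove both directions, working with the definition of henselianity as uniqueness of the prolongation of $v$ to $K^{\alg}$, or equivalently $D_K = G_K$.

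\textbf{Henselian implies the root-lifting property.} Let $f \in \Oo_v[X]$ be monic and $a \in \Oo_v$ with $v(f(a)) > 2v(f'(a))$. Let $\tilde v$ be the unique prolongation of $v$ to a splitting field $L$ of $f$. Write $f(X) = \prod_i (X - \alpha_i)$; all $\alpha_i$ are integral, so $\tilde v(a - \alpha_i) \ge 0$. Choose $\alpha = \alpha_1$ so that $\tilde v(a-\alpha)$ is maximal among the $\tilde v(a-\alpha_i)$.

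First we show $\tilde v(a-\alpha) > v(f'(a))$. Put $\gamma_i = \tilde v(a-\alpha_i)$. The expansion $f'(a) = \sum_j \prod_{i\ne j}(a-\alpha_i)$ gives $v(f'(a)) \ge \sum_i \gamma_i - \gamma_1$. Hence
\[
v(f(a)) = \sum_i \gamma_i \le v(f'(a)) + \gamma_1,
\]
and combined with the hypothesis this yields $\gamma_1 > v(f'(a))$.

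Next we show $\alpha \in K$. Every $\sigma \in \Gal(L/K)$ preserves $\tilde v$, because the prolongation is unique. Therefore $\sigma\alpha$ is a root with $\tilde v(a-\sigma\alpha) = \tilde v(a-\alpha)$. If $\sigma\alpha \ne \alpha$, then $\tilde v(\alpha - \sigma\alpha) \ge \gamma_1$, and so
\[
v(f'(a)) \ge \sum_{i \ge 2} \min(\gamma_1, \tilde v(\alpha-\alpha_i)) \ge \ldots
\]
A cleaner route is Newton's method. Put $\delta = v(f(a)) - 2v(f'(a)) > 0$ and iterate $a' = a - f(a)/f'(a)$. The Taylor expansion shows
\[
v(f(a')) \ge 2v(f(a)) - 2v(f'(a)) \quad\text{and}\quad v(f'(a')) = v(f'(a)),
\]
so the quantity $v(f)-2v(f')$ grows. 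This gives approximate roots in $K$ converging to $\alpha$, but it does not terminate in a non-complete field.

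So instead we use separability. The roots $\alpha_i$ with $\gamma_i > v(f'(a))$ are unique: if two of them, $\alpha_1$ and $\alpha_2$, satisfied this, then $\tilde v(\alpha_1-\alpha_2) > v(f'(a))$, and bounding $f'(\alpha_1)$ against $f'(a)$ leads to a contradiction. This is the crux of the estimate. Given this uniqueness, $\sigma$ permutes the roots while preserving $\tilde v(a-\cdot)$, so $\sigma$ fixes $\alpha$. Hence $\alpha \in K$, the extension being separable because $f'(\alpha) \ne 0$ (which follows from $v(f'(\alpha)) = v(f'(a))$). The uniqueness statement in the lemma is the same estimate.

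\textbf{The root-lifting property implies henselian.} Suppose $v$ had two distinct prolongations to some finite Galois extension $L$. Then $K \ne L^D$, so there is a proper finite separable extension $L^D/K$. Take a primitive element $\beta$ of $L^D$ over $K$. Using weak approximation (Fact~\ref{fact:weak_approx}) over the distinct prolongations and the conjugates of $\tilde v$, we may choose $\beta$ with $\beta$ close to $1$ for $\tilde v$ and close to $0$ for every other prolongation. Then the minimal polynomial of $\beta$, made integral, has reduction with a simple root $1$. Indeed, $\beta$ is the only conjugate whose residue under $\tilde v$ is $1$: this is Fact~\ref{fact:decomposition subgroup}, since $\tau\beta$ for $\tau \notin D$ corresponds to evaluating $\beta$ at another prolongation.

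The root-lifting property then gives a root in $K$, contradicting that the polynomial is irreducible of degree $>1$. Hence $D_K = G_K$.

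\textbf{Main obstacle.} The step I expect to be hardest is the valuation estimate showing that only one root lies within distance $>v(f'(a))$ of $a$. This requires carefully comparing $f'(a)$ with $\prod_{i\ne 1}(\alpha_1-\alpha_i)$ via the ultrametric inequality.
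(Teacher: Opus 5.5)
The paper does not prove this lemma; it quotes it as standard, with a pointer to Engler--Prestel, Sec.~4.1. So there is no in-paper argument to compare against. Your strategy is a standard one and it works:
\begin{itemize}
\item uniqueness of the prolongation makes $\tilde v$ invariant under $\Aut(L/K)$;
\item the expansion $f'(a)=\sum_j\prod_{i\ne j}(a-\alpha_i)$ locates a root $\alpha_1$ with $\tilde v(a-\alpha_1)>v(f'(a))$;
\item uniqueness of such a root forces it to be fixed by $\Aut(L/K)$, hence to lie in $K$;
\item conversely, weak approximation on $L^D$ produces a monic integral polynomial whose reduction has the simple root $1$.
\end{itemize}

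The step you leave unproved and call the crux is short. Since $f'(X)-f'(Y)\in(X-Y)\,\Oo_v[X,Y]$ and $\alpha_1,a\in\Oo_{\tilde v}$, we get $\tilde v(f'(\alpha_1)-f'(a))\ge\tilde v(\alpha_1-a)>v(f'(a))$. Hence $\tilde v(f'(\alpha_1))=v(f'(a))<\infty$, and in particular $\alpha_1$ is a simple, hence separable, root. Suppose some other root $\alpha_2$ (roots listed with multiplicity) also satisfied $\tilde v(\alpha_2-a)>v(f'(a))$. Then $\tilde v(\alpha_1-\alpha_2)>v(f'(a))$ by the ultrametric inequality. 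Since all roots lie in $\Oo_{\tilde v}$,
\[
    \tilde v(f'(\alpha_1)) = \sum_{i\ge 2}\tilde v(\alpha_1-\alpha_i) \ge \tilde v(\alpha_1-\alpha_2) > v(f'(a)),
\]
which is a contradiction. The same computation, applied to roots in $\Oo_v$, gives the uniqueness clause of the lemma. The abandoned attempts should be deleted: the display trailing off in ``$\ge\ldots$'' and the Newton-iteration paragraph.

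A few smaller repairs are needed.
\begin{itemize}
\item If $f$ is inseparable, $L/K$ is only normal, so work with $\Aut(L/K)$ rather than $\Gal(L/K)$. This suffices because $\alpha_1$ is separable and all its $K$-conjugates are of the form $\sigma\alpha_1$.
\item In the converse, drop ``primitive''. You give no reason an approximating element can be chosen primitive, and you do not need one: any $\beta\in L^D$ produced by weak approximation already lies outside $K$, since otherwise $v(\beta-1)>0$ and $v(\beta)>0$ would both hold.
\item Justify that weak approximation applies. The restrictions to $L^D$ of the prolongations $\tilde v\circ\tau$, $\tau\notin D$, differ from $\tilde v|_{L^D}$ by Fact~\ref{fact:decomposition subgroup}(ii). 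Distinct ones are pairwise incomparable by Fact~\ref{fact:compatibility}.
\item Do not ``make the minimal polynomial integral'' by rescaling, since that destroys the monicity the hypothesis requires. The minimal polynomial $g$ is already in $\Oo_v[X]$, because each conjugate satisfies $\tilde v(\tau\beta)=(\tilde v\circ\tau)(\beta)\ge 0$ by the choice of $\beta$.
\end{itemize}
With these points in place, the reduction of $g$ is $(X-1)X^{m-1}$ with $m=[K(\beta):K]>1$. Applying the hypothesis with $a=1$, where $v(g(1))>0=2v(g'(1))$, gives a root of $g$ in $K$, contradicting irreducibility.
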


The reasons why henselianity is a central notion are manifold. It plays a role analogous to that of completeness in algebraic number theory: there, instead of the henselisation, one would consider the completion of a global field with respect to a non-archimedean absolute value. Indeed, for all valued fields of rank 1, completeness implies henselianity. Moreover, one may develop ramification theory in an \emph{absolute} sense: working with a henselian valued field $(K,v)$ and a fixed Galois extension $L/K$, the invariants and diagrams of Section~\ref{sec:ram_theory} do not depend on the choice of a valuation $w$ on $L$.
Neukirch's classical text \cite{Neukirch99} adopts this perspective, formulating ramification theory not just for local fields but more generally for henselian valued fields of rank 1.

Another avatar of henselianity is the following equivalent condition, which links the properties of distance (in the sense of the valuation topology) and algebraicity. It is remarkably useful in practice.
\begin{lem}[Krasner's Lemma] \label{lem:krasner}
A valued field $(K,v)$ is henselian if and only if it satisfies the following property: given a separable element $x \in K^{\sepc}$ with conjugates $x = x_1, \ldots, x_n$ and any algebraic element $y \in K^{\alg}$, the condition
\[
    v(y - x) > \max_{2 \le i \le n} v(x_i - x),
\]
implies $K(x) \subseteq K(y)$.
\end{lem}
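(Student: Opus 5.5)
We prove the two directions of Krasner's Lemma separately, working throughout with a fixed prolongation $\tilde v$ of $v$ to $K^{\alg}$ (Chevalley, Theorem~\ref{thm:chevalley}), and writing $v$ for $\tilde v$ where harmless.

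\textbf{Henselian implies the Krasner property.} Assume $(K,v)$ is henselian, so $\tilde v$ is the unique prolongation to $K^{\alg}$. Then for every $\sigma\in\Aut(K^{\alg}/K)$ the valuation $\tilde v\circ\sigma$ is again a prolongation, hence $\tilde v\circ\sigma=\tilde v$. Let $x$ and $y$ be as in the statement and put $M=K(y)$; note that $M$ is again henselian, since any prolongation of $v|_M$ to $K^{\alg}$ also prolongs $v$. By Galois theory, $K(x)\subseteq M$ holds as soon as every $\sigma\in\Aut(K^{\alg}/M)$ fixes $x$. Fix such a $\sigma$. It fixes $y$ and maps $x$ to some conjugate $x_i$. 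Using $\tilde v\circ\sigma=\tilde v$ we get
\[
\tilde v(y-x_i)=\tilde v(\sigma(y-x))=\tilde v(y-x).
\]
Therefore
\[
\tilde v(x_i-x)\ge\min\{\tilde v(x_i-y),\tilde v(y-x)\}=\tilde v(y-x)>\max_{j\ge2}\tilde v(x_j-x).
\]
This forces $x_i=x$, so $\sigma$ fixes $x$ and $K(x)\subseteq K(y)$.

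\textbf{The Krasner property implies henselian.} Here I would argue via the decomposition field. Suppose $(K,v)$ is not henselian. Then $D_K\ne G_K$, so $K^h\ne K$, and since $K^h/K$ is separable there is a finite separable subextension of $K^h/K$ that is proper. Pick a primitive element $x$ of it, of degree $n\ge2$. By the Conjugacy Theorem~\ref{thm:Conjugacy} and Fact~\ref{fact:decomposition subgroup}, $v$ has at least two inequivalent, hence incomparable, prolongations to the normal closure $N$ of $K(x)$. The aim is to use weak approximation (Fact~\ref{fact:weak_approx}) together with continuity of roots (Fact~\ref{fact:cont_roots}) to produce an element $y$ that is $\tilde v$-close to $x$ but does not generate $K(x)$, contradicting the Krasner property.

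I would try the following concrete route. Take the minimal polynomial $f$ of $x$ over $K$ and perturb its coefficients to get $g\in K[X]$ with coefficients so close to those of $f$ that, for $\tilde v$, some root $y$ of $g$ satisfies $\tilde v(y-x)>\max_{i\ge2}\tilde v(x_i-x)$; Fact~\ref{fact:cont_roots} and Remark~\ref{rem:cont_roots} make this possible. Weak approximation is then used to choose the perturbation so that, with respect to a \emph{different} prolongation, $g$ acquires a root in $K$. Concretely one could aim for $g$ to be reducible, for instance of the form $(X-a)h(X)$, so that $y$ has degree less than $n$. Then $K(y)\not\supseteq K(x)$, which is the desired contradiction.

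\textbf{Main obstacle.} The difficulty lies in this converse: making $g$ close to $f$ for $\tilde v$ while simultaneously forcing reducibility. Weak approximation acts on elements of $K$ with respect to the restrictions of the various prolongations, and all of these restrict to the same $v$ on $K$. So the distinction between the prolongations has to be exploited inside $N$, not inside $K$.

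My first attempt would be a more direct choice. Use weak approximation in the finite extension $K(x)$, which carries at least two inequivalent prolongations $w_1=\tilde v|_{K(x)}$ and $w_2$. Choose $z\in K(x)$ that is $w_1$-close to $x$ and $w_2$-close to $0$, while arranging that $z$ is still a primitive element or suitably controlled. Then compare the conjugates of $z$ against $x$ with respect to $\tilde v$, and try to force the Krasner hypothesis to produce an impossible inclusion.

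If this becomes too delicate, the fallback is to cite the standard equivalence proved in \cite[Sec.~4.1]{Engler05}.
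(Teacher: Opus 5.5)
The paper gives no proof of this lemma. It records Krasner's Lemma as one of the standard equivalent forms of henselianity, with the general pointer to \cite[Sec.~4.1]{Engler05}, so your proposal has to stand on its own.

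\textbf{Forward direction.} Your argument is the standard one and is correct. One point is worth making explicit: the fixed field of $\Aut(K^{\alg}/K(y))$ is the purely inseparable closure of $K(y)$. So your final step needs that $x$ is separable over $K(y)$.

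\textbf{Converse: there is a genuine gap.} You list two strategies and a fallback citation, but neither strategy reaches a contradiction.
\begin{enumerate}
\item The polynomial-perturbation route cannot work as described, as you suspected. Approximation carried out in $K$ only sees $v$ itself, since all prolongations restrict to it. Moreover, ``acquiring a root in $K$ with respect to another prolongation'' is not a valuation-dependent property.
\item Your second route is closer, but it aims at the wrong comparison: with $x$, rather than with an element of $K$. Also, approximating only with respect to $w_1$ and $w_2$ leaves the remaining conjugates of $z$ uncontrolled, so the Krasner hypothesis cannot be verified.
\end{enumerate}

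\textbf{The missing step.} You need an element of $L \coloneqq K(x) \subseteq K^h$, $L \ne K$, that lies closer to an element of $K$ than to any of its own conjugates. Then apply the property with $y \in K$.

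Let $w_1 = \tilde v|_L, w_2, \ldots, w_g$ be all prolongations of $v$ to $L$. Since $D \coloneqq D_{\tilde v}(K^{\sepc}/K) \subseteq G_L$, any $\sigma \in G_K$ with $(\tilde v\circ\sigma)|_L = w_1$ lies in $G_L$. Indeed, by the Conjugacy Theorem over $L$ there is $\tau \in G_L$ with $\tilde v\circ\sigma = \tilde v\circ\tau$, so $\tau\sigma^{-1} \in D \subseteq G_L$. In particular $g \ge 2$.

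By weak approximation, pick $z \in L$ with $z \in \Mm_{w_1}$ and $z - 1 \in \Mm_{w_j}$ for all $j \ge 2$. For every $\sigma \in G_K \setminus G_L$ we then get $\tilde v(\sigma z - 1) > 0$, hence $\tilde v(\sigma z) = 0 < \tilde v(z)$. Consequently every conjugate $z_i \ne z$ satisfies $\tilde v(z_i - z) = 0$, and $z \notin K$. Taking $y = 0$ gives
\[
    \tilde v(y - z) = \tilde v(z) > 0 = \max_{z_i \ne z} \tilde v(z_i - z),
\]
so the Krasner property forces $z \in K(0) = K$, a contradiction.

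This is exactly where your choice $x \in K^h$ is needed. Without it, a conjugate $\sigma z \ne z$ with $(\tilde v\circ\sigma)|_L = w_1$ could spoil the estimate.
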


Perhaps the most far-reaching reason for the importance of henselian valued fields is the fact that henselianity is a \emph{first-order} property (it can be expressed as an axiom scheme via Hensel's Lemma~\ref{lem:hensel}). This allows the class of henselian valued fields (structures) to be studied via model theory. In contrast, being rank 1 complete is not a first-order property (since it entails a cardinality bound on the value group).
In model-theoretic practice, henselianity is used---among other things---to lift residue field embeddings to valued field embeddings, which has model-theoretic consequences through methods pioneered by A. Robinson. One key formative result in this direction is the Ax-Kochen-Ershov Theorem \ref{thm:AKE}, which we will mention at a later stage.

We conclude with two applications of Krasner's Lemma.

\begin{prop}
\label{prop:abs_gal_completion}
Let $(K,v)$ be a henselian valued field of rank 1. Then
\[G_{\widehat{K}} \cong G_{\vphantom{\widehat{K}}K}.\]
\end{prop}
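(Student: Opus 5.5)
The plan is to show that restriction $G_{\widehat{K}} \to G_K$ is an isomorphism. Concretely, we show that $\widehat{K}\,K^{\sepc}$ is a separable closure of $\widehat{K}$, and that $K^{\sepc}$ and $\widehat{K}$ are linearly disjoint over $K$.

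First, the setup. Since $(K,v)$ is henselian, $v$ extends uniquely to $K^{\alg}$, and also to $\widehat{K}$. The completion $\widehat{K}$ is again henselian, being complete of rank 1 (or: Hensel's Lemma passes to the completion). Fix an algebraic closure of $\widehat{K}$ containing $K^{\alg}$, and let $\tilde v$ denote the unique extension of $v$ to it.

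\textbf{Step 1: linear disjointness.} Let $L/K$ be a finite separable extension. We claim $[L\widehat{K}:\widehat{K}]=[L:K]$. Write $L=K(x)$ with minimal polynomial $f$ over $K$, and suppose $f$ factored over $\widehat{K}$. Its monic factors would have coefficients in $\widehat{K}$, algebraic over $K$. We then show $K$ is relatively separably algebraically closed in $\widehat{K}$, as follows. Take $y\in\widehat{K}$ separable algebraic over $K$, with conjugates $y=y_1,\dots,y_n$. By density of $K$ in $\widehat{K}$, choose $a\in K$ with $v(a-y)>\max_{i\ge2}\tilde v(y_i-y)$. Krasner's Lemma~\ref{lem:krasner}, applied to the henselian field $K$, gives $K(y)\subseteq K(a)=K$, so $y\in K$. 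The coefficients of a factor of $f$ are separable algebraic over $K$, since they are symmetric functions in roots of the separable $f$. Hence they lie in $K$, so $f$ stays irreducible over $\widehat{K}$. Taking $L/K$ Galois, this makes restriction $\Gal(L\widehat K/\widehat K)\to\Gal(L/K)$ an isomorphism. In the limit, $G_{\widehat K}\to\Gal(K^{\sepc}\widehat K/\widehat K)\cong G_K$ is surjective.

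\textbf{Step 2: $K^{\sepc}\widehat{K}$ is separably closed.} This is the main obstacle. Let $x$ be separable over $\widehat{K}$ with minimal polynomial $g\in\widehat{K}[X]$ of degree $n$ and distinct roots $x_1,\dots,x_n$. By density, choose $h\in K[X]$ monic of degree $n$ whose coefficients are very close to those of $g$. By Fact~\ref{fact:cont_roots} and Remark~\ref{rem:cont_roots}, $h$ has roots $y_1,\dots,y_n$ satisfying $\tilde v(x_i-y_i)>\max_{j\ne k}\tilde v(x_j-x_k)$, and these roots are pairwise distinct. Hence $h$ is separable, and $y_1\in K^{\sepc}$. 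Krasner's Lemma for the henselian field $\widehat{K}$ gives $\widehat{K}(x)\subseteq\widehat{K}(y_1)\subseteq K^{\sepc}\widehat{K}$.

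One technical care point: Fact~\ref{fact:cont_roots} is stated over a single field. Apply it over $\widehat{K}$, where $g$ lives, and let the approximating polynomial $h$ have coefficients in the dense subfield $K$. Since $\widehat{K}$ and $K$ share the value group, the bound $\delta$ can be met by elements of $K$.

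\textbf{Conclusion.} Step 2 gives $\widehat{K}^{\sepc}=K^{\sepc}\widehat{K}$, so restriction $G_{\widehat K}\to G_K$ is injective. Step 1 gives surjectivity. The map is a continuous bijection of profinite groups, hence a topological isomorphism.
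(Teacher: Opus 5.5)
Your proof is correct and follows essentially the same route as the paper. Density plus Krasner's Lemma over $K$ gives $K^{\sepc}\cap\widehat{K}=K$, hence linear disjointness, and continuity of roots plus Krasner's Lemma over the henselian field $\widehat{K}$ shows that every finite separable extension of $\widehat{K}$ is generated by an element separable over $K$. The differences are only in packaging (the restriction map $G_{\widehat K}\to G_K$ and $\widehat{K}^{\sepc}=K^{\sepc}\widehat{K}$ instead of a bijection of finite Galois extensions and inverse limits), and you are slightly more careful than the paper in checking, via Remark~\ref{rem:cont_roots}, that the approximating polynomial is separable, so its root really lies in $K^{\sepc}$.
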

\begin{proof}
Note that $\widehat{K}$ is henselian as a complete rank 1 valued field.
By density of $K$ in $\widehat{K}$ and Krasner's Lemma~\ref{lem:krasner}, $K^{\sepc} \cap \widehat{K} = K$; hence $K^{\sepc}$ and $\widehat{K}$ are linearly disjoint. Therefore, we obtain an injective map between sets of finite Galois extensions over $K$ and $\widehat{K}$,
\begin{equation} \label{eq:bij_Gal}
    \{\text{$E/K$ fin. Galois ext.}\} \longrightarrow \{\text{$F/\widehat{K}$ fin. Galois ext.}\}, \quad E \longmapsto E\widehat{K},
\end{equation}
inducing isomorphisms $\Gal(E/K) \cong \Gal(E\widehat{K}/\widehat{K})$.

If we can show that (\ref{eq:bij_Gal}) is surjective, then passing to inverse limits yields
\[
    G_K \cong \varprojlim_{\text{$E/K$ fin.\,Gal.}} \Aut(E/K) \cong \varprojlim_{\text{$F/\widehat{K}$ fin.\,Gal.}} \Aut(F/\widehat{K}) \cong G_{\widehat{K}}.
\]
So let $\widehat{K}(\alpha)/\widehat{K}$ be a finite Galois extension, and let $f(X) \in \widehat{K}[X]$ be the minimal polynomial of $\alpha$. By continuity of roots (Fact \ref{fact:cont_roots}), we may find a monic polynomial $g(X) \in K[X]$ of the same degree sufficiently close to $f(X)$, such that $g(X)$ has a unique root $\beta$ satisfying
\[
    v(\beta - \alpha) > \max\{v(\sigma(\alpha) - \alpha) : \sigma \in \Gal(F/\widehat{K})\}.
\]
By Krasner's lemma once again, this implies that $F = \widehat{K}(\beta)$ is generated by an element algebraic over $K$. In particular, we map $K(\beta)$ to $F = K(\beta)\widehat{K}$.
\end{proof}

\begin{cor}[``Fundamental Theorem'' of non-archimedean analysis] \label{cor:fund_non-arch}
Let $(K,v)$ be a henselian valued field of rank 1 with $\charK K = 0$. Then the completion of the algebraic closure of $K$ is algebraically closed.
\end{cor}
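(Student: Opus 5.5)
Let $C \coloneqq \widehat{K^{\alg}}$ be the completion of $K^{\alg}$ with respect to the unique (by henselianity) prolongation $\tilde v$ of $v$; it is again a valued field of rank 1. The plan is to apply Proposition~\ref{prop:abs_gal_completion} to the henselian rank 1 field $K^{\alg}$ itself.

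First, $K^{\alg}$ with $\tilde v$ is henselian. Indeed, $K^{\alg}$ is algebraically closed, so every algebraic extension is trivial and the prolongation of $\tilde v$ to $(K^{\alg})^{\alg} = K^{\alg}$ is unique; equivalently $G_{K^{\alg}} = 1 = D_{K^{\alg}}$. It still has rank 1, since $\tilde v K^{\alg}$ lies in the divisible hull of $vK$ (Fact~\ref{fact:finite_orders}), and that divisible hull still embeds into $\IR$. Since $\charK K^{\alg} = 0$, we have $(K^{\alg})^{\sepc} = K^{\alg}$.

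Next, Proposition~\ref{prop:abs_gal_completion} gives $G_C \cong G_{K^{\alg}} = 1$, so $C$ is separably closed. As $\charK C = \charK K^{\alg} = 0$ (the completion contains $K^{\alg}$, hence $\IQ$), $C$ is perfect. Therefore $C^{\sepc} = C^{\alg}$ and $C$ is algebraically closed.

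I expect no step to be a genuine obstacle; the content has been absorbed into Proposition~\ref{prop:abs_gal_completion}. The only points needing care are those already made: that $\tilde v$ on $K^{\alg}$ is well defined (uniqueness from henselianity of $K$), that it has rank 1, and that the characteristic $0$ hypothesis is used precisely to pass from separably closed to algebraically closed. One should also check that Proposition~\ref{prop:abs_gal_completion} applies even though $K^{\alg}$ has no proper finite Galois extensions. In that case the map~(\ref{eq:bij_Gal}) has a singleton domain, and the surjectivity argument shows every finite Galois extension $F/C$ is generated by an element $\beta$ algebraic over $K^{\alg}$, hence $\beta \in K^{\alg} \subseteq C$, so $F = C$.
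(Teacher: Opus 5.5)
Your proposal is correct and matches the paper's intended argument. The paper states this corollary without a separate proof, directly after Proposition~\ref{prop:abs_gal_completion}; it follows by applying that proposition to the trivially henselian rank~1 field $K^{\alg}$, which gives $G_{\widehat{K^{\alg}}} \cong G_{K^{\alg}} = 1$, and then using $\charK K = 0$ to pass from separably closed to algebraically closed, exactly as you do.
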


\subsubsection{\texorpdfstring{$p$}{p}-henselianity}

An important weakening of henselianity in the context of pro-$p$ groups is the notion of \emph{$p$-henselianity}.
\begin{defi}
A valued field $(K,v)$ is called \emph{$p$-henselian} if $v$ extends uniquely to any finite Galois extension of $p$-power degree, or equivalently, to the maximal pro-$p$ Galois extension of $K$, which we denote by $K(p)$.
\end{defi}
Hensel's Lemma still holds with the same statement as in Lemma~\ref{lem:hensel} for polynomials $f(X) \in \Oo_v[X]$ that split in $K(p)$.
The notion of $p$-henselianity admits the following useful characterisation:
\begin{lem}[{\cite[Thm. 4.2.2]{Engler05}}]
    \label{lem:p-henselianity criterion}
    Let $(K,v)$ be a valued field. Then $v$ is $p$-henselian if and only if $v$ extends uniquely to any $C_p$-extension, i.e., any Galois extension of degree $p$.
\end{lem}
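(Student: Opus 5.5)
One direction is immediate. If $v$ is $p$-henselian, then by definition it extends uniquely to every finite Galois extension of $p$-power degree, and a $C_p$-extension is such an extension. The work lies in the converse. Assume $v$ extends uniquely to every $C_p$-extension of $K$; we must show that it extends uniquely to $K(p)$.

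The plan is to reduce, via Theorem~\ref{thm:Conjugacy}, to finite Galois subextensions $L/K$ of $K(p)$. Let $G = \Gal(L/K)$, a finite $p$-group, and fix a prolongation $w$ of $v$ to $L$. By Fact~\ref{fact:decomposition subgroup}, $v$ has exactly $(G:D)$ prolongations to $L$, where $D = D_w(L/K)$. So it suffices to show $D = G$, and I will argue by contradiction, supposing $D \subsetneq G$.

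The key group-theoretic input is that in a finite $p$-group every proper subgroup is contained in a maximal subgroup, and maximal subgroups are normal of index $p$. Choose a maximal subgroup $M$ with $D \subseteq M \subsetneq G$. The fixed field $F = L^M$ is then a Galois extension of $K$ of degree $p$, i.e.\ a $C_p$-extension. By hypothesis $v$ has a unique prolongation to $F$, namely $w|_F$.

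Now compute the decomposition group of $w|_F$ in $\Gal(F/K) = G/M$. An element $\sigma M$ fixes $\Oo_{w|_F}$ exactly when $\sigma\Oo_w \cap F = \Oo_w \cap F$. Conjugating the prolongation to $F$ upward using Theorem~\ref{thm:Conjugacy}, this decomposition group is the image $DM/M$. Since $D \subseteq M$, this image is trivial. Fact~\ref{fact:decomposition subgroup} then gives $(G/M : 1) = p$ prolongations of $v$ to $F$, contradicting uniqueness. Hence $D = G$ for every such $L$, so $v$ extends uniquely to each finite subextension of $K(p)$, and therefore to $K(p)$ itself (a valuation ring on the union is determined by its traces on the finite layers).

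The main obstacle is the identification of the decomposition group of the restricted valuation as the image $DM/M$, that is, the compatibility of decomposition groups with quotients. This needs care but is standard. Concretely, if $\sigma \in G$ satisfies $\sigma\Oo_w\cap F = \Oo_w\cap F$, then $\sigma w$ and $w$ are two prolongations of the same valuation on $F$. The Conjugacy Theorem applied to $L/F$ then gives $\tau \in M$ with $\tau\sigma \in D$, so $\sigma \in MD = DM$.
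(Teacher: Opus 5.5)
Your proof is correct and follows essentially the same route as the paper. Both arguments pick a maximal (hence normal, index-$p$) subgroup containing the proper decomposition group $D$, use uniqueness of the prolongation to its fixed field (a $C_p$-extension), and then apply the Conjugacy Theorem over that fixed field to reach a contradiction. The paper phrases the last step as the orbit--stabiliser count $|G| = r|D| = |N|$, while you phrase it as the decomposition group of the restricted valuation being $DM/M = 1$.
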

\begin{proof}[Proof.]
Without loss of generality, let $F/K$ be a finite Galois extension of $p$-power degree, and assume that $v$ extends uniquely to any $C_p$-extension of $K$. By the Conjugacy Theorem~\ref{thm:Conjugacy}, $G = \Gal(F/K)$ acts transitively on the set of valuation rings $\{\Oo_1, \ldots, \Oo_r\}$ of $F$ extending $\Oo_v$. If $r > 1$, then $D = D_{\Oo_1}(F/K) \lneq G$, and we may choose a maximal proper subgroup $D \le N < G$. For any finite $p$-group, the maximal proper subgroups are precisely the normal subgroups of index $p$ (see, for instance, \cite[Thm. 4.6(ii)]{Rotman95}). So $N \vartriangleleft G$ is of index $p$. By the Galois correspondence, $F^N/K$ is a Galois extension of degree $p$, so $\Oo_v$ extends uniquely to $F^N$ by assumption. Thus, $N = \Gal(F/F^N)$ acts transitively on $\{\Oo_1, \ldots, \Oo_r\}$ as well. But now $D$ is the stabiliser of $\Oo_1$ with respect to the action of either $G$ or $N$, so
\[
    |G| = r \cdot |D| = |N|,
\]
by the Orbit-Stabiliser Theorem, which is a contradiction.
\end{proof}

\begin{cor}
Let $(K,v)$ be a valued field. If $G_K$ is a pro-$p$ group, then $v$ is henselian if and only if $v$ extends uniquely to any $C_p$-extension.
\end{cor}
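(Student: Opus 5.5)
The corollary follows from Lemma~\ref{lem:p-henselianity criterion} once we check that, when $G_K$ is pro-$p$, henselianity and $p$-henselianity coincide. The whole plan is to show that in this situation $K(p) = K^{\sepc}$.

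First, identify the maximal pro-$p$ Galois extension. Since $G_K$ is a pro-$p$ group, every finite Galois extension $E/K$ inside $K^{\sepc}$ has Galois group $\Gal(E/K)$, which is a finite quotient of $G_K$ by an open normal subgroup and hence a finite $p$-group. Therefore every finite Galois extension of $K$ has $p$-power degree. As $K^{\sepc}$ is the union of its finite Galois subextensions, we get $K(p) = K^{\sepc}$.

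Next, compare the two definitions. By Definition~\ref{def:abs_ram} and the remarks in Section~\ref{sec:hens}, $(K,v)$ is henselian if and only if $v$ extends uniquely to $K^{\sepc}$. Here one uses that extensions to purely inseparable extensions are unique (Fact~\ref{fact:pure_insep}), so uniqueness on $K^{\sepc}$ is equivalent to uniqueness on $K^{\alg}$. By definition, $v$ is $p$-henselian if and only if $v$ extends uniquely to $K(p)$. Since $K(p) = K^{\sepc}$, the two conditions are the same.

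Finally, Lemma~\ref{lem:p-henselianity criterion} states that $p$-henselianity is equivalent to $v$ extending uniquely to every $C_p$-extension. Combining this with the previous step gives the corollary.

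If one prefers to work only with the finite-level definition of $p$-henselianity, namely unique extension to every finite Galois extension of $p$-power degree, the argument still goes through. Uniqueness of the prolongation to every finite Galois subextension of $K^{\sepc}$ implies uniqueness on $K^{\sepc}$. Indeed, if two prolongations to $K^{\sepc}$ differed, they would already differ on some finite Galois subextension, since any element of $K^{\sepc}$ lies in one.

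The only point needing care is the identification $K(p) = K^{\sepc}$. It relies on finite quotients of a pro-$p$ group being $p$-groups, which is standard.
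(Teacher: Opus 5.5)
Your proposal is correct and is the argument the paper intends. The paper states this corollary without proof right after Lemma~\ref{lem:p-henselianity criterion}, and it follows exactly as you say: $G_K$ pro-$p$ forces $K(p) = K^{\sepc}$, so henselianity and $p$-henselianity coincide, and your remark reconciling the finite-level and $K(p)$-level definitions is a correct extra detail.
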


\begin{cor}[{\cite[Cor. 4.2.4]{Engler05}}]
\label{cor:Kummer p-henselianity criterion}
Let $(K,v)$ be a valued field such that $\charK(Kv) \ne p$ and $\zeta_p \in K$. Then $v$ is $p$-henselian if and only if $1 + \Mm_v \subseteq K^{\times p}$.
\end{cor}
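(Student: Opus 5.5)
The plan is to combine Lemma~\ref{lem:p-henselianity criterion} with Kummer theory. Since $\zeta_p \in K$ and $\charK K \ne p$ (as $\charK Kv \ne p$, and $\zeta_p \in K$ forces $\charK K \ne p$ anyway), every $C_p$-extension of $K$ is of the form $K(\sqrt[p]{a})$ with $a \in K^\times \setminus K^{\times p}$. By Lemma~\ref{lem:p-henselianity criterion}, $v$ is $p$-henselian if and only if $v$ extends uniquely to every such extension. So it suffices to show the following: $1+\Mm_v \subseteq K^{\times p}$ holds if and only if no Kummer extension $K(\sqrt[p]{a})/K$ of degree $p$ admits more than one prolongation of $v$.

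For the forward direction ($p$-henselian $\Rightarrow$ $1+\Mm_v \subseteq K^{\times p}$), take $a = 1+m$ with $m \in \Mm_v$. Consider $f(X) = X^p - a \in \Oo_v[X]$. Its reduction $X^p - 1$ has the simple root $1$ in $Kv$, because $\charK Kv \ne p$. Since $f$ splits in $K(\zeta_p, \sqrt[p]{a}) = K(\sqrt[p]{a}) \subseteq K(p)$, the $p$-henselian version of Hensel's Lemma gives a root in $K$. Thus $a \in K^{\times p}$.

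For the converse, assume $1+\Mm_v \subseteq K^{\times p}$, and suppose $L = K(\alpha)$ with $\alpha^p = a$ is a $C_p$-extension in which $v$ has more than one prolongation. Because $[L:K] = p$ is prime and the Galois group acts transitively, the decomposition group is trivial, and there are exactly $p$ prolongations $w_1,\dots,w_p$. By Fact~\ref{fact:decomposition subgroup}, each $(L,w_i)/(K,v)$ is then immediate. The key step is to produce an element $b \in L$ of the form $b = \alpha^j c$ (with $c \in K$ and $p\nmid j$, so that $b^p \in K\setminus K^{\times p}$), which is close to $1$ for one prolongation. We use weak approximation (Fact~\ref{fact:weak_approx}) on the incomparable rings $\Oo_{w_i}$, together with immediacy of $w_1$.

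Concretely, first scale $a$ by $p$-th powers from $K$ so that $v(a) = 0$; this is possible because $w_1(\alpha) \in vK$ by immediacy, and $p\,w_1(\alpha) = v(a)$. Next, since $\alpha w_1 \in Kv$, again by immediacy, choose $c \in \Oo_v^\times$ with $\overline{\alpha c^{-1}} = 1$ in $Lw_1$. Then $\beta = \alpha/c$ satisfies $\beta^p = a/c^p \in K$ and $w_1(\beta - 1) > 0$. Restricting to $K$, we get $v(\beta^p - 1) > 0$, so $\beta^p \in 1+\Mm_v \subseteq K^{\times p}$. Hence $\beta^p$ is a $p$-th power in $K$, and so is $a = c^p\beta^p$. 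This contradicts $[L:K] = p$.

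The main obstacle is the normalisation step: ensuring that $\alpha$ can be adjusted by elements of $K$ to be a $w_1$-unit with residue in $Kv$. This is exactly where immediacy of each $w_i$ is used, which comes from the decomposition group being trivial, i.e., from non-uniqueness. Once that is done, the hypothesis $1+\Mm_v\subseteq K^{\times p}$ finishes the argument directly, and weak approximation turns out not to be needed.
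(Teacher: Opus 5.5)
Your proof is correct. The forward direction is the paper's argument verbatim. For the converse you take a different route.

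\textbf{The paper's route.} It proves uniqueness directly by a case split on $x \in K^\times \setminus K^{\times p}$.
\begin{itemize}
\item If $vx \notin pvK$, the values $w(x^{i/p})$ are distinct modulo $vK$.
\item If $vx \in pvK$, one normalises $x$ to a unit. Then the hypothesis $1+\Mm_v \subseteq K^{\times p}$ forces $\overline{x} \notin (Kv)^{\times p}$, so the residues $\overline{x}^{\,i/p}$ are $Kv$-linearly independent.
\end{itemize}
In both cases every prolongation $w$ satisfies $w\bigl(\sum_i a_i x^{i/p}\bigr) = \min_i w(a_i x^{i/p})$, which pins $w$ down explicitly.

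\textbf{Your route.} You argue by contradiction.
\begin{itemize}
\item Non-uniqueness forces $g=p$ and trivial decomposition group, so every prolongation is immediate (Fact~\ref{fact:decomposition subgroup}). This is the same counting as in Step~5 of the proof of Proposition~\ref{prop:classical case}.
\item A single immediate prolongation $w_1$ already lets you rescale $\alpha$ into $1+\Mm_{w_1}$.
\item This gives $a \in K^{\times p}(1+\Mm_v) = K^{\times p}$, a contradiction.
\end{itemize}

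\textbf{Comparison.} Both arguments share the normalise-to-a-unit-and-look-at-the-residue step; the logic is dual. The paper shows every prolongation is totally ramified or inert, hence unique. You show a non-unique prolongation would be immediate, which is impossible.

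Your version is shorter and isolates the real point: no Kummer extension $K(\sqrt[p]{a})$ with $a\notin K^{\times p}$ can carry an immediate prolongation. It leans on ramification-theoretic facts (transitivity of the Galois action and immediacy of the decomposition field). The paper's version uses only the strict triangle inequality and residue linear independence. It also yields more: an explicit formula for the unique prolongation, and the fact that each $C_p$-extension is totally ramified or inert, hence defectless.

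Neither converse argument uses $\charK Kv \ne p$; that hypothesis is needed only for Hensel's Lemma in the forward direction. Finally, the weak-approximation plan and the elements $\alpha^j c$ in your third paragraph are vestigial, as you note yourself, and should be deleted from the write-up.
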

\begin{proof}
$\Rightarrow$. Fix $a \in \Mm_v$ and consider the polynomial $f(X) = X^p - (1 + a) \in \Oo_v[X]$, which splits in $K(p)$. By the $p$-analogue of Hensel's Lemma, it has a root in $K^\times$.

$\Leftarrow$. By Lemma~\ref{lem:p-henselianity criterion} and Kummer theory, it suffices to show that $v$ extends uniquely to $K(x^{1/p})$ for any fixed $x \in K^\times \setminus K^{\times p}$. If $vx \notin pvK$, then
\begin{equation} \label{eq:p-root_min_id}
    w\left(a_0 + a_1x^{1/p} + \ldots + a_{p - 1} x^{(p - 1)/p}\right) = \min_{0 \le i \le p - 1} w\bigl(a_ix^{i/p}\bigr) \quad \text{for all $a_0, \ldots, a_{p - 1} \in K$},
\end{equation}
holds for any extension $w$ of $v$ and thus determines it uniquely.
If $vx \in pvK$, we may assume without loss of generality that $vx = 0$. We must have $\overline{x} \notin (Kv)^{\times p}$, for otherwise, $1 + \Mm_v \subseteq K^{\times p}$ would imply $x \in K^{\times p}$. But then again, the identity (\ref{eq:p-root_min_id}) will hold because $\bigl\{1, \overline{x}^{1/p}, \ldots, \overline{x}^{(p - 1)/p}\bigr\}$ is linearly independent over $Kv$. In this case, $v$ also extends uniquely.
\end{proof}

\subsection{Coarsenings and decomposition}
\label{sec:coars&decomp}

Fix a field $K$. One may endow $K$ with several different valuations (equivalently, valuation rings). The following relations are of particular significance.
\begin{defi}
Let $v,w$ be valuations on a field $K$. Then $w$ is a \emph{coarsening} of $v$ (and $v$ is a \emph{refinement} of $w$) if $\Oo_v \subseteq \Oo_w$. Moreover, we say $v$ and $w$ are \emph{dependent} if $\Oo_v$ and $\Oo_w$ have a non-trivial common coarsening, i.e., the compositum $\Oo_v\Oo_w$ is not equal to $K$. Otherwise, we say $v$ and $w$ are \emph{independent}.
\end{defi}

The following two facts provide a first indication of the usefulness of these notions. Note that Fact~\ref{fact:approx} is a partial strengthening of the Weak Approximation Theorem (Fact~\ref{fact:weak_approx}).

\begin{fact}[{\cite[Sec. 2.3]{Engler05}}]
Let $v, w$ be valuations on a field $K$. Then $v$ and $w$ induce the same topology on $K$ if and only if $\Oo_v$ and $\Oo_w$ are dependent.
\end{fact}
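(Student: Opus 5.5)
We prove the two directions separately, translating both sides into statements about the valuation rings $\Oo_v$ and $\Oo_w$. Write $\Oo \coloneqq \Oo_v\Oo_w$ for the smallest subring of $K$ containing both. The first observation is that $\Oo$ is itself a valuation ring, since it contains the valuation ring $\Oo_v$. The rings between $\Oo_v$ and $K$ are exactly the coarsenings of $v$, and they are linearly ordered by inclusion. Consequently, $v$ and $w$ are dependent precisely when $\Oo \neq K$, and in that case $\Oo$ is the finest common coarsening $u$ of $v$ and $w$.

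\emph{Dependent $\Rightarrow$ same topology.} Suppose $u$ is a non-trivial common coarsening. The plan is to show that both $v$ and $w$ induce the topology of $u$; by symmetry it suffices to treat $v$. Let $\Delta \subseteq vK$ be the convex subgroup with $uK \cong vK/\Delta$, which is proper because $u$ is non-trivial.
\begin{itemize}
    \item A $v$-ball contains a $u$-ball. Given $\gamma > 0$ in $vK$, pick $\delta > 0$ in $vK \setminus \Delta$ with $\delta > \gamma$. If $u(y) > u(c)$ for some $c$ with $v(c) = \delta$, then $v(y) - \delta \notin \Delta$ and is positive, so $v(y) > \delta > \gamma$.
    \item A $u$-ball contains a $v$-ball. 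If $v(y) > v(c)$ and $v(y) - v(c) \notin \Delta$, then $u(y) > u(c)$. So choose $c'$ with $v(c') - v(c)$ positive and outside $\Delta$; then $v(y) > v(c')$ forces $u(y) > u(c)$.
\end{itemize}
These two inclusions give equality of the topologies.

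\emph{Same topology $\Rightarrow$ dependent.} We argue by contraposition: assume $\Oo_v\Oo_w = K$ and show the topologies differ. Note first that independent valuations are incomparable, unless one of them is trivial. If one is trivial it induces the discrete topology, and the statement is degenerate: a non-trivial valuation is never discrete, and two trivial valuations are equal. So we may assume both are non-trivial and, being incomparable, Fact~\ref{fact:weak_approx} applies. Its precise-approximation strengthening for independent valuations is Fact~\ref{fact:approx}, stated immediately below; we may instead derive the needed instance directly.

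The goal is an element $x$ with $v(x) > 0$ and $w(x) < 0$; then the powers $x^n \to 0$ in the $v$-topology but not in the $w$-topology, so the topologies differ. Independence gives $K = \Oo_v\Oo_w$, so we can write $1/m = \sum a_i b_i$ with $m \in \Mm_w$ nonzero, $a_i \in \Oo_v$ and $b_i \in \Oo_w$.

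We expect this step to be the main obstacle: producing such an $x$ from independence alone, without assuming the full Approximation Theorem. Our first attempt is the following. The relation $1/m = \sum a_i b_i$ shows that $\Oo_w[\Oo_v]$ contains an element of negative $w$-value. Hence some $a \in \Oo_v$ has $w(a) < 0$, since otherwise $\Oo_v \subseteq \Oo_w$, contradicting incomparability. Symmetrically, some $b \in \Oo_w$ has $v(b) < 0$. We then combine these elements, for instance via $x = \frac{1}{1 + b^{-1}}$-type expressions as in the standard proof of weak approximation, to force $v(x) > 0$ and $w(x) < 0$.

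If a direct construction proves awkward, we fall back on a cleaner route through rank. Since $\Oo_v\Oo_w = K$, the coarsenings of $v$ and of $w$ share only the trivial one, and one can then reduce to the rank-one coarsenings, which have the same topologies as $v$ and $w$ respectively. Two non-equivalent rank-one valuations are then separated by a standard absolute-value argument.
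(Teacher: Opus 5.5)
Your proof of \emph{dependent $\Rightarrow$ same topology} is correct. A non-trivial coarsening $u$ induces the same topology as $v$, and your two ball inclusions show exactly this.

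The converse has a genuine gap. The only thing you extract from $\Oo_v\Oo_w=K$ is incomparability. But an element $x$ with $v(x)>0>w(x)$ exists for \emph{every} incomparable pair, dependent ones included, so it cannot witness different topologies. For instance, on $K=k(s,t)$ let $u$ be the $s$-adic valuation (residue field $k(t)$), and let $v$, $w$ be the composites of $u$ with the $t$-adic and the $t^{-1}$-adic valuation on $k(t)$, respectively. Then $v(t)>0>w(t)$, yet $v$ and $w$ are dependent and both induce the $s$-adic topology.

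What fails is the claim that $x^n\to 0$ in the $v$-topology. This holds only when $\IZ v(x)$ is cofinal in $vK$; in the example $u(t^n)=0$ for all $n$. Your fallback does not rescue the argument, because a valuation need not have any rank-one coarsening. Such a coarsening requires a maximal proper convex subgroup of $vK$, and there is none if, say, $vK=\bigoplus_{n\in\IN}\IZ$ is ordered so that each coordinate is infinitely larger than all earlier ones. In that case no element of $K$ at all satisfies $x^n\to0$, so choosing $x$ more carefully cannot save the power-sequence idea either.

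The missing idea is to compare neighbourhoods of $0$ directly, which gives a two-line proof. The ring $\Oo_w$ is $w$-open. If the topologies agree, it contains a $v$-ball $\{y : v(y)>\gamma\}$, hence $c\Oo_v\subseteq\Oo_w$ for any $c$ with $v(c)>\gamma$. Since $\Oo_v\Oo_w$ consists of finite sums $\sum a_ib_i$ with $a_i\in\Oo_v$ and $b_i\in\Oo_w$, this yields $c\,\Oo_v\Oo_w\subseteq\Oo_w$. So $\Oo_v\Oo_w=K$ would force $\Oo_w=K$.

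In your contrapositive form: suppose $\Oo_v\Oo_w=K$, fix $z$ with $w(z)<0$, and for arbitrary $c\in K^\times$ write $c^{-1}z=\sum a_ib_i$. Then some $ca_i\notin\Oo_w$. Hence $\Oo_w$, although a $w$-neighbourhood of $0$, contains no $v$-ball around $0$. Your relation $1/m=\sum a_ib_i$ was this computation without the free scaling parameter $c$.

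Finally, both valuations must be assumed non-trivial, as in the cited source. Two trivial valuations induce the same topology although $\Oo_v\Oo_w=K$, so that case is a genuine exception to the statement, not merely a degenerate one.
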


\begin{fact}[Approximation Theorem, {\cite[Thm. 2.4.1]{Engler05}}] \label{fact:approx}
Let $\Oo_{v_1}, \ldots, \Oo_{v_n}$ be pairwise independent, non-trivial valuation rings on $K$. Given tuples $(x_i) \in K^n$ and $(\gamma_i) \in \prod_{i = 1}^n v_iK$, there exists $x \in K$ such that
\[
    v_i(x - x_i) > \gamma_i \quad \text{for $i = 1, \ldots, n$.}
\]
\end{fact}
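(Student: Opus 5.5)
The target is the Approximation Theorem (Fact~\ref{fact:approx}): given pairwise independent non-trivial valuation rings $\Oo_{v_1},\ldots,\Oo_{v_n}$, targets $x_i$ and bounds $\gamma_i$, find $x$ with $v_i(x-x_i)>\gamma_i$ for every $i$. I would reduce it to Weak Approximation (Fact~\ref{fact:weak_approx}) together with the topological content of independence, in three steps.

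\textbf{Step 1: separating elements.} For each $i$ I want $y_i \in K$ with $v_i(y_i - 1)$ arbitrarily large and $v_j(y_i)$ arbitrarily large for all $j \ne i$. Independent rings are in particular incomparable, so Weak Approximation gives $z$ with $z-1 \in \Mm_{v_i}$ and $z \in \Mm_{v_j}$ for $j\neq i$. Powers of $z$ push $v_j(z^m)=m\,v_j(z)$ up, but only through the convex subgroup generated by $v_j(z)$. This is where independence must be used.

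\textbf{Step 2: unbounded elements.} Independence means $\Oo_{v_i}\Oo_{v_j}=K$. So for each $i$ and any $\delta\in v_iK$ there is $a\in \Oo_{v_j}$ with $v_i(a)<-\delta$; dually, there are elements small in $v_i$ yet still lying in $\Oo_{v_j}$. The idea is to choose $c\in K$ with $v_i(c)>\gamma$ for the prescribed $\gamma$, use weak approximation so that $c$ is also a unit for the other valuations, and replace $z$ by $z' = z^m/(z^m + (1-z^m)c^{-1}\cdot\text{stuff})$, or more simply form $y = 1/(1+w)$ with $w$ chosen $v_i$-large and $v_j$-small. The cleanest route I would follow is to use the induced topologies. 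Independent non-trivial valuations induce distinct, hence (by the cited Fact on dependence and topology) independent, field topologies. From this one derives that the diagonal image of $K$ in $\prod_i (K,v_i)$ is dense. This is the classical Artin–Whaples/Stone-type argument: find $z$ with $v_1(z)>0$ and $v_j(z)<0$ for $j\ge2$, then $(1+z^{m})^{-1}$ tends to $1$ in $v_1$ and to $0$ in $v_j$ as $m\to\infty$.

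\textbf{Step 3: assembly.} Once the $y_i$ exist, set $x=\sum_i x_i y_i$. Then $v_i(x-x_i)\ge \min\bigl(v_i(x_i)+v_i(y_i-1),\,\min_{j\neq i} v_i(x_j)+v_i(y_j)\bigr)$, which exceeds $\gamma_i$ for a good enough choice of the $y_i$.

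\textbf{Main obstacle.} The hard part is producing $z$ with $v_1(z)>0$ while $v_j(z)<0$ for $j\geq 2$, \emph{and} with $v_1(z)$ cofinal in $v_1K$ as $m$ grows. Cofinality fails when $v_1$ has higher rank. I would handle this by induction on $n$. For $n=2$, independence yields $a\in\Oo_{v_2}$ with $v_1(a)$ below any prescribed value; combining this with the Weak Approximation element gives $z$ with $v_1(z)>\gamma$ and $v_2(z)<0$. For general $n$, the inductive hypothesis supplies elements simultaneously controlled at $v_2,\ldots,v_n$, and these are multiplied together in the standard way. The quantity $(1+z)^{-1}$ then satisfies $v_1((1+z)^{-1}-1)=v_1(z)>\gamma$ and $v_j((1+z)^{-1})=-v_j(z)>0$. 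Large $v_j$-values are obtained by replacing $z$ by $z\,b$, where $b$ is chosen by the $n=2$ case at each $j$ to be $v_j$-small but a $v_1$-unit.
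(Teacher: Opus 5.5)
The paper gives no proof of this Fact; it quotes it from Engler--Prestel. So your sketch has to stand on its own, and it has a genuine gap at the step that matters.

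\textbf{What works.} Step~3 correctly reduces everything to elements $y$ with $v_1(y-1)>\delta_1$ and $v_j(y)>\delta_j$ for $j\ge 2$. Taking $y=(1+z)^{-1}$ works once you have $z$ with $v_1(z)>\delta_1\ge 0$ and $v_j(z)<-\delta_j\le 0$. Your case $n=2$ is also fine. The condition $\Oo_{v_1}\Oo_{v_2}=K$ says exactly that $v_1(\Oo_{v_2}\setminus\{0\})$ is unbounded below, and symmetrically for $v_2$. So $z=a'/a$ works, with $a\in\Oo_{v_2}$, $v_1(a)<-\delta_1$ and $a'\in\Oo_{v_1}$, $v_2(a')<-\delta_2$.

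\textbf{The gap.} For $n\ge 3$ you never actually produce $z$.
\begin{itemize}
\item ``Multiplied together in the standard way'' is the Artin--Whaples case analysis. In the bad case it uses powers $b^m$, and so needs the cofinality that you yourself note fails in higher rank.
\item The boost $z\mapsto zb$, with $b$ taken from the case $n=2$ for the pair $(v_1,v_j)$, leaves $v_k(b)$ for $k\ne 1,j$ uncontrolled. It can therefore destroy $v_k(zb)<0$.
\item Taking the factors from the inductive hypothesis instead still leaves one valuation uncontrolled in each factor. A product $bc$ then only succeeds when that uncontrolled value happens to have the right sign.
\item The topological route in Step~2 does not help. ``Distinct topologies, hence the diagonal is dense'' is a restatement of the theorem, and the $(1+z^m)^{-1}$ argument is again the rank-one one.
\end{itemize}

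\textbf{The missing idea} is to combine additively, via the strict ultrametric inequality, rather than by powers. Let $S(m)$ be the statement: for any $m$ of the valuations including $v_1$, and any bounds, there is $z$ with $v_1(z)>\delta_1$ and $v_j(z)<-\delta_j$ for the others. Assume $S(n-1)$ and let $\delta_1,\dots,\delta_n\ge 0$.
\begin{itemize}
\item First take $c$ from the case $n=2$ for $(v_1,v_n)$, with $v_1(c)>\delta_1$ and $v_n(c)<-\delta_n$.
\item Then take $b$ from $S(n-1)$ for $v_1,\dots,v_{n-1}$, with $v_1(b)>\delta_1$ and $v_j(b)<\min\{-\delta_j,\,v_j(c),\,2v_j(c)\}$ for $2\le j\le n-1$.
\item Put $z=b+c$ if $v_n(b)\ne v_n(c)$, and $z=b+c^2$ otherwise.
\end{itemize}
Then $v_1(z)>\delta_1$, using $v_1(c^2)=2v_1(c)>\delta_1$. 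For $2\le j\le n-1$ we get $v_j(z)=v_j(b)<-\delta_j$. In both cases $v_n(z)\le v_n(c)<-\delta_n$; in the second case this is because $v_n(c^2)=2v_n(c)<v_n(c)=v_n(b)$. This gives $S(n)$ with no cofinality needed, and your Steps~2--3 then finish the proof.

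Alternatively, you can apply the Chinese remainder theorem in $R=\bigcap_i\Oo_{v_i}$. By incomparability, its maximal ideals are the $\Mm_{v_i}\cap R$. Independence shows that $\{x\in R: v_i(x)>\gamma_i\}$ lies in no maximal ideal other than $\Mm_{v_i}\cap R$, so these ideals are pairwise comaximal.
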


The full strength of coarsenings only becomes apparent when we shift our focus back to the valuation and place formalism.

\begin{lem}
    \label{lem:coarsening equivalence}
    Let $v,w$ be valuations on a field $K$. The following are equivalent:
    \begin{enumerate}[(i)]
        \item $\Oo_v \subseteq \Oo_w$
        \item $\Oo_v^\times \subseteq \Oo_w^\times$
        \item $\Mm_w \subseteq \Mm_v$ [sic]
        \item there exists a convex subgroup $\Delta \le \Gamma_v$ such that $w$ is given by the composition
        \begin{diagram}
            K^\times \ar[r, "v", two heads] \ar[rrr, bend right=30, "w", two heads] & \Gamma_v \ar[r] & \Gamma_v/\Delta \ar[r, phantom, "\cong"] &[-4ex] \Gamma_w,
        \end{diagram}
        and $v$ induces a valuation $\overline{v}$ on $Kw$ with $\overline{v}(Kw) = \Delta$ and $(Kw)\overline{v} = Kv$.
    \end{enumerate}
\end{lem}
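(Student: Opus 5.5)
We prove the cycle of implications (i)$\Rightarrow$(ii)$\Rightarrow$(iii)$\Rightarrow$(i), and then show (i)$\Leftrightarrow$(iv). Throughout we use that valuation rings are determined by their units and maximal ideals: $\Oo_v^\times = \Oo_v \setminus \Mm_v$, and for $x \ne 0$ we have $x \in \Mm_v$ iff $x^{-1} \notin \Oo_v$.

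For (i)$\Rightarrow$(ii), take $x \in \Oo_v^\times$. Then $x, x^{-1} \in \Oo_v \subseteq \Oo_w$, so $x \in \Oo_w^\times$.

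For (ii)$\Rightarrow$(iii), take $0 \ne x \in \Mm_w$ and suppose $x \notin \Mm_v$. Either $x \in \Oo_v^\times \subseteq \Oo_w^\times$, which contradicts $x \in \Mm_w$. Or $x \notin \Oo_v$, in which case $x^{-1} \in \Mm_v \subseteq \Oo_v$. Then $x^{-1} \in \Oo_v$ and $x \notin \Oo_v$; we must reach a contradiction from (ii) alone. Note $1 + x^{-1} \in \Oo_v^\times \subseteq \Oo_w^\times$. On the other hand, $x \in \Mm_w$ gives $w(x^{-1}) < 0$, so $w(1 + x^{-1}) = w(x^{-1}) < 0$, contradicting $1 + x^{-1} \in \Oo_w^\times$.

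For (iii)$\Rightarrow$(i), take $x \in \Oo_v$ and suppose $x \notin \Oo_w$. Then $x^{-1} \in \Mm_w \subseteq \Mm_v$, so $x \notin \Oo_v$, a contradiction.

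For (i)$\Rightarrow$(iv), set $\Delta \coloneqq v(\Oo_w^\times)$. It is a subgroup because $\Oo_w^\times$ is a group and $v$ is a homomorphism on $K^\times$. To see it is convex, let $0 \le v(y) \le v(x)$ with $x \in \Oo_w^\times$. Then $y \in \Oo_v \subseteq \Oo_w$, and $x/y \in \Oo_v \subseteq \Oo_w$, so $y^{-1} = (x/y)\,x^{-1} \in \Oo_w$. Hence $y \in \Oo_w^\times$.

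The kernel of the composite $K^\times \to \Gamma_v \to \Gamma_v/\Delta$ is $v^{-1}(\Delta) = \Oo_v^\times \Oo_w^\times = \Oo_w^\times$, using (ii). Next we show the positive cone of $\Gamma_v/\Delta$ corresponds to $\Oo_w$. If $v(x) + \Delta \ge 0$, then $v(x) \ge -\delta$ for some $\delta = v(u) \in \Delta$ with $u \in \Oo_w^\times$. Hence $xu \in \Oo_v \subseteq \Oo_w$, and so $x \in \Oo_w$. The converse holds because $v$ is order preserving. Since a valuation is determined up to equivalence by its valuation ring, together with the construction $v_{\Oo}$, this yields $\Gamma_v/\Delta \cong K^\times/\Oo_w^\times \cong \Gamma_w$ compatibly.

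The induced valuation on $Kw$ has valuation ring $\overline{\Oo_v} \coloneqq \Oo_v/\Mm_w$; this makes sense because $\Mm_w \subseteq \Mm_v \subseteq \Oo_v$. We check that $\overline{\Oo_v}$ is a valuation ring of $Kw$. Any nonzero $\bar{x} \in Kw$ lifts to some $x \in \Oo_w^\times$, and then $x$ or $x^{-1}$ lies in $\Oo_v$.

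The value group is $(Kw)^\times/\overline{\Oo_v}^\times$. This is the image of $\Oo_w^\times / \Oo_v^\times$, once we verify that the kernel of $\Oo_w^\times \to (Kw)^\times/\overline{\Oo_v}^\times$ is $\Oo_v^\times$. Indeed, if $x \in \Oo_w^\times$ has residue in $\overline{\Oo_v}^\times$, then $x \in \Oo_v + \Mm_w = \Oo_v$, and likewise $x^{-1} \in \Oo_v$. So the value group is isomorphic to $v(\Oo_w^\times) = \Delta$.

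For the residue field, we have $\overline{\Oo_v}/\overline{\Mm_v} = (\Oo_v/\Mm_w)/(\Mm_v/\Mm_w) \cong \Oo_v/\Mm_v = Kv$.

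For (iv)$\Rightarrow$(i), if $w = \pi \circ v$ with $\pi\colon \Gamma_v \to \Gamma_v/\Delta$ order preserving, then $v(x) \ge 0$ implies $w(x) \ge 0$. Hence $\Oo_v \subseteq \Oo_w$.

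The main obstacle is bookkeeping rather than depth. The delicate points are the convexity of $\Delta$ and identifying the value group of $\overline{v}$ with $\Delta$ precisely, not merely a quotient of it. Both reduce to the kernel computation $\Oo_w^\times \cap (\Oo_v + \Mm_w)^{\times} = \Oo_v^\times$ given above.
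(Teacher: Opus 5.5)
Your proof is correct and follows essentially the paper's route ($\Delta = v(\Oo_w^\times)$, $\Oo_{\overline v} = \Oo_v/\Mm_w$, $\Mm_{\overline v} = \Mm_v/\Mm_w$). The differences are that you check the convexity of $\Delta$ and the order on $\Gamma_v/\Delta$ by hand, where the paper reads both off from the natural quotient map $K^\times/\Oo_v^\times \to K^\times/\Oo_w^\times$, and that you supply the (ii)$\Rightarrow$(iii) step via $1+x^{-1}$, which the paper's sketch leaves implicit.

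One justification needs fixing. Order-preservation of $\Gamma_v \to \Gamma_v/\Delta$ only places $\Oo_v$ inside $\{x : v(x)+\Delta \ge 0\}$, not $\Oo_w$. Argue instead that a nonzero $x \in \Oo_w$ is either a unit of $\Oo_w$, so $v(x) \in \Delta$, or lies in $\Mm_w \subseteq \Mm_v$ by (iii), so $v(x) > 0$.
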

\begin{proof}[Proof (sketch).]
Note that
\[
    K = \Oo_v \sqcup \Mm_v^{-1} = \Oo_w \sqcup \Mm_w^{-1}
\]
can be written as the disjoint union of elements of valuation $\ge 0$ and $<0$. Taking complements, this implies the---at first sight counterintuitive---inclusion $\Mm_w \subseteq \Mm_v$.

Given a convex subgroup $\Delta \le \Gamma_v$, the composition of $v$ with the natural quotient map
\begin{diagram}[cramped]
    w : K^{\times} \ar[r, "v", two heads] & \Gamma_v \ar[r, two heads] & \Gamma_v/\Delta
\end{diagram}
is a valuation with valuation ring
\[
    \Oo_w = \{x \in K : w(x) \ge 0 \} \supseteq \{x \in K : v(x) \ge 0 \} = \Oo_v.
\]
Conversely, given a coarsening $\Oo_v \subseteq \Oo_w$, we obtain a commutative diagram
\begin{center}
\begin{tikzcd}[remember picture]
    K^\times \ar[r, two heads] \ar[rd, "v"'] & K^\times/\Oo_v^\times \ar[r, two heads, "q"] & K^\times/\Oo_w^\times \\
    & \Gamma_v \ar[r, dashed, two heads, "{q'}"] & \Gamma_w
\end{tikzcd}
\begin{tikzpicture}[overlay,remember picture]
    \path (\tikzcdmatrixname-1-2) to node[midway,sloped]{$\cong$}
    (\tikzcdmatrixname-2-2);
    \path (\tikzcdmatrixname-1-3) to node[midway,sloped]{$\cong$}
    (\tikzcdmatrixname-2-3);
\end{tikzpicture}
\end{center}
where the dashed arrow $q'$ is a quotient map induced by $q$ and the two isomorphisms. Unravelling definitions, one sees that $\Delta \coloneqq \ker q' = v(\Oo_w^\times)$ and $w = q' \circ v$. Furthermore, note that $v$ restricts to a homomorphism $\Oo_w^{\times} \epi \Delta$, which induces a well-defined valuation
\begin{diagram}[cramped]
    \overline{v} : (Kw)^{\times} = \Oo_w^{\times}/\Mm_w \ar[r, two heads] & \Delta,
\end{diagram}
where $\Mm_w = \{x \in K : v(x) > \Delta\}$ is the coset of value $\infty$ with respect to $\overline{v}$.
Moreover, we have
\begin{gather*}
    \Oo_{\overline{v}} = \overline{\Oo_v} = \Oo_v/\Mm_w \subseteq Kw = \Oo_w/\Mm_w \\
    \Mm_{\overline{v}} = \overline{\Mm_v} = \Mm_v/\Mm_w,
\end{gather*}
and consequently, $(Kw)\overline{v} = Kv$.
\end{proof}

\subsubsection{Composition of places}
\label{sec:comp_places}

The second---and perhaps most practical---description of coarsenings is given by the formalism of places. Indeed, for a valued field $(K,v)$ with a coarsening $\Oo_w \supseteq \Oo_v$, one can separate the structural information encoded in $v$ into two parts using the coarsening $\Oo_w$. In the mixed characteristic case, there are two canonical choices of coarsenings, leading to the notion of \emph{Standard Decomposition} that we will introduce in Section~\ref{sec:stand_decomp}.

Let us now explain in more detail what is meant by ``structural information'' and how decompositions are obtained.

As seen above, the valuation $w$ can be written as a composition
\begin{diagram}[cramped]
    w : K^{\times} \ar[r,"v"] & \Gamma_v \ar[r] & \Gamma_v/\Delta \cong \Gamma_w,
\end{diagram}
where $\Delta = v(\Oo_w^{\times})$ is a convex subgroup of $\Gamma_v$, and $v$ induces a valuation $\overline{v}$ on the residue field $Kw$. How can we express this setup in terms of places---namely, what is the relationship between $\varphi_v$, $\varphi_w$, and $\varphi_{\overline{v}}$, the places associated to $v$, $w$, and $\overline{v}$, respectively?

Let $a \in K$. By construction, these places act on $a$ according to the following rules:
\[
    a \stackrel{\varphi_w}{\longmapsto} \begin{cases}
        \text{if $a \in \Oo_w:$} & a\Mm_w \stackrel{\varphi_{\overline{v}}}{\longmapsto} \begin{cases}
            a\Mm_v & \text{if $a \in \Oo_v$} \\
            \infty & \text{if $a \notin \Oo_v$}
        \end{cases}\\
        \text{if $a \notin \Oo_w:$} & \infty
    \end{cases}
\]
Therefore, $\varphi_v$ is---as a matter of fact---the composition of $\varphi_w$ and $\varphi_{\overline{v}}$:
\begin{diagram}
    K \ar[r,"\varphi_w"] \ar[rr,bend right=30,"\varphi_v"] & Kw \cup \{\infty\} \ar[r,"\varphi_{\overline{v}}"] & Kv \cup \{\infty\}
\end{diagram}
(Note that here we extend $\varphi_{\overline{v}}$ by the additional assignment $\infty \longmapsto \infty$.)

Conversely, the composition of two places $\varphi_w : K \longrightarrow Kw \cup \{\infty\}$ and $\psi : Kw \longrightarrow k \cup \{\infty\}$,
\begin{diagram}[cramped]
    K \ar[r,"\varphi_w"] & Kw \cup \{\infty\} \ar[r,"\psi"] & k \cup \{\infty\},
\end{diagram}
gives rise to a coarsening $\Oo_w = \varphi_w^{-1}(Kw)$ of the valuation ring $\Oo_v \coloneqq \varphi_w^{-1}(\psi^{-1}(k))$. Again, these two constructions are mutually inverse (subject to the same proviso that equivalent places are identified; see the end of Section~\ref{sec:places}).

\begin{slogan*}
Any coarsening induces a decomposition into two places.
\end{slogan*}

We note that the discussions of Sections~\ref{sec:val+val_ring},~\ref{sec:places},~\ref{sec:coars&decomp}, and~\ref{sec:comp_places} can be succinctly summarised in categorical terms:

\begin{thm}
Let $K$ be a fixed field. The following thin categories are equivalent:
\begin{enumerate}[(a)]
    \item the poset category of valuation rings $\Oo$ on $K$ ordered by inclusion $\subseteq$;
    \item objects are valuations $v$ on $K$; morphisms between $v$ and $v'$ are pairs $(q,\overline{v})$, such that $q$ is a factoring of $v'$ through the value group of $v$ and $\overline{v}$ is the induced valuation on $Kv'$;
    \item objects are places $\varphi$ on $K$; morphisms between $\varphi$ and $\varphi'$ are factorings $\psi$ of $\varphi$ through the residue field of $\varphi'$.
\end{enumerate}
In symbols (omitting $\infty$ for readability):
\[
    \{\Oo \mid\; \subseteq\} \simeq
    \left\{
         \smash[b]{\begin{tabular}[ht]{@{}c@{}}
            $v : K \epi \Gamma$ \\
            valuations
        \end{tabular}} \ \middle|\ \phantom{\varphi' :\;}
        \begin{tikzcd}[cramped, sep=small, remember picture]
            \makebox[0pt][r]{$v :\;$} K \ar[r, two heads] & \Gamma \ar[d,"\, q", two heads] & Kv' \ar[d,"\overline{v}", near start, two heads]\\
            \makebox[0pt][r]{$v' :\;$} K \ar[r, two heads] & \Gamma' & \ker q
        \end{tikzcd}
        \begin{tikzpicture}[overlay,remember picture]
            \path (\tikzcdmatrixname-1-1) to node[midway,sloped]{$=$}
            (\tikzcdmatrixname-2-1);
        \end{tikzpicture}
    \right\} \simeq
    \left\{
        \smash[b]{\begin{tabular}[h]{@{}c@{}}
            $\varphi : K \epi k$ \\
            places
        \end{tabular}} \ \middle|\ \phantom{\varphi' :\;}
        \begin{tikzcd}[cramped, sep=small, remember picture]
            \makebox[0pt][r]{$\varphi :\;$} K \ar[r, two heads] & k \\
            \makebox[0pt][r]{$\varphi' :\;$} K \ar[r, two heads] & F \ar[u,"\,\psi"']
        \end{tikzcd}
        \begin{tikzpicture}[overlay,remember picture]
            \path (\tikzcdmatrixname-1-1) to node[midway,sloped]{$=$}
            (\tikzcdmatrixname-2-1);
        \end{tikzpicture}
    \right\}
\]
\end{thm}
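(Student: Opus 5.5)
The statement asserts that three thin categories are equivalent: valuation rings under inclusion, valuations with factorisation data, and places with factorisation data. The plan is to write down explicit functors between the three. We then check that each is well defined on objects and on morphisms, and that the composites are naturally isomorphic to the identities. Since all three categories are thin (preorders), functoriality and naturality are automatic once morphisms are shown to exist. So an equivalence reduces to two checks:
\begin{enumerate}[(1)]
    \item a bijection on isomorphism classes of objects;
    \item for any two objects $X, Y$: $\Hom(X,Y)\neq\emptyset$ exactly when the corresponding hom-set on the other side is non-empty.
\end{enumerate}

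\textbf{Objects.} Send $\Oo$ to $v_{\Oo} : K \epi K^\times/\Oo^\times \cup\{\infty\}$, and send $v$ to $\Oo_v$. Section~\ref{sec:val+val_ring} already shows $\Oo_{v_\Oo}=\Oo$ and $v\sim v_{\Oo_v}$. The notion of equivalence of valuations there is exactly isomorphism in category (b): an isomorphism $q$ of value groups with trivial kernel, so $\overline v$ is the trivial valuation. Similarly, send $\Oo$ to the residue place $K\to \Oo/\Mm\cup\{\infty\}$, and send a place $\varphi$ to $\varphi^{-1}(k)$. Section~\ref{sec:places} shows these are mutually inverse up to the stated identification of places. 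Hence isomorphism classes correspond bijectively.

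\textbf{Morphisms (a)$\leftrightarrow$(b).} This is exactly Lemma~\ref{lem:coarsening equivalence}. Suppose $\Oo_v\subseteq\Oo_{v'}$. Then (iv) of that lemma provides the convex subgroup $\Delta=v(\Oo_{v'}^\times)$, the quotient $q:\Gamma_v\to\Gamma_v/\Delta\cong\Gamma_{v'}$ with $v'=q\circ v$, and the induced valuation $\overline v$ on $Kv'$ with value group $\ker q=\Delta$. Conversely, given such a pair $(q,\overline v)$ with $v'=q\circ v$, we argue directly: $v(x)\ge0$ implies $q(v(x))\ge0$, since $q$ is an order-preserving surjection, so $\Oo_v\subseteq\Oo_{v'}$. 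Uniqueness of the morphism is not required of the pair itself. It suffices that existence matches, because we identify (b) as thin, as the statement stipulates.

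\textbf{Morphisms (a)$\leftrightarrow$(c).} Use the composition-of-places discussion in Section~\ref{sec:comp_places}. Suppose $\Oo_\varphi\subseteq\Oo_{\varphi'}$. Take $\psi=\varphi_{\overline v}$ on the residue field $F$ of $\varphi'$, extended by $\infty\mapsto\infty$. The case analysis there shows $\varphi=\psi\circ\varphi'$ up to equivalence. Conversely, suppose $\varphi=\psi\circ\varphi'$. Then $\varphi(a)\in k$ forces $\varphi'(a)\ne\infty$, since $\psi(\infty)=\infty$, so $\Oo_\varphi=\varphi^{-1}(k)\subseteq\varphi'^{-1}(F)=\Oo_{\varphi'}$.

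The main obstacle is bookkeeping. The objects of (b) and (c) are only defined up to the equivalences of Section~\ref{sec:places}, so one must check that the morphism data transport along those equivalences. For instance, replacing $v'$ by an equivalent valuation changes $q$ by post-composition with an isomorphism and changes $\overline v$ correspondingly. This is routine but has to be said once to make the functors well defined on equivalence classes.
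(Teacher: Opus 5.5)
Your proposal is correct and is essentially the paper's own argument. The paper gives no separate proof; it presents the theorem as a categorical summary of the object correspondences in Sections~\ref{sec:val+val_ring} and~\ref{sec:places}, Lemma~\ref{lem:coarsening equivalence}, and the composition-of-places discussion in Section~\ref{sec:comp_places}, which are exactly the ingredients you assemble, and your reduction to essential surjectivity plus matching non-emptiness of hom-sets only makes explicit what the paper leaves implicit (thinness of (b) and (c) is in fact automatic, since $q$ and $\psi$ are determined by surjectivity of $v$ and $\varphi'$).
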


Once and for all, we remark that, by abuse of notation, we will use valuation rings, valuations, and places interchangeably whenever there is no danger of confusion.
As a final remark, we note that one of the key properties of decompositions is that they preserve and reflect henselianity.
\begin{fact}[{\cite[Cor. 4.1.4]{Engler05}}] \label{fact:hensel_preserve}
Let $v$ be a valuation that decomposes as $v = \overline{v} \circ w$. Then $v$ is henselian if and only if $\overline{v}$ and $w$ are henselian.
\end{fact}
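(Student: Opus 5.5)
The plan is to work throughout with the characterisation from Section~\ref{sec:hens}: a valued field is henselian if and only if its valuation extends uniquely to a fixed algebraic closure. We also use the dictionary of Lemma~\ref{lem:coarsening equivalence} and Section~\ref{sec:comp_places}. Put $L = K^{\alg}$. Valuation rings $\Oo$ of $L$ with $\Oo \cap K = \Oo_v$ should correspond bijectively to pairs $(\Oo', \Oo'')$, where $\Oo'$ is a valuation ring of $L$ extending $\Oo_w$ and $\Oo''$ is a valuation ring of $L\Oo'$ extending $\Oo_{\overline{v}}$. Indeed, given such a pair, the composite place yields a ring $\Oo \subseteq \Oo'$, and $\Oo \cap K = \Oo_v$ because the composite place restricts to the composite $\varphi_{\overline{v}} \circ \varphi_w = \varphi_v$ on $K$. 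We also record that $L\Oo'$ is an algebraic closure of $Kw$. It is algebraic by Fact~\ref{fact:finite_orders}, and it is algebraically closed because any monic polynomial over $L\Oo'$ lifts to a monic polynomial over $\Oo'$, whose roots lie in $\Oo'$ by integrality and reduce to roots of the original.

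For ($\Rightarrow$), let $\Oo_{v'}$ be the unique extension of $v$ to $L$. Take any extension $\Oo'$ of $w$ to $L$. By Chevalley (Theorem~\ref{thm:chevalley}), extend $\overline{v}$ to some $\Oo''$ on $L\Oo'$, and compose to get $\Oo \subseteq \Oo'$ lying over $\Oo_v$. Uniqueness forces $\Oo = \Oo_{v'}$, so every extension of $w$ is a coarsening of $\Oo_{v'}$. Coarsenings of a fixed valuation ring are linearly ordered, so any two extensions of $w$ are comparable, and hence equal by Fact~\ref{fact:compatibility}. Thus $w$ is henselian. With $\Oo'$ now unique, distinct extensions of $\overline{v}$ to $L\Oo' = (Kw)^{\alg}$ would compose to distinct extensions of $v$, since the composite determines $\Oo''$ as $\Oo/\Mm_{\Oo'}$. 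Hence $\overline{v}$ is henselian.

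For ($\Leftarrow$), let $\Oo_1, \Oo_2$ be extensions of $v$ to $L$, with value groups $\Gamma_i \supseteq vK$. Set $\Delta = v(\Oo_w^\times)$ and $\Delta_i = \Conv_{\Gamma_i}(\Delta)$, and let $\Oo_i'$ be the coarsening of $\Oo_i$ attached to $\Delta_i$. The key point is that $\Oo_i' \cap K = \Oo_w$. The inclusion $\supseteq$ is clear. For $\subseteq$, it suffices that $\Delta_i \cap vK = \Delta$, which holds since $\Delta$ is convex in $vK$ and $\Gamma_i/vK$ is torsion (Fact~\ref{fact:finite_orders}); in fact convexity alone already gives it. So both $\Oo_i'$ extend $w$, and henselianity of $w$ gives $\Oo_1' = \Oo_2' =: \Oo'$. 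Then $\Oo_i/\Mm_{\Oo'}$ are two extensions of $\overline{v}$ to $L\Oo' = (Kw)^{\alg}$: they restrict to $\Oo_v/\Mm_w = \Oo_{\overline{v}}$ on $Kw$. Henselianity of $\overline{v}$ makes them equal, and therefore $\Oo_1 = \Oo_2$.

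I expect the main technical care to be in the correspondence step: checking that composites and coarsenings restrict correctly to $K$, in particular that $\Oo_i' \cap K = \Oo_w$, and that residue rings modulo $\Mm_{\Oo'}$ restrict to $\Oo_{\overline{v}}$. Everything else is formal once this is in place.
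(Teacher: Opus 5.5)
Your proof is correct. The paper gives no argument of its own for this Fact; it only cites \cite[Cor.~4.1.4]{Engler05}. So there is no in-text proof to match, and your proposal serves as a self-contained substitute.

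The steps you flag as delicate all check out:
the composite of $\Oo'$ and $\Oo''$ is a valuation ring inside $\Oo'$ that meets $K$ in $\Oo_v$;
the equality $\Delta_i \cap vK = \Delta$ needs only convexity of $\Delta$ in $vK$;
and since $\Mm_{\Oo'} \subseteq \Oo_i$, the ring $\Oo_i/\Mm_{\Oo'}$ is a valuation ring of $L\Oo'$ meeting $Kw = \Oo_w/\Mm_w$ in $\Oo_v/\Mm_w = \Oo_{\overline{v}}$.

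You never actually use the full bijection announced at the start, only the two constructions. If you want the bijection itself, the extra point is that the coarsening of $\Oo$ lying over $\Oo_w$ is unique. This follows from Fact~\ref{fact:compatibility}, because coarsenings of $\Oo$ are comparable, and it is where torsion of $\Gamma_i/vK$ genuinely matters.

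A common alternative replaces counting prolongations by a polynomial criterion: $(K,v)$ is henselian if and only if every polynomial $X^n + X^{n-1} + a_{n-2}X^{n-2} + \dots + a_0$ with all $a_j \in \Mm_v$ has a root in $K$. With this criterion:
\begin{enumerate}
\item $w$ inherits henselianity from $v$ simply because $\Mm_w \subseteq \Mm_v$.
\item $\overline{v}$ inherits it by lifting coefficients from $\Mm_v/\Mm_w$ to $\Mm_v$ and reducing a root, which lies in $\Oo_v$ by integrality.
\item Conversely, Hensel's Lemma for $\overline{v}$ gives a root $\bar b \equiv -1$ of the reduction of such a polynomial modulo $\Mm_w$. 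This root is simple, so Hensel's Lemma for $w$ lifts it to a root in $K$.
\end{enumerate}

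That route never touches value groups or residue fields of $K^{\alg}$. However, it rests on the nontrivial equivalence between Hensel-type polynomial criteria and unique prolongation, which the paper only quotes (cf.\ Lemma~\ref{lem:hensel}).

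Your route works directly from the characterisation the paper adopts, namely unique prolongation to $K^{\alg}$. It uses only Theorem~\ref{thm:chevalley}, Fact~\ref{fact:compatibility} and the coarsening dictionary of Lemma~\ref{lem:coarsening equivalence}. It also gives slightly more: an explicit description of the prolongations of $v$ to $K^{\alg}$ in terms of those of $w$ and of $\overline{v}$.
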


\subsubsection{Standard Decomposition} \label{sec:stand_decomp}

For any parameter in the maximal ideal of a valued field, there are two canonical coarsenings. Our description will be place-theoretic. The main theorem is:

\begin{thm}[Standard Decomposition at $\varpi$] \label{thm:stand_decomp}
Let $(K,v)$ be any valued field and $\varpi \in \Mm_v$ a fixed non-zero parameter. Then $v$ can be written as the composition of three places,
\begin{diagram}
    v : K \ar[r,"v_0"] & Kv_0 \cup \{\infty\} \ar[r,"\overline{v_{\varpi}}"] & Kv_{\varpi} \cup \{\infty\} \ar[r,"\dbloverline{v}"] & Kv \cup \{\infty\},
\end{diagram}
where
\begin{enumerate}[(a)]
    \item $v_0$ is the finest coarsening such that $v_0\varpi = 0$;
    \item $v_{\varpi}$ is the coarsest coarsening such that $v_{\varpi}\varpi > 0$;
    \item $\overline{v}$ is the valuation induced by $v$ on $Kv_0$;
    \item $\overline{v_{\varpi}}$ is the valuation induced by $v_{\varpi}$ on $Kv_0$;
    \item $\dbloverline{v}$ is the valuation induced by $v$ on $Kv_{\varpi}$.
\end{enumerate}
Furthermore, these valuations satisfy the following properties:
\begin{enumerate}[(i)]
    \item \label{item:induced value groups} The valuations $\overline{v}$ and $\dbloverline{v}$ have value groups
    \[
        \Gamma_0 \coloneqq \Gamma^+_{v{\varpi}} = \Conv(v{\varpi}) \and \Gamma_{\varpi} \coloneqq \Gamma^-_{v{\varpi}} = \{\gamma \in vK : |\gamma| \ll v{\varpi}\},
    \]
    respectively, and moreover, $\overline{v_{\varpi}}$ is of rank 1 and $\overline{v} = \dbloverline{v} \circ \overline{v_{\varpi}}$.
    \item \label{item:O v_0} The valuation ring of $v_0$ is given by $\Oo_{v_0} = \Oo_v[\varpi^{-1}]$. \label{item:O_v_0}
    \item \label{item:O induced} The valuation rings of $\overline{v}$ and $\dbloverline{v}$ are given by
    \[
        \Oo_{\overline{v}} = \Oo_v \ultra \bigcap_{n \ge 1} \varpi^n\Oo_v \and \Oo_{\dbloverline{v}} = \Oo_v/\rad(\varpi\Oo_v) \cong (\Oo_v/\varpi\Oo_v)_{\Red},
    \]
    where $\rad(\cdot)$ denotes the radical of an ideal and $(\cdot)_{\Red}$ the reduction of a ring.
    \item \label{item:O varpi} The valuation ring of $v_{\varpi}$ is given by $\Oo_{v_{\varpi}} = \{x \in K : x^n\varpi \in \Oo_v \text{ for all $n \in \IN$}\}$.
    \item \label{item:v_0 trivial} If $\IZ v\varpi$ is cofinal in the value group of $v$, then $v_0$ is the trivial valuation. 
    \item \label{item:induced v trivial} If $\overline{v}$ is of rank 1, then $\dbloverline{v}$ is trivial. 
\end{enumerate}
\end{thm}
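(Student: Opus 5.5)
The whole statement reduces to convex subgroups of $\Gamma = vK$, via the correspondence of Lemma~\ref{lem:coarsening equivalence}. Coarsenings $\Oo_w \supseteq \Oo_v$ correspond bijectively and order-reversingly to convex subgroups $\Delta \le \Gamma$, with $\Delta = v(\Oo_w^\times)$ and $w = (\Gamma \to \Gamma/\Delta)\circ v$. The residue valuation $\overline{v}$ on $Kw$ then has value group $\Delta$ and residue field $Kv$.

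\textbf{Step 1: identifying $v_0$ and $v_\varpi$.} The condition $w\varpi = 0$ means $v\varpi \in \Delta$. So the finest such coarsening corresponds to the smallest convex subgroup containing $v\varpi$, namely $\Gamma^+_{v\varpi} = \Conv(v\varpi)$; this is $v_0$. The condition $w\varpi>0$ means $v\varpi \notin \Delta$. So the coarsest such coarsening corresponds to the largest convex subgroup not containing $v\varpi$. I will check directly that this is $\Gamma^-_{v\varpi}$: it is a convex subgroup, it omits $v\varpi$, and any convex subgroup omitting $v\varpi$ consists of elements $\varepsilon$ with $n|\varepsilon| < v\varpi$ for all $n$. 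This gives $v_\varpi$. Since $\Gamma^-_{v\varpi}\subseteq\Gamma^+_{v\varpi}$, we get $\Oo_{v_0}\subseteq\Oo_{v_\varpi}$.

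\textbf{Step 2: the decomposition.} Applying Lemma~\ref{lem:coarsening equivalence} to the pair $v_0 \le v_\varpi$ inside the residue field $Kv_0$ gives the induced valuation $\overline{v_\varpi}$. Its value group is $\Gamma^+/\Gamma^-$, which has rank 1 by Fact~\ref{fact:rank1}. Composing places as in Section~\ref{sec:comp_places} gives $\varphi_v = \varphi_{\dbloverline v}\circ\varphi_{\overline{v_\varpi}}\circ\varphi_{v_0}$. I also get $\overline{v} = \dbloverline{v}\circ\overline{v_\varpi}$ by the same lemma, applied on $Kv_0$ with the coarsening $\overline{v_\varpi}$ of $\overline v$. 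The value groups in (i) are read off from the lemma: $\overline v$ has value group $\Conv(v\varpi)$ and $\dbloverline v$ has $\Gamma^-_{v\varpi}$.

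\textbf{Step 3: explicit rings (ii)--(iv).} For (ii), $x\in\Oo_v[\varpi^{-1}]$ iff $vx \ge -n v\varpi$ for some $n$. This is exactly $vx \bmod \Conv(v\varpi) \ge 0$, i.e.\ $x \in \Oo_{v_0}$.

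For (iii), the maximal ideal of $v_0$ is $\{x : vx > \Conv(v\varpi)\} = \bigcap_n \varpi^n\Oo_v$, and $\Oo_{\overline v} = \Oo_v/\Mm_{v_0}$ by the proof of the lemma. Similarly $\Mm_{v_\varpi} = \{x : vx>\Gamma^-\} = \{x : nvx \ge v\varpi \text{ for some } n\}$, which is $\rad(\varpi\Oo_v)$. Then $\Oo_{\dbloverline v} = \Oo_v/\Mm_{v_\varpi}$, and this is $(\Oo_v/\varpi\Oo_v)_{\Red}$ since nilradicals pass to quotients.

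For (iv), $x^n\varpi\in\Oo_v$ for all $n$ iff $n\,vx \ge -v\varpi$ for all $n$. This means either $vx\ge0$ or $|vx|\ll v\varpi$, i.e.\ $vx \ge 0$ modulo $\Gamma^-$.

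\textbf{Step 4: triviality (v), (vi).} If $\IZ v\varpi$ is cofinal, then $\Conv(v\varpi)=\Gamma$, so $\Oo_{v_0}=K$. If $\overline v$ has rank 1, then $\Conv(v\varpi)$ has no nonzero proper convex subgroup, so $\Gamma^-=0$ and $\dbloverline v$ is trivial.

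The only genuinely delicate point is Step 1's maximality claim for $\Gamma^-_{v\varpi}$, together with keeping track of the order reversal (finer valuation means smaller convex subgroup). Everything else is bookkeeping with Lemma~\ref{lem:coarsening equivalence}.
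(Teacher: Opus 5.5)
Your proposal is correct and follows essentially the same route as the paper: coarsen by $\Conv(v\varpi)$ and $\Gamma^-_{v\varpi}$, compose the resulting places, and read off (ii)--(iv) from the explicit descriptions $\Mm_{v_0} = \bigcap_n \varpi^n\Oo_v$ and $\Mm_{v_\varpi} = \rad(\varpi\Oo_v)$. One small wording slip: the correspondence $\Oo_w \mapsto v(\Oo_w^\times)$ preserves inclusion rather than reversing it, but every concrete use you make of it (finest coarsening $\leftrightarrow$ smallest convex subgroup) is right.
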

\begin{proof}
We begin by choosing two convex subgroups $\Gamma_0$ and $\Gamma_{\varpi}$ of $\Gamma = vK$. Let $\Gamma_0 \coloneqq \Gamma_{\varpi}^+$ be the convex hull of $\IZ v\varpi$ and $\Gamma_{\varpi} \coloneqq \Gamma_{\varpi}^-$ the maximal convex subgroup not containing $v\varpi$. In particular, we have $\Gamma_{\varpi} \subsetneq \Gamma_0$. Coarsening by these convex subgroups, we obtain valuations
\[
    \begin{tikzcd}[cramped, row sep=tiny]
        v_0 : K^{\times} \ar[r, two heads] & \Gamma \ar[r, two heads] & \Gamma/\Gamma_0 \\
        v_{\varpi} : K^{\times} \ar[r, two heads] & \Gamma \ar[r, two heads] & \Gamma/\Gamma_{\varpi}.
    \end{tikzcd}
\]
The operation of coarsening by convex subgroups preserves inclusion, so $v_0$ and $v_{\varpi}$ are as in (a) and (b).
By our discussion in Section~\ref{sec:comp_places}, the valuation $v$ can be decomposed as
\[
    \begin{tikzcd}
        K \ar[r,"v_{\varpi}"] & Kv_{\varpi} \cup \{\infty\} \ar[r,"\dbloverline{v}"] & Kv \cup \{\infty\}.
    \end{tikzcd}
\]
Since $v_0$ is a coarsening of $v_{\varpi}$, we can decompose $v_{\varpi}$ further, which yields the decomposition into three places as stated in the theorem.
The value group of $v_{\varpi}$ is $\Gamma/\Gamma_{\varpi}$. The value group of the induced valuation $\overline{v_{\varpi}}$ must consequently be $\Gamma_0/\Gamma_{\varpi}$, which is of rank 1 by Fact~\ref{fact:rank1}.

Consider the three maximal ideals $\Mm_{v_0} \subseteq \Mm_{v_{\varpi}} \subseteq \Mm_v$ of $\Oo_{v_0} \supseteq \Oo_{v_{\varpi}} \supseteq \Oo_v$.\footnote{Recall that passing from valuation rings to their maximal ideals reverses inclusion.} The place $\overline{v}$ is given by
\[
    x\Mm_{v_0} \longmapsto \begin{cases}
            x\Mm_v & \text{if $x \in \Oo_v$} \\
            \infty & \text{if $x \notin \Oo_v$},
        \end{cases}
\]
whereas $\dbloverline{v} \circ \overline{v_{\varpi}}$ is the assignment
\[
    x\Mm_{v_0} \longmapsto \begin{cases}
        \text{if $x \in \Oo_{v_{\varpi}}:$} & x\Mm_{v_{\varpi}} \longmapsto \begin{cases}
            x\Mm_v & \text{if $x \in \Oo_v$} \\
            \infty & \text{if $x \notin \Oo_v$}
        \end{cases}\\
        \text{if $x \notin \Oo_{v_{\varpi}}:$} & \infty \longmapsto \infty.
    \end{cases}
\]
Clearly, these two maps are the same, and hence \ref{item:induced value groups} follows. By definition, $\Oo_{v_0}$ must be the finest coarsening for which $\varpi$ becomes invertible---this proves \ref{item:O v_0}. Further note that $\Oo_{\overline{v}} = \Oo_v/\Mm_{v_0}$ and $\Oo_{\dbloverline{v}} = \Oo_v/\Mm_{v_{\varpi}}$, where the ideals involved are given by
\begin{gather*}
    \Mm_{v_0} = \{x \in \Oo_{v_0} : v_0(x) > 0\} = \{x \in \Oo_v : v(x) > \IZ v\varpi\} = \bigcap_{n \ge 1} \varpi^n\Oo_v \\
    \Mm_{v_{\varpi}} = \{x \in \Oo_{v_{\varpi}} : v_{\varpi}(x) > 0\} = \{x \in \Oo_v : v(x) > \Gamma_{\varpi}\} = \rad(\varpi\Oo_v).
\end{gather*}
This yields \ref{item:O induced} and \ref{item:O varpi}.

If $\IZ v\varpi$ is cofinal in $\Gamma$, then $\Gamma = \Gamma_0$ and $v_0$ is trivial. If $\Gamma_0$ has rank 1, then $\Gamma_{\varpi} = \{0\}$ and therefore $v = v_{\varpi}$, so the induced valuation $\dbloverline{v}$ must be trivial. This proves \ref{item:v_0 trivial} and \ref{item:induced v trivial}.
\end{proof}

To simplify notation even further, we will from now on omit the symbol ``$\infty$'' when working with places.

\begin{rem}[Standard Decomposition for mixed characteristic] \label{rem:stand_decomp_mixed}
For valued fields of mixed characteristic $(0,p)$, the key distinguished example is the \emph{Standard Decomposition at $\varpi = p$}.
In this case, the decomposition can be written as follows:
\begin{diagram}
    K \ar[r,"(0{,}0)","\Gamma/\Gamma_0"'] \ar[rr,bend right=49,"\Gamma/\Gamma_p"] & Kv_0 \ar[r,"(0{,}\,p)","\Gamma_0/\Gamma_p"'] \ar[rr,bend left=40,"\Gamma_0"] & Kv_p \ar[r,"(p{,}\,p)","\Gamma_p"'] & Kv
\end{diagram}
Each arrow is labelled with the value group of the corresponding valuation. The three middle arrows are additionally labelled with the characteristic of the valuations.

The valued field $(Kv_0,\overline{v})$ is often called the \emph{core field} of $(K,v)$.
Note that here,
\begin{enumerate}[(a)]
    \item $v_0$ is the finest coarsening with residue characteristic 0, and
    \item $v_p$ is the coarsest coarsening with residue characteristic $p$.
\end{enumerate}
What is distinguishing about the parameter $\varpi = p$ is that it isolates the place at which a change in characteristic occurs for valuations of mixed characteristic $(0,p)$.
\end{rem}

\begin{slogan*}
The \emph{(0,\:\!0)} and $(p,p)$-equicharacteristic places in the Standard Decomposition are chosen maximally possible, thus creating a rank 1 place in the middle.
\end{slogan*}

Fortunately, the above notation is consistent with writing $v_p$ for the $p$-adic valuation on $\IQ_p$.

\subsubsection{Canonical henselian valuation}
\label{sec:can_hens_val}

Fix a field $K$ that is not separably closed.
We will introduce the tree structure of the set of henselian valuations on $K$ and determine a distinguished element $v_K$ in it, which we call the \emph{canonical henselian valuation}.
We will show that the canonical henselian valuation admits a ``going-down'' statement (Theorem~\ref{thm:henseling down}). More specifically, for the proof of the Main Theorem, $v_K$ will be the source of a non-trivial valuation that gives $K$ the structure of a $p$-adically closed field.

Everything that follows builds on F. K. Schmidt's Theorem, initially proven for complete rank 1 valued fields, but easily adapted to the henselian setting. Recall that two valuations $v$ and $w$ are \emph{independent} if any common coarsening is trivial, i.e., $\Oo_v\Oo_w = K$ (equivalently: they induce different topologies on $K$).

\begin{thm}[F. K. Schmidt] \label{thm:schmidt}
If $K$ admits two independent and non-trivial henselian valuations, then $K$ must be separably closed.
\end{thm}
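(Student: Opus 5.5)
The plan is to argue by contradiction. Suppose $v$ and $w$ are independent, non-trivial henselian valuations on $K$, and $K$ is not separably closed. We aim to produce a finite separable extension $L/K$ and a separable polynomial with no root in $K$ which nonetheless must have a root in $K$.

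The tools are Hensel's Lemma~\ref{lem:hensel} (or Krasner's Lemma~\ref{lem:krasner}) applied to each valuation separately, glued together by the Approximation Theorem (Fact~\ref{fact:approx}). Independence is exactly what lets us approximate simultaneously with respect to $v$ and $w$.

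\textbf{Step 1: reduce to a monic separable irreducible polynomial.} Since $K$ is not separably closed, pick a monic separable irreducible $f\in K[X]$ of degree $n\ge 2$. Write $f(X)=X^n+a_{n-1}X^{n-1}+\dots+a_0$.

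\textbf{Step 2: two split polynomials, one for each valuation.} Choose a split separable polynomial $g_v=\prod_{i=1}^n (X-b_i)$ with pairwise distinct $b_i\in K$. Choose also a polynomial $g_w$ of degree $n$ with at least one simple root in $K$, say $g_w=(X-c)\,h(X)$ with $h(c)\neq 0$; for instance $g_w = f$ itself will not do, so we simply take $g_w=g_v$. In fact a single split separable polynomial $g$ suffices.

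\textbf{Step 3: approximate the coefficients.} Apply the Approximation Theorem coefficientwise. We get a monic $F\in K[X]$ of degree $n$ whose coefficients are $v$-adically very close to those of $f$ and $w$-adically very close to those of $g$.

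\textbf{Step 4: use $w$ to show $F$ has a root in $K$.} By continuity of roots (Fact~\ref{fact:cont_roots}) and Hensel's Lemma for $w$, $F$ has a root in $K$ close to $b_1$. Concretely: scale so everything is integral, and take $a=b_1$. Then $w(F(b_1))$ is large while $w(F'(b_1))$ is close to $w(g'(b_1))$, which is finite. So $w(F(b_1))>2w(F'(b_1))$ once the approximation is good enough, and Hensel's Lemma gives a root of $F$ in $K$.

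\textbf{Step 5: use $v$ to show $F$ is irreducible.} By continuity of roots for $v$ extended to $K^{\sepc}$, each root $y$ of $F$ is closer to some root $x_i$ of $f$ than the separation $\max_{j\ne i}\tilde v(x_j-x_i)$. Krasner's Lemma~\ref{lem:krasner} then yields $K(x_i)\subseteq K(y)$, so $[K(y):K]\ge n$. Hence $F$ has no root in $K$, which contradicts Step 4.

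The main obstacle is the bookkeeping in Steps 3–5: choosing the thresholds $\gamma,\delta$ in advance, and arranging integrality so that Hensel's Lemma applies under $w$. For the integrality, I would first replace $f$ by $c^nf(X/c)$, with $c$ chosen to make the coefficients lie in $\Oo_v$. I would handle the $w$-integrality of $g$ by choosing the $b_i\in\Oo_w$. The approximation then only needs to achieve finitely many explicit strict inequalities, which the Approximation Theorem delivers directly.
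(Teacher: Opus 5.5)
Your proof is correct and follows essentially the same route as the paper: use the Approximation Theorem to build a polynomial that is $v$-close to a separable irreducible $f$ and $w$-close to a split separable polynomial, then apply continuity of roots together with Krasner's Lemma on one side and henselianity on the other. The only difference is cosmetic. You use Hensel's Lemma under $w$ to produce a single root in $K$, whereas the paper applies continuity of roots and Krasner with respect to both valuations to conclude that $f$ and the split polynomial have the same splitting field.
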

\begin{proof}
Let $v_1$ and $v_2$ be independent non-trivial henselian valuations and assume $K \ne K^{\sepc}$.
Furthermore, let $f(X) \in K[X]$ be a separable irreducible polynomial and $g(X) \in K[X]$ any polynomial of the same degree $n > 1$. If $g(X)$ is sufficiently close to $f(X)$ in each coefficient (with respect to a henselian valuation), then by continuity of roots (Fact~\ref{fact:cont_roots}), Remark~\ref{rem:cont_roots}, and Krasner's Lemma~\ref{lem:krasner}, $f(X)$ and $g(X)$ have the same splitting field. Now let $h(X)$ be any polynomial with precisely $n$ distinct roots in $K$ (i.e., a product of linear factors). By the Approximation Theorem (Fact~\ref{fact:approx}), we may choose $g(X)$ sufficiently close to $f(X)$ with respect to $v_1$ and, at the same time, sufficiently close to $h(X)$ with respect to $v_2$, so that $f(X)$ and $h(X)$ have the same splitting field. This is a contradiction.
\end{proof}

\begin{example}
Let $\IC_p$ be the completion of $\IQ_p^{\alg}$. By Fact~\ref{cor:fund_non-arch}, $\IC_p$ is algebraically closed. If $q \ne p$ is another prime, then there exists a non-canonical field isomorphism $\IC_p \cong \IC_q$ between algebraically closed fields of characteristic 0 and of size continuum (we forget the topology). In this way, $\IC_p$ can be endowed with a natural $p$-adic topology and an abstract $q$-adic topology induced by the non-canonical isomorphism. The point of F. K. Schmidt's Theorem is that such an anomaly can only occur if $K$ is separably closed.
\end{example}

\begin{defi}
Let $K \ne K^{\sepc}$ be a field. We partition the set of henselian valuation rings on $K$ into two sets
\begin{align*}
    H_1(K) & \coloneqq \{\Oo_v : \text{$\Oo_v$ henselian with $Kv$ \underline{not} separably closed}\} \\
    H_2(K) & \coloneqq \{\Oo_v : \text{$\Oo_v$ henselian with $Kv$ separably closed}\}.
\end{align*}
\end{defi}

\begin{lem}[Tree structure] \label{lem:tree}
The coarsening relation induces the following structure:
\begin{enumerate}[(i)]
    \item $H_1(K)$ is non-empty (it contains the trivial valuation) and linearly ordered by $\subseteq$.
    \item Any valuation in $H_1(K)$ is coarser than any valuation in $H_2(K)$.
    \item Any non-empty subset of $H_2(K)$ has a supremum (finest common coarsening) in $H_2(K)$.
    \item If $H_2(K) = \emptyset$, then $H_1(K)$ has a minimum (a finest coarsening).
\end{enumerate}

\end{lem}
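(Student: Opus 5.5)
The whole argument rests on one observation, derived from F.\,K. Schmidt's Theorem~\ref{thm:schmidt} and applied to the henselian residue fields. \textbf{Key observation:} let $\Oo_1, \Oo_2$ be two incomparable henselian valuation rings on $K$. They are comparable with their compositum $\Oo = \Oo_1\Oo_2$, which is their finest common coarsening; it is a valuation ring, and it is henselian by Fact~\ref{fact:hensel_preserve}, since $\Oo_1$ refines it. Write $w$ for its valuation. Then $\Oo_1$ and $\Oo_2$ induce valuations $\overline{v_1}, \overline{v_2}$ on $Kw$, both henselian by Fact~\ref{fact:hensel_preserve}. Both are non-trivial because the rings are incomparable, and they are independent because $\Oo_1\Oo_2 = \Oo$ has no proper common coarsening; on $Kw$ their compositum is all of $Kw$. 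Hence $Kw$ is separably closed by Theorem~\ref{thm:schmidt}, and so is $Kv_i$, being a residue field of a valuation on the separably closed field $Kw$ (residue fields of separably closed fields are separably closed, by Hensel's Lemma on separable polynomials). In short: \emph{two incomparable henselian valuations both lie in $H_2(K)$, and so does their compositum.}

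(i) The trivial valuation is henselian, and its residue field $K$ is not separably closed, so $H_1(K)\neq\emptyset$. Two incomparable members of $H_1(K)$ would have separably closed residue fields by the observation, which is impossible; so $H_1(K)$ is linearly ordered.

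(ii) Take $\Oo_1 \in H_1(K)$ and $\Oo_2\in H_2(K)$. They cannot be incomparable, by the observation. If $\Oo_1 \subsetneq \Oo_2$, then $Kv_1$ is a residue field of a valuation on the separably closed field $Kv_2$, hence separably closed, a contradiction. So $\Oo_2\subseteq \Oo_1$.

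(iii) For a non-empty $S\subseteq H_2(K)$, let $\Oo$ be the ring generated by $\bigcup S$. I would check that it is a valuation ring coarsening every member of $S$. Any overring of a valuation ring is a valuation ring. Moreover, the valuation rings containing a fixed $\Oo_0 \in S$ form a chain, and each $\Oo' \in S$ is either comparable with $\Oo_0$ or, by the observation, its compositum with $\Oo_0$ lies in $H_2(K)$ and contains $\Oo_0$. So $\Oo$ is the union of a chain of henselian rings $\Oo_0\Oo'$ in $H_2(K)$ containing $\Oo_0$.

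The main obstacle is showing that this union stays henselian with a separably closed residue field. A union of a chain of valuation rings is the intersection of the corresponding prime ideals in the opposite direction. I would argue henselianity via Fact~\ref{fact:hensel_preserve}: $\Oo$ is a coarsening of the henselian $\Oo_0$. For the residue field, take a separable monic polynomial over $\Oo$. Its finitely many coefficients lie in some $\Oo_0\Oo'$ of the chain, and its reduction has a root in the separably closed residue field there; lifting that root (if needed) gives a root in $K$. Reducing modulo $\Mm_{\Oo}\subseteq \Mm_{\Oo_0\Oo'}$ then yields a root in $K\Oo$. So $\Oo\in H_2(K)$ and is the supremum.

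(iv) If $H_2(K)=\emptyset$, take $\Oo = \bigcap H_1(K)$, which is a valuation ring as the intersection of a chain. It is henselian because extension uniqueness is a finitary condition: for a finite Galois $L/K$, two distinct prolongations would separate at some level of the chain. If $K\Oo$ were separably closed, $\Oo$ would lie in $H_2(K)=\emptyset$, which is impossible; so $\Oo\in H_1(K)$ and is the minimum.
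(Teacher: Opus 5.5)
Your route is the paper's. The Schmidt-based observation (two incomparable henselian rings and their compositum all lie in $H_2(K)$) gives (i) and (ii) exactly as you argue. For (iii) and (iv), which the paper only sketches with a reference to Engler--Prestel, your reductions are sound. The supremum is the union of the chain $\{\Oo_0\Oo'\}_{\Oo'\in S}\subseteq H_2(K)$, and it is henselian as a coarsening of $\Oo_0$. In (iv), the finitary argument works once you make it precise. Every prolongation of $\bigcap H_1(K)$ to a finite Galois $L$ equals, by Fact~\ref{fact:compatibility}, the intersection of its coarsenings lying over the members of $H_1(K)$. Each of these coarsenings is the unique prolongation of the corresponding henselian member, so the intersection does not depend on the prolongation you started with.

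The gap is in the residue-field step of (iii), the point you yourself flagged as the main obstacle. To show $K\Oo$ is separably closed, start from a separable monic $\bar f\in K\Oo[X]$ and lift it to a monic $f\in\Oo[X]$; separability of $f$ over $K$ alone says nothing about its reductions. You then pick a chain member $\Oo_0\Oo'$ containing the coefficients and claim that the reduction of $f$ modulo $\Mm_{\Oo_0\Oo'}$ has a root in the separably closed field $K(\Oo_0\Oo')$, and that this root lifts. This does not follow. Since $\Mm_\Oo\subseteq\Mm_{\Oo_0\Oo'}$, the discriminant of $f$, which is a unit of $\Oo$, may lie in $\Mm_{\Oo_0\Oo'}$, so the reduction at the finer level can be inseparable. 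A separably closed field need not contain a root of an inseparable polynomial (e.g.\ $X^p-\bar b$ with $\bar b$ not a $p$-th power). And even when a root exists, Hensel's Lemma only lifts simple roots.

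The repair is to add one more element to your finite set. Since $\bar f$ is separable, $\mathrm{disc}(f)\in\Oo^\times$, so $\mathrm{disc}(f)^{-1}\in\Oo$ as well. Choose the chain member containing the coefficients of $f$ \emph{and} $\mathrm{disc}(f)^{-1}$. Then:
\begin{itemize}
\item the reduction of $f$ to $K(\Oo_0\Oo')$ is separable, hence splits into distinct linear factors;
\item Hensel's Lemma for the henselian ring $\Oo_0\Oo'$ lifts these to $\deg f$ distinct roots of $f$ in $\Oo_0\Oo'\subseteq\Oo$;
\item reducing modulo $\Mm_\Oo$ shows that $\bar f$ splits over $K\Oo$.
\end{itemize}
In particular $\Oo\neq K$, since $K$ is not separably closed.
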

In short, we get the following picture:

\tikzset{every picture/.style={line width=0.75pt}}
\usetikzlibrary{decorations.pathreplacing}
\begin{center}
\hspace*{85pt}
\begin{tikzpicture}[x=1pt,y=1pt]
\draw (180, -37.5) -- (180, -112.5);
\fill (180, -112.5) circle (.3ex);
\node[left] at (180, -112.5) {$v_K$};
\draw (180, -112.5) -- (120.6, -187.5);
\draw (180, -112.5) -- (240, -187.5);
\draw (150, -150) -- (180, -187.5);
\draw (150, -187.5) -- (165, -168.75);
\draw (225, -168.75) -- (210, -187.5);
\draw [decorate, decoration={brace,mirror}, thick]
  (255, -106) -- (255, -37) node[midway, right=5pt] {$H_1(K)$};
\draw [decorate, decoration={brace,mirror}, thick]
  (255, -187.5) -- (255, -108) node[midway, right=5pt] {$H_2(K)$};
\draw[<->] (340, -37) -- (340, -187.5)
  node[pos=0.2, right=3pt] {\small coarser}
  node[pos=0.8, right=3pt] {\small finer};
\end{tikzpicture}
\end{center}

\begin{proof}[Proof (sketch).]
All valuations we consider will be henselian by virtue of Fact~\ref{fact:hensel_preserve}.
Let $\Oo_w = \Oo_u\Oo_v$ be the finest common coarsening of two incomparable henselian valuation rings $\Oo_u$ and $\Oo_v$ on $K$. The proof boils down to the following observation: the induced valuations $\overline{u}$ and $\overline{v}$ on $Kw$ are---by design---non-trivial independent henselian valuations. By F. K. Schmidt's Theorem~\ref{thm:schmidt}, this implies that $Kw$ is separably closed, which in turn implies that $Ku = (Kw)\overline{u}$ and $Kv = (Kw)\overline{v}$ are separably closed as well. This proves (i), (ii), and the fact that any finite subset in $H_2(K)$ has a supremum in $H_2(K)$. One can further show that for any non-empty subset $S \subseteq H_2(K)$, its supremum $\Oo_S$ (i.e., the ring generated by all $\Oo \in S$) is given by
\begin{equation} \label{eq:rest_prod_val}
    \Oo_S = \left\{x_1 \cdots x_n : x_1, \ldots, x_n \in \bigcup_{\Oo \in S} \Oo,\, n \in \IN \right\},
\end{equation}
and that it lies in $H_2(K)$ (cf. \cite[Thm.~4.4.2]{Engler05}). Likewise, one can see that
\[
    \Oo_1 \coloneqq \bigcap_{\Oo \in H_1(K)} \Oo
\]
is a valuation ring and the infimum of $H_1(K)$.
\end{proof}

\begin{defi}[Canonical henselian valuation]
    Let $K \ne K^{\sepc}$ be a field. The \emph{canonical henselian valuation} $v_K$ (with valuation ring $\Oo_{v_K}$) is defined to be
    \begin{enumerate}[(i)]
        \item the coarsest valuation in $H_2(K)$ if $H_2(K) \neq \emptyset$, and
        \item the finest valuation in $H_1(K)$ otherwise.
    \end{enumerate}
    If $H_2(K) \neq \emptyset$, then $\Oo_{v_K}$ does not lie in $H_1(K)$. We further define:
    \[
        H(K) \coloneqq H_1(K) \cup \{\Oo_{v_K}\}
    \]
    Note that any valuation in $H(K)$ is comparable to any other henselian valuation and that $\Oo_{v_K}$ is the minimum of $H(K)$.
\end{defi}

\begin{example} \label{ex:valuations on Qp}
     The canonical henselian valuation of $\IQ_p$ is the $p$-adic valuation $v_p$. The only other henselian valuation on $\IQ_p$ is the trivial one.
\end{example}
\begin{proof}
    As $\IQ_pv_p = \IF_p$ is not separably closed, $\Oo_{v_p}$ is comparable to all other henselian valuation rings. It does not admit any proper refinement, since such a refinement would induce a non-trivial valuation on $\IF_p$, which does not exist. Similarly, the value group of any coarsening is a quotient of $\IZ$ by a convex subgroup. The only non-trivial convex subgroup is $\IZ$ itself---the corresponding coarsening is the trivial valuation.
\end{proof}

\subsubsection{``Henseling Down''}
\label{sec:henseling down}

By definition, henselianity ``goes up''---any algebraic extension of a henselian valued field is henselian. The following partial converse was established only in 2003.

\begin{thm}[Henseling Down, {\cite[Prop. 3.1]{Koenigsmann03}}]
    \label{thm:henseling down}
    Let $L/K$ be an algebraic extension with $L$ not separably closed. Consider a valuation $w$ on $L$ with $\Oo_w \in H(L)$.
    Then, the valuation ring of the restriction $v \coloneqq w|_K$  lies in $H(K)$ in each of the following cases:
    \begin{enumerate}[(i)]
        \item $L/K$ is normal.
        \item $L/K$ is finite.
        \item $G_L$ is a Sylow $p$-subgroup of $G_K$ for some prime $p$. If $p = 2$, additionally assume that no proper coarsening of $w$ has real closed residue field. \label{item:hens_down3}
    \end{enumerate}
\end{thm}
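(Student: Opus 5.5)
The plan is to split the statement into two independent parts: first show that $v = w|_K$ is henselian, then show that a henselian $v$ arising this way automatically lies in $H(K)$. The second part is formal; the real work, and the place where the hypotheses (i)--(iii) enter, is the first.

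\emph{Henselian restrictions lie in $H(K)$.} Assume $v$ is henselian.
If $Kv$ is not separably closed, then $\Oo_v \in H_1(K) \subseteq H(K)$. So assume $Kv$ is separably closed.
\begin{itemize}
\item Then $Lw$ is separably closed too, being algebraic over $Kv$ by Fact~\ref{fact:finite_orders}. Hence $\Oo_w \notin H_1(L)$, so $\Oo_w = \Oo_{v_L}$ is the coarsest element of $H_2(L)$.
\item Suppose $\Oo_v \ne \Oo_{v_K}$. Then some $\Oo_u \in H_2(K)$ satisfies $\Oo_v \subsetneq \Oo_u$, using Lemma~\ref{lem:tree}(iii) for the supremum.
\item Since $u$ is henselian, it has a unique prolongation $u_L$ to $L$. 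This $u_L$ is henselian, its residue field is separably closed, and $\Oo_w \subseteq \Oo_{u_L}$ because $u_L$ and the coarsening of $w$ lying over $u$ are both prolongations of $u$.
\item The inclusion is strict, since restricting to $K$ gives $\Oo_v \subsetneq \Oo_u$. This contradicts the fact that $w$ is the coarsest element of $H_2(L)$.
\end{itemize}
Hence $\Oo_v = \Oo_{v_K} \in H(K)$.

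\emph{Henselianity of $v$: reduction to a core case.} Let $u \coloneqq v_K$, which exists since $K$ is not separably closed, and let $u_L$ be its unique prolongation to $L$. Since $u_L$ is henselian and $\Oo_w \in H(L)$, the two rings are comparable.
\begin{itemize}
\item If $\Oo_{u_L} \subseteq \Oo_w$, then $v$ is a coarsening of the henselian valuation $u$, hence henselian by Fact~\ref{fact:hensel_preserve}, and we are done.
\item Otherwise $\Oo_w \subsetneq \Oo_{u_L}$. Then $\Oo_{u_L} \in H_1(L)$, because everything in $H(L)$ that is strictly coarser than $v_L$ lies in $H_1(L)$. In particular $Lu_L$ is not separably closed, and therefore neither is $Ku$; so $u \in H_1(K)$ is the finest element there and $H_2(K) = \emptyset$.
\item Passing to residue fields, $w$ induces a non-trivial valuation $\overline{w}$ on $Lu_L$ with $\Oo_{\overline{w}} \in H(Lu_L)$. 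On the other side, $Ku$ carries no non-trivial henselian valuation at all, by minimality of $u$ and Fact~\ref{fact:hensel_preserve}.
\end{itemize}
I also need the hypotheses to pass to the residue extension $Lu_L/Ku$:
\begin{itemize}
\item normality passes by the First Exact Sequence;
\item finiteness passes trivially;
\item the Sylow property passes by projecting $D \to G_{Ku}$.
\end{itemize}
So everything reduces to the \textbf{core case}: $K$ has no non-trivial henselian valuation, and we must show that $L$ then has no non-trivial $\Oo_w \in H(L)$.

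\emph{Core case under (i) and (ii).} For (i), every $\sigma \in \Aut(L/K)$ maps $H(L)$ to itself, because $v_L$ is intrinsically defined. Hence $\sigma\Oo_w$ and $\Oo_w$ are comparable, and both extend $v$, so they are equal by Fact~\ref{fact:compatibility}. Thus $w$ is the unique prolongation of $v$ to $L$. Combining this with the Conjugacy Theorem~\ref{thm:Conjugacy} for $K^{\sepc}/K$ and henselianity of $w$ on $K^{\sepc} = L^{\sepc}$, every prolongation of $v$ to $K^{\sepc}$ coincides with the one above $w$. So $v$ is henselian, which forces $v$ to be trivial, hence $w$ trivial as well.

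For (ii), I would pass to the normal hull $N$ of $L/K$, extend $w$ uniquely to $w_N$, show $\Oo_{w_N} \in H(N)$, and then apply (i) to $N/K$. The danger is that $Lw$ might not be separably closed while $Nw_N$ is. By Artin--Schreier (Theorem~\ref{thm:Artin-Schreier}) this can only happen when some residue field is real closed. I expect to treat that case separately, using that the orderings involved are then henselian-compatible, and this real-closed subcase is the first obstacle.

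\emph{Core case under (iii): the main obstacle.} Here $G_L$ is only a Sylow $p$-subgroup, so no normality is available. The approach I would try first:
\begin{itemize}
\item Show that $v$ is at least $p$-henselian. Since $G_L$ is contained in the decomposition group of a prolongation of $w$, the decomposition group of $v$ has index prime to $p$.
\item Upgrade from $p$-henselian to henselian by comparing the valuations $w^{\sigma}$ on the conjugate Sylow fields $L^{\sigma}$, $\sigma \in G_K$. Each lies in $H(L^{\sigma})$, and their restrictions to $K$ all equal $v$. Arguing prime by prime, I would show they glue to a single prolongation to $K^{\sepc}$.
\end{itemize}
The case $p = 2$ is exactly where $2$-henselian valuations can coexist with an ordering: a proper coarsening with real closed residue field would give extra $H_1$-elements not seen by $K$. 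This is why the additional hypothesis in \ref{item:hens_down3} is imposed, and I expect this gluing step to be the hardest part of the proof.
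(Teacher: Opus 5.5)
Your first part (a henselian restriction automatically lies in $H(K)$) and your argument for (i) are correct and agree with the paper. The reduction to the ``core case'' is also valid, but it does not simplify (iii), where you still have to prove henselianity of $v$ from scratch. The genuine gaps are in (ii) and (iii).

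In (ii) you isolate the right danger: $\Oo_{w_N}\notin H(N)$, so $v_N$ is a proper coarsening of $w_N$ and $Nv_N$ is separably closed. You leave it open, but no orderings are needed to close it.
\begin{itemize}
\item Since $\Oo_w\in H(L)$, the proper coarsening $v_N|_L$ of $w$ has a residue field that is not separably closed. Yet this residue field has the finite separably closed extension $Nv_N$.
\item By Theorem~\ref{thm:Artin-Schreier} it is therefore real closed, and the same theorem forces $K(v_N|_K)=L(v_N|_L)$.
\item Hence the valuations $\overline v$ and $\overline w$ induced by $v$ and $w$ on this common residue field coincide, so $\overline v$ is henselian because $w$ is.
\item Since $v_N|_K$ is henselian by (i) applied to $N/K$, Fact~\ref{fact:hensel_preserve} shows that $v=\overline v\circ v_N|_K$ is henselian.
\end{itemize}

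For (iii) the proposal contains no actual argument. Your observation that $v$ is $p$-henselian is correct, but it cannot be upgraded ``prime by prime'': there is no information at primes $q\ne p$. Also, the valuations $w^{\sigma}$ live on the different fields $\sigma L$, so there is nothing to glue directly. The missing idea is to bring two prolongations onto a common field using Sylow conjugacy inside $G_F$ rather than $G_K$.
\begin{itemize}
\item Let $\widetilde w$ be the unique prolongation of $w$ to $K^{\sepc}$, let $F/K$ be finite Galois, and let $v_2$ be any prolongation of $v$ to $F$. By the Conjugacy Theorem, $v_2=\widetilde{v_2}|_F$ with $\Oo_{\widetilde{v_2}}=\sigma\Oo_{\widetilde w}$ for some $\sigma\in G_K$.
\item Put $M\coloneqq LF$. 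Since $K^{\sepc}/L$ is pro-$p$ and $L/(L\cap F)$ is prime-to-$p$, both $G_M$ and $G_{\sigma M}$ are Sylow $p$-subgroups of $G_F$. So there is $\tau\in G_F$ with $\tau M=\sigma M$.
\item Then $u_2\coloneqq\widetilde{v_2}|_{\sigma M}\circ\tau$ is a henselian prolongation of $v$ to the same field $M$ as $u_1\coloneqq\widetilde w|_M$. It restricts to $v_2$ on $F$ because $\tau$ fixes $F$.
\item By Fact~\ref{fact:compatibility}, $u_1=u_2$, and hence $v_2=\widetilde w|_F$, as soon as one of $\Oo_{u_1},\Oo_{u_2}$ lies in $H(M)$.
\item If neither does, $v_M$ is a proper coarsening of both. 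Then $v_M|_L$ is a proper coarsening of $w$ whose residue field is not separably closed but has the finite separably closed extension $Mv_M$, so it is real closed.
\item As $v_M|_L$ is henselian, the Lemma of Ostrowski (Corollary~\ref{cor/def:defect}) gives that the residue degree $2$ divides the $p$-power $[M:L]$, forcing $p=2$.
\end{itemize}
This is precisely the configuration excluded by the extra hypothesis in (iii), and it is the only place where that hypothesis enters.
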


\begin{proof}
    (i) We need to establish two properties: first, that $v$ is henselian, and second, that it is a coarsening of $v_K$. To prove that $v$ is henselian, it suffices to show that $w$ is the unique extension of $v$ to $L$. So let $w'$ be any prolongation of $v$ to $L$. By the Conjugacy Theorem~\ref{thm:Conjugacy}, $w'$ is henselian. Since $\Oo_w \in H(L)$, it follows that $\Oo_w$ and $\Oo_{w'}$ are comparable. Hence, by Fact~\ref{fact:compatibility}, $\Oo_w = \Oo_{w'}$.

    For the second part, note that if $\Oo_w \in H_1(L)$, then $\Oo_v \in H_1(K)$ by the definition of $H_1(K)$. Thus, it remains to consider the case $w = v_L$. All proper coarsenings of $v_L$ lie in $H_1(L)$, and hence their restrictions to $K$ (and therefore, all proper coarsenings of $v$) lie in $H_1(K)$. This implies that $\Oo_v \in H(K)$ for $w = v_L$.
    
    (ii) Essentially, the strategy is to pass to the splitting field of $L/K$ and apply (i). Let $N$ be the splitting field of $L/K$. Then $w$ has a unique henselian prolongation $w'$ to $N$. If $\Oo_{w'} \in H(N)$, the claim follows directly from (i). Otherwise, $v_N$ is a proper coarsening of $w'$, and both $Nv_N$ and $Nw'$ are separably closed. Hence, by Fact~\ref{fact:compatibility}, $v_N|_L$ must be a proper coarsening of $w$ as well. Since $\Oo_w \in H(L)$, it follows that $L(v_N|_L)$ is not separably closed. However, $Nv_N/L(v_N|_L)$ is finite, so $L(v_N|_L)$ must be real closed by the Artin-Schreier Theorem~\ref{thm:Artin-Schreier}.
    Note that $L(v_N|_L)/K(v_N|_K)$ is finite, and therefore $L(v_N|_L) = K(v_N|_K)$ by the same theorem. In this setup, $v_N$ restricts to a coarsening of $w$ on $L$ and to a coarsening of $v$ on $K$, giving rise to induced valuations $\overline{w}$ and $\overline{v}$:
    \vspace{-0.5em}
    \begin{center}
    \begin{minipage}{.4\textwidth}
        \centering
        \begin{tikzcd}
            L \ar[rr, bend left=37, "w"] \ar[r, "v_N|_L"]& L(v_N|_L) \ar[r, "\overline{w}"] & Lw
        \end{tikzcd}
    \end{minipage}
    \begin{minipage}{.4\textwidth}
        \centering
        \begin{tikzcd}
            K \ar[rr, bend left=37, "v"] \ar[r, "v_N|_K"]& K(v_N|_K) \ar[r, "\overline{v}"] & Kv
        \end{tikzcd}
    \end{minipage}
    \end{center}
    \vspace{-0.5em}
    By (i), $v_N|_K$ is henselian. Hence, by Fact \ref{fact:hensel_preserve}, to show that $v$ is henselian, it suffices to prove that $\overline{v}$ is henselian. Since $w$ is henselian, so is $\overline{w}$ (Fact \ref{fact:hensel_preserve}, again). Moreover, as $v = w|_{K}$, we have $\Oo_{\overline{v}} = \Oo_{\overline{w}}$, which implies that $\overline{v}$ is henselian. We conclude $\Oo_v \in H(K)$ as before.

    (iii) To show that $v$ is henselian, let $F/K$ be a finite Galois extension. We must show that $v$ has a unique prolongation to $F$. Clearly, one such prolongation is $v_1 \coloneqq \widetilde{w}|_F$, where $\widetilde{w}$ denotes the unique prolongation of $w$ to $K^{\sepc}$. Let $v_2$ be any other prolongation of $v$ to $F$, and let $\widetilde{v_2}$ denote a prolongation of $v_2$ to $K^{\sepc}$. By the Conjugacy Theorem~\ref{thm:Conjugacy}, there exists $\sigma \in G_K$ such that $\Oo_{\widetilde{v_2}} = \sigma \Oo_{\widetilde{w}}$. 
    
    Additionally, set $M \coloneqq LF$. Recall that $G_L \in \Syl_p(G_K)$, where $\Syl_p(G_K)$ denotes the set of Sylow $p$-subgroups of $G_K$.
    
    \newcommand{\addcomment}[1]{\llap{\raisebox{0.35ex}{\color{mygreen}\scriptsize$#1$}}}
    \begin{figure}[ht]
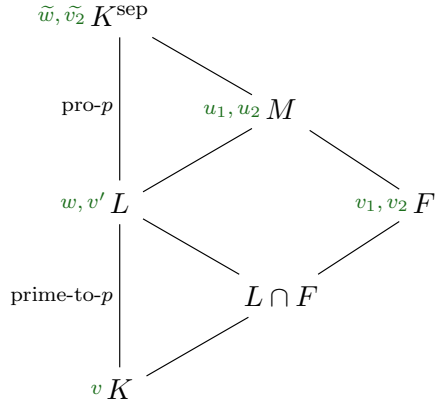

        \begin{diagram}
            \addcomment{\widetilde{w},\widetilde{v_2}\,}K^{\sepc} \ar[dr, dash] \ar[dd, dash, "\text{pro-$p$}"'] && \\
            & \addcomment{u_1, u_2\,}M\ar[dl, dash] \ar[dr, dash] & \\
            \addcomment{w, v'\,}L \ar[dd, dash, "\text{prime-to-$p$}"'] \ar[dr, dash] && \addcomment{v_1, v_2\,}F \ar[dl, dash] \\
            & L \cap F \ar[dl, dash] & \\
            \addcomment{v\,}K &&
        \end{diagram}
        \caption*{\textsc{Figure.} The diagram of fields for our proof. The coloured decorations are the valuations that are considered for each field.}
    \end{figure}
    Since $K^{\sepc}/L$ is a pro-$p$ extension, so is $K^{\sepc}/M$. Similarly, $L/(L \cap F)$ is a prime-to-$p$ extension, hence so is $M/F$. Therefore, $G_M \in \Syl_p G_F$.
    
    As $F/K$ is Galois, we have $\sigma F = F$, so $F \subseteq \sigma M$. Applying the preceding argument to $\sigma M$ in place of $M$ (with $\sigma L$ replacing $L$), we find that $G_{\sigma M} \in \Syl_p G_F$ as well. By the second pro-finite Sylow Theorem (see \cite[Corollary 2.3.6 (c)]{Ribes10}), $G_M$ and $G_{\sigma M}$ are conjugate in $G_F$, so there exists $\tau \in G_F$ with $\tau(M) = \sigma M$. Hence, $u_2 \coloneqq \widetilde{v_2}|_{\sigma M} \circ \tau$ is a henselian prolongation of $v$ to $M$, which restricts to $v_2$ on $F$.

    To show that $v_1 = v_2$, it suffices to prove that $u_1 \coloneqq \widetilde{w}|_M$ and $u_2$ coincide. By Fact~\ref{fact:compatibility}, it is enough to show that $\Oo_{u_1}$ and $\Oo_{u_2}$ are comparable. This holds if $u_1 \in H(M)$ or $u_2 \in H(M)$. Suppose, for contradiction, that $v_M$ is a proper coarsening of both $u_1$ and $u_2$. Then, $v' \coloneqq v_M|_L$ is a proper coarsening of $w = u_1|_L$ (again by Fact~\ref{fact:compatibility}), and hence its residue field $Lv'$ is not separably closed. Consequently, the residue field extension $Mv_M / Lv'$ is finite with $Mv_M$ separably closed, which is only possible if $Lv'$ is real closed. In this case, $[Mv_M : Lv'] = 2$ implies $2 \mid [M : L]$ by the fundamental equality (Corollary~\ref{cor:fundamental equality}), so we must have $p = 2$. Our standing assumption for $p = 2$ excludes the existence of the valuation $v'$, which yields a contradiction.

    Finally, by the same argument as in part (i), we conclude $v \in H(K)$.
\end{proof}

Later on, we will need two additional technical facts about real closed fields to handle the special case of $p = 2$ in the Galois characterisation of henselianity in Section \ref{sec:Galois_char_henselianity}.

\begin{fact}[{\cite[p. 109]{Engler05}}] \label{fact:coarsest_real_closed}
Let $K$ be a field and $\Oo_w \in H_1(K)$ a henselian valuation ring with real closed residue field. Then any henselian refinement $\Oo_v \in H_1(K)$ of $w$ will also have real closed residue field. In particular, if $K$ admits at least one henselian valuation with real closed residue field, then it has a coarsest one.
\end{fact}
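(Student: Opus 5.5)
The plan is to argue through the First Exact Sequence, using the Artin--Schreier Theorem~\ref{thm:Artin-Schreier} to recognise real closed fields by their absolute Galois groups.

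\textbf{First assertion.} Let $\Oo_v \subseteq \Oo_w$ with $\Oo_v \in H_1(K)$, and decompose $v = \overline{v} \circ w$. By Fact~\ref{fact:hensel_preserve}, $\overline{v}$ is a henselian valuation on the real closed field $Kw$. Since $\overline{v}$ is henselian, its absolute decomposition group is all of $G_{Kw}$, which has order $2$. The residue field of $\overline{v}$ on $(Kw)^{\sepc}$ is algebraically closed, so the automorphism group of this residue field over $Kv$ is $G_{Kv}$. The First Exact Sequence (Fact~\ref{fact:inertia group}) then gives a surjection $G_{Kw} \epi G_{Kv}$. Because $\Oo_v \in H_1(K)$, the field $Kv$ is not separably closed, so $G_{Kv}$ is non-trivial. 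Hence $|G_{Kv}| = 2$, and Theorem~\ref{thm:Artin-Schreier} shows that $Kv$ is real closed.

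\textbf{Second assertion.} Let $S \subseteq H_1(K)$ be the non-empty set of henselian valuation rings with real closed residue field. By Lemma~\ref{lem:tree}, $S$ is a chain, so $\Oo_u \coloneqq \bigcup_{\Oo \in S} \Oo$ is a valuation ring. It is a common coarsening of every $w_i \in S$, so it is henselian by Fact~\ref{fact:hensel_preserve}. Each $Kw_i$ is the residue field of a henselian valuation $\overline{w_i}$ on $Ku$. If $Ku$ were separably closed, each $Kw_i$ would be too, which is false; hence $\Oo_u \in H_1(K)$. It remains to show that $Ku$ is real closed, because then $\Oo_u \in S$ is the coarsest element.

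The key observation is that $\Oo_u^\times = \bigcup_i \Oo_{w_i}^\times$: if $x, x^{-1} \in \Oo_u$, both lie in a single member of the chain. Consequently every non-zero element of $Ku$ lifts to a unit of some $\Oo_{w_i}$. With this I will verify the Artin--Schreier criteria in turn.
\begin{enumerate}[(a)]
\item \emph{$-1$ is not a sum of squares.} Suppose $1 + \sum a_j^2 \in \Mm_u$ with $a_j \in \Oo_u$. Choose $i$ with all $a_j \in \Oo_{w_i}$. Since $\Mm_u \subseteq \Mm_{w_i}$, reducing gives $-1$ as a sum of squares in $Kw_i$, which is impossible.
\item \emph{Every element is $\pm$ a square.} Lift $a \in Ku^\times$ to $x \in \Oo_{w_i}^\times$. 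In the real closed field $Kw_i$ the residue of $x$ is $\pm c^2$. Hensel's Lemma~\ref{lem:hensel} for $w_i$ (residue characteristic $0$) gives $x = \pm b^2$ in $K$, and reducing along $u$ gives $a = \pm \overline{b}^2$.
\item \emph{No proper odd-degree extension.} Let $E = Ku(\alpha)$ with $\alpha$ having irreducible monic minimal polynomial $f$ of odd degree $n > 1$. Lift $f$ to a monic $F$ over $\Oo_u$. Since $\mathrm{disc}(f) \neq 0$, I can choose $i$ such that all coefficients of $F$ lie in $\Oo_{w_i}$ and $\mathrm{disc}(F) \in \Oo_{w_i}^\times$. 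Viewing $f$ over the henselian field $(Ku, \overline{w_i})$, its reduction to $Kw_i$ is separable of degree $n$. By Hensel's Lemma (factor lifting), an irreducible monic polynomial with separable reduction has irreducible reduction. Its reduction is therefore an irreducible polynomial of odd degree $n > 1$ over the real closed field $Kw_i$, which is a contradiction.
\end{enumerate}

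I expect (c) to be the main obstacle. The natural alternative, the fundamental equality, breaks down because the value groups of the $\overline{w_i}$ need not be $2'$-divisible. The discriminant trick is what lets me choose $i$ so that the extension is unramified with respect to $\overline{w_i}$. The step to check carefully there is that irreducibility of $f$ passes to its reduction.
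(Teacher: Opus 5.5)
Your proof is correct, but it takes a different route from the one the paper indicates. The paper gives no proof of its own. It cites Engler--Prestel and says the argument rests on orderings: henselian valuation rings are convex with respect to every ordering of $K$, and a convex valuation ring induces an ordering on its residue field, so the ordering of $Kw$ passes to the residue fields of henselian refinements.

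You avoid orderings entirely. For the first assertion, the First Exact Sequence for the henselian valuation $\overline{v}$ on $Kw$ gives $C_2 \cong G_{Kw} \epi G_{Kv}$, and Artin--Schreier finishes. This is shorter, and it uses only tools already developed in the paper, so the convexity lemma from Engler--Prestel is not needed.

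For the second assertion, the paper covers with ``in particular'' the step that the union of the chain of such valuation rings again has real closed residue field. You prove this explicitly. You use $\Oo_u^\times = \bigcup_i \Oo_{w_i}^\times$ and $\Mm_u = \bigcap_i \Mm_{w_i}$, and check the three Artin--Schreier criteria by lifting to a single $w_i$.

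One simplification in (c): you do not need the factor-lifting form of Hensel's Lemma, which the paper does not state. Once $\operatorname{disc}(F) \in \Oo_{w_i}^\times$, the reduction of $F$ modulo $\Mm_{w_i}$ is a separable polynomial of odd degree over the real closed field $Kw_i$. So it has a simple root, and Lemma~\ref{lem:hensel} lifts this to a root of $F$ in $\Oo_{w_i} \subseteq \Oo_u$. Its image in $Ku$ is a root of $f$, exactly as in your step (b).
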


The proof relies on the observation that henselian valuation rings on $K$ are convex with respect to any ordering on $K$ (see \cite[Lem.~4.3.6]{Engler05}), and that any ordering on $K$ induces an ordering on the residue field $Kv$ of any convex valuation ring $\Oo_v \subseteq K$.

The following was pointed out to us by Philip Dittmann:
\begin{lem}[cf. {\cite[1.11 Folg.]{Geyer69}}] \label{lem:no_normal_C2}
Let $F$ be a field with pro-$2$ absolute Galois group $G_F$. If $F$ is not real closed, then $G_F$ contains no normal subgroup isomorphic to $C_2$.
\end{lem}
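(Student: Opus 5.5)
The idea is to show that a normal subgroup of order $2$ in $G_F$ forces its fixed field to be real closed and Galois over $F$, and then to use the rigidity of the ordering on a real closed field to conclude that this fixed field is $F$ itself.

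\textbf{Step 1: reduce to a real closed Galois extension.} Suppose $N = \langle \sigma \rangle \trianglelefteq G_F$ with $N \cong C_2$, and let $E \coloneqq (F^{\sepc})^N$. Then $G_E = N$ has order $2$, so by the Artin--Schreier Theorem~\ref{thm:Artin-Schreier}, $E$ is real closed. In particular $\charK F = 0$, and $F^{\sepc} = F^{\alg} = E(\sqrt{-1})$. Since $N$ is normal in $G_F$, the extension $E/F$ is Galois with group $G_F/N$. (Normality already gives this; conjugation by $\tau \in G_F$ fixes $\sigma$, since $\Aut(C_2)=1$, so $N$ is even central.)

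\textbf{Step 2: show $\Gal(E/F)$ is trivial.} On the real closed field $E$ the unique ordering is defined by the squares: $x \ge 0$ if and only if $x$ is a square in $E$. Hence every field automorphism $\rho$ of $E$ is order-preserving. Now fix $x \in E$ and let $S \subseteq E$ be the finite set of roots in $E$ of the minimal polynomial of $x$ over $F$. Any $\rho \in \Gal(E/F)$ permutes $S$ and preserves the linear order on $S$. An order-preserving permutation of a finite linearly ordered set is the identity, so $\rho(x) = x$. As $x$ was arbitrary, $\rho = \mathrm{id}$, i.e.\ $\Gal(E/F)=1$.

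\textbf{Step 3: conclude.} By Step 2 and Galois theory, $E = F$, so $F$ is real closed. This contradicts the hypothesis, so no such $N$ exists.

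The pro-$2$ hypothesis is not actually needed in this argument; it only reflects the setting in which the lemma is applied.

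\textbf{Main point to check.} The step that needs care is the assertion that every automorphism of a real closed field is order-preserving. It follows directly from the characterisation of the positive cone as the set of nonzero squares, which is part of Theorem~\ref{thm:Artin-Schreier}: for every $a$, either $a$ or $-a$ is a square, and $-1$ is not a sum of squares. The same characterisation is what ensures that the ordering on $E$ restricts to a linear order on each finite set $S$ that every $\rho \in \Gal(E/F)$ respects.
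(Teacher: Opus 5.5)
Your proof is correct and is essentially the paper's argument: the fixed field of the normal $C_2$ is real closed and Galois over $F$, and every $F$-automorphism of it must preserve its unique ordering, which rules out moving any element to a different conjugate. The only difference is that the paper applies this to a quadratic subextension $F(\sqrt{x})$ and the pair $\pm\sqrt{x}$, whose existence is where it uses that $G_F$ is pro-$2$. You apply it to the finite ordered set of conjugates of an arbitrary element, which correctly shows that the pro-$2$ hypothesis is superfluous.
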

\begin{proof}
Assume for a contradiction that $C_2 \vartriangleleft G_F$ is a proper normal subgroup, and let $R \coloneq (F^{\alg})^{C_2}$. Then $R$ is real closed by Theorem~\ref{thm:Artin-Schreier} and $R/F$ is a Galois extension.
Let $F(\sqrt{x})/F$ be any quadratic extension contained in $R$.
\begin{diagram}[cramped]
    F^{\alg} \ar[d,no head,"C_2"] \\
    R \ar[d,no head] \\
    F(\sqrt{x}) \ar[d,no head,"C_2"] \\
    F
\end{diagram}
The involution on $F(\sqrt{x})$ over $F$ extends to an automorphism of $R$, which induces two different orderings on $F(\sqrt{x})$; $\sqrt{x}$ is positive for one and negative for the other. This contradicts the fact that $R$ can only be the real closure of one of the two orderings on $F(\sqrt{x})$ by the uniqueness of the ordering on $R$.
\end{proof}

\subsection{Creation of valuations}
\label{sec:rigid elements}

One of our main ingredients, the Galois characterisation of henselianity (Theorem~\ref{thm:Galois code henselianity}), requires us to \emph{create} a valuation on a field $K$ based on abstract properties of $K$---out of thin air, as it would seem. The constructions originate in quadratic form theory, developed in the works of Arason, Bröcker, Elman, Jacob, and Ware \cite{Broecker76, Jacob81, Ware81, Ware92, AEJ}, and later adapted by Hwang, Jacob, and Koenigsmann~\cite{HwJ95, Koenigsmann95}.

\begin{leit*}
The purpose of this section is to explain the proof of Theorem~\ref{thm:creation_p-rigid}, which provides a sufficient condition for the existence of a non-trivial valuation with non-$p$-divisible value group on $K$, based on the interplay between $K^{\times p}$ and the additive structure of the field $K$. Historically, the somewhat computational proof was hard to motivate. It turns out that the computations involved work in a much more general setup---this setup, however, is easier to motivate. We follow Efrat's abstract approach (\S11 and \S26.4 in his book \cite{Efrat06}) using $S$-compatible valuations (see Definition~\ref{def:S-comp}). Later, we will specialise to our case of interest.
\end{leit*}

\subsubsection{Creating valuations from rigid elements}

Certainly, any field $K$ can be endowed with the trivial valuation. What we are looking for are abstract properties of $K$ that guarantee the existence of a valuation $v$ that is \emph{as non-trivial as possible}. This is realised by the idea of \emph{multiplicative stratification}, which is made precise as follows:

\begin{defi} \label{def:S-comp}
Let $S \le H$ be multiplicative subgroups of $K^\times$. Define two sets of valuation rings on $K$,
\[
    \Val(S) \coloneqq \{\Oo_v \subseteq K \text{ val. ring} : 1 + \Mm_v \le S\} \text{ \ and \ } \Val(S,H) \coloneqq \{\Oo_v \in \Val(S) : \Oo_v^\times \le H\}.
\]
A valuation $v$ is \emph{$S$-compatible} if $\Oo_v \in \Val(S)$.
\end{defi}

The parameters $S$ and $H$ \emph{stratify} possible valuation rings into the sets $\Val(S)$ and $\Val(S,H)$.
If $v$ is trivial on $K$, then $1 + \Mm_v = \{1\}$ and $\Oo_v^\times = K^\times$. Hence, $S$ and $H$ control the non-triviality of $v$.
Strikingly, $\Val(S)$ and $\Val(S,H)$ fit into a hierarchy analogous to the tree $H_1(K) \cup H_2(K)$ of henselian valuations (cf. Lemma~\ref{lem:tree}). In this sense, the condition $1 + \Mm_v \le S$ defining $\Val(S)$ can be seen as a weak analogue of henselianity.

\begin{prop}[{\cite[Lem. 11.1.2]{Efrat06}}] \label{prop:S-val-tree}
Let $S \le H \le K^\times$ be multiplicative subgroups. The set $\Val(S)$, ordered by the coarsening relation, has the following properties:
\begin{enumerate}[(i)]
    \item $\Val(S)\setminus\Val(S,H)$ is linearly ordered and any $\Oo_w \in \Val(S)\setminus\Val(S,H)$ is coarser than any $\Oo_v \in \Val(S,H)$.
    \item $\Val(S,H)$ is closed under refinements.
    \item Any non-empty subset of $\Val(S,H)$ has a supremum (i.e., a finest common coarsening) in $\Val(S,H)$. If $\Val(S,H) \ne \emptyset$, we write $v(S,H)$ for the coarsest valuation in $\Val(S,H)$. In particular, $v(S,H)$ is comparable to any valuation in $\Val(S)$.
\end{enumerate}
\end{prop}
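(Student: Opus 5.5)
The heart of the proof is a single lemma about incomparable valuation rings, which plays the role that F.\,K.~Schmidt's Theorem~\ref{thm:schmidt} plays in the proof of Lemma~\ref{lem:tree}.

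\emph{Key lemma.} Let $\Oo_v,\Oo_w\in\Val(S)$ be incomparable, and let $\Oo_u\coloneqq\Oo_v\Oo_w$ be their finest common coarsening. Then
\[
\Oo_u^\times\subseteq(1+\Mm_v)(1+\Mm_w)\subseteq S .
\]
To prove this, we work in the residue field $Ku$. By Lemma~\ref{lem:coarsening equivalence}, $v$ and $w$ induce valuations $\overline v,\overline w$ on $Ku$. These are non-trivial because $v,w$ are incomparable, and independent because $u$ is the finest common coarsening. Let $x\in\Oo_u^\times$ with residue $\overline x\neq 0$. The Approximation Theorem (Fact~\ref{fact:approx}) gives $y\in\Oo_u^\times$ with
\[
\overline v(\overline y-1)>0 \and \overline w(\overline y-\overline x)>0 .
\]
Since $\Mm_u\subseteq\Mm_v\cap\Mm_w$, these conditions lift to $K$: we get $y\in1+\Mm_v$, and $x/y\in1+\Mm_w$ (using that $\overline y$ is a $\overline w$-unit). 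Hence $x=y\cdot(x/y)\in(1+\Mm_v)(1+\Mm_w)$. In particular $\Oo_u^\times\le S\le H$. Moreover $1+\Mm_u\subseteq1+\Mm_v\subseteq S$, so $\Oo_u\in\Val(S,H)$.

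\emph{Item (ii).} I read (ii) as closure under refinements inside $\Val(S)$. This is immediate: if $\Oo_{v'}\subseteq\Oo_v$ with $\Oo_{v'}\in\Val(S)$ and $\Oo_v\in\Val(S,H)$, then $\Oo_{v'}^\times\subseteq\Oo_v^\times\le H$ by Lemma~\ref{lem:coarsening equivalence}. The literal reading, closure under arbitrary refinements, fails to control $1+\Mm_{v'}$: this set is larger than $1+\Mm_v$, so nothing forces it into $S$. The intended statement is therefore the one relative to $\Val(S)$.

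\emph{Item (i).} Take $\Oo_w\in\Val(S)\setminus\Val(S,H)$ and $\Oo_v\in\Val(S)$.
\begin{enumerate}[(a)]
\item If $\Oo_v,\Oo_w$ were incomparable, the key lemma would put their common coarsening $\Oo_u$ in $\Val(S,H)$. Then (ii) would force its refinement $\Oo_w\in\Val(S,H)$, a contradiction. So any such pair is comparable, and in particular $\Val(S)\setminus\Val(S,H)$ is linearly ordered.
\item If $\Oo_v\in\Val(S,H)$, comparability gives either $\Oo_v\subseteq\Oo_w$ or the reverse. The reverse inclusion would again put $\Oo_w\in\Val(S,H)$ by (ii). Hence $\Oo_w$ is coarser than $\Oo_v$.
\end{enumerate}

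\emph{Item (iii).} Let $T\subseteq\Val(S,H)$ be non-empty.
\begin{enumerate}[(a)]
\item For two members, the key lemma shows that their compositum lies in $\Val(S,H)$ if they are incomparable, and this is trivial if they are comparable. By induction, every finite subset of $T$ has its supremum in $\Val(S,H)$.
\item The finite suprema form a directed family of valuation rings. Its union $\Oo_T$ is the ring generated by $T$, and it is a valuation ring because it contains one.
\item Units are compatible with directed unions, so $\Oo_T^\times=\bigcup\Oo_{T_0}^\times\le H$, the union running over finite subsets $T_0\subseteq T$.
\item Since $\Mm_{\Oo_T}=\bigcap\Mm_{\Oo_{T_0}}$, we get $1+\Mm_{\Oo_T}\subseteq S$. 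Thus $\Oo_T\in\Val(S,H)$ is the required supremum.
\end{enumerate}
Taking $T=\Val(S,H)$ produces the coarsest element $v(S,H)$. Any $\Oo\in\Val(S)$ is then comparable to it: either $\Oo\in\Val(S,H)$, in which case $\Oo\subseteq\Oo_{v(S,H)}$, or $\Oo\notin\Val(S,H)$, in which case (i) shows $\Oo$ is coarser than $\Oo_{v(S,H)}$.

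I expect the main obstacle to be the lifting step in the key lemma. One has to check carefully that approximation in $Ku$ really produces elements of $1+\Mm_v$ and $1+\Mm_w$ in $K$. This relies on $\Mm_u\subseteq\Mm_v\cap\Mm_w$, and on correctly identifying $\overline v$ and $\overline w$ as non-trivial and independent on $Ku$.
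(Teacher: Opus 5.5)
The paper does not prove this proposition itself; it only cites \cite[Lem.~11.1.2]{Efrat06}. So I am judging your argument on its merits. The architecture is sound. Your key lemma says that the finest common coarsening $\Oo_u=\Oo_v\Oo_w$ of two incomparable $S$-compatible rings satisfies $\Oo_u^\times\subseteq(1+\Mm_v)(1+\Mm_w)\subseteq S$; once that holds, (ii), (i) and (iii) follow formally. Your reading of (ii) as closure under refinements \emph{inside} $\Val(S)$ is the correct one. The deductions of (i) and (iii) from the key lemma are all fine: comparability via (ii), finite composita by induction, and directed unions preserving both $\Oo^\times\le H$ and $1+\Mm\subseteq S$.

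One step fails as written, in the lifting part of the key lemma where you expected trouble. From $\overline w(\overline y-\overline x)>0$ you may conclude that $\overline y$ is a $\overline w$-unit only when $\overline x$ is one. But $x\in\Oo_u^\times$ is arbitrary and $\overline w$ is non-trivial on $Ku$, so you can have $\overline w(\overline x)>0$. Then $\overline w(\overline y)>0$ as well, and $\overline w(\overline x/\overline y-1)=\overline w(\overline x-\overline y)-\overline w(\overline y)$ can be $\le 0$; for instance, if $0<\overline w(\overline y-\overline x)<\overline w(\overline x)$ it equals $0$. So $x/y\in 1+\Mm_w$ is not established.

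The repair is to use the full strength of Fact~\ref{fact:approx}. Choose $\overline y$ with $\overline v(\overline y-1)>0$ and $\overline w(\overline y-\overline x)>\overline w(\overline x)$. Then $\overline w(\overline y)=\overline w(\overline x)$, hence $\overline w(\overline x/\overline y-1)>0$. Since $x/y\in\Oo_u^\times$ and $\Mm_u\subseteq\Mm_w$, this lifts to $x/y\in 1+\Mm_w$.

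Alternatively, your original bound does work whenever $\overline w(\overline x)\le 0$. Since $(1+\Mm_v)(1+\Mm_w)$ is a group, you may replace $x$ by $x^{-1}$ in the remaining case. With either correction the key lemma even gives $\Oo_u^\times=(1+\Mm_v)(1+\Mm_w)$, and the rest of your proof goes through unchanged.
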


\tikzset{every picture/.style={line width=0.75pt}}
\begin{center}
\hspace*{100pt}
\begin{tikzpicture}[x=1pt,y=1pt]
\draw (180, -37.5) -- (180, -112.5);
\fill (180, -112.5) circle (.3ex);
\node[left=3pt] at (180, -112.5) {$v(S,H)$};
\draw (180, -112.5) -- (120.6, -187.5);
\draw (180, -112.5) -- (240, -187.5);
\draw (150, -150) -- (180, -187.5);
\draw (150, -187.5) -- (165, -168.75);
\draw (225, -168.75) -- (210, -187.5);
\draw [decorate, decoration={brace,mirror}, thick]
  (255, -106) -- (255, -37) node[midway, right=5pt] {$\Val(S) \setminus \Val(S,H)$};
\draw [decorate, decoration={brace,mirror}, thick]
  (255, -187.5) -- (255, -108) node[midway, right=5pt] {$\Val(S,H)$};
\draw[<->] (365, -37) -- (365, -187.5)
  node[pos=0.2, right=3pt] {\small coarser}
  node[pos=0.8, right=3pt] {\small finer};
\end{tikzpicture}
\end{center}

In our reasoning, the question now is how to identify suitable candidates for subgroups $S \le H \le K^\times$ such that $S$ is sufficiently large, $H$ is sufficiently small, and, moreover, to establish a criterion for the non-emptiness of $\Val(S,H)$.
The key to this is the following explicit description of $v(S,H)$.

\begin{defi} \label{def:O(S,H)}
Given $S \le H \le K^\times$, define
\begin{align*}
    \Oo^-(S,H) & \coloneqq (1 - S)\setminus H \\
    \Oo^+(S,H) & \coloneqq \{x \in H : x\Oo^-(S,H) \subseteq \Oo^-(S,H)\} \\
    \Oo(S,H) & \coloneqq \Oo^-(S,H) \cup \Oo^+(S,H).
\end{align*}
\end{defi}

Consider the case that $S = 1 + \Mm_v$ and $H = \Oo_v^{\times}$ for some valuation $v$ on $K$. Then
\[
    \Oo^-(S,H) = \Mm_v,\ \Oo^+(S,H) = \Oo_v^\times, \text{ and }\ \Oo(S,H) = \Oo_v.
\]
Therefore, $\Oo^-(S,H)$ is supposed to model the maximal ideal, $\Oo^+(S,H)$ the units, and $\Oo(S,H)$ the valuation ring of the valuation we aim to construct.

\begin{prop}[{\cite[Thm. 11.3.6]{Efrat06}}]
The following statements are equivalent:
\begin{enumerate}[(i)]
    \item $\Val(S,H)$ is non-empty;
    \item $\Oo(S,H)$ lies in $\Val(S,H)$;
    \item $\Oo(S,H)$ is equal to the valuation ring $\Oo_{v(S,H)}$ of $v(S,H)$.
\end{enumerate}
\end{prop}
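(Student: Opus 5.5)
The implications (iii)$\Rightarrow$(ii)$\Rightarrow$(i) are formal. If $\Oo(S,H) = \Oo_{v(S,H)}$, then $\Oo(S,H)$ is the valuation ring of an element of $\Val(S,H)$, so it lies in $\Val(S,H)$. And (ii) trivially gives that $\Val(S,H)$ is non-empty. The content is therefore (i)$\Rightarrow$(iii).

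So assume $\Val(S,H) \neq \emptyset$. By Proposition~\ref{prop:S-val-tree}(iii), the coarsest element $\Oo_v \coloneqq \Oo_{v(S,H)}$ exists. I will show directly that $\Oo^-(S,H) = \Mm_v$ and $\Oo^+(S,H) = \Oo_v^\times$. Since $\Oo_v = \Mm_v \sqcup \Oo_v^\times$, this gives $\Oo(S,H) = \Oo_v$.

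\textbf{Step 1: $\Mm_v \subseteq \Oo^-(S,H)$.} Let $x \in \Mm_v$, $x \neq 0$ (the case $x = 0$ is handled separately by the convention $0 \in \Oo^-$, since $1 \in S$). Then $1 - x \in 1 + \Mm_v \subseteq S$, so $x \in 1 - S$. Moreover $x \notin H$. Indeed, if $x \in H$, then together with $\Oo_v^\times \le H$ one would get $\Oo_v^\times\langle x\rangle \le H$. This subgroup is the unit group of a proper coarsening $\Oo_w$ of $\Oo_v$, namely coarsening by the convex hull of $vx$. That coarsening still satisfies $1 + \Mm_w \subseteq 1 + \Mm_v \le S$, because coarsening shrinks the maximal ideal (Lemma~\ref{lem:coarsening equivalence}). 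So $\Oo_w \in \Val(S,H)$, contradicting the coarsest choice of $v$. The catch is that $H$ must contain the whole convex hull of $vx$, not just $\Oo_v^\times\langle x\rangle$. I will fix this by using that $H \supseteq \Oo_v^\times$ and that every element of valuation between $0$ and $n\,vx$ is a unit times a product of $x$'s and units. That argument looks shaky, so instead I will argue via the tree structure. Suppose $x \in H \cap \Mm_v$. Consider the coarsest coarsening $\Oo_w$ of $\Oo_v$ with $\Oo_w^\times \le H$. Its existence follows by taking the union of the chain of coarsenings of $v$ lying in $\Val(S,H)$, which is an element of $\Val(S,H)$ by (iii). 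Maximality then forces $\Oo_w = \Oo_v$, and I will check that $x^{-1}\Oo_v \cup \Oo_v$ generates a coarsening whose units lie in $H$. Making this precise is the step I expect to be the main obstacle: one must show that the units of the ring $\Oo_v[x^{-1}]$ lie in $H$. Here I would use that $1+\Mm_v\le S\le H$ and that each unit of $\Oo_v[x^{-1}]$ has the form $u x^{k}$ up to a factor whose valuation lies in the convex hull of $vx$.

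\textbf{Step 2: $\Oo^-(S,H) \subseteq \Mm_v$.} Take $x \in (1-S) \setminus H$. Since $\Oo_v^\times \le H$, we have $x \notin \Oo_v^\times$. Suppose $v(x) < 0$. Then $1 - x$ has valuation $v(x)$, and $1-x \in S \le H$. Also $(1-x)/(-x) = 1 - x^{-1} \in 1 + \Mm_v \le S$. Hence $-x \in H$, and since $-1 = \bigl(1-(1+1)\bigr)$-type considerations place $-1$ in $H$ (it lies in $\Oo_v^\times \le H$), we get $x \in H$, a contradiction. Therefore $v(x) > 0$.

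\textbf{Step 3: units.} For $u \in \Oo_v^\times$ we have $u \in H$ and $u\Mm_v = \Mm_v$, so $u \in \Oo^+(S,H)$ by Steps 1--2. Conversely, let $x \in H$ with $x\Mm_v \subseteq \Mm_v$. Then $v(x) \ge 0$ (otherwise multiply by an element of $\Mm_v$ of small valuation, using that $\Mm_v \neq 0$ when $v$ is non-trivial; if $v$ is trivial, $\Oo^-(S,H) = \{0\}$ and the claim is immediate). Since $x \in H$, Step 1 gives $x \notin \Mm_v$, so $x \in \Oo_v^\times$.
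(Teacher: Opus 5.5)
Your implications (iii)$\Rightarrow$(ii)$\Rightarrow$(i) are fine. So is Step~2, which in fact shows $\Oo^-(S,H)\subseteq\Mm_v$ for \emph{every} $\Oo_v\in\Val(S,H)$. The gap is the target you set for Steps~1 and~3: for $v=v(S,H)$ the identities $\Oo^-(S,H)=\Mm_v$ and $\Oo^+(S,H)=\Oo_v^\times$ are false in general. $H$ is only required to contain $\Oo_v^\times$, and it may well meet $\Mm_v$.

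Take $K=\IQ_p$, $v=v_p$, $S=1+p\IZ_p$ and $H=\IZ_p^\times\langle p^2\rangle$. The condition $1+\Mm_w\le S$ forces $\Mm_w\subseteq p\IZ_p$, i.e.\ $\IZ_p\subseteq\Oo_w$, so $\Val(S)=\{\IZ_p,\IQ_p\}$. Since $\IQ_p^\times\not\le H$, we get $\Val(S,H)=\{\IZ_p\}$ and $v(S,H)=v_p$. Here $\Oo^-(S,H)=\{0\}\cup\{y\in p\IZ_p : v_p(y)\ \text{odd}\}$. Thus $p^2\in\Mm_v$ but $p^2\notin\Oo^-(S,H)$, and $p^2\in\Oo^+(S,H)$ although $p^2\notin\Oo_v^\times$. (The proposition itself still holds here: $\Oo(S,H)=\IZ_p$.) So Step~1 cannot be repaired, and the obstacle you flag is real: for instance $\Oo_v[p^{-2}]=\IQ_p$ has units outside $H$. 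Step~3 inherits the error, since both of its directions use $\Oo^-(S,H)=\Mm_v$.

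What is true is $\Oo^-(S,H)=\Mm_v\setminus H$ and $\Oo^+(S,H)=\Oo_v\cap H$. Since $\Oo_v^\times\le H$, these two sets partition $\Oo_v$.
\begin{itemize}
\item The first identity is your Step~1 computation ($x\in\Mm_v\Rightarrow 1-x\in S$) combined with Step~2.
\item For $\Oo_v\cap H\subseteq\Oo^+(S,H)$: if $x\in\Oo_v\cap H$ and $y\in\Oo^-(S,H)\subseteq\Mm_v\setminus H$, then $xy\in\Mm_v\setminus H$.
\end{itemize}
Neither of these inclusions uses that $v$ is coarsest.

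Coarseness enters only in $\Oo^+(S,H)\subseteq\Oo_v$. Your idea of multiplying by a small element of $\Mm_v$ is right there, except that the multiplier must lie in $\Oo^-(S,H)=\Mm_v\setminus H$. Let $x\in H$ with $v(x)<0$, and suppose every $y$ with $0<v(y)\le -v(x)$ lay in $H$. Then $H$ would contain every element whose value lies in $\Conv(v(x))$: reduce by powers of $x\in H$ and use $\Oo_v^\times\le H$. That is, $H$ would contain the unit group of the proper coarsening $\Oo_w\supsetneq\Oo_v$ attached to $\Conv(v(x))$. Since $1+\Mm_w\subseteq 1+\Mm_v\le S$, this puts $\Oo_w\in\Val(S,H)$, contradicting the choice of $v(S,H)$. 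Hence some $y\in\Mm_v\setminus H$ has $v(xy)\le 0$. Then $xy\notin\Mm_v\supseteq\Oo^-(S,H)$, so $x\notin\Oo^+(S,H)$.
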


By doing computations with $\Oo(S,H)$, which gives an explicit description of $v(S,H)$, one can derive a criterion for the non-emptiness of $\Val(S,H)$. This, in turn, relies crucially on the notion of \emph{rigidity} coming from the theory of quadratic forms.

\begin{defi}
Fix a subgroup $S \le K^\times$. An element $x \in K^{\times}$ is \emph{$S$-rigid} (or simply: \emph{rigid}) if $S + xS \subseteq S \cup xS$, or equivalently, $1 + xS \subseteq S \cup xS$.
Write
\[
    A(S) \coloneqq \{-x \in K^\times : 1 + x \notin S \cup xS\} = \{x \in K^\times : 1 - x, 1 - x^{-1} \notin S\}
\]
for a distinguished set of additive inverses of non-rigid elements, which we use in the proposition below.
\end{defi}

The notion of rigidity is in harmony with the ultrametric triangle inequality for valuations: if $S = 1 + \Mm_v$, and $H = \Oo_v^\times$, then any element in $K^\times \setminus H$ is rigid and $A(S) = \Oo_v^\times$.
Note that our first goal is to formulate a full criterion determining for which $H$ (depending on $S$) the set $\Val(S,H)$ is non-empty.

\begin{prop} \label{prop:rigid_crit}
Let $S \le H \le K^\times$. The following conditions are equivalent:
\begin{enumerate}[(i)]
    \item $\Val(S,H)$ is non-empty;
    \item $-1$ and $A(S)$ are contained in $H$, and $\Oo^-(S,H)$ satisfies the closure property
    \begin{equation*}
        \Oo^-(S,H)\Oo^-(S,H) \subseteq 1 - S; \tag{$\dagger$} \label{eq:closure}
    \end{equation*}
    \item every $x \in K^\times \setminus H$ is rigid and $\Oo^-(S,H)$ satisfies \emph{(\ref{eq:closure})}.
\end{enumerate}
\end{prop}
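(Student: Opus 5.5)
We prove the cycle (i)$\Rightarrow$(iii)$\Rightarrow$(ii)$\Rightarrow$(i). For the last implication we rely on the preceding proposition (Efrat's Theorem 11.3.6), which reduces non-emptiness of $\Val(S,H)$ to showing $\Oo(S,H)\in\Val(S,H)$.

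\emph{(i)$\Rightarrow$(iii).} Pick $\Oo_v\in\Val(S,H)$. Let $x\in K^\times\setminus H$. Since $\Oo_v^\times\le H$, we have $v(x)\neq 0$. Take $s\in S$.
If $v(x)>0$, then $1+xs\in 1+\Mm_v\subseteq S$.
If $v(x)<0$, then $1+xs=xs\bigl(1+(xs)^{-1}\bigr)\in xS\cdot(1+\Mm_v)\subseteq xS$.
So every $x\notin H$ is rigid. For $(\dagger)$, let $x\in\Oo^-(S,H)=(1-S)\setminus H$; we claim $x\in\Mm_v$.
\begin{itemize}
\item Since $x\notin H\supseteq \Oo_v^\times$, we have $v(x)\neq 0$.
\item Suppose $v(x)<0$. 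Write $x=1-s$ with $s\in S$. Then $v(s)=v(x)<0$, and $x=-s(1-s^{-1})\in -S\cdot(1+\Mm_v)\subseteq -S$.
\end{itemize}
It remains to see $-S\subseteq H$, i.e.\ $-1\in H$. In characteristic $2$ this is trivial. Otherwise one gets it from $v$ directly: $-1\in\Oo_v^\times\le H$. This contradicts $x\notin H$, so $v(x)>0$. Hence $\Oo^-\Oo^-\subseteq\Mm_v\subseteq 1-S$, because $1-m\in 1+\Mm_v\subseteq S$ for $m\in\Mm_v$.

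\emph{(iii)$\Rightarrow$(ii).} First we show $A(S)\subseteq H$, assuming $-1\in H$. Let $x\in A(S)$ and suppose $x\notin H$. Then $-x\notin H$, so $-x$ is rigid, giving $1-x\in S\cup(-x)S$. The case $1-x\in S$ is excluded by the definition of $A(S)$. The case $1-x\in -xS$ means, after dividing by $-x$, that $1-x^{-1}\in S$, which is also excluded. This is a contradiction.

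\emph{Main obstacle.} The delicate point is $-1\in H$ from (iii) alone. Suppose $-1\notin H$, so $-1$ is rigid; then $1-S\subseteq S\cup -S$. The plan is to play this off against $(\dagger)$. Any $s\in S$ with $1-s\in -S$ has $1-s\notin H$ (since $-S\cap H=\emptyset$), so $1-s\in\Oo^-$. Then $(\dagger)$ forces $(1-s)^2\in 1-S$. One also needs $-1\cdot$(rigid elements) to remain outside $H$. The first test cases to try are $s=-1$ and $s=1\pm\text{(elements of }\Oo^-)$, aiming at an element lying in both $S$ and $-S$, hence in $H\cap(-H)$, which is impossible. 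If this route fails, I would fall back on Efrat's treatment, which handles $-1$ separately.

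\emph{(ii)$\Rightarrow$(i).} First, every $x\notin H$ is rigid: $-x\notin H$ because $-1\in H$, so $-x\notin A(S)$, which unwinds to $1+x\in S\cup xS$. Then we verify by hand, in order:
\begin{enumerate}[(a)]
\item $\Oo(S,H)$ is closed under multiplication. Use $(\dagger)$ for $\Oo^-\cdot\Oo^-$ and the definition of $\Oo^+$ for the mixed products.
\item For every $x\in K^\times$, $x$ or $x^{-1}$ lies in $\Oo(S,H)$. Use rigidity for $x\notin H$, and a case split on whether $x\Oo^-\subseteq\Oo^-$ for $x\in H$.
\item $\Oo(S,H)$ is closed under addition. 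Reduce to $1+y$ with $y\in\Oo(S,H)$, again via rigidity.
\item $1+\Oo^-\subseteq S$ and the units are $\Oo^+\le H$.
\end{enumerate}
This puts $\Oo(S,H)$ in $\Val(S,H)$. Steps (b) and (c) carry the bulk of the routine computation.
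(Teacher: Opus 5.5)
Your cycle breaks exactly where you flag the ``main obstacle'', and the missing idea is one line. Since $1\in S$, we have $1+(-1)\cdot 1=0\notin S\cup(-1)S$. So $-1$ is \emph{never} $S$-rigid, and (iii) therefore forces $-1\in H$. This is precisely how the paper obtains $-1\in H$; the detour through $(\dagger)$ that you sketch is unnecessary (and $s=-1$ need not even lie in $S$). With this fixed, your argument for $A(S)\subseteq H$ is the paper's (iii)$\Rightarrow$(ii).

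Your direct proof of (i)$\Rightarrow$(iii) is a good self-contained addition, since the paper only cites Efrat for (i)$\Leftrightarrow$(ii). It does contain a slip, however. Rigidity of $x$ concerns $1+xs$ for every $s\in S$, and $S$ may contain elements of arbitrarily negative value (e.g.\ $S=\IQ_p^{\times 2}$ with $v=v_p$, $p\neq 2$). So you must split on the sign of $v(xs)$, not of $v(x)$. This works because $xs\notin H$ (as $S\le H$), whence $v(xs)\neq 0$.

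The implication (ii)$\Rightarrow$(i) is not actually proved. The preceding proposition does no work here: that $\Oo(S,H)\in\Val(S,H)$ suffices for non-emptiness is obvious. Your steps (a)--(d) are the substance of Efrat's theorem, not routine checks. For example, showing that for $x\in H$ one of $x\Oo^-\subseteq\Oo^-$ or $x^{-1}\Oo^-\subseteq\Oo^-$ holds, or even that $-\Oo^-\subseteq\Oo^-$, requires playing rigidity off against $(\dagger)$.

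Moreover, (d) is mis-stated. In general $\Oo^+=\Oo(S,H)\cap H$ contains non-units, so the maximal ideal of $\Oo(S,H)$ can be strictly larger than $\Oo^-$. For a discrete valuation $v$ with $S=1+\Mm_v$ and $H=v^{-1}(2\IZ)$, one gets $\Oo(S,H)=\Oo_v$. Yet $\Oo^-\setminus\{0\}$ consists only of the elements of odd positive value. So you would also need $1+m\in S$ for non-units $m\in\Oo^+$.

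The paper sidesteps all of this by citing Efrat (Thm.~11.3.6) for (i)$\Leftrightarrow$(ii) and proving only (ii)$\Leftrightarrow$(iii). You should either do the same or genuinely carry out (a)--(d).
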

\begin{proof}
(i) $\Leftrightarrow$ (ii). See \cite[Thm. 11.3.6]{Efrat06} for the proof of this equivalence.

(ii) $\Leftrightarrow$ (iii). Assume $-1 \in H$ and $A(S) \subseteq H$. Let $x$ be non-rigid, i.e., there exists $s \in S$ such that $1 + xs \notin S \cup xS$. Then $-xs \in A(S) \subseteq H$, and therefore, $x \in H$.

Conversely, assume that each $x \in K^\times \setminus H$ is rigid. Note that $-1$ is non-rigid, so $-1 \in H$. If $x \in A(S)$, then $-x$ is non-rigid, hence $x \in H$.
\end{proof}

This proposition shows that non-rigid elements and (\ref{eq:closure}) are the only obstructions to creating valuations in the aforementioned sense, motivating the following:
\begin{defi}
Fix $S \le K^\times$. We define
\[
    H_S \coloneqq \langle -1, S, A(S) \rangle \le K^\times.
\]
Equivalently, $H_S$ is the group generated by $S$ and all non-rigid elements with respect to $S$. We call $H_S$ the \emph{non-rigid hull of $S$}.
\end{defi}

The last ingredient we need is an additional fact about property (\ref{eq:closure}): it allows us to tweak $H$ slightly to ensure that (\ref{eq:closure}) holds.

\begin{lem}[{\cite[Prop. 11.4.2]{Efrat06}}] \label{lem:rigid_closure}
Given $S \le H_S \le H$, assume that $a,b \in \Oo^-(S,H)$ violate \emph{(\ref{eq:closure})}, i.e., $ab \notin 1 - S$. Then $a^2 \in S$ and $\Oo^-(S,\langle H, a \rangle)$ satisfies \emph{(\ref{eq:closure})}.
\end{lem}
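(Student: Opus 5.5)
The plan is to work entirely with rigidity and the coset structure $K^\times/S$. The standing hypothesis $H_S \le H$ says that every element of $K^\times\setminus H$ is rigid, and that $-1\in H$.

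\textbf{First part: $a^2\in S$.}
1. Write $a = 1-s$ and $b=1-t$ with $s,t\in S$ and $a,b\notin H$. Then $a$, $b$, $-a$, $-b$ are all rigid, since $-1\in H$.
2. Compute $1-ab = 1-a+at = s+at$. Rigidity of $a$ gives $S+aS\subseteq S\cup aS$. Since $1-ab\notin S$ by assumption, this forces $1-ab\in aS$. By symmetry $1-ab\in bS$, so $a\in bS$.
3. Write $1-ab = au$ with $u\in S$ and $b = a s'$ with $s'\in S$. Then $1 = a(u+b)$, that is,
\[
a^{-1} = u + 1 - t .
\]
Comparing with $a^{-1} = 1 + s a^{-1}$ (from $a = 1-s$) gives $u - t = s a^{-1}$.
4. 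Apply rigidity of $-1\cdot$(appropriate element) to the expression $u-t\in S - S$, i.e.\ $S+(-1)S$. The aim is to pin down the class of $sa^{-1}$ modulo $S$ and conclude $a^{-1}\in aS$, i.e.\ $a^2\in S$.

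If the first attempt does not close, I would use the symmetric identities obtained by swapping $a$ and $b$. I would also use the relation $ab\in a^2S$ together with rigidity of $a^2$: if $a^2\notin S$, then $a^2\notin H$ is possible, and $1-ab = 1 - a^2 s'$ must land in $S\cup a^2S$. Both alternatives should then contradict $1-ab\in aS$ with $a\notin H$. This coset bookkeeping is the main obstacle; I expect the case split "$a^2\in H$ or not" to be where the argument needs care.

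\textbf{Second part.}
5. Since $a^2\in S\subseteq H$, we have $\langle H,a\rangle = H\cup aH$. Hence
\[
\Oo^-(S,\langle H,a\rangle) = \{c\in 1-S : c\notin H\cup aH\}\subseteq \Oo^-(S,H).
\]
6. Suppose $c,d$ lie in this set and $cd\notin 1-S$. Applying step 2 to the pair $(c,d)$ gives $c\in dS$.
7. Now use the pairs $(a,c)$ and $(a,d)$, which both lie in $\Oo^-(S,H)$. If $ac\notin 1-S$, step 2 gives $c\in aS\subseteq aH$, which is impossible. Hence $ac, ad\in 1-S$.
8. Combine $ac\in 1-S$ and $ab\notin 1-S$ with rigidity of $a$ and $c$, in the same manner as steps 2--3. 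This shows $c\in bS$ or $c\in aS$ up to $H$, contradicting $c\notin H\cup aH$ unless $cd\in 1-S$.

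The delicate point in the second part is again the final coset chase, and I would model it on the computation from steps 2--4.
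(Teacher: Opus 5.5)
Steps 1--3 and 5--7 are sound. In both parts, however, the decisive step is missing, and in each case what is missing is a concrete use of rigidity. (The paper gives no argument here; it cites Efrat.)

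\emph{First part.} Step 4 cannot work as stated.
\begin{itemize}
\item $-1$ is never rigid: $0 = 1+(-1)\cdot 1 \in S+(-1)S$, but $0\notin S\cup -S$.
\item The identity $u-t = sa^{-1}$ says nothing about the class of $a^{-1}$ modulo $S$.
\item Your fallback also fails. The expression $1-a^2s'$ is $1+(-a^2)s'$, so the element whose rigidity you would need is $-a^2$, not $a^2$. Even then it gives nothing when $a^2\in H\setminus S$.
\end{itemize}
The conclusion is already within reach. By Step 2, $b\in aS$, so Step 3 reads
\[
a^{-1}=u+b\in S+aS\subseteq S\cup aS
\]
by rigidity of $a$. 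Since $a\notin H\supseteq S$, we have $a^{-1}\notin S$. Hence $a^{-1}\in aS$, i.e.\ $a^2\in S$. Note the byproduct $ab\in a^2S=S$, which you will need below.

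\emph{Second part.} Step 8 is only a hope, and rigidity of $a$ and of $c$ separately will not close it. The hypothesis $c\notin aH$ must be used at full strength. Since $a^2\in H$, it is equivalent to $ac\notin H$, so the product $ac$ is rigid; this is the missing idea.

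Take nonzero $c,d\in\Oo^-(S,\langle H,a\rangle)$ with $cd\notin 1-S$. The argument of Step 2 gives $1-cd\in cS$. Your Step 7 argument, using $bS=aS$, gives $ac\in 1-S$ and $bd\in 1-S$. Now expand $1-abcd$ in two ways. First, using $ab\in S$ and $a^{-1}\in aS$,
\[
1-abcd=(1-ab)+ab(1-cd)\in aS+cS=a(S+acS)\subseteq aS\cup cS .
\]
Second,
\[
1-abcd=(1-ac)+ac(1-bd)\in S+acS\subseteq S\cup acS .
\]
The cosets $S$, $aS$, $cS$, $acS$ are pairwise distinct, because $a$, $c$, $ac$ and $a^{-1}c\in acS$ all lie outside $H$. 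This is a contradiction, so $\Oo^-(S,\langle H,a\rangle)$ satisfies $(\dagger)$.
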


\begin{thm}[Creating valuations from rigid elements] \label{thm:creation_rigid}
The set $\Val(S,H)$ is non-empty if and only if
\begin{enumerate}[(i)]
    \item $H_S \le H$ in case that $\Val(S,H_S) \ne \emptyset$, or
    \item $H_S < H' \le H$, where $(H' : H_S) = 2$, $H' = \langle H, a \rangle$, and $a \in \Oo^-(S,H_S)$ is an element as in the preceding lemma (in case that $\Val(S,H_S) = \emptyset$).
\end{enumerate}
In particular, if no element in $K^\times/S$ has order two, (ii) cannot hold, and we must have $\Val(S,H_S) \ne \emptyset$. In that case, $H_S \ne K^\times$ implies that $\Oo(S,H_S)$ is a non-trivial valuation ring.
\end{thm}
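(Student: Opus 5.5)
The plan is to assemble the theorem from Propositions~\ref{prop:S-val-tree} and~\ref{prop:rigid_crit} together with Lemma~\ref{lem:rigid_closure}. Both directions turn on one observation: by Proposition~\ref{prop:rigid_crit}(iii), if $\Oo_v \in \Val(S,H)$ then every element outside $H$ is rigid, so every non-rigid element lies in $H$. Moreover $S \le H$ by definition. Hence $H_S \le H$ whenever $\Val(S,H) \ne \emptyset$, and $H_S$ is the smallest candidate for $H$.

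\textbf{Necessity.} Suppose $\Val(S,H) \neq \emptyset$. If $\Val(S,H_S) \neq \emptyset$, the observation above already gives (i). Otherwise $\Val(S,H_S) = \emptyset$, and since $-1 \in H_S$ and $A(S) \subseteq H_S$, Proposition~\ref{prop:rigid_crit}(ii) tells us that ($\dagger$) fails for $\Oo^-(S,H_S)$. So there are $a,b \in \Oo^-(S,H_S)$ with $ab \notin 1-S$. By Lemma~\ref{lem:rigid_closure}, $a^2 \in S$ and ($\dagger$) holds for $H' = \langle H_S, a\rangle$. Since $a \notin H_S$ and $a^2 \in S \le H_S$, the index $(H':H_S)$ is $2$.

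It remains to show $a \in H$; this is the step I expect to be the main obstacle. I would argue by contradiction. If $a \notin H$, then $a \in \Oo^-(S,H)$, because $a \in 1-S$. Also $b \in 1-S$, so either $b \in H$ or $b \in \Oo^-(S,H)$. In the second case $ab \notin 1-S$ contradicts ($\dagger$) for $H$, which holds by Proposition~\ref{prop:rigid_crit} applied to $\Val(S,H) \neq \emptyset$. The first case needs a separate argument. Here I would use the valuation $\Oo_v = \Oo(S,H)$ directly: we have $a \in \Mm_v$, and either $b \in \Mm_v$ or $b \in \Oo_v^\times$. Then $ab \in \Mm_v$, so $1 - ab \in 1+\Mm_v \subseteq S$, that is, $ab \in 1-S$, a contradiction. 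Treated this way, via the valuation rather than ($\dagger$), the two cases merge into one. So $a \in H$, giving $H' \le H$.

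\textbf{Sufficiency.} In case (i), Proposition~\ref{prop:S-val-tree}(ii) says $\Val(S,\cdot)$ is upward closed in $H$, since the condition $\Oo_v^\times \le H_S$ implies $\Oo_v^\times \le H$; hence $\Val(S,H_S) \subseteq \Val(S,H)$. In case (ii), $H'$ contains $-1$ and $A(S)$, and $\Oo^-(S,H')$ satisfies ($\dagger$) by Lemma~\ref{lem:rigid_closure}, so Proposition~\ref{prop:rigid_crit} gives $\Val(S,H') \ne \emptyset$. Enlarging to $H$ preserves nonemptiness by the same monotonicity.

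\textbf{The ``in particular'' clauses.} In case (ii), $a \notin S$ because $a \notin H_S \supseteq S$, while $a^2 \in S$; so $aS$ has order two in $K^\times/S$. Hence if $K^\times/S$ has no element of order two, case (ii) is impossible. Applying the theorem with $H = K^\times$, for which $\Val(S,K^\times)$ contains the trivial valuation and so is nonempty, we conclude $\Val(S,H_S) \ne \emptyset$. Then $\Oo(S,H_S) = \Oo_{v(S,H_S)}$ is a valuation ring whose unit group lies in $H_S$. If $H_S \neq K^\times$, this unit group is not all of $K^\times$, so the valuation ring is non-trivial.
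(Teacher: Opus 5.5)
Your route is the paper's: Proposition~\ref{prop:rigid_crit} forces $H_S \le H$, Lemma~\ref{lem:rigid_closure} produces $H' = \langle H_S, a\rangle$, and sufficiency is monotonicity in $H$. You also correctly identify the one point the paper's two-line proof leaves implicit, namely that $a \in H$.

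However, your treatment of that point has a hole. In the case $b \in H$ you assert ``either $b \in \Mm_v$ or $b \in \Oo_v^\times$'', i.e.\ $b \in \Oo_v$, with no justification. This is not automatic. Neither $H$ nor $1-S$ lies in $\Oo_v$ in general: if $s \in S$ has $v(s) < 0$, then $1-s \in 1-S$ has negative value, and $H \supseteq -S$ can contain such elements. Your remark that ``the two cases merge into one'' suggests you are treating $1-S$ as contained in $\Oo_v$.

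The missing ingredient is that $b \notin H_S$ while $-1 \in H_S$. Take any $\Oo_v \in \Val(S)$ and $x \in 1-S$ with $v(x) < 0$. Then $1 - x^{-1} \in 1+\Mm_v \le S$, so $-x = (1-x)(1-x^{-1})^{-1} \in S$, i.e.\ $x \in -S$.

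Applied to $b \in \Oo^-(S,H_S)$, this forces $v(b) \ge 0$, since otherwise $b \in -S \subseteq H_S$. Applied to $a$, using $-S \subseteq H_S \le H$ and $\Oo_v^\times \le H$, it forces $v(a) > 0$ whenever $a \notin H$. Hence $ab \in \Mm_v$, so $1-ab \in 1+\Mm_v \le S$, contradicting $ab \notin 1-S$.

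With this argument, the case split on $b$ is unnecessary. Any $\Oo_v \in \Val(S,H)$ works, and you do not need the identification $\Oo_v = \Oo(S,H)$.

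The rest of your proposal is fine: the index-$2$ computation, the order-two element $aS$, and the non-triviality of $\Oo(S,H_S)$. One small correction: $\Val(S,H_S) \subseteq \Val(S,H)$ is simply $\Oo_v^\times \le H_S \le H$. Proposition~\ref{prop:S-val-tree}(ii) concerns refinements, not enlarging $H$.
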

\begin{proof}
In essence, this is \cite[Thm. 11.4.4]{Efrat06}. The theorem follows directly from the above: Proposition~\ref{prop:rigid_crit} shows that $H_S \le H$ is a necessary condition and Lemma~\ref{lem:rigid_closure} describes how to choose $H'$ in the case $\Val(S,H_S) = \emptyset$.
\end{proof}

We may now drop the $S$-compatibility requirement for valuation rings and replace it with a condition on the value group:

\begin{cor}[{\cite[Thm. 11.4.5]{Efrat06}}] \label{cor:nontr_val_crit}
Let $S \le K^\times$ be a subgroup such that no element in $K^\times/S$ has order two. Then the following are equivalent:
\begin{enumerate}[(i)]
    \item there exists a (non-trivial) valuation $v$ on $K^\times$ with $v(K^\times) \ne v(S)$;
    \item the non-rigid hull $H_S$ is a proper subgroup of $K^\times$.
\end{enumerate}
\end{cor}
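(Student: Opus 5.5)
We prove the two implications separately, using Theorem~\ref{thm:creation_rigid} as the bridge between non-rigid hulls and valuations. Throughout, the hypothesis that $K^\times/S$ has no element of order two rules out alternative (ii) of that theorem. So whenever $\Val(S,H)$ is non-empty for some $H$, we are in case (i): $\Val(S,H_S)\neq\emptyset$ and $H_S\le H$.

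\textbf{(ii) $\Rightarrow$ (i).} Assume $H_S\lneq K^\times$. By Theorem~\ref{thm:creation_rigid}, $\Val(S,H_S)\neq\emptyset$, and $\Oo_v\coloneqq\Oo(S,H_S)$ is a non-trivial valuation ring lying in $\Val(S,H_S)$. Hence $\Oo_v^\times\le H_S$, and since $S\le H_S$ also $S\Oo_v^\times\le H_S\lneq K^\times$. Now observe that $v(K^\times)=v(S)$ holds precisely when $K^\times=S\Oo_v^\times$. Therefore $v(K^\times)\neq v(S)$, which gives (i).

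\textbf{(i) $\Rightarrow$ (ii).} This is the direction I expect to be the main obstacle: the given $v$ need not be $S$-compatible, so we cannot directly place it in some $\Val(S,H)$. The plan is to exploit the hypothesis on $K^\times/S$ to show that the failure of $S$-compatibility does not matter. Set $H\coloneqq S\Oo_v^\times$, which is a proper subgroup of $K^\times$ by assumption. We claim that every $x\in K^\times\setminus H$ is $S$-rigid, which gives $H_S\le H\lneq K^\times$.

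\emph{Naive attempt.} Take $x\notin H$ and $s\in S$; then $v(xs)\neq 0$. If $v(xs)>0$ we would like $1+xs\in S$, and if $v(xs)<0$ we would like $1+xs\in xS$. Both require $1+\Mm_v\subseteq S$, which we do not have. So the naive argument fails.

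\emph{Intended remedy.} Pass to a replacement valuation that is $S$-compatible. I would follow the strategy of \cite[Thm.~11.4.5]{Efrat06}: enlarge $S$ to $T\coloneqq S(1+\Mm_v)$. By the ultrametric inequality $v(T)=v(S)$, so $T\Oo_v^\times = S\Oo_v^\times = H$, and $v$ is $T$-compatible with $\Oo_v\in\Val(T,H)$. Theorem~\ref{thm:creation_rigid} applied to $T$ then yields $H_T\le H$. The cases need checking: if $K^\times/T$ has an element of order two, case (ii) gives $H_T$ of index at most $2$ in a subgroup of $H$, and this still suffices.

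\emph{Remaining step.} It remains to transfer properness from $H_T$ back to $H_S$, i.e.\ to compare rigidity with respect to $S$ and with respect to $T$. This is where the order-two hypothesis on $K^\times/S$ must be used. I expect this to be the genuinely delicate part. Failing a clean comparison, the fallback is to cite \cite[Thm.~11.4.5]{Efrat06} directly for this implication, with the reasoning above explaining why it holds.
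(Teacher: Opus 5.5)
Your proof of (ii) $\Rightarrow$ (i) is correct and coincides with the paper's. For (i) $\Rightarrow$ (ii), however, the ``remaining step'' cannot be carried out, and citing Efrat does not rescue it: for valuations that are not $S$-compatible the implication is false.

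\textbf{Counterexample.} Take $K=\IQ$, $S=\IQ^{\times 3}$ (so $K^\times/S$ has exponent $3$) and $v=v_\ell$ for any prime $\ell$. Then $v(K^\times)=\IZ\neq 3\IZ=v(S)$, so (i) holds as stated. On the other hand, $-2\in A(S)$, since $3$ and $\tfrac32$ are not cubes. Every odd prime $q$ also lies in $A(S)$: $1-q^{-1}$ has $q$-adic value $-1$, and $q-1=a^3$ would force $q=(a+1)(a^2-a+1)=2$. Hence $H_S=\IQ^\times$.

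In your notation, $T=S(1+\ell\IZ_{(\ell)})$ does satisfy $H_T\le H\lneq\IQ^\times$. That part is fine; it follows directly from Proposition~\ref{prop:rigid_crit} (or your naive argument with $T$ in place of $S$), with no case distinction needed. But $H_S=\IQ^\times$ at the same time, so no comparison of $S$-rigidity with $T$-rigidity can transfer properness back.

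\textbf{What is missing.} The gap is a missing hypothesis rather than a missing idea. Condition (i) must be read as ``there is an $S$-compatible valuation $v$, i.e.\ $\Oo_v\in\Val(S)$, with $v(K^\times)\neq v(S)$''. The paper's one-line argument (``the ultrametric triangle inequality implies $A(S)\subseteq H$'') is exactly your naive attempt, and it tacitly uses $1+\Mm_v\subseteq S$. Moreover, the proof of (ii) $\Rightarrow$ (i) actually produces an $S$-compatible valuation, and only that direction is used later, in Theorem~\ref{thm:creation_p-rigid}.

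Under this reading your naive attempt is already a complete proof. For $x\notin H=S\Oo_v^\times$ you have $v(x)\neq 0$. If $v(x)>0$, then $1-x\in 1+\Mm_v\subseteq S$; if $v(x)<0$, then $1-x^{-1}\in 1+\Mm_v\subseteq S$. Either way $x\notin A(S)$, so $A(S)\subseteq H$. Together with $-1\in\Oo_v^\times\le H$ and $S\le H$, this gives $H_S\le H\lneq K^\times$. The detour through $T$ is then unnecessary.
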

\begin{proof}
(i) $\Rightarrow$ (ii). Let $H = \Oo_v^\times S = v^{-1}(v(S))$, which is a proper subgroup of $K^\times$ since $v(K^\times) \ne v(S)$. The ultrametric triangle inequality for $v$ implies that $A(S) \subseteq H$. Therefore, $H_S \le H$ are proper subgroups of $K^\times$.

(ii) $\Rightarrow$ (i). By Theorem~\ref{thm:creation_rigid}, $\Oo_v \coloneqq \Oo(S,H_S)$ is a non-trivial valuation ring. We cannot have $v(K^\times) = v(S)$, for then $K^\times = \Oo_v^\times S \le H_S$, but $H_S$ is supposed to be a proper subgroup. Hence, $v$ is non-trivial with $v(K^\times) \ne v(S)$.
\end{proof}

In \cite{Jacob81, Ware81, AEJ, Koenigsmann95}, $\Oo(S,H)$ is constructed directly as in Definition~\ref{def:O(S,H)}. Without the context of the tree of $S$-compatible valuations and its canonical member $v(S,H)$, the construction of the non-trivial valuation ring $\Oo(S,H_S)$, together with the associated choice of $H_S$ via rigid elements, can appear somewhat mysterious (see the discussion in \cite[p.~xi]{Efrat06}).

\subsubsection{Creating valuations from \texorpdfstring{$p$}{p}-rigid elements}

In the context of the Galois characterisation of henselianity, we do not construct $H_S$ directly. Instead, we will see that considerations from Kummer theory more naturally lead to the study of subgroups $T \le K^\times$ that satisfy a weaker form of rigidity.

\begin{defi} \label{def:p-rigid}
Fix a prime $p$, and let $-1 \in S \le K^\times$. We say an element $x \in K^\times$ is \emph{$p$-rigid} if
\vspace{-0.5em}
\[
    S + xS \subseteq \bigcup_{i = 0}^{p - 1} x^iS.
\]
Let $T_S \le K^\times$ be the \emph{non-$p$-rigid hull of $S$}, i.e., the subgroup generated by $S$ and all non-$p$-rigid elements $x \in K^\times$. Equivalently, $T_S$ is generated by $S$ and all $x \in K^\times$ that satisfy
\[
    1 - x \notin \bigcup_{i = 0}^{p - 1} x^iS.
\]
\end{defi}

Note that any $x \in K^\times \setminus S$ is $p$-rigid, whenever we consider a valuation $v$ on $K$ and we choose $S = \Oo_v^\times K^{\times p}$. This example trivialises if $vK$ is $p$-divisible, because then $\Oo_v^\times K^{\times p} = K^\times$.

Since there are more non-rigid elements than non-$p$-rigid elements, it is clear that $T_S \le H_S$. The goal is to bound the index $(H_S : T_S)$, so that the non-triviality of $(K^\times : H_S)$ can be expressed as a condition on $(K^\times : T_S)$. This final step was nicknamed the ``wonderful creation of rigid elements'' in \cite[Chap.~2.5]{Koenigsmann03}.

\begin{lem}[From $p$-rigid to rigid elements]
\label{lem:p-rigid->rigid}
Let $S \le K^\times$ be a subgroup that contains $-1$ and $K^{\times p}$. If $a, b \in A(S)$, then $a$ and $b$ have $\IF_p$-linearly dependent cosets\footnote{Note that since $K^{\times p} \le T_S$, the abelian group $K^\times/T_S$ has exponent $p$.} in $K^\times/T_S$. In particular, $(H_S : T_S) \le p$.
\end{lem}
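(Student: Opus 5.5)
We reduce the index bound to the dependence claim, and then prove the dependence claim by contradiction, reading off exponents in the $\IF_p$-vector space $V \coloneqq K^\times/T_S$.

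\emph{The index bound.} Since $-1 \in S$, we have $H_S = \langle S, A(S)\rangle$, and $S \le T_S \le H_S$. Hence $H_S/T_S$ is generated by the images of the elements of $A(S)$ in $V$, which has exponent $p$ because $K^{\times p} \le S \le T_S$. If any two of these images are $\IF_p$-linearly dependent, they all lie on a single line of $V$. So $H_S/T_S$ is cyclic of order at most $p$, and the whole lemma reduces to the dependence claim.

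\emph{Setting up the contradiction.} Fix $a, b \in A(S)$. If either one lies in $T_S$, the images are trivially dependent, so assume $\bar a, \bar b$ are independent in $V$. Then every $x = a^{j}b^{k}s$ with $s \in S$ and $(j,k) \not\equiv (0,0) \pmod p$ lies outside $T_S$, and is therefore $p$-rigid: $S + xS \subseteq \bigcup_{i} x^iS$.

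First extract information from $a$ alone. Since $a \in A(S)$, we have $1-a \notin S$. By $p$-rigidity, $1-a \in a^{i}S$ for some $i \in \{1,\dots,p-1\}$. The identity $1-a^{-1} = -a^{-1}(1-a)$ together with $1-a^{-1} \notin S$ and $-1 \in S$ then forces $i \ne 1$. Symmetrically, $1-b \in b^{k}S$ with $k \in \{2,\dots,p-1\}$. In particular $p \neq 2$ is needed for $A(S)\not\subseteq T_S$, so for $p=2$ the claim already holds because $A(S) \subseteq T_S$.

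\emph{Deriving the contradiction.} The key step is to play off the two additive decompositions
\[
1-ab = (1-a) + a(1-b) = (1-b) + b(1-a)
\]
against each other.

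The first gives $1-ab \in a^{i}S + ab^{k}S = a^{i}\bigl(S + a^{1-i}b^{k}S\bigr)$. Since $a^{1-i}b^{k} \notin T_S$, $p$-rigidity yields $1-ab \in a^{i}(a^{1-i}b^{k})^{m}S$ for some $m$. The second decomposition similarly gives $1-ab \in b^{k}(a^{i}b^{1-k})^{m'}S$. In addition, $ab \notin T_S$ is $p$-rigid with respect to $1 - ab \in S + abS$, so $1-ab \in (ab)^{n}S$.

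Comparing the three coordinate vectors in $V$ with respect to the basis $\bar a, \bar b$ gives congruences mod $p$:
\[
\bigl(i + m(1-i),\ mk\bigr) \equiv \bigl(m' i,\ k + m'(1-k)\bigr) \equiv (n, n).
\]
I expect to show these are inconsistent using $i, k \notin \{0,1\}$.

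If these three congruences alone do not suffice, I would enlarge the supply of relations. One option is to replace $b$ by $b^{r}s$ and redo the computation; another is to use $1-a^{-1}b$, $1-ab^{-1}$, and the fact that $a^{-1}, b^{-1}$ satisfy the same hypotheses with exponent $1-i$ (since $1-a^{-1}\in a^{i-1}S$). The aim is to pin $i$ and $k$ down to values contradicting $i,k \ne 1$.

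\emph{Main obstacle.} This exponent bookkeeping is where I expect the difficulty. Choosing the right auxiliary elements, so that the resulting system of congruences is actually contradictory for every $p$, is the delicate combinatorial heart of the ``wonderful creation of rigid elements''.
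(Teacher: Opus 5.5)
Your reduction of the index bound to the dependence claim is correct. So are the passage to independent $\bar a,\bar b$, the exponents $i,k\in\{2,\dots,p-1\}$, the treatment of $p=2$, and the three congruences themselves.

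\textbf{The gap.} The gap is the step you flag as the main obstacle, and your expectation there is wrong: the system is consistent in general. Eliminating $m$ (resp.\ $m'$), each decomposition reduces to the same relation $ik\equiv(i+k-1)n \pmod p$. The second decomposition is just the first with $a$ and $b$ interchanged. This relation has the solution $m=i(i+k-1)^{-1}$, $m'=k(i+k-1)^{-1}$, $n=ik(i+k-1)^{-1}$ whenever $i+k\not\equiv 1$. For instance, $p=5$, $i=4$, $k=3$, $m=4$, $m'=3$, $n=2$ satisfies your system. It also satisfies the analogous relations coming from $1-a^{-1}b$ and $1-ab^{-1}$. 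You only get a contradiction when $i+k\equiv 1$, which is forced for $p=3$ but not for $p\ge 5$. So as it stands the proposal does not prove the lemma for $p\ge 5$, and the fallbacks you list are not carried out.

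\textbf{The missing idea.} This is how the paper argues: read the relation as a multiplicativity statement, then chain it $p-1$ times.

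For $x\notin T_S$, let $\varepsilon(x)\in\IF_p$ be the exponent with $1-x\in x^{\varepsilon(x)}S$. Set $\chi(x)\coloneqq 1-\varepsilon(x)^{-1}$ if $\varepsilon(x)\neq 0$, and $\chi(x)\coloneqq 0$ otherwise. Apply your first decomposition to any independent pair $x,y\notin T_S$ with $\chi(x)\neq 0$ or $\chi(y)\neq 0$. It gives $\varepsilon(x)\varepsilon(y)\equiv(\varepsilon(x)+\varepsilon(y)-1)\,\varepsilon(xy)$, which yields $\chi(xy)=\chi(x)\chi(y)$. If one of the two exponents lies in $\{0,1\}$, it forces $\varepsilon(xy)\in\{0,1\}$, i.e.\ $\chi(xy)=0$.

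Two further properties hold. First, $\chi(x)=0$ iff $1-x\in S\cup xS$, so $\chi(a),\chi(b)\in\IF_p^\times$ by what you proved. Second, $\chi$ never takes the value $1$.

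Now apply multiplicativity to the pairs $(a,a^jb)$ for $j=0,\dots,p-2$. This gives $\chi(a^{p-1}b)=\chi(a)^{p-1}\chi(b)=\chi(b)$ by Fermat's little theorem. The pair $(a^{p-1},b)$ gives $\chi(a^{p-1}b)=\chi(a^{p-1})\chi(b)$. Since $\chi(b)\neq 0$, this forces $\chi(a^{p-1})=1$, which is impossible.

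So the auxiliary elements you need are the $a^jb$ and the power $a^{p-1}$, not further relations for the single pair $a,b$. The contradiction comes from $\chi(a)^{p-1}=1$.
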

\begin{proof}
Our proof is an adaptation of \cite[Lem. 26.4.2, Prop. 26.4.3]{Efrat06}, which itself is based on \cite[Prop. 2.5]{HwJ95}.

Let $x,y \in K^\times \setminus T_S$ represent $\IF_p$-linearly independent cosets in $K^\times/T_S$. By assumption, $x, y$, and $xy \in K^\times \setminus T_S$ are $p$-rigid, so we there are unique $i, j, k \in \{0, \ldots, p - 1\}$ such that
\[
    1 - x \in x^iS, \quad 1 - y \in y^jS, \quad 1 - xy \in (xy)^kS.
\]
We expect some kind of relationship between $i$, $j$, and $k$, which we now determine as a first step. We may write
\[
    1 - xy = (1 - x) + x(1 - y) \in x^iS + xy^jS = x^i(S + x^{1 - i}y^jS).
\]
If $(i,j) \ne (1,0)$, then $x^{1 - i}y^j \notin T_S$ is $p$-rigid and hence
\[
    1 - xy \in x^i(S + x^{1 - i}y^jS) \subseteq \bigcup_{l = 0}^{p - 1} x^i(x^{1 - i}y^j)^l S.
\]
Moreover, recall that $1 - xy \in (xy)^kS$.
Viewing the exponents of $x$ and $y$ as $\IF_p$-coordinates in $\langle x, y, S\rangle/S$, we deduce that there exists a unique $l \in \{0, \ldots, p - 1\}$ such that
\[
    k \equiv i + (1 - i)l \equiv jl \pmod{p}.
\]
This implies
\[
    i \equiv (i + j - 1)l \pmod{p} \Longrightarrow ij \equiv (i + j - 1)jl \equiv (i + j - 1)k \pmod{p}.
\]
If $(i,j) \ne (0,1), (1,0)$, then $k \ne 0$ implies $i,j \ne 0$. In that case, we can rearrange as
\begin{equation} \label{eq:mult_identity}
    (1 - i^{-1})(1 - j^{-1}) \equiv (1 - k^{-1}) \pmod{p}.
\end{equation}
This analysis therefore yields the desired relationship between $i, j, k$, and allows us to define a sign function $\chi : K^\times \setminus T_S \longrightarrow \IF_p$ via
\[
    \chi(x) \coloneqq \begin{cases}
        1 - i^{-1} & \text{if $i \ne 0$ and $1 - x \in x^iS$} \\
        0 & \text{if $1 - x \in S$.}
    \end{cases}
\]
From this definition and (\ref{eq:mult_identity}), we conclude that $\chi$ satisfies the properties:
\begin{enumerate}[(i)]
    \item $1 \notin \im(\chi)$;
    \item $\chi(x) = 0$ if and only if $1 - x \in S \cup xS$;
    \item if $x, y \in K^\times \setminus T_S$ have linearly independent cosets in $K^\times/T_S$, and $\chi(x) \ne 0$ or $\chi(y) \ne 0$, then $\chi(xy) = \chi(x) \chi(y)$.
\end{enumerate}
Finally, assume there exist $a, b \in A(S)$ that represent linearly independent cosets in $K^\times/T_S$. We will show that this is impossible by the properties of $\chi$.

Note that (ii) implies $\chi(a), \chi(b) \ne 0$. Using (iii) inductively, we obtain
\[
    \chi(a^{p - 1}b) = \chi(a)^{p - 1} \chi(b) \and \chi(a^{p - 1}b) = \chi(a^{p - 1}) \chi(b).
\]
But since $\chi(a)^{p - 1} = 1$, it follows that $\chi(a^{p - 1}) = 1$, contradicting (i).
\end{proof}

Putting everything together, we conclude:

\begin{thm}[Creating valuations from $p$-rigid elements]
\label{thm:creation_p-rigid}
Let $p$ be any prime, and let $S \le K^\times$ be a subgroup containing $-1$ and $K^{\times p}$. If $(K^\times : T_S) > p$, then there exists a non-trivial valuation $v$ on $K$ with non-$p$-divisible value group, i.e., $vK \ne pvK$.
\end{thm}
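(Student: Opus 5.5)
The plan is to combine Lemma~\ref{lem:p-rigid->rigid} with Corollary~\ref{cor:nontr_val_crit}, applied to the same subgroup $S$. There are two things to check. First, $H_S$ is a proper subgroup of $K^\times$. Second, no element of $K^\times/S$ has order two, or else the case $p=2$ is handled separately through Theorem~\ref{thm:creation_rigid}(ii). After that, the resulting valuation must be shown to have non-$p$-divisible value group.

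\textbf{Step 1: $H_S$ is proper.} Since $S \le T_S \le H_S$, Lemma~\ref{lem:p-rigid->rigid} gives $(H_S : T_S) \le p$. Because $K^{\times p} \le T_S$, the quotient $K^\times/T_S$ is an $\IF_p$-vector space. Its dimension is at least $2$, since $(K^\times : T_S) > p$. Hence $(K^\times : T_S) \ge p^2$, and so $(K^\times : H_S) \ge p^2/p = p > 1$. In particular, $H_S \ne K^\times$.

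\textbf{Step 2: creating the valuation.} For odd $p$, the group $K^\times/S$ has exponent dividing $p$, because $K^{\times p} \le S$. So it has no element of order two. Corollary~\ref{cor:nontr_val_crit} then yields a non-trivial valuation $v$ with $v(K^\times) \ne v(S)$. Since $K^{\times p} \le S$, we have $v(S) \supseteq pvK$, and therefore $vK \ne v(S) \supseteq pvK$ forces $vK \ne pvK$.

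For $p = 2$, the exponent of $K^\times/S$ is $2$, so the hypothesis of the corollary fails. In this case I would argue directly via Theorem~\ref{thm:creation_rigid}. Choose a subgroup $H$ with $H_S \le H < K^\times$ that avoids the bad index-$2$ enlargement. Concretely, in the case $\Val(S,H_S) = \emptyset$, take $H' = \langle H_S, a\rangle$ of index at most $2$ over $H_S$. This requires $(K^\times : H_S)$ to be large enough that $H'$ is still proper.

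Here the counting from Step 1 only gives $(K^\times : H_S) \ge 2$, which is not sufficient, so it must be sharpened. I would refine it by noting that $a \in \Oo^-(S,H_S)$ and asking whether $a$ can lie in $T_S$-cosets already accounted for. Alternatively, I would apply Lemma~\ref{lem:p-rigid->rigid} more carefully: it says that all of $A(S)$ spans at most a one-dimensional subspace of $K^\times/T_S$. Then $H_S/T_S$ is at most one-dimensional, and adjoining $a$ gives $\langle H_S,a\rangle/T_S$ of dimension at most $2$.

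\textbf{Main obstacle.} The hard part is the case $p=2$ with $(K^\times : T_S) = 4$. Here the enlarged group $H'$ could equal $K^\times$, so the index bound alone may not suffice. I would try first to show that $a$ can be taken inside $T_S$, or that $a \in \Oo^-(S,H_S)$ with $a^2 \in S$ is in fact non-$2$-rigid, which would give $H' = H_S$. Failing that, I would check whether the paper's intended hypothesis implicitly uses $v$ from $\Val(S,H')$ with $H'$ proper.

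Once some valuation ring $\Oo_v \in \Val(S,H)$ with $H$ proper is obtained, the conclusion follows as in the odd case. We have $\Oo_v^\times \le H$ and $S \le H$, so $v(S) \subseteq v(H) \subsetneq vK$, and hence $vK \ne pvK$.
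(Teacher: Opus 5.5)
\textbf{Gap in the case $p=2$.} For odd $p$ your argument is exactly the paper's. Lemma~\ref{lem:p-rigid->rigid} gives $H_S \ne K^\times$, the group $K^\times/S$ has odd exponent, and Corollary~\ref{cor:nontr_val_crit} yields $v$ with $pvK \subseteq v(S) \ne vK$. The case $p=2$, however, is left unresolved. Your fallback ideas there cannot work. The element $a$ lies in $\Oo^-(S,H_S) = (1-S)\setminus H_S$, so by definition $a \notin H_S \supseteq T_S$; hence $a$ can never be moved into $T_S$, and $H' \ne H_S$. Moreover, if all you know is $(H_S : T_S) \le 2$ and $(K^\times : T_S) = 4$, then $H' = \langle H_S, a\rangle$ really could be all of $K^\times$. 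So the bound coming from Lemma~\ref{lem:p-rigid->rigid} is the wrong tool at $p=2$.

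\textbf{The missing observation.} For $p=2$, being $2$-rigid is literally being rigid, since $\bigcup_{i=0}^{1} x^iS = S \cup xS$. Concretely, because $-1 \in S$ we have $xS = -xS$, so $y \in A(S)$ if and only if $1-y \notin S \cup yS$. This is precisely the generating condition for $T_S$ in Definition~\ref{def:p-rigid}. Hence $A(S) \subseteq T_S$, so $H_S = \langle -1, S, A(S)\rangle \le T_S \le H_S$, that is, $T_S = H_S$.

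\textbf{Completing the argument.} Since $K^\times/T_S$ is an $\IF_2$-vector space of order greater than $2$, we get $(K^\times : H_S) = (K^\times : T_S) \ge 4$. By Lemma~\ref{lem:rigid_closure} we have $a^2 \in S$, so $(H' : H_S) \le 2$, and therefore $H' = \langle H_S, a\rangle$ is still a proper subgroup of $K^\times$. Theorem~\ref{thm:creation_rigid} then gives $\Val(S,H') \ne \emptyset$. Any $\Oo_v$ in it satisfies $\Oo_v^\times \le H' \ne K^\times$, so $v$ is non-trivial, and $2vK \subseteq v(S) \subseteq v(H') \subsetneq vK$. This is your closing paragraph, now justified. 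It is exactly how the paper treats $p=2$; the situation $(K^\times : T_S)=4$ that you identify as the main obstacle does not in fact cause any trouble.
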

\begin{proof}
For $p = 2$, we have $T_S = H_S$. By Theorem~\ref{thm:creation_rigid}, there exists $H_S \le H' \lneq K^\times$ for which $\Oo_v = \Oo(S,H')$ is a non-trivial valuation ring. We cannot have $vK = 2vK$, for otherwise, $K^\times = \Oo_v^\times K^{\times 2} \subseteq H'$.

For $p > 2$, Lemma~\ref{lem:p-rigid->rigid} implies that $(K^\times : H_S) > 1$. Note that any element in $K^\times/S$ has odd order dividing $p$.
Finally, by Corollary~\ref{cor:nontr_val_crit}, there exists a valuation $v$ on $K$ with
\[
    p vK = v(K^{\times p}) \subseteq v(S) \ne v(K^\times) = vK. \qedhere
\]
\end{proof}

As a final remark, let us note that all known proofs of our Main Theorem rely on the creation of valuations: in each case, the above construction serves as the original source of an abstract valuation for fields with absolute Galois group isomorphic to $G_{\IQ_p}$.

\subsection{First-order languages for valued fields}

In Sections~\ref{chap:transfer} and \ref{chap:main}, we view ordered groups, valued fields, etc., as model-theoretic structures.
For the avoidance of doubt, we specify here the precise first-order languages that we use.

We regard fields as structures in the language of rings
\[
    \Ll_{\ring} \coloneqq \{0, 1, +, -, \cdot\},
\]
and ordered abelian groups as structures in the language
\[
    \Ll_{\oag} \coloneqq \{0, +, -, \le\}.
\]
Here, $+, -, \cdot$ are binary function symbols, and $\le$ is a binary relation symbol.
Our language for valued fields is
\[
    \Ll_{\text{VF}} \coloneqq (\mathsf{VF},\mathsf{VG},\mathsf{RF};\, v, \res),
\]
a three-sorted language consisting of sorts for the underlying valued field, the value group, and the residue field. For $\mathsf{VF}$ and $\mathsf{RF}$ we have the same symbols as in $\Ll_{\ring}$, and $\mathsf{VG}$ is given the symbols in $\Ll_{\oag}$; additionally, $\Ll_{\VF}$ contains two unary function symbols $v : \mathsf{VF} \longrightarrow \mathsf{VG}$ and $\res : \mathsf{VF} \longrightarrow \mathsf{RF}$, interpreted as the valuation and residue maps, respectively.

A more economical, one-sorted language for valued fields is
\[
    \Ll_{\val} \coloneqq \Ll_{\ring} \cup \{\Oo\},
\]
where $\Oo$ is a unary predicate interpreted as a valuation ring.
The languages $\Ll_{\VF}$ and $\Ll_{\val}$ have the same expressive power; formulas may differ in complexity.

\begin{fact} \label{fact:val_lang}
The $\Ll_{\VF}$- and $\Ll_{\val}$-structure of a valued field $(K,v)$ are uniformly bi-interpretable. In particular, for any two valued fields $(K,v)$ and $(K,v')$, we have
\[
    (K,vK,Kv;\, v, \res) \equiv_{\Ll_{\VF}} (K',v'K',K'v';\, v', \res') \Longleftrightarrow (K, \Oo_v) \equiv_{\Ll_{\val}} (K', \Oo_{v'}).
\]
\end{fact}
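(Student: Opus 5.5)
The plan is to write down two explicit interpretations, one in each direction, using formulas that do not depend on the particular valued field. Then I check that composing them gives back each structure up to a uniformly definable isomorphism. The statement about elementary equivalence then follows by the usual translation of sentences along an interpretation.

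\emph{From $\Ll_{\VF}$ to $\Ll_{\val}$.} Given $(K,vK,Kv;\,v,\res)$, I interpret the predicate $\Oo$ on the sort $\mathsf{VF}$ by the formula $\Oo(x) :\Leftrightarrow x = 0 \vee v(x) \ge 0$. The ring symbols are kept as they are on $\mathsf{VF}$. This is clearly uniform and recovers $(K,\Oo_v)$.

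\emph{From $\Ll_{\val}$ to $\Ll_{\VF}$.} Given $(K,\Oo)$, I proceed as follows.
\begin{itemize}
\item The sort $\mathsf{VF}$ is $K$ itself.
\item For $\mathsf{VG}$, I take the quotient of $K^\times = \{x : x \ne 0\}$ by the definable equivalence relation $x \sim y :\Leftrightarrow \exists z\,(zy = x \wedge \Oo(z) \wedge \exists u\,(uz = 1 \wedge \Oo(u)))$, that is, $x/y \in \Oo^\times$. Addition is induced by multiplication, and $[x] \le [y] :\Leftrightarrow \exists z\,(xz = y \wedge \Oo(z))$. This is exactly the construction $K^\times/\Oo^\times$ of Section~\ref{sec:val+val_ring}. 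If one insists on $\infty$ in the value sort, I adjoin it as the class of $0$.
\item For $\mathsf{RF}$, I take $\Oo$ modulo the definable ideal $\Mm = \{x : x = 0 \vee \neg\Oo(x^{-1})\}$, with the induced ring operations.
\item The symbols $v$ and $\res$ are interpreted by the quotient maps. For $\res$, I extend it to all of $K$ by some fixed convention off $\Oo$ (e.g.\ sending $x \notin \Oo$ to the class of $0$) so that it is a total function symbol.
\end{itemize}
All formulas here are independent of $K$, so the interpretation is uniform. The only work is checking that the relations really are an equivalence relation, a congruence and a linear order. This follows from $\Oo$ being a valuation ring, exactly as in Section~\ref{sec:val+val_ring}.

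For bi-interpretability, the composite $\Ll_{\val} \to \Ll_{\VF} \to \Ll_{\val}$ is literally the identity on $(K,\Oo)$. For the other composite, starting from an $\Ll_{\VF}$-structure I obtain $K^\times/\Oo_v^\times$ and $\Oo_v/\Mm_v$. These are identified with $vK$ and $Kv$ by the maps $[x] \mapsto v(x)$ and $[x] \mapsto \res(x)$. Their graphs are definable by $v(x) = \gamma$ and $\res(x) = \xi$ with uniform formulas, and they are isomorphisms by the short exact sequences~(\ref{eq:SES}).

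Finally, an interpretation yields a translation $\phi \mapsto \phi^*$ of $\Ll_{\VF}$-sentences into $\Ll_{\val}$-sentences with $(K,\Oo_v) \models \phi^*$ iff the associated $\Ll_{\VF}$-structure satisfies $\phi$, and similarly in the other direction. Both implications of the displayed equivalence follow immediately.

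I do not expect a genuine obstacle. The only point needing care is the bookkeeping of quotient sorts (imaginaries) and of the value $\infty$ and the partial residue map, which I handle by the conventions fixed above.
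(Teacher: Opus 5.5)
Your proof is correct and is the argument the paper leaves implicit: the paper states this as a Fact without proof, relying on recovering $v$ from $\Oo_v$ as $K^\times \to K^\times/\Oo_v^\times$ with $x\Oo_v^\times \le y\Oo_v^\times \Leftrightarrow y/x \in \Oo_v$ (Section~\ref{sec:val+val_ring}), and you have checked that this construction and its reverse are given by uniform formulas. Two cosmetic points: the identification $\Oo_v/\Mm_v \cong Kv$ comes from the first isomorphism theorem for $\res|_{\Oo_v}$, not from the unit sequence in (\ref{eq:SES}); and you should say explicitly that your default values for $v(0)$ and for $\res$ off $\Oo_v$ are the same fixed convention used in the $\Ll_{\VF}$-structures, since otherwise the second composite is not isomorphic to the original structure.
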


For brevity, we will simply write $(K,v)$ for the entire $\Ll_{\VF}$-structure.

%% file: part3_galois.tex
\section{Valuations and absolute Galois groups}
\label{chap:val+abs_Galois}

\subsection{\texorpdfstring{The explicit structure of $\IQ_p$}{The explicit structure of Qₚ}}
\label{sec:structure of Qp}

\subsubsection{Cyclotomic extensions}

\begin{defi}
Let $F$ be a field. Denote the set of roots of unity in $F$ by $\mu_F$. Whenever $F$ contains all $n$-th roots of unity for some $n \in \IN_{>0}$, we denote this set by $\mu_n$. Furthermore, write $\zeta_n$ for a primitive $n$-th root of unity and $\zeta_{q^{\infty}}$ for the set of all $q$-power roots of unity.
\end{defi}

When several roots of unity are involved, we tacitly assume that they form a compatible system (e.g. $\zeta_{q^2}^q = \zeta_q$).

The following is well-known:

\begin{lem}
	\label{lem:cycExt}
	Let $L = \IQ_p(\mu_n)$ be a cyclotomic extension of $\IQ_p$.
    \begin{enumerate}[(i)]
        \item If $(n,p) = 1$, then $L/\IQ_p$ is unramified of degree $k = \ord_n(p)$, the order of $p$ in $(\IZ/n\IZ)^\times$. In particular,
        \[
            \Gal(L/\IQ_p) \cong \Gal(\IF_{p^k}/\IF_p) \cong C_k.
        \]
        \item If $n = p^r$, then $L/\IQ_p$ is totally ramified of degree $(p - 1)p^{r - 1}$ and $1 - \zeta_{p^r}$ is a uniformiser. Moreover,
        \[
            \Gal(L/\IQ_p) \cong \bigl(\IZ/p^r\IZ\bigr)^\times.
        \]
    \end{enumerate}
    In particular, cyclotomic extensions are abelian.
\end{lem}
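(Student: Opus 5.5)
For part (i), I will use Hensel's Lemma~\ref{lem:hensel} together with the ramification theory of Section~\ref{sec:ram_theory}. Since $(n,p)=1$, the polynomial $X^n-1$ is separable over $\IF_p$, because its derivative $nX^{n-1}$ has no common root with it. The residue field $\IF_{p^k}$ with $k=\ord_n(p)$ is the smallest extension of $\IF_p$ containing the $n$-th roots of unity, as $\IF_{p^k}^\times$ is cyclic of order $p^k-1$.

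Let $f=\IZ_p$-lift of a minimal polynomial of a primitive $n$-th root of unity in $\IF_{p^k}$, of degree $k$. Then $f$ is irreducible over $\IQ_p$, since its reduction is irreducible. The unramified extension $E=\IQ_p(\alpha)$, with $\alpha$ a root of $f$, has residue field $\IF_{p^k}$ and degree $k$. By Hensel's Lemma, $X^n-1$ splits in $E$, because all of its simple residue roots lift. Hence $L\subseteq E$.

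Conversely, the residue field of $L$ contains the reductions of $\mu_n$. These reductions are distinct, since $X^n-1$ is separable mod $p$. So $[L:\IQ_p]\ge f(L/\IQ_p)\ge k$ by the fundamental inequality~\eqref{eq:fund_ineq}, and therefore $L=E$ is unramified of degree $k$. The isomorphism $\Gal(L/\IQ_p)\cong\Gal(\IF_{p^k}/\IF_p)$ comes from the First Exact Sequence~\eqref{eq: 1st Exact Sequence}: the inertia group is trivial because $e=1$ and the extension is defectless. The latter holds because $[L:\IQ_p]=ef$ here.

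For part (ii), I will use the Eisenstein argument. Write $\Phi_{p^r}(X)=\sum_{i=0}^{p-1}X^{ip^{r-1}}$, and set $\pi=1-\zeta_{p^r}$. Then $\Phi_{p^r}(1)=p$, and modulo $p$ we have $\Phi_{p^r}(X)\equiv (X-1)^{(p-1)p^{r-1}}$, since $X^{p^r}-1\equiv(X-1)^{p^r}$. Hence $\Phi_{p^r}(1-Y)$ is Eisenstein in $Y$, so it is irreducible over $\IQ_p$.

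The norm gives $p=\prod_{a\in(\IZ/p^r)^\times}(1-\zeta_{p^r}^a)$. Each quotient $(1-\zeta^a)/(1-\zeta)$ is a unit, being a polynomial in $\zeta$ whose inverse is likewise. Therefore $v(p)=(p-1)p^{r-1}v(\pi)$ for the unique prolongation $v$. This gives $e\ge(p-1)p^{r-1}=[L:\IQ_p]$, so by~\eqref{eq:fund_ineq} the extension is totally ramified and $\pi$ is a uniformiser.

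The Galois group embeds into $(\IZ/p^r\IZ)^\times$ via its action on $\zeta_{p^r}$. Comparing orders makes this embedding an isomorphism.

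For the final claim, a general $n$ factors as $n=p^r m$ with $(m,p)=1$. Then $\IQ_p(\mu_n)$ is the compositum of the fields in (i) and (ii), so its Galois group embeds in the product of their Galois groups and is abelian. More directly, it embeds in $(\IZ/n\IZ)^\times$.

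The main obstacle is the unit claim in (ii), namely that the quotients $(1-\zeta^a)/(1-\zeta)$ are units. Without it one cannot conclude that $v(p)=(p-1)p^{r-1}v(\pi)$. The rest is routine.
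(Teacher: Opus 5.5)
The paper gives no proof of this lemma; it is introduced simply as ``well-known''. There is therefore no argument of the paper's to compare with. Your proof is correct and is the standard one: Hensel lifting of a generator of $\IF_{p^k}$ for (i), and the Eisenstein polynomial $\Phi_{p^r}(1-Y)$ for (ii).

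Two small simplifications are available.

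In (ii), the Eisenstein step and your unit computation each suffice on their own. On one side, $p=\prod_a(1-\zeta_{p^r}^a)$ with all factors of equal value gives $e\ge(p-1)p^{r-1}$. Since $\zeta_{p^r}$ is a root of the degree-$(p-1)p^{r-1}$ polynomial $\Phi_{p^r}$, also $[L:\IQ_p]\le(p-1)p^{r-1}$. Together these force irreducibility and total ramification at once. On the other side, a root of an Eisenstein polynomial is automatically a uniformiser, so the unit claim you flagged as the main obstacle is not actually needed.

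In (i), trivial inertia follows by counting. The map $\Gal(L/\IQ_p)=D\to\Gal(\IF_{p^k}/\IF_p)$ from the First Exact Sequence is surjective between groups of the same order $k$. So you need not appeal to defectlessness.
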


We therefore obtain the following description of the roots of unity in $\IQ_p$.
\begin{cor} \label{cor:rootsOfUnity}
In $\IQ_p$, the roots of unity are
\[
    \mu_{\IQ_p} = \begin{cases}
        \mu_{p - 1} & \text{if $p \ne 2$} \\
        \mu_2 & \text{if $p = 2$.}
    \end{cases}
\]
\end{cor}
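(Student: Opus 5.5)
The plan is to show that a root of unity in $\IQ_p$ has order coprime to $p$ except for the sign $-1$ when $p = 2$, and then to pin down the prime-to-$p$ part via the residue field.

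\textbf{Prime-to-$p$ part.} Let $\zeta \in \IQ_p$ be a root of unity of order $n$ and write $n = mp^r$ with $(m,p) = 1$. Then $\zeta^{p^r}$ has order $m$, so $\mu_m \subseteq \IQ_p$ and $\IQ_p(\mu_m) = \IQ_p$. By Lemma~\ref{lem:cycExt}(i), the degree $[\IQ_p(\mu_m):\IQ_p] = \ord_m(p)$ must equal $1$, i.e.\ $p \equiv 1 \pmod m$, so $m \mid p-1$. Conversely, $\mu_{p-1} \subseteq \IQ_p$. This follows either from Lemma~\ref{lem:cycExt}(i) with $n = p-1$, since $\ord_{p-1}(p) = 1$, or directly from Hensel's Lemma~\ref{lem:hensel} applied to $X^{p-1}-1$. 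That polynomial has $p-1$ simple roots in $\IF_p$, because its derivative $(p-1)X^{p-2}$ is nonzero at every unit.

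\textbf{$p$-power part.} Suppose $\zeta_{p^r} \in \IQ_p$ with $r \ge 1$. By Lemma~\ref{lem:cycExt}(ii), $[\IQ_p(\zeta_{p^r}):\IQ_p] = (p-1)p^{r-1}$, and this must equal $1$. That forces $r = 1$ and $p = 2$, in which case $\zeta_2 = -1$ lies in $\IQ_p$ anyway. So for $p \ne 2$ there are no nontrivial $p$-power roots of unity. For $p = 2$ the $2$-power roots of unity are exactly $\pm 1$, since $\zeta_4 \notin \IQ_2$ because its extension has degree $2$.

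\textbf{Combining.} Every finite subgroup of $\IQ_p^\times$ is cyclic, so $\mu_{\IQ_p}$ is the product of its prime-to-$p$ part and its $p$-power part. For $p \ne 2$ this gives $\mu_{p-1}$. For $p = 2$ it gives $\mu_1 \cdot \mu_2 = \mu_2$.

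There is no real obstacle here. The only care needed is the bookkeeping that splits the order $n$ into its $p$-part and prime-to-$p$ part, and treating $p = 2$ separately, where the degree $(p-1)p^{r-1}$ equals $1$ for $r = 1$.
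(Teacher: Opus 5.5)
Your argument is correct and matches the paper, which reads the corollary off Lemma~\ref{lem:cycExt}: the degree $\ord_m(p)$ forces $m \mid p-1$, and the degree $(p-1)p^{r-1}$ forces $r = 0$, or else $p = 2$ and $r = 1$. In the combining step you don't need cyclicity of finite subgroups, which would presuppose that $\mu_{\IQ_p}$ is finite: splitting $n = mp^r$ already bounds the order of each individual $\zeta$, or you can note that any torsion abelian group is the product of its primary parts.
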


In the context of our Main Theorem---the Galois-theoretic characterisation of $p$-adically closed fields---we consider the situation where a field $K$ satisfies $G_K \cong G_{\IQ_p}$. To draw conclusions about the roots of unity contained in $K$, we seek a purely Galois-theoretic criterion that identifies the roots of unity in $K$. This motivates the following lemma.

\begin{lem}
    \label{lem:prime-to-p roots}
    Let $(K,v)$ be a henselian valued field, and let $q$ be any prime. Assume that
    \begin{enumerate}[(i)]
        \item $\charK Kv \ne q$;
        \label{item:unitroots_assum1}
        \item $vK$ is not $q$-divisible;
        \label{item:unitroots_assum2}
        \item $\Gal(Kv(q)/Kv) \cong \IZ_q$, where $Kv(q)$ is the maximal pro-$q$ Galois extension. \label{item:unitroots_assum3}
    \end{enumerate}
    Then, for any $r \ge 1$,
    \[
        \zeta_{q^r} \in K \ifftext \text{$K$ admits a $C_{q^r} \times C_{q^r}$-extension.}
    \]
    In particular, the lemma applies to $(K,v) = (\IQ_p,v_p)$ for $p \ne q$.
\end{lem}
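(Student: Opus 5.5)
Both implications rest on Kummer theory for the direction $\Rightarrow$, and on a ramification-theoretic description of the maximal pro-$q$ quotient for the direction $\Leftarrow$.

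\emph{Direction $\Rightarrow$.} Assume $\zeta_{q^r} \in K$. By Kummer theory, it suffices to show that $K^\times/K^{\times q^r}$ contains a subgroup isomorphic to $(\IZ/q^r\IZ)^2$. We take the two generators from two sources.
\begin{enumerate}[(a)]
    \item Choose $\pi \in K^\times$ with $v\pi \notin qvK$, which exists by \ref{item:unitroots_assum2}.
    \item Choose a unit $u \in \Oo_v^\times$ whose residue $\overline{u}$ has order $q^r$ in $(Kv)^\times/(Kv)^{\times q^r}$.
\end{enumerate}
To find $u$, note that $\zeta_{q^r}$ reduces to a primitive $q^r$-th root of unity in $Kv$, by \ref{item:unitroots_assum1}. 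Hence Kummer theory over $Kv$ applies, and the cyclic quotient $\IZ/q^r$ of $\Gal(Kv(q)/Kv) \cong \IZ_q$ gives such a $\overline{u}$.

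Now suppose $\pi^a u^b \in K^{\times q^r}$. Applying $v$ forces $q^r \mid a$, because $v\pi$ has order exactly $q^r$ modulo $q^r vK$: indeed $v\pi \notin qvK$ and $vK/q^rvK$ is a $\IZ/q^r$-module. Then $u^b$ is a $q^r$-th power times a unit. Reducing modulo $\Mm_v$ gives $\overline{u}^b \in (Kv)^{\times q^r}$, so $q^r \mid b$. Henselianity is not even needed in this direction.

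\emph{Direction $\Leftarrow$.} Assume $K$ has a $C_{q^r}\times C_{q^r}$-extension, so the maximal pro-$q$ quotient $G_K(q)=\Gal(K(q)/K)$ surjects onto $C_{q^r}^2$. We compute $G_K(q)$ via ramification theory.

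Because $v$ is henselian and $\charK Kv \ne q$, the ramification group $R_K$ is a pro-$p$ group with $p \ne q$ (or trivial). So $G_K(q)$ is the maximal pro-$q$ quotient of $D_K/R_K \cong \Hom(\tilde vK^{\sepc}/vK,\mu)\rtimes G_{Kv}$, by Theorem~\ref{thm:Splitting of D/R}.

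The pro-$q$ quotient of $G_{Kv}$ is $\IZ_q$, topologically generated by some $\phi$, by \ref{item:unitroots_assum3}. The inertia part contributes $\IZ_q^{r_q}(1)$, on which $\phi$ acts through the cyclotomic character $\chi$ on $\mu_{q^\infty}\subseteq (Kv)^{\sepc}$. Taking the pro-$q$ quotient of the semidirect product then yields a quotient of $\IZ_q^{r_q}(1)_{\text{coinv}}\rtimes\IZ_q$.

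An abelian quotient of exponent $q^r$ must therefore factor through
\[
    \bigl(\IZ_q^{r_q}/(\chi(\phi)-1,\,q^r)\bigr)\times \IZ/q^r.
\]
This contains $C_{q^r}^2$ as a quotient only if $\chi(\phi)\equiv 1 \pmod{q^r}$ (taking $r_q\ge 1$). The congruence says exactly that $\phi$ fixes $\mu_{q^r}$ inside the maximal pro-$q$ extension of $Kv$.

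It remains to conclude $\zeta_{q^r}\in Kv$. The extension $Kv(\mu_{q^r})/Kv$ has degree dividing $q-1$ times a power of $q$, and the prime-to-$q$ part needs separate handling. I would argue that $\zeta_q \in Kv$: otherwise the pro-$q$ quotient of $G_K$ cannot absorb the twist, because the action of $\Gal(Kv(\zeta_q)/Kv)$ on $\IZ_q(1)$ is then nontrivial of prime-to-$q$ order. Then the coinvariants under the pro-$q$ part of $G_{Kv}$, which is $\IZ_q$, drop to $\IZ_q/(\chi-1)$, and abelianisation kills the inertia. Once $\zeta_q\in Kv$, the extension $Kv(\mu_{q^r})/Kv$ is a $q$-extension, so $\phi$ fixing $\mu_{q^r}$ gives $\mu_{q^r}\subseteq Kv$. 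Finally, Hensel's Lemma~\ref{lem:hensel} applied to $X^{q^r}-1$, which is separable modulo $\Mm_v$ by \ref{item:unitroots_assum1}, lifts $\zeta_{q^r}$ to $K$.

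\emph{Main obstacle.} The step I expect to be hardest is the careful computation of the maximal abelian exponent-$q^r$ quotient of the semidirect product. In particular, this requires treating the prime-to-$q$ tame part of $G_{Kv}$ and the case $\zeta_q\notin Kv$ correctly.

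Finally, for $(\IQ_p,v_p)$ with $p\ne q$, hypotheses \ref{item:unitroots_assum1}--\ref{item:unitroots_assum3} hold: $\IQ_p$ is henselian, $v_p\IQ_p = \IZ$ is not $q$-divisible, and $G_{\IF_p}\cong\widehat{\IZ}$ has pro-$q$ quotient $\IZ_q$.
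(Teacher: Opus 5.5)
Your proof is correct in substance, but for $\Leftarrow$ it takes a genuinely different route from the paper.

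\emph{Direction $\Rightarrow$.} Both arguments combine an unramified $C_{q^r}$-extension with the totally ramified Kummer extension $K(\sqrt[q^r]{x})$. The paper lifts the residue extension using henselianity and then uses linear disjointness of unramified and totally ramified extensions. Your direct Kummer-independence check of $\pi$ and $u$ does indeed work for any valued field. One small correction: $v\pi$ has order exactly $q^r$ in $vK/q^rvK$ because $vK$ is torsion-free. It does not follow merely from $vK/q^rvK$ being a $\IZ/q^r$-module.

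\emph{Direction $\Leftarrow$.} The paper stays at finite level. Since $\Gal(K^{\ur}/K)$ is cyclic by (iii), the $C_{q^r}\times C_{q^r}$-extension contains a $C_{q^r}$-subextension $E$ with $E\cap K^{\ur}=K$. This $E$ is totally and tamely ramified, so the Second Exact Sequence gives $\Gal(E/K)\cong\Hom(wE/vK,\mu_{Kv})$. That group is cyclic of order $q^r$ only if $\zeta_{q^r}\in Kv$.

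The paper's argument uses only the Second Exact Sequence for one finite extension and never meets the cyclotomic character. So your ``main obstacle'' simply does not arise there. Your route needs the full semi-direct splitting of $D_K/R_K$, which is a heavier input, but in return it describes the whole maximal pro-$q$ quotient of $G_K$.

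Within your approach, the obstacle also disappears if you abelianise directly instead of passing through $\phi$. Write $A\cong\IZ_q(1)^{r_q}$ for the $q$-part of inertia. Then $(A\rtimes G_{Kv})^{\mathrm{ab}}\otimes\IZ/q^r \cong A/\bigl(q^rA+\sum_{h\in G_{Kv}}(\chi(h)-1)A\bigr)\times\IZ/q^r$. This surjects onto $C_{q^r}^2$ if and only if $\chi(h)\equiv 1 \pmod{q^r}$ for every $h\in G_{Kv}$, that is, if and only if $\mu_{q^r}\subseteq Kv$. The case $\zeta_q\notin Kv$ is then automatic, because some $\chi(h)-1$ is a unit.

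Your sketch of that exceptional case is right in spirit but misstated. The coinvariants must be taken under the kernel of $G_{Kv}\to\IZ_q$, not under ``the pro-$q$ part'' of $G_{Kv}$. That kernel surjects onto $\Gal(Kv(\zeta_q)/Kv)$, which has order prime to $q$, and so contains an $h$ with $\chi(h)-1\in\IZ_q^\times$.
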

\begin{proof}
    Note that the assumptions hold for $K = \IQ_p$ because $Kv = \IF_p$ admits a unique cyclic extension of degree $q^n$ for every $n \ge 1$.

    Assume $\zeta_{q^r} \in K$. By assumption \ref{item:unitroots_assum3}, $K$ admits an unramified $C_{q^r}$-extension. Moreover, by \ref{item:unitroots_assum2}, we may choose $x \in K$ with $vx$ not $q$-divisible and consider the totally ramified Kummer extension $K(\!\!\sqrt[q^r]{x})/K$, which is well-defined by \ref{item:unitroots_assum1} and the existence of a primitive $q^r$-th root of unity. Any unramified and totally ramified extensions of $K$ are linearly disjoint, so taking the compositum of these two $C_{q^r}$-extensions yields the desired $C_{q^r} \times C_{q^r}$-extension.

    Conversely, assume that $K$ has a $C_{q^r} \times C_{q^r}$-extension $L/K$. Consider the maximal unramified extension $K^{\ur}/K$ inside $L$. By assumption \ref{item:unitroots_assum3}, $\Gal(K^{\ur}/K)$ must be cyclic and a quotient of $C_{q^r} \times C_{q^r}$. Therefore, $K$ admits a $C_{q^r}$-extension $E/K$ linearly disjoint from $K^{\ur}/K$. Then, the extension $E/K$ is totally ramified, and tamely so by assumption \ref{item:unitroots_assum1}.
    
    Let $w$ be the unique prolongation of $v$ to $E$. By Hensel's Lemma and \ref{item:unitroots_assum1}, it suffices to show that $\zeta_{q^r} \in Ew = Kv$. We can see that this follows from the Second Exact Sequence (\ref{eq: 2nd Exact Sequence}):
    \[
        \Gal(E/K) = I_w(E/K) \cong \Hom(wE/vK, \mu_{Ew}).
    \]
    Indeed, $\Hom(wE/vK, \mu_{Ew}) \cong C_{q^r}$ if and only if $wE/vK \cong C_{q^r}$ and $\zeta_{q^r} \in \mu_{Ew} = \mu_{Kv}$.
\end{proof}

\begin{rem} \label{rem:prime-to-p roots finite extensions}
Observe that if a valued field $(K,v)$ satisfies \ref{item:unitroots_assum1}--\ref{item:unitroots_assum3} in the above lemma, and $L/K$ is a finite abelian extension with unique prolongation $w$ of $v$, then the valued field $(L,w)$ also satisfies \ref{item:unitroots_assum1}--\ref{item:unitroots_assum3}.
\end{rem}

We will further consider infinite cyclotomic extensions of $K$, to which the following subtle observation applies.
\begin{lem} \label{lem:inf_cyclotom}
Let $F$ be a field with $\charK F \ne q$. Then $F(\zeta_{q^{\infty}}) = F(\zeta_q)$, $F(\zeta_{2^{\infty}}) = F(\zeta_4)$ (if $q = 2$)\footnote{These exceptional cases do occur: for example, consider the field $\IQ(\{\zeta_{q^n} + \zeta_{q^n}^{-1}\}_{n \ge 1}) \subseteq \IR$. Its only root of unity is $\zeta_2$---after adjoining $\zeta_q$ or $\zeta_4$ (if $q = 2$), it will contain all $q^n$-th roots of unity.}, or $\Gal(F(\zeta_{q^{\infty}})/F) \cong \IZ_q \times C$, where $C$ is cyclic of order dividing $q - 1$ or 2 (if $q = 2$).
\end{lem}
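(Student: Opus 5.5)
The plan is to embed $\Gal(F(\zeta_{q^{\infty}})/F)$ into the automorphism group of $\mu_{q^\infty}$ and read off the statement from the structure of $\IZ_q^\times$. Since $\charK F \ne q$, the group $\mu_{q^\infty} \subseteq F^{\sepc}$ is isomorphic to $\IQ_q/\IZ_q$. Every automorphism of $F(\zeta_{q^{\infty}})/F$ permutes the compatible system of roots of unity. This gives the cyclotomic character, an injective continuous homomorphism
\[
    \chi : G \coloneqq \Gal(F(\zeta_{q^{\infty}})/F) \mono \Aut(\IQ_q/\IZ_q) \cong \IZ_q^\times .
\]
So $G$ is isomorphic to a closed subgroup of $\IZ_q^\times$. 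Recall the structure of this group:
\[
    \IZ_q^\times \cong \mu_{q-1} \times (1 + q\IZ_q) \quad (q \ne 2), \qquad \IZ_2^\times \cong \{\pm 1\} \times (1 + 4\IZ_2),
\]
where $1 + q\IZ_q$ (resp. $1 + 4\IZ_2$) is isomorphic to $\IZ_q$ via the logarithm or a choice of topological generator. I would cite this as standard.

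The main step is to describe the closed subgroups $H$ of $\IZ_q^\times$. Put $U \coloneqq 1 + q\IZ_q$ (resp. $1+4\IZ_2$), so $U \cong \IZ_q$, and its closed subgroups are $1$ and $U^{q^n} = 1 + q^{n+1}\IZ_q$ (resp. $1 + 2^{n+2}\IZ_2$). There are two cases.
\begin{enumerate}[(a)]
    \item If $H \cap U = 1$, then $H$ embeds into the finite cyclic quotient $\IZ_q^\times/U$, which has order $q-1$ (resp. $2$). So $H$ is finite cyclic.
    \item If $H \cap U = U^{q^n}$ is open in $U$, then $H$ is open in $\IZ_q^\times$. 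It is then a product of a finite cyclic group $C$ (its torsion, of order dividing $q-1$ or $2$) with a pro-$q$ part isomorphic to $\IZ_q$. For odd $q$ this splitting is immediate, since the torsion has order prime to $q$ and the group is abelian; one can also use the Teichmüller decomposition. For $q=2$ the torsion of $\IZ_2^\times$ is $\{\pm1\}$, and $H$ is either torsion-free (hence $\cong \IZ_2$) or contains $-1$, giving $H = \{\pm 1\} \times (H\cap U)$.
\end{enumerate}
Case (b) directly gives $G \cong \IZ_q \times C$.

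It remains to show that in case (a), where $G$ is finite, the field $F(\zeta_{q^\infty})$ equals $F(\zeta_q)$ (resp. $F(\zeta_4)$). I expect this to be the main obstacle, though it is short. If $G$ is finite, then $F(\zeta_{q^\infty})/F$ is finite, so $\chi(G) \cap U = 1$, i.e.\ $\chi(G) \cap (1 + q\IZ_q) = 1$ (resp. $\cap (1+4\IZ_2) = 1$). Now $\Gal(F(\zeta_{q^\infty})/F(\zeta_q))$ is exactly the set of $\sigma$ with $\chi(\sigma) \equiv 1 \bmod q$, i.e.\ $\chi^{-1}(U)$. Likewise for $q=2$, $\Gal(F(\zeta_{2^\infty})/F(\zeta_4)) = \chi^{-1}(1+4\IZ_2)$. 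Both of these groups are trivial, which gives the claimed equalities.

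Note the three alternatives overlap, for instance when the group is trivial; this causes no harm. The only genuine input is the injectivity of $\chi$ together with the structure of $\IZ_q^\times$.
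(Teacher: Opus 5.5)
Your proof is correct, but it takes a different route from the paper. The paper works directly in the tower $F_n = F(\zeta_{q^n})$ with a norm computation. If $\zeta_{q^n} \notin F_{n-1}$ for some $n \ge 2$ (resp.\ $n \ge 3$ when $q = 2$, to absorb the sign in the norm), then $\zeta_{q^{n+1}} \in F_n$ would make $N_{F_n/F_{n-1}}(\zeta_{q^{n+1}})$ a $q$-th root of $\zeta_{q^{n-1}}$ in $F_{n-1}$. So once the tower grows, it grows forever. The shape $\IZ_q \times C$ is then obtained by ``taking the inverse limit of $\Gal(F_n/F)$'', which tacitly relies on the embeddings $\Gal(F_n/F) \hookrightarrow (\IZ/q^n\IZ)^\times$.

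You instead start from the cyclotomic character $\chi : G \mono \IZ_q^\times$ and read off both parts of the statement from the structure of $\IZ_q^\times$. The finite-versus-infinite dichotomy comes from the torsion-freeness of $U = 1+q\IZ_q$ (resp.\ $1+4\IZ_2$), through the identification $\chi^{-1}(U) = \Gal(F(\zeta_{q^\infty})/F(\zeta_q))$ (resp.\ $\Gal(F(\zeta_{2^\infty})/F(\zeta_4))$). The splitting $\IZ_q \times C$ comes from decomposing the closed subgroup $\chi(G)$.

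The paper's argument is more elementary and purely field-theoretic, in keeping with its self-contained aim, but it leaves the identification of the limit group as a sketch. Yours is more structural: it treats $q = 2$ uniformly, without the $n \ge 3$ adjustment, and it makes exactly that final step explicit. The facts you leave implicit are standard. A torsion-free closed $H \le \IZ_2^\times$ projects injectively onto the factor $U \cong \IZ_2$, so it is $\cong \IZ_2$ when non-trivial. For odd $q$, closedness of $H$ gives $H = (H \cap \mu_{q-1}) \times (H \cap U)$.
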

\begin{proof}
Consider $F_n \coloneqq F(\zeta_{q^n})$ for $n \in \IN$, which form a tower of field extensions. Assume $q \ne 2$ and that there exists $n \ge 2$ such that $\zeta_{q^n} \notin F_{n - 1}$. If $\zeta_{q^{n + 1}} \in F_n$, then
\[
    (N_{F_n/F_{n - 1}}(\zeta_{q^{n + 1}}))^q = N_{F_n/F_{n - 1}}(\zeta_{q^n}) = \zeta_{q^{n - 1}} \in F_{n - 1}.
\]
However, $\zeta_{q^{n - 1}}$ has no $q$-th root in $F_{n-1}$ by assumption. Hence, $\zeta_{q^{n + 1}}$ cannot lie in $F_n$, and by induction, the $F_n$ form an infinite degree tower, except when $F(\zeta_q) = F(\zeta_{q^\infty})$. The same idea works for $q = 2$, only that a change in sign in the norm forces us to require $n \ge 3$. If we are not in one of the exceptional cases, the tower is of infinite degree, and the claim follows by taking the inverse limit of $\Gal(F_n/F)$.
\end{proof}

\subsubsection{\texorpdfstring{Residues and generators in $p$-adic fields}{Residues and generators in p-adic fields}}

Let $F$ be a $p$-adic field, i.e., a finite extension of $\IQ_p$. In this section, if not stated otherwise, we will always write $v$ for the unique valuation on $F$ extending the $p$-adic valuation on $\IQ_p$, and $\Oo_F$ for its valuation ring.

The goal of this section is to study the multiplicative structure of $p$-adic fields by explicit means.
Our starting point is the filtration
\[
    F^\times \supseteq U_F^{(0)} \supseteq U_F^{(1)} \supseteq \ldots \supseteq U_F^{(k)} \supseteq \ldots
\]
of the multiplicative group $F^\times$ by higher unit groups.

\begin{defi}
Let $F$ be a $p$-adic field with uniformiser $\pi$. The \emph{$k$-th higher unit groups} are defined as
\begin{align*}
    U^{(0)}_F & \coloneqq \Oo_F^\times \\
    U^{(k)}_F & \coloneqq \bigl\{x \in \Oo_F^\times: x \equiv 1 \; (\text{mod }{\pi^k})\bigr\} \quad \text{for $k \ge 1$.}
\end{align*}
\end{defi}

Some of the calculations given below are standard; others may be less well-known.

\begin{fact}
\label{fact:multgrp_direct_prod}
Let $F$ be a $p$-adic field with uniformiser $\pi$. Denote its prime-to-$p$ roots of unity by $\mu'_F$. The multiplicative group $F^\times$ can be decomposed as the direct product
\[
    F^\times = \pi^{\IZ} \times \mu'_F \times U_F^{(1)}.
\]
\end{fact}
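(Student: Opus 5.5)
We prove the decomposition in two steps, using the short exact sequences (\ref{eq:SES}) for the $p$-adic valuation $v$ on $F$ (normalised so that $v\pi = 1$). The first step splits off the value group, the second splits off the residue field.

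\textbf{Splitting off the value group.} Since $F$ is a finite extension of $\IQ_p$, the value group $vF$ is discrete; after normalisation it is $\IZ$, generated by $v\pi$. The map $K^\times \to vK$ in (\ref{eq:SES}) therefore has a section $n \mapsto \pi^n$. Because $\IZ$ is free, this gives
\[
    F^\times = \pi^{\IZ} \times \Oo_F^\times,
\]
and $\pi^{\IZ} \cong \IZ$ since $\pi$ is not a root of unity.

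\textbf{Splitting off the residue field.} We use the second sequence $1 \to U_F^{(1)} \to \Oo_F^\times \to (Fv)^\times \to 1$. The residue field $Fv = \IF_q$ is finite with $q = p^f$, so $(Fv)^\times$ is cyclic of order $q-1$, which is prime to $p$.

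To split this sequence we apply Hensel's Lemma~\ref{lem:hensel} to $X^{q-1}-1$. Its reduction has $q-1$ distinct roots in $\IF_q$, and it is separable because $p \nmid q-1$. Each residue root therefore lifts uniquely to a root in $\Oo_F$, and the lifts form a subgroup $\mu_{q-1} \le \Oo_F^\times$ mapping isomorphically onto $(Fv)^\times$.

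Next we identify $\mu_{q-1}$ with $\mu'_F$. Take any prime-to-$p$ root of unity $\zeta$ of order $m$. The polynomial $X^m-1$ is separable modulo $\pi$, so $\zeta$ reduces to an element of order exactly $m$ in $\IF_q^\times$. Hence $m \mid q-1$, and $\zeta \in \mu_{q-1}$. This gives $\mu'_F = \mu_{q-1}$.

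Finally, $\mu'_F \cap U_F^{(1)} = 1$, since reduction is injective on $\mu'_F$. The section $\mu'_F \to \Oo_F^\times$ is a homomorphism splitting the sequence, so
\[
    \Oo_F^\times = \mu'_F \times U_F^{(1)}.
\]

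\textbf{Conclusion.} Combining the two splittings gives $F^\times = \pi^{\IZ}\times\mu'_F\times U_F^{(1)}$, as claimed. All groups involved are abelian, so every internal product here is direct as soon as the factors generate and intersect trivially, and that is exactly what the two steps establish.

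The only point that needs care is uniqueness in the Hensel lifting. This is what makes the lifted roots closed under multiplication, and so a subgroup; it holds because the products of lifts are again roots of $X^{q-1}-1$ with the correct residues.
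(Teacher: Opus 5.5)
Your argument is correct and complete. The paper records this decomposition as a standard fact without proof (it is the classical one, cf.\ \cite[Chap.~II]{Neukirch99}), and your route is the standard proof: split off $\pi^{\IZ}$ via the valuation, then split $1 \to U_F^{(1)} \to \Oo_F^\times \to (Fv)^\times \to 1$ using the Hensel lifts of $\IF_q^\times$.

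One small remark: the lifts are closed under multiplication even without the uniqueness you single out. The roots of $X^{q-1}-1$ in $F$ form a group with at most $q-1$ elements, and your $q-1$ distinct lifts already exhaust it.
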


\begin{fact}
    \label{fact:TowerLawU}
    Let $F$ be a $p$-adic field. For any $k \ge 1$, there exist isomorphisms
    \begin{align*}
        U_F^{(0)}/U_F^{(1)} & \cong ((Fv)^\times, \cdot) \\
        U_F^{(k)}/U_F^{(k+1)} & \cong (Fv,+).
    \end{align*}
\end{fact}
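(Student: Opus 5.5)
The statement to prove is Fact~\ref{fact:TowerLawU}. We handle the two isomorphisms separately. In both cases the plan is to write down an explicit surjective homomorphism and identify its kernel.

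For $k = 0$, use the reduction map
\[
    \Oo_F^\times \longrightarrow (Fv)^\times, \qquad x \longmapsto \overline{x}.
\]
It is multiplicative, and it lands in $(Fv)^\times$ because units reduce to non-zero residues. It is surjective: any non-zero residue lifts to an element of $\Oo_F$ of valuation $0$, which is a unit. Its kernel is $\{x \in \Oo_F^\times : x \equiv 1 \pmod{\pi}\} = U_F^{(1)}$. This is exactly the second short exact sequence in (\ref{eq:SES}), since $1 + \Mm_v = U_F^{(1)}$.

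For $k \ge 1$, define
\[
    \phi_k : U_F^{(k)} \longrightarrow (Fv,+), \qquad 1 + \pi^k a \longmapsto \overline{a},
\]
where $a \in \Oo_F$. The element $a$ is uniquely determined by $x = 1 + \pi^k a$, so $\phi_k$ is well defined. To check that it is a homomorphism, expand
\[
    (1 + \pi^k a)(1 + \pi^k b) = 1 + \pi^k\bigl(a + b + \pi^k ab\bigr).
\]
Since $k \ge 1$, the term $\pi^k ab$ lies in $\Mm_v$, so its residue vanishes and the image is $\overline{a} + \overline{b}$. This cancellation is the only point needing care, and it is exactly why we need $k \ge 1$. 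Surjectivity holds because any residue $\overline{a}$ lifts to some $a \in \Oo_F$, and then $1 + \pi^k a$ is a unit (it is congruent to $1$ mod $\pi$) lying in $U_F^{(k)}$. The kernel consists of those $1 + \pi^k a$ with $a \in \pi\Oo_F$, that is, exactly $U_F^{(k+1)}$.

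In both cases the first isomorphism theorem gives the claimed isomorphism. Nothing specific to $p$-adic fields is used beyond discreteness of the valuation, which is what makes the groups $U_F^{(k)}$ and the description $\Mm_v = \pi\Oo_F$ available. I expect no genuine obstacle; the only non-formal point is the vanishing of the cross term in the homomorphism check.
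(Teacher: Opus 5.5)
Your proposal is correct and matches the paper's proof, which uses exactly the maps $x U_F^{(1)} \mapsto x + \Mm_F$ and $(1+\pi^k a)U_F^{(k+1)} \mapsto a + \Mm_F$ (citing Neukirch). You additionally verify the homomorphism, surjectivity and kernel conditions, which the paper leaves to the reference.
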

\begin{proof}
The isomorphisms are given by
\begin{align*}
    xU_F^{(1)} & \longmapsto x + \Mm_F \\
    (1 + \pi^k a)U_F^{(k + 1)} & \longmapsto a + \Mm_F,
\end{align*}
see \cite[Chap. II, (3.10)]{Neukirch99}.
\end{proof}

\begin{rem} \label{rem:mod_rule}
The fact that the above maps are indeed isomorphisms relies on the following rule linking additive and multiplicative congruences: for any $a, a' \in U_F^{(0)}$,
\[
    a \equiv a' \pmod{\pi^k} \ifftext a \equiv a' \quad\bigl(\mathrm{mod}\ U_F^{(k)}\bigr).
\]
The equivalence is easily verified:
\[
    \exists b \in \Oo_F : a' = a + b\pi^k \Longleftrightarrow \exists b \in \Oo_F : \frac{a'}a = 1 + a^{-1}b\pi^k \Longleftrightarrow \frac{a'}a \in U_F^{(k)}.
\]
\end{rem}

\begin{cor} \label{cor:representation}
Let $F$ be a $p$-adic field and $\Aa \subseteq \Oo_F$ a set of representatives for $Fv$. Then any $x \in U_F^{(1)}/U_F^{(k+1)}$ has a unique representation of the form $x = aU_F^{(k+1)}$, where
\[
    a = 1 + a_1 \pi + \ldots + a_k \pi^k,
\]
and all coefficients $a_i$ lie in $\Aa$.
\end{cor}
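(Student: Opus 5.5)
We prove Corollary~\ref{cor:representation} by induction on $k$, using the filtration quotients of Fact~\ref{fact:TowerLawU} together with the congruence rule of Remark~\ref{rem:mod_rule}. Throughout, we read the statement for a fixed uniformiser $\pi$. Note that $U_F^{(1)}/U_F^{(k+1)}$ is a finite group of order $|Fv|^k$: it has a filtration by the $U_F^{(j)}/U_F^{(k+1)}$ for $1 \le j \le k$, and by Fact~\ref{fact:TowerLawU} each successive quotient $U_F^{(j)}/U_F^{(j+1)}$ is isomorphic to $(Fv,+)$. On the other side, there are exactly $|\Aa|^k = |Fv|^k$ formal expressions $1 + a_1\pi + \ldots + a_k\pi^k$ with $a_i \in \Aa$, and each lies in $U_F^{(1)}$. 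So it suffices to prove either existence or uniqueness; the other then follows by counting.

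I would prove existence, by induction on $k$. The base case $k = 0$ is trivial, as the quotient is the trivial group and the representative is $a = 1$. For the inductive step, let $x \in U_F^{(1)}$ and suppose we already have $a' = 1 + a_1\pi + \ldots + a_{k-1}\pi^{k-1}$ with $x \equiv a' \pmod{U_F^{(k)}}$. Then $x/a' \in U_F^{(k)}$. By Fact~\ref{fact:TowerLawU}, the class of $x/a'$ modulo $U_F^{(k+1)}$ equals the class of $1 + c\pi^k$ for a unique $c \in \Aa$. This gives
\[
    x \equiv a'(1 + c\pi^k) \quad \bigl(\mathrm{mod}\ U_F^{(k+1)}\bigr).
\]

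It remains to convert this multiplicative correction into an additive one. We have $a'(1 + c\pi^k) = a' + a'c\pi^k$, and since $a' \equiv 1 \pmod{\pi}$, this is congruent to $a' + c\pi^k$ modulo $\pi^{k+1}$. Both elements are units, so Remark~\ref{rem:mod_rule} turns this additive congruence modulo $\pi^{k+1}$ into a congruence modulo $U_F^{(k+1)}$. Setting $a_k \coloneqq c$ gives $x \equiv 1 + a_1\pi + \ldots + a_k\pi^k \pmod{U_F^{(k+1)}}$, completing the induction. Uniqueness then follows from the counting argument above.

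Alternatively, uniqueness can be shown directly. If two expansions agree modulo $U_F^{(k+1)}$, take the first index $j$ at which they differ. Their quotient lies in $U_F^{(j)}$, and its image under the isomorphism of Fact~\ref{fact:TowerLawU} is $a_j - a_j' \ne 0$ in $Fv$, since distinct elements of $\Aa$ have distinct residues. Hence the quotient is not in $U_F^{(j+1)}$, and so not in $U_F^{(k+1)}$, a contradiction. The only real subtlety in the whole argument is this careful switching between additive and multiplicative congruences, which is exactly what Remark~\ref{rem:mod_rule} handles.
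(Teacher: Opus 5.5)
Your proof is correct, but its main line runs in the opposite direction from the paper's. The paper proves uniqueness directly: if two expansions first differ at index $i \le k$, then $v(a-a') = v(\pi^i)$, which contradicts $a - a' = a'\pi^{k+1}b$. It then obtains existence for free from the count $|U_F^{(1)}/U_F^{(k+1)}| = |Fv|^k$.

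You instead construct the expansion digit by digit and let counting give uniqueness. Fact~\ref{fact:TowerLawU} extracts the next coefficient, and Remark~\ref{rem:mod_rule} converts the multiplicative correction $a'(1+c\pi^k)$ into the additive one $a' + c\pi^k$. Your alternative uniqueness argument is essentially the paper's, phrased through the filtration isomorphism rather than a valuation estimate.

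The paper's route is shorter, since uniqueness is a one-line valuation computation. Yours is constructive: it is exactly the algorithm for computing the $\pi$-adic digits. There is also a difference in hypotheses. The paper's existence step genuinely needs $Fv$ to be finite, whereas your inductive existence argument does not. Combined with your direct uniqueness argument, it dispenses with counting altogether. The statement then holds verbatim for any valued field whose maximal ideal is $\pi\Oo_F$, whatever its residue field.
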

\begin{proof}
Assume that $a = 1 + \sum_{i = 1}^k a_i\pi^i$ and $a' = 1 + \sum_{i = 1}^k a_i'\pi^i$ represent the same element modulo $U_F^{(k + 1)}$. In other words, there exists $1 + \pi^{k + 1}b \in U_F^{(k + 1)}$ with $a = a'(1 + \pi^{k + 1}b)$. Assume $a_i \ne a_i'$ for some $i$, in which case we choose the minimal such $i$. Then
\[
    v(a - a') = v((a_i - a_i') \pi^i) = v(\pi^i),
\]
which contradicts $a - a' = a' \pi^{k + 1}b$. Therefore, $a_i = a_i'$ for all $i = 1, \ldots, k$.

The fact that any element in $U_F^{(1)}/U_F^{(k+1)}$ can be written as claimed follows from a counting argument. Observe that
\[
    \bigl|U_F^{(1)}/U_F^{(k+1)}\bigr| = |Fv|^k
\]
by Fact~\ref{fact:TowerLawU}, which is exactly the number of distinct representatives we consider.
\end{proof}

The following residue calculation is crucial for the results that follow.

\begin{lem} \label{lem:p/(1-zeta)^p}
Let $F$ be a $p$-adic field containing $\zeta_p$. Then
\[
    \frac{p}{\pi^{p - 1}} \equiv - 1 \pmod{\pi},
\]
where $\pi = \zeta_p - 1$. In particular, $p/\pi^{p - 1}$ has residue $-1$ in the residue field.
\end{lem}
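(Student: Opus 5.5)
The plan is to start from the cyclotomic identity
\[
    p = \prod_{j=1}^{p-1} (1 - \zeta_p^j),
\]
which follows by evaluating $\Phi_p(X) = 1 + X + \ldots + X^{p-1} = \prod_{j=1}^{p-1}(X - \zeta_p^j)$ at $X = 1$. Writing $\pi = \zeta_p - 1$, each factor becomes $1 - \zeta_p^j = -\bigl((1+\pi)^j - 1\bigr) = -\pi\,(j + \binom{j}{2}\pi + \ldots + \pi^{j-1})$. Factoring out $\pi$ from every factor, we obtain
\[
    \frac{p}{\pi^{p-1}} = (-1)^{p-1} \prod_{j=1}^{p-1} u_j, \qquad u_j \coloneqq \frac{(1+\pi)^j - 1}{\pi} \in \Oo_F .
\]

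Next we reduce modulo $\pi$. Since $\pi$ is integral with $v(\pi) > 0$, this is legitimate: we have $u_j \equiv j \pmod{\pi}$, because all further terms of the binomial expansion are divisible by $\pi$. Hence
\[
    \frac{p}{\pi^{p-1}} \equiv (-1)^{p-1}\,(p-1)! \pmod{\pi}.
\]
The residue field has characteristic $p$, so Wilson's theorem gives $(p-1)! \equiv -1$. For $p$ odd, $(-1)^{p-1} = 1$, and the result is $-1$. For $p = 2$, the sign is $-1$ and $(p-1)! = 1$, again giving $-1$; directly, $\pi = -2$ and $p/\pi = -1$ exactly. This yields the claimed congruence, and since the right-hand side is a unit, $p/\pi^{p-1}$ is a unit with residue $-1$.

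The only point that needs care is to interpret ``$\pmod{\pi}$'' as a congruence in $\Oo_F$. Here $\pi$ need not be a uniformiser of $F$, only an element of positive valuation. The argument uses only that the $u_j$ lie in $\Oo_F$ and that $\pi\Oo_F \subseteq \Mm_F$, so the residue statement follows regardless. Thus there is no real obstacle: the main step is simply the bookkeeping of signs, including the $p = 2$ case.
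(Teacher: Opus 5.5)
Your proof is correct and is essentially the paper's argument: both write $p = \prod_{j=1}^{p-1}(1-\zeta_p^j)$, rewrite $p/\pi^{p-1}$ as $(-1)^{p-1}\prod_{j}(\zeta_p^j-1)/(\zeta_p-1)$ with each factor $\equiv j \pmod{\pi}$, and finish with Wilson's theorem. One small point: to pass from Wilson's congruence modulo $p$ to a congruence modulo $\pi$, you need $p \in \pi\Oo_F$, which your displayed identity gives (indeed $p \in \pi^{p-1}\Oo_F$); the characteristic of the residue field alone does not give this when $\pi$ is not a uniformiser.
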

\begin{proof}
Consider
\[
    f(X) = X^{p - 1} + \ldots + X + 1 = \prod_{i = 1}^{p - 1}(X-\zeta_p^i).
\]
Observe that
\[
    \frac{p}{\pi^{p - 1}} =\frac{f(1)}{(\zeta_p - 1)^{p - 1}} = (-1)^{p - 1}\prod_{i = 1}^{p - 1}\biggl(\frac{\zeta^i_p - 1}{\zeta_p - 1}\biggr) = (-1)^{p - 1}\prod_{i = 1}^{p - 1}(1 + \zeta_p + \ldots + \zeta_p^{i - 1}).
\]
For each factor, we compute
\[
    1 + \zeta_p + \ldots + \zeta_p^{i - 1} = 1 + (1 + \pi)+ \ldots + (1 + \pi)^{i - 1} \equiv i \pmod{\pi}.
\]
Therefore, the claim follows from $p \in (\pi)$, $(-1)^{p - 1} \equiv 1 \pmod{p}$, and Wilson's Theorem
\[
    (p - 1)! \equiv -1 \pmod{p}. \qedhere
\]
\end{proof}

\begin{lem}[{cf. \cite[Lem. 3.2]{Koenigsmann03}}]
\label{lem:U_Kpowers_general}
Let $F/\IQ_p(\zeta_p)$ be a finite extension with ramification index $e \coloneqq e(F/\IQ_p(\zeta_p))$. Then
\[
    U_F^{(ep+1)} \subseteq \bigl(U_F^{(e)}\bigr)^p.
\]
Moreover, for $F = \IQ_p(\zeta_p)$, we have $U_F^{(p + 1)} = \bigl(U_F^{(1)}\bigr)^p$.
\end{lem}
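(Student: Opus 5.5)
We normalise the valuation $v$ on $F$ so that a uniformiser $\pi_F$ has $v(\pi_F) = 1$. Since $1 - \zeta_p$ is a uniformiser of $\IQ_p(\zeta_p)$ (Lemma~\ref{lem:cycExt}) and $p/(\zeta_p - 1)^{p-1}$ is a unit (Lemma~\ref{lem:p/(1-zeta)^p}), we get $v(\zeta_p - 1) = e$ and $v(p) = e(p-1)$. The whole argument rests on the binomial expansion
\[
    (1+z)^p = 1 + pz + \sum_{i=2}^{p-1}\tbinom{p}{i} z^i + z^p .
\]
If $v(z) = m \ge 1$, the middle terms have value at least $e(p-1) + 2m$.

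For the inclusion, fix $m > e$. Then $v(z^p) = pm > e(p-1) + m$, and the middle terms also have value $> e(p-1)+m$. Hence
\[
    (1+z)^p \equiv 1 + pz \pmod{\pi_F^{\,m + e(p-1) + 1}} .
\]
So the $p$-th power map sends $U_F^{(m)}$ into $U_F^{(m+e(p-1))}$. On the graded pieces $U_F^{(m)}/U_F^{(m+1)} \cong Fv \to U_F^{(m+e(p-1))}/U_F^{(m+e(p-1)+1)} \cong Fv$ (Fact~\ref{fact:TowerLawU}, Remark~\ref{rem:mod_rule}), it induces multiplication by the residue of the unit $p/\pi_F^{e(p-1)}$. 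This is a bijection.

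We now run a successive approximation. Let $x \in U_F^{(ep+1)}$ and put $k_0 = ep+1$, $m_0 = k_0 - e(p-1) = e+1 > e$. By the graded-piece surjectivity, we find $y_0 \in U_F^{(m_0)}$ with $x y_0^{-p} \in U_F^{(k_0+1)}$. Iterating, we obtain $y_j \in U_F^{(m_0+j)}$ with $x (y_0 \cdots y_j)^{-p} \in U_F^{(k_0+j+1)}$. The product $\prod_j y_j$ converges in the complete field $F$ to some $y \in U_F^{(e+1)} \subseteq U_F^{(e)}$ with $y^p = x$. Alternatively, $(U_F^{(e+1)})^p$ is compact, hence closed, and dense in $U_F^{(ep+1)}$. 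This proves $U_F^{(ep+1)} \subseteq (U_F^{(e)})^p$.

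For $F = \IQ_p(\zeta_p)$ we have $e = 1$, so the above already gives $U_F^{(p+1)} \subseteq (U_F^{(1)})^p$. For the reverse inclusion, take $z$ with $v(z) = m \ge 1$ and show $v((1+z)^p - 1) \ge p+1$.

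\emph{Case $m \ge 2$.} Here $v(pz) = p-1+m \ge p+1$, $v(z^p) \ge 2p$, and the middle terms have value $\ge p-1+2m \ge p+1$.

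\emph{Case $m = 1$.} This is the main obstacle, since $pz$ and $z^p$ both have value exactly $p$ and must cancel. Write $\pi = \zeta_p - 1$ and $z \equiv c\pi \pmod{\pi^2}$ with $c$ a unit. The residue field is $\IF_p$, so $c^p \equiv c$. Lemma~\ref{lem:p/(1-zeta)^p} gives $pz \equiv -c\pi^p \pmod{\pi^{p+1}}$. Also $z^p \equiv c^p\pi^p \equiv c\pi^p \pmod{\pi^{p+1}}$, because $z^p - (c\pi)^p$ is divisible by $z - c\pi$, which has value $\ge 2$, and the cofactor has value $\ge p-1$. The middle terms have value $\ge (p-1) + 2 = p+1$. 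Hence $(1+z)^p \equiv 1 \pmod{\pi^{p+1}}$, i.e.\ $(U_F^{(1)})^p \subseteq U_F^{(p+1)}$, which yields the stated equality.
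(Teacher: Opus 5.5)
Your proof is correct. For the second assertion you do what the paper does: expand $(1+z)^p$ modulo $\pi^{p+1}$, use Lemma~\ref{lem:p/(1-zeta)^p} to replace $pz$ by $-c\pi^p$, and use $Fv=\IF_p$ to cancel it against $z^p$. The paper covers both of your cases at once by writing $(1+\pi b)^p \equiv 1+\pi^p(b^p-b) \pmod{\pi^{p+1}}$ for arbitrary $b\in\Oo_F$.

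For the inclusion $U_F^{(ep+1)}\subseteq (U_F^{(e)})^p$ your route differs. You show that, for $m>e$, the $p$-th power map on the graded pieces of the unit filtration is multiplication by the nonzero residue of $p/\pi_F^{e(p-1)}$. You then pass to the limit using completeness (or compactness) of $F$. The paper instead writes $x=1+\pi^p a$ with $a\in\Mm_F$ and rescales to $f(X)=\pi^{-p}\bigl[(1+\pi X)^p-x\bigr]$. It notes that $\overline{f}=X^p-X$ and lifts the simple root $0$ by Hensel's Lemma.

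Both arguments give a $p$-th root lying in $U_F^{(e+1)}$, and both are the same Newton-type approximation underneath. Your version makes the graded-piece behaviour of the $p$-th power map explicit. That is exactly what reappears in the kernel computation of the Degree Lemma~\ref{lem:dimOfPowerQuotient} at the critical level $m=e$.

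The paper's version uses only henselianity, not completeness or local compactness, and this matters. Remark~\ref{rem:general_dim_formula} extends the Degree Lemma, and hence this lemma, to henselian fields whose value group is merely elementarily equivalent to $\IZ$. That extension is what Step~3.3 of the proof of the Main Theorem needs. In that setting your infinite product need not converge, and $\bigcap_k U^{(k)}$ is no longer trivial, so you would have to fall back on Hensel's Lemma anyway.
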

\begin{proof}
Recall that $\IQ_p(\zeta_p)/\IQ_p$ is totally ramified of degree $p - 1$ (Lemma \ref{lem:cycExt}). Let $\pi = \zeta_p - 1$ be a uniformiser for $\IQ_p(\zeta_p)$ and $\Pi$ any uniformiser for $F$. If the valuation $v$ on $F$ is normalised such that $vp = 1$, then $v\pi = \tfrac 1{p - 1}$ and $v\Pi = \tfrac 1{e(p - 1)}$. In particular,
\[
    U_F^{(e)} = 1 + (\pi) \quad\text{and}\quad
    U_F^{(ep + 1)} = 1 + (\pi^p\Pi).
\]
Let $x = 1 + \pi^p a$ with $a \in (\Pi)$ be given. Then it suffices to show that
\[
    (1 + \pi X)^p - (1 + \pi^p a) = 0
\]
has a solution in $\Oo_F$. Consider
\[
    f(X) \coloneqq \pi^{-p}\Bigl[(1 + \pi X)^p - (1 + \pi^pa)\Bigr] = 
    X^p - a + \sum_{i = 1}^{p - 1} \binom p i \pi^{i - p} X^i.
\]
Noting that
\begin{alignat*}{2}
    v\left[\binom p i \pi^{i - p}\right] & = v(\pi^{i - 1}) > 0 && \text{ for $i = 2, \ldots, p - 1$}, \\
    p \pi^{1 - p} & \equiv -1 \pmod{\pi}\ && \text{ by Lemma~\ref{lem:p/(1-zeta)^p},}
\end{alignat*}
we conclude that $f(X)$ reduces to $\overline{f}(X) = X^p - X \in (Fv)[X]$. So the first claim follows by Hensel's Lemma.

Now assume $F = \IQ_p(\zeta_p)$ and let $\Pi = \pi$. The above considerations then yield
\[
    (1 + \pi b)^p \equiv 1 + p \pi b + \pi^p b^p \equiv 1 + \pi^p(b^p - b) \pmod{\pi^{p + 1}}
\]
for any $b \in \Oo_F$. Since $Fv = \IF_p$, we have $b^p \equiv b \pmod{\pi}$, and therefore
\[
    \bigl(U_F^{(1)}\bigr)^p \subseteq U_F^{(p + 1)}. \qedhere
\]
\end{proof}

The unit group $U_F^{(ep + 1)}$ will, from now on, play a central role.

\begin{lem} \label{lem:generators}
Let $F/\IQ_p(\zeta_p)$ be a finite extension with uniformiser $\pi$ in $F$. Write $e$ for the ramification index and $f$ for the inertia degree of this extension. Assume further that $\{b_1, \ldots, b_f\} \subseteq \Oo_F$ represent an $\IF_p$-basis for $Fv$. Then
\[
    \{\pi\} \cup \{1 + b_i \pi^k : 1 \le i \le f,\, 1 \le k \le ep\}
\]
represents a generating set in $F^\times/F^{\times p}$. Moreover, for $F = \IQ_p(\zeta_p)$,
\[
    \{\pi\} \cup \{1 + b_1 \pi^k : 1 \le k \le p \}
\]
represents an $\IF_p$-basis of $F^\times/F^{\times p}$.
\end{lem}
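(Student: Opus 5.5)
We reduce the generation statement to the higher unit filtration. By Fact~\ref{fact:multgrp_direct_prod}, $F^\times = \pi^{\IZ} \times \mu'_F \times U_F^{(1)}$. Since $\mu'_F$ has order prime to $p$, it lies in $F^{\times p}$. Hence $F^\times/F^{\times p}$ is generated by the image of $\pi$ together with the image of $U_F^{(1)}$. Next, Lemma~\ref{lem:U_Kpowers_general} gives $U_F^{(ep+1)} \subseteq (U_F^{(e)})^p \subseteq F^{\times p}$. So the image of $U_F^{(1)}$ in $F^\times/F^{\times p}$ is a quotient of $U_F^{(1)}/U_F^{(ep+1)}$, and it suffices to show that the elements $1 + b_i\pi^k$ ($1 \le i \le f$, $1 \le k \le ep$) generate $U_F^{(1)}/U_F^{(ep+1)}$ as a group. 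Note that $ep$ is the right cutoff: in the normalisation of that lemma, $\pi^{ep}$ has the same valuation as the $p$-th power of the uniformiser of $\IQ_p(\zeta_p)$, up to the additional factor handled there.

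To prove this, argue by descending induction along the filtration using Fact~\ref{fact:TowerLawU}. For each $1 \le k \le ep$, the map $(1+\pi^k a)U_F^{(k+1)} \mapsto \bar a$ is an isomorphism $U_F^{(k)}/U_F^{(k+1)} \cong (Fv,+)$, and $Fv$ is a finite extension of $\IF_p$ with basis $\bar b_1,\dots,\bar b_f$. Since the additive group is generated by the $\bar b_i$, the elements $1+b_i\pi^k$ generate $U_F^{(k)}$ modulo $U_F^{(k+1)}$. Here we use that multiplication of the $1+b_i\pi^k$ corresponds to addition of residues, which is Remark~\ref{rem:mod_rule}. Stacking these layers for $k=1,\dots,ep$ shows that the given set generates $U_F^{(1)}$ modulo $U_F^{(ep+1)}$, which proves the first claim.

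For $F=\IQ_p(\zeta_p)$, we have $e=f=1$ and $Fv=\IF_p$, so we may take $b_1=1$. We use the sharper equality $U_F^{(p+1)} = (U_F^{(1)})^p$ from Lemma~\ref{lem:U_Kpowers_general}. Then
\[
    F^{\times p} = \pi^{p\IZ}\times \mu'_F \times (U_F^{(1)})^p = \pi^{p\IZ}\times\mu'_F\times U_F^{(p+1)},
\]
and therefore
\[
    F^\times/F^{\times p} \cong \IZ/p \times U_F^{(1)}/U_F^{(p+1)}.
\]
By Fact~\ref{fact:TowerLawU}, $|U_F^{(1)}/U_F^{(p+1)}| = p^p$, so $F^\times/F^{\times p}$ is an $\IF_p$-vector space of dimension $p+1$. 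The $p+1$ elements $\pi, 1+\pi,\dots,1+\pi^p$ generate it by the first part, so they form a basis.

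The main obstacle is the bookkeeping in the passage from the layers to the whole quotient. Each layer $U_F^{(k)}/U_F^{(k+1)}$ is elementary abelian, but $U_F^{(1)}/U_F^{(ep+1)}$ need not be. This does not matter for generation, because a filtered group is generated by lifts of generators of its successive quotients. For the basis statement, the decisive input is the equality $U_F^{(p+1)} = (U_F^{(1)})^p$ rather than mere inclusion, since it pins down $F^{\times p}$ exactly and yields the count $p^{p+1}$.
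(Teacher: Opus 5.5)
Your proposal is correct and follows the paper's proof essentially step for step. The shared steps are the direct product decomposition, the inclusion $U_F^{(ep+1)} \subseteq (U_F^{(1)})^p$ from Lemma~\ref{lem:U_Kpowers_general}, and descending induction through the layers of Fact~\ref{fact:TowerLawU}; for $F = \IQ_p(\zeta_p)$ both use the equality $U_F^{(p+1)} = (U_F^{(1)})^p$ together with the count $|U_F^{(1)}/U_F^{(p+1)}| = p^p$. One cosmetic point: you need not specialise to $b_1 = 1$, since your argument works verbatim for any $b_1$ with nonzero residue, as the statement requires.
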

\begin{proof}
By the direct sum decomposition of Fact~\ref{fact:multgrp_direct_prod}, we obtain an isomorphism
\[
    F^\times/F^{\times p} \cong \pi^{\IZ}/\pi^{p\IZ} \times U_F^{(1)}/\bigl(U_F^{(1)}\bigr)^p,
\]
where the first factor is generated by $\overline{\pi}$. We now focus on the second factor.

The idea is to compare sets modulo $U_F^{(ep + 1)}$, which we know lies in $\bigl(U_F^{(1)}\bigr)^p$ by Lemma~\ref{lem:U_Kpowers_general}. Therefore, it suffices to show that the elements $1 + b_i \pi^k$ generate $U_F^{(1)}$ modulo $U_F^{(ep + 1)}$.

\textit{Claim. $\langle 1 + b_i \pi^k : 1 \le i \le f,\, l \le k \le ep \rangle \equiv U_F^{(l)}\; \bigl(\mathrm{mod}\ U_F^{(ep + 1)}\bigr)$ for any $l = 1, \ldots, ep + 1$.}

\begin{claimproof}{}
Via backward induction from $l = ep + 1$ to $l = 1$. The case $l = ep + 1$ is trivial. By Fact~\ref{fact:TowerLawU},
\[
    \langle 1 + b_1\pi^l, \ldots, 1 + b_f\pi^l \rangle U_F^{(l + 1)} = U_F^{(l)},
\]
for any $l = 1, \ldots, ep + 1$. This identity proves the inductive step $l + 1 \rightarrow l$. For $l = 1$, we recover our original claim.
\end{claimproof}

Consider now the special case $F = \IQ_p(\zeta_p)$. Recall that by Lemma~\ref{lem:U_Kpowers_general}, 
\[
    U_F^{(1)}/\bigl(U_F^{(1)}\bigr)^p = U_F^{(1)}/U_F^{(p + 1)}.
\]
By the same proof, $\{1 + b_1 \pi^k : 1 \le k \le p\}$ is a generating set of cardinality $p$ for $U_F^{(1)}/U_F^{(p + 1)}$. Hence, it must be a basis, since
\[
    \bigl|U_F^{(1)}/U_F^{(p + 1)}\bigr| = |Fv|^p = p^p. \qedhere
\]
\end{proof}

\subsubsection{Degree counting}

The lemma we are about to prove counts the $q$-rank of a $p$-adic field $F$ (the $\IF_q$-dimension of $F^\times/F^{\times q}$). It is most commonly proved via a direct sum decomposition based on the $p$-adic logarithm, which turns $F^\times$ into an additive structure, see \cite[Chap. II, (5.8)]{Neukirch99}. Here, we present an alternative proof, which has the added benefit of working in a much larger class of valued fields that includes $p$-adically closed fields.

\begin{lem}[Degree Lemma]
    \label{lem:dimOfPowerQuotient}
    Let $F/\IQ_p$ be a finite extension of degree $n$. Let $q$ be a prime such that $\zeta_q \in F$. Then
    \[
    \dim_{\IF_q}(F^\times/F^{\times q}) = 
    \begin{cases}
        2 & \text{if } q \ne p \\
        n+2 & \text{if } q = p.
    \end{cases}
    \]
\end{lem}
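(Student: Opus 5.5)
Using the direct product decomposition of Fact~\ref{fact:multgrp_direct_prod}, $F^\times = \pi^{\IZ} \times \mu'_F \times U_F^{(1)}$, I reduce to computing $\dim_{\IF_q}$ of each factor modulo $q$-th powers and adding:
\[
    F^\times/F^{\times q} \cong \pi^{\IZ}/\pi^{q\IZ} \times \mu'_F/(\mu'_F)^q \times U_F^{(1)}/\bigl(U_F^{(1)}\bigr)^q .
\]
The first factor always contributes $1$.

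\textbf{Case $q \ne p$.} Here $\zeta_q \in F$ and $q \nmid p$, so $\zeta_q \in \mu'_F$. Since $\mu'_F$ is cyclic of order $|Fv|-1$ and that order is divisible by $q$, the quotient $\mu'_F/(\mu'_F)^q$ has order $q$. For $U_F^{(1)}$, take $x = 1 + m \in U_F^{(1)}$. The polynomial $X^q - x$ reduces to $X^q - 1$, which has the simple root $1$ because $q \ne p$, so Hensel's Lemma~\ref{lem:hensel} gives a $q$-th root in $U_F^{(1)}$. Hence $U_F^{(1)}$ is $q$-divisible, and the total is $1+1+0 = 2$.

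\textbf{Case $q = p$.} Now $\zeta_p \in F$, so $F/\IQ_p(\zeta_p)$ is finite with ramification index $e$ and inertia degree $f$, and $n = (p-1)ef$. The group $\mu'_F$ has order prime to $p$, so it is $p$-divisible and contributes $0$. The task is therefore to show $\dim_{\IF_p} U_F^{(1)}/\bigl(U_F^{(1)}\bigr)^p = n+1$.

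I would determine the image of the $p$-power map layer by layer along the filtration $U_F^{(k)}$, using Fact~\ref{fact:TowerLawU}. Write $p = u\Pi^{e(p-1)}$ with $\Pi$ a uniformiser of $F$, so that $v(p) = e(p-1)$ in $\Pi$-units; the critical level is $k_0 = ep/(p-1)\cdot(p-1)/p = e$. Expand $(1+a\Pi^k)^p$:
\begin{itemize}
    \item if $k < e$, the term $a^p\Pi^{pk}$ dominates, so the map sends layer $k$ to layer $pk$ via $a \mapsto a^p$, a bijection on $Fv$;
    \item if $k > e$, the term $pa\Pi^k$ dominates, so layer $k$ goes bijectively to layer $k + e(p-1)$;
    \item at $k = e$ both terms have valuation $ep$, and the induced map on residues is $a \mapsto a^p + \bar u a$, an additive map with kernel of size $p$ exactly when $\zeta_p \in F$. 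This is the residue computation of Lemma~\ref{lem:p/(1-zeta)^p} and the reduction $X^p - X$ in Lemma~\ref{lem:U_Kpowers_general}.
\end{itemize}

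By Lemma~\ref{lem:U_Kpowers_general}, $U_F^{(ep+1)} \subseteq \bigl(U_F^{(1)}\bigr)^p$, so it suffices to work in the finite group $U_F^{(1)}/U_F^{(ep+1)}$ of order $|Fv|^{ep} = p^{fep}$. The kernel of the $p$-power map restricted to $U_F^{(1)}$ is $\mu_p$, which has order $p$. Consequently
\[
    \bigl(U_F^{(1)} : (U_F^{(1)})^p\bigr) = p \cdot \bigl(U_F^{(1)} : U_F^{(ep+1)}\bigr) \big/ \bigl((U_F^{(1)})^p : U_F^{(ep+1)}\bigr).
\]
A cleaner route is the standard finite-index identity for $\IZ_p$-modules: for a finitely generated $\IZ_p$-module $U$ with $U[p] \cong \IF_p$, one has $(U:U^p) = p\cdot p^{\mathrm{rank}}$. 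The rank is obtained by comparing $U_F^{(m)} \cong \Oo_F$ for large $m$, where the $p$-power map is a bijection of layers $m \to m + e(p-1)$, giving index $p^{[F:\IQ_p]} = p^n$. Together with $\mu_p$ this yields $(U:U^p) = p^{n+1}$.

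To keep the argument elementary, I would instead count via the layer maps directly: the image of the $p$-power map on $U_F^{(1)}/U_F^{(ep+1)}$ meets each layer $j \le ep$ in a subspace whose codimension is read off from the list above. The main obstacle is the bookkeeping at the critical layer $e$. There, the image in layer $ep$ has codimension $1$, since $a \mapsto a^p + \bar u a$ has a $1$-dimensional cokernel over $\IF_p$. One also has to check that layers $pk$ for $k < e$ and layers $k + e(p-1)$ for $k > e$ do not overlap incorrectly; $p \nmid$ the relevant indices handles this. Summing the missed layers gives $f\cdot\bigl(ep - \lfloor e\rfloor - (ep - e)\bigr)$ plus $1$, that is $f(ep - e) + 1 = n+1$. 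Adding the factor $1$ from $\pi$ gives $n+2$.
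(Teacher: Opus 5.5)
Your case $q\ne p$, and your reduction for $q=p$, coincide with the paper's. The reduction consists of the decomposition of $F^\times$ followed by truncation at $U_F^{(ep+1)}$ via Lemma~\ref{lem:U_Kpowers_general}. The two proofs differ only in how the image $(U_F^{(1)})^p/U_F^{(ep+1)}$ is counted.

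The paper counts from the source. The $p$-th power map induces a surjection $U_F^{(1)}/U_F^{(e+1)}\to (U_F^{(1)})^p/U_F^{(ep+1)}$, which is well defined since $(U_F^{(e+1)})^p\subseteq U_F^{(ep+1)}$. The same split into $k<e$ and $k=e$ that you describe shows its kernel has order $p$. So the image has order $p^{fe-1}$, and the index is $p^{fep-fe+1}=p^{n+1}$.

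You count from the target, via the associated graded of the image. Both arguments rest on the critical-level map $a\mapsto a^p+\bar u a$ from Lemma~\ref{lem:p/(1-zeta)^p}. Both use only Hensel's Lemma and a finite truncation, so both survive in the generality of Remark~\ref{rem:general_dim_formula}; your $\IZ_p$-module shortcut relies on completeness and does not. The paper's version never has to describe the graded pieces of a subgroup. Yours gives, as a bonus, an explicit $\IF_p$-basis of $U_F^{(1)}/(U_F^{(1)})^p$: the $1+b_i\Pi^j$ with $1\le j<ep$ and $p\nmid j$, plus one $1+c\Pi^{ep}$ with $\bar c$ outside the image of $a\mapsto a^p+\bar u a$. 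This sharpens Lemma~\ref{lem:generators}.

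Three points in your write-up need repair.

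(i) Your displayed index identity is wrong as written. Since $U_F^{(ep+1)}\subseteq (U_F^{(1)})^p$, one simply has $(U_F^{(1)}:(U_F^{(1)})^p)=(U_F^{(1)}:U_F^{(ep+1)})/((U_F^{(1)})^p:U_F^{(ep+1)})$. The factor $p$ from $\mu_p$ belongs inside the denominator: $((U_F^{(1)})^p:U_F^{(ep+1)})=(U_F^{(1)}:U_F^{(e+1)})/p$, which is exactly the paper's kernel count. Your version would give $p^{n+2}$; presumably you meant $p\cdot(U_F^{(1)}:U_F^{(ep+1)})/(U_F^{(1)}:U_F^{(e+1)})$.

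(ii) Your claim that the image meets each layer in the subspace ``read off from the list'' needs a one-line justification. Every element of the image is a single $p$-th power $x^p$, and its level and leading coefficient are determined by those of $x$ as in your list. If $x$ has level $e$ with leading coefficient in the kernel of $a\mapsto a^p+\bar u a$, then $x^p\in U_F^{(ep+1)}$ and contributes nothing. Your worry about overlapping layers is vacuous: for $k>e$ the target layer $k+e(p-1)$ exceeds $ep$, so it is invisible in $U_F^{(1)}/U_F^{(ep+1)}$. Moreover, ``$p\nmid$ the relevant indices'' is not a valid reason, since $p$ can divide $k+e(p-1)$.

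(iii) Your final tally is garbled, since $ep-\lfloor e\rfloor-(ep-e)=0$. The correct count runs over the $ep$ layers:
the $e-1$ layers $p,2p,\dots,(e-1)p$ are filled;
layer $ep$ is filled up to codimension $1$;
the remaining $ep-e$ layers are missed entirely.
This gives codimension $f(ep-e)+1=fe(p-1)+1=n+1$.

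With these corrections your argument is complete.
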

\begin{proof}
We first consider the case $q \ne p$.
Let $f \coloneqq f(F/\IQ_p)$ be the degree of inertia.
Note that any element in $U^{(1)} \coloneqq U^{(1)}_F$ has a $q$-th root by Hensel's Lemma. Using the direct sum decomposition for $F^\times$ (Fact~\ref{fact:multgrp_direct_prod}), we obtain
\[
    F^\times/F^{\times q} \cong \IZ/q\IZ \times \mu'_F/(\mu'_F)^q.
\]
From $\zeta_q \in \mu'_F = \mu_{p^f - 1}$ (Lemma \ref{lem:cycExt}), we conclude that $q \mid (p^f - 1)$, so the second factor is isomorphic to $\IZ/q\IZ$ and the claim follows.

Now consider the case $q = p$. Then $\IQ_p(\zeta_p) \subseteq F$, and let $e \coloneqq e(F/\IQ_p(\zeta_p))$ be the ramification index relative to $\IQ_p(\zeta_p)$. Hence,
\begin{equation} \label{eq:prod_id}
   n = e(F/\IQ_p) f(F/\IQ_p) = e(p - 1)f. 
\end{equation}
Reasoning as in the first case, we obtain
\[
    F^\times/F^{\times p} \cong \IZ/p\IZ \times U^{(1)}/\bigl(U^{(1)}\bigr)^{p},
\]
so it suffices to show that $U^{(1)}/\bigl(U^{(1)}\bigr)^{p}$ has $\IF_p$-dimension $n + 1$. To calculate the dimension, we again work modulo $U^{(ep + 1)}$. First of all, write
\begin{equation*}
U^{(1)}/\bigl(U^{(1)}\bigr)^{p} \cong \frac{U^{(1)}/U^{(ep + 1)}}{\displaystyle (U^{(1)})^p/U^{(ep + 1)}}
\end{equation*}
using Lemma \ref{lem:U_Kpowers_general}. By Corollary~\ref{cor:representation} and \eqref{eq:prod_id}, we have
\begin{equation*}
    \bigl|U^{(1)}/U^{(ep + 1)}\bigr| = |Fv|^{ep} = p^{fep} = p^{n + \frac{n}{p - 1}}.
\end{equation*}
Hence, it remains to prove
\begin{equation} \label{eq:desideratum}
    \bigl|(U^{(1)})^p/U^{(ep + 1)}\bigr| = p^{\frac{n}{p - 1} - 1}.
\end{equation}

\textit{Claim. The $p$-th power map $x \longmapsto x^p$ induces a well-defined epimorphism
\[
    \Phi : U^{(1)}/U^{(e + 1)} \epi (U^{(1)})^p/U^{(ep + 1)},
\]
with image of cardinality $p^{n/(p - 1) - 1}$.}

\begin{claimproof*}{}
Let $\pi$ be a uniformiser in $F$. Using $vp = v(\pi^{e(p - 1)})$, observe that
\[
    (1 + b\pi^{e + 1})^p \equiv 1 \pmod{\pi^{ep + 1}} \quad \text{for all $b \in \Oo_F$,}
\]
which implies $(U^{(e + 1)})^p \subseteq U^{(ep + 1)}$. Thus, $\Phi$ is well-defined and surjective. It suffices to show that $\Phi$ has kernel of order $p$. To this end, let $1 + a_1 \pi + \ldots + a_e \pi^e$ represent an element in $U^{(1)}/U^{(e + 1)}$, where each $a_i \in \Oo_F$ comes from a set of representatives for $Fv$ (Corollary~\ref{cor:representation}). Without loss of generality, assume that $a_i = 0$ represents $0 \in Fv$. Consider
\[
    (1 + a_1 \pi + \ldots + a_e \pi^e)^p \equiv 1 \quad \bigl(\mathrm{mod}\ U^{(ep + 1)}\bigr),
\]
or equivalently,
\begin{equation} \label{eq:diff_expression}
    (1 + a_1 \pi + \ldots + a_e \pi^e)^p - 1 \equiv 0 \pmod{\pi^{ep + 1}}.
\end{equation}
We claim that $a_i = 0$ for all $i \le e - 1$. If not, let $i$ be minimal such that $a_i \ne 0$. Expanding the expression in \eqref{eq:diff_expression}, we see that if $i < e$, then
\[
    v(a_i^p \pi^{pi}) = pi < e(p - 1) + i = v(pa_i\pi^i),
\]
so it follows that $a_i^p \pi^{pi}$ is the unique term of minimal value, a contradiction.
Therefore, \eqref{eq:diff_expression} simplifies to
\[
    (1 + a_e \pi^e)^p - 1 \equiv a_e^p \pi^{ep} + p a_e \pi^e \equiv 0 \pmod{\pi^{ep + 1}},
\]
which is equivalent to $\overline{a_e} \in Fv$ being a solution of
\begin{equation} \label{eq:sol}
    X^p + \overline{\frac{p}{\pi^{e(p - 1)}}} X = 0.
\end{equation}
Using Lemma~\ref{lem:p/(1-zeta)^p}, one easily checks that
\[
    \{0\} \cup \left\{\overline{\frac{\zeta_p - 1}{\pi^e}\zeta_{p - 1}^i} : i = 0, \ldots, p - 2\right\}
\]
is the solution set for (\ref{eq:sol}). This proves that $\Phi$ is $p$-to-1, implying \eqref{eq:desideratum}.
\end{claimproof*}
\end{proof}

\begin{rem} \label{rem:general_dim_formula}
As the proof of the Main Theorem will require a somewhat stronger version of the Degree Lemma for $q = p$, we examine which properties of~$\IQ_p$ were used in the arguments leading up to Lemma~\ref{lem:dimOfPowerQuotient}. We find that the following assumptions are needed: $(K,v)$ is a henselian valued field of characteristic 0 satisfying
\begin{enumerate}[(i)]
    \item $\zeta_p \in K$;
    \item the value group has minimal positive element $1$ and $v(p) = e(p - 1) < \infty$;
    \item the residue field is $\IF_{p^f}$ with $f < \infty$.
\end{enumerate}
If $(K,v)$ satisfies these conditions, then the following general degree formula holds:
\[
    \dim_{\IF_p}(K^\times/K^{\times p}) = e(p - 1)f + 2
\]
Let us briefly note that an alternative proof of this generalisation could proceed by reduction to the $p$-adic case: in the Standard Decomposition of $v_K$, the (0,\:\!0)-place does not contribute to the dimension of $K^\times/K^{\times p}$, and both the core field $Kv_0$ and its completion (which is a $p$-adic field) have the same $p$-rank by Hensel's Lemma.
\end{rem}

\begin{cor}     \label{cor:finitelyManyExtensionsQ_p}
$\IQ_p$ has only finitely many extensions of degree $n$ for any positive integer $n$. 
\end{cor}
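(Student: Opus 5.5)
The argument passes through finite Galois extensions and abelian quotients, and then counts. Every extension of degree $n$ of $\IQ_p$ (inside a fixed algebraic closure) lies in its Galois closure, which has degree at most $n!$ over $\IQ_p$. A finite Galois extension of bounded degree has only finitely many subfields. It therefore suffices to show that for each $N$ there are only finitely many Galois extensions $L/\IQ_p$ of degree at most $N$.

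The first reduction is to make the Galois groups solvable. By Fact~\ref{fact:decomposition subgroup} and the First and Second Exact Sequences, the Galois group $\Gal(L/\IQ_p)$ of a finite Galois extension has the following structure:
\begin{itemize}
    \item $R$ is a $p$-group;
    \item $I/R$ embeds in $\Hom(wL/vK,\mu_{Lw})$ and is therefore abelian;
    \item $D/I \cong \Gal(Lw/\IF_p)$ is cyclic;
    \item $D$ is the whole group by henselianity.
\end{itemize}
Hence $\Gal(L/\IQ_p)$ is solvable. Any such $L$ is reached by a tower $\IQ_p = L_0 \subseteq L_1 \subseteq \cdots \subseteq L_m = L$ in which each step $L_{i+1}/L_i$ is cyclic of prime degree $q_i$, with $m$ and all $q_i$ bounded in terms of $N$.

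We then proceed by induction along such towers. It is enough to prove the following claim: \emph{for every $p$-adic field $F$ and every prime $q$, $F$ has only finitely many cyclic extensions of degree $q$.} Granting the claim, the extensions at each level of the tower are finitely many, and the number of towers of length at most $\log_2 N$ is finite.

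To prove the claim, set $F' = F(\zeta_q)$, a $p$-adic field of degree dividing $q-1$ over $F$. If $E/F$ is cyclic of degree $q$, then $EF'/F'$ is either trivial or cyclic of degree $q$. In the latter case Kummer theory shows it is $F'(\sqrt[q]{x})$ for some class $x \in F'^{\times}/F'^{\times q}$, and $E$ is a subfield of $EF'$, which has bounded degree over $F$. By the Degree Lemma~\ref{lem:dimOfPowerQuotient}, $F'^{\times}/F'^{\times q}$ is a finite $\IF_q$-vector space, of dimension $2$ or $[F':\IQ_p]+2$. So there are only finitely many possible composita $EF'$, each has finitely many subfields, and the claim follows.

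The main obstacle is this final step. It needs finiteness of $F'^\times/F'^{\times q}$ for an arbitrary $p$-adic field containing $\zeta_q$, not just for $\IQ_p$, and the Degree Lemma supplies exactly that. The only other point requiring care is the solvability argument, which relies on the ramification filtration being available for finite extensions.
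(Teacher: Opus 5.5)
Your proof is correct and follows essentially the same route as the paper: pass to Galois closures, use the ramification filtration to get solvability and hence towers of cyclic steps of prime degree, and bound the number of $C_q$-extensions at each step by Kummer theory and the Degree Lemma~\ref{lem:dimOfPowerQuotient} after adjoining $\zeta_q$. The only difference is cosmetic: the paper adjoins all needed roots of unity once at the start, passing to $\IQ_p(\zeta_q : q \mid m)$, whereas you adjoin $\zeta_q$ at each step and descend via $E \subseteq EF'$.
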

\begin{proof}
Assume $\IQ_p$ has infinitely many extensions of degree $n$. Passing to normal hulls, this
gives infinitely many Galois extensions of $\IQ_p$ of degree $m$ for some $m \leq n!$. If $L$ is the finite extension of $\IQ_p$ obtained by adjoining $\zeta_q$ for each prime divisor $q$ of $m$, then it has infinitely many Galois extensions of degree $k$ for some $k \mid m$. By the Degree Lemma~\ref{lem:dimOfPowerQuotient} and Kummer theory, any finite extension of $L$ admits only finitely many $C_q$-extensions. As any Galois extension of $L$ of degree $k$ is obtained by iterated $C_q$-extensions (any such extension is solvable by the ramification filtration), there can only be finitely many of them: contradiction.
\end{proof}

We say that $G_{\IQ_p}$ is \emph{small}, i.e., has finitely many open subgroups of index $n$, for any fixed $n$.

\begin{rem}
The alternative (standard) approach is to use continuity of roots (Fact~\ref{fact:cont_roots}), Krasner's Lemma~\ref{lem:krasner}, and the compactness of the space $\IZ_p^n$ of integral polynomials of degree $n$ to deduce that there are finitely many totally ramified extensions of degree $n$. The general statement then follows by considering inertia subfields.
\end{rem}

We will later need the following converse statement about fields that have small absolute Galois group.

\begin{lem} \label{lem:small_Gal->finite_dim}
Let $K$ be a field and $q$ be any prime. If $K$ has small absolute Galois group and $\charK K \ne q$, then $\dim_{\IF_q} (K^{\times}/K^{\times q}) < \infty$.
\end{lem}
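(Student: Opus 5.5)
We argue by contradiction via Kummer theory, after first reducing to the case where $K$ contains $\zeta_q$.

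\textbf{Reduction.} Put $L \coloneqq K(\zeta_q)$. Since $\charK K \ne q$, the extension $L/K$ is finite Galois of degree $d \mid q-1$. In particular $d$ is prime to $q$.

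We claim that the natural map $K^\times/K^{\times q} \to L^\times/L^{\times q}$ is injective. Suppose $x \in K^\times$ becomes a $q$-th power in $L$, say $x = y^q$ with $y \in L$. Taking norms gives $x^d = N_{L/K}(y)^q \in K^{\times q}$. Since $\gcd(d,q)=1$, it follows that $x \in K^{\times q}$. Hence
\[
\dim_{\IF_q} K^\times/K^{\times q} \le \dim_{\IF_q} L^\times/L^{\times q}.
\]

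Next, $G_L$ is open of index $d$ in $G_K$. Every open subgroup of $G_L$ of index $m$ is an open subgroup of $G_K$ of index $dm$. Therefore $G_L$ is again small, and we may assume $\zeta_q \in K$.

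\textbf{Kummer count.} Now assume $\zeta_q \in K$ and suppose, for contradiction, that $\dim_{\IF_q}(K^\times/K^{\times q}) = \infty$. Kummer theory gives a bijection between subgroups $K^{\times q} \le \Delta \le K^\times$ with $(\Delta : K^{\times q}) = q$ and cyclic extensions of $K$ of degree $q$, via $\Delta \mapsto K(\Delta^{1/q})$. The one-dimensional subspaces of an infinite-dimensional $\IF_q$-vector space form an infinite set. Hence $K$ has infinitely many distinct $C_q$-extensions.

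Each such extension corresponds to a distinct open normal subgroup of $G_K$ of index $q$. This contradicts the smallness of $G_K$.

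The only step needing care is the norm argument in the reduction, which transfers finiteness from $K(\zeta_q)$ down to $K$. It uses only $\gcd([K(\zeta_q):K], q) = 1$. The remaining steps are formal.
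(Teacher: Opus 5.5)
Your proof is correct and follows essentially the same route as the paper: pass to $L = K(\zeta_q)$, use the norm together with $\gcd([L:K],q)=1$ to compare $K^\times/K^{\times q}$ with $L^\times/L^{\times q}$, and then bound the latter by Kummer theory and smallness. The differences are minor: the paper shows that the norm-induced map $L^\times/L^{\times q} \to K^\times/K^{\times q}$ is surjective, whereas you show that the inclusion-induced map $K^\times/K^{\times q} \to L^\times/L^{\times q}$ is injective (both follow because the composite is $x \mapsto x^d$, an automorphism of $K^\times/K^{\times q}$), and you also spell out why $G_L$ is small, which the paper leaves implicit.
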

\begin{proof}
Note that $L \coloneqq K(\zeta_q)$ is a Galois extension of $K$ of degree $d \mid (q - 1)$. The norm map induces an $\IF_q$-vector space homomorphism
\[
    \overline{N_{L/K}} : L^\times/L^{\times q} \longrightarrow K^\times/K^{\times q}
\]
that is surjective, since for any $x \in K^\times$, we have
\[
    \langle N_{L/K}(x) \rangle = \langle x^d \rangle \equiv \langle x \rangle \pmod{K^{\times q}}.
\]
Hence,
\[
    \dim_{\IF_q} (K^\times/K^{\times q}) \le \dim_{\IF_q} (L^\times/L^{\times q}) < \infty,
\]
by Kummer theory, using the fact that $L$ admits only finitely many $C_q$-extensions (since $G_K$ is small).
\end{proof}

\subsection{Fields with finite \texorpdfstring{$p$}{p}-rank}
\label{sec:Pop}

The finiteness of $K^\times/K^{\times p}$ is a natural condition in the context of Kummer theory, and, as we have shown above, holds for $\IQ_p$. In \cite[Kor. 2.7]{Pop88}, Pop discovered that this simple condition implies strong structural properties whenever $K$ comes with a valuation. See also \cite[12.5]{Efrat06}.

\begin{lem}[Pop's Lemma] \label{lem:Pop}
Let $(K,v)$ be a valued field of mixed characteristic $(0,p)$. Assume that $K^\times/K^{\times p}$ is finite. Then:
\begin{enumerate}[(i)]
    \item $Kv$ is perfect.
    \item If $v$ is of rank 1, then $vK$ is either discrete or $p$-divisible.
    \item If $vK$ is discrete, then $Kv$ is finite.
\end{enumerate}
\end{lem}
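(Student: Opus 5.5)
The plan is to prove all three parts by exhibiting explicit elements of $K^\times$ that are pairwise independent modulo $K^{\times p}$, so that any failure of (i)--(iii) produces infinitely many independent classes. Throughout, I fix $\varpi = p$ and use only the ultrametric inequality together with the binomial expansion of $(1+x)^p$. For every $k \in \IN$ I take the subgroup $U_{\ge\gamma} \coloneqq 1 + \{x : vx \ge \gamma\}$, and I use the following basic observation. If $vx < v(p)/(p-1)$ and $v(x) \notin p\,vK$, then $1+x \notin K^{\times p}\cdot U_{>vx}$. Indeed, a $p$-th power $y^p$ close to $1+x$ would force $y = 1+z$ with $v(z^p) = vx$, because in this range the term $z^p$ dominates $pz$ in the expansion of $(1+z)^p$.

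\textbf{Part (i).} Suppose $Kv$ is imperfect, and choose $\overline{b} \in Kv \setminus (Kv)^p$ with a lift $b \in \Oo_v^\times$. Consider $x_j \coloneqq 1 + b\,p^{\,j}$ for suitable small values; cleaner is to use elements $1+ b\, t$ with $0<vt<v(p)/(p-1)$. If $vt$ is fixed and $t = c^p$ with $vc = vt/p$, then the classes $1+b\,c^{p}\,\lambda^p$ with $\lambda$ running through lifts of a $(Kv)^p$-basis of $Kv$ are independent modulo $K^{\times p}\cdot U_{>vt}$. The reason is that $(1+z)^p \equiv 1+z^p \pmod{U_{>vt}}$ whenever $vz = vt/p$, so the relevant quotient embeds $Kv/(Kv)^p$ additively. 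If $vK$ has no element of the form $v(c^p)$ in that interval, I instead use an element $t$ with $vt$ not $p$-divisible, or simply take $t = p^{1/?}$-free and replace this by the additive group $\{x : vx = \gamma\}/\{vx>\gamma\} \cong Kv$ for $\gamma = p\delta$, where $0<p\delta<v(p)/(p-1)$. Such a $\delta$ exists after passing to $\delta = v(p)/N$ for large $N$, at the cost of adjoining an $N$-th root of $p$; that finite extension $K'$ still has finite $K'^\times/K'^{\times p}$ by the norm argument of Lemma~\ref{lem:small_Gal->finite_dim}, and it has the same residue field up to a finite extension, which stays imperfect. Since $[Kv:(Kv)^p]=\infty$ or at least $\ge p$, iterating over infinitely many values $\gamma$ gives infinitely many independent classes. \textbf{This step is the main obstacle:} I must make sure the filtration quotients at distinct $\gamma$ contribute independently. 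I plan to handle it by the ``leading term'' argument: a product $\prod (1+x_i)^{n_i}$ with $0<n_i<p$ has leading term at the minimal $\gamma$, and it is a $p$-th power only if that leading coefficient lies in $(Kv)^p$.

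\textbf{Part (ii).} Let $v$ have rank 1, so $vK \subseteq \IR$, and suppose $vK$ is neither discrete nor $p$-divisible. Then $vK$ is dense in $\IR$, and $vK/p\,vK \ne 0$. I pick $\gamma \notin p\,vK$. By density, I choose infinitely many values $\gamma_1 > \gamma_2 > \dots$ in the interval $(0, v(p)/(p-1))$, all lying in $\gamma + p\,vK$; density of $p\,vK$ makes this possible. I then pick $x_i$ with $v x_i = \gamma_i$. By the basic observation and the leading-term argument, the elements $1+x_i$ are independent modulo $K^{\times p}$. The key point is that a nontrivial product has minimal-value term $\gamma_j \notin p\,vK$, while a $p$-th power $(1+z)^p - 1$ has value either $p\,vz \in p\,vK$ or $\ge v(p)+vz > v(p)/(p-1)$. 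This contradicts finiteness of $K^\times/K^{\times p}$.

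\textbf{Part (iii).} Let $vK$ be discrete with uniformiser $\pi$, so the standard filtration $U^{(k)}$ exists. Fact~\ref{fact:TowerLawU} holds verbatim, with $U^{(k)}/U^{(k+1)} \cong Kv$. If $Kv$ is infinite, I take $k$ with $k< v(p)/(p-1)$ measured in units of $v\pi$, and with $p \nmid k$; $k=1$ works unless $v(p) \le (p-1)v\pi$, which is ruled out because $v(p)\ge v\pi$ and the mixed characteristic case can be treated by the same argument at $k$ near $e/(p-1)$ after adjoining $\zeta_p$. Then no $p$-th power has leading term at level $k$, by the valuation dichotomy used in (ii). Hence $U^{(k)}/U^{(k+1)} \cong Kv$ injects into $K^\times/K^{\times p}U^{(k+1)}$. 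Since $Kv$ is infinite and of characteristic $p$, this quotient is an infinite-dimensional $\IF_p$-space, which contradicts finiteness. If no such $k$ is available (say $e<p-1$), I use part (i): $Kv$ is perfect, so I would first pass to $K(\zeta_p,\pi^{1/p})$, a finite extension that preserves finite $p$-rank and the residue field up to a finite extension, so that an admissible $k$ exists.
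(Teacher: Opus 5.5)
\textbf{Part (ii) is correct, but parts (i) and (iii) have a genuine gap.} Part (ii) is the contrapositive of the paper's argument. The paper fixes finitely many representatives $1+a_i$ of $(1+\Mm_v)/(1+\Mm_v)^p$ and shows that every value below $\delta=\min\{va_i,vp\}$ lies in $pvK$; you instead produce infinitely many independent classes $1+x_i$. Both run on the same dichotomy for $v\bigl((1+z)^p-1\bigr)$.

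The gap is this. Whenever you lack an admissible level---in (i) when there is no small $\delta$, in (iii) when $e\le p-1$---you pass to a finite extension $K'$ such as $K(p^{1/N})$ or $K(\zeta_p,\pi^{1/p})$. You then assert that $K'^\times/K'^{\times p}$ is still finite ``by the norm argument of Lemma~\ref{lem:small_Gal->finite_dim}''. That argument runs the other way: for $L=K(\zeta_q)$ it shows that the norm maps $L^\times/L^{\times q}$ onto $K^\times/K^{\times q}$. This bounds the $q$-rank of $K$ by that of $L$, never conversely; when $[L:K]$ is prime to $q$, $K^\times/K^{\times q}$ only sees the $\Gal(L/K)$-invariant part of $L^\times/L^{\times q}$. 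For extensions of $p$-power degree such as $K(\pi^{1/p})$, the natural map $K^\times/K^{\times p}\to K'^\times/K'^{\times p}$ is not even injective, since $\pi$ becomes a $p$-th power. Separately, your claim in (iii) that $v(p)\le (p-1)v\pi$ is ``ruled out because $v(p)\ge v\pi$'' is false: $\IQ_p$ itself falls into this case for every $p$. So (i) and (iii) remain unproved exactly for fields of small absolute ramification, including the prototype $\IQ_p$. Also, in the discrete case of (i), iterating over infinitely many levels $\gamma$ cannot work, since only finitely many levels lie below your threshold, even after adjoining $p^{1/N}$.

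\textbf{The detour is unnecessary.} The wild branch of your dichotomy is lossy. If $(p-1)vz\ge vp$, then $v\bigl((1+z)^p-1\bigr)\ge vp+vz\ge \tfrac{p}{p-1}\,vp$, not merely $>\tfrac{1}{p-1}\,vp$. Hence every level $\gamma<\tfrac{p}{p-1}\,vp$ with $\gamma\notin pvK$ is admissible.
\begin{itemize}
\item[(iii)] In the discrete case $\gamma=v\pi\le vp$ always qualifies, i.e.\ $k=1$ works. This is the paper's observation $(1+\Mm_v)^p\subseteq 1+\Mm_v^2$. It makes $Kv\cong(1+\Mm_v)/(1+\Mm_v^2)$ a quotient of the finite group $(1+\Mm_v)/(1+\Mm_v)^p$.
\item[(i)] Your ``main obstacle'' disappears once you note that an imperfect field is infinite. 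The single-level quotient $Kv/(Kv)^p$ you already embed is then infinite-dimensional over $\IF_p$: use $b\mu^p$ with $\mu$ running through an $\IF_p$-basis of $Kv$, rather than a $(Kv)^p$-basis. A suitable $\delta\in vK$ with $0<\delta<vp/(p-1)$ exists whenever $vK$ has no minimal positive element. The discrete case follows from the corrected (iii), since finite fields are perfect.
\end{itemize}
The paper's route for (i) is shorter still. The two exact sequences derived from \eqref{eq:SES} show that $(Kv)^\times/(Kv)^{\times p}$ is finite. A pigeonhole argument on the infinite set $\{x+a^p\}$ then yields $x=\bigl((cd-ab)/(b-d)\bigr)^p$.
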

\begin{proof}
We have two identities,
\begin{align*}
    \Oo_v^{\times p} & = \Oo_v^\times \cap K^{\times p} \\
    (1 + \Mm_v)^p & = \Oo_v^{\times p} \cap (1 + \Mm_v),
\end{align*}
where the first one is immediate; for the second one, note that for any $x \in \Oo_v^\times$,
\begin{equation} \label{eq:x^p=1 mod M}
    x^p \equiv 1 \pmod{\Mm_v} \Longrightarrow x \equiv 1 \pmod{\Mm_v}.
\end{equation}
Consequently, the two standard short exact sequences of \eqref{eq:SES} induce two additional short exact sequences by the nine lemma:
\begin{gather*}
    1 \longrightarrow \Oo_v^\times/\Oo_v^{\times p} \longrightarrow K^\times/K^{\times p} \longrightarrow vK/pvK \longrightarrow 1 \\
    1 \longrightarrow (1 + \Mm_v)/(1 + \Mm_v)^p \longrightarrow \Oo_v^\times/\Oo_v^{\times p} \longrightarrow (Kv)^\times/(Kv)^{\times p} \longrightarrow 1
\end{gather*}
The exactness of these sequences implies that all quantities in
\[
    |(Kv)^\times/(Kv)^{\times p}| \cdot |(1 + \Mm_v)/(1 + \Mm_v)^p| = |\Oo_v^\times/\Oo_v^{\times p}| \le |K^\times/K^{\times p}|
\]
are finite.

To prove (i), it suffices to consider the case that $Kv$ is infinite. In that case, $(Kv)^p$ must be infinite as well.
Now fix any $x \in (Kv)^\times$. Observe that $\{x + a^p : a \in (Kv)^{\times}\}$ is an infinite set, while $K^\times/K^{\times p}$ is finite. By the pigeonhole principle, there are $a, b, c, d \in (Kv)^{\times}$, $a \ne c$, $b \ne d$, such that $(x + a^p)b^p = (x + c^p)d^p$. Solving for $x$, we obtain
\[
    x = \frac{(cd - ab)^p}{(b - d)^p} \in (Kv)^{\times p}.
\]
Hence, $Kv$ is perfect.

For (ii), recall that we know $(1 + \Mm_v)/(1 + \Mm_v)^p$ is finite. We let $\{1 + a_i\}_{1 \le i \le n} \subseteq 1 + \Mm_v$ be a complete set of representatives for $(1 + \Mm_v)/(1 + \Mm_v)^p$ and define
\[
    \delta \coloneqq \min\{va_1, \ldots, va_n, vp\} > 0.
\]
Consider an arbitrary element $x \in \Mm_v$. For some $i \in \{1, \ldots, n\}$ and $y \in \Mm_v$, we may write
\[
    1 + x = (1 + a_i)(1 + y)^p
\]
and therefore
\begin{equation} \label{eq:pop_id}
    vx = v\bigl(a_i(1 + y)^p + [(1 + y)^p - 1 - y^p] + y^p\bigr) \ge \min\{va_i,vp,v(y^p)\}.
\end{equation}
If $vx < \delta$, then \eqref{eq:pop_id} implies $v(y^p) < \delta$. In particular, we attain equality $vx = v(y^p) \in pvK$ in this case. Our consideration now splits into two cases: if $\delta$ is minimal positive in $vK$, then $vK = \delta \IZ$ is discrete. Otherwise, the interval $(0,\delta) \subset vK$ is $p$-divisible and non-empty. Since $vK$ is of rank 1, the interval generates $vK$, and hence $vK$ must be $p$-divisible itself.

Finally, for (iii), we assume $K$ has a uniformiser. It is easy to see that $(1 + \Mm_v)^p \subseteq 1 + \Mm_v^2$. In particular,
\[
    |(1 + \Mm_v)/(1 + \Mm_v^2)| \le |(1 + \Mm_v)/(1 + \Mm_v)^p| < \infty.
\]
Adapting Fact~\ref{fact:TowerLawU}, we see that the left-hand side equals $|Kv|$, so $Kv$ must be finite.
\end{proof}

\subsection{Distinguishing characteristics of fields}
\label{sec:cohomology disguise}

In this section, we answer the question: \emph{how does $G_{\IQ_p}$ know that $\IQ_p$ has characteristic $\ne p$?}

Galois cohomology provides an immediate answer if we consider cohomological $p$-dimensions:
\[
    \cd_p G_{\IQ_p} = 2, \text{ while } \cd_p G_K \le 1
\]
for any field $K$ of characteristic $p$, see \cite[II.~\S4, II.~\S2.2]{Serre97}. Since our goal is to give a self-contained answer, we present an elementary criterion---motivated by Galois cohomology---for distinguishing fields of characteristic zero from those of positive characteristic. This will make explicit the elementary ideas concealed by the cohomological formalism.

\begin{idea} \label{idea:char}
For a field $K$, consider the property of whether every $C_p$-extension of $K$ embeds into a $C_{p^2}$-extension of $K$. We will show that this is true in general if $K$ has characteristic $p$, and false for some $p$-adic field $K$.
\end{idea}

Cyclic Galois extensions are well-understood by Hilbert 90, Kummer theory, and Artin-Schreier theory (in positive characteristic). For convenience, we state these classical results.

\begin{thm}[Hilbert 90] \label{thm:H90}
Let $L/K$ be a finite cyclic Galois extension and $\sigma$ a generator of $\Gal(L/K)$. Then for any $a \in L$:
\begin{alignat*}{4}
    N_{L/K}(a) = 1 & \Longleftrightarrow \exists b \in L^{\times}\ && a = \sigma(b) \cdot b^{-1} && \quad \text{\emph{(``Multiplicative H90'')}} \\
    \Tr_{L/K}(a) = 0 & \Longleftrightarrow \exists b \in L\ && a = \sigma(b) - b && \quad \text{\emph{(``Additive H90'')}}
\end{alignat*}
\end{thm}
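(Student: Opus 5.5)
The two implications ``$\Leftarrow$'' are immediate, and I will do them first. Since $N_{L/K}$ and $\Tr_{L/K}$ are, respectively, the product and the sum over the conjugates $\sigma^0, \ldots, \sigma^{n-1}$ (with $n = [L:K]$), both are invariant under precomposition with $\sigma$. Hence
\[
    N_{L/K}(\sigma(b)b^{-1}) = N_{L/K}(b)/N_{L/K}(b) = 1 \and \Tr_{L/K}(\sigma(b) - b) = 0 .
\]
All the content lies in ``$\Rightarrow$''. For both versions I plan to write down an explicit ``Poincaré/Hilbert series'' candidate for $b$ and check the identity by telescoping. The only non-formal input is that the resulting element is non-zero.

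\emph{Multiplicative case.} Assume $N_{L/K}(a) = 1$. For $c \in L$ I set
\[
    \beta(c) \coloneqq c + a\,\sigma(c) + a\sigma(a)\,\sigma^2(c) + \ldots + a\sigma(a)\cdots\sigma^{n-2}(a)\,\sigma^{n-1}(c).
\]
Applying $\sigma$ and multiplying by $a$ shifts every term one step to the right. The last term becomes $a\sigma(a)\cdots\sigma^{n-1}(a)\,\sigma^n(c) = N_{L/K}(a)\,c = c$, so $a\,\sigma(\beta(c)) = \beta(c)$. Provided $\beta(c) \neq 0$, the element $b \coloneqq \beta(c)^{-1}$ then satisfies $a = \sigma(b)b^{-1}$, as required.

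To find $c$ with $\beta(c) \neq 0$, I will invoke Dedekind's (Artin's) lemma on the linear independence of characters. The distinct automorphisms $\sigma^0, \ldots, \sigma^{n-1}$, viewed as characters $L^\times \to L^\times$, are $L$-linearly independent. The coefficients in $\beta$ are non-zero (the first is $1$), so $\beta$ is not identically zero on $L$.

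\emph{Additive case.} Assume $\Tr_{L/K}(a) = 0$. By the same independence lemma, $\Tr_{L/K} = \sum_i \sigma^i$ is not identically zero, so I choose $c \in L$ with $\Tr_{L/K}(c) \neq 0$. I then put
\[
    b' \coloneqq \frac{1}{\Tr_{L/K}(c)} \Bigl[a\,\sigma(c) + (a + \sigma(a))\,\sigma^2(c) + \ldots + (a + \sigma(a) + \ldots + \sigma^{n-2}(a))\,\sigma^{n-1}(c)\Bigr].
\]
Applying $\sigma$ turns each partial sum $a + \ldots + \sigma^{k-1}(a)$ into $\sigma(a) + \ldots + \sigma^{k}(a)$. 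Subtracting then leaves
\[
    b' - \sigma(b') = \frac{1}{\Tr_{L/K}(c)}\Bigl(a\sum_{i=1}^{n-1}\sigma^i(c) - (\sigma(a) + \ldots + \sigma^{n-1}(a))\,c\Bigr).
\]
Using $\sigma(a) + \ldots + \sigma^{n-1}(a) = \Tr_{L/K}(a) - a = -a$, this equals $a\,\Tr_{L/K}(c)/\Tr_{L/K}(c) = a$. Hence $b \coloneqq -b'$ satisfies $a = \sigma(b) - b$.

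The telescoping computations are routine bookkeeping of indices. The step I regard as the main obstacle is the non-vanishing of $\beta$ and of $\Tr_{L/K}$. I will prove it via Dedekind's lemma by the usual minimal-relation argument. Take a shortest non-trivial relation $\sum_i \lambda_i \sigma^i = 0$. Substitute $xy$ for $x$ and subtract $\sigma^{j}(y)$ times the original relation. Choosing $y$ with $\sigma^{i}(y) \neq \sigma^{j}(y)$ for two indices in the support then produces a strictly shorter non-trivial relation, a contradiction.
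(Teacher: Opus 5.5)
The paper does not prove this statement. Hilbert 90 is quoted ``for convenience'' as a classical result and then used as a black box: the additive version in Lemma~\ref{lem:C_pInC_p^2Charp} and the multiplicative version in Lemma~\ref{lem:C_pCriterion}.

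Your argument is the standard textbook proof (explicit Hilbert-series candidates plus Dedekind's independence of characters), and it is correct. I checked both telescoping identities:
\begin{itemize}
\item In the multiplicative case, $a\,\sigma(\beta(c)) = \beta(c)$ uses exactly $N_{L/K}(a)=1$.
\item In the additive case, your intermediate expression for $b'-\sigma(b')$ is right. It uses the fact that the prefactor $1/\Tr_{L/K}(c)$ lies in $K$ and is therefore fixed by $\sigma$; you use this silently and it is worth stating.
\end{itemize}
The minimal-relation argument for Dedekind's lemma is also fine. Its only unstated point is the base case: a one-term relation $\lambda\sigma^i=0$ with $\lambda\neq 0$ is impossible because $\sigma^i(1)=1$. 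Hence a minimal relation has at least two terms, which is what you need to pick two indices in its support.

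Compared with the paper, your route buys self-containedness. The introduction says the criteria of Section~\ref{sec:cohomology disguise} ``rely only on Hilbert's Theorem~90''. With your half-page argument included, that section would rest on nothing beyond Dedekind's lemma and basic Galois theory.
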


One can deduce the following characterisations of $C_p$-extensions:

\begin{thm}[Kummer theory]
\label{thm:Kummer theory}
Let $K$ be a field with $\charK K \ne p$ and $\zeta_p \in K$. Then there exists a bijection
\begin{align*}
    \{\text{one-dim.\,subspaces } \langle c \rangle K^{\times p} \le K^\times/K^{\times p}\} & \longleftrightarrow \{\text{$L/K$ Galois extensions of deg.\;$p$}\} \\
    \langle c \rangle K^{\times p} & \longmapsto K(\sqrt[p]{c}) \\
    (K^\times \cap L^{\times p})/K^{\times p} & \longmapsfrom L.
\end{align*}
\end{thm}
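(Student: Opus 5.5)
The statement to prove is Kummer theory for $C_p$-extensions. I will construct the two maps explicitly, check that each is well defined, and show they are mutually inverse. Throughout, fix a primitive $p$-th root of unity $\zeta_p \in K$.

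\textbf{The forward map.} Let $c \in K^\times \setminus K^{\times p}$ and let $\alpha$ be a root of $X^p - c$. Since $\charK K \ne p$, this polynomial is separable. Its roots are the elements $\zeta_p^i\alpha$, all of which lie in $K(\alpha)$, so $K(\alpha)/K$ is Galois.

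Each $\sigma \in \Gal(K(\alpha)/K)$ satisfies $\sigma(\alpha)/\alpha \in \mu_p$. The assignment $\sigma \mapsto \sigma(\alpha)/\alpha$ is an injective homomorphism into $\mu_p \cong C_p$, because $\zeta_p$ is fixed by $\sigma$. Hence the degree is $1$ or $p$. It equals $1$ exactly when $\alpha \in K$, i.e. when $c \in K^{\times p}$, which we have excluded.

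Replacing $c$ by $c^j b^p$ with $p \nmid j$ and $b \in K^\times$ replaces $\alpha$ by $\alpha^j b$, which generates the same field. So $K(\sqrt[p]{c})$ depends only on the line $\langle c\rangle K^{\times p}$.

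\textbf{The backward map.} Let $L/K$ be cyclic of degree $p$ with generator $\sigma$. Then $N_{L/K}(\zeta_p) = \zeta_p^p = 1$, so by multiplicative Hilbert 90 (Theorem~\ref{thm:H90}) there is $\beta \in L^\times$ with $\sigma(\beta) = \zeta_p\beta$. Put $c := \beta^p$. Then $\sigma(c) = c$, so $c \in K$, and $\beta \notin K$ because $\sigma$ moves it. Since $[L:K] = p$ is prime, $L = K(\beta) = K(\sqrt[p]{c})$. This shows the forward map is surjective.

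It remains to show that $(K^\times \cap L^{\times p})/K^{\times p}$ is exactly the line $\langle c\rangle K^{\times p}$. The inclusion $\supseteq$ is clear.

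\textbf{The key computation.} For $\subseteq$, let $\gamma \in L^\times$ with $\gamma^p = d \in K^\times$. Then $\sigma(\gamma)/\gamma$ is a $p$-th root of unity, say $\zeta_p^j$. It follows that $\gamma\beta^{-j}$ is fixed by $\sigma$ and so lies in $K$. Hence $d = c^j \cdot (\gamma\beta^{-j})^p \in \langle c\rangle K^{\times p}$.

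This simultaneously gives injectivity of the forward map. If $K(\sqrt[p]{c}) = K(\sqrt[p]{c'}) = L$, then by the above both $c$ and $c'$ lie in $K^\times \cap L^{\times p}$, which is the single line $\langle c\rangle K^{\times p}$. Therefore $\langle c\rangle K^{\times p} = \langle c'\rangle K^{\times p}$, and the two maps are mutually inverse.

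The only step that needs any care is this identification of $K^\times \cap L^{\times p}$ with a single line. It is handled by the eigenvector argument above, where $\beta$ is the Hilbert 90 eigenvector for $\sigma$ with eigenvalue $\zeta_p$.
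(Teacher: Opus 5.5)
Your proof is correct and complete: the forward map is well defined, multiplicative Hilbert 90 gives surjectivity, and the eigenvector computation identifies $(K^\times \cap L^{\times p})/K^{\times p}$ with the single line $\langle c\rangle K^{\times p}$, which gives injectivity and shows the two maps are mutually inverse. The paper does not prove this statement; it cites it as a classical result deducible from Hilbert 90 (Theorem~\ref{thm:H90}), which is exactly the route you take.
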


\begin{thm}[Artin-Schreier theory]
\label{thm:Artin-Schrier theory}
Let $K$ be a field of positive characteristic $p$. Then there exists a bijection
\begin{align*}
    \{\text{one-dim.\;subspaces } \langle c \rangle  + \wp(K) \le K/\wp(K)\} & \longleftrightarrow \{\text{$L/K$ Galois extensions of deg.\;$p$}\} \\
    \langle c \rangle + \wp(K) & \longmapsto K(\alpha),
\end{align*}
where $\wp(X) = X^p - X$ is the Artin-Schreier map and $\alpha$ is a root of the Artin-Schreier polynomial $\wp(X) - c$.
\end{thm}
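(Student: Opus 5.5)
The plan is to derive the bijection from Additive Hilbert 90 (Theorem~\ref{thm:H90}), in parallel with the usual derivation of Kummer theory from its multiplicative version. Throughout I use that $\wp$ is an additive homomorphism with kernel $\IF_p$, so $K/\wp(K)$ is an $\IF_p$-vector space.

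\emph{The map is well defined and lands in $C_p$-extensions.} Let $c \in K \setminus \wp(K)$ and let $\alpha$ be a root of $\wp(X) - c$. The other roots are $\alpha + i$ for $i \in \IF_p$, so the polynomial is separable and $K(\alpha)/K$ is Galois. Every $K$-automorphism sends $\alpha \mapsto \alpha + i$, which embeds $\Gal(K(\alpha)/K)$ into $\IF_p$. The group is nontrivial because $\alpha \notin K$ (otherwise $c = \wp(\alpha) \in \wp(K)$). Hence it is $C_p$ and $[K(\alpha):K] = p$.

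Next I check that $K(\alpha)$ depends only on the line $\langle c\rangle + \wp(K)$. If $c' = jc + \wp(b)$ with $j \in \IF_p^\times$ and $b \in K$, then $j\alpha + b$ is a root of $\wp(X) - c'$. Since $j\alpha + b$ lies in $K(\alpha)$, and both fields have degree $p$ over $K$, we get $K(\alpha') = K(\alpha)$.

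\emph{Surjectivity.} Let $L/K$ be Galois of degree $p$ with generator $\sigma$. Since $\Tr_{L/K}(1) = p = 0$, Additive H90 gives $\beta \in L$ with $\sigma(\beta) - \beta = 1$. Put $\alpha = -\beta$, so that $\sigma(\alpha) = \alpha - 1$. Then
\[
\sigma(\wp(\alpha)) = \wp(\alpha - 1) = \wp(\alpha),
\]
so $c \coloneqq \wp(\alpha) \in K$. Because $\alpha \notin K$ and $[L:K] = p$ is prime, $L = K(\alpha)$, and $c \notin \wp(K)$ since the polynomial $\wp(X) - c$ has no root in $K$.

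\emph{Injectivity.} This is the step needing the most care. Suppose $K(\alpha) = K(\alpha') = L$ with $\wp(\alpha) = c$ and $\wp(\alpha') = c'$. Choose a generator $\sigma$ with $\sigma(\alpha) = \alpha + 1$, and write $\sigma(\alpha') = \alpha' + j$ with $j \in \IF_p^\times$. Then $\sigma(\alpha' - j\alpha) = \alpha' - j\alpha$, so $b \coloneqq \alpha' - j\alpha \in K$. Applying the additive map $\wp$ and using $\wp(j\alpha) = j\,\wp(\alpha)$ (valid because $j^p = j$), we get
\[
c' = jc + \wp(b).
\]
Thus $c$ and $c'$ span the same line in $K/\wp(K)$. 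This shows the two constructions are mutually inverse on lines, completing the bijection.
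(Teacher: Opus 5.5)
Your proof is correct. It is the standard derivation of Artin--Schreier theory from Additive Hilbert~90 (Theorem~\ref{thm:H90}), which is exactly how the paper indicates the result is obtained; the paper states it as a classical consequence of Hilbert~90 and gives no further argument.
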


The next three lemmas realise Idea~\ref{idea:char}.

\begin{lem} \label{lem:C_pInC_p^2Charp}
    Let $K$ be a field of positive characteristic $p$. Then any $C_p$-extension $L/K$ is contained in a $C_{p^2}$-extension $M/K$.
\end{lem}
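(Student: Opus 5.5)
Since $L/K$ is a $C_p$-extension in characteristic $p$, I will describe it by Artin--Schreier theory (Theorem~\ref{thm:Artin-Schrier theory}): $L = K(\alpha)$ with $\wp(\alpha) = \alpha^p - \alpha = c$ for some $c \in K \setminus \wp(K)$. Let $\sigma$ be the generator of $\Gal(L/K)$ with $\sigma(\alpha) = \alpha + 1$. I will then look for $M = L(\beta)$ with $\wp(\beta) = \gamma$ for a suitable $\gamma \in L$, so that $M/L$ is itself an Artin--Schreier extension. The goal is to choose $\gamma$ so that $\sigma$ lifts to an automorphism of $M$ of order $p^2$.

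\textbf{Finding $\gamma$.} For $M/K$ to be Galois with a lift $\tilde\sigma$ of $\sigma$, we need $\sigma(\gamma) - \gamma \in \wp(L)$, say $\sigma(\gamma) - \gamma = \wp(\delta)$; the lift is then $\tilde\sigma(\beta) = \beta + \delta$. I will pick $\delta \in L$ with $\Tr_{L/K}(\delta) = 1$; since $L/K$ is separable, the trace is surjective, so such $\delta$ exists. Then
\[
    \Tr_{L/K}(\wp(\delta)) = \wp(\Tr_{L/K}(\delta)) = \wp(1) = 0,
\]
using that the trace commutes with the Frobenius and is additive. By additive Hilbert 90 (Theorem~\ref{thm:H90}), there is $\gamma \in L$ with $\sigma(\gamma) - \gamma = \wp(\delta)$.

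\textbf{Verifying $M/K$ is $C_{p^2}$.} Computing the $p$-fold iterate gives
\[
    \tilde\sigma^p(\beta) = \beta + \delta + \sigma(\delta) + \dots + \sigma^{p-1}(\delta) = \beta + \Tr_{L/K}(\delta) = \beta + 1 \ne \beta,
\]
provided $\beta \notin L$. Then $\tilde\sigma^p$ is a nontrivial element of $\Gal(M/L)$, so $\tilde\sigma$ has order $p^2$ and $M/K$ is cyclic of degree $p^2$. The same computation handles normality: $M$ is the splitting field over $K$ of the product of $X^p - X - \tau(\gamma)$ over $\tau \in \Gal(L/K)$, so $M/K$ is Galois and the extension $\tilde\sigma$ of $\sigma$ exists.

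\textbf{Main obstacle.} The key step is ensuring $\gamma \notin \wp(L)$, so that $[M:L] = p$. Solutions $\gamma$ of $\sigma(\gamma) - \gamma = \wp(\delta)$ are determined only up to adding elements of $K$, so I may replace $\gamma$ by $\gamma + c'$ with $c' \in K$. Suppose $\gamma = \wp(\eta)$ with $\eta \in L$. Then $\wp(\sigma(\eta) - \eta - \delta) = 0$, so $\sigma(\eta) - \eta - \delta \in \IF_p$. Taking traces gives $0 - 1 \in p\IF_p = 0$, a contradiction. Hence $\gamma \notin \wp(L)$ automatically, and this step goes through; the remaining work is routine bookkeeping.
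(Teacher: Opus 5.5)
Your proof is correct and uses the same construction as the paper: an element of trace $1$, additive Hilbert 90 applied to $\wp(\delta)$, and the Artin--Schreier extension $L(\beta)$ with $\wp(\beta) = \gamma$ (the paper takes the explicit trace-one element $-\alpha^{p-1}$, where you invoke surjectivity of the trace). The one real difference is the final step. The paper excludes $C_p \times C_p$ by contradiction, comparing $\gamma$ in $L/\wp(L)$ with the generator of a hypothetical second $C_p$-subextension, whereas you compute directly that $\tilde\sigma^p(\beta) = \beta + \Tr_{L/K}(\delta) = \beta + 1$. That is shorter, and together with your check that $\gamma \notin \wp(L)$ (which the paper leaves implicit) it gives a complete verification.
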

\begin{diagram}[cramped]
    M \ar[d, no head, dotted] \ar[dd, no head, dotted, bend left=40, "C_{p^2}"]\\
    L \ar[d, no head, "C_p"'] \\
    K
\end{diagram}
\begin{proof}
    By Artin-Schreier theory, $L$ is generated by a root $x$ of an Artin-Schreier polynomial $X^p - X - a$. Dividing by $(-x^pa)$, we obtain
    \[
        -a^{-1} + a^{-1}(x^{-1})^{p-1} + (x^{-1})^p=0,
    \]
    which implies $\Tr_{L/K}(x^{-1}) = -a^{-1}$. We also have $-x^{p - 1} = -1 - a x^{-1}$, yielding
    \begin{align}
        \label{eq:Tr(b)=1}
        \Tr(-x^{p-1}) = \Tr(-1 - a x^{-1}) = -\Tr(1) - a\Tr(x^{-1}) = -p + 1 = 1. 
    \end{align}
    Define $b \coloneqq -x^{p - 1}$. In characteristic $p$, $\Tr(b^p) = \Tr(b)^p$ holds, and thus
    \[
        \Tr(b^p - b) = \Tr(b^p) -  \Tr(b) = \Tr(b)^p -  \Tr(b) = 1 - 1=0.
    \]
    Hence, we can apply (additive) Hilbert 90 to find $c \in L$ such that
    \begin{equation} \label{eq:H90_conj}
        b^p - b = \sigma(c) - c
    \end{equation}
    for some generator $\sigma$ of $\Gal(L/K)$.
    We claim that the Artin-Schreier extension $L(y)/L$, where $y$ is a root of $X^p - X - c$, is the desired $C_{p^2}$-extension of $K$.
    Indeed, $M \coloneqq L(y)$ over $K$ has degree $p^2$ and is Galois by (\ref{eq:H90_conj}). So we are left to prove $\Gal(M/K) \not\cong C_p \times C_p$. Assume the contrary. By the Galois correspondence, there exists a $C_p$-extension $L' \ne L$ over $K$ contained in $M$. By Artin-Schreier theory, $L' = K(z)$ where $z^p - z - d = 0$ for some $d \in K$. By construction, we have $L(z) = M$. Again, Artin-Schreier theory tells us that $c$ and $d$ generate the same $\IF_p$-subspace in $L/\wp(L)$, so $c = id + e^p - e$ for some $i \in \{1, \ldots, p - 1\}$ and $e \in L$. Therefore,
    \[
        b^p-b = \sigma(c) -c = (\sigma(id) + \sigma(e)^p - \sigma(e)) - (id + e^p - e) = (\sigma(e) - e)^p - (\sigma(e)-e).
    \]  
    In particular, $\sigma(e) - e = b + j$ for some $j \in \{0, \ldots, p - 1\}$. However,
    \[
        \Tr(\sigma(e) - e) = 0 \ne 1 = 1 + pj = \Tr(b) + \Tr(j) = \Tr(b + j),
    \]
    by \eqref{eq:Tr(b)=1}, which yields a contradiction.
\end{proof}

As indicated by Idea \ref{idea:char}, there is a $p$-adic field that admits a $C_p$-extension that does not embed into any $C_{p^2}$-extension. We will give an explicit example in Lemma \ref{lem:C_pNotInC_p^2}. To facilitate the construction, we will first give an equivalent criterion in the presence of a primitive $p$-th root of unity.
\begin{lem}
    \label{lem:C_pCriterion}
    Assume $K$ is a field with $\charK K \ne p$ and $\zeta_p \in K$. Let $L/K$ be a $C_p$-extension. Then $L$ can be embedded into a $C_{p^2}$-extension $M/K$ if and only if $\zeta_p \in \im N_{L/K}$.
\end{lem}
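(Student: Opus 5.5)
Write $L = K(\alpha)$ with $\alpha^p = a \in K^\times$ (Kummer theory, Theorem~\ref{thm:Kummer theory}), and let $\sigma$ be the generator of $\Gal(L/K)$ with $\sigma(\alpha) = \zeta_p\alpha$. The plan is to translate embeddability into a Kummer-theoretic condition over $L$ and then into a norm condition via multiplicative Hilbert 90 (Theorem~\ref{thm:H90}). Since $\zeta_p \in L$, any $C_p$-extension $M/L$ has the form $M = L(\sqrt[p]{\beta})$ for some $\beta \in L^\times \setminus L^{\times p}$.

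\emph{Step 1: which $\beta$ give a Galois $M/K$.} $M/K$ is Galois if and only if $\sigma$ extends to $M$. By Kummer theory this holds if and only if $\sigma(\beta) \in \beta^{j} L^{\times p}$ for some $j$. Applying $N_{L/K}$ and comparing, one reduces to the case $\sigma(\beta)/\beta \in L^{\times p}$, i.e.\ $\sigma(\beta) = \beta\gamma^p$ for some $\gamma \in L^\times$. One should check that the relevant ($\Gal(L/K)$-stable) lines really have eigenvalue $j = 1$: since $\sigma$ has order $p$, it acts unipotently on $L^\times/L^{\times p}$, so $j^p \equiv 1$ and hence $j \equiv 1 \pmod p$.

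\emph{Step 2: determine the group.} Let $\tilde\sigma$ be an extension of $\sigma$ with $\tilde\sigma(\sqrt[p]{\beta}) = \gamma\sqrt[p]{\beta}$. Iterating gives
\[
    \tilde\sigma^p\bigl(\sqrt[p]{\beta}\bigr) = N_{L/K}(\gamma)\sqrt[p]{\beta}.
\]
Moreover $N_{L/K}(\gamma)^p = N_{L/K}(\sigma\beta/\beta) = 1$, so $N_{L/K}(\gamma) \in \mu_p$. The group $\Gal(M/K)$ has order $p^2$, and it is cyclic exactly when $\tilde\sigma$ has order $p^2$. This happens if and only if $N_{L/K}(\gamma) \neq 1$, i.e.\ $N_{L/K}(\gamma)$ is a primitive $p$-th root of unity. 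Replacing $\gamma$ by $\gamma\zeta_p^i$ changes the norm by $\zeta_p^{ip} = 1$, so the condition is independent of the choice of $\tilde\sigma$. Replacing $\beta$ by a suitable power, we may assume $N_{L/K}(\gamma) = \zeta_p$.

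\emph{Step 3: both directions.} ($\Rightarrow$) A $C_{p^2}$-extension $M \supseteq L$ is a $C_p$-extension of $L$, hence has the form $L(\sqrt[p]{\beta})$. By Step 2 it yields $\gamma$ with $N_{L/K}(\gamma) = \zeta_p$. ($\Leftarrow$) Suppose $N_{L/K}(\gamma) = \zeta_p$. Then $N_{L/K}(\gamma^p) = 1$, so multiplicative H90 gives $\beta \in L^\times$ with $\gamma^p = \sigma(\beta)/\beta$. Set $M = L(\sqrt[p]{\beta})$. By Step 1, $M/K$ is Galois, and by Step 2 it is cyclic of order $p^2$.

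\emph{Main obstacle.} The delicate point is to show $\beta \notin L^{\times p}$, so that $[M:L] = p$. Suppose instead $\beta = \delta^p$. Then $(\sigma\delta/\delta)^p = \gamma^p$, so $\gamma = \zeta_p^i\,\sigma(\delta)/\delta$ for some $i$. Taking norms gives $\zeta_p = N_{L/K}(\gamma) = \zeta_p^{ip} = 1$, a contradiction. Beyond this, Steps 1 and 2 are bookkeeping, and the only care needed is in consistently tracking the choice of extension $\tilde\sigma$.
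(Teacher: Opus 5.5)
Your proof is correct. It shares the paper's framework: Kummer theory over $L$ to write $M = L(\sqrt[p]{\beta})$, multiplicative Hilbert 90 to produce $\beta$ from $\gamma$ in the converse direction, and exactly the paper's argument that $\beta \notin L^{\times p}$.

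The genuine difference is how you decide between $C_{p^2}$ and $C_p \times C_p$. You compute the $p$-th power of a lift directly, $\tilde\sigma^p(\sqrt[p]{\beta}) = N_{L/K}(\gamma)\sqrt[p]{\beta}$. This gives the single criterion ``cyclic $\iff N_{L/K}(\gamma) \neq 1$'', which settles both implications at once. The paper never considers lifts of $\sigma$ to $M$; instead it uses that $C_{p^2}$ has only one subgroup of index $p$:
\begin{itemize}
\item For ($\Rightarrow$) it applies Hilbert 90 a second time. If $N_{L/K}(e) = 1$, then $e = \sigma(f)/f$, so $df^{-p} \in K$, and $K(\sqrt[p]{df^{-p}})$ would be a second $C_p$-subextension of $M$.
\item For ($\Leftarrow$) it supposes a second subextension $K(\sqrt[p]{c}) \subseteq M$ exists. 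Kummer theory over $L$ then gives $d = c^i f^p$, hence $e^p = (\sigma(f)/f)^p$, which contradicts the non-$p$-th-power argument.
\end{itemize}

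Your route is shorter and makes transparent why $N_{L/K}(\gamma)$ is the obstruction: it is literally the scalar by which $\tilde\sigma^p$ acts on $\sqrt[p]{\beta}$. The paper's route avoids bookkeeping with lifts, at the cost of two separate subfield arguments.

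Your eigenvalue argument in Step 1 ($j^p \equiv 1$ forces $j \equiv 1 \pmod p$) is also more direct than the paper's ``replace $\sigma$ by an appropriate power''. The preceding remark about ``applying $N_{L/K}$ and comparing'' is not what does the work: it only gives $N_{L/K}(\beta)^{1-j} \in K^{\times p}$. You can drop it.
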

\begin{proof}
    $\Rightarrow$. Assume that $L$ can be embedded into a $C_{p^2}$-extension $M/K$. Then $M/L$ is Galois with $\Gal(M/L) \cong C_p$. Therefore, $M = L(\sqrt[p]{d})$ for some $d \in L^\times \setminus L^{\times p}$ by Kummer theory. Let $\sigma$ be a generator of $\Gal(L/K)$. As $L(\sqrt[p]{d}) = L(\sqrt[p]{\sigma(d)})$, Kummer theory implies $\sigma(d) \in \langle d \rangle L^{\times p}$. Replacing $\sigma$ by an appropriate power if necessary, we may assume $\sigma(d) \in d L^{\times p}$ without loss of generality.
    Let $e \in L^\times$ be such that $\sigma(d)d^{-1} = e^p$. Then
    \[
        (N_{L/K}(e))^p = N_{L/K}(e^p) = N_{L/K}\left(\frac{\sigma(d)}{d}\right) = \frac{\sigma(d)}{d}\frac{\sigma(\sigma(d))}{\sigma(d)} \cdots \frac{\sigma^p(d)}{\sigma^{p-1}(d)} = 1.
    \]
    It suffices to show that $N_{L/K}(e) \ne 1$. If instead $N_{L/K}(e) = 1$, then we could find $f \in L^\times$ such that $e = \sigma(f)f^{-1}$ by Hilbert 90. Thus,
    \[
        \sigma(d)d^{-1} = e^p = \left(\sigma(f)f^{-1}\right)^p \Longrightarrow \sigma\left(df^{-p}\right) = df^{-p},
    \]
    which implies $df^{-p} \in K$. But then $K(\sqrt[p]{df^{-p}}) = K(\sqrt[p]{d}/f) \ne L$ would be yet another $C_p$-extension of $K$ contained in $M$, which is impossible by the Galois correspondence for $M/K$.
    
    $\Leftarrow$. The line of reasoning is very similar, but with inverted order. Assume that $N_{L/K}(e) = \zeta_p$. Then $N_{L/K}(e^p) = (N_{L/K}(e))^p = 1$, so there exists some $d \in L^\times$ such that $\sigma(d)d^{-1} = e^p$. We claim that $d \notin L^{\times p}$. For otherwise, we could write $d = f^p$, so that
    \[
        e^p = \sigma(d)d^{-1} = \left(\sigma(f)f^{-1}\right)^p \Longrightarrow e = \sigma(f)f^{-1} \zeta_p^i \phantom{a}
    \]
    for some $i \in \{0, \ldots, p - 1\}$, which implies
    \[
        \zeta_p = N_{L/K}(e) =  N_{L/K}\left(\sigma(f)f^{-1}\right)N_{L/K}(\zeta_p^i) = 1,
    \]
    a contradiction.
    Thus, $M \coloneqq L(\sqrt[p]{d})$ is indeed a $C_p$-extension of $L$. Moreover, using $\sigma(d) = de^p$, we see that $L(\sqrt[p]{d}) = L(\sqrt[p]{\sigma(d)})$, so $M/K$ is a Galois extension of degree $p^2$ (as $M$ contains all conjugates of $\sqrt[p]{d}$). Note that $\Gal(M/K)$ is either isomorphic to $C_{p^2}$ or $C_p \times C_p$. We are left to exclude the latter case. So assume for a contradiction that $\Gal(M/K) \cong C_p \times C_p$. On account of the Galois correspondence, there exists a $C_p$-extension $L' \ne L$ of $K$ inside $M$. By Kummer theory, $L' = K(\sqrt[p]{c})$ for some $c \in K$ and $L(\sqrt[p]{c}) = M$. Yet again, this means that $c$ and $d$ generate the same subgroup in $L^\times/L^{\times p}$. Thus, $d = c^i f^p$ with $i \in \{1, \ldots, p - 1\}$ and $f \in L^\times$. But then
    \[
        e^p = \sigma(d)d^{-1} = \frac{\sigma(c^i f^p)}{c^i f^p} = \left(\sigma(f)f^{-1}\right)^p,
    \]
    contradicting our earlier observation that this equality cannot hold.
\end{proof}

\begin{lem} \label{lem:C_pNotInC_p^2}
Let $p \ne 2$ be a prime. Consider $K = \IQ_p(\zeta_p)$ with uniformiser $\pi = \zeta_p - 1$. Then, $L = K(\sqrt[p]{(1 - p)\pi})$ is a $C_p$-extension of $K$ that is not contained in any $C_{p^2}$-extension. \\
Moreover, $L = \IQ_2(\sqrt{-2})$ is a quadratic extension of $\IQ_2$ not contained in any $C_4$-extension.
\end{lem}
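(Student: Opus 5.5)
By Lemma~\ref{lem:C_pCriterion} (valid since $\zeta_p \in K$), it suffices in both cases to show that $\zeta_p \notin N_{L/K}(L^\times)$. First I will confirm that $L/K$ is a $C_p$-extension. In the odd case, $c \coloneqq (1-p)\pi$ has $vc = v\pi$, which generates $vK$. So $c \notin K^{\times p}$, and $L = K(\alpha)$ with $\alpha^p = c$ is a totally ramified $C_p$-extension. Moreover, $\Pi \coloneqq \alpha$ is a uniformiser of $L$ with $\Pi^p = c$.

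\textbf{The case $p = 2$.} This is a direct computation. Norms from $\IQ_2(\sqrt{-2})$ are the values $x^2 + 2y^2$ with $x, y \in \IQ_2$. If $x^2 + 2y^2 = -1$, then $v(x^2)$ is even and $v(2y^2)$ is odd, so they differ. Hence $0 = v(-1) = \min\{v(x^2), v(2y^2)\}$, which forces $x \in \IZ_2^\times$ and $v(y) \ge 0$. Unit squares are $\equiv 1 \pmod 8$, so $x^2 + 2y^2 \bmod 8$ lies in $\{1, 3\}$: it is $3$ if $y$ is a unit and $1$ otherwise. Neither equals $-1 \equiv 7$, so $-1 \notin N(L^\times)$.

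\textbf{The case $p$ odd.} Suppose $N_{L/K}(e) = \zeta_p$. First I reduce to principal units. By Fact~\ref{fact:multgrp_direct_prod}, write $e = \Pi^k \cdot \xi \cdot u$ with $\xi \in \mu'_L = \mu'_K$ and $u \in U_L^{(1)}$. Then $N(\Pi) = \pm c$ has valuation $v\pi$, while $N(\xi) = \xi^p$ and $N(u)$ are units. Since $\zeta_p$ is a unit, this forces $k = 0$. Next, $\zeta_p$ and $N(u)$ lie in $U_K^{(1)}$, whereas $\xi^p \in \mu'_K$. By the directness of the decomposition, $\xi^p = 1$, so $\xi = 1$. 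Thus $\zeta_p = 1 + \pi \in N(U_L^{(1)})$.

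By Lemma~\ref{lem:U_Kpowers_general}, $U_K^{(p+1)} = (U_K^{(1)})^p \subseteq N(L^\times)$. So it suffices to work in $V \coloneqq U_K^{(1)}/U_K^{(p+1)} \cong \IF_p^p$, with basis $\{1 + b_1\pi^k : 1 \le k \le p\}$ from Lemma~\ref{lem:generators}, where $b_1 = 1$. By Fact~\ref{fact:TowerLawU} for $L$, $U_L^{(1)}$ is generated modulo $U_L^{(p^2+1)}$ by the elements $1 + a\Pi^i$ with $a \in \IZ_p$ a Teichmüller-type representative, $1 \le i \le p^2$. Here $U_L^{(p^2+1)}$ has norms in $U_K^{(p+1)}$.

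I then compute $N(1 + a\Pi^i) = \prod_j (1 + a\zeta_p^{ij}\Pi^i)$ by expanding in elementary symmetric functions of the conjugates $\zeta_p^j \Pi$. Those with index prime to $p$ are divisible by $p$, which has valuation $p - 1$ in $\pi$-units, while the top term is $\pm a^p c^i$. This gives the following picture. For $p \nmid i$, the element $N(1 + a\Pi^i)$ lands at level $i$ of the filtration of $U_K^{(1)}$, plus higher-level corrections. For $i = pm$, one has $N(1 + a\Pi^{pm}) = (1 + ac^m)^p$, which is trivial in $V$. The expected outcome is that the image of $N(U_L^{(1)})$ in $V$ is a hyperplane: $(U_K^{(1)} : N(U_L^{(1)})U_K^{(p+1)}) = p$. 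This hyperplane should be cut out by a linear functional in which the coordinate of $1 + \pi$ interacts with the level-$p$ coordinate.

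\textbf{Main obstacle.} The hardest step is the level-$p$ bookkeeping. The element $N(1 + \Pi) = 1 + c + (\text{terms of }p\text{-divisible binomial type})$ agrees with $\zeta_p = 1 + \pi$ at level $1$. The difference $c - \pi = -p\pi$ and the cross terms then appear at level $p$, where Lemma~\ref{lem:p/(1-zeta)^p} ($p/\pi^{p-1} \equiv -1$) converts them into residues. The factor $1 - p$ in the definition of $c$ is chosen precisely to shift this level-$p$ residue so that $\zeta_p$ falls outside the hyperplane. For unit $a$, I would first treat $1 + a\Pi$ and compute $N(1 + a\Pi)\,\zeta_p^{-a}$ modulo $U_K^{(p+1)}$ to extract its level-$p$ coefficient. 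I would then show that no product of the other generators, with $2 \le i < p$ and $p \nmid i$, can cancel this coefficient without reintroducing lower-level terms.
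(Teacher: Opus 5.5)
For odd $p$ there is a genuine gap. Your reduction via Lemma~\ref{lem:C_pCriterion}, your check that $L/K$ is a $C_p$-extension, and your reduction to $\zeta_p \in N_{L/K}(U_L^{(1)})$ are all correct. Your case $p=2$ is also a valid direct argument: using the norm form $x^2+2y^2$ and a congruence mod $8$ works just as well as the paper's computation $\im\overline{N_{L/K}} = \langle \overline{2},\overline{3}\rangle$.

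The gap is that the proposal stops exactly where the lemma's content lies. That $\zeta_p$ falls outside the image of the norm is only announced (``expected outcome'', ``should be cut out'', ``is chosen precisely to''), never verified. You also describe $N_{L/K}(1+\Pi)$ as $1+c$ plus ``terms of $p$-divisible binomial type'' and ``cross terms'' at level $p$. That leaves open the one thing that matters: whether those terms cancel the level-$p$ shift coming from $-p\pi$. If they did, $\zeta_p$ would be a norm and the lemma would fail.

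The missing observation is that no such terms exist. For $p\nmid i$, the element $\Pi^i$ generates $L$ and satisfies $(\Pi^i)^p = c^i$, so its minimal polynomial is $X^p - c^i$. The elementary symmetric functions $e_1,\dots,e_{p-1}$ of its conjugates $\zeta_p^j\Pi^i$ therefore vanish identically, not merely modulo $p$. Hence $N_{L/K}(1+\Pi^i) = 1+c^i$ exactly for $p$ odd. Also, $a=1$ suffices by Lemma~\ref{lem:generators}, since the residue field is $\IF_p$; no Teichm\"uller twists or $\zeta_p^{-a}$ are needed.

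With your $c=(1-p)\pi$ and Lemma~\ref{lem:p/(1-zeta)^p},
\[
    N_{L/K}(1+\Pi) = 1+\pi-p\pi \equiv 1+\pi+\pi^p \equiv (1+\pi)(1+\pi^p) \pmod{\pi^{p+1}}.
\]
By Remark~\ref{rem:mod_rule}, $\zeta_p \in N_{L/K}(U_L^{(1)})\,U_K^{(p+1)}$ would then force
\[
    1+\pi^p \in \langle 1+c^i : 1\le i\le p-1\rangle\, U_K^{(p+1)}.
\]
This is impossible. Take a product $\prod_{i=1}^{p-1}(1+c^i)^{n_i}$ with $0\le n_i<p$ not all zero, and let $i_0=\min\{i : n_i\ne 0\}$, so $i_0<p$. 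The product is $\equiv 1+n_{i_0}\pi^{i_0} \pmod{\pi^{i_0+1}}$, so it does not lie in $U_K^{(p)}$. And the empty product fails too, since $1+\pi^p\notin U_K^{(p+1)}$.

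This is precisely how the paper concludes, after reading off the image of $\overline{N_{L/K}}$ from the exact norms of the generators $\Pi$ and $1+\Pi^k$. Once you have the exact norm formula, your ``main obstacle'' reduces to these two lines.
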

\begin{proof}
We write $c \coloneqq 1 - p \in \Oo_K^\times$ and let $\Pi \coloneqq \sqrt[p]{c\pi}$ be a root of the Eisenstein polynomial $X^p - c\pi$. A fortiori, this is a uniformiser of $L$. By Lemma~\ref{lem:C_pCriterion}, we only need to verify that $\zeta_p \notin \im N_{L/K}$. Since $K^{\times p} \subseteq \im N_{L/K}$, it suffices to show that the induced map
\[
    \overline{N_{L/K}} : L^\times/L^{\times p} \longrightarrow K^\times/K^{\times p}
\]
does not contain $\zeta_pK^{\times p}$ in its image. We will determine the image by direct calculation.

From Lemma~\ref{lem:generators}, we know that
\[
    R = \bigl\{\Pi, 1 + \Pi, 1 + \Pi^2, \ldots, 1 + \Pi^{p^2}\bigr\}
\]
is a set of representatives generating $L^{\times}/L^{\times p}$.
Consider an element $x_k = 1 + \Pi^k$ for $p \nmid k$.
Note that $x_k$ has minimal polynomial $(X - 1)^p - (c\pi)^k$, and thus
\[
    N_{L/K}(x_k) = 1 + (c\pi)^k.
\]
For $k \ge p + 1$, the norms $N_{L/K}(x_k) = 1 + (c\pi)^k$ already lie in $K^{\times p}$ by Lemma~\ref{lem:U_Kpowers_general}. Therefore, $\im \overline{N_{L/K}}$ is generated by the set of representatives
\[
    S = \{c\pi, 1 + c\pi, \ldots, 1 + (c\pi)^{p - 1}\}.
\]
Hence, $\im \overline{N_{L/K}} \subseteq K^\times/K^{\times p}$ is a subgroup of index $p$ (by the second part of Lemma~\ref{lem:generators}).

Assume towards a contradiction that $\zeta_p = 1 + \pi \in \im N_{L/K}$. By Lemma~\ref{lem:p/(1-zeta)^p},
\[
    1 + c\pi = 1 + \pi -p\pi \equiv 1 + \pi + \pi^p \equiv (1 + \pi) (1 + c^p\pi^p) \pmod{\pi^{p + 1}}
\]
and hence, by Remark~\ref{rem:mod_rule},
\[
    \frac{1 + c\pi}{1 + \pi} \equiv 1 + (c\pi)^p \pmod{U_K^{(p + 1)}}.
\]
Note that, by Lemma~\ref{lem:U_Kpowers_general}, $U_K^{(p + 1)} \subseteq K^{\times p}$. But this implies that $\overline{1 + (c\pi)^p}$ lies in the image of $\overline{N_{L/K}}$. This is impossible, since $S \cup \{1 + (c\pi)^p\}$ represents a basis of $K^{\times}/K^{\times p}$.

For $K = \IQ_2$ and $L = \IQ_2(\sqrt{-2})$, we have $\im \overline{N_{L/K}} = \langle \overline{2}, \overline{3} \rangle$. Therefore, $-1 \notin \im N_{L/K}$, for otherwise $\IQ_2^\times/\IQ_2^{\times 2} = \langle \overline{2}, \overline{3}, \overline{5} \rangle = \langle \overline{2}, \overline{3}, \overline{-1} \rangle \subseteq \overline{N_{L/K}}$.
\end{proof}

\subsection{Galois characterisation of henselianity} \label{sec:Galois_char_henselianity}

The main result of this section provides a method for recovering certain henselian valuations---those we shall call \emph{tamely branching}---from absolute Galois groups. We begin by stating the result. After giving some historical context, we motivate the definition of tamely branching valuations and then proceed to the proof.

\begin{defi} \label{def:tamely_branch}
    Let $(K,v)$ be a valued field and $p$ any prime. We say $v$ is \emph{tamely branching at $p$} if
    \begin{enumerate}[(i)]
        \item $\charK Kv \ne p$; \label{item:tam_branch1}
        \item $vK \ne p vK$; \label{item:tam_branch2}
        \item $p^2 \mid |G_{Kv}|$, whenever $(vK : p vK) = p$. \label{item:tam_branch3}
    \end{enumerate}
\end{defi}

\begin{thm}[Galois characterisation of henselianity] \label{thm:Galois code henselianity}
A field $K$ admits a henselian valuation, tamely branching at $p$, if and only if $G_K$ has a Sylow $p$-subgroup $P \not\cong C_2, \IZ_p, \IZ_2 \rtimes C_2$ that contains a non-trivial normal abelian closed subgroup $N$.
\end{thm}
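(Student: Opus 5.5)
The plan is to treat the two directions separately. Throughout we work with a Sylow $p$-subgroup $P = G_F$ and its fixed field $F$. We use that $F/K$ is a (pro-)prime-to-$p$ extension, so $p$-ranks of value groups and the $p$-part of residue Galois groups are unchanged in passing from $K$ to $F$.

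\emph{($\Rightarrow$).} Let $v$ be henselian on $K$ and tamely branching at $p$. Let $w$ be its unique prolongation to $F$. Since $G_F$ is pro-$p$ and $\charK Fw \ne p$, the ramification group $R_F$ is trivial by Fact~\ref{fact:ramification group}\ref{item:R_trivial}. Theorem~\ref{thm:Splitting of D/R} then gives
\[
    P = D_F \cong \IZ_p^{r} \rtimes G_{Fw},
\]
with $r = \dim_{\IF_p}(wF/pwF) = \dim_{\IF_p}(vK/pvK) \ge 1$, and $G_{Fw}$ a Sylow $p$-subgroup of $G_{Kv}$. We take $N := I_F \cong \IZ_p^r$, which is normal, abelian, closed and non-trivial. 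It remains to rule out the three exceptional groups. If $r \ge 2$, then $P$ contains $\IZ_p^2$, which none of $C_2$, $\IZ_p$, $\IZ_2 \rtimes C_2$ does. If $r = 1$, then condition (iii) of Definition~\ref{def:tamely_branch} forces $|G_{Fw}| \ge p^2$, so $P/N$ has order at least $p^2$, which excludes all three.

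\emph{($\Leftarrow$).} We first produce a valuation on $F$. Since $[F(\zeta_p):F]$ divides $p-1$ and $G_F$ is pro-$p$, we get $\zeta_p \in F$. We also need $\charK F \ne p$. For this we would use Lemma~\ref{lem:C_pInC_p^2Charp}: in characteristic $p$ every $C_p$-extension embeds into a $C_{p^2}$-extension, which should make $G_F$ too ``free'' to have a non-trivial normal abelian subgroup unless $G_F \cong \IZ_p$. Making this precise without cohomology is part of the difficulty.

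Next we pass to a suitable finite extension $E \subseteq F^{N}$ of $F$. We put $S = E^{\times p}$, enlarged if necessary by the $p$-th powers and norms coming from the $N$-part. The Kummer classes $x \in E^\times/E^{\times p}$ cutting out the subextensions fixed by open subgroups containing $N$ are the candidates for $p$-rigid elements. The reason is that for such $x$,
\[
    1 - x = N_{E(\sqrt[p]{x})/E}\bigl(1 - \sqrt[p]{x}\bigr),
\]
and the commutativity and normality of $N$ should force this norm into $\bigcup_i x^iS$. The rank conditions coming from $P \not\cong C_2, \IZ_p, \IZ_2 \rtimes C_2$ are what should give the index bound $(E^\times : T_S) > p$. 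Theorem~\ref{thm:creation_p-rigid} then yields a valuation on $E$ with non-$p$-divisible value group. We then argue that this valuation is $S$-compatible, hence $p$-henselian; by Corollary~\ref{cor:Kummer p-henselianity criterion} and the corollary after Lemma~\ref{lem:p-henselianity criterion}, and since $G_E$ is pro-$p$, it is henselian. Finally we restrict to $F$ using Henseling Down for finite extensions (Theorem~\ref{thm:henseling down}(ii)).

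We then make the valuation canonical and descend it to $K$. Among henselian valuations on $F$ with residue characteristic $\ne p$ and non-$p$-divisible value group, we take the coarsest one that is still tamely branching, so that it lies in $H(F)$, using the tree structure (Lemma~\ref{lem:tree}). Theorem~\ref{thm:henseling down}\ref{item:hens_down3} then gives that its restriction to $K$ is henselian. For $p = 2$ we must exclude a proper coarsening with real closed residue field. Here we pass to the coarsest such coarsening (Fact~\ref{fact:coarsest_real_closed}) and use Lemma~\ref{lem:no_normal_C2} together with the excluded groups $C_2$ and $\IZ_2 \rtimes C_2$ to reach a contradiction. Tame branching descends from $F$ to $K$ because $F/K$ is prime-to-$p$.

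The main obstacle is the rigidity step, namely showing that the existence of $N$ forces $(E^\times : T_S) > p$. This is where the exceptional groups must be seen to be exactly the obstructions. I would attempt it first in the case where $N$ contains a copy of $\IZ_p$ with $P/N$ non-trivial, doing explicit norm computations in the spirit of Lemma~\ref{lem:C_pCriterion}.
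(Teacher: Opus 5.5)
Your ($\Rightarrow$) direction is the paper's argument, with one imprecision. The bound $|P/N|\ge p^2$ by itself does not exclude $\IZ_p$ (take $N=p^2\IZ_p$). What excludes $\IZ_p$ and $\IZ_2\rtimes C_2$ is the splitting $P\cong N\rtimes G_{Fw}$ with $G_{Fw}$ infinite (by Artin--Schreier): two infinite closed subgroups of a profinite group containing an open subgroup $\cong\IZ_p$ cannot meet trivially.

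The ($\Leftarrow$) direction has a genuine gap, exactly at the step you flag, and your set-up cannot close it. With $S=E^{\times p}$ for a finite extension $E$ of $F=\Fix P$, the hypothesis $(E^\times:T_S)>p$ of Theorem~\ref{thm:creation_p-rigid} can fail even when the desired valuation exists. Take $p$ odd and $K=\IQ_p\laurent{t}$ with its $t$-adic valuation, which is tamely branching at $p$.
\begin{itemize}
\item The prolongation $w$ to $E$ is henselian of residue characteristic $0$, so $1+\Mm_w\subseteq E^{\times p}$. Hence every $x$ with $wx\notin pwE$ is $p$-rigid.
\item The residue field $Ew$ is a prime-to-$p$ direct limit of finite extensions of $\IQ_p(\zeta_p)$. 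In each such finite extension, the non-$p$-rigid classes generate the whole Kummer group. For example, if $p\nmid v(x)$, choose $s\in S$ with $v(xs)=j\in\{1,\dots,p-1\}$; then $1+xs\in U^{(j)}\setminus U^{(j+1)}$ is not a $p$-th power.
\item Non-rigidity lifts from $Ew$ to units of $E$.
\end{itemize}
Hence $T_S=\Oo_w^\times E^{\times p}$ and $(E^\times:T_S)=(wE:pwE)=p$, so the creation theorem gives nothing. Commutativity and normality of $N$ say nothing about norm groups of Kummer extensions of $E$, and your ``enlargement of $S$'' is unspecified. You also leave $\charK F\ne p$ unproved.

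The missing idea is the paper's reduction to the classical case (Proposition~\ref{prop:classical case}). Do not work at a finite level. Instead pass to the fixed field $M$ of a closed subgroup isomorphic to $\IZ_p\rtimes\IZ_p$: a copy $N_0\cong\IZ_p^2$ inside $N$ if $N\not\cong\IZ_p$, and $\langle N,\sigma\rangle$ for a suitable $\sigma\in P\setminus N$ if $N\cong\IZ_p$. Then every finite extension $L$ of $M$ has $\dim_{\IF_p}L^\times/L^{\times p}=2$ (Lemma~\ref{lem:finite extensions classical case}), and this rank-two property does all the work.
\begin{itemize}
\item Rigidity: for $L=M(\sqrt[p]{x})$, write $a+b\sqrt[p]{x}=(\sqrt[p]{x})^iy^jz^p$ and take norms. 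This gives $(-1)^{p-1}a^p+xb^p\in x^iM^{\times p}$, so every $x\notin M^{\times p}$ is $p$-rigid for $S=T_S=M^{\times p}$.
\item Index: $(M^\times:T_S)=p^2>p$, so Theorem~\ref{thm:creation_p-rigid} applies.
\item Characteristic: $\charK M=p$ is excluded, because a root $\theta$ of $X^p-X-c$ would give three independent Artin--Schreier classes over $M(\theta)$.
\end{itemize}

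Since $M$ is an infinite extension of $\Fix P$, the descent must run through normal extensions via Theorem~\ref{thm:henseling down}(i), not (ii). In the first case the tower is $\Fix N_0\supseteq\Fix N\supseteq\Fix P$. In the second case one first goes up from $M$ to $\Fix N$, then down to $\Fix P$, with a separate argument when the prolongation to $\Fix N$ is not in $H(\Fix N)$. This is where normality of $N$ is actually used. Only the last step $\Fix P/K$ uses (iii).

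For $p=2$ there are two further fixes.
\begin{itemize}
\item You must first replace $K$ by $K(\sqrt{-1})$. This makes $N$ torsion-free (using Lemma~\ref{lem:no_normal_C2}) and removes the sign in the norm computation.
\item A proper coarsening with real closed residue field cannot be argued away, since such coarsenings do occur. Instead, replace the valuation by the coarsest such coarsening. It remains tamely branching because $(wK:2wK)=(vK:2vK)\ge 4$ (Lemma~\ref{lem:tamely branching in H(K)}).
\end{itemize}
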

Recovering valuations lies at the very heart of extracting arithmetic information from Galois-theoretic data. Both Neukirch’s Theorem \ref{thm:Neukirch} and Pop’s Theorem \ref{thm:Pop} crucially rely on this principle. For example, Neukirch’s Theorem builds on the following result \cite[Satz 1]{Neukirch68}, which can be viewed as an early precursor to the full Galois-theoretic characterisation of henselian valued fields.

\begin{thm}[Neukirch 1968]
Let $K \subseteq \IQ^{\alg}$ be a field such that
\begin{enumerate}[(i)]
    \item $G_K$ is pro-solvable and
    \item the Brauer group $\operatorname{Br}(K)$ is not a $p$-group for any prime $p$ and not a $\{2,3\}$-group (i.e., a group where each element has order $2^n 3^m$).
\end{enumerate}
Then $K$ admits a henselian valuation.
\end{thm}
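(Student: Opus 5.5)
The plan is to deduce the theorem from Theorem~\ref{thm:Galois code henselianity}, or more directly from the principle behind it, by locating a valuation whose decomposition group exhausts $G_K$. Since $K\subseteq\IQ^{\alg}$, every non-trivial valuation on $K$ is the restriction of a $p$-adic place of $\IQ^{\alg}$ for some rational prime $p$. Its decomposition group $D_{\mathfrak P}\le G_K$ is then the absolute Galois group of the henselisation $K_{\mathfrak P}=K^h$, which is an algebraic extension of $\IQ_p$. Unlike the rest of the paper, I would freely use the Brauer group and the local--global principle, because the hypothesis is itself cohomological.

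\textbf{Step 1: from the Brauer group to places.} Write $K$ as a direct limit of number fields $K_i$. Pass the Albert--Brauer--Hasse--Noether sequence to the limit. This yields an injection of $\operatorname{Br}(K)$ into $\bigoplus_{\mathfrak P}\operatorname{Br}(K_{\mathfrak P})$, taken over the places of $K$ (a restricted product in the limit). By local class field theory, $\operatorname{Br}(K_{\mathfrak P})\cong\IQ/\IZ$ with the local degrees divided out. Hence $\operatorname{Br}(K_{\mathfrak P})[\ell]\neq0$ holds exactly when the $\ell$-part of the supernatural degree $[K_{\mathfrak P}:\IQ_p]$ is finite. Equivalently, $D_{\mathfrak P}$ contains an open subgroup of a Sylow $\ell$-subgroup of $G_{\IQ_p}$. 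Archimedean places only contribute $2$-torsion.

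The hypothesis gives two distinct primes $\ell\neq\ell'$ with $\operatorname{Br}(K)[\ell]\neq0\neq\operatorname{Br}(K)[\ell']$. We may take $\ell\ge5$, since $\operatorname{Br}(K)$ is not a $\{2,3\}$-group. So there are finite places $\mathfrak P$ and $\mathfrak P'$ whose decomposition groups have "large" $\ell$-part and $\ell'$-part, respectively.

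\textbf{Step 2: henselianity.} The goal is to show that some such $\mathfrak P$ has $D_{\mathfrak P}=G_K$, i.e.\ that $\mathfrak P$ is the unique extension of its restriction. I would work inside a Hall $\{\ell,\ell'\}$-subgroup $H$ of $G_K$, which exists and is unique up to conjugacy because $G_K$ is prosolvable.

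First, use the local structure: a Sylow $\ell$-subgroup of $D_{\mathfrak P}$, for $\ell\ne p$, is open in $\IZ_\ell\rtimes\IZ_\ell$ (Proposition~\ref{prop:Syl G_Q_p}). It therefore contains the non-trivial normal abelian inertia part $\IZ_\ell$.

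Second, use F.\,K.~Schmidt's Theorem~\ref{thm:schmidt} in its decomposition-group form: two distinct places of $\IQ^{\alg}$ are independent, so the intersection of their decomposition groups contains no open subgroup of either. Then argue via Hall--Sylow conjugacy that conjugates of $D_{\mathfrak P}\cap H$ must coincide. Concretely, the inertia $\IZ_\ell$ of $\mathfrak P$ normalises a Sylow subgroup of $H$ that lies in two distinct decomposition groups, which contradicts independence. This forces the $G_K$-orbit of $\mathfrak P$ to be a single point.

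The role of the hypothesis $\ell\ge5$, i.e.\ of excluding $\{2,3\}$, is to rule out the solvable configurations of small order, for example $S_3$- or $S_4$-type permutations of places, in which distinct decomposition groups can still jointly generate a solvable group.

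\textbf{Main obstacle.} I expect Step 2 to be the main obstacle, in particular the group-theoretic argument that converts "two prime parts of the Brauer group are non-zero" into "the set of places is a single $G_K$-orbit point". I would first try an approach modelled on Theorem~\ref{thm:henseling down}\ref{item:hens_down3}: show that the valuation induced on the fixed field of a Sylow $\ell$-subgroup is henselian, and then go down to $K$ using the $\ell'$-information to exclude conjugate places. The Brauer-theoretic bookkeeping in Step 1 should be routine.
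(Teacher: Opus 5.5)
The paper does not prove this theorem; it is quoted from Neukirch as background. So your argument has to stand on its own, and it does not. Step~2, which you rightly flag as the crux, contains no actual argument, and it carries the entire content of the theorem.

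Step~1 only produces places $\mathfrak P,\mathfrak P'$ at which $D_{\mathfrak P}$, resp.\ $D_{\mathfrak P'}$, contains an open subgroup of a Sylow $\ell$-, resp.\ $\ell'$-subgroup of the local group. What must be shown is that $G_K$ fixes a single place, i.e.\ that $G_K$ lies in one decomposition group. By Theorem~\ref{thm:Galois code henselianity}, it would already suffice to show this for a suitable Sylow subgroup of $G_K$.

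Neither of your tools addresses this. F.\,K.~Schmidt (Theorem~\ref{thm:schmidt}) only gives $D_{\mathfrak P}\cap D_{\mathfrak Q}=1$ for $\mathfrak P\ne\mathfrak Q$. Hall conjugacy only relates $H$ to its conjugates. Neither says anything about whether $G_K$ moves $\mathfrak P$.

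Your ``concretely'' sentence does not close the gap. You give no reason why the inertia $\IZ_\ell$ at $\mathfrak P$ should normalise a Sylow subgroup of $H$, nor why that subgroup should lie in two distinct decomposition groups. Moreover, a subgroup lying in two distinct decomposition groups is trivial by the very independence you invoke, so no contradiction arises.

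The paper's own example shows that new input is unavoidable. In the remark motivating Definition~\ref{def:tamely_branch}, part (c), the intersection $L$ of two real closures of $\IQ$ has $G_L\cong\IZ_2\rtimes C_2$. This group is pro-$2$, hence prosolvable, and is generated by two distinct decomposition groups that intersect trivially. Yet $L$ carries no non-trivial henselian valuation. It is excluded only because $\operatorname{Br}(L)$ is a $2$-group.

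So a proof must use the second prime $\ell'$, prosolvability, and $\ell\ge 5$ in an essential, quantitative way. Your sketch names all three but uses none of them. The remark about ``$S_3$- or $S_4$-type configurations'' is a guess, not an argument.

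Even granting such a locality statement, your set-up misses cases.
\begin{itemize}
\item The place witnessing $\ell$-torsion may lie above $\ell$ itself. Then the Sylow $\ell$-subgroup of $D_{\mathfrak P}$ is open in a Sylow $\ell$-subgroup of $G_{\IQ_\ell}$. Such a group is not of the form $\IZ_\ell\rtimes\IZ_\ell$ and has no non-trivial normal abelian closed subgroup, by Theorem~\ref{thm:Galois code henselianity}, exactly as in part~(1) of \ref{main_step:1.5}. So your inertia mechanism is unavailable there.
\item If $\ell'=2$, the $\ell'$-witness may be a real place rather than a finite one, as you tacitly assume.
\end{itemize}
A complete proof would first have to secure a witness at a place not above the relevant prime, or handle the wild case separately. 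It would then have to prove that a Sylow or Hall $\{\ell,\ell'\}$-subgroup of $G_K$ lies in a single decomposition group. As written, Step~2 restates the theorem rather than proving it.
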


Pop extended Neukirch's result \cite[Satz 1.13]{Pop88}, keeping the assumption of pro-solvability of $G_K$, but replacing condition (ii) with a more general one. Incidentally, in \cite{Koenigsmann95}, this generalised result was used to show that fields $K$ with $G_K \cong G_{\IQ_p}$ admit a henselian valuation, leading to a complete proof of our Main Theorem. Nevertheless, the full Galois characterisation of henselianity, as stated above, had not yet been established.

Further progress was made in \cite{Engler98}, where the authors proved a $p$-henselian analogue of Theorem \ref{thm:Galois code henselianity}, extending earlier results in \cite{Engler94, Efrat952} for the case $p = 2$. A criterion for henselian valuations was later formulated in \cite{Koenigsmann01}, although it required the additional assumption that the Sylow $p$-subgroups of $G_K$ are solvable. This restriction was subsequently removed in \cite{Koenigsmann03} via the``henseling down'' techniques discussed in Section \ref{sec:henseling down}. The result and its proof were also presented in Engler and Prestel's exposition \cite[Sec. 5.4]{Engler05}, which our treatment here generally follows.
We adapt some of their arguments for clarity.

Before turning to the proof of Theorem~\ref{thm:Galois code henselianity}, we will motivate Definition~\ref{def:tamely_branch} by indicating why we can only expect to recover tamely branching henselian valuations from the absolute Galois group.
\begin{rem}
\begin{enumerate}[(a)]
    \item Let $k$ be a field of characteristic 0 that does not admit a non-trivial henselian valuation, e.g., $k = \IQ$. Further, let $(K,v)$ be a henselian valued field with divisible value group $vK$ and residue field $Kv = k$, e.g., the field of Puiseux series over $k$ (see Example~\ref{ex:puiseux}).
    Then $G_K \cong G_k$ by Facts~\ref{fact:inertia group} and \ref{fact:ramification group}. By our assumption on $k$, we cannot expect to recover $v$ from the isomorphism class of $G_K$. Therefore, we require $vK$ to be non-$p$-divisible for at least one prime $p$, motivating \ref{item:tam_branch2} in Definition~\ref{def:tamely_branch}.

    \item Consider $K = \IC\laurent{t}$ with the $t$-adic valuation. We saw in Example~\ref{ex:Galois calculations} that $G_K \cong G_{\IF_q}$ for any prime $q$. Therefore, we cannot expect to recover the $t$-adic valuation from the isomorphism class of $G_K$. In general, if $\charK Kv \ne p$, $(vK : pvK) = p$, and $p \nmid G_{Kv}$, then $G_K$ has Sylow $p$-subgroup $\IZ_p$. However, this is the absolute Galois group of $\bigcup_{p \nmid n} \IF_{q^n}$, which admits only the trivial valuation, hence motivating \ref{item:tam_branch3}.
    
    \item If $p > 2$, then $p \mid G_{Kv}$ implies $p^2 \mid G_{Kv}$ by the Artin-Schreier Theorem~\ref{thm:Artin-Schreier}. There is yet another issue if $p = 2$ and $2 \mid G_{Kv}$, but $4 \nmid G_{Kv}$.
    The field $K = \IR\laurent{t}$, endowed with the $t$-adic valuation $v$, is henselian with $\charK Kv \ne 2$, $(vK : 2vK) = 2$, and $G_{Kv} \cong C_2$.
    By Theorem~\ref{thm:Splitting of D/R}, its absolute Galois group is $\widehat{\IZ} \rtimes C_2$ (which has Sylow 2-subgroup $\IZ_2 \rtimes C_2$). However, it is possible to construct a field $L$ that does not admit a non-trivial henselian valuation with $G_L \cong \IZ_2 \rtimes C_2$ as follows.
    Let $L$ be the intersection of two distinct real closures of $\IQ$ in $\IQ^{\alg}$, and let $\sigma$ and $\tau$ be the involutions in $G_{\IQ}$ corresponding to the two real closures; together they topologically generate $G_L$. By Artin-Schreier, they do not commute and $\sigma\tau$ generates an infinite cyclic group $Z$, where $\sigma$ acts on $Z$ by conjugation via $\sigma\tau \mapsto \tau\sigma = (\sigma\tau)^{-1}$. Hence, $G_L = \langle\sigma, \tau\rangle$ is isomorphic to $\overline{Z} \rtimes \langle \sigma \rangle$. If we choose the two real closures such that $\sigma$ and $\tau$ lie in the same Sylow $2$-subgroup of $G_{\IQ}$, then $G_L$ will be of the form $\IZ_2 \rtimes C_2$.
\end{enumerate}
\end{rem}

Furthermore, note that condition \ref{item:tam_branch1} on the characteristic of the residue field allows for a natural choice of the group $N$ in Theorem~\ref{thm:Galois code henselianity} (see below).

For the remainder of this section, we use the following notation. Given a field $K$ and a closed subgroup $H \leq G_K$, we write $\Fix H$ for the (absolute) fixed field of $H$ inside $K^{\mathrm{sep}}$.

\begin{lem} \label{lem:Galois_hens-->}
    Let $K$ be a field with a henselian valuation $v$, tamely branching at $p$, and $w$ its unique extension to $L \coloneqq \Fix P$, where $P$ is Sylow $p$-subgroup of $G_K$. Then, the absolute inertia group $I_L$  is a non-trivial normal abelian closed subgroup of $P$. Moreover, $P \not\cong  C_2, \IZ_p, \IZ_2 \rtimes C_2$.
\end{lem}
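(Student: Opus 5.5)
Since $v$ is henselian, $w$ is henselian on $L=\Fix P$ and $D_L=G_L=P$. The plan is to read off $I_L$ and $P$ from the First and Second Exact Sequences for $L^{\sepc}/L$, and then check the three exclusions using the tamely branching hypothesis.

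First, the inertia group. Condition (i) of Definition~\ref{def:tamely_branch} gives $\charK Lw=\charK Kv\ne p$. The ramification group $R_L$ is the unique Sylow $\charK Kv$-subgroup of $I_L$ (or trivial in residue characteristic $0$). It is a closed subgroup of the pro-$p$ group $P$, so it is trivial. The Second Exact Sequence then gives
\[
I_L\cong\Hom(wL^{\sepc}/wL,\mu_{L^{\sepc}w}),
\]
which is abelian. It is normal and closed in $D_L=P$ by Fact~\ref{fact:inertia group}. For non-triviality, I will show $wL\ne p\,wL$. Since $L/K$ is an algebraic prime-to-$p$ extension, each finite subextension has ramification index prime to $p$ by the fundamental inequality. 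An element $\gamma\in vK\setminus p\,vK$ therefore stays non-$p$-divisible in $wL$. Indeed, if $\gamma=p\delta$ with $\delta\in wL$, then $m\delta\in vK$ for some $m$ prime to $p$, and writing $1=ap+bm$ gives $\gamma=p(a\gamma+b m\delta)\in p\,vK$, a contradiction. Theorem~\ref{thm:Splitting of D/R} then yields $I_L\cong\IZ_p^{r_p}$ with $r_p\ge1$, so $I_L\neq 1$. The same argument shows the induced map $vK/p\,vK\to wL/p\,wL$ is injective, and in fact an isomorphism, since the index is coprime to $p$ at each finite level. Hence $r_p(wL)=r_p(vK)$.

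Next, the exclusions. The group $P$ contains $I_L\cong\IZ_p^{r}$ with $r\ge1$, which is torsion-free and infinite, so $P\not\cong C_2$. Since $R_L=1$, Theorem~\ref{thm:Splitting of D/R} gives $P\cong\IZ_p^{r}\rtimes G_{Lw}$, where $G_{Lw}$ is a Sylow $p$-subgroup of $G_{Kv}$ (residue extensions being prime-to-$p$). If $(vK:p\,vK)>p$, then $r\ge2$, so $P\supseteq\IZ_p^2$. But $\IZ_p$ has no such subgroup, and neither does $\IZ_2\rtimes C_2$, whose open subgroups are either $\IZ_2$ or again of that dihedral shape. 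If $(vK:p\,vK)=p$, then $r=1$ and condition (iii) gives $p^2\mid|G_{Kv}|$, so $G_{Lw}$ is a non-trivial pro-$p$ group that is not $C_2$. Then $P\cong\IZ_p\rtimes G_{Lw}$ has a non-trivial quotient $G_{Lw}$ and a normal subgroup $\IZ_p$ with non-trivial quotient, so $P\not\cong\IZ_p$ by comparing ranks/open subgroup structure.

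The main obstacle is excluding $P\cong\IZ_2\rtimes C_2$ when $p=2$ and $r=1$. For this I compare abelianisations, or equivalently the number of open index-$2$ subgroups. In $\IZ_2\rtimes C_2$ (dihedral action) the abelianisation is $C_2\times C_2$. Since $G_{Lw}$ is a non-trivial pro-$2$ group with order divisible by $4$, the Artin–Schreier Theorem~\ref{thm:Artin-Schreier} rules out $G_{Lw}\cong C_2$. So $G_{Lw}$ is either infinite or finite of order at least $4$; but a finite non-trivial absolute Galois group has order $2$, so $G_{Lw}$ is infinite. A quotient $G_{Lw}$ of $\IZ_2\rtimes C_2$ by a normal subgroup $N$ with $N\cong\IZ_2$ must be $C_2$ once we check that every normal subgroup isomorphic to $\IZ_2$ has finite index $2$ in a suitable sense. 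The only infinite procyclic normal subgroups are the open rotations, and their quotients are finite dihedral groups, not absolute Galois groups of order at least $4$ by Artin–Schreier. This contradiction finishes the proof.
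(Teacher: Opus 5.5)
Your proposal follows the paper's proof. As there, you use $D_L=G_L=P$, get $R_L=1$ from $\charK Lw\ne p$, obtain $I_L\cong\IZ_p^{r}$ and $P\cong I_L\rtimes G_{Lw}$ from Theorem~\ref{thm:Splitting of D/R}, and pass the tamely branching data from $(K,v)$ to $(L,w)$ along the prime-to-$p$ extension $L/K$. You also usefully spell out the group-theoretic exclusions that the paper leaves implicit.

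Two justifications need tightening.
\begin{itemize}
\item The claim $p\nmid e(E/K)$ for finite $E\subseteq L$ does not follow from the fundamental inequality $[E:K]\ge ef$, which allows $p\mid e$. It follows from the Lemma of Ostrowski (Corollary/Definition~\ref{cor/def:defect}): since $v$ is henselian, $[E:K]=d\,e\,f$, so $e\mid[E:K]$.
\item In the case $r=1$, the properties you list for excluding $\IZ_p$ (a non-trivial quotient, and a normal $\IZ_p$ with non-trivial quotient) are also satisfied by $\IZ_p\supseteq p\IZ_p$, so they do not suffice. The clean argument handles $\IZ_p$ and $\IZ_2\rtimes C_2$ together. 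In either group, a closed subgroup isomorphic to $\IZ_p$ lies in the procyclic normal subgroup; for $\IZ_2\rtimes C_2$ this holds because every element outside it is an involution. Such a subgroup is therefore open, so $G_{Lw}\cong P/I_L$ would be finite. This contradicts your own observation, via Theorem~\ref{thm:Artin-Schreier}, that $G_{Lw}$ is infinite.
\end{itemize}
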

\begin{proof}
    By assumption and  Fact~\ref{fact:inertia group}, $D_L = G_L = P$ and $I_L$ is a normal closed subgroup of $P$. Since $\charK Kv \ne p$, the absolute ramification subgroup $R_L$ is trivial by Fact~\ref{fact:ramification group}. Using Theorem~\ref{thm:Splitting of D/R}, we compute
    \begin{align}
        I_L & \cong \IZ_p^{\dim_{\IF_p}w L/p w L} \label{eq:I_L} \\
        P = D_L & \cong I_L \rtimes G_{Lw}. \label{eq:P}
    \end{align}
    As $P$ is a Sylow $p$-subgroup of $G_K$, the extension $L/K$ is prime-to-$p$, and hence $p \nmid [Lw : Kv]$ and $(wL : p wL) = (vK : p vK)$. Therefore, $w$ is tamely branching at $p$ as well, so $I_L$ is indeed non-trivial abelian by \eqref{eq:I_L} and $P \not\cong  C_2, \IZ_p, \IZ_2 \rtimes C_2$ by \eqref{eq:P}.
\end{proof}

This lemma proves the ``only if'' part in Theorem~\ref{thm:Galois code henselianity}; we will now focus on the other direction. We prove it in two steps: First, we establish the theorem in the ``classical'' case where $G_K \cong \IZ_p \rtimes \IZ_p$ (the name stems from the fact that any Sylow $p$-subgroup of $G_{\IQ_q}$ is of the shape $\IZ_p \rtimes \IZ_p$ for $p \ne q$, see Proposition~\ref{prop:Syl G_Q_p}). In a second step, we reduce the general case to the ``classical'' one.

\begin{prop}[Classical case]
    \label{prop:classical case}
    Let $K$ be a field with $G_K \cong \IZ_p \rtimes \IZ_p$. Assume additionally that $\sqrt{-1} \in K$ if $p = 2$. Then $K$ admits a henselian valuation tamely branching at $p$.
\end{prop}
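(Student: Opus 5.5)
Let $G \coloneqq G_K \cong \IZ_p \rtimes \IZ_p$, with normal subgroup $A \cong \IZ_p$ (the left factor) and quotient $G/A \cong \IZ_p$. The plan is to manufacture a valuation from $p$-rigid elements via Theorem~\ref{thm:creation_p-rigid}, and then upgrade it to a henselian one using the tree structure of henselian valuations.

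\textbf{Step 1: roots of unity and Kummer data.} Since $G$ is pro-$p$, $\charK K \ne p$: in characteristic $p$ every finitely generated pro-$p$ quotient behaviour would be free, and Lemma~\ref{lem:C_pInC_p^2Charp} combined with the relation in $\IZ_p\rtimes\IZ_p$ would give a contradiction. Concretely, I would argue that $G$ is not free pro-$p$, although a pro-$p$ absolute Galois group in characteristic $p$ is free by Artin--Schreier theory. Moreover $[K(\zeta_p):K]$ divides $p-1$ and is a power of $p$, so $\zeta_p \in K$ (for $p=2$ we use the hypothesis $\sqrt{-1}\in K$). Then $K^\times/K^{\times p} \cong \Hom(G,\IZ/p)$ has dimension $2$. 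For $p\neq 2$ the Frattini quotient of $\IZ_p\rtimes\IZ_p$ is $(\IZ/p)^2$.

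\textbf{Step 2: creating a valuation.} Let $E \coloneqq \Fix A$. This is the unique $\IZ_p$-extension of $K$ whose Galois group over it is the normal abelian subgroup, and its $C_p$-layer is $K(\sqrt[p]{a})$ for a distinguished class $a \in K^\times/K^{\times p}$. Take $S \coloneqq \langle -1, K^{\times p}, \dots\rangle$, aiming for $S = K^{\times p}$, which contains $-1$ since $\zeta_p \in K$ and, for $p=2$, $-1 = \sqrt{-1}^2$. We need $(K^\times : T_S) > p$, i.e.\ that every element outside $K^{\times p}$ is $p$-rigid, so that $T_S = K^{\times p}$ and the index is $p^2$. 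For $x \notin K^{\times p}$ and $s\in K^\times$, the element $y = 1-xs$ satisfies $y \in N_{K(\sqrt[p]{x})/K}$ (the norm of $1-\sqrt[p]{xs}$ up to the standard identity), so $(x,y)$ is a trivial symbol. I would show by a group-theoretic computation that two independent classes $x,y$ in $K^\times/K^{\times p}$ can never satisfy $y \in N(K(\sqrt[p]{x}))$. By Lemma~\ref{lem:C_pCriterion}-type reasoning, this amounts to the statement that the $(C_p\times C_p)$-extension $K(\sqrt[p]{x},\sqrt[p]{y})$ cannot be "split" in the sense of embedding into an extension with Galois group $\IZ/p^2\times\IZ/p$ twisted appropriately. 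This should follow from the explicit presentation $\langle \sigma,\tau \mid \tau\sigma\tau^{-1}=\sigma^{q}\rangle$ with $q\equiv 1 \bmod p$, whose only $(C_p\times C_p)$-quotient has a nonsplit lift. Hence $1-xs \in \bigcup_i x^iK^{\times p}$, and Theorem~\ref{thm:creation_p-rigid} yields a nontrivial valuation $v$ with $vK \ne pvK$, which in fact is $K^{\times p}$-compatible: $1+\Mm_v \subseteq K^{\times p}$.

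\textbf{Step 3: henselianity.} By Corollary~\ref{cor:Kummer p-henselianity criterion}, provided $\charK Kv\neq p$, $v$ is $p$-henselian, hence henselian since $G_K$ is pro-$p$. To get $\charK Kv \ne p$, I would use that if $\charK Kv = p$ then $1+\Mm_v\subseteq K^{\times p}$ forces, via a Pop's-Lemma-style argument (Lemma~\ref{lem:Pop}) and the finiteness of $K^\times/K^{\times p}$, either a contradiction or the option of passing to the coarsening $v_0$ of the Standard Decomposition at $p$. That coarsening remains $p$-compatible and retains non-$p$-divisible value group, since the $\IZ_p$-inertia computed through Theorem~\ref{thm:Splitting of D/R} must be visible. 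This is the step I expect to be the main obstacle. Finally, condition (iii) of Definition~\ref{def:tamely_branch} holds: if $(vK:pvK)=p$, then by Theorem~\ref{thm:Splitting of D/R} $G_{Kv}$ must be a quotient of $G$ by a $\IZ_p$ inertia subgroup, so $G_{Kv}\cong\IZ_p$, and $p^2 \mid |G_{Kv}|$.
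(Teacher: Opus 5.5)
Your overall architecture is reasonable. Your henselianity step, which takes $K^{\times p}$-compatibility from the construction $\Oo(S,H')$ in the proof of Theorem~\ref{thm:creation_p-rigid} (not its statement) and then applies Corollary~\ref{cor:Kummer p-henselianity criterion}, is valid. It is also genuinely shorter than the paper's weak-approximation argument, which builds a $C_p^{p+1}$-extension of a $C_p$-extension in which $v$ splits.

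\textbf{First gap: $p$-rigidity.} This is the core step, and it is not established. You correctly reduce it to showing that $N_{L/K}(L^\times)\subseteq\langle x\rangle K^{\times p}$ for $L=K(\sqrt[p]{x})$. Note that $s$ must range over $K^{\times p}$, not $K^\times$, for $1-xs$ to be a norm. The embedding-problem argument you sketch is neither carried out nor correctly set up:
\begin{itemize}
\item The implication ``$y=N_{L/K}(\beta)$ $\Rightarrow$ $K(\sqrt[p]{x},\sqrt[p]{y})$ lies in a Galois extension whose group is the Heisenberg group of order $p^3$ ($D_4$ if $p=2$)'' needs an explicit Kummer construction. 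It is in the spirit of Lemma~\ref{lem:C_pCriterion}, but for a different embedding problem, and you do not give it.
\item For odd $p$ the relevant group is the Heisenberg group of exponent $p$, not a twisted $\IZ/p^2\times\IZ/p$. Indeed, $\IZ_p\rtimes\IZ_p$ with $v_p(q-1)=1$ \emph{does} surject onto $\IZ/p^2\rtimes\IZ/p$, so the ``no lift'' claim fails for that group.
\item For $p=2$, your condition $q\equiv 1 \bmod p$ is vacuous. If $q\equiv 3\bmod 4$, then $\sigma\mapsto r$, $\tau\mapsto s$ maps $\IZ_2\rtimes\IZ_2$ onto $D_4$. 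You would first have to extract $q\equiv1\bmod 4$ from $\sqrt{-1}\in K$, for instance by noting that $K^\times/K^{\times 4}\cong\Hom(G_K,\IZ/4)$ must be free of rank $2$ over $\IZ/4$.
\end{itemize}
None of this is needed. By Lemma~\ref{lem:finite extensions classical case}, $G_L\cong\IZ_p\rtimes\IZ_p$, so $L^\times/L^{\times p}$ is $2$-dimensional with basis $\sqrt[p]{x},y$. Writing $a+b\sqrt[p]{x}=(\sqrt[p]{x})^iy^jz^p$ and taking norms gives $a^p\pm xb^p\in x^iK^{\times p}$, with the sign for $p=2$ absorbed by $\sqrt{-1}\in K$. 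This is exactly the rigidity the paper proves in its Step~2.

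\textbf{Second gap: $\charK Kv\ne p$.} Your henselianity argument needs this. The reason you give, that ``the $\IZ_p$-inertia must be visible'', is circular: nothing in the group structure forces a henselian valuation on $K$ to have non-$p$-divisible value group. The missing input is the computation from Step~4 of the paper's proof. Suppose $\charK Kv=p$ and $0<vx\le vp$. Since $1+\Mm_v\subseteq K^{\times p}$, we can write $1+x=(1+z)^p$ with $z\in\Mm_v$, and expanding forces $vx=p\,vz$. Hence:
\begin{itemize}
\item $\Conv(vp)$ is $p$-divisible;
\item so $v_0K/pv_0K\cong vK/pvK\neq 0$;
\item and $v_0$ has residue characteristic $0$ and is still $K^{\times p}$-compatible, since $\Mm_{v_0}\subseteq\Mm_v$.
\end{itemize}
So $v_0$ does the job. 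The paper instead first coarsens by the maximal $p$-divisible convex subgroup and then excludes $\charK Kv=p$ by the same computation.

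\textbf{Two smaller points.}
\begin{itemize}
\item In Step~1, Lemma~\ref{lem:C_pInC_p^2Charp} alone yields no contradiction when $p^2\mid q-1$. You therefore genuinely rely on freeness of pro-$p$ absolute Galois groups in characteristic $p$ (Witt, or $\cd_p\le 1$). The paper replaces this by an explicit trace computation showing $\dim_{\IF_p}K(x)/\wp(K(x))\ge 3$.
\item In your last step, a quotient of $\IZ_p\rtimes\IZ_p$ by a normal copy of $\IZ_p$ can be $C_{p^s}\rtimes\IZ_p$. To get $p^2\mid |G_{Kv}|$, use that the splitting in Theorem~\ref{thm:Splitting of D/R} embeds $G_{Kv}$ into the torsion-free group $G_K$. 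Also $G_{Kv}\neq 1$, since $G_K$ is not procyclic.
\end{itemize}
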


Some of the ideas in the proof of the classical case are inspired by cohomological considerations; as such, they will look similar to those in Section~\ref{sec:cohomology disguise}. Otherwise, our main technical tool is the notion of rigid elements, as introduced in Section~\ref{sec:rigid elements} (Theorem~\ref{thm:creation_p-rigid} will yield the desired valuation on $K$).

Before we establish the classical case, we prove the following basic lemma, which allows us to pass to finite extensions of $K$ without changing the shape of the Galois group.
\begin{lem}
    \label{lem:finite extensions classical case}
    Let $K$ be a field with $G_K \cong \IZ_p \rtimes \IZ_p$. Then for every finite extension $L/K$, also $G_L \cong \IZ_p \rtimes \IZ_p$ (with possibly different action in the semi-direct product).
\end{lem}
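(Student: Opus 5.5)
The statement is purely group-theoretic. Fix $G \coloneqq G_K \cong \IZ_p \rtimes \IZ_p$, written with normal factor $A \cong \IZ_p$ and complement $B \cong \IZ_p$, so that every element of $G$ is uniquely $ab$ with $a \in A$, $b \in B$. For a finite extension $L/K$ we have $G_L = G_{L'}$ for the separable part $L' = L \cap K^{\sepc}$ of $L$, since a purely inseparable extension does not change the absolute Galois group. Hence it suffices to show that every open subgroup $H \le G$ is again of the form $\IZ_p \rtimes \IZ_p$. I would not use any field theory beyond this reduction.

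\textbf{Step 1: the normal part.} Set $A_H \coloneqq H \cap A$. This is an open subgroup of $A \cong \IZ_p$, hence $A_H = p^m\IZ_p \cong \IZ_p$ for some $m$. Since $A$ is normal in $G$, the subgroup $A_H$ is normal in $H$.

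\textbf{Step 2: the quotient and a complement.} The quotient $H/A_H$ embeds into $G/A \cong B \cong \IZ_p$. Its image is open because $H$ is open, so $H/A_H \cong p^n\IZ_p \cong \IZ_p$. Now choose $h \in H$ mapping to a topological generator of $H/A_H$. Then $C \coloneqq \overline{\langle h\rangle}$ is procyclic, and it surjects onto $H/A_H \cong \IZ_p$. I will argue that $C \cong \IZ_p$ with $C \cap A_H = 1$. First, $C$ is a pro-$p$ procyclic group, so $C$ is a quotient of $\IZ_p$. Since it surjects onto the infinite group $\IZ_p$, it must be isomorphic to $\IZ_p$. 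The composite $\IZ_p \cong C \to H/A_H \cong \IZ_p$ is then a surjection $\IZ_p \to \IZ_p$, and any such map is an isomorphism because $\IZ_p$ is Hopfian, its quotients by nonzero closed subgroups being finite. So $C \cap A_H = 1$, and $H = A_H \rtimes C \cong \IZ_p \rtimes \IZ_p$, with $C$ acting on $A_H$ by restricting the conjugation action.

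The main point requiring care is that the complement lifts. This is exactly the freeness, or projectivity, of $\IZ_p$ as a pro-$p$ group, which the Hopfian argument in Step 2 handles. I would also remark that the action may change, for instance from $\sigma \mapsto \sigma^{p}$ to a power $\sigma \mapsto \sigma^{p^{k}}$, which is why the statement says ``possibly different action''.
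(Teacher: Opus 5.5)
Your proof is correct and follows the paper's argument: intersect $G_L$ with the normal copy of $\IZ_p$, project onto the quotient copy, observe that both images are open in $\IZ_p$ and hence isomorphic to $\IZ_p$, and split the restricted extension. Your Step~2 lifts a topological generator of $H/A_H$ to a copy of $\IZ_p$ meeting $A_H$ trivially, which justifies the splitting that the paper only asserts. Your closing aside is off, though: $\sigma \mapsto \sigma^{p}$ is not an automorphism of $\IZ_p$, so the action is always through a unit of $\IZ_p$ (the $\sigma \mapsto \sigma^{p}$ of Remark~\ref{rem:Z_q-action} concerns $\IZ_q \rtimes \IZ_q$ with $q \ne p$).
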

\begin{proof}
    Let
    \[
    \begin{tikzcd}
        1 \arrow[r] & \IZ_p \arrow[r, "\iota"] & G_K \arrow[r, "\pi"] & \IZ_p \arrow[r] & 1
    \end{tikzcd}
    \]
    be the short exact sequence associated to the semidirect product for $G_K$. As $L/K$ is a finite extension, $G_L$ is a closed subgroup of finite index in $G_K$. Hence, $\iota^{-1}(G_L)$ and $\pi(G_L)$ will also be closed of finite index in their respective copies of $\IZ_p$. Therefore, $\iota^{-1}(G_L)$ and $\pi(G_L)$ are themselves isomorphic to $\IZ_p$. The associated short sequence with maps $\iota|_{\iota^{-1}(G_L)}$ and $\pi|_{G_L}$ remains exact and split, so the claim follows.
\end{proof}

Recall that an element $x \in K^\times \setminus K^{\times p}$ is called $p$-rigid with respect to $K^{\times p}$ if
\begin{equation}
    \label{eq:p-rigidity}
    K^{\times p} + xK^{\times p} \subseteq \bigcup_{i = 0}^{p - 1} x^iK^{\times p}.
\end{equation}
If all such $x$ are $p$-rigid and $(K^\times : K^{\times p}) > p$, then Theorem~\ref{thm:creation_p-rigid} allows us to create a valuation for the choice of parameters $S = T_S = K^{\times p}$. The key idea of the proof is that the right-hand side of \eqref{eq:p-rigidity} is given by the image of the norm map for $K(\sqrt[p]{x})/K$.

We organise the proof into six steps.

\begin{enumerate}[wide, labelwidth=!, labelindent=1.5em]
    \item[\normalfont\ref{hensel_step:1}] Show $\charK K \ne p$ and $\zeta_p \in K$. Deduce $\dim_{\IF_p}K^\times/K^{\times p} = 2$.
    \item[\normalfont\ref{hensel_step:2}] Show that every $x \in K \setminus K^{\times p}$ is $p$-rigid via Kummer theory.
    \item[\normalfont\ref{hensel_step:3}] Create a valuation $v$ satisfying $vK \ne pvK$ using Theorem~\ref{thm:creation_p-rigid}.
    \item[\normalfont\ref{hensel_step:4}] Show that $\charK Kv \ne p$.
    \item[\normalfont\ref{hensel_step:5}] Show that $v$ is henselian.
    \item[\normalfont\ref{hensel_step:6}] Show that $v$ is tamely branching at $p$.
\end{enumerate}

\begin{proof}[{Proof of Proposition~\ref{prop:classical case}}]
We follow the six steps outlined above.

\begin{enumerate}[wide, labelwidth=!, align=left, labelindent=0pt, label=\textbf{{Step} \arabic*.}]
    \item \label{hensel_step:1} Note that $K$ admits precisely two linearly disjoint $C_p$-extensions because $\IZ_p \rtimes \IZ_p$ has rank two.
    Suppose for now that $\charK K = p$. Then, by Artin-Schreier theory (Theorem~\ref{thm:Artin-Schrier theory}),
    \[
        \dim_{\IF_p} K/\wp(K) = 2.
    \]
    Let $a\wp(K)$ and $b\wp(K)$ be a basis for $K/\wp(K)$. The splitting fields of $X^p - X - a$ and $X^p - X - b$ are then linearly disjoint. If $x$ is a root of $X^p - X - a$, then $\alpha \coloneqq -x^{p-1}$ satisfies $\Tr(\alpha)=1$ (as in \eqref{eq:Tr(b)=1} in the proof of Lemma~\ref{lem:C_pInC_p^2Charp}), where $\Tr = \Tr_{K(x)/K}$.
    
    We claim that the elements $a\alpha$, $b\alpha$, and $b$ are $\IF_p$-linearly independent in $K(x)/\wp(K(x))$. Indeed, let $i_1, i_2, i_3 \in \IF_p$ be such that
    \[
        i_1 a \alpha + i_2 b \alpha + i_3b = y^p - y
    \]
    for some $y \in K(x)$. Taking the trace on both sides of the equation, we obtain
    \[
        i_1 a + i_2 b + p(i_3 b) = i_1 a + i_2 b = \Tr(y^p) - \Tr(y) = \Tr(y)^p - \Tr(y),
    \]
    which implies $i_1 = i_2 = 0$ by our choice of $a$ and $b$. Hence, $y^p - y - i_3b = 0$. By our choice of $b$, this implies $i_3 = 0$, and therefore, $\dim_{\IF_p} K(x)/\wp(K(x)) \ge 3$. Recall that Lemma~\ref{lem:finite extensions classical case} tells us that $G_{K(x)} \cong \IZ_p \rtimes \IZ_p$, which contradicts our initial observation applied to $K(x)$. Therefore, $\charK K \ne p$ and $\zeta_p \in K^{\sepc}$.
    
    Note that $[K(\zeta_p) : K] \le p - 1$ and the assumption that $G_K$ is a pro-$p$ group imply $\zeta_p \in K$. In particular, $\dim_{\IF_p} K^\times/K^{\times p} = 2$ by Kummer theory (Theorem~\ref{thm:Kummer theory}).
    
    \item \label{hensel_step:2}
    Let $x \in K^\times \setminus K^{\times p}$ and set $L \coloneqq K(\sqrt[p]{x})$. Choose two cosets $xK^{\times p}$, $yK^{\times p}$ that form an $\IF_p$-basis for $K^\times/K^{\times p}$. By Kummer theory, $\sqrt[p]{x}L^{\times p}$ and $yL^{\times p}$ are linearly independent in $L^\times/L^{\times p}$ and hence form a basis by Lemma~\ref{lem:finite extensions classical case}. Thus, we may write
    \[
        L^{\times} = \bigcup_{0 \le i, j < p} (\sqrt[p]{x})^i y^j L^{\times p}.
    \]
    For any $a, b \in K^\times$, there exist $i, j \in \{0, \ldots, p - 1\}$ and $z \in L^\times$ such that
    \[
        a + b\sqrt[p]{x} = (\sqrt[p]{x})^i y^j z^p.
    \]
    Note that $a + b\sqrt[p]{x}$ has minimal polynomial $(X - a)^p - b^px$. Applying the norm map to both sides yields
    \[
        (-1)^{p - 1} N_{L/K}(a + b\sqrt[p]{x}) = (-1)^{p - 1}a^p + xb^p = x^i y^{jp} N_{L/K}(z)^p \in x^i K^{\times p}.
    \]
    This proves the inclusion \eqref{eq:p-rigidity}: $x$ is $p$-rigid. (Note that we assumed $\sqrt{-1} \in K$ if $p = 2$.)
    
    \item \label{hensel_step:3} By Step 2, any $x \in K^\times \setminus K^{\times p}$ is $p$-rigid. Moreover, $(K^\times : K^{\times p}) = p^2 > p$. By the creation of valuations from $p$-rigid elements (Theorem~\ref{thm:creation_p-rigid}) for parameters $S = T_S = K^{\times p}$, we obtain a non-trivial valuation $v$ with $vK \ne p vK$. Additionally, we replace $v$ by the coarsening with respect to the maximal $p$-divisible convex subgroup. Then, $v$ will not contain any non-trivial $p$-divisible convex subgroups and $v$ remains non-$p$-divisible.

    \item \label{hensel_step:4} We now prove $\charK Kv \ne p$. Assume towards a contradiction that $\charK Kv = p$.
    Then, $\Conv(vp)$ is a non-trivial convex subgroup. By our choice of $v$, we know that $\Conv(vp)$ and hence $(0,vp] \subset vK$ is not $p$-divisible. So we may choose an element $x \in K$ such that
    \[
        0 < vx \le vp \and vx \notin p vK.
    \]
    In particular, $x \in K^\times \setminus K^{\times p}$ is $p$-rigid, so there exist $i \in \{0, \ldots, p - 1\}$ and $y \in K^\times$ such that $1 + x = x^i y^p$. Note that $v(1 + x) = 0$ implies
    \[
        i = 0,\ vy = 0{,} \text{ and } y \equiv 1 \pmod{\Mm_v},
    \]
    the latter as in \eqref{eq:x^p=1 mod M}. If we write $y = 1 + z$ with $z \in \Mm_v$, then
    \[
        vx = v((1 + z)^p - 1) \ge \min\{vp + vz, v(z^p)\}.
    \]
    But $vx < vp + vz$ implies $vx = v(z^p)$, and hence $vx \in pvK$, contrary to our choice of $x$.
    
    \item \label{hensel_step:5} Assume that $v$ is not henselian. By Lemma~\ref{lem:p-henselianity criterion}, there exists a Galois extension $L/K$ of degree $p$ with pairwise inequivalent prolongations $v_1, \ldots, v_g$  of $v$ to $L$ with $g > 1$. By the fundamental equality (Corollary~\ref{cor:fundamental equality}), $p = d\;\!efg$ and therefore
    \[
        d = e = f = 1 \and g = p,
    \]
    where $v_1, \ldots, v_p$ are thus immediate, incomparable prolongations (cf. Fact~\ref{fact:compatibility}). By Kummer theory, $L = K(\sqrt[p]{x})$ for some $x \in \Oo_v^{\times}$ (otherwise, we would have $e = p$). Set $x_0 \coloneqq \sqrt[p]{x} \notin L^{\times p}$.
    
    Applying the weak approximation theorem (Fact~\ref{fact:weak_approx}), we may find for each $i \in \{1, \ldots, p\}$,
    \begin{equation} \label{eq:x_i-from-weak}
        x_i \in \Mm_{v_i} \cap \bigcap_{j \ne i} \Oo_{v_j}^\times.
    \end{equation}
    As all $v_i$ are immediate, we have $v_iL = vK$ for each $i$.
    We may assume without loss of generality, that $v_ix_i \notin pvK$, as follows: if $v_ix_i \in pvK$, use the fact that $\Conv(v_ix_i)$ is non-trivial to find $y \in K$ with $vy \in (0,v_ix_i)$ and $vy \notin pvK$ (otherwise, as in Step 4, $(0,v_ix_i] \subset vK$ would be $p$-divisible, which cannot happen). Replacing $x_i$ by $x_i + y$, gives the desired element satisfying \eqref{eq:x_i-from-weak} and $v_ix_i \notin pvK$.
    
    We claim that $L(\sqrt[p]{x_0},\sqrt[p]{x_1}, \ldots, \sqrt[p]{x_p})/L$ has Galois group $C_p^{p + 1}$. By Kummer theory, it suffices to show that
    \[
        x_0^{\vphantom{i_p}i_0} x_1^{\vphantom{i_p}i_1} \cdots x_p^{i_p} \in L^{\times p} \text{ with $i_0, \ldots, i_p \in \{0, \ldots, p - 1\}$} \Longrightarrow i_0 = \ldots = i_p = 0.
    \]
    Evaluating this product by $v_j$ yields $i_j = 0$ for every $j = 1, \ldots, p$. Recall that $x_0 \notin L^{\times p}$, so $i_0 = 0$ as well.
    By Step 1 and Lemma~\ref{lem:finite extensions classical case}, we know that $L$ admits at most two linearly disjoint $C_p$-extensions, so our construction is impossible. We conclude that $v$ must be henselian.

    \item \label{hensel_step:6} Assume $(vK : pvK) = p$ and let $x \in K$ be such that $vx \notin p vK$. It remains to verify property \ref{item:tam_branch3}, that is, $p^2 \mid |G_{Kv}|$. If $G_{Kv} = 1$, then $\Oo_v^{\times} = \Oo_{v}^{\times p}$, since $(Kv)^{\times} = (Kv)^{\times p}$ and $1 + \Mm_v = (1 + \Mm_v)^p$ by henselianity. This would imply that there is only one $C_p$-extension, namely $K(\sqrt[p]{x})$. Hence $G_{Kv} \ne 1$ (and we are done by the Artin-Schreier Theorem~\ref{thm:Artin-Schreier} in the case $p \ne 2$).
    
    Let $L \coloneqq K(\sqrt[p]{y})$, where $y \in \Oo_v^{\times}$ is such that $yv \notin (Kv)^{\times p}$. In particular, $L/K$ is unramified of degree $f = p$. As before, we cannot have $G_{Lv} = 1$, because $L$ admits more than one $C_p$-extension. This shows that $p^2 \mid G_{Kv}$. \qedhere
\end{enumerate}
\end{proof}

From this, we are able to deduce the general case. Our main tool will be ``Henseling Down'' (Theorem~\ref{thm:henseling down}). In order to apply this theorem to a tamely branching valuation $v$, we require $\Oo_v \in H(K)$ and moreover, that no proper coarsening of $v$ has real closed residue field. Conveniently, we may assume this extra condition without loss of generality:

\begin{lem}[{\cite[Lem. 5.4.6]{Engler05}}]
    \label{lem:tamely branching in H(K)}
    Let $(K,v)$ be a henselian valued field, $v$ tamely branching at $p$. Then there exists a valuation ring in $H(K)$ tamely branching at $p$ as well. 
    If $p = 2$, we can find a henselian valuation on $K$, tamely branching at $2$, such that furthermore, no proper coarsening has real closed residue field.
\end{lem}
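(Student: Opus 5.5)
The plan is to move from $v$ to a suitable coarsening inside the tree of henselian valuations of Lemma~\ref{lem:tree}. Let $\Oo_v$ be henselian and tamely branching at $p$. If $\Oo_v \in H(K)$ there is nothing to do for the first claim. Otherwise $v_K$ lies in $H_2(K)$ and is a proper coarsening of $v$. Write $v = \overline{v} \circ v_K$, where $\overline{v}$ is the induced henselian valuation on the separably closed field $Kv_K$. I will then show that $v_K$ itself is tamely branching at $p$.

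Condition (i) holds because $Kv_K$ is separably closed, so $\charK Kv_K \ne p$ would follow if we knew the residue characteristic differs from $p$. Since $Kv_K$ specialises to $Kv$ and $\charK Kv \ne p$, we get $\charK Kv_K \ne p$. Indeed, $\charK Kv_K$ is either $0$ or equal to $\charK Kv$.

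For (ii), note that $\overline{v}(Kv_K)$ is $p$-divisible. This is because $Kv_K$ is separably closed of characteristic $\ne p$, so every element has a $p$-th root. The short exact sequence $0 \to \overline{v}(Kv_K) \to vK \to v_KK \to 0$ with $p$-divisible kernel then gives $vK/pvK \cong v_KK/pv_KK$. Hence $v_KK \ne pv_KK$, and the two indices $(\cdot : p\,\cdot)$ coincide.

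For (iii), suppose $(v_KK : pv_KK) = p$. Then $(vK : pvK) = p$ as well, so by hypothesis $p^2 \mid |G_{Kv}|$. But $Kv$ is the residue field of a henselian valuation on a separably closed field, so $Kv$ is itself separably closed and $G_{Kv} = 1$. This contradicts the hypothesis. So, in the case $v \notin H(K)$, (iii) for $v$ forces $(vK : pvK) > p$, and therefore $(v_KK : pv_KK) > p$. Condition (iii) is then vacuous for $v_K$, and $v_K \in H(K)$ is tamely branching.

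For $p = 2$, start from a valuation $w \in H(K)$ tamely branching at $2$, as just obtained. If some proper coarsening of $w$ has real closed residue field, take the coarsest henselian valuation $u$ with real closed residue field (Fact~\ref{fact:coarsest_real_closed}; such $u$ lies in $H_1(K)$). The candidate is then $u$ itself: no proper coarsening of $u$ has real closed residue field, by minimality. Since $Ku$ is real closed, $\charK Ku = 0 \ne 2$.

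It remains to check (ii) and (iii) for $u$, and this is the main obstacle. The induced valuation $\overline{w}$ on $Ku$ is a henselian valuation on a real closed field, so its value group is divisible, since positive elements are squares and odd-degree roots exist. As before, this gives $wK/2wK \cong uK/2uK \ne 0$. If $(uK : 2uK) = 2$, we need $4 \mid |G_{Ku}|$. This fails, since $G_{Ku} = C_2$. So I expect the argument has to rule out $(uK:2uK)=2$. If it occurs, then $(wK:2wK)=2$ too, which forces $4 \mid |G_{Kw}|$. But $Kw$ is the residue field of a henselian valuation on the real closed field $Ku$, and is therefore real closed or algebraically closed, so $|G_{Kw}| \le 2$. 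This contradicts tame branching of $w$. Hence (iii) is vacuous for $u$, and $u$ is the required valuation. Finally, $u$ can be checked to lie in $H(K)$ via Lemma~\ref{lem:tree}, since it lies in $H_1(K)$.
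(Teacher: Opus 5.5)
Your proposal is correct and follows essentially the paper's argument. When $\Oo_v \in H_2(K)$ you pass to $v_K$, and the $p$-divisibility (resp.\ $2$-divisibility) of the value group induced on the separably closed (resp.\ real closed) residue field gives $(v_KK : pv_KK) = (vK : pvK)$; since $|G_{Kv}| \le 2$, this index is not $p$ (resp.\ not $2$), so condition (iii) becomes vacuous. The only differences are cosmetic: for $p = 2$ you start from the valuation in $H(K)$ already constructed rather than from the original $v$, and your first sentence on condition (i) is garbled, since the actual reason is just that $\charK Kv_K \in \{0, \charK Kv\}$.
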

\begin{proof}
    It suffices to consider the case $\Oo_v \in H_2(K)$, i.e., when $Kv$ is separably closed. We claim that the canonical henselian valuation $v_K$ is tamely branching at $p$. By definition, $v_K$ is coarser than $v$, so consider the decomposition:
    \[
        \begin{tikzcd}
            K \ar[rr,bend left=30,"v"] \ar[r, "v_K"]& Kv_K \ar[r, "\overline{v}"] & Kv
        \end{tikzcd}
    \]
    We see that $\charK(Kv_K) \ne p$. Moreover, $\overline{v}(Kv_K)$ is divisible as $Kv_K$ is separably closed. Thus,
    \[
        (v_KK : p v_KK) = (vK : p vK),
    \]
    and therefore, $v_KK$ cannot be $p$-divisible.
    Note that since $Kv$ is separably closed, $p^2 \nmid |G_{Kv}|$ and hence, $(v_KK : p v_KK) = (vK : p vK) \ne p$.
    
    For $p = 2$, consider the case that some coarsening of $v$ has real closed residue field. Let $w$ denote the coarsest such coarsening, which exists by Fact~\ref{fact:coarsest_real_closed}. Then, $\Oo_w \in H(K)$ automatically satisfies the required property, and it remains to show that $w$ is tamely branching at $2$. We proceed as before, with $w$ taking the place of $v_K$ and real closedness of $Kw$ playing the role of separable closedness of $Kv_K$.
    Since $Kw$ is real closed, it follows that $\charK(Kw) = 0$ and that the value group $\overline{v}(Kw)$ is divisible (for any $x \in Kw$, we have that $x$ or $-x$ has a $q$th root for any prime $q$).
    
    Again, $(wK : 2 wK) = (vK : 2 vK)$ holds as in the previous argument. The assumption that $v$ is tamely branching at $2$ implies $(vK : 2 vK) \geq 4$ (since $|G_{Kv}| = 2$, cf. Fact~\ref{fact:coarsest_real_closed}). It follows that $w$ is also tamely branching at $2$.
\end{proof}

Similarly, we need to ensure that the process of henseling down also preserves the property of being tamely branching at a prime $p$.
\begin{lem}
    \label{lem:tamely branching down algebraic}
    Let $(L,w)$ be a valued field, $w$ tamely branching at $p$. If $L/K$ is an algebraic extension of fields, then $v = w|_K$ is tamely branching at $p$ as well.
\end{lem}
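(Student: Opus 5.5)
The plan is to verify the three conditions of Definition~\ref{def:tamely_branch} for $v = w|_K$ one at a time. Each one is inherited from $w$, using that $wL/vK$ is torsion and $Lw/Kv$ is algebraic (Fact~\ref{fact:finite_orders}). Condition~\ref{item:tam_branch1} is immediate: $Kv$ embeds into $Lw$, so $\charK Kv = \charK Lw \ne p$.

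For condition~\ref{item:tam_branch2}, argue by contraposition and assume $vK = pvK$.
\begin{enumerate}[(a)]
    \item First, $wL/vK$ has no $p$-torsion. If $\delta \in wL$ satisfies $p\delta \in vK$, write $p\delta = p\varepsilon$ with $\varepsilon \in vK$. Torsion-freeness of $wL$ then gives $\delta = \varepsilon \in vK$.
    \item Now take any $\delta \in wL$. Its order $n$ modulo $vK$ is finite and, by (a), prime to $p$.
    \item Write $n\delta = p\varepsilon$ with $\varepsilon \in vK$, and choose integers $a,b$ with $an + bp = 1$. Then $\delta = a n\delta + b p\delta = p(a\varepsilon + b\delta) \in p\,wL$.
\end{enumerate}
Hence $wL$ is $p$-divisible, contradicting the hypothesis on $w$.

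For condition~\ref{item:tam_branch3}, suppose $(vK : pvK) = p$. The aim is $(wL : pwL) \le (vK : pvK)$.
\begin{enumerate}[(a)]
    \item Write $wL$ as the directed union of subgroups $\Gamma_i$ generated by $vK$ and finitely many elements. Each quotient $\Gamma_i/vK$ is finite, since it is a finitely generated torsion group.
    \item For torsion-free abelian groups $\Delta \le \Gamma$ with $\Gamma/\Delta$ finite, one has $\dim_{\IF_p}\Gamma/p\Gamma = \dim_{\IF_p}\Delta/p\Delta$. One way to see this: $p\Gamma \cap \Delta$ and $p\Delta$ sit between $\Delta$ and $p\Gamma$, and comparing $|\Gamma/p\Gamma|$ with $|\Delta/p\Delta|$ via $|\Gamma/\Delta| = |p\Gamma/p\Delta|$ gives equality of the two finite cardinalities.
    \item Passing to the direct limit, $wL/pwL = \varinjlim \Gamma_i/p\Gamma_i$, so $\dim_{\IF_p}(wL/pwL) \le \dim_{\IF_p}(vK/pvK) = 1$.
\end{enumerate}
Since $wL \ne pwL$, we get $(wL : pwL) = p$. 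The tamely branching property of $w$ then gives $p^2 \mid |G_{Lw}|$.

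Finally, $Lw/Kv$ is algebraic. Restriction identifies $G_{Lw}$ with a closed subgroup of $G_{Kv}$, namely the absolute Galois group of the separable part of $Lw$ over $Kv$; a purely inseparable part does not change the absolute Galois group. Supernatural orders of closed subgroups divide that of the ambient group, so $p^2 \mid |G_{Kv}|$.

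The only step needing real care is the index comparison in (b)–(c) above, because for infinite algebraic extensions the $p$-rank can drop but never rise. I would handle it via the finitely generated reduction and the direct limit as described, rather than trying to compare $vK/pvK$ and $wL/pwL$ directly.
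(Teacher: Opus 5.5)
Your proof is correct and takes the same route as the paper. Both reduce conditions (ii) and (iii) to the inequality $\dim_{\IF_p}(wL/p\,wL) \le \dim_{\IF_p}(vK/p\,vK)$ for the torsion extension $vK \le wL$, and reduce the residue field condition to $G_{Lw}$ being (isomorphic to) a closed subgroup of $G_{Kv}$. The paper cites this inequality from Engler--Prestel (Ex.~5.5.2), whereas you prove it yourself via the finite-index equality and a directed-union argument; that argument also covers your separate B\'ezout argument for (ii), as the case $\dim_{\IF_p}(vK/p\,vK) = 0$.
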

\begin{proof}
Recall that $wL/vK$ is a torsion group and $Lw/Kv$ is an algebraic extension by Fact~\ref{fact:finite_orders}. In particular, $p^2 \mid G_{Lw}$ implies $p^2 \mid G_{Kv}$.
The rest of the claim follows immediately from the following general fact about abelian groups: if $\Delta \le \Gamma$ are $p$-torsion-free abelian groups such that $\Gamma/\Delta$ is a torsion group, then
\[
    \dim_{\IF_p}(\Delta/p\Delta) \ge \dim_{\IF_p}(\Gamma/p\Gamma).
\]
See \cite[Ex. 5.5.2]{Engler05} for a proof of this inequality.
\end{proof}

\begin{cor} \label{cor:Galois_hens<--}
    Let $K$ be a field and $p$ any prime. Assume $G_K$ has a Sylow $p$-subgroup $P \not\cong C_2, \IZ_p, \IZ_2 \rtimes C_2$ that contains a non-trivial normal abelian closed subgroup $N$. Then $K$ admits a henselian valuation, tamely branching at $p$.
\end{cor}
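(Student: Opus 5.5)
The plan is to reduce to the classical case (Proposition~\ref{prop:classical case}), build a valuation high up in the tower $K \subseteq L \coloneqq \Fix P \subseteq E \coloneqq \Fix N$, and then bring it down to $K$. It goes down first along the Galois extension $E/L$, which is Galois because $N \vartriangleleft P$, and then along the Sylow extension $L/K$, using Henseling Down (Theorem~\ref{thm:henseling down}\,(i) and \ref{item:hens_down3}). Lemma~\ref{lem:tamely branching down algebraic} guarantees that the property ``tamely branching at $p$'' survives each restriction.

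\textbf{Group-theoretic step.} First I would locate a closed subgroup $H \le P$ with $N \le H$ (or at least $H \cap N$ open in $H$) and $H \cong \IZ_p \rtimes \IZ_p$.
\begin{enumerate}[(a)]
\item If $N$ contains a subgroup of order $2$ (so $p = 2$ by Artin--Schreier), it should generate a normal $C_2$ after shrinking, and Lemma~\ref{lem:no_normal_C2} together with Fact~\ref{fact:coarsest_real_closed} should force the excluded shapes $C_2$ or $\IZ_2 \rtimes C_2$. So I may take $N$ torsion-free.
\item If $N$ has $\IZ_p$-rank $\ge 2$, take $H \cong \IZ_p \times \IZ_p \le N$.
\item If $N \cong \IZ_p$: since $P \not\cong \IZ_p$, pick $\tau \in P \setminus N$ with infinite image in $P/N$, and let $H = \overline{N\langle \tau\rangle}$. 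This is an extension of $\IZ_p$ by the free pro-$p$ group $\IZ_p$, hence split, so $H \cong \IZ_p \rtimes \IZ_p$. Elements $\tau$ of finite order modulo $N$ again only occur for $p = 2$ and lead back to the excluded shapes.
\end{enumerate}

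\textbf{Applying the classical case.} Let $M \coloneqq \Fix H$, replaced by $M(\sqrt{-1})$ if $p = 2$; Lemma~\ref{lem:finite extensions classical case} preserves the shape of the Galois group. Proposition~\ref{prop:classical case} then gives a henselian valuation on $M$ that is tamely branching at $p$. By Lemma~\ref{lem:tamely branching in H(K)} I may choose it in $H(M)$ with no proper coarsening having real closed residue field. Its unique extension to $E$ (with $E/M$ algebraic) is henselian, and I would check that it lies in $H(E)$ and is still tamely branching. Inside $H$, the group $G_E = N$, or an open subgroup of it, is exactly the inertia part, so the value group keeps non-$p$-divisible rank and the residue field of $E$ is separably closed. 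This means the extension to $E$ essentially sits at $v_E$.

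\textbf{Descent.} Apply Henseling Down (i) to $E/L$ and then (iii) to $L/K$, using that $G_L = P$ is Sylow. The real-closed hypothesis needed in (iii) for $p = 2$ is secured via Fact~\ref{fact:coarsest_real_closed} and Lemma~\ref{lem:tamely branching in H(K)}. Lemma~\ref{lem:tamely branching down algebraic} at each step yields the desired valuation on $K$.

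I expect the main obstacle to be the going-up step from $M$ to $E$: keeping the valuation in $H(E)$ and tamely branching, and reconciling the case where only an open subgroup of $N$ lies in $H$. The $p = 2$ torsion bookkeeping in the group-theoretic step is a secondary difficulty.
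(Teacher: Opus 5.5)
Your architecture is the paper's: apply Proposition~\ref{prop:classical case} to a subgroup $\IZ_p\times\IZ_p\le N$, or to $N\rtimes\langle\tau\rangle\cong\IZ_p\rtimes\IZ_p$. Then descend with Henseling Down~(i) along $\Fix N/\Fix P$ and~(iii) along $\Fix P/K$. But the case $N\cong\IZ_p$ has a genuine gap, exactly at the step you flag.

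There $G_E=N\cong\IZ_p$, so by Lemma~\ref{lem:Galois_hens-->} the field $E=\Fix N$ carries \emph{no} henselian valuation tamely branching at $p$. So your claim that the prolongation $u$ of $w$ to $E$ ``is still tamely branching'' is false, and Lemma~\ref{lem:tamely branching down algebraic} cannot be started at $E$. Your claim that $N$ is the inertia part, so that $Eu$ is separably closed, also fails when $H$ is abelian. More seriously, when $Eu$ is separably closed, $u$ can be a \emph{proper} refinement of $v_E$. This happens, e.g., if $w$ has a proper coarsening whose inertia group contains $N$. Then $\Oo_u\notin H(E)$, and Henseling Down~(i) says nothing about $u|_L$. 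So ``essentially sits at $v_E$'' is unjustified.

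The missing idea, which is how the paper handles its Case~2, is to recover tamely branching only at the level of $L=\Fix P$.
\begin{itemize}
\item If $\Oo_u\in H(E)$, Henseling Down~(i) puts $u|_L$ in $H(L)$. Then $u|_L=w|_L$ is tamely branching by Lemma~\ref{lem:tamely branching down algebraic} applied to $M/L$; no normality is needed there.
\item If instead $\Oo_u\subsetneq\Oo_{v_E}$, you must switch to $v_E$. Here $\charK Ev_E\ne p$, since $Eu$ is a residue field of $Ev_E$. Also $v_EE\ne pv_EE$, since otherwise $N=G_E$ would be trivial. By Fact~\ref{fact:compatibility}, $v_E|_M$ is a proper coarsening of $w\in H(M)$, hence lies in $H_1(M)$. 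So the residue field of $v_E|_L$ is not separably closed.
\end{itemize}
In the second case, Henseling Down~(i) applied to $v_E\in H(E)$ gives $v_E|_L\in H(L)$. Together with $\sqrt{-1}\in L$ when $p=2$, the facts above make $v_E|_L$ tamely branching at $p$, and~(iii) finishes. For this reason, adjoin $\sqrt{-1}$ to $K$ at the outset, via Henseling Down~(ii) as the paper does, rather than to $M$. You need $\sqrt{-1}\in L$ here, and you keep $M\subseteq E$.

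Smaller points:
\begin{itemize}
\item In case~(b), $H\le N$ gives $E\subseteq M$, so there is no going up. You simply restrict along $M/E$, which is normal because $N$ is abelian.
\item In case~(c), your remark that elements of finite order modulo $N$ occur only for $p=2$ is false: take $P=\IZ_p^2$ and $N=p\IZ_p\times 0$. What you need is one $\tau$ of infinite image in $P/N$, and that requires its own argument; the paper also just asserts it.
\item Your step~(a) is fine once made precise. By Artin--Schreier, the $2$-torsion of $N$ is a normal $C_2$ in $P$, and Lemma~\ref{lem:no_normal_C2} then forces $P\cong C_2$. The fact about coarsest real closed residue fields plays no role there.
\end{itemize}
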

\begin{proof}
    By Theorem~\ref{thm:Artin-Schreier}, $N$ is torsion-free unless $p = 2$ and $\sqrt{-1} \notin \Fix N$. We may replace $K$ by $K(\sqrt{-1})$ in this exceptional case by Henseling Down (Theorem~\ref{thm:henseling down}(ii)) and Lemma~\ref{lem:tamely branching down algebraic}, ensuring $P \not\cong \IZ_2$. The fact that $N$ remains non-trivial is due to Lemma~\ref{lem:no_normal_C2}. In particular, we may now view $N$ as a torsion-free $\IZ_p$-module.

    \textbf{Case 1.} Let us first consider the easier case that $N$ is not topologically generated by a single element (i.e., $N \not\cong \IZ_p$), so there exists $N_0 \vartriangleleft N$ with $N_0 \cong \IZ_p \times \IZ_p$. By the ``classical case'' (Proposition~\ref{prop:classical case}), there exists a henselian valuation $w$ on $M \coloneqq \Fix N_0$, tamely branching at $p$, which we may assume to lie in $H(M)$ by Lemma~\ref{lem:tamely branching in H(K)}. Now, we may ``hensel down'' as indicated by the left diagram.
    \begin{center}
        \begin{tikzcd}[column sep=9em, cramped]
            K^{\sepc} \ar[d, dash] & K^{\sepc} \ar[d, dash] \\
            M \rlap{$\,= \Fix N_0$} \ar[d, dash, "\text{normal}"'] & F \rlap{$\,= \Fix N$} \ar[d, dash] \ar[dd, dash, bend right=55, "\text{normal}"', pos=0.48] \\
            \Fix N \ar[d, dash, "\text{normal}"'] & M \rlap{$\,= \Fix \langle N, \sigma \rangle$} \ar[d, dash] \\
            L \rlap{$\;= \Fix P$} \ar[d, dash, "\substack{\text{max.}\\\text{prime-to-$p$}}"'] & L \rlap{$\;= \Fix P$} \ar[d, dash, "\substack{\text{max.}\\\text{prime-to-$p$}}"'] \\
            K & K \\[-1em]
            \textsc{Case 1.} & \textsc{Case 2.}\\[-1.5em]
        \end{tikzcd}
    \end{center}
    
    More precisely, two applications of Henseling Down (Theorem~\ref{thm:henseling down}(i) and Lemma~\ref{lem:tamely branching down algebraic}) show that $w|_L$ is in $H(L)$ and tamely branching at $p$, where $L \coloneqq \Fix P$. By Lemma~\ref{lem:tamely branching in H(K)}, again, we may additionally assume that $L$ admits a henselian valuation $w'$, tamely branching at $p$, such that no proper coarsening has real closed residue field if $p = 2$. Therefore, $w'|_K$ is the desired valuation by Henseling Down, Theorem~\ref{thm:henseling down}(iii).

    \textbf{Case 2.} Let us now consider the case that $N \cong \IZ_p$. As $P \not\cong \IZ_p$, we may find some $\sigma \in P \setminus N$ such that $\langle \sigma \rangle \cong \IZ_p$ and $\langle N,\sigma \rangle \cong \IZ_p \rtimes \IZ_p$. By Proposition~\ref{prop:classical case}, there exists a valuation $w$ on $M \coloneqq \Fix\langle N,\sigma \rangle$, tamely branching at $p$, which we may assume to lie in $H(M)$. For $p = 2$, we additionally require that $w$ does not have any proper coarsenings with real closed residue field. Let $u$ denote the unique prolongation of $w$ to $F \coloneqq \Fix N$.
    If $\Oo_u \in H(F)$, we see that $u|_K$ is the desired valuation by Henseling Down (as indicated by the right diagram).
    
    It remains to consider the case that $u, v_F \in H_2(F)$ with $\Oo_u \subsetneq \Oo_{v_F}$. By definition, $Fv_F$ is separably closed, $\charK Fv_F \ne p$ (as $Fu$ is a residue field of $Fv_F$), and $v_FF$ is not $p$-divisible (otherwise $N = G_F$ would be trivial). Further, $\Oo_u \subsetneq \Oo_{v_F}$ implies $\Oo_w \subsetneq \Oo_{v_F|_M}$ (Fact~\ref{fact:compatibility}), therefore, $\Oo_{v_F|_M} \in H_1(M)$ since $\Oo_w \in H(M)$. By Henseling Down, $u' \coloneqq v_F|_L$ lies in $H(L)$ and is tamely branching at $p$ (since $\charK Fv_F \ne p$, $v_FF \ne p v_FF$, and $Mv_F|_M$ is not separably closed and $\sqrt{-1} \in Lu'$).
    Finally, applying Henseling Down to $L/K$ shows that $v_F|_K = u'|_K$ is henselian and tamely branching at $p$.
\end{proof}

This completes the proof of the Galois characterisation of henselianity (Theorem~\ref{thm:Galois code henselianity}).

%% file: part4_transfer.tex
\section{Transfer between fields of different characteristic}
\label{chap:transfer}

We pursue two goals in this chapter. First, we give two separate proofs of the \emph{Transfer Lemma}, which will be a key component in the proof of the Main Theorem.
\begin{transferlem*}
Let $(K,v)$ be a henselian valued field of mixed characteristic $(0,p)$.
Assume that
\vspace{-0.3em}
\begin{enumerate}[(i)]
    \item $G_K$ is small;
    \item $vK = \Conv(vp)$ and $vK$ is $p$-divisible.
\end{enumerate}
\vspace{-0.3em}
Then, there exists a field $F$ of positive characteristic $p$ such that $G_K \cong G_F$.
\end{transferlem*}
Secondly, we aim to provide a systematic treatment of the Saturation-Decomposition Method---a non-standard (i.e., model-theoretic) transfer method for valued fields.
We give some general background on transfer methods in Section~\ref{sec:two_transfer}.

The original, model-theoretic strategy for proving the Transfer Lemma was a precursor to the Saturation-Decomposition Method for non-discrete valuations. In his Habilitation, the second author asked for a purely algebraic proof of the Transfer Lemma, noting that the lemma
\begin{quote}
    ``is one of those rare examples of `applied model theory', where the only proof known so far uses model theory in an essential way.'' \cite[p.\;24]{Koenigsmann04}
\end{quote}
In \cite{Koenigsmann04}, all arguments were based on the smallness of the absolute Galois group $G_K$, which obscured the connection to the truncated valuation ring $\Oo_v/p\Oo_v$, the algebraic object at the heart of transfer from mixed characteristic to equal positive characteristic.

We give a new proof of the Transfer Lemma based on the \emph{tilting construction} as presented by Scholze \cite{Scholze12}. We should note that apart from its definition and most basic properties, we only use the Fontaine-Wintenberger Theorem~\ref{thm:FW}, for which we will provide the necessary background in Section~\ref{sec:perfectoid}.

Motivated by the theory of perfectoid fields, Jahnke and Kartas developed and greatly generalised the Saturation-Decomposition Method in \cite{Jahnke-Kartas}, further extending it to include non-standard treatments of perfectoid fields, the Fontaine-Wintenberger Theorem, and almost mathematics.

With this in mind, what we present in Section~\ref{sec:sat-decomp_method} as the Saturation-Decomposition Method is, in fact, a hybrid between \cite{Koenigsmann04} and \cite{Jahnke-Kartas}. Apart from changing emphasis, we prove an optimal version of Jahnke and Kartas' ``Taming Theorem'' \cite[1.5]{Jahnke-Kartas}---we show that it holds precisely for the class of roughly deeply ramified (rdr) fields in the sense of Gabber-Ramero/Kuhlmann-Rzepka (Theorem~\ref{thm:taming}). The Almost Purity Theorem for rdr fields (Corollary~\ref{cor:KR-almost-purity}) and the Transfer Lemma (Corollary~\ref{cor:transfer_lemma2}) follow as immediate corollaries.

In Section~\ref{sec:JK}, we briefly explain what extra ingredients are needed to recover the theorems of Jahnke-Kartas and how their work exemplifies the above-mentioned method. This will complete our picture of transfer, as sketched in the thematic diagram on p.\;\pageref{diagram:inf_ram}.
We hope that our presentation shows that the arithmetic and model-theoretic proofs of the Transfer Lemma are two sides of the same coin, and that model theory brings more than just a new language to the table.

\subsection{Two methods of transfer}
\label{sec:two_transfer}

The purpose of the Transfer Lemma is to exhibit an isomorphism of absolute Galois groups between fields of characteristic 0 and characteristic $p$. More generally, \emph{transfer} is achieved when arithmetic properties or invariants can be translated between fields of different characteristics. Usually, both fields in question belong to a common family, e.g., local/global fields.
In many natural settings, transfer can be observed.

There are at least two rigorous settings for transfer---both of which are asymptotic in nature---as is indicated by their respective slogans ``$p \rightarrow \infty$'' and ``$e \rightarrow \infty$'', see \cite{Scholze14}.

\subsubsection{\texorpdfstring{$p \rightarrow \infty$}{p→∞}} \label{subsec:p->infty}

Consider the local fields $\IQ_p$ and $\IF_p\laurent{t}$ with $p$ varying over the set of primes.
Ax and Kochen \cite{Ax-Kochen1,Ax-Kochen2,Ax-Kochen3} established the following rigorous transfer principle: any first-order property $\varphi$ (in the language of rings) holds in $\IQ_p$ if and only if it holds in $\IF_p\laurent{t}$ for all $p$ sufficiently large. In short, there is a bound $N(\varphi) \in \IN$ such that
\begin{equation*}
    \IQ_p \models \varphi \Longleftrightarrow \IF_p\laurent{t} \models \varphi \quad \text{for all $p \ge N(\varphi)$}.
\end{equation*}
Indeed, this transfer principle was one of the main goals of their series of papers. Ax and Kochen were motivated by a concrete question of Artin: does any homogeneous polynomial over $\IQ_p$ of degree $d \in \IN$ in $n > d^2$ variables have a non-trivial solution? Any field for which this statement holds is said to have the \emph{$C_2$-property}. Artin expected a positive answer since a result of Lang says that $\IF_p\laurent{t}$ is $C_2$ for all primes $p$. The Ax-Kochen transfer described above implies that the $p$-adic fields $\IQ_p$ are ``asymptotically'' $C_2$, that is to say, for any $d \in \IN$, there is a bound $N(d) \in \IN$ such that for all $p \ge N(d)$, all $\IQ_p$ satisfies the $C_2$-property for homogeneous polynomials of degree $\le d$ (this is a first-order property for any given $d$). On the other hand, it is known that $\IQ_p$ is not $C_2$ for any prime $p$.

Results obtained from this type of transfer are necessarily \emph{asymptotic} in nature. Looking at the proof of Ax-Kochen transfer, this is to be expected, as it uses the existence of non-principal ultrafilters in a crucial way. Essentially, Ax-Kochen transfer is a formal consequence of the elementary equivalence of the ultraproducts
\begin{equation} \label{eq:ultra}
    \Qq \coloneqq \prod_{p \in \IP} \IQ_p \ultra \Uu \equiv \prod_{p \in \IP} \IF_p\laurent{t} \ultra \Uu \eqqcolon \Ss
\end{equation}
for any non-principal ultrafilter $\Uu$ on the set of primes $\IP$. This is because ultraproducts allow one to interpolate first-order properties: a first-order property $\varphi$ holds for almost all fields in the families $\{\IQ_p\}_{p \in \IP}$ or $\{\IF_p\laurent{t}\}_{p \in \IP}$ if and only if $\varphi$ holds in the corresponding ultraproduct valued field $\Qq$ or $\Ss$ (Łoś's theorem).

The fact that $\IQ_p$ and $\IF_p\laurent{t}$ have the same value group and residue field implies that the same is true for $\Qq$ and $\Ss$---their value groups and residue fields are isomorphic to
\[
    \prod_{p \in \IP} \IZ \ultra \Uu \and \prod_{p \in \IP} \IF_p \ultra \Uu,
\]
respectively.
The elementary equivalence of $\Qq$ and $\Ss$ in (\ref{eq:ultra}) is then a special case of the following theorem that lies at the heart of the model theory of valued fields:
\begin{thm}[Ax-Kochen-Ershov] \label{thm:AKE}
Let $(K,v)$ and $(K',v')$ be two henselian valued fields of equicharacteristic $(0{,}0)$. Then:
\[
    (K,v) \equiv (K',v') \Longleftrightarrow vK \equiv v'K' \and Kv \equiv K'v'.
\]
\end{thm}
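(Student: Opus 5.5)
The forward implication is immediate. The value group sort and the residue field sort of $\Ll_{\VF}$ are interpreted in $(K,v)$ without parameters, so every $\Ll_{\oag}$-sentence about $vK$ and every $\Ll_{\ring}$-sentence about $Kv$ translates into an $\Ll_{\VF}$-sentence about $(K,v)$. By Fact~\ref{fact:val_lang} the same holds in $\Ll_{\val}$. The work is in the converse.

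\textbf{Reduction to isomorphic invariants.} Assume $vK \equiv v'K'$ and $Kv \equiv K'v'$. Regard $(vK, Kv)$ and $(v'K', K'v')$ as elementarily equivalent two-sorted structures. By the Keisler--Shelah theorem there is an ultrafilter $\Uu$ with $(vK,Kv)^{\Uu} \cong (v'K',K'v')^{\Uu}$. Since $v(K^{\Uu}) = (vK)^{\Uu}$ and $K^{\Uu}v = (Kv)^{\Uu}$, we may replace $(K,v)$ and $(K',v')$ by their ultrapowers. Choosing $\Uu$ suitably, we may also assume both are $\kappa$-saturated for some $\kappa$ larger than the cardinalities we use below. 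Henselianity and equicharacteristic $(0,0)$ are first-order, so both survive. We therefore reduce to the case $vK \cong v'K'$ and $Kv \cong K'v'$ with both fields highly saturated, and it suffices to prove $(K,v) \equiv (K',v')$ by a back-and-forth argument.

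\textbf{Structural lifts.} Next I build canonical ``skeletons'' inside $K$ and $K'$. First, because $\charK Kv = 0$ and $v$ is henselian, there is a subfield $k \subseteq \Oo_v$ mapping isomorphically onto $Kv$. To get it, take a maximal subfield of $\Oo_v$ and show by Hensel's Lemma~\ref{lem:hensel} that it surjects onto the residue field: transcendental residues lift freely, and separable algebraic residues lift via simple roots. Second, saturation yields a cross-section $s : vK \to K^\times$, i.e.\ a group homomorphism with $v \circ s = \mathrm{id}$. The exact sequence $1 \to \Oo_v^\times \to K^\times \to vK \to 1$ from (\ref{eq:SES}) splits in $\aleph_1$-saturated models, because $\Oo_v^\times$ is then pure-injective. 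After adjusting $s$, I expect its image can be arranged to commute with the residue lift. The subfield $E_0 = k(s(vK))$ is then isomorphic to a field of generalised rational functions $k(t^{\Gamma})$ carrying the monomial valuation. Fixing isomorphisms $Kv \cong K'v'$ and $vK \cong v'K'$ therefore produces a valued field isomorphism $E_0 \cong E_0'$. It extends to the henselisations $E_0^h \cong E_0'^h$ by their universal property. Now $K/E_0^h$ and $K'/E_0'^h$ are immediate extensions.

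\textbf{Immediate part (main obstacle).} It remains to run back-and-forth over henselian subfields $E \subseteq K$ with $K/E$ immediate and $|E| < \kappa$. Given such an $E$, an embedding $E \hookrightarrow K'$, and $a \in K \setminus E$, choose a pseudo-Cauchy sequence $(a_\nu)$ in $E$ with pseudo-limit $a$ and no pseudo-limit in $E$. In residue characteristic $0$, henselian fields are algebraically maximal (the defect is $1$ by Corollary/Definition~\ref{cor/def:defect}). Hence the sequence is of transcendental type, and by Kaplansky's lemma the valued field $E(a)$ is determined up to isomorphism by $(a_\nu)$. Saturation of $K'$ realises a pseudo-limit $a'$ of the image sequence, which lies outside the image of $E$. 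So $E(a) \cong E'(a')$, and we pass to henselisations. Alternating sides and using Zorn's lemma gives an isomorphism of elementary substructures, or equivalently a back-and-forth system, and hence $(K,v) \equiv (K',v')$. I expect the hard points to be twofold. One is verifying Kaplansky's uniqueness for transcendental-type sequences. The other is ensuring that the cross-section and the residue lift can be chosen compatibly, so that $E_0$ really is the monomial field. If compatibility fails, I would instead work relative to angular component maps, which exist in saturated models.
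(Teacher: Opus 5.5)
The paper does not prove this theorem. It quotes it as the classical Ax--Kochen--Ershov theorem and uses it as a black box, so your argument has to stand on its own. Your forward direction is fine, and most of your ingredients are standard: Keisler--Shelah, the Hensel lift of the residue field, cross-sections in $\aleph_1$-saturated models, and Kaplansky plus algebraic maximality in residue characteristic $0$.

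There is a genuine gap in the immediate step: the cardinalities are inconsistent.

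\begin{itemize}
\item Every $E$ in your back-and-forth contains $E_0^h\supseteq k\cong Kv$.
\item In a $\kappa$-saturated valued field, the infinite residue sort is $\kappa$-saturated, so $|E|\ge|Kv|\ge\kappa$.
\item Hence the condition $|E|<\kappa$ can never hold, and no $\kappa$ can be ``larger than the cardinalities used below''.
\end{itemize}

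Consequently, a pseudo-Cauchy sequence in $E$ approximating $a$ may have length $\ge\kappa$. For instance, if $a\notin E$ is a limit of a Cauchy sequence from $E$, its length is $\mathrm{cf}(vK)\ge\kappa$. In that case $\kappa$-saturation of $K'$ gives no pseudo-limit of the image sequence.

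More fundamentally, you fix an arbitrary isomorphism $E_0^h\cong (E_0')^h$ of size $\ge\kappa$ in advance, and nothing in your argument shows it extends to an isomorphism $K\to K'$. Saturation does not control which long sequences from the skeleton acquire pseudo-limits in $K$ versus in $K'$.

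The remedy is to build the skeleton piecewise inside \emph{small} substructures.

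\begin{itemize}
\item Fix $\phi: Kv\cong K'v'$ and $\psi: vK\cong v'K'$, lifts $\iota,\iota'$, and cross-sections $s,s'$.
\item Take as back-and-forth system the isomorphisms $f:E\to E'$ between henselian subfields of size $<\kappa$, with $\kappa$ regular uncountable.
\item Require $E$ to be closed under $\iota\circ\res$ and $s\circ v$, and require $f\circ\iota=\iota'\circ\phi$ and $f\circ s=s'\circ\psi$ on $\res(E)$ and $vE$.
\end{itemize}

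To add $a\in K$, first close $E$ in $\omega$ steps under adjoining $\iota(\bar c)$ and $s(\gamma)$ for the new residues and values of $E(a)^h$. Each single adjunction has a determined valued-field type, which matches on the $K'$ side:

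\begin{itemize}
\item a Gauss extension for a transcendental residue;
\item an unramified extension via Hensel for an algebraic residue;
\item a monomial extension if $\gamma$ has infinite order modulo $vE$;
\item the totally ramified root of $X^n-s(n\gamma)$ if $\gamma$ has order $n$ modulo $vE$.
\end{itemize}

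Afterwards $a$ is immediate over a small henselian field $F$. A pseudo-Cauchy sequence in $F$ for $a$ can then be chosen of length at most $|vF|<\kappa$, and your Kaplansky argument together with $\kappa$-saturation of $K'$ goes through. This is essentially the Ax--Kochen/Kochen argument, or equivalently Pas's quantifier elimination with angular components.

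Separately, your worry about compatibility is unfounded. For any lift $k$ and any cross-section $s$, a finite sum $\sum c_\gamma s(\gamma)$ with $c_\gamma\in k^\times$ and distinct $\gamma$ has value $\min\gamma$. So $k(s(vK))$ is automatically the monomial field, and no adjustment of $s$ is needed.
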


\subsubsection{\texorpdfstring{$e \rightarrow \infty$}{e→∞}} \label{subsec:e->infty}

In contrast to the previous method, let $p$ now be a fixed prime.
Krasner \cite{Krasner57}, Kazhdan \cite{Kazhdan86}, and Deligne \cite{Deligne84} observed that by increasing the absolute ramification index $e = v(p)$ of a local field $F$, it more and more resembles a positive characteristic local field $\IF_q\laurent{t}$.
The simplest instance of this phenomenon is, perhaps, the following observation of Kazhdan's:
\begin{fact}
Let $F/\IQ_p$ be a totally ramified extension of degree $e = e(F/\IQ_p)$. If $\pi$ is a uniformiser in $F$, then
\[
    \Oo_F/(\pi^e) = \Oo_F/(p) \cong \IF_p[t]/(t^e).
\]
If $e$ is a power of $p$, the right-hand side can be written isomorphically as $\IF_p[t^{1/e}]/(t)$.
\end{fact}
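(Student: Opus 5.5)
The statement is Kazhdan's observation: for $F/\IQ_p$ totally ramified of degree $e$ with uniformiser $\pi$, we have $\Oo_F/(\pi^e) = \Oo_F/(p) \cong \IF_p[t]/(t^e)$. The plan is to first compare the two ideals and then build an explicit ring isomorphism sending $t \mapsto \pi$.

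First, since the extension is totally ramified of degree $e$, normalising $v\pi = 1$ gives $vp = e = v(\pi^e)$. Hence $p = u\pi^e$ for some unit $u \in \Oo_F^\times$, and $(p) = (\pi^e)$ as ideals of $\Oo_F$.

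Second, define $\phi : \IF_p[t] \to \Oo_F/(p)$. Because $\Oo_F/(p)$ has characteristic $p$, the prime ring $\IZ$ acts through $\IF_p$, so there is a unique ring homomorphism with $t \mapsto \pi \bmod p$. Its kernel contains $t^e$, since $\pi^e \in (p)$. So $\phi$ factors through $\IF_p[t]/(t^e)$.

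Third, show the induced map $\IF_p[t]/(t^e) \to \Oo_F/(\pi^e)$ is bijective.

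For surjectivity, use that the residue field is $Fv = \IF_p$ (total ramification gives $f = 1$), with $\{0, 1, \ldots, p-1\} \subseteq \IZ$ as representatives. Any $x \in \Oo_F$ then has an expansion $x \equiv \sum_{i=0}^{e-1} a_i \pi^i \pmod{\pi^e}$ with $a_i$ in this set. This is obtained by successive approximation: subtract a representative of $\overline{x}$, divide by $\pi$, and repeat, exactly as in Corollary~\ref{cor:representation}.

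For injectivity, compare cardinalities. On one side $|\IF_p[t]/(t^e)| = p^e$. On the other, $|\Oo_F/(\pi^e)| = |Fv|^e = p^e$, from the filtration $\Oo_F \supseteq (\pi) \supseteq \cdots \supseteq (\pi^e)$, each of whose successive quotients is isomorphic to $Fv$. So a surjection between these finite sets is a bijection.

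Finally, suppose $e = p^m$. The assignment $s \mapsto t^{1/e}$ gives an isomorphism $\IF_p[s]/(s^e) \cong \IF_p[t^{1/e}]/(t)$, because the ideal $(t)$ in $\IF_p[t^{1/e}]$ is generated by $(t^{1/e})^e$. Here $t^{1/e}$ is just a formal variable; the assumption that $e$ is a $p$-power is only needed to motivate the notation in the perfectoid setting.

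The main obstacle is the surjectivity step, namely that sums $\sum a_i\pi^i$ with $a_i \in \IZ$ reach every class modulo $\pi^e$. This relies on the residue field being exactly the prime field $\IF_p$, so that residues lift to integers, and it is where total ramification is genuinely used.
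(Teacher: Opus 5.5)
Your proof is correct. The equality $(p)=(\pi^e)$ follows from $v(p)=e\,v(\pi)$. The $\IF_p$-algebra map $t\mapsto\pi$ kills $t^e$. The $\pi$-adic digit expansion with digits in $\{0,\dots,p-1\}$ gives surjectivity; this is exactly where inertia degree $1$ enters. The count $|\IF_p[t]/(t^e)|=p^e=|\Oo_F/(\pi^e)|$ then gives injectivity. Your reading of the last sentence is also right: the $p$-power hypothesis is only notational, anticipating the direct limit over $e=p^n$ that follows in the paper.

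The paper gives no proof of its own and simply refers to \cite[Lem.~6.1.3]{Kartas22}. The usual short argument goes instead through Eisenstein polynomials. The minimal polynomial $g$ of $\pi$ over $\IQ_p$ is Eisenstein and $\Oo_F=\IZ_p[\pi]\cong\IZ_p[X]/(g)$, so $\Oo_F/(p)\cong\IF_p[X]/(\overline{g})=\IF_p[X]/(X^e)$. That route is a one-liner, and it extends at once to arbitrary finite $F/\IQ_p$, giving $\IF_{p^f}[X]/(X^e)$ via the maximal unramified subextension. However, it presupposes the structure theorem $\Oo_F=\IZ_p[\pi]$.

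Your route uses only three facts: $v\pi$ is minimal positive, $vp=e\,v\pi$, and the residue field is $\IF_p$. It is precisely the technique of Corollary~\ref{cor:representation} and Fact~\ref{fact:TowerLawU}. You could also replace the cardinality count by the valuation argument from the uniqueness part of Corollary~\ref{cor:representation}: for the least $i$ with $a_i\neq 0$, a sum $\sum_{j<e}a_j\pi^j$ with digits $a_j\in\{0,\dots,p-1\}$ has value $i\,v\pi<e\,v\pi$. With that change, the argument works verbatim for any valued field with these three properties, complete or not and of any rank.
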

For a simple proof of this fact, see \cite[Lem. 6.1.3]{Kartas22}. We lose the dependence on $e$ by passing to a direct limit. For example, if we adjoin a compatible system of $p$-power roots
\[
    p^{1/p^{\infty}} = \bigl\{p^{1/p^n}\bigr\}_{n \ge 1}
\]
to $\IQ_p$, i.e., setting $K = \IQ_p(p^{1/p^\infty})$, this yields
\[
    \Oo_K/(p) \cong \varinjlim \Oo_{\IQ_p(p^{1/p^n})}/(p) \cong \varinjlim \IF_p\bigl[t^{1/p^n}\bigr]\ultra(t) \cong \IF_p[t]^{1/p^\infty}/(t).
\]
Examining the right-hand side, we see that the Frobenius map $x \mapsto x^p$ is surjective. By taking an inverse limit with the Frobenius map acting as the transition maps, one recovers an integral domain that is the valuation ring of an algebraic extension of $\IF_p\laurent{t}$.
\[
    \IF_p\llbracket t \rrbracket^{1/p^\infty} \cong \varprojlim_{x \mapsto x^p} \Bigl(\IF_p[t]^{1/p^\infty}/(t)\Bigr)
\]
This is the valuation ring of the $t$-adic valuation on the perfect hull $\IF_p\laurent{t}^{1/p^\infty}$ of $\IF_p\laurent{t}$. The latter is what is called the \emph{tilt} of $K$.
Fontaine and Wintenberger furthermore showed that for this construction, one gets a canonical isomorphism of absolute Galois groups between $K$ and its tilt:
\begin{equation} \label{eq:FW}
    G_{\IQ_p(p^{1/p^{\infty}})} \cong G_{\IF_p\laurent{t}^{1/p^\infty}}.
\end{equation}
In \cite[Thm. 1.3]{Scholze12}, Scholze vastly generalised this isomorphism to an equivalence of categories over perfectoid fields, algebras, and spaces and their respective tilts. We will only use the Fontaine-Wintenberger Theorem for perfectoid fields, which we explain in the next section.

In a completely distinct fashion, Koenigsmann \cite{Koenigsmann04} constructed an absolutely unramified $(0,p)$-place $\varphi : K \longrightarrow F$ for certain valued fields $K$ with small absolute Galois group and $p$-divisible value group, that are sufficiently\footnote{For example, it suffices to consider non-principal ultrapowers of valued fields---these are $\aleph_1$-saturated.} saturated (in the sense of model theory). By ramification theory, this means that $\varphi$ induces an isomorphism
\[
    G_K \cong G_F
\]
of absolute Galois groups of fields of characteristic 0 and $p$. This was a new instance of transfer for valued field with ``infinite'' absolute ramification index $e = v(p)$. By this, we mean that the interval $(0,vp] \subset vK$ has infinitely many points because $vK$ is $p$-divisible.

As we have mentioned before, the Saturation-Decomposition Method, within the formalism of Jahnke-Kartas (Sections~\ref{sec:sat-decomp_method} and~\ref{sec:JK}), will link these two interpretations of ``$e \rightarrow \infty$''.
To summarise, this chapter roughly follows the outline of the following thematic diagram.

\begin{diagram}[column sep=huge, row sep=large] \label{diagram:inf_ram}
    & \text{Perfectoid fields} \arrow[ddl, "\shortstack{\scriptsize Scholze \\ \scriptsize 2012}"', no head] \arrow[ddr, "\shortstack{\scriptsize Jahnke-Kartas \\ \scriptsize 2025}", no head] & \\
    & \shortstack{(concrete) infinitely\\ ramified valued fields} \arrow[dl, "\shortstack{\scriptsize Fontaine-\\ \scriptsize Wintenberger\\ \scriptsize 1979}", no head] \arrow[dr, "\shortstack{\scriptsize Koenigsmann \\ \scriptsize 2004}"', no head] & \\
    \text{Abs. Galois groups} && \text{Model theory}
\end{diagram}

\subsection{Perfectoid fields}
\label{sec:perfectoid}

In this section, we consider valued fields $(K,v)$ that are complete and of rank 1 (this is the familiar setup of algebraic number theory). This is essential because the constructions involved are analytic in nature. We further assume that $(K,v)$ has mixed characteristic $(0,p)$. In particular, the residue map
\[
    \res : \Oo_v \longrightarrow Kv, \quad \alpha \longmapsto \overline{\alpha},
\]
reduces elements that live in characteristic 0 to elements that live in characteristic $p$. If we wish to exhibit a structural connection between $K$ and $Kv$, it would be pertinent if the residue map had a section. Certainly, it cannot be additive (since the characteristics differ).
However, if $p$ is a uniformiser, there is a unique multiplicative section $\tau$ of the residue map called the \emph{Teichmüller lift}. Its construction is based on the following basic fact:
\begin{fact}
Let $R$ be a commutative ring. Then for any $\alpha, \beta \in R$ and $n \in \IN$,
\[
    \alpha \equiv \beta \mod{(p)} \Longrightarrow \alpha^{p^n} \equiv \beta^{p^n} \mod{\left(p^{n + 1}\right)}.
\]
\end{fact}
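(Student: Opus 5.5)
We argue by induction on $n$, with the claim holding trivially for $n = 0$. The plan is to reduce each inductive step to a binomial-coefficient divisibility.

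For the step, write $\alpha^{p^n} = \beta^{p^n} + p^{n+1}\gamma$ for some $\gamma \in R$, as given by the inductive hypothesis. Raising both sides to the $p$-th power and expanding by the binomial theorem yields
\[
    \alpha^{p^{n+1}} = \beta^{p^{n+1}} + \sum_{i=1}^{p} \binom{p}{i} \beta^{p^n(p-i)} \bigl(p^{n+1}\gamma\bigr)^i .
\]
It then suffices to show that every summand lies in $(p^{n+2})$.

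For $1 \le i \le p-1$, the coefficient $\binom p i$ is divisible by $p$. Together with the factor $p^{(n+1)i}$, this gives divisibility by $p^{(n+1)i+1}$, and $(n+1)i + 1 \ge n+2$. For $i = p$, the summand is $p^{(n+1)p}\gamma^p$. Here $(n+1)p \ge 2(n+1) \ge n+2$ because $p \ge 2$.

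I do not expect any real obstacle. The only point requiring care is the edge term $i = p$, where the binomial coefficient contributes no factor of $p$. That term is handled by $p \ge 2$, so the argument works uniformly, including for $p = 2$.
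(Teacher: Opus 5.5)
Your induction is correct and complete. It uses only that $p \mid \binom{p}{i}$ for $0 < i < p$, which needs $p$ prime, as it is throughout the paper, together with your two exponent estimates, and these hold in any commutative ring $R$ with $p$ read as $p \cdot 1_R$. The paper records this as a basic fact without proof, and your binomial-expansion argument is the standard one it implicitly relies on.
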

Assume that $p$ is a uniformiser. Then the above fact provides a source of canonicity for lifts: if $\alpha$ and $\beta$ are different representatives of the same residue class in $Kv$, their $p^n$-th powers will lie closer and closer together as $n$ grows. Following this idea, we define
\[
    \tau : Kv \longrightarrow \Oo_v, \quad a_0  \longmapsto \lim_{n \rightarrow \infty}\alpha_n^{p^n}, \quad \text{where } \alpha_n \in \res^{-1}(a_n),\ a_n \coloneqq a_0^{1/p^n} = \Phi^{-n}(a_0),
\]
and $\Phi$ denotes the Frobenius on $Kv$.
Crucially, $K$ needs to be complete for the limit to be well-defined. Using the language of inverse limits, we can rewrite this in a more suggestive fashion. The Teichmüller lift $\tau$ is given as the composition
\begin{equation} \label{eq:sharp}
    \begin{tikzcd}[row sep=0em]
        Kv \ar[r,"\cong","\iota"'] & \displaystyle\varprojlim_{\Phi} Kv = \varprojlim \left(\Oo_v/p\Oo_v \stackrel{\Phi}{\longleftarrow} \Oo_v/p\Oo_v \stackrel{\Phi}{\longleftarrow} \cdots \right) \ar[r,"\sharp"] & \Oo_v \\
        a_0 \ar[r, maps to] &  (a_n)_{n \ge 0} = {\bigl(a_0, a_0^{1/p}, a_0^{1/p^2}, \ldots\bigr)} \ar[r, maps to] & \displaystyle\lim_{n \rightarrow \infty} \alpha_n^{p^n}
    \end{tikzcd}
\end{equation}
of the canonical isomorphism $\iota$ (that sends $a_0$ to the unique tuple of iterated $p$-th roots) and a map denoted by $\sharp$, which is called the \emph{sharp map}. If we leave aside $\iota$ and weaken our assumption on $\Phi$, requiring only surjectivity rather than bijectivity, we are naturally led to the following new objects and definitions:

\begin{defi}[{\cite[Def. 3.1]{Scholze12}}]
Let $(K,v)$ be a complete valued field of rank 1 and residue characteristic $p$. We call $K$ \emph{perfectoid} if the value group is non-discrete and the truncated valuation ring $\Oo_v/p\Oo_v$ is semi-perfect, i.e., the Frobenius on it is surjective. We write
\[
    \Oo_v^{\flat} \coloneqq \displaystyle\varprojlim_{\Phi} \Oo_v/p\Oo_v = \varprojlim \left(\Oo_v/p\Oo_v \stackrel{\Phi}{\longleftarrow} \Oo_v/p\Oo_v \stackrel{\Phi}{\longleftarrow} \cdots \right)
\]
for the \emph{tilt} of the valuation ring and $\sharp : \Oo_v^\flat \longrightarrow \Oo_v$ for the sharp map defined above.
\end{defi}

\begin{rem}
\begin{enumerate}
    \item Perfectoid fields have a $p$-divisible value group (we will see the short argument presented as Lemma~\ref{lem:semi-perf->p-div}) and perfect residue fields since $\Oo_v/\Mm_v \cong (\Oo_v/p\Oo_v)_{\Red}$, see Theorem~\ref{thm:stand_decomp}\ref{item:O induced}.
    \item By definition, $\Oo_v^\flat$ has a ring structure, whereas $\sharp$ is just a multiplicative map.
\end{enumerate}
\end{rem}

\begin{example}
\begin{enumerate}[(1)]
    \item Perfectoid fields of characteristic $p$ are precisely perfect fields endowed with a complete rank 1 valuation. A basic example is the completion of $\IF_p\laurent{t}^{1/p^\infty}$.
    \item Typical examples of perfectoid fields of characteristic $0$ are complete valued fields that arise from adjoining a compatible system of $p$-power roots to $\IQ_p$. For instance, the completions of $\IQ_p(p^{1/p^{\infty}})$ and $\IQ_p(\zeta_{p^{\infty}})$,
    \[
        \widehat{\IQ_p(p^{1/p^{\infty}})} \and \widehat{\IQ_p(\zeta_{p^{\infty}})},
    \]
    are perfectoid. Here, $\zeta_{p^{\infty}} = \{\zeta_{p^n}\}_{n \ge 1}$ denotes a compatible system of $p^n$-th primitive roots of unity.
\end{enumerate}
\end{example}

We summarise the basic properties of the tilt, which, on their own, are not very hard to prove (see the original paper \cite[Lem. 3.4]{Scholze12} or Bhatt's lecture notes \cite[Chap. 3.2]{Bhatt17} for a more detailed treatment).
\begin{fact} \label{fact:perfectoid_props}
Let $K$ be a perfectoid field of mixed characteristic $(0,p)$. Then
\begin{enumerate}[(i)]
    \item $\Oo_v^{\flat}$ is a valuation ring. The corresponding valuation on $K^{\flat} \coloneqq \Frac(\Oo_v^{\flat})$ is given by the composition
    \[
        v^{\flat} : K^{\flat} \stackrel{\sharp}{\longrightarrow} K \stackrel{v}{\longrightarrow} \Gamma_v \cup \{\infty\},
    \]
    where $\sharp$ is extended multiplicatively to the quotient field. Moreover, any element $x^{\sharp}$ in the image of\; $\sharp$ admits a compatible system of $p^n$-th roots $(x^{1/p^n})^{\sharp}$.
    \item There exists $t \in \Oo_v^{\flat}$ with $v(t^{\sharp}) = vp$ such that $\sharp$ induces an isomorphism
    \[
        \overline{\sharp} : \Oo_v^{\flat}/t\Oo_v^{\flat} \cong \Oo_v/p\Oo_v.
    \]
    \item $(K^{\flat}, v^{\flat})$ is a perfectoid field of characteristic $p$, and it has the same value group and residue field as $K$.
\end{enumerate}
\end{fact}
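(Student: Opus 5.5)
We prove (i)–(iii) in order, working directly with the inverse limit $\Oo_v^\flat = \varprojlim_\Phi \Oo_v/p\Oo_v$ and the sharp map. The preliminary step is to show that $\sharp$ is well defined and multiplicative. Given $x = (x_n)_{n\ge 0}$ with $x_{n+1}^p = x_n$ in $\Oo_v/p\Oo_v$, choose lifts $\alpha_n \in \Oo_v$. By the basic Fact, $\alpha_{n+1}^p \equiv \alpha_n \pmod p$ gives $\alpha_{n+1}^{p^{n+1}} \equiv \alpha_n^{p^n} \pmod{p^{n+1}}$. Hence $(\alpha_n^{p^n})$ is Cauchy, and its limit $x^\sharp$ exists by completeness. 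The same estimate shows the limit is independent of the lifts. Multiplicativity is immediate, and by construction $(x^{1/p^n})^\sharp$, the image of the shifted sequence, is a compatible system of $p^n$-th roots of $x^\sharp$.

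\emph{Part (i).} Define $v^\flat(x) \coloneqq v(x^\sharp)$ and show it is a valuation on $\Oo_v^\flat$.
\begin{itemize}
\item Injectivity: $x^\sharp = 0$ forces $x = 0$, because $x_n \equiv (x^\sharp)^{1/p^n}$-type lifts, namely $x_n = \overline{(x^{1/p^n})^\sharp}$.
\item Multiplicativity follows from that of $\sharp$.
\item Ultrametric inequality: the key observation is $(x+y)^\sharp = \lim_n \bigl((x^{1/p^n})^\sharp + (y^{1/p^n})^\sharp\bigr)^{p^n}$. This holds because $x_n + y_n$ is lifted by $(x^{1/p^n})^\sharp + (y^{1/p^n})^\sharp$. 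Then $v$ of each term is $\ge p^n \min\{v((x^{1/p^n})^\sharp), v((y^{1/p^n})^\sharp)\} = \min\{v(x^\sharp), v(y^\sharp)\}$, and the inequality passes to the limit.
\end{itemize}
To see that $\Oo_v^\flat$ is a valuation ring with exactly this valuation, we show that $v^\flat(x) \ge v^\flat(y)$ with $y \ne 0$ implies $y \mid x$. We construct the quotient componentwise: $(x^{1/p^n})^\sharp / (y^{1/p^n})^\sharp \in \Oo_v$ is compatible under $p$-th powers, and reducing mod $p$ gives an element $z$ with $yz = x$. Hence $\Oo_v^\flat$ is a domain, $\sharp$ extends multiplicatively to $K^\flat$, and $v \circ \sharp$ is its valuation.

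\emph{Part (ii).} This is where the perfectoid hypothesis is needed. Since the value group is non-discrete and $\Oo_v/p$ is semi-perfect, pick $\varpi \in \Oo_v$ with $\varpi^p \equiv p \cdot u \pmod{p^2}$-type behaviour. More concretely, use semi-perfectness to find $\varpi_1$ with $\varpi_1^p \equiv p \pmod{p\Oo_v}$ after multiplying by a unit. The cleanest route is to prove first that every element of $\Oo_v/p$ has a compatible system of $p$-power roots, so $\Oo_v^\flat \to \Oo_v/p$ ($x \mapsto x_0$) is surjective. Choose $t$ with $t_0 = \bar p$, adjusting via a unit so that $v(t^\sharp) = vp$; this works since $t^\sharp \equiv p$ only mod $p$, so one argues with $p/t^\sharp$ being a unit after the refinement $v(t^\sharp - p) > vp$ is obtained from $t^\sharp \equiv p \pmod{p\Mm_v}$. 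Then the map $x \mapsto \overline{x^\sharp} = x_0$ is a surjective ring homomorphism $\Oo_v^\flat \to \Oo_v/p\Oo_v$, additive because it is the projection. Its kernel is $\{x : v^\flat(x) \ge vp\} = t\Oo_v^\flat$ by part (i).

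\emph{Part (iii).} The value group $v^\flat(K^\flat) = v(\operatorname{im}\sharp)$ is all of $\Gamma_v$: given $a \in \Oo_v$, a $\sharp$-lift $x$ of $\bar a$ satisfies $v(x^\sharp) = va$ whenever $va < vp$, and rank 1 plus $p$-divisibility handles the rest. The residue fields agree by reducing (ii) modulo maximal ideals. Completeness follows from $\Oo_v^\flat$ being an inverse limit of discrete rings compatible with the $t$-adic topology, using (ii). Perfectness holds since Frobenius on $\Oo_v^\flat$ is the shift, hence bijective. Rank 1 and non-discreteness transfer from the equality of value groups.

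The main obstacle is part (ii): obtaining $t$ with exactly $v(t^\sharp) = vp$ and identifying the kernel. This is the one place where semi-perfectness and non-discreteness are genuinely used. The first thing to try is to choose $\varpi$ with $vp/p$ as value and $\varpi^p/p$ a unit, and to build $t$ from roots of $\varpi$.
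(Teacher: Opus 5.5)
The paper does not prove this Fact; it cites Scholze's Lemma~3.4 and Bhatt's notes. Your outline follows that standard argument, and parts (i) and (iii) are essentially fine.

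The genuine gap is the construction of $t$ in part (ii). Choosing $t$ with $t_0 = \bar p$ imposes no condition, because $\bar p = 0$ in $\Oo_v/p\Oo_v$. The zeroth component only records $t^\sharp$ modulo $p$, so it gives at best $v(t^\sharp)\ge vp$, and even $t=0$ is allowed. The refinement $t^\sharp\equiv p \pmod{p\Mm_v}$ that you invoke does not follow from anything you have, since the inverse system never controls $t^\sharp$ more finely than modulo $p$. Your auxiliary congruences ``$\varpi_1^p\equiv p \pmod{p\Oo_v}$'' are vacuous for the same reason: they only say $v(\varpi_1)\ge vp/p$. Without an element of value exactly $vp$, the identification of the kernel $\{x : v^\flat(x)\ge vp\}$ with $t\Oo_v^\flat$ has nothing to stand on.

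The fix uses only the lifting observation you already make in (iii), so you should establish that before (ii). Any $t$ with $v(t^\sharp)\ge vp$ has $t_0=0$, so the information must be placed in $t_1$ instead:
\begin{itemize}
\item By Lemma~\ref{lem:semi-perf->p-div}, $vK$ is $p$-divisible, so choose $\varpi\in\Oo_v$ with $v\varpi = vp/p$.
\item By surjectivity of $x\mapsto x_0$ (applied to the shifted system), choose $t\in\Oo_v^\flat$ with $t_1=\bar\varpi$.
\item Then $(t^{1/p})^\sharp\equiv\varpi \pmod{p}$ and $v\varpi<vp$ give $v\bigl((t^{1/p})^\sharp\bigr)=vp/p$, hence $v(t^\sharp)=vp$.
\end{itemize}
This makes your closing suggestion precise, and your kernel computation then goes through. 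Equivalently, once you know $v^\flat(K^\flat)=vK$ from the argument in (iii), any $t$ with $v^\flat(t)=vp$ works.

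The same computation also makes your completeness claim in (iii) precise. One has $\ker(x\mapsto x_n)=\{x : v(x^\sharp)\ge p^n\,vp\}=t^{p^n}\Oo_v^\flat$, so the inverse-limit topology is the $t$-adic one. Moreover, $\varprojlim_n \Oo_v^\flat/t^{p^n}\Oo_v^\flat$ is identified with $\varprojlim_\Phi \Oo_v/p\Oo_v=\Oo_v^\flat$.
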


The main Galois-theoretic theorem about perfectoid fields due to Kedlaya-Liu and Scholze, generalising (\ref{eq:FW}), is the following:
\begin{thm}[Generalised Fontaine-Wintenberger] \label{thm:FW}
For any perfectoid field $K$, we have
\[
    G_K \cong G_{K^{\flat}}.
\]
\end{thm}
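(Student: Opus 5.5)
We prove $G_K \cong G_{K^{\flat}}$ for a perfectoid field $K$. The plan is to build an equivalence between the categories of finite separable extensions of $K$ and of $K^{\flat}$ (via $L \mapsto L^{\flat}$), compatible with degrees and automorphisms, and then pass to inverse limits exactly as in Proposition~\ref{prop:abs_gal_completion}. If $\charK K = p$ there is nothing to prove, since $K^{\flat} = K$, so assume mixed characteristic $(0,p)$.

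\textbf{Step 1: the tilting functor.} First show that any finite extension $L/K$, with its unique prolongation of $v$ (which is complete, since $K$ is complete of rank $1$), is again perfectoid. Then $L \mapsto L^{\flat}$ is a functor to finite extensions of $K^{\flat}$, and by functoriality of $\varprojlim_\Phi$ it sends $\Aut(L/K)$ to $\Aut(L^{\flat}/K^{\flat})$. The degree equality $[L:K] = [L^{\flat}:K^{\flat}]$ should come from Fact~\ref{fact:perfectoid_props}(ii): it identifies $\Oo_L/p\Oo_L \cong \Oo_{L^\flat}/t\Oo_{L^\flat}$ as modules over $\Oo_K/p\Oo_K \cong \Oo_{K^\flat}/t\Oo_{K^\flat}$. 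Counting ranks of these almost free modules (finite extensions of a field with non-discrete value group are defectless up to almost zero error) then gives equal degrees.

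\textbf{Step 2: essential surjectivity on the characteristic $p$ side.} Given a finite separable extension $M/K^{\flat}$, the task is to produce $L/K$ with $L^{\flat} \cong M$. Write $M = K^{\flat}(\beta)$ with minimal polynomial $g$ over $\Oo_{K^\flat}$. Reduce $g$ modulo $t$, transport it through $\overline{\sharp}$ to a polynomial over $\Oo_K/p$, and lift its coefficients (for instance via $\sharp$) to a monic $f \in \Oo_K[X]$. By continuity of roots (Fact~\ref{fact:cont_roots}) and Krasner's Lemma~\ref{lem:krasner}, after perturbing $f$, the extension $L$ it generates satisfies $\Oo_L/p \cong \Oo_M/t$ compatibly. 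Since $L$ is perfectoid by Step 1, this gives $L^{\flat} \cong M$. The same approximation argument applied to automorphisms shows that $\Aut(L/K) \to \Aut(L^\flat/K^\flat)$ is bijective, because both are determined by their action on $\Oo/p$ modulo almost zero elements.

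\textbf{Main obstacle.} The crux is the \emph{almost purity} input underlying Steps 1 and 2: showing that finite extensions of a perfectoid field are perfectoid, and that degrees match. Concretely, this means showing that $\Oo_L/p\Oo_L$ is semi-perfect, i.e.\ that $L/K$ is almost étale on integral level. My first attempt would go through the characteristic $p$ side, where it is easy: any finite extension of a perfect complete field is perfect, hence perfectoid. I would then transfer back along the mod-$t$/mod-$p$ identification, using the different ideal and Hensel-type lifting of étale algebras over $\Oo/p$ to the complete ring $\Oo$. If that becomes too delicate, I would instead reduce, via Proposition~\ref{prop:abs_gal_completion} and continuity of roots, to the approximation argument alone, and accept citing Scholze~\cite[Thm.~3.7]{Scholze12} for the almost-purity step.
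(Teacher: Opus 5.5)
Your outline has the right overall shape (a degree-preserving equivalence between finite extensions of $K$ and of $K^\flat$, followed by inverse limits as in Proposition~\ref{prop:abs_gal_completion}). However, there is a genuine gap: neither step is actually carried out.

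\emph{Step~1.} It presupposes that every finite extension $L/K$ is perfectoid, since Fact~\ref{fact:perfectoid_props}, and in particular $\Oo_L/p\Oo_L \cong \Oo_{L^\flat}/t\Oo_{L^\flat}$, is only available for perfectoid $L$. This is exactly the almost purity statement you postpone. The route you sketch for it runs in the wrong direction. Lifting (almost) \'etale algebras from $\Oo_{K^\flat}/t \cong \Oo_K/p$ to $\Oo_K$ produces perfectoid extensions of $K$ \emph{out of} extensions of $K^\flat$. It tells you nothing about a given $L/K$ unless you already know that $\Oo_L/p$ is (almost) \'etale over $\Oo_K/p$, which is the claim itself.

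\emph{Step~2.} It fails as written, because the extension generated by a lift $f$ is not determined by $f$ modulo $p$. Take the perfectoid field $K = \widehat{\IQ_p(\zeta_{p^\infty})}$ and $\pi = \zeta_p - 1$. The polynomials $X^p - 1$ and $X^p - 1 - \pi^p$ are congruent modulo $p$ (as $v(\pi^p) > vp$). Yet the first splits while the second is irreducible. Indeed, comparing valuations, a $p$-th root of $1 + \pi^p$ would have the form $1 + \pi y$ with $y$ a unit. Then $(1 + \pi y)^p \equiv 1 + \pi^p(y^p - y) \pmod{\pi^p\Mm_v}$ would force $\overline{y}^p - \overline{y} = 1$ in $Kv = \IF_p$, which is impossible. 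Continuity of roots and Krasner's Lemma only compare polynomials over one and the same valued field, so they cannot tell you which lift is the ``right'' one.

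The missing idea is the Frobenius-untwisting trick alluded to in Section~\ref{sec:JK}. Since $K^\flat$ is perfect, replacing the coefficients of $g$ by their $p^n$-th roots still gives a generator of $M$ but divides the valuation of the discriminant by $p^n$. Once that valuation is small compared with $vp$, Hensel-type arguments make the extension generated by a lift independent of all choices, and $\Oo_L/p$ agrees with $\Oo_M/t$ up to an error controlled by the discriminant.

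Even after this repair you only have the untilting functor $M \mapsto M^\sharp$ from finite extensions of $K^\flat$ to finite extensions of $K$. The genuinely hard part of the theorem is that \emph{every} finite extension of $K$ arises in this way; this is also what would make each $L$ perfectoid, as Step~1 requires. In Scholze's proof this is supplied by an explicit approximation argument showing that the untilt of the completed algebraic closure of $K^\flat$ is algebraically closed. Density and Krasner's Lemma then finish the job exactly as in Proposition~\ref{prop:abs_gal_completion}. Your proposal contains no substitute for this. Falling back on \cite[Thm.~3.7]{Scholze12} means citing the degree-preserving equivalence of finite extensions, from which the statement is immediate, rather than a lemma towards it.

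For comparison, the paper does not prove the theorem either; it is quoted from Kedlaya--Liu and Scholze. Section~\ref{sec:JK} does sketch the Jahnke--Kartas route, in which the hard step is replaced by saturation. In an $\aleph_1$-saturated ultrapower $K^*$, the middle rank~1 place of the Standard Decomposition is defectless (Fact~\ref{fact:sat->defectless}). So by the Taming Theorem~\ref{thm:taming}, every finite extension of $K^*$ is unramified with respect to $v^*_p$, and $G_{K^*} \cong G_{K^*v^*_p}$. The elementary embedding of $K^\flat$ into $K^*v^*_p$ induced by $\natural$, together with the small-discriminant trick, then cuts this down to $G_K \cong G_{K^\flat}$.
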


\subsubsection{Direct proof of the Transfer Lemma}

The following lemma---the proof of which will be genuinely valuation-theoretic---is a useful source for perfectoid fields. The Transfer Lemma follows as an immediate corollary.

\begin{lem} \label{lem:finite->perfectoid}
Let $(K,v)$ be a valued field of mixed characteristic $(0,p)$ such that
\begin{enumerate}[(i)]
    \item $\dim_{\IF_p} K^{\times}/(K^{\times})^p < \infty$;
    \item the value group $vK$ is $p$-divisible and of rank 1;
    \item the residue field $Kv$ is perfect.
\end{enumerate}
Then, $\Oo_v/p\Oo_v$ is semi-perfect. In particular, the completion of $(K,v)$ is perfectoid.
\end{lem}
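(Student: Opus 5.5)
The plan is to prove semi-perfectness in two stages. First I show Frobenius is surjective on $\Oo_v/\varpi\Oo_v$ for some small $\varpi$, using exactly the finiteness argument from the proof of Pop's Lemma~\ref{lem:Pop}. Then I bootstrap from $\varpi$ to $p$ by successive approximation, using that $vK$ is $p$-divisible and of rank 1.

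\emph{Base step.} By (i) and the short exact sequences in the proof of Pop's Lemma, $(1+\Mm_v)/(1+\Mm_v)^p$ is finite. Choose representatives $1+a_1,\ldots,1+a_n$ and set $\delta \coloneqq \min\{va_1,\ldots,va_n,vp\}>0$. For $x\in\Mm_v$ write $1+x=(1+a_i)(1+y)^p$ with $y\in\Mm_v$. Then
\[
    x-y^p = a_i(1+y)^p+\bigl[(1+y)^p-1-y^p\bigr],
\]
and both terms have value $\ge\delta$. Hence every element of $\Mm_v$ is a $p$-th power modulo the ideal $I_\delta\coloneqq\{z: vz\ge\delta\}$.

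Now let $x\in\Oo_v^\times$. Since $Kv$ is perfect by (iii), there is $z\in\Oo_v$ with $x-z^p\in\Mm_v$, and by the previous paragraph $x-z^p\equiv w^p \pmod{I_\delta}$ for some $w$. Because $(z+w)^p\equiv z^p+w^p \pmod{p\Oo_v}$, we get $x\equiv(z+w)^p$ modulo $I_\delta+p\Oo_v=I_\delta$, using $\delta\le vp$. Using $p$-divisibility of $vK$, fix $\varpi_0\in\Mm_v$ with $\varpi\coloneqq\varpi_0^p$ satisfying $0<v\varpi\le\delta$. This gives: for every $x\in\Oo_v$ there is $y\in\Oo_v$ with $x\equiv y^p\pmod{\varpi\Oo_v}$.

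\emph{Bootstrapping.} Given $x\in\Oo_v$, write $x=y_0^p+\varpi x_1$ with $x_1\in\Oo_v$. Apply the base step to $x_1$ to get $x_1=y_1^p+\varpi x_2$, and iterate. After $N$ steps,
\[
    x=\sum_{k<N}(\varpi_0^{k}y_k)^p+\varpi^N x_N \equiv \Bigl(\sum_{k<N}\varpi_0^k y_k\Bigr)^p \pmod{p\Oo_v+\varpi^N\Oo_v},
\]
again using that $(a+b)^p-a^p-b^p\in p\Oo_v$. Since $vK$ has rank 1, there is $N$ with $N\,v\varpi\ge vp$. For this $N$, $\varpi^N\Oo_v\subseteq p\Oo_v$, so $x$ is a $p$-th power modulo $p$. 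Thus $\Oo_v/p\Oo_v$ is semi-perfect.

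\emph{Perfectoid completion.} The completion $\widehat K$ has the same value group and $\widehat{\Oo}/p\widehat{\Oo}\cong\Oo_v/p\Oo_v$. Its value group is non-discrete, being $p$-divisible and non-trivial (it contains $vp>0$). Hence $\widehat K$ is perfectoid.

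The main obstacle I expect is the base step: one has to extract from the purely multiplicative finiteness condition (i) an additive congruence "$x\equiv y^p$" modulo a fixed ideal. The identity above does this, but only because $\delta$ is bounded by both the $va_i$ and $vp$, and the binomial cross terms need care there. The iteration itself is routine once $\varpi$ is chosen to be a $p$-th power.
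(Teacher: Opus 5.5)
Your proof is correct, but it is organised differently from the paper's. The paper runs a single multiplicative iteration. It takes representatives $a_1,\ldots,a_n$ of all of $K^\times/K^{\times p}$ and uses the $p$-divisibility of $vK$ and the perfectness of $Kv$ to move them into $1+\Mm_v$. It sets $\gamma=\min_i v(a_i-1)$ and writes $x=y_1^p(1+z_1)$, $z_i=y_{i+1}^p(1+z_{i+1})$ with $z_i\in\{a_j-1\}$. After $m$ steps with $m\gamma\ge vp$ it reads off $x\equiv(y_1+y_1y_2+\cdots+y_1\cdots y_m)^p \pmod{p\Oo_v}$.

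You instead separate two stages. Your base step is an additive sharpening of the computation in the proof of Pop's Lemma~\ref{lem:Pop}(ii). It shows that Frobenius is surjective on $\Oo_v$ modulo $\{z : vz\ge\delta\}$, using only the finiteness of $(1+\Mm_v)/(1+\Mm_v)^p$ and the perfectness of $Kv$, with no use of $p$-divisibility or of rank~$1$. After that, $p$-divisibility enters only to make $\varpi$ a $p$-th power, and the archimedean property enters only in the additive bootstrap from $\varpi$ to $p$.

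The paper's route is shorter and produces an explicit approximate $p$-th root in one pass. Yours is a little longer but shows exactly where each hypothesis is used. In particular, it isolates the general reduction ``Frobenius surjective on $\Oo_v/\varpi\Oo_v$ for some $p$-th power $\varpi\in\Mm_v$ $\Rightarrow$ $\Oo_v/p\Oo_v$ semi-perfect, provided $vp\in\Conv(v\varpi)$''. This is precisely what breaks down in the non-archimedean Witt-vector example of Section~\ref{sec:sat-decomp_method}, where $\Oo_v/[t]\Oo_v$ is semi-perfect but $\Oo_v/p\Oo_v$ is not.
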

\begin{proof}
Let $(\widehat{K},\hat v)$ be the completion of $(K,v)$. The second part of the conclusion follows immediately from
\begin{align*}
    \hat v\widehat{K} & \cong vK, \\
    \widehat{K}\hat v & \cong Kv, \\
    \Oo_v/p\Oo_v & \cong \Oo_{\hat v}/p\Oo_{\hat v}.
\end{align*}
To prove that $\Oo_v/p\Oo_v$ is semi-perfect, fix any $x \in \Oo_v$. We will define a sequence of increasingly better approximations to a $p$-th root of $x$.

Let $\{a_1(K^{\times})^p, \ldots, a_n(K^{\times})^p\}$ be a set of representatives for $K^{\times}/(K^{\times})^p$. Using the condition that $vK$ is $p$-divisible, we can assume without loss of generality that $a_1, \ldots, a_n \in \Oo_v^{\times}$. We may further assume $a_1, \ldots, a_n \in 1 + \Mm_v$, again without loss of generality, since $Kv$ is perfect. Set $\gamma \in vK$ to be the value
\[
    \gamma \coloneqq \min_{1 \le i \le n} v(a_i - 1) > 0.
\]
Using that $vK$ is archimedean, we choose $m \in \IN$ such that $m \gamma \ge vp$.

We can now construct our approximations\footnote{We note that an essentially identical approximation argument is given in the proof of \cite[Prop.~4.4]{Kuhlmann-Rzepka}.} to a $p$-th root of $x$. Choose $y_1, y_2, \ldots, y_m \in \Oo_v$ and $z_1, z_2, \ldots, z_m \in \{a_1 - 1, \ldots, a_n - 1\}$ inductively such that
\begin{align*}
    x & = y_1^p (1 + z_1) \\
    z_i & = y_{i + 1}^p (1 + z_{i + 1})
\end{align*}
for $i = 1, \ldots, m - 1$. Therefore, we obtain the sum decomposition
\[
    x = y_1^p (1 + y_2^p (1 + \ldots(1 + z_m)\ldots)) = y_1^p + y_1^p y_2^p + \ldots + y_1^p y_2^p \cdots y_m^p + y_1^p y_2^p \cdots y_m^p z_m.
\]
By construction,
\[
    v(y_1^p y_2^p \cdots y_m^p z_m) = v(x) + v(z_1) + v(z_2) + \ldots + v(z_m) \ge m \gamma \ge vp,
\]
so we have $y_1^p y_2^p \cdots y_m^p z_m \in p\Oo_v$. The Frobenius map (which is additive on $\Oo_v/p\Oo_v$) thus sends
\[
    y_1 + y_1 y_2 + \ldots + y_1 y_2 \cdots y_m
\]
to our chosen $x$ modulo $p$.
\end{proof}

\begin{cor}[Transfer Lemma] \label{cor:transfer_lemma1}
Let $(K,v)$ be a henselian valued field of mixed characteristic $(0,p)$. Assume that
\begin{enumerate}[(i)]
    \item $G_K$ is small;
    \item $vK = \Conv(vp)$ and $vK$ is $p$-divisible.
\end{enumerate}
Then, there exists a field $F$ of positive characteristic $p$ such that $G_K \cong G_F$.
\end{cor}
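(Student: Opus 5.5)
We plan to verify the hypotheses of Lemma~\ref{lem:finite->perfectoid}, pass to the completion, and then combine Proposition~\ref{prop:abs_gal_completion} with the Generalised Fontaine-Wintenberger Theorem~\ref{thm:FW}. The candidate field is $F \coloneqq \widehat{K}^{\flat}$, the tilt of the completion, which has characteristic $p$ by Fact~\ref{fact:perfectoid_props}(iii).

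\textbf{Rank and finiteness.} We first observe that $vK = \Conv(vp)$ means that $vK$ has no proper non-trivial convex subgroup containing $vp$. In the Standard Decomposition at $\varpi = p$ (Theorem~\ref{thm:stand_decomp}), the coarsening $v_0$ is then trivial by \ref{item:v_0 trivial}. This does not yet give rank 1, since $\Gamma_p$ might be non-trivial. So the key intermediate goal is to show $\Gamma_p = \{0\}$, i.e.\ that $v$ has rank 1. Next, by Lemma~\ref{lem:small_Gal->finite_dim} applied with $q = p$ (using $\charK K = 0$), smallness of $G_K$ gives $\dim_{\IF_p} K^\times/K^{\times p} < \infty$. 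Pop's Lemma~\ref{lem:Pop}(i) then makes $Kv$ perfect.

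\textbf{Killing $\Gamma_p$ (main obstacle).} I would argue as follows. The valuation $\dbloverline{v}$ on $Kv_p$ is of equal characteristic $p$, with $p$-divisible value group $\Gamma_p$ (a convex subgroup of a $p$-divisible group is $p$-divisible) and residue field $Kv$. By Fact~\ref{fact:hensel_preserve}, both $v_p$ and $\dbloverline{v}$ are henselian. I then intend to apply Pop's Lemma to the mixed characteristic rank 1 valuation $v_p$, or alternatively to $v$ itself, to deduce that the residue field $Kv_p$ is perfect.

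If $\dbloverline{v}$ were non-trivial, $Kv_p$ would be a perfect field with a non-trivial valuation, which is not yet a contradiction. So instead of proving rank 1 outright, I would replace $v$ by $v_p$. The coarsening $v_p$ is henselian, of mixed characteristic, of rank 1 (its value group $vK/\Gamma_p \cong \Gamma_0/\Gamma_p$ has rank 1 by Theorem~\ref{thm:stand_decomp}\ref{item:induced value groups}), and its value group is $p$-divisible. Its residue field $Kv_p$ is perfect by Pop's Lemma~\ref{lem:Pop}(i) applied to $(K, v_p)$.

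\textbf{Conclusion.} Lemma~\ref{lem:finite->perfectoid} then applies to $(K, v_p)$ and shows that $\widehat{K}$, the completion with respect to $v_p$, is perfectoid. Since $v_p$ is henselian of rank 1, Proposition~\ref{prop:abs_gal_completion} gives $G_K \cong G_{\widehat{K}}$. Theorem~\ref{thm:FW} gives $G_{\widehat{K}} \cong G_{\widehat{K}^\flat}$, so we may take $F = \widehat{K}^{\flat}$.

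The only delicate point is the reduction to rank 1 through $v_p$, together with checking that henselianity passes to the coarsening. Once that is in place, everything else is a direct assembly of the stated results.
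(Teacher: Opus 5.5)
Your proposal is correct and is essentially the paper's proof. You pass to the coarsening $v_p$ (henselian by Fact~\ref{fact:hensel_preserve}, rank~1, with $p$-divisible value group), obtain finite $p$-rank from Lemma~\ref{lem:small_Gal->finite_dim} and perfectness of $Kv_p$ from Pop's Lemma~\ref{lem:Pop}(i), and then apply Lemma~\ref{lem:finite->perfectoid}, Proposition~\ref{prop:abs_gal_completion} and Theorem~\ref{thm:FW} with $F = \widehat{K}^{\flat}$. You can simply delete the abandoned attempt to force $\Gamma_p = \{0\}$: under these hypotheses $v$ itself need not have rank~1 (e.g.\ an algebraically closed $K$ with $v = \dbloverline{v} \circ v_p$ for a non-trivial $\dbloverline{v}$ on $Kv_p$), which is exactly why coarsening to $v_p$ is the right move.
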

\begin{proof}
Smallness of $G_K$ implies that $\dim_{\IF_p} K^{\times}/(K^{\times})^p < \infty$ by Lemma~\ref{lem:small_Gal->finite_dim}.
Consider the standard coarsening $v_p$, which, by our assumptions on $vK$, has $p$-divisible value group $v_pK$ of rank 1. By Pop's Lemma~\ref{lem:Pop}(i), $Kv_p$ is perfect. Therefore, $(K,v_p)$ satisfies all the conditions of the preceding lemma, making $(\widehat{K},\widehat{v_p})$ a perfectoid field. Hence
\[
    G_{\vphantom{\widehat{K}}K} \cong G_{\widehat{K}} \cong G_{\widehat{K}^{\flat}}
\]
by Proposition~\ref{prop:abs_gal_completion} and the Generalised Fontaine-Wintenberger Theorem. So we may choose $F = \widehat{K}^{\flat}$ as our characteristic $p$ field.
\end{proof}

For the purpose of proving our Main Theorem, this is all we need. In particular, no saturated models were involved in the construction of $F$ --- instead, we provided a purely arithmetic description via completions and tilts.

\subsection{The Saturation-Decomposition Method} \label{sec:sat-decomp_method}

For the rest of the chapter, we will assume familiarity with the basic concepts of model theory (such as theories, types, saturation, and ultraproducts).

To the best of our knowledge, the Saturation-Decomposition Method originates in Kochen's individual account \cite{Kochen75} of his work with Ax. The idea of the method is roughly the following:

\begin{slogan*}
Given a valued field $(K,v)$, study its structure in two steps:
\begin{enumerate}[(i)]
    \item \textbf{Saturate} by passing to an extension with strong closure properties whilst preserving all elementary properties of the valued field (i.e., pass to a saturated elementary extension).
    \item \textbf{Decompose} the saturated extension into places that can each be analysed individually.
\end{enumerate}
\end{slogan*}

\subsubsection{The discrete case}

We start with a classic example. The theorem below characterises the first-order theory of $(\IQ_p,v_p)$. This serves a dual purpose: it is the simplest example of the Saturation-Decomposition Method at work, and, at the same time, it characterises fields elementarily equivalent to $\IQ_p$ via four natural axioms. It is precisely these four axioms that we will have to check in the proof of the Main Theorem.
\begin{thm} \label{thm:char_Qp}
A valued field $(K,v)$ is elementary equivalent to $(\IQ_p,v_p)$ if and only if
\begin{enumerate}[(i)]
    \item $(K,v)$ is henselian of mixed characteristic $(0,p)$;
    \item $vp$ is minimal positive;
    \item $vK \equiv \IZ$;
    \item $Kv = \IF_p$.
\end{enumerate}
\end{thm}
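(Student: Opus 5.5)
The forward direction is immediate: $(\IQ_p,v_p)$ satisfies (i)--(iv), and each axiom is first-order in $\Ll_{\VF}$ (via Fact~\ref{fact:val_lang}). Henselianity is an axiom scheme by Hensel's Lemma~\ref{lem:hensel}. Minimality of $vp$ is a single sentence. $vK\equiv\IZ$ is expressed by the theory of $\IZ$ in the value sort. $Kv=\IF_p$ is the sentence $\forall x\,\bigvee_{i<p} x=i$ in the residue sort. All these transfer to any $(K,v)\equiv(\IQ_p,v_p)$.

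For the converse I would follow the Saturation-Decomposition slogan. Let $(K,v)$ satisfy (i)--(iv). Both $(K,v)$ and $(\IQ_p,v_p)$ satisfy the same axioms, so by the downward and upward Löwenheim--Skolem theorems and the fact that these axioms are first-order, it suffices to show that any two $\aleph_1$-saturated models $(K,v)$, $(K',v')$ of (i)--(iv) are elementarily equivalent. Replacing each by a non-principal ultrapower over $\IN$ (which preserves the elementary theory) gives the needed saturation. I then apply the Standard Decomposition at $\varpi=p$ (Theorem~\ref{thm:stand_decomp}, Remark~\ref{rem:stand_decomp_mixed}). By (ii) and (iv), $\Gamma_p=\Gamma^-_{vp}$ is trivial, since $vp$ is the minimal positive element, so nothing is infinitesimal with respect to it. Hence $v=v_p$, and the decomposition is $K\xrightarrow{v_0}Kv_0\xrightarrow{\overline{v}}\IF_p$, where $\overline{v}$ has value group $\Conv(vp)=\IZ\cdot vp\cong\IZ$. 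By Fact~\ref{fact:hensel_preserve}, $v_0$ and $\overline{v}$ are henselian.

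The core step is to identify the core field $(Kv_0,\overline{v})$. It is a henselian discretely valued field of mixed characteristic with uniformiser $p$ and residue field $\IF_p$. By saturation, it is complete: a Cauchy sequence indexed by $\IN$ with respect to $\overline{v}$ lifts to a countable type in $K$, which is realised. So $Kv_0\cong\IQ_p$, because a complete DVR of characteristic $0$ with uniformiser $p$ and residue field $\IF_p$ is $\IZ_p$ (e.g.\ via Teichmüller lifts and $p$-adic expansions). Thus $(K,v_0)$ is henselian of equicharacteristic $(0,0)$ with residue field $\IQ_p$ and value group $vK/\IZ$. The same holds for $K'$. Now I apply the Ax-Kochen-Ershov Theorem~\ref{thm:AKE} to $(K,v_0)$ and $(K',v'_0)$. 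The residue fields are both $\IQ_p$, and for the value groups I need $vK/\IZ\equiv v'K'/\IZ$. Since $vK\equiv\IZ$, both quotients are divisible torsion-free ordered abelian groups (a $\IZ$-group modulo its minimal convex subgroup is divisible), hence elementarily equivalent, and non-trivial by saturation.

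It remains to lift $K\equiv_{v_0}K'$ together with the identification of the core fields back to an equivalence of $(K,v)$ and $(K',v')$. I would run AKE in the expanded language where the residue field $\IQ_p$ carries its valuation $\overline{v}$, which is definable in $\IQ_p$ (e.g.\ $\Oo=\{x:\exists y\, y^2=1+px^2\}$ for $p\ne2$, with a similar formula for $p=2$). The relative quantifier elimination of AKE with enriched residue sort then gives $(K,v)\equiv(K',v')$. I expect this last step to be the main obstacle, since it needs the version of AKE with residue-field expansions, or an explicit definition of $v$ from $v_0$ and the definable $p$-adic valuation on $Kv_0$. The completeness step via saturation also needs care, but it is standard.
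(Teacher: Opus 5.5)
Your proposal is correct and follows essentially the same route as the paper. Both arguments saturate two models, apply the Standard Decomposition at $p$, identify the complete core field with $\IQ_p$, and apply Ax--Kochen--Ershov to the $(0,0)$-place with non-trivial divisible value groups.

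The final step is easier than you expect. The paper simply notes that J.~Robinson's formula defines $\Oo_v$ in $K$ itself, using the same $\Ll_{\ring}$-formula in both models, so $K\equiv K'$ as rings already gives $(K,v)\equiv(K',v')$. Your second option, defining $\Oo_v$ from $v_0$ together with the Robinson-definable valuation on $Kv_0\cong\IQ_p$, works equally well, and no resplendent version of AKE is needed.
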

\begin{proof}[Proof]
Properties (i)--(iv) clearly hold in $(\IQ_p,v_p)$ and it is not difficult to verify that (i)--(iv) can be expressed by a set of first-order sentences. Any valued field elementarily equivalent to $(\IQ_p,v_p)$ will thus satisfy (i)--(iv) as well.

It now suffices to show that (i)--(iv) form a complete theory. To this end, let $(K^*,v^*)$ and $(K',v')$ be two $\aleph_1$-saturated models of (i)--(iv). Our aim is to show $(K^*,v^*) \equiv (K',v')$. First, decompose $v^*$ and $v'$ (Remark~\ref{rem:stand_decomp_mixed}):
\newcommand{\eqdotted}{\mathrel{\ooalign{$\cong$\cr\hfil\textcolor{white}{\rule[.18ex]{.1em}{1.4ex}\hspace{.2em}\rule[.18ex]{.1em}{1.4ex}}\hfil\cr}}}
\begin{center}
\begin{tikzcd}[remember picture, cramped]
    K^* \ar[r,"v^*_0"] & K^*v^*_0 \ar[r,"\overline{v^*}"] & \IF_p \\
    K' \ar[r,"v'_0"] & K'v'_0 \ar[r,"\overline{v'}"] & \IF_p
\end{tikzcd}
\begin{tikzpicture}[overlay,remember picture]
    \path (\tikzcdmatrixname-1-2) to node[midway,sloped]{$\eqdotted$}
    (\tikzcdmatrixname-2-2);
    \path (\tikzcdmatrixname-1-3) to node[midway,sloped]{$=$}
    (\tikzcdmatrixname-2-3);
\end{tikzpicture}
\end{center}
Properties (ii) and (iii) imply that $\overline{v^*}$ and $\overline{v'}$ have value groups isomorphic to $\IZ$, whereas the value groups of $v^*_0$ and $v'_0$ are non-trivial and divisible (cf. Theorem~\ref{thm:stand_decomp}\ref{item:induced value groups} and Fact~\ref{fact:Z-groups}). In particular, the completeness of the theory of non-trivial divisible ordered abelian groups implies $v^*_0K^* \equiv v'_0K'$. Moreover, saturation implies that $(K^*v^*_0,\overline{v^*})$ and $(K'v'_0,\overline{v'})$ are complete. By a well-known structure theorem (\cite[Chap. II, \S 5.3]{Serre79}), $\IQ_p$ is the only complete valued field with value group $\IZ$, minimal positive value $vp$, and residue field $\IF_p$, up to isomorphism. Therefore, $K^*v^*_0 \cong K'v'_0$. By the Ax-Kochen-Ershov Theorem~\ref{thm:AKE}, we conclude
\[
    (K^*,v^*_0) \equiv (K',v'_0).
\]
While $v^*_0$ and $v'_0$ are not the valuations we started with, note that $\Oo_{v^*}$ and $\Oo_{v'}$ are quantifier-free definable via J. Robinson's formula
\begin{equation} \label{eq:JR-formula}
    \varphi(x) \coloneqq \begin{cases}
        \exists y\, (1 + 2 x^3 = y^3) & \text{if $p = 2$} \\
        \exists y\, (1 + p x^2 = y^2) & \text{if $p \ne 2$},
    \end{cases}
\end{equation}
so $K^* \equiv K'$ (as rings) implies $(K^*,v^*) \equiv (K',v')$ by Fact~\ref{fact:val_lang}.
\end{proof}
\begin{rem}
It might be tempting to consider just one saturated model $(K^*,v^*)$ and compare its theory to $(\IQ_p,v_p)$ directly. This does not work, however, since the (0,\:\!0)-place in the Standard Decomposition is trivial and thus not elementarily equivalent to $v^*_0$. Working instead with the field of Puiseux series over $\IQ_p$ as a ``canonical model'' of (i)--(iv) would do the trick.
\end{rem}

\begin{rem} \label{rem:Witt}
This method is the standard approach to reducing problems in mixed characteristic $(0,p)$ to equicharacteristic (0,\:\!0) in model theory and remains widely used. For the above strategy, it is crucial that we are able to determine the core field via structure theorems for Witt rings or, more generally, Cohen rings (for the case the residue field is imperfect, see \cite{Anscombe-Jahnke}). Note that in the above proof of Theorem~\ref{thm:char_Qp}, we used the structure theorem for the Witt ring over $\IF_p$. These tools only become available if the core field is $\IZ$-valued, or equivalently, if the original valued field is finitely ramified (the interval $(0,vp] \subset vK$ contains finitely many points).
The strongest model-theoretic results one could hope for were established in \cite{ADF24}, which handles this general case (henselian, finitely ramified valued fields).
\end{rem}

\subsubsection{The non-discrete case}

The case when $(0,vp] \subset vK$ has infinitely many points is handled quite differently. Here, it is not the core field that plays the central role, but rather the positive characteristic residue field $Kv_p$ (from the Standard Decomposition~\ref{rem:stand_decomp_mixed}).
The key assumption we need to handle this case is the semi-perfectness of $\Oo_v/p\Oo_v$.

We now present a model-theoretic criterion for semi-perfectness. The proof will use a preservation lemma along the Standard Decomposition.

\begin{lem} \label{lem:semi-perf->p-div}
Let $(K,v)$ be a valued field of mixed characteristic $(0,p)$. If $\Oo_v/p\Oo_v$ is semi-perfect and $vp$ is not minimal positive, then $\Conv(vp)$ is $p$-divisible.
\end{lem}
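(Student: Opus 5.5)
Fix $\gamma \in \Conv(vp)$ with $\gamma > 0$; we want to show $\gamma \in p\,vK$. Since $\Conv(vp)$ is a convex subgroup and $p\,\Conv(vp) \subseteq \Conv(vp)$, it suffices to treat positive elements of $\Conv(vp)$. The strategy is to first handle values in the interval $(0, vp)$ directly via semi-perfectness, and then use the archimedean-type structure of $\Conv(vp)$ (every element is bounded by some $n\,vp$) to reach all of $\Conv(vp)$.

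\textbf{Step 1: values strictly below $vp$.} Let $x \in \Oo_v$ with $0 \le vx < vp$. By semi-perfectness of $\Oo_v/p\Oo_v$ there is $y \in \Oo_v$ with $x \equiv y^p \pmod{p\Oo_v}$, i.e.\ $x = y^p + pz$ with $z \in \Oo_v$. Since $v(pz) \ge vp > vx$, the ultrametric inequality forces $vx = v(y^p) = p\,vy \in p\,vK$. Hence every element of $vK$ lying in $[0,vp)$ is $p$-divisible; by symmetry so is every element of $(-vp, vp)$.

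\textbf{Step 2: reaching all of $\Conv(vp)$.} Here we use that $vp$ is not minimal positive. If there is $\delta \in vK$ with $0 < \delta < vp$, then any $\gamma \in [0, n\,vp)$ can be written as a sum of at most $2n$ elements of $[0,vp)$. Indeed, given such a $\gamma$, let $k \ge 0$ be maximal with $k\delta \le \gamma$; this $k$ exists because $\gamma < n\,vp$ and Step~1 plus the following shows $k$ is finite, or one can subtract $vp$-chunks directly. Cleaner: write $\gamma = \gamma'$ and repeatedly subtract $\delta$ or $vp - \delta$ (both lying in $(0,vp)$) until landing in $[0,vp)$. Each remaining summand is $p$-divisible by Step~1, and $p\,vK$ is a subgroup, so $\gamma \in p\,vK$. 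Since every element of $\Conv(vp)$ has absolute value below some $n\,vp$, this gives $\Conv(vp) \subseteq p\,vK$.

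\textbf{Step 3: $p$-divisibility inside $\Conv(vp)$.} Finally, note that if $\gamma = p\beta$ with $\gamma \in \Conv(vp)$, then $|\beta| \le |\gamma|$, so convexity gives $\beta \in \Conv(vp)$. Thus $\Conv(vp)$ is $p$-divisible as a group in its own right.

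The only subtle point is Step~2, where one must carefully organize the decomposition of an arbitrary $\gamma \in \Conv(vp)$ into summands lying in $(0,vp)$. The hypothesis that $vp$ is not minimal positive is exactly what supplies a $\delta \in (0,vp)$, so that $vp = \delta + (vp - \delta)$ is itself a sum of two such elements. Writing $\gamma = m\,vp + r$ with $r \in [0,vp)$, which is possible by choosing $m$ maximal with $m\,vp \le \gamma$, finishes the argument without further case analysis.
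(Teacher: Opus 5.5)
Your proof is correct and is essentially the paper's proof: the same valuation trick ($x = y^p + pz$ with $vx < vp$ forces $vx = p\,vy$) handles $[0,vp)$, the same splitting $vp = \delta + (vp-\delta)$ handles $vp$, and the same observation that $[0,vp]$ generates $\Conv(vp)$ (via $\gamma = m\,vp + r$) finishes it. Your Step~3 spells out the convexity detail the paper leaves implicit. Do delete the aborted attempt with $k$ maximal such that $k\delta \le \gamma$: it can fail when $\delta \ll vp$, since then no maximal $k$ need exist, and your final decomposition $\gamma = m\,vp + r$ makes it unnecessary.
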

\begin{proof}
Use the following trick (cf. \cite[Lem. 3.2]{Scholze12}): whenever
\[
    x \equiv y^p \pmod{p\Oo_v}, \quad 0 \le vx < vp,
\]
we must have $vx = p \cdot vy$. Hence, the interval $[0,vp) \subset vK$ is $p$-divisible. Moreover, $vp \in pvK$, which can be seen by writing $vp = \gamma + (vp - \gamma)$ for some $\gamma \in (0,vp)$.
But $[0,vp]$ generates all of $\Conv(vp)$, which must then be $p$-divisible as well.
\end{proof}

\begin{lem} \label{lem:semi_perf_up and down}
Let $(K,v)$ be a valued field of mixed characteristic $(0,p)$. Assume that $vp$ is not minimal positive in its archimedean class (equivalently, $v_p p$ is not minimal positive in $v_p K$). Let $v = \dbloverline{v} \circ \overline{v_p} \circ v_0$ be decomposed as in the Standard Decomposition. Then the following are equivalent:
\begin{enumerate}[(i)]
    \item $\Oo_v/p\Oo_v$ is semi-perfect;
    \item $\Oo_{\overline{v}}/p\Oo_{\overline{v}}$ is semi-perfect;
    \item $\Oo_{v_p}/p\Oo_{v_p}$ is semi-perfect;
    \item $\Oo_{\overline{v_p}}/p\Oo_{\overline{v_p}}$ is semi-perfect.
\end{enumerate}
\end{lem}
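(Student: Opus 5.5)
The plan is to split the four conditions into two pairs, each handled by a ring isomorphism, and then do the real work in the comparison (i)$\Leftrightarrow$(iii).

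\textbf{(i)$\Leftrightarrow$(ii) and (iii)$\Leftrightarrow$(iv).} By Theorem~\ref{thm:stand_decomp}\ref{item:O induced}, $\Oo_{\overline{v}} = \Oo_v/\Mm_{v_0}$ with $\Mm_{v_0} = \bigcap_n p^n\Oo_v \subseteq p\Oo_v$. Hence reduction gives a ring isomorphism $\Oo_v/p\Oo_v \cong \Oo_{\overline{v}}/p\Oo_{\overline{v}}$, which settles (i)$\Leftrightarrow$(ii). The same argument applies with $v_p$ in place of $v$: here $\Oo_{\overline{v_p}} = \Oo_{v_p}/\Mm_{v_0}$ and $\Mm_{v_0}\subseteq p\Oo_{v_p}$. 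This gives (iii)$\Leftrightarrow$(iv).

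\textbf{(i)$\Rightarrow$(iii).} Since $vp$ is not minimal positive, Lemma~\ref{lem:semi-perf->p-div} shows that $\Conv(vp)$ is $p$-divisible, and in particular so is $\Gamma_p \subseteq \Conv(vp)$. Let $x \in \Oo_{v_p}$, so that $vx \ge -\delta$ for some $\delta \in \Gamma_p$.
\begin{enumerate}[(a)]
    \item Choose $d$ with $p\,vd \ge \delta$ and $vd \in \Gamma_p$; this uses $p$-divisibility of $\Gamma_p$. Then $d^p x \in \Oo_v$.
    \item By (i), write $d^p x = y^p + pz$ with $y, z \in \Oo_v$.
    \item Dividing by $d^p$ gives $x = (y/d)^p + p\,z d^{-p}$.
    \item Since $d$ is a unit of $\Oo_{v_p}$, both $y/d$ and $zd^{-p}$ lie in $\Oo_{v_p}$.
\end{enumerate}
This shows that $\Oo_{v_p}/p\Oo_{v_p}$ is semi-perfect.

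\textbf{(iii)$\Rightarrow$(i).} This is the step I expect to be the main obstacle. The error term $p\Oo_{v_p}$ only controls valuations up to an infinitesimal $\Gamma_p$-defect, so it does not land directly in $p\Oo_v$. The plan has two stages.
\begin{enumerate}[(a)]
    \item \emph{Get $p$-divisibility.} Apply Lemma~\ref{lem:semi-perf->p-div} to $(Kv_0,\overline{v_p})$, using (iv) and the hypothesis that $v_pp$ is not minimal positive. This shows that $\Conv(v_pp)$ is $p$-divisible. Lifting through $\Gamma_p$, we can pick $b \in K$ with $p\,v_p(b) = v_p(p)$.
    \item \emph{Semi-perfectness modulo $b^p\Oo_v$ wait—modulo $b\Oo_v$.} Let $x \in \Oo_v$ and use (iii) to write $x = y^p + e$ with $e \in p\Oo_{v_p}$. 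Since $v_p(b) < v_p(p)$, we have $ve > vb$, so $e \in b\Oo_v$. Moreover $v(y^p) \ge \min(vx, ve) \ge 0$, so $y \in \Oo_v$. Hence $\Oo_v/b\Oo_v$ is semi-perfect.
    \item \emph{Bootstrap to $p$.} Iterate as in the proof of Lemma~\ref{lem:finite->perfectoid}. Writing $x = y_1^p + b z_1$, then $z_1 = y_2^p + b z_2$, and so on, and using additivity of Frobenius modulo $p$, we get
    \[
        x \equiv \bigl(y_1 + b^{1/p} y_2 + \cdots\bigr)^p \pmod{b^k\Oo_v + p\Oo_v}.
    \]
    For this we need $b$ to have a $p$-th root up to a unit of $\Oo_v$. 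I would arrange this by choosing $b$ itself as a $p$-th power of an element with suitable value, again using $p$-divisibility.
    \item \emph{Finish.} After finitely many steps, $v(b^k) \ge vp$, because $v_p(b^k) > v_p(p)$ once $k > p$. At that point the error lies in $p\Oo_v$, which gives (i).
\end{enumerate}

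The delicate bookkeeping is in stage (c). I need to choose $b$ so that the $p$-th roots of $b$ used in the iteration exist and the resulting coefficients remain in $\Oo_v$. A fallback is to take $b = c^{p}$ with $v_p(c) = v_p(p)/p^2$.
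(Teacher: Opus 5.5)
Your proof is correct and follows the paper's overall structure. The differences are in packaging and in the (iii)$\Rightarrow$(i) step.

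\textbf{(i)$\Leftrightarrow$(ii) and (iii)$\Leftrightarrow$(iv).} You use $\Mm_{v_0}=\bigcap_n p^n\Oo_v\subseteq p\Oo_v\subseteq p\Oo_{v_p}$ to get ring isomorphisms $\Oo_v/p\Oo_v\cong\Oo_{\overline{v}}/p\Oo_{\overline{v}}$ and $\Oo_{v_p}/p\Oo_{v_p}\cong\Oo_{\overline{v_p}}/p\Oo_{\overline{v_p}}$. The paper proves the same thing elementwise (projection in one direction, lifting in the other), so your version is the cleaner formulation of the same idea.

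\textbf{(i)$\Rightarrow$(iii).} You use the paper's trick: multiply by the $p$-th power of a $v_p$-unit to land in $\Oo_v$, apply (i), then divide back. The paper scales by $x^{-p}$, writing $x=x^p\cdot x^{1-p}$; you scale by $d^p$. Here you do not need Lemma~\ref{lem:semi-perf->p-div} or $p$-divisibility of $\Gamma_p$ at all: taking $vd=\delta$ already gives $v(d^px)\ge(p-1)\delta\ge 0$.

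\textbf{(iii)$\Rightarrow$(i).} Both arguments rest on the same auxiliary element $c$ with $v_pc=\tfrac{1}{p^2}v_pp$.
\begin{itemize}
\item Your fallback $b=c^p$ is exactly the right choice, and it settles the hedging in stage (c). You get $x=\sum_{j=1}^{k}(c^{j-1}y_j)^p+c^{pk}z_k$ with all $y_j,z_k\in\Oo_v$. After $k=p+1$ passes the error term lies in $p\Oo_v$.
\item The paper needs only two passes. After the second pass it does not weaken the new error to $b\Oo_v$ but keeps it as $pz_1$ with $z_1\in\Oo_{v_p}$. This gives $x=y_0^p+(cy_1)^p+p\,c^pz_1$, and $c^pz_1\in\Oo_v$ because $v_p(c^p)>0$ absorbs the $\Gamma_p$-defect of $z_1$.
\end{itemize}
Your version is closer in spirit to the approximation argument of Lemma~\ref{lem:finite->perfectoid}. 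It makes the mechanism transparent (semi-perfect modulo $c^p$ bootstraps to semi-perfect modulo $p$), at the cost of more bookkeeping; the paper's version is shorter.

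Also remove the stray ``modulo $b^p\Oo_v$ wait'' in stage (b).
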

\begin{proof}
We may view (iii) $\Leftrightarrow$ (iv) as a special case of (i) $\Leftrightarrow$ (ii).

(i) $\Rightarrow$ (ii). Note that $\Oo_v$ projects onto $\Oo_{\overline{v}} = \Oo_v/\Mm_{v_0}$.

(i) $\Rightarrow$ (iii). Let $x \in \Oo_{v_p} \supseteq \Oo_v$. There is nothing to show if $x \in \Oo_v$, so assume $x \notin \Oo_v$. This implies $x^{-1} \in \Oo_v$; in particular, $x \in \Oo_{v_p}^{\times}$ and $x^{1 - p} \in \Oo_v$. Hence, we may find $y_0, z_0 \in \Oo_v$ with $x^{1 - p} = y^p + pz$. But then $x = y'^p + pz'$ for $y' = xy \in \Oo_{v_p}$ and $z' = x^p z \in \Oo_{v_p}$.

(ii) $\Rightarrow$ (i). Let $x \in \Oo_v \subseteq \Oo_{v_0}$. There is nothing to show if $x \in p\Oo_v$, so assume $x \notin p\Oo_v \supset \Mm_{v_0}$. This implies $x \in \Oo_{v_0}^\times$, so we can find $y, z \in \Oo_v \cap \Oo_{v_0}^\times$ such that $\overline{x} = \overline{y}^p + p\overline{z}$ in $\Oo_{\overline{v}} = \Oo_v/\Mm_{v_0}$. 
However, since $\Mm_{v_0} \subset p\Oo_v$, we must have $x - y^p \in p\Oo_v$.

(iii) $\Rightarrow$ (i). Write $\Gamma = vK$ and recall that $\Gamma_p$ is the maximal convex subgroup not containing $vp$. By assumption and Lemma~\ref{lem:semi-perf->p-div}, the value group $v_pK = \Gamma/\Gamma_p$ is $p$-divisible.
Let $x \in \Oo_v \subseteq \Oo_{v_p}$ and find $y_0, z_0 \in \Oo_{v_p}$ such that $x = y_0^p + pz_0$. A posteriori, we observe that $y_0 \in \Oo_v$ because $pz_0 \in \Oo_v$ by Theorem~\ref{thm:stand_decomp}\ref{item:O varpi}. If $z_0 \in \Oo_v$, we are done; otherwise $z_0 \in \Oo_{v_p}\setminus\Oo_v$, i.e., $vz_0 < 0$ and $vz_0 \in \Gamma_p$. We would like to use assumption (iii) a second time, but this will not be successful if we apply it to $pz_0$ directly. Instead, let $c \in \Oo_v$ be any constant of value $v_pc = \frac{1}{p^2}v_pp \in \Gamma/\Gamma_p$, so that $v(pc^{-p}) \gg -vz_0$ and hence $pc^{-p}z_0 \in \Oo_v$.
In a second pass, we may write $pc^{-p}z_0 = y_1^p + pz_1$, again with $y_1 \in \Oo_v$ and $z_1 \in \Oo_{v_p}$. Therefore,
\[
    x = y_0^p + pz_0 = y_0^p + (cy_1)^p + pc^pz_1.
\]
Finally, note that $y_0^p + (cy_1)^p \equiv (y_0 + cy_1)^p$ modulo $p\Oo_v$ and $c^pz_1 \in \Oo_v$.
\end{proof}

\begin{rem}
The somewhat unusual condition about the value $vp$ is essential (it is not enough to require that $vp$ is not minimal positive in $vK$), as the following example shows (suggested to us by Margarete Ketelsen). Consider the fraction field of the Witt vector ring $K = \Frac(W(k))$ over the field $k = \IF_p(t)^{1/p^{\infty}}$, the perfect hull of $\IF_p(t)$. Endow $K$ with the composite valuation $v = v_t \circ w$, where $w$ is the valuation corresponding to the Witt ring and $v_t$ is the $t$-adic valuation on $k$; in particular, $w = v_p$. Even though $vp$ is not minimal positive in $vK$, $v_pp = wp$ is minimal positive in $v_pK = \IZ$. Moreover, $\Oo_{v_p}/p\Oo_{v_p} \cong k$ is perfect. However, $\Oo_v/p\Oo_v$ is not semi-perfect: the element $t^{-1}p$ does not admit a $p$-th root modulo $p$---any such root would have value $\frac 1 p v(t^{-1}p)$, which reduces to $\frac 1 p v_pp$ in $v_pK$. This would contradict the minimality of $v_pp$.
\end{rem}

\begin{thm}[Criterion for semi-perfectness] \label{thm:semi-perf_char}
Let $(K,v)$ be a valued field of mixed characteristic $(0,p)$. Then the following are equivalent:
\begin{enumerate}[(i)]
    \item $\Oo_v/p\Oo_v$ is semi-perfect and $vp$ is not minimal positive in $vK$.
    \item $K^*v^*_p$ is perfect and $\overline{v^*}(K^*v^*_0)$ is $p$-divisible for any (possibly trivial) elementary extension $(K^*,v^*) \succcurlyeq (K,v)$.
    \item $K^*v^*_p$ is perfect and $\overline{v^*_p}(K^*v^*_0)$ is $p$-divisible for some $\aleph_1$-saturated elementary extension $(K^*,v^*) \succcurlyeq (K,v)$.
\end{enumerate}
\end{thm}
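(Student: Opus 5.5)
The plan is to prove (i) $\Rightarrow$ (ii) $\Rightarrow$ (iii) $\Rightarrow$ (i). The bridge between the two sides is Lemma~\ref{lem:semi_perf_up and down}, which reduces semi-perfectness of $\Oo_v/p\Oo_v$ to semi-perfectness of $\Oo_{\overline{v_p}}/p\Oo_{\overline{v_p}}$ for the rank~1 middle place. First I will check that condition (i) is elementary. Semi-perfectness is expressed by the sentence $\forall x \in \Oo\, \exists y,z \in \Oo\, (x = y^p + pz)$, and ``$vp$ not minimal positive'' is a single $\Ll_{\val}$-sentence. Hence (i) holds in $(K,v)$ if and only if it holds in every elementary extension $(K^*,v^*)$.

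\textbf{(i) $\Rightarrow$ (ii).} Let $(K^*,v^*) \succcurlyeq (K,v)$. Then $\Oo_{v^*}/p\Oo_{v^*}$ is semi-perfect and $v^*p$ is not minimal positive. By Lemma~\ref{lem:semi-perf->p-div}, $\Conv(v^*p) = \overline{v^*}(K^*v^*_0)$ is $p$-divisible. In particular $v^*p$ is not minimal positive in its archimedean class, so Lemma~\ref{lem:semi_perf_up and down} applies and $\Oo_{v^*_p}/p\Oo_{v^*_p}$ is semi-perfect. The residue field of $v^*_p$ equals $(\Oo_{v^*_p}/p\Oo_{v^*_p})_{\Red}$ by Theorem~\ref{thm:stand_decomp}\ref{item:O induced}, applied to $v_p$ at $\varpi = p$; here $v_pp$ generates a rank-one group, so this is the relevant reduction. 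The reduction of a semi-perfect ring of characteristic $p$ is perfect, so $K^*v^*_p$ is perfect.

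\textbf{(ii) $\Rightarrow$ (iii)} is trivial: $\overline{v^*_p}(K^*v^*_0)$ is a quotient of $\overline{v^*}(K^*v^*_0)$, and $\aleph_1$-saturated elementary extensions exist.

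\textbf{(iii) $\Rightarrow$ (i)} is the main step. Fix an $\aleph_1$-saturated $(K^*,v^*)$ as in (iii). Since (i) is elementary, it suffices to prove it for $(K^*,v^*)$. The value group $\Gamma_0/\Gamma_p$ of $\overline{v^*_p}$ is archimedean, $p$-divisible and non-trivial, so $v^*p$ is not minimal positive in its archimedean class. By Lemma~\ref{lem:semi_perf_up and down}, it is then enough to show that $\Oo/p\Oo$ is semi-perfect for the rank~1 valued field $(k,w) := (K^*v^*_0, \overline{v^*_p})$. This field has perfect residue field $K^*v^*_p$ and $p$-divisible value group. Saturation gives that $(k,w)$ is complete; this is the same argument as in the proof of Theorem~\ref{thm:char_Qp}, since a countable Cauchy sequence gives a consistent type.

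It then remains to show that a complete rank~1 valued field with $p$-divisible value group and perfect residue field has semi-perfect $\Oo/p\Oo$. Take $x \in \Oo_w$. Choose $y_0 \in \Oo_w$ whose residue is a $p$-th root of $\bar x$ after scaling by an element $c$ with $w(c) = w(x)/p$ (possible by $p$-divisibility). This gives $w(x - y_0^p) > w(x)$, hence $x = y_0^p + x_1$ with $x_1$ of strictly larger value. Iterate on $x_1$ with $y_1$, and so on. The values must climb past $wp$ after finitely many steps, and the error terms $(y_0+y_1+\cdots)^p - \sum y_i^p$ lie in $p\Oo_w$.

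\textbf{Main obstacle.} The difficulty is that the increments $w(x_{i+1}) - w(x_i)$ need not be bounded below. If $w(x_i) \to \gamma < wp$, the process stalls, and this is where completeness and saturation must enter. I would first try to avoid it by choosing the $y_i$ greedily. Failing that, I would use saturation directly on the type $\{x - y^p \in p\Oo\}$. Each finite piece of this type is approximable, but making that precise also needs the stalling to be controlled. I expect this approximation argument to be the delicate part.
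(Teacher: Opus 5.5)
The gap is in the core step of (iii)$\Rightarrow$(i). Everything before it is fine: (i)$\Rightarrow$(ii), (ii)$\Rightarrow$(iii), and the reduction via Lemma~\ref{lem:semi_perf_up and down} to the middle place $(k,w)=(K^*v^*_0,\overline{v^*_p})$. But from saturation you extract only that $(k,w)$ is complete. You then aim to prove that every complete rank~1 field with $p$-divisible value group and perfect residue field has semi-perfect $\Oo/p\Oo$. That statement is false.

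For a counterexample, take $K_{n+1}=K_n(\alpha_{n+1})$ with $K_0=\IQ_p$, $\alpha_0=1/p$ and $\alpha_{n+1}^p-\alpha_{n+1}=\alpha_n$.
\begin{itemize}
\item Each step is totally ramified of degree $p$, since $v(\alpha_{n+1})=-1/p^{n+1}$.
\item $\alpha_{n+1}^{-1}$ is a root of an Eisenstein polynomial, which gives $v(\mathcal{D}_{K_{n+1}/K_n})=2(p-1)/p^{n+1}$ (normalising $vp=1$). So the differents of $K_n/\IQ_p$ stay below $2$.
\item The union has value group $\IZ[1/p]$ and residue field $\IF_p$.
\item Bounded different means it is not deeply ramified (Coates--Greenberg/Gabber--Ramero), i.e.\ $\Oo/p\Oo$ is not semi-perfect.
\end{itemize}
Its completion therefore satisfies all your hypotheses and violates your conclusion. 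So the stalling you describe ($w(x_i)\to\gamma<wp$) is genuine, and no choice of the $y_i$, greedy or otherwise, removes it. At a stall the approximants form a pseudo-Cauchy sequence, not a Cauchy sequence, and completeness says nothing about it. For the same reason, your remark that the values pass $wp$ after finitely many steps fails even in the saturated field.

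The missing idea is to use saturation to obtain pseudo-limits and to iterate transfinitely, which is what the paper does.
\begin{itemize}
\item \emph{Successor step.} This is your step: choose $b$ with $w(b^p)=w(a_\alpha^p-a)$ and $b^p/(a_\alpha^p-a)\equiv 1 \bmod \Mm_w$, and set $a_{\alpha+1}=a_\alpha-b$. Then $w(a_{\alpha+1}^p-a)\ge\min\{w(a_\alpha^p-a-b^p),wp\}>w(a_\alpha^p-a)$.
\item \emph{Limit step.} At a limit $\lambda<\omega_1$, lift $a$ and the $a_\alpha$ to $c,c_\alpha\in\Oo_{v^*_p}$. Realise the countable type $\{v^*(x^p-c)>v^*(c_\alpha^p-c)\}_{\alpha<\lambda}$ using $\aleph_1$-saturation. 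It is finitely satisfiable because the $c_\alpha$ themselves witness its finite parts, and its realisation gives $a_\lambda$.
\item \emph{Termination.} This comes from the value group, not from the size of the increments. A strictly increasing $\omega_1$-sequence cannot lie in the bounded interval $[0,wp]$ of a subgroup of $\IR$, so some countable stage has $w(a_\alpha^p-a)\ge wp$.
\end{itemize}
Your suggested type $\{x-y^p\in p\Oo\}$ does not help: it is a single formula, so realising it is just the desired conclusion. The type that saturation can actually realise is the one expressing improvement over all earlier approximants.
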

\begin{proof}
For improved readability, we abbreviate $(F,w) \coloneqq (K^*v^*_0,\overline{v^*_p})$, which is, we recall, the ``middle'' rank~1 place in the Standard Decomposition.
\begin{diagram}
    K^* \ar[r,"v^*_0"] & F = K^*v_0^* \ar[r,"w = \overline{v^*_p}"] & K^*v_p^* \ar[r] & K^*v^*
\end{diagram}

(i) $\Rightarrow$ (ii). Both properties in (i) are first-order and are, hence, preserved in elementary extensions. We may write the residue field $K^*v_p^* = Fw$ as
\[
    Fw = \Frac ((\Oo_{v^*}/p\Oo_{v^*})_{\Red})
\]
by Theorem~\ref{thm:stand_decomp}\ref{item:O induced}. Consequently, if the Frobenius map on $\Oo_{v^*}/p\Oo_{v^*}$ is surjective, then so is the Frobenius on $Fw$.
The value group $\overline{v^*}F$ is given by $\Conv(vp)$, cf. Theorem~\ref{thm:stand_decomp}\ref{item:induced value groups}, and hence $p$-divisible by Lemma~\ref{lem:semi-perf->p-div}.

(ii) $\Rightarrow$ (iii). Note that $\overline{v^*_p}(K^*v^*_0) = wF$ must be $p$-divisible as a quotient of $\Conv(vp)$, which itself is $p$-divisible.

(iii) $\Rightarrow$ (i).  It suffices to show that $\Oo_{v^*}/p\Oo_{v^*}$ is semi-perfect and $v^*p$ is not minimal positive, since (i) is a first-order property.
If $wF$ is $p$-divisible, then in particular, $v^*_p p$ is not minimal positive in $v^*_p K^*$. Therefore, by applying Lemma~\ref{lem:semi_perf_up and down}, it suffices to show that $\Oo_w/p\Oo_w$ is semi-perfect.

The remaining argument is a transfinite version of the construction of an approximate $p$-th root (similar to the proof of Lemma~\ref{lem:finite->perfectoid}).

Let $a \in \Oo_w$. By transfinite recursion, we define a sequence $\{a_{\alpha}\}_{\alpha < \omega_1}$ in $\Oo_w$ satisfying
\begin{equation} \label{eq:transfinite}
    w(a_{\beta}^p - a) > w(a_{\alpha}^p - a) \ge 0 \quad \text{if $w(a_{\alpha}^p - a) < wp$}
\end{equation}
for all ordinals $\alpha < \beta < \omega_1$, as follows:
\begin{enumerate}
    \setcounter{enumi}{-1}
    \item Set $a_0 = 0$.
    \item For any $a_{\alpha}$ previously defined, set $a_{\alpha + 1} = a_{\alpha}$ if $w(a_{\alpha}^p - a) \ge wp$. Otherwise, using the assumption that $wF$ is $p$-divisible and $Fw$ is perfect, choose $b \in \Oo_w$ such that
    \begin{equation} \label{eq:def_b}
        w(b^p) = w(a_{\alpha}^p - a) \and
        \frac{b^p}{a_{\alpha}^p - a} \equiv 1 \pmod{\Mm_w}.
    \end{equation}
    Set $a_{\alpha + 1} = a_{\alpha} - b$. We obtain
    \begin{gather*}
        a_{\alpha + 1}^p - a = (a_{\alpha} - b)^p - a \equiv a_{\alpha}^p - a - b^p \pmod{p\Oo_w} \\
        w(a_{\alpha}^p - a - b^p) > w(a_{\alpha}^p - a),
    \end{gather*}
    the latter implied by (\ref{eq:def_b}). Together, these yield
    \[
        w(a_{\alpha + 1}^p - a) \ge \min\{w(a_{\alpha}^p - a - b^p), wp\} > w(a_{\alpha}^p - a).
    \]
    \item Let $\lambda < \omega_1$ be a limit ordinal. As before, set $a_{\lambda} = a_{\alpha}$ if $w(a_{\alpha}^p - a) \ge wp$ for some $\alpha < \lambda$. Otherwise, first lift $a$ and each $a_{\alpha}$ from $\Oo_w = \Oo_{v^*_p}/\Mm_{v^*_0}$ to elements $c, c_{\alpha}$ in $\Oo_{v^*_p} \subseteq K^*$. It then follows from $\Mm_{v_p^*} \subseteq \Mm_{v^*}$ and (\ref{eq:transfinite}) that 
    \[
        v^*(c_{\beta}^p - c) > v^*(c_{\alpha}^p - c) \quad \text{for all $\alpha < \beta < \lambda$.} 
    \]
    Therefore, the set of formulas
    \[
        p(x) = \{v^*(x^p - c) > v^*(c_{\alpha}^p - c)\}_{\alpha < \lambda}
    \]
    defines a 1-type over constants $\{c\} \cup \{c_{\alpha}\}_{\alpha < \lambda}$. Using saturation of $(K^*,v^*)$, we may find a realisation $d \in K^*$. In particular, $d \in \Oo_{v^*_p}$, and so we may set $a_{\lambda} = d\Mm_{v^*_0} \in \Oo_w$.
\end{enumerate}
The sequence defined above satisfies (\ref{eq:transfinite}) by construction. If $w(a_{\alpha}^p - a) < wp$ for all $\alpha < \omega_1$,  we would obtain an uncountable, strictly increasing sequence in the interval $[0,wp]$. But this is impossible because any interval in an archimedean ordered abelian group is second-countable. The only other option is that $w(a_{\alpha}^p - a) \ge wp$ for some countable ordinal $\alpha$, which yields a $p$-th root of $a$ modulo $p$, as required.
\end{proof}

In a second step, we characterise the property that $\Gamma/\Gamma_0$ is divisible. To this end, define:

\begin{defi}
Let $\Gamma$ be an ordered abelian group and $\gamma \in \Gamma_{>0}$ a fixed constant. If $\Gamma$ satisfies the axiom scheme
\[
    \forall x > n\gamma\, \exists y \exists z\, (x = ny + z \wedge 0 \le z < n\gamma)
\]
for all integers $n > 1$, we say that $\Gamma$ is \emph{regular above $\gamma$}.
\end{defi}

\begin{rem}
This is compatible with the usual notion of \emph{regularity} for ordered abelian groups, as it is used by Robinson and Zakon \cite{Robinson-Zakon}. Indeed, an ordered abelian group $\Gamma$ is regular if and only if $\Gamma$ is regular above $\gamma$ for all $\gamma \in \Gamma_{>0}$.
\end{rem}

\begin{prop}[Characterisation of regularity above $\gamma$] \label{prop:reg_char}
Let $\Gamma$ be an ordered abelian group and $\gamma \in \Gamma_{>0}$. The following are equivalent:
\begin{enumerate}[(i)]
    \item $\Gamma$ is regular above $\gamma$;
    \item $\Gamma/\Conv(\gamma)$ is divisible;
    \item $\Gamma^*/\Conv(\gamma)$ is divisible for any elementary extension $(\Gamma^*,\gamma) \succcurlyeq (\Gamma,\gamma)$.
\end{enumerate}
\end{prop}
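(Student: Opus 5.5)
I will prove the cycle (i) $\Rightarrow$ (ii) $\Rightarrow$ (iii) $\Rightarrow$ (i). The key observation is that regularity above $\gamma$ is a first-order axiom scheme in $\Ll_{\oag}$ with a constant for $\gamma$. So (i) transfers to every elementary extension $(\Gamma^*,\gamma)$, and it suffices to relate (i) to divisibility of the quotient in a single group. Write $\Delta \coloneqq \Conv(\gamma) = \{\delta : |\delta| \le m\gamma \text{ for some } m\}$.

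\textbf{(i) $\Rightarrow$ (ii).} Let $x \in \Gamma$ and $n > 1$. If $x \in \Delta$ there is nothing to show. Otherwise, replacing $x$ by $-x$ if necessary, we may assume $x > \Delta$, so in particular $x > n\gamma$. The axiom gives $x = ny + z$ with $0 \le z < n\gamma$, so $z \in \Delta$ and $x + \Delta = n(y + \Delta)$. Hence $\Gamma/\Delta$ is divisible.

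\textbf{(iii) $\Rightarrow$ (ii).} This is immediate, taking the trivial elementary extension $\Gamma^* = \Gamma$.

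\textbf{(ii) $\Rightarrow$ (i).} Fix $x > n\gamma$. If $x \notin \Delta$, divisibility gives $y$ with $x - ny = \delta \in \Delta$. Then $\delta$ lies in some interval $[kn\gamma, (k+1)n\gamma)$ with $k \in \IZ$. Setting $y' \coloneqq y + k\gamma$ and $z \coloneqq \delta - kn\gamma \in [0, n\gamma)$ yields $x = ny' + z$, as required.

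The main obstacle is the case $x \in \Delta$ with $x > n\gamma$. There we must perform a ``division with remainder'' by $n$ inside the archimedean-bounded part: we need $y$ with $x - ny \in [0, n\gamma)$. The natural choice is $y = k\gamma$, where $k$ is the largest integer with $kn\gamma \le x$. This $k$ exists because $x \le m\gamma$ for some $m$, so the set of admissible $k$ is bounded above. Then $z = x - kn\gamma \in [0, n\gamma)$. So this case actually holds in every ordered abelian group and does not require (ii). With this observation the equivalence (i) $\Leftrightarrow$ (ii) holds in every $\Gamma$.

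\textbf{(i) $\Rightarrow$ (iii).} Since $(\Gamma^*,\gamma) \succcurlyeq (\Gamma,\gamma)$ preserves the axiom scheme, $\Gamma^*$ is regular above $\gamma$. Applying (i) $\Rightarrow$ (ii) to $\Gamma^*$ shows that $\Gamma^*/\Conv(\gamma)$ is divisible, where $\Conv(\gamma)$ is now computed in $\Gamma^*$.
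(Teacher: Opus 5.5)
Your proof is correct and follows the paper's route. The equivalence (i)~$\Leftrightarrow$~(ii) uses the same division-with-remainder adjustment $y' = y + k\gamma$, $z = \delta - kn\gamma$, and the remaining implications follow because regularity above $\gamma$ is first-order. Your separate case $x \in \Conv(\gamma)$ is not needed, since divisibility of $\Gamma/\Conv(\gamma)$ trivially applies there with $y = 0$, and the paper treats all $x > n\gamma$ uniformly. Also, the implications you actually prove ((i)~$\Leftrightarrow$~(ii), (i)~$\Rightarrow$~(iii), (iii)~$\Rightarrow$~(ii)) are not the cycle you announced, though they suffice.
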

\begin{proof}
It suffices to show that (i) $\Leftrightarrow$ (ii), since being regular above $\gamma$ is a first-order property.

(i) $\Rightarrow$ (ii). Write $\Delta \coloneqq \Conv(\gamma)$. Let $x \in \Gamma\setminus\Delta$ be positive and $n \ge 2$. By regularity, we may find $y \in \Gamma$ and $z \in \Delta$ such that $x = ny + z$, so $\overline{x} = n \overline{y} \in \Gamma/\Delta$.

(ii) $\Rightarrow$ (i). Let $n \ge 2$ and $x > n \gamma$ be given. By divisibility of $\Gamma/\Delta$, we can find $y' \in \Gamma$ such that $z' \coloneq x - ny' \in \Delta$. Choose $k \in \IZ$ such that $k\cdot n \gamma \le z' <(k+1)\cdot n \gamma$ and define $z \coloneq z' -k\cdot n\gamma$, where $0 \le z < n\gamma$. Setting $y = y' + k\gamma$, we obtain the desired expression $x = ny + z$.
\end{proof}

\begin{fact}[{\cite[p.~25]{Koenigsmann04}, \cite[Thm.~1.13]{AK16}}] \label{fact:sat->defectless}
Let $K$ be a field endowed with a non-trivial valuation $v$. Assume that $v$ decomposes as $v = v'' \circ w \circ v'$ with $w$ of rank 1.
If $(K,v)$ is $\aleph_1$-saturated, then $w$ has value group $\IZ$ or $\IR$. Moreover, $w$ is henselian defectless (cf. Definition~\ref{def:abs_ram}).
\end{fact}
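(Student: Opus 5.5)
The plan is to work directly with the middle rank~$1$ place. Write $F \coloneqq Kv'$ for the residue field of $v'$, and $\Delta' \le \Delta \le vK$ for the convex subgroups with $v' : K \to vK/\Delta$ and $wF \cong \Delta/\Delta'$. Since $w$ has rank~$1$, Hölder's Fact lets us regard $wF$ as a subgroup of $\IR$. Everything is then reduced to two consequences of $\aleph_1$-saturation, namely \emph{completeness of the value group} and \emph{spherical completeness of $(F,w)$}. Each needed property will be expressed as a countable partial type in the language $\Ll_{\VF}$; by Fact~\ref{fact:val_lang} this costs nothing over $\Ll_{\val}$. Parameters will be lifts to $K$ and to $vK$ of the relevant data on $F$ and on $wF$.

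\textbf{Value group.} If $wF \subseteq \IR$ is discrete, it is cyclic, hence $\cong \IZ$. Otherwise it is dense, and I will show it equals $\IR$. Fix $r \in \IR$ and choose $\gamma_n, \delta_n \in \Delta$ whose images in $\Delta/\Delta'$ increase, respectively decrease, to $r$. The type $\{\gamma_n < x < \delta_n\}_{n}$ in the value-group sort is countable and finitely satisfiable, so saturation gives a realisation $\xi$. We have $\xi \in \Delta$ by convexity. Its image in $\Delta/\Delta'$ is squeezed between the images of the $\gamma_n$ and the $\delta_n$; these are real numbers converging to $r$, so the image equals $r$.

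\textbf{Spherical completeness.} Take a strictly nested chain of closed balls $B_w(a_i,r_i)$ in $F$, indexed by an ordinal. The radii increase in $\IR$, so we may pass to a countable cofinal subsequence $(a_n,r_n)_{n\in\IN}$ with the same intersection. If $r_n \to \infty$, the intersection is a Cauchy limit; this case is also covered by the argument below. Lift each $a_n$ to $c_n \in \Oo_{v'}$, and choose $\varepsilon_n \in \Delta$ whose image in $\Delta/\Delta'$ lies strictly between $r_{n-1}$ and $r_n$.

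The delicate point is that ``$w(y-a)\ge r$'' is not itself first order. The choice of $\varepsilon_n$ handles this. From $w(x v' - a_n) \ge r_n$ we get $v(x-c_n) > \varepsilon_n$, and $v(x-c_n) > \varepsilon_n$ gives $w(xv' - a_n) \ge \text{image of }\varepsilon_n > r_{n-1}$. Consider the countable type $\{v(x-c_n) > \varepsilon_n\}_n$. Nestedness of the balls makes it finitely satisfiable: any lift of a point of $B_w(a_N,r_N)$ realises the first $N$ formulas. A realisation $x$ lies in $\Oo_{v'}$, and its residue $xv'$ then lies in every $B_w(a_{n-1},r_{n-1})$, hence in the intersection of the chain.

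Finally, I would invoke the classical fact (Kaplansky--Krull) that a spherically complete valued field is maximal. A maximal field is henselian, since the henselisation is immediate, and it is defectless: a finite extension with defect would produce a proper immediate extension, via the standard pseudo-Cauchy-sequence argument. I expect the main obstacle to be the translation of balls of $w$ into first-order conditions across the three-fold decomposition, which is why the radii $\varepsilon_n$ are chosen strictly between consecutive $r_n$. The implication from maximality to defectlessness I would cite rather than reprove.
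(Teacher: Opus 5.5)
Your proof is correct. For defectlessness, though, it takes a different route from the paper's actual proof: it is essentially the ``standard strategy'' that the paper only mentions in Remark~\ref{rem:defectless_generalities}. Your value-group argument is the same as the paper's Claim~1.

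After that the two proofs diverge. The paper gets henselianity from completeness of the rank~1 field $(E,w)$, which is your $(F,w)$. It then proves defectlessness only when $E$ is perfect, and does so by hand in three steps:
\begin{itemize}
\item algebraic maximality, via a type realising $\sup w(f(E))$ followed by an improvement step that uses immediateness;
\item algebraic maximality of finite extensions, by lifting them to unramified extensions of $(K,v')$, which are again saturated;
\item defectlessness, by splitting a Galois extension at its inertia or ramification subfield into non-immediate steps.
\end{itemize}

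Your route is spherical completeness from one countable type, followed by Kaplansky's theorem and the classical facts that maximal fields are henselian and defectless. It is shorter, and it proves the Fact in the full generality stated, with no perfectness hypothesis on the residue field of $v'$. The price is twofold. First, the real work is outsourced to two nontrivial external theorems. Second, your type uses countably many parameters $c_n,\varepsilon_n$, so it genuinely needs $\aleph_1$-saturation. The paper's types in the defectlessness argument have finitely many parameters, so they already work under $\aleph_0$-saturation once henselianity is known.

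Two small repairs are needed; write $\overline{\varepsilon_n}$ for the image of $\varepsilon_n$ in $\Delta/\Delta'$.

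\textbf{The choice of $\varepsilon_n$ when $wF\cong\IZ$.} If $r_{n-1}<r_n$ are consecutive integers, no $\varepsilon_n$ has image strictly between them. However, your argument only uses $r_{n-1}\le\overline{\varepsilon_n}<r_n$. From $w(xv'-a_n)\ge\overline{\varepsilon_n}\ge r_{n-1}$ and $a_n\in B_w(a_{n-1},r_{n-1})$ you still get $xv'\in B_w(a_{n-1},r_{n-1})$. So in that case take $\overline{\varepsilon_n}=r_{n-1}$.

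\textbf{The justification of ``maximal $\Rightarrow$ defectless''.} Your parenthetical reason is too quick. A defect extension of $F$ yields, through a tower of degree-$p$ steps, a proper immediate extension of some finite extension of $F$, not of $F$ itself. So you also need that maximality passes to finite extensions, or that finite-dimensional spaces over spherically complete fields have valuation bases. Since you intend to cite the implication anyway, cite it in this form, e.g.\ the reference to Warner in Remark~\ref{rem:defectless_generalities}.
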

\begin{proof}
Let $E$ be the residue field of $v'$ on which $w$ is defined. Without loss of generality, we may assume $wE$ is a subgroup of $\IR$ and $1 \in wE$. Choose $\varpi \in \Oo_{v'}^{\times}$ such that $w\overline{\varpi} = 1$.

\textit{Claim 1. $wE$ is isomorphic to $\IZ$ or $\IR$.}

\begin{claimproof}{1}
If $wE$ is discrete, then $wE \cong \IZ$. Otherwise, $wE$ is dense. For any $\gamma \in \IR$, the set
\[
    p(x) = \left\{s \cdot v\varpi < v(x) < t\cdot v\varpi \right\}_{s,\:\! t\;\! \in\;\! \IQ,\, \gamma\;\! \in\:\! (s,t)}
\]
defines a 1-type in $(K,v)$ by density. Any realisation $x$ of $p(x)$ lies in $\Oo_{v'}^{\times}$ and satisfies $w\overline{x} = \gamma$. A realisation exists by saturation and thus $\gamma \in wE$.
\end{claimproof}

By a similar argument, $(E,w)$ is complete of rank 1 and hence henselian \cite[Chap.~II, (4.6), (6.7)]{Neukirch99}.

We present the proof of defectlessness only in the case that $E$ is perfect (this is the only case we need later for our applications; see Remark~\ref{rem:defectless_generalities} below for the general case).

\textit{Claim 2. $(E,w)$ is algebraically maximal.}

\begin{claimproof}{2}
Let $E(\alpha)/E$ be an immediate finite simple extension and $\widetilde{w}$ the unique prolongation of $w$ to $E^{\sepc}$.
Assume towards a contradiction that $\alpha$ has minimal polynomial $f(X) \in E[X]$ with roots $\alpha = \alpha_1, \alpha_2, \ldots, \alpha_n \notin E$.
By the Conjugacy Theorem~\ref{thm:Conjugacy},
\[
    \widetilde{w}(c - \alpha_1) = \ldots = \widetilde{w}(c - \alpha_n) \quad \text{for any $c \in E$,}
\]
and therefore, $w(f(c)) = n \cdot \widetilde{w}(c - \alpha)$.
Observe that $w(f(E))$ is bounded, as otherwise, for any $c \in E$ with $\widetilde{w}(c - \alpha) = \frac 1 n w(f(c))$ sufficiently large, $\alpha \in E(c) = E$ by Krasner's Lemma~\ref{lem:krasner}. Moreover, we can show that the maximum is attained in $wE \subseteq \IR$.
Let $\gamma \coloneqq \sup w(f(E)) \in \IR$ and let $g(X) \in \Oo_{v'}[X]$ be a lift of the polynomial $f(X)$. Consider the 1-type
\[
    q(x) \coloneqq \left\{r \cdot v\varpi < v(g(x)) \right\}_{r\:\! \in\:\! (-\infty,\gamma)\;\! \cap\;\! \IQ}
\]
Any realisation $x \in K$ satisfies $x \in \Oo_{v'}$ and $w(f(c_0)) = \gamma$ for $c_0 \coloneqq \overline{x} \in E$.
But since $E(\alpha)/E$ is immediate, we can find $a, b \in E$ such that
\[
    \widetilde{w}(c_0 - \alpha) = w(a) \and \widetilde{w}\left(\frac{c_0 - \alpha}{a} - b\right) > 0.
\]
Hence, taking $c_1 \coloneqq c_0 - ab \in E$ yields
\[
    \widetilde{w}(c_1 - \alpha) > \widetilde{w}(c_0 - \alpha),
\]
and therefore,
\[
    w(f(c_1)) > w(f(c_0)) = \gamma,
\]
which contradicts our choice of $\gamma$. Thus, the extension $E(\alpha)/E$ is trivial.
\end{claimproof}

\textit{Claim 3. Any finite extension $F/E$ is algebraically maximal.}

\begin{claimproof}{3}
To this end, let $L$ be an unramified extension of $(K,v')$ with residue field $F$. Repeat the proof of Claim 2 by replacing $K$ and $E$ with $L$ and $F$, noting that $L$ must be saturated as a finite extension of $K$.
\end{claimproof}

\textit{Claim 4. Any finite extension $F/E$ is defectless.}

\begin{claimproof*}{4}
It suffices to consider Galois extensions $F/E$.

If $wE = \IR$, we must have $e(F/E) = 1$. Let $E_1/E$ be the inertia subfield of $F/E$, so that $[E_1 : E] = f(F/E)$ and $F/E_1$ is an immediate extension. By Claim 3, we have $F = E_1$, and the conclusion follows.

If $wE \cong \IZ$, consider instead the tower of field extensions (cf. \cite[Cor. 2.10]{Kuhlmann10})
\[
    E = E_0 \subseteq E_1 \subseteq E_2 \subseteq \ldots \subseteq E_r = F,
\]
where $E_1$ is the ramification subfield of $F/E$ and each $E_{i + 1}/E_i$ is a $C_p$-extension for $i = 1, \ldots, r - 1$, obtained by applying the Galois correspondence to a composition series of the $p$-group $R(F/E) = \Gal(F/E_1)$. By Claim 3, none of the extensions $E_{i + 1}/E_i$ can be immediate, so they are all defectless instead. Finally, this means that $F/E$ can be written as a tower of defectless extensions and hence must be defectless itself.
\end{claimproof*}
\end{proof}

\begin{rem} \label{rem:defectless_generalities}
The standard strategy for proving this fact is the following: it is a formal consequence of $\aleph_1$-saturation that $(E,w)$ is spherically complete, i.e., any decreasing filtration of non-empty open balls has non-empty intersection (cf. \cite[Lem. 2.3.7]{Jahnke-Kartas}). In general, spherical completeness implies defectlessness (this is, in particular, true without the assumption that $E$ is perfect, see \cite[Thm. 31.14./31.21.]{Warner89}). This implication is related to the more widely known fact that complete rank 1 valued fields are defectless (cf. the remark following \cite[Chap. II, (6.8)]{Neukirch99}).
Note that in our proof, which is a simplified implementation of this strategy, we only used finitely many constants for our types (except when arguing that $w$ is henselian). Hence, we can make do with the weaker assumption of $\aleph_0$-saturation to show that $w$ is defectless (at least whenever $E$ is perfect and $w$ is henselian).
\end{rem}

As a consequence of the Saturation-Decomposition Method, we obtain the following application: a sharpened version of the Taming Theorem of Jahnke-Kartas \cite[Thm.\;6.2.3(I)]{Jahnke-Kartas}. In particular, the conclusion about absolute Galois groups can be seen as a non-standard, partial version of the almost purity theorem.

\begin{thm}[Taming Theorem] \label{thm:taming}
Let $(K,v)$ be an $\aleph_1$-saturated valued field of mixed characteristic $(0,p)$. Consider the valuations $v_0$ and $\overline{v_p}$ in the decomposition
\begin{diagram}
    v : K \ar[r,"v_0"] & Kv_0 \ar[r,"\overline{v_p}"] & Kv_p \ar[r,"\dbloverline{v}"] & Kv.
\end{diagram}
Assume $v_0$ is henselian. Then the following statements hold:
\begin{enumerate}[(i)]
    \item $v_0$ is tame.
    \item $v_0$ is absolutely unramified if and only if $vK$ is regular above $vp$.
    \item The following are equivalent:
    \begin{enumerate}[(a)]
        \item $\overline{v_p}$ is tame.
        \item $\overline{v_p}$ is absolutely unramified.
        \item $\Oo_v/p\Oo_v$ is semi-perfect and $vp$ is not minimal positive in $vK$. \label{cond:rdr}
        \item $\overline{v_p}$ has $p$-divisible value group and perfect residue field.
    \end{enumerate}
\end{enumerate}
In particular, if the equivalent conditions for (ii) and (iii) are satisfied, then there exist canonical isomorphisms
\[
    G_K \cong G_{Kv_0} \cong G_{Kv_p}.
\]
\end{thm}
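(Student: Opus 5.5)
The proof reads the Taming Theorem through the ramification theory of Section~\ref{sec:ram_theory}: a henselian valuation is tame if $R=1$ and the field is perfect, and absolutely unramified if moreover $I=1$. Throughout I use Theorem~\ref{thm:Splitting of D/R} and the First and Second Exact Sequences.

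\textbf{Part (i).} The valuation $v_0$ is henselian by assumption and has residue characteristic $0$. So $R_K=1$ by Fact~\ref{fact:ramification group}\ref{item:R_trivial}, and $K$ is perfect since $\charK K=0$. Hence $v_0$ is tame.

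\textbf{Part (ii).} Since $R=1$, the inertia group of $v_0$ is $\Hom(v_0K^{\sepc}/v_0K,\mu)$ by Theorem~\ref{thm:Splitting of D/R}. This group is trivial if and only if $v_0K=\Gamma/\Conv(vp)$ is divisible, because the roots of unity in the characteristic-$0$ algebraically closed residue field are all present. By Proposition~\ref{prop:reg_char}, divisibility of $\Gamma/\Conv(vp)$ is equivalent to $vK$ being regular above $vp$.

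\textbf{Part (iii).} First I check that $\overline{v_p}$ is henselian. Write $E=Kv_0$ and $w=\overline{v_p}$. Fact~\ref{fact:sat->defectless}, applied to the decomposition $v=\dbloverline{v}\circ w\circ v_0$ using $\aleph_1$-saturation, shows that $(E,w)$ is henselian defectless with value group $\IZ$ or $\IR$.

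\begin{itemize}
\item (b)$\Rightarrow$(a) holds by definition.
\item (a)$\Rightarrow$(d): a tame field is perfect, so $Ew$ is perfect. The inertia group $I_E=\Hom(w E^{\sepc}/wE,\mu)$ then accounts for all ramification, and tameness forces $R=1$. If $wE$ were not $p$-divisible, then, since the residue characteristic is $p$, some purely wild extension, for example a root of $p$ or an Artin--Schreier-type extension, would have to be excluded. I expect the cleanest route to be the following: a tame field of residue characteristic $p$ has $p$-divisible value group, because $\Oo_w/p\Oo_w$ must be semi-perfect. Concretely, adjoining a $p$-th root of a uniformiser gives ramification index $p$, which is wild, so $wE=pwE$. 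This also rules out $wE\cong\IZ$, so $wE=\IR$.
\item (d)$\Rightarrow$(b): with $wE$ $p$-divisible and of rank 1, it is $\IR$, so it is divisible. The residue field is perfect. Defectlessness then gives $[F:E]=e\cdot f=f$ for every finite extension $F/E$, with $Fw/Ew$ separable since $Ew$ is perfect. Hence every finite extension is unramified, and $E$ is perfect because $\charK E=0$.
\item (c)$\Leftrightarrow$(d): this is Theorem~\ref{thm:semi-perf_char} (iii)$\Leftrightarrow$(i), applied with $(K^*,v^*)=(K,v)$, which is already $\aleph_1$-saturated.
\end{itemize}

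\textbf{Final statement.} If both $v_0$ and $\overline{v_p}$ are absolutely unramified, then $D=G$ and $I=1$. The First Exact Sequence gives $G_K\cong G_{Kv_0}$ and $G_{Kv_0}\cong G_{Kv_p}$. These isomorphisms are canonical via $\Phi$, since henselianity makes the prolongation unique.

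The main obstacle is (a)$\Rightarrow$(d). One must make sure that tameness excludes a discrete $wE\cong\IZ$ and forces $p$-divisibility. The ramification index $p$ argument handles this, but it should be stated against the paper's definition of tame, which requires $p\nmid e$ for every finite extension.
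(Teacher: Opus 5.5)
Your proposal is correct and follows the paper's proof essentially step for step: the Second Exact Sequence for (i) and (ii), Fact~\ref{fact:sat->defectless} for henselianity and defectlessness of $\overline{v_p}$, and Theorem~\ref{thm:semi-perf_char} for (c)$\Leftrightarrow$(d). The one real difference is your (d)$\Rightarrow$(b), which is slightly more direct: you get $e=1$ from divisibility of $\IR$ and $[F:E]=f$ from defectlessness, whereas the paper rules out a non-trivial ramification group via a defect extension above the ramification subfield.

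There is one slip to fix in (a)$\Rightarrow$(d). Perfectness of $Ew$ does not follow from perfectness of $E$, which is automatic in characteristic $0$. It follows from tameness forcing every residue extension $Fw/Ew$ to be separable, combined with the fact that the residue field of $E^{\alg}$ is the algebraic closure of $Ew$.
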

\begin{proof}
By ramification theory (Fact~\ref{fact:ramification group}), $v_0$ is tame outright and $v_0$ is absolutely unramified if and only if $v_0K$ is divisible. By Proposition~\ref{prop:reg_char}, this is precisely the case when $vK$ is regular above $vp$.

(b) $\Rightarrow$ (a). Immediate.

(a) $\Rightarrow$ (d). If $\overline{v_p}$ is tame, then $Kv_p$ is perfect and $\overline{v_p}(Kv_0)$ is $p$-divisible by ramification theory.

(d) $\Rightarrow$ (b). By Fact~\ref{fact:sat->defectless}, we know that $\overline{v_p}$ is henselian defectless and has value group $\IR$. The absolute ramification subgroup of $\overline{v_p}$ is trivial: if not, condition (d) implies that $(Kv_0)^{\sepc}$ is a non-trivial immediate extension of its ramification subfield; one can then find a finite defect extension of $Kv_0$ (cf. Definition~\ref{cor/def:defect}). Hence, $\overline{v_p}$ is tame, and therefore absolutely unramified.

(c) $\Leftrightarrow$ (d). This follows immediately from Theorem~\ref{thm:semi-perf_char}, our criterion for semi-perfectness.

If $v_0$ and $\overline{v_p}$ are indeed absolutely unramified, then their absolute inertia groups are trivial and the absolute Galois groups of $K$, $Kv_0$, and $Kv_p$ are canonically isomorphic by Fact~\ref{fact:inertia group}.
\end{proof}

\begin{rem}
The situation trivialises in the case that $vp$ is minimal positive in $vK$, because then $\dbloverline{v}$ is trivial and $\Oo_v/p\Oo_v \cong Kv$. If $Kv$ is perfect, then $Kv_0 \cong \Frac(W(Kv))$ by the usual Witt structure theorem we already used in the proof of Theorem~\ref{thm:char_Qp}, see also Remark~\ref{rem:Witt}.
Moreover, we note that the above theorem is also true for $\aleph_0$-saturated valued fields $(K,v)$. However, this does not follow directly from our treatment.
\end{rem}

Following Kuhlmann-Rzepka, we give a name to condition~\ref{cond:rdr} in the theorem above:
\begin{defi}
Let $(K,v)$ be a valued field of mixed characteristic $(0,p)$. We call it \emph{roughly deeply ramified} if $\Oo_v/p\Oo_v$ is semi-perfect and $vp$ is not minimal positive in $vK$.
\end{defi}

In \cite{Kuhlmann-Rzepka}, this is the weakest notion of deep ramification. Using the sharp version of the Taming Theorem above, we recover a result of Kuhlmann-Rzepka on deeply ramified fields. The proof below is strikingly short and follows the non-standard strategy of Jahnke-Kartas, who employ it to show that perfectoid fields (a subclass of the class of deeply ramified fields) are closed under finite extensions.

\begin{cor}[Almost Purity, {\cite[Thm. 1.5]{Kuhlmann-Rzepka}}] \label{cor:KR-almost-purity}
Let $(K,v)$ be a roughly deeply ramified valued field of mixed characteristic $(0,p)$. Let $L/K$ be an algebraic extension and $w$ a prolongation of $v$ to $L$. Then $(L,w)$ is roughly deeply ramified.
\end{cor}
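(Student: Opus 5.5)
The plan is to reduce to the case where the Taming Theorem~\ref{thm:taming} applies, and use its equivalence (a)$\Leftrightarrow$(c): tameness of the middle rank~1 place $\overline{v_p}$ is evidently preserved under finite extensions, while rough deep ramification is first-order.

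\textbf{Step 1 (reduce to finite extensions).} Every $x \in \Oo_w$ lies in $\Oo_{w|_{L_0}}$ for some finite subextension $L_0/K$. Also, $wL \supseteq vK$ always contains a positive element below $wp = vp$. Hence if every finite subextension is roughly deeply ramified, so is $(L,w)$. So we may assume $L = K(\alpha)$ is finite.

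\textbf{Step 2 (reduce to henselian $v$).} The Taming Theorem needs $v_0$ to be henselian. So we pass to henselisations $(K^h,v^h)$ and $(L^h,w^h) = (LK^h, w^h)$. I would show that $\Oo_v/p\Oo_v \to \Oo_{v^h}/p\Oo_{v^h}$ is an isomorphism, and likewise for $L$. The idea is that $K^h/K$ is immediate and $K$ is dense in $K^h$ for the topology of $v_p$. Then, modulo $p$, every element of $\Oo_{v^h}$ is congruent to an element of $\Oo_v$, and $p\Oo_{v^h} \cap K = p\Oo_v$. Consequently rough deep ramification passes from $K$ to $K^h$, and back from $L^h$ to $L$, with the value of $p$ unchanged.

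I expect this density statement to be the main obstacle. In rank~1 it is the classical fact that the henselisation sits inside the completion. In general I would apply it to the rank~1 place $\overline{v_p}$ on $Kv_0$, after first passing to $(K,v_p)$ via Lemma~\ref{lem:semi_perf_up and down} (applicable because $vp$ is not minimal in its archimedean class, by Lemma~\ref{lem:semi-perf->p-div} and $p$-divisibility). If that route becomes messy, a fallback is to arrange henselianity of $v_0$ directly in the saturated model, using that the henselian property of $v$ is first-order and preserved in elementary extensions, together with Fact~\ref{fact:hensel_preserve}.

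\textbf{Step 3 (saturate, tame, descend).} With $v$ henselian, take an $\aleph_1$-saturated elementary extension $(K^*,v^*) \succcurlyeq (K,v)$. The field $L^* = K^*(\alpha)$, with the unique prolongation $w^*$, is interpretable in $(K^*,v^*)$ with parameters. Hence $(L^*,w^*) \succcurlyeq (L,w)$, and it is again $\aleph_1$-saturated and henselian.

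Since $(K^*,v^*)$ is roughly deeply ramified, Theorem~\ref{thm:taming}(iii) shows that $\overline{v^*_p}$ is tame on $K^*v^*_0$. The Standard Decomposition of $w^*$ restricts to that of $v^*$, because convex hulls of $wp = vp$ correspond under the torsion extension $w^*L^*/v^*K^*$ (Fact~\ref{fact:finite_orders}). So $L^*w^*_0$ is a finite extension of $K^*v^*_0$. Every finite extension of a tame valued field is tame, hence $\overline{w^*_p}$ is tame. Theorem~\ref{thm:taming} (a)$\Rightarrow$(c) then gives that $(L^*,w^*)$ is roughly deeply ramified. This is a first-order property, so it descends to $(L,w)$.
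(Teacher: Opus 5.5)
Your Step~3 is essentially the paper's proof. Both arguments pass to $\aleph_1$-saturated models, apply Theorem~\ref{thm:taming}(iii) to the middle rank~1 place, note that the relevant property of $\overline{v^*_p}$ survives the finite extension $L^*w^*_0/K^*v^*_0$, and descend because rough deep ramification is first-order. The one difference is that you transport clause~(a), tameness, whereas the paper transports clause~(d). Clause~(d) is more economical: $p$-divisibility of the value group and perfectness of the residue field pass to $\overline{w^*_p}$ directly by Fact~\ref{fact:finite_orders}, with no ramification theory needed.

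Step~2, however, is unnecessary, and its central claim is false as written. The map $\Oo_v/p\Oo_v \to \Oo_{v^h}/p\Oo_{v^h}$ need not be surjective. By Theorem~\ref{thm:stand_decomp}\ref{item:O induced}, the reductions of these two rings are $\Oo_{\dbloverline{v}}$ and the valuation ring of the henselisation of $(Kv_p,\dbloverline{v})$. So $Kv_p$ strictly grows whenever $\dbloverline{v}$ is not henselian. For an example, take a complete perfectoid field with residue field $\IF_p(t)^{1/p^\infty}$ and compose it with the non-henselian $t$-adic valuation; this field is roughly deeply ramified by Lemma~\ref{lem:semi_perf_up and down}. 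For the same reason, $K$ need not be dense in $K^h$ once $v$ has rank $>1$, since the $v_p$-topology is just the $v$-topology.

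Your repair can be made to work: replace $v$ by $v_p$ via Lemma~\ref{lem:semi_perf_up and down}, then henselise $v_p$. But the reason is not density of $K$. It is that $\Mm_{v_0}\subseteq p\Oo_{v_p}$, and that the core field of $(K,v_p)^h$ is the rank~1 henselisation of $(Kv_0,\overline{v_p})$, which sits in its completion. Your fallback presupposes that $v$ is henselian, which is exactly what it is meant to arrange.

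None of this is needed. The hypothesis ``$v_0$ henselian'' in Theorem~\ref{thm:taming} plays no role in part~(iii). The equivalence (c)$\Leftrightarrow$(d) is just Theorem~\ref{thm:semi-perf_char}, and $\overline{v^*_p}$ is automatically henselian and defectless by Fact~\ref{fact:sat->defectless}. You also do not need to interpret $(L^*,w^*)$ through the unique prolongation. Instead, take ultrapowers of $(K,v)$ and $(L,w)$ with respect to the same non-principal ultrafilter, as the paper does. This gives $[L^*:K^*]=[L:K]$ and $(L,w)\preccurlyeq(L^*,w^*)$ for an arbitrary prolongation $w$, with no henselianity assumed.
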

\begin{proof}
It suffices to consider the case that $L/K$ is finite, since the valuation ring of $L$ is the union of the valuation rings of all finite subextensions. Consider the ultrapowers $K^*$ and $L^*$ of $K$ and $L$, with respect to the same non-principal ultrafilter over $\IN$. These ultrapowers are automatically $\aleph_1$-saturated \cite[Thm. 6.1.1]{CK90}. Moreover, $L^*/K^*$ is a finite extension of degree equal to $[L : K]$. The Standard Decomposition now reads:
\begin{diagram}
    \llap{$w^* : \;$} L^* \ar[r,"w^*_0"] \ar[d, no head] & L^*w^*_0 \ar[r,"\overline{w^*_p}"] \ar[d, no head] & L^*w^*_p \ar[r,"\dbloverline{w^*}"] \ar[d, no head] & L^*w^* \ar[d, no head] \\
    \llap{$v^* : \;$} K^* \ar[r,"v^*_0"] & K^*v^*_0 \ar[r,"\overline{v^*_p}"] & K^*v^*_p \ar[r,"\dbloverline{v^*}"] & K^*v^*
\end{diagram}
As we have noted before, being roughly deeply ramified is a first-order property, so it holds for $(K^*,v^*)$. By the Taming Theorem~\ref{thm:taming}, the valuation $\overline{v^*_p}$ has $p$-divisible value group and perfect residue field. Consequently, $\overline{w^*_p}$ will have $p$-divisible value group and perfect residue field as well (cf. Fact~\ref{fact:finite_orders}). Therefore, yet again by the Taming Theorem, the valued field $(L^*,w^*)$ is roughly deeply ramified, and so must be $(L,w)$.
\end{proof}
In contrast, the proof in \cite{Kuhlmann-Rzepka} is purely algebraic and proceeds by an analysis of defect.

\subsubsection{Non-standard proof of the Transfer Lemma}

With the Taming Theorem at hand, we can now give a second proof of the Transfer Lemma. This relies only on one additional fact: small Galois groups are encoded in the theory of fields. The Transfer Lemma then follows as an immediate corollary.

\begin{lem}[Klingen, {\cite{Klingen74}}] \label{lem:small}
Let $K$ be a field with small absolute Galois group. Then for any field $F$,
\[
     K \equiv F \Longrightarrow G_K \cong G_F.
\]
\end{lem}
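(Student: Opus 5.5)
The plan is to show that the finite Galois extensions of $K$ of bounded degree, together with their Galois-theoretic configuration, are described by finitely many first-order sentences. By elementary equivalence, $F$ then has exactly the same configuration. The final step passes to the inverse limit.

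Fix $n\ge 1$. Since $G_K$ is small, $K$ has only finitely many separable extensions of degree $\le n$ inside $K^{\sepc}$. Let $M_n$ be the compositum of all of them. Then $M_n/K$ is a finite Galois extension, and every open subgroup of $G_K$ of index $\le n$ contains $G_{M_n}$.

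Write $M_n=K(\alpha)$ with $\alpha$ having minimal polynomial $f\in K[X]$ of degree $N=[M_n:K]$. If $\charK K=p>0$, we work with separable polynomials throughout, and separability is itself first-order via the discriminant. The following properties are expressible by one $\Ll_{\ring}$-sentence $\sigma_n$, after existentially quantifying the coefficients of $f$ and of auxiliary polynomials:
\begin{enumerate}[(a)]
\item $f$ is irreducible of degree $N$ and splits in $K[X]/(f)$, with the roots given by explicit polynomial expressions $g_1(\alpha),\dots,g_N(\alpha)$;
\item the composition law $g_i(g_j(X))\equiv g_{\tau(i,j)}(X)\bmod f$ holds for a fixed multiplication table $\tau$, which encodes $\Gal(M_n/K)\cong G$;
\item every polynomial of degree $\le n$ that is irreducible and separable over $K$ has a root in $K[X]/(f)$.
\end{enumerate}
Irreducibility of a polynomial of bounded degree is first-order, since it only requires quantifying over coefficients of possible factors. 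Likewise, working in $K[X]/(f)$ is just coordinate arithmetic in $K^N$.

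By $K\equiv F$, the field $F$ satisfies $\sigma_n$. This produces a Galois extension $M_n'/F$ with group $G$ (with the same multiplication table) that contains every extension of $F$ of degree $\le n$. In particular the profinite quotients $G_K/\mathrm{core}$ and $G_F/\mathrm{core}$ attached to index $\le n$ subgroups are isomorphic. For instance, $G_K^{(n)}:=\Gal(M_n/K)\cong \Gal(M_n'/F)=:G_F^{(n)}$, and these are the largest quotients detecting all open subgroups of index $\le n$.

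It remains to pass to the limit. For each $n$, the set $\Phi_n$ of isomorphisms $G_K^{(n)}\to G_F^{(n)}$ is finite and non-empty. The transition maps $G^{(n+1)}\twoheadrightarrow G^{(n)}$ are the restrictions $M_{n+1}\to M_n$, because the core of index-$\le n$ subgroups contains the core of index-$\le n+1$ subgroups. Hence isomorphisms at level $n+1$ restrict to isomorphisms at level $n$: an isomorphism of profinite quotients maps the image of the characteristic subgroup "intersection of subgroups of index $\le n$" correctly. Thus $(\Phi_n)$ forms an inverse system of finite non-empty sets, and its inverse limit is non-empty by König's lemma/compactness.

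A compatible family of such isomorphisms yields $G_K=\varprojlim G_K^{(n)}\cong\varprojlim G_F^{(n)}$. The right-hand side equals $G_F$, since $G_F$ is small as well: by $\sigma_n$ it has only the finitely many index-$\le n$ subgroups visible in $G_F^{(n)}$.

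The main obstacle is the compatibility step. The Galois groups must be identified canonically as the quotients $G/\bigcap\{U:(G:U)\le n\}$, so that restriction is intrinsic and the isomorphism sets $\Phi_n$ really form an inverse system. Checking that clause (c) forces $M_n'$ to be exactly this maximal quotient on the $F$-side is the crux. I would handle it by noting that the subgroups of index $\le n$ of $\Gal(M_n'/F)$ correspond exactly to the extensions of $F$ of degree $\le n$.
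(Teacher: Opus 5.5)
Your proposal is correct and follows the same route as the paper's sketch: encode the finite Galois group of the compositum of all extensions of degree $\le n$ by $\Ll_{\ring}$-sentences, transfer these to $F$ via $K \equiv F$, and assemble the levels into an isomorphism of inverse limits $G_K \cong G_F$. The paper builds the compatibility between levels $n$ and $n-1$ into its sentences $\varphi_n$ and only asserts that an isomorphism of inverse systems ``can be recovered''; you make explicit the two points behind this, namely how your sentence $\sigma_n$ forces the field obtained over $F$ to be exactly the compositum of all extensions of degree $\le n$, and why a coherent family of isomorphisms exists (K\"onig's lemma, using that the restriction kernels are characteristic).
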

\begin{proof}[Proof (Sketch)]
Let $n \ge 1$ be a natural number. By definition, $K$ admits only finitely many extensions of degree at most $n$. Let $K_n$ denote the compositum of all of these, and set $G_n \coloneqq \Gal(K_n/K)$ (which is finite too). Then $K$ satisfies the first-order sentences $\varphi_n$ expressing ``there exists an isomorphism between $G_n$ and the Galois group of the compositum of all extensions of degree $\le n$, restricting to an isomorphism between $G_{n - 1}$ and the Galois group of the compositum of all extensions of degree $\le n - 1$''. Any field $F \equiv K$ satisfies all $\varphi_n$. From this, an isomorphism of inverse systems can be recovered. Therefore,
\[
    G_K = \varprojlim_n G_n \cong G_F. \qedhere
\]
\end{proof}

\begin{cor} \label{cor:transfer_lemma2}
Let $(K,v)$ be a henselian valued field of mixed characteristic $(0,p)$. Assume further that
\begin{enumerate}[(i)]
    \item $G_K$ is small;
    \item $vK$ is regular above $vp$ and $p$-divisible.
\end{enumerate}
Then, there exists a field $F$ of characteristic $p$ satisfying $G_K \cong G_F$. In particular, the case $vK = \Conv(vp)$ recovers the Transfer Lemma.
\end{cor}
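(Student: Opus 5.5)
The plan is to pass to a saturated elementary extension, apply the Taming Theorem~\ref{thm:taming} there, and pull the Galois group back using Klingen's Lemma~\ref{lem:small}.

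First, choose a non-principal ultrafilter on $\IN$ and let $(K^*,v^*)$ be the corresponding ultrapower of $(K,v)$. It is $\aleph_1$-saturated and elementarily equivalent to $(K,v)$. In particular $K^* \equiv K$ as rings, so Lemma~\ref{lem:small} gives $G_{K^*} \cong G_K$ because $G_K$ is small. It therefore suffices to produce a characteristic $p$ field $F$ with $G_{K^*} \cong G_F$.

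Next, check the hypotheses of the Taming Theorem for $(K^*,v^*)$.
\begin{enumerate}[(a)]
    \item Henselianity, regularity above $vp$ and $p$-divisibility of the value group are first-order, so they transfer to $(K^*,v^*)$.
    \item Since $v^*$ is henselian, its coarsening $v^*_0$ is henselian by Fact~\ref{fact:hensel_preserve}.
    \item Regularity above $v^*p$ gives, via part~(ii) of the Taming Theorem, that $v^*_0$ is absolutely unramified.
    \item For part~(iii), verify condition~(d): $\overline{v^*_p}$ has $p$-divisible value group and perfect residue field.
\end{enumerate}

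The value group of $\overline{v^*_p}$ is a quotient of $\Conv(v^*p)$. Since $v^*K^*$ is $p$-divisible and $\Conv(v^*p)$ is convex (hence pure), $\Conv(v^*p)$ is $p$-divisible, and so is the quotient.

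Perfectness of the residue field $K^*v^*_p$ is where smallness enters. Smallness gives $\dim_{\IF_p} K^\times/K^{\times p}<\infty$ by Lemma~\ref{lem:small_Gal->finite_dim}. The statement that this dimension is at most $d$ is first-order, so $K^{*\times}/K^{*\times p}$ is finite as well. Pop's Lemma~\ref{lem:Pop}(i), applied to the mixed characteristic valuation $v^*_p$ on $K^*$, then shows that $K^*v^*_p$ is perfect.

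With both parts of the Taming Theorem in force, it yields $G_{K^*} \cong G_{K^*v^*_p}$. The field $K^*v^*_p$ has characteristic $p$, so we take $F = K^*v^*_p$. Composing with Klingen's isomorphism gives $G_K \cong G_F$.

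For the final claim, assume $vK = \Conv(vp)$. Then $vK/\Conv(vp)$ is trivial, hence divisible, so $vK$ is regular above $vp$ by Proposition~\ref{prop:reg_char}. The hypotheses of the Transfer Lemma are therefore a special case of the present ones.

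The main obstacle is the perfectness of $K^*v^*_p$, where smallness has to be turned into a first-order statement that survives the ultrapower before Pop's Lemma can be applied. The other step needing care is making sure saturation is available exactly where the Taming Theorem uses it; this is why we work in $K^*$ rather than in $K$.
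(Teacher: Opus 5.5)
Your proposal is correct and is essentially the paper's proof: pass to an $\aleph_1$-saturated elementary extension, keep the Galois group fixed by Klingen's Lemma~\ref{lem:small}, obtain perfectness of $K^*v^*_p$ from Pop's Lemma~\ref{lem:Pop}(i) and $p$-divisibility of the value group of $\overline{v^*_p}$ from that of $v^*K^*$, and conclude with the Taming Theorem~\ref{thm:taming}. The differences are only cosmetic. You transfer finiteness of $K^{\times}/K^{\times p}$ by a first-order sentence, whereas the paper applies Lemma~\ref{lem:small_Gal->finite_dim} directly to $K^*$, whose absolute Galois group is small. You also spell out the henselianity of $v^*_0$ and the regularity check for the case $vK = \Conv(vp)$, which the paper leaves implicit.
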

\begin{proof}
By the preceding lemma, we can pass to an $\aleph_1$-saturated elementary extension $(K^*,v^*)$, preserving (ii) and without changing the (small) absolute Galois group.
Recall Lemma~\ref{lem:small_Gal->finite_dim} showed that $(K^*)^{\times}/(K^*)^{\times p}$ is finite, implying $K^*v_p^*$ is perfect by Pop's Lemma~\ref{lem:Pop}(i). Lastly, since $v^*$ has $p$-divisible value group, so does $\overline{v^*_p}$.
Hence, by the Taming Theorem~\ref{thm:taming}, we obtain a chain of canonical isomorphisms
\begin{equation*}
    G_K \cong G_{K^*} \cong G_{K^*v_0^*} \cong G_{K^*v_p^*}.
\end{equation*}
So we may choose $F = K^*v_p^*$ as our characteristic $p$ field.
\end{proof}

\subsection{The work of Jahnke-Kartas}
\label{sec:JK}

We have now sufficiently developed the Saturation-Decomposition Method in the non-discrete case in order to give an exposition of the ideas of Jahnke and Kartas. Their main technical result is:

\begin{thm}[{\cite[Thm. 1.7.3]{Jahnke-Kartas}}] \label{thm:JK_main}
Let $(K,v) \subseteq (K',v')$ be two henselian valued fields with regular dense value groups and residue fields of characteristic $p$. Suppose that one may choose $\varpi \in \Mm_v \setminus\{0\}$ such that
\begin{enumerate}[(i)]
    \item $\Oo_v/\varpi\Oo_v$ and $\Oo_{v'}/\varpi\Oo_{v'}$ are semi-perfect, and
    \item $\Oo_v[\varpi^{-1}]$ and $\Oo_{v'}[\varpi^{-1}]$ are algebraically maximal coarsenings.
\end{enumerate}
Then
\[
    (K,v) \preccurlyeq (K',v') \Longleftrightarrow \Oo_v/\varpi\Oo_v \preccurlyeq \Oo_{v'}/\varpi\Oo_{v'}.
\]
\end{thm}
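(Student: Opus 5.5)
This is the Jahnke–Kartas theorem, and I would prove it with the Saturation-Decomposition Method, following the template of Theorem~\ref{thm:char_Qp} and the Taming Theorem~\ref{thm:taming}.

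The forward direction is the easier one. The ring $\Oo_v/\varpi\Oo_v$ is interpretable in $(K,v)$ with parameter $\varpi$, uniformly in the valued field. Hence $(K,v) \preccurlyeq (K',v')$ gives $\Oo_v/\varpi\Oo_v \preccurlyeq \Oo_{v'}/\varpi\Oo_{v'}$.

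For the converse, I would first pass to elementary extensions. Take a suitably saturated $(K',v')$ together with an $\aleph_1$-saturated elementary extension $(K^*,v^*) \succcurlyeq (K,v)$ inside a common saturated model. All hypotheses should transfer to these extensions. Semi-perfectness, regularity and density are first-order. Algebraic maximality of the coarsening $\Oo_v[\varpi^{-1}]$ is elementary in the presence of semi-perfectness; this I would argue via the Taming Theorem.

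Next I decompose each valuation at $\varpi$ as in Theorem~\ref{thm:stand_decomp}, writing $v = \dbloverline{v} \circ \overline{v_\varpi} \circ v_0$. In the saturated models the rank-1 middle piece $\overline{v_\varpi}$ has value group $\IR$ and is henselian defectless, by Fact~\ref{fact:sat->defectless}. Semi-perfectness gives $p$-divisible value group and perfect residue field, as in Theorem~\ref{thm:semi-perf_char}, now run at $\varpi$ instead of $p$. So $\overline{v_\varpi}$ is absolutely unramified, as in the Taming Theorem.

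The residue field $Kv_\varpi = \Frac((\Oo_v/\varpi\Oo_v)_{\Red})$ is interpretable in $\Oo_v/\varpi\Oo_v$, so elementarity of the truncated rings gives elementarity of these characteristic-$p$ residue fields together with their induced valuations $\dbloverline{v}$.

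The main step is to lift this to the valued field $(K^*v^*_0,\overline{v^*})$ and then to $K^*$. I would use a relative Ax-Kochen-Ershov principle, an embedding lemma for tame (absolutely unramified) fields. The mixed-characteristic piece $(Kv_0, \overline{v_\varpi})$ is perfectoid-like, so I would treat it via tilting (Fact~\ref{fact:perfectoid_props}): its $\Oo/\varpi$ agrees with that of its characteristic-$p$ tilt, which reduces the comparison to perfect valued fields of characteristic $p$. There the AKE principle for tame/perfect henselian fields, with value groups compared via regularity (Proposition~\ref{prop:reg_char}), yields elementarity.

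The outer $(0,0)$ place $v_0$ is then handled by the Ax-Kochen-Ershov Theorem~\ref{thm:AKE}, relative to the substructure $K$. Here I use that the value group $\Gamma/\Conv(v\varpi)$ is divisible by regularity, and that $v_0$ is algebraically maximal by hypothesis (ii). Finally I recover $v$ from the composite of the three places, as in Fact~\ref{fact:val_lang}.

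I expect the main obstacle to be the relative embedding step in the middle: showing that an elementary inclusion of $\Oo/\varpi$ rings yields an elementary inclusion of the rank-1 pieces, rather than merely elementary equivalence. I would first try to transport a back-and-forth system of the characteristic-$p$ tilts through the sharp map $\sharp$.
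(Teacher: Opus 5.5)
\textbf{There is a genuine gap: the middle step cannot be done through tilts, and your treatment of $v_0$ only covers the case in which hypothesis (ii) is vacuous.}

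The step you flag as the main obstacle is where the proposal breaks, and tilting cannot repair it. The sharp map $\sharp$ is only multiplicative, and the tilt forgets the untilt. For $p \ne 2$, the completions of $\IQ_p(p^{1/p^\infty})$ and $\IQ_p(\zeta_{p^\infty})$ have isomorphic tilts, yet only the second contains $\zeta_p$. So an elementary inclusion, or a back-and-forth system, between tilts does not descend to the characteristic-$0$ fields. Making such a descent work is essentially the content of the theorem itself; indeed, Jahnke--Kartas obtain the model-theoretic Fontaine--Wintenberger theorem as a \emph{consequence} of it.

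The missing idea is that no tilting is needed. Kuhlmann's relative AKE principle for tame valued fields (Ingredient~\ref{ingr:MT_tame}) is available in mixed characteristic and is resplendent, so the paper applies an AKE principle only once, to the coarsening $v_\varpi = \overline{v_\varpi}\circ v_0$. It first saturates, taking ultrapowers of $(K,v)$ and $(K',v')$ with respect to the same non-principal ultrafilter; this keeps $K^* \subseteq K'^*$ and preserves both sides of the equivalence. It then shows that $v_\varpi$ is tame: its value group is divisible by regularity, and the middle piece is absolutely unramified exactly as you argue. Next it derives $(Kv_\varpi,\Oo_{\dbloverline{v}}) \preccurlyeq (K'v'_\varpi,\Oo_{\dbloverline{v}'})$ from the truncated rings. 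The resplendent tame AKE principle then gives $(K,v_\varpi,\Oo_{\dbloverline{v}}) \preccurlyeq (K',v'_\varpi,\Oo_{\dbloverline{v}'})$, and $v = \dbloverline{v}\circ v_\varpi$ is recovered definably.

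Your treatment of $v_0$ also fails in general. The place $v_0$ is of equicharacteristic $(0,0)$ only if $\charK K = 0$ and $vp \in \Conv(v\varpi)$. If $\charK K = p$ (which the theorem explicitly allows), or if $v\varpi \ll vp$, then $Kv_0$ has characteristic $p$ and Theorem~\ref{thm:AKE} is unavailable.

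These are exactly the cases in which hypothesis (ii) has content. When $v_0$ is of equicharacteristic $(0,0)$, algebraic maximality of $\Oo_v[\varpi^{-1}]$ already follows from the henselianity of $v$ via the Lemma of Ostrowski, so your plan never genuinely uses (ii). What (ii) provides is that $v_0$ is tame, being algebraically maximal with divisible value group and perfect residue field. Hence $v_\varpi$ is tame as a composite of tame valuations, which is what feeds into Kuhlmann's principle.

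Two smaller points:
\begin{enumerate}[(a)]
    \item Even a place-by-place argument would need every AKE step to be resplendent, so as to carry the induced valuation on the residue field up to $K$.
    \item $(\Oo_v/\varpi\Oo_v)_{\Red}$ is not first-order interpretable in $\Oo_v/\varpi\Oo_v$ in general, since the nilradical need not be definable. So obtaining $\Oo_{\dbloverline{v}} \preccurlyeq \Oo_{\dbloverline{v}'}$ genuinely uses the saturation.
\end{enumerate}
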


Let us sketch a proof of Theorem~\ref{thm:JK_main}. Two additional ingredients are needed.

\begin{ingr*}
\begin{enumerate}[(I)]
    \item Add a constant symbol for $\varpi$ to our language. Then, the class of valued field $(K,v)$ described by Theorem~\ref{thm:JK_main} is elementary \cite[Prop. 4.1.4]{Jahnke-Kartas}. On its own, the property that $\Oo_v[\varpi^{-1}]$ is algebraically maximal is not first-order axiomatisable---it is crucial that we additionally know that $\Oo_v/p\Oo_v$ is semi-perfect. \label{ingr:alg_max_axiom}
    \item What enables us to use methods from logic are Kuhlmann's results on the model theory of tame valued fields \cite{Kuhlmann16}. In particular, we will use the following Ax-Kochen/Ershov principle:
    
    \textit{Let $(K,v) \subseteq (K',v')$ be tame valued fields. Then:}
    \[
        (K,v) \preccurlyeq (K',v') \Longleftrightarrow vK \preccurlyeq v'K' \quad \text{\textit{and}} \quad Kv \preccurlyeq K'v'.
    \]
    Moreover, this AKE principle is \emph{resplendent}, that is to say, it still holds if we add extra structure on the residue field sort. For instance, we may add a predicate for a valuation ring to the residue field sort of $(K,v)$.
    \label{ingr:MT_tame}
\end{enumerate}
\end{ingr*}

\begin{proof}[{Proof of Theorem \ref{thm:JK_main} (Sketch).}]
We can interpret $\Oo_v/\varpi\Oo_v$ in $(K,v)$, so the implication from left to right follows immediately. Conversely, assume $\Oo_v/\varpi\Oo_v \preccurlyeq \Oo_{v'}/\varpi\Oo_{v'}$.

The first part is saturating. Let $(K^*,v^*)$ and $(K'^*,v'^*)$ be ultrapowers of $(K,v)$ and $(K',v')$, with respect to the same non-principal ultrafilter on $\IN$. By Łoś's Theorem,
\begin{align*}
    (K,v) \preccurlyeq (K',v') & \Longleftrightarrow (K^*,v^*) \preccurlyeq (K'^*,v'^*) \\
    \Oo_v/\varpi\Oo_v \preccurlyeq \Oo_{v'}/\varpi\Oo_{v'} & \Longleftrightarrow \Oo_{v^*}/\varpi\Oo_{v^*} \preccurlyeq \Oo_{v'^*}/\varpi\Oo_{v'^*}.
\end{align*}
Crucially, Ingredient \ref{ingr:alg_max_axiom} says that the assumptions on $\Oo_v/\varpi\Oo_v$ and $\Oo_v[\varpi^{-1}]$ are preserved in the ultrapower. Since ultrapowers are $\aleph_1$-saturated, we may thus assume, without loss of generality, that we are working with an instance of the theorem where the given valued fields $(K,v)$ and $(K',v')$ are $\aleph_1$-saturated.

The second part of the proof is  decomposition: Theorem~\ref{thm:stand_decomp} yields
\[
    \begin{tikzcd}
        K \ar[r,"v_0"] & Kv_0 \ar[r,"\overline{v_{\varpi}}"] & Kv_{\varpi} \ar[r,"\dbloverline{v}"] & Kv,
    \end{tikzcd}
\]
where $v_{\varpi} = \overline{v_{\varpi}} \circ v_0$ has divisible value group by regularity of $vK$. Thus, $v_{\varpi}$ is tame by the proof of the Taming Theorem~\ref{thm:taming}; the same holds for $v'_{\varpi}$. By saturation,\footnote{There are several ways to make this precise. One is to use the Keisler-Shelah Theorem \cite[6.1.15]{CK90} to choose an ultrafilter at the start of the proof that makes $\Oo_{v^*}/\varpi\Oo_{v^*}$ and $\Oo_{v'^*}/\varpi\Oo_{v'^*}$ (plainly) isomorphic. This simplifies things considerably at the cost of a model-theoretic black box. A different approach is to observe that the nilradical of $\Oo_v/\varpi\Oo_v$ is uniformly $\Ll_{\omega_1,\omega}$-definable. This suffices, because $\Oo_{v^*}/\varpi\Oo_{v^*} \preccurlyeq_{\omega_1,\omega} \Oo_{v'^*}/\varpi\Oo_{v'^*}$ is a formal consequence of the structures being $\aleph_1$-saturated. For more details, see \cite{Gitin26}.}
\[
    \Oo_v/\varpi\Oo_v \preccurlyeq \Oo_{v'}/\varpi\Oo_{v'} \Longrightarrow \Oo_{\dbloverline{v}} = (\Oo_v/\varpi\Oo_v)_{\Red} \preccurlyeq (\Oo_{v'}/\varpi\Oo_{v'})_{\Red} = \Oo_{\dbloverline{v}'},
\]
and consequently, $(Kv_{\varpi}, \Oo_{\dbloverline{v}}) \preccurlyeq (K'v'_{\varpi}, \Oo_{\dbloverline{v}'})$.
Ingredient \ref{ingr:MT_tame} provides the resplendent tame AKE principle. It follows that
\begin{equation} \label{eq:tame_two_places}
    (K, v_{\varpi}, \Oo_{\dbloverline{v}}) \preccurlyeq (K', v'_{\varpi}, \Oo_{\dbloverline{v}'}).
\end{equation}
We can recover our original valuation $v = \dbloverline{v} \circ v_{\varpi}$ definably---as per Section~\ref{sec:comp_places}---via
\[
    x \in \Oo_v \Longleftrightarrow xv_{\varpi} = \res(x) \in \Oo_{\dbloverline{v}}.
\]
Hence, (\ref{eq:tame_two_places}) implies $(K,v) \preccurlyeq (K',v')$ by Fact~\ref{fact:val_lang}.
\end{proof}

We describe two main consequences. The first one is a new class of examples for the AKE philosophy. Prior to Jahnke and Kartas' work, such examples were inaccessible due to our notoriously limited understanding of the first-order theory of positive characteristic fields (see \cite{Anscombe24}) and of the defect.
\begin{cor}
Let $\IF_p(t)^h$ be the henselisation of $\IF_p(t)$ inside $\IF_p\laurent{t}$. Then
\[
    (\IF_p(t)^h)^{1/p^\infty} \preccurlyeq \IF_p\laurent{t}^{1/p^\infty} \preccurlyeq \widehat{\IF_p\laurent{t}^{1/p^\infty}}.
\]
\end{cor}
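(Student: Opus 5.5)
The plan is to obtain both relations from Theorem~\ref{thm:JK_main}, applied with $\varpi = t$ to the two inclusions $(\IF_p(t)^h)^{1/p^\infty} \subseteq \IF_p\laurent{t}^{1/p^\infty} \subseteq \widehat{\IF_p\laurent{t}^{1/p^\infty}}$, each field carrying the ($t$-adic) valuation extended from $v_t$.

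\textbf{Hypotheses of Theorem~\ref{thm:JK_main}.} First I would check these for all three fields. Each is henselian: the henselisation is henselian, perfect hulls are purely inseparable so henselianity goes up (Fact~\ref{fact:pure_insep}), and the completion is complete of rank~1. Each value group is $\IZ[1/p]$. This group is dense, and it is regular, since for any $n$ the quotient $\IZ[1/p]/n\IZ[1/p]$ is finite and every interval of $\IZ[1/p]$ meets each coset. Each residue field is $\IF_p$, of characteristic $p$.

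Since the valuation has rank~1, $\Oo_v[t^{-1}] = K$ by Theorem~\ref{thm:stand_decomp}\ref{item:v_0 trivial}. So condition (ii) of Theorem~\ref{thm:JK_main} asks that the trivial valuation be algebraically maximal, which holds vacuously: a finite extension that is immediate for the trivial valuation has residue field equal to the extension and hence degree~1.

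For condition (i), note that every field in the chain is perfect. Hence $\Oo_v$ is perfect, and Frobenius is surjective on $\Oo_v/t\Oo_v$.

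\textbf{Comparing the truncated rings.} It then remains to show that the truncated rings $\Oo_v/t\Oo_v$ are elementary substructures of one another. I expect them to be canonically isomorphic via the inclusion maps, which is stronger. For $(\IF_p(t)^h)^{1/p^\infty}$ the ring is $\bigcup_n \IF_p[t^{1/p^n}]/(t)$. Density of $\IF_p(t)^{1/p^\infty}$ in the completion gives surjectivity of the inclusion-induced maps on $\Oo/t\Oo$. Injectivity holds because all the valuations are restrictions of the same one.

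Concretely, any $x$ with $v(x) \ge 0$ in a larger field admits $y$ in the smaller field with $v(x - y) \ge 1 = v(t)$. This is immediate from density for the completion. For the Laurent series step, truncate the Puiseux-type expansion: an element of $\IF_p\laurent{t^{1/p^n}}$ is congruent mod $t$ to a polynomial in $t^{1/p^n}$. Thus the inclusion $\Oo_v/t\Oo_v \hookrightarrow \Oo_{v'}/t\Oo_{v'}$ is an isomorphism, in particular an elementary embedding, and Theorem~\ref{thm:JK_main} yields both $\preccurlyeq$ relations.

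\textbf{Main obstacle.} The delicate point is whether condition (ii) really trivialises. This needs the coarsening $\Oo_v[\varpi^{-1}]$ to be the whole field, which holds because the valuation has rank~1. If the intended reading is that the core field $Kv_0$ with $\overline{v}$ must be algebraically maximal, this would fail for the henselisation, which carries defect. So I would double-check the formulation against \cite{Jahnke-Kartas}. Under the statement as given, the trivial-valuation reading applies and the argument goes through.
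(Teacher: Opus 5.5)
Your proposal is correct and is essentially the paper's own proof. The paper also takes $\varpi = t$, uses density of $\IF_p(t)^{1/p^\infty}$ to see that all three fields share the truncated ring $\IF_p[t]^{1/p^\infty}/(t)$, notes that the coarsenings $\Oo[t^{-1}]$ are trivial and that $\frac{1}{p^\infty}\IZ$ is regular dense, and applies Theorem~\ref{thm:JK_main}; your checks of henselianity and semi-perfectness fill in details the paper leaves implicit.

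The hedge in your last paragraph is unnecessary, since the paper uses exactly the trivial-coarsening reading. Moreover, the defect you mention is not special to the henselisation. The immediate Artin--Schreier extension given by $X^p - X - t^{-1}$ is nontrivial over all three fields, so the other reading would block every application here.
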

\begin{proof}
Simply pick $\varpi = t$ and observe that, by a density argument, all three valued fields have the same truncated valuation ring $\Oo/\varpi\Oo = \IF_p[t]^{1/p^\infty}/(t)$ and trivial coarsenings $\Oo[t^{-1}]$. The value groups are isomorphic to $\frac{1}{p^{\infty}}\IZ$ and therefore regular dense. Now apply Theorem~\ref{thm:JK_main}.
\end{proof}

As a second application, one recovers the Generalised Fontaine-Wintenberger Theorem~\ref{thm:FW}.
\begin{thm}[Model-theoretic Fontaine-Wintenberger, {\cite[Thm.\;6.2.3(II)]{Jahnke-Kartas}}]
\

Let $(K,v)$ be a perfectoid field of mixed characteristic $(0,p)$. Let $(K^*,v^*) = (K,v)^{\IN}/\Uu$ be an ultrapower of $(K,v)$ with respect to a non-principal ultrafilter $\Uu$ on $\IN$.
\begin{enumerate}[(i)]
    \item The valuation ring $\Oo_v^\flat$ embeds into $\Oo_{v^*}$ multiplicatively via
    \begin{equation} \label{eq:natural}
        \natural : \Oo_v^\flat \longrightarrow \Oo_{v^*}, \quad x \longmapsto \ulim_{n \in \IN} (x^{\sharp})^{1/p^n}
    \end{equation}
    and induces an elementary embedding $\tau : (K^\flat,v^\flat) \mono (K^*v^*_p,\dbloverline{v^*})$.
    \item The canonical isomorphism of Theorem \ref{thm:taming},
    \[
        G_{K^*} \cong G_{K^*v_p^*} \text{ restricts to }\ G_K \cong G_{K^\flat},
    \]
    the Fontaine-Wintenberger isomorphism.
\end{enumerate}
\end{thm}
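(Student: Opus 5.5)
The plan is to build $\tau$ concretely from the sharp map, check it is a valued-field embedding, and then get elementarity from Theorem~\ref{thm:JK_main}, using Łoś's theorem on the truncated valuation rings. Part (ii) then follows by naturality of $\natural$ under automorphisms together with a degree count.

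\textbf{Step 1: the map $\natural$.} By Fact~\ref{fact:perfectoid_props}(i), every $x^\sharp$ has a compatible system of $p^n$-th roots $(x^{1/p^n})^\sharp$. So $\natural(x)=\ulim_n (x^{1/p^n})^\sharp$ is a well-defined element of $\Oo_{v^*}$. It is multiplicative because $\sharp$ is multiplicative and the root systems are compatible. It is injective, since $x\neq 0$ gives $v^*(\natural x)=\ulim_n v(x^\sharp)/p^n<\infty$.

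\textbf{Step 2: additivity modulo $\Mm_{v^*_p}$.} For each $n$, the map $x\mapsto (x^{1/p^n})^\sharp \bmod p$ is the $n$-th projection $\Oo_v^\flat\to\Oo_v/p\Oo_v$, hence a ring homomorphism. Therefore
\[
\natural(x+y)-\natural(x)-\natural(y)\in p\Oo_{v^*}\subseteq \Mm_{v^*_p}.
\]
Composing $\natural$ with the residue map $\Oo_{v^*_p}\to K^*v^*_p$ therefore gives an injective ring homomorphism $\Oo_v^\flat\to\Oo_{\dbloverline{v^*}}$, which extends to $\tau: K^\flat\to K^*v^*_p$. On values, $\tau$ realises $\gamma\mapsto \ulim_n\gamma/p^n$, an order embedding of $v^\flat K^\flat=vK$ into $\Gamma^*_p$. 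Hence $\tau$ is an embedding of valued fields $(K^\flat,v^\flat)\hookrightarrow(K^*v^*_p,\dbloverline{v^*})$.

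\textbf{Step 3: elementarity of $\tau$ (main obstacle).} I would apply Theorem~\ref{thm:JK_main} in characteristic $p$, with $\varpi=t$ from Fact~\ref{fact:perfectoid_props}(ii) and its image $\tau(t)$. We have to check its hypotheses, of which the truncations are the crux.
\begin{enumerate}[(a)]
\item Both fields are henselian and perfect, so the truncations are semi-perfect, and both value groups are regular and dense. For $K^*v^*_p$ this comes from the Taming Theorem~\ref{thm:taming} together with saturation.
\item The relevant coarsenings are algebraically maximal. For $K^\flat$ the coarsening is $v^\flat$ itself, which is complete of rank~1 and perfect. For $K^*v^*_p$ it is the rank-1 part coming from $\aleph_1$-saturation, by Fact~\ref{fact:sat->defectless}.
\item The induced map $\Oo^\flat_v/t\to\Oo_{\dbloverline{v^*}}/\tau(t)$ is elementary.
\end{enumerate}
For (c), note that $\tau(t)$ is the residue of $\ulim_n p^{1/p^n}$, up to units. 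So $\Oo_{\dbloverline{v^*}}/\tau(t)$ is identified with $\bigl(\prod_n \Oo_v/p^{1/p^n}\Oo_v\bigr)/\Uu$, once one checks that the elements killed by passing to $\Mm_{v^*_p}$ vanish here. Each factor $\Oo_v/p^{1/p^n}\Oo_v$ is identified with $\Oo_v/p\Oo_v$ by the $n$-fold Frobenius, which is bijective on these truncations for perfectoid $K$. Under these identifications and $\overline\sharp$, the map in (c) becomes the diagonal embedding of $\Oo_v/p\Oo_v$ into its ultrapower, which is elementary by Łoś's theorem. The delicate points are bookkeeping the Frobenius twists compatibly with $\natural$, and checking that the two truncations really match after dividing by $\Mm_{v^*_p}$.

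\textbf{Step 4: part (ii).} The isomorphism $G_{K^*}\cong G_{K^*v^*_p}$ of Theorem~\ref{thm:taming} is given by residue maps, so it is natural. Take a finite Galois $L/K$. Then $L$ is perfectoid by Corollary~\ref{cor:KR-almost-purity} and completeness, and $L^*=L\otimes_K K^*$. The map $\natural_L$ extends $\natural_K$ and commutes with $\Gal(L/K)$, since $\sharp$ and the $p$-power root systems are functorial. Hence $\tau_L(L^\flat)$ is a $\Gal(L/K)$-stable subfield of $L^*w^*_p$.

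Because $\tau_K$ is elementary, $\tau(K^\flat)$ is relatively algebraically closed in $K^*v^*_p$. So $\tau_L(L^\flat)$ and $K^*v^*_p$ are linearly disjoint over $\tau(K^\flat)$. Comparing degrees with $[L^*w^*_p:K^*v^*_p]=[L:K]$ then shows that $L^\flat/K^\flat$ is Galois with group $\Gal(L/K)$, and that $L^*w^*_p=\tau(L^\flat)\cdot K^*v^*_p$. Passing to the limit over $L$, restriction along $\tau$ turns the isomorphism $G_{K^*}\cong G_{K^*v^*_p}$ into the isomorphism $G_K\cong G_{K^\flat}$, $L\mapsto L^\flat$. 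This is exactly the Fontaine--Wintenberger correspondence.
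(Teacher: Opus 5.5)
Your construction of $\tau$ in Steps 1--2 and your proof in Step 3(c) that the truncated map is elementary follow the paper's argument: the truncated map is the diagonal embedding twisted by $\overline{\sharp}$ and the non-standard Frobenius, hence elementary by {\L}o\'s. There are, however, genuine gaps in Step 3(b) and in Step 4.

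\textbf{Step 3(b).} The coarsening required in hypothesis (ii) of Theorem~\ref{thm:JK_main} is $\Oo[\varpi^{-1}]$. This is the \emph{top} place of the Standard Decomposition at $\varpi$, not a rank-1 piece.
\begin{itemize}
\item For $K^\flat$ this coarsening is the trivial valuation, not $v^\flat$. That slip is harmless.
\item For $K^*v^*_p$, the element $\tau(t)$ has value $\delta=\ulim_n vp/p^n$. So $\Oo_{\dbloverline{v^*}}[\tau(t)^{-1}]$ is the coarsening by $\Conv(\delta)$, with value group $\Gamma^*_p/\Conv(\delta)$. This is a non-trivial equicharacteristic-$p$ valuation of infinite rank: the elements $\ulim_n vp/p^{\lfloor n/k\rfloor}$, $k\ge 2$, lie in pairwise distinct archimedean classes strictly between $\Conv(\delta)$ and $vp$.
\end{itemize}
Fact~\ref{fact:sat->defectless} only makes the rank-1 \emph{middle} factor $w$ of a decomposition $v''\circ w\circ v'$ henselian defectless. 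It says nothing about the coarsening $v'$. Nor is there an Ostrowski-type argument here, as there is for the $(0,0)$-place in mixed characteristic. The algebraic maximality of $\Oo_{\dbloverline{v^*}}[\tau(t)^{-1}]$ is exactly what the paper identifies as the main difficulty (\cite[Lem.~4.2.5]{Jahnke-Kartas}). Without it, Theorem~\ref{thm:JK_main} cannot be applied, so elementarity of $\tau$ is not established.

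A smaller point: regularity of $\Gamma^*_p$ does not come from the Taming Theorem. It holds because $\Gamma^*\equiv vK$ is regular and convex subgroups of regular groups are regular.

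\textbf{Step 4.} There are two problems.
\begin{itemize}
\item \emph{Algebraicity of $L^\flat$.} Comparing degrees only bounds $[\tau_L(L^\flat)\cdot K^*v^*_p:K^*v^*_p]$ by $[L:K]$. It does not show that $L^\flat$ is algebraic over $K^\flat$, which you need both for linear disjointness and for the Galois claim. What you need is that $\Gal(L/K)$ acts faithfully on $L^\flat$ with fixed field $K^\flat$. Both can be checked by hand, but must be argued:
  \begin{itemize}
  \item Faithfulness: if $\sigma\equiv\mathrm{id}$ mod $p$ on $\Oo_L$, writing $x=y^p+pz$ gives $\sigma\equiv\mathrm{id}$ mod $p^2$, and inductively $\sigma=\mathrm{id}$.
  \item Fixed field: if $x\in\Oo_L^\flat$ is invariant, then $x^\sharp=\lim_n\tilde x_n^{p^n}$ is a limit of elements invariant mod $p^{n+1}$, so $x^\sharp\in K$.
  \end{itemize}
\item \emph{Essential surjectivity (the more serious gap).} You only show that $L\mapsto L^\flat$ is injective and preserves Galois groups. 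The isomorphism $G_K\cong G_{K^\flat}$ also requires every finite extension $M'$ of $K^\flat$ to be of the form $L^\flat$. Equivalently, the extension of $K^*$ that the Taming Theorem attaches to $M'\cdot K^*v^*_p$ must be defined over $K$. This is not formal: $G_K$ need not be small, and $K^*$ has many finite extensions not coming from $K$.
\end{itemize}
The paper obtains this direction from a special property of perfectoid fields. A finite extension of $K^\flat$ is generated by polynomials of arbitrarily small discriminant, obtained by taking $p$-th roots of coefficients. Via $\natural$, these are matched with extensions of $K^*$ coming from $K$ whose generating polynomials have infinitesimal discriminant, so they are $v^*_p$-unramified (cf.\ \cite[Cor.~6.2.2]{Jahnke-Kartas}).
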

Let us briefly explain the main ideas of the beautiful proof.

The musical maps $\sharp$ and $\natural$ defined in (\ref{eq:sharp}) and (\ref{eq:natural}) are both given by limits of a certain kind. Though $\sharp$ is only multiplicative and additive modulo $p$, by passing to an ultrapower, the induced embedding into the canonical residue field $K^*v_p^*$ becomes a genuine valued field embedding, see \cite[Lem. 6.1.2]{Jahnke-Kartas}.

Perhaps an even more subtle observation is elementarity. Let $\varpi^{\flat}$ be such $\varpi = (\varpi^{\flat})^\sharp$ has value $v\varpi = vp$ (Fact~\ref{fact:perfectoid_props}) and write $\pi \coloneqq (\varpi^{\flat})^{\natural} \in \Oo_{v^*}$. Then the composition
\begin{equation*}
    \begin{tikzcd}[row sep=0em]
        \llap{$\overline{\natural} :\;$} \Oo_{v^\flat}/(\varpi^{\flat}) \ar[r,"\cong","\overline{\sharp}"'] & \Oo_v/(\varpi) \ar[r,"\iota"',"\preccurlyeq"] & \Oo_{v^*}/(\varpi) \ar[r,"\cong","{\;\Phi^*}"'] & \Oo_{v^*}/(\pi) \\
        x/(\varpi^{\flat}) \ar[r, maps to] & x^{\sharp}/(\varpi) \ar[r, maps to] & \displaystyle\ulim_{n \in \IN} x^{\sharp} \ultra (\varpi) \ar[r, maps to] & \displaystyle\ulim_{n \in \IN} (x^{\sharp})^{1/p^n} \ultra (\pi)
    \end{tikzcd}
\end{equation*}
is equal to the map $\overline{\natural}$ induced by $\natural$ on $\Oo_{v^\flat}/(\varpi^{\flat})$. Here, $\iota$ is induced by the diagonal embedding into the ultrapower (elementary by Łoś) and $\Phi^*$ is the inverse of a non-standard Frobenius isomorphism. Loosely speaking, $\overline{\natural}$ is an elementary embedding because it is a diagonal embedding, twisted twice by ``cancelling'' Frobenius isomorphisms \cite[Thm. 6.1.3]{Jahnke-Kartas}.
One then shows that the embedding $\tau$ is itself elementary by carefully checking that the assumptions of Theorem \ref{thm:JK_main} are satisfied. The main difficulty in this step is in showing that $\Oo_{\dbloverline{v^*}}[\dbloverline{\pi}^{-1}]$ is algebraically maximal, cf. \cite[Lem. 4.2.5]{Jahnke-Kartas}.

The final conclusion in (ii) uses the special property of perfectoid fields that any finite extension generated by a polynomial of discriminant of value $<vp$ can be generated by polynomials of arbitrarily small discriminant (essentially, by taking $p$-th roots of the coefficients iteratively). As a consequence, $\natural$ interpolates between extensions of $K^{\flat}$, generated by polynomials of (potentially) arbitrarily small discriminant, and finite extensions of $K^*$ coming from $K$, generated by polynomials of (actually) infinitesimal discriminant, i.e., of value $\ll vp$, making the latter unramified with respect to $v^*_p$, cf. \cite[Cor. 6.2.2]{Jahnke-Kartas}.

We have described a proof of the Fontaine-Wintenberger Theorem via the Saturation-Decom\-po\-sition Method. At the same time, recall that we used this theorem to give our first proof of the Transfer Lemma. Methodically, we have thus come full circle in our diagram on p.\;\pageref{diagram:inf_ram}.

\begin{rem}[Limitations of the method]
If we follow the strategy of the proof of Theorem~\ref{thm:JK_main}, the results will be limited by the strength of the model-theoretic input that enters as Ingredient~\ref{ingr:MT_tame}. Indeed, Jahnke-Kartas show that Theorem~\ref{thm:JK_main} still holds if we replace the regularity assumption on value groups by $\Gamma_v \preccurlyeq \Gamma_{v'}$, see \cite[Thm. 1.7.3]{Jahnke-Kartas}.
This is done by adapting Kuhlmann's model theory of tame valued fields to \emph{roughly tame} valued fields (those $(K,v)$ for which the core field $(Kv_0,\overline{v})$ is tame). It turns out that this weakening of assumptions is harmless, essentially because the (0,\:\!0)-place in the Standard Decomposition is always tame by the Lemma of Ostrowski \ref{cor/def:defect} (cf. \cite[3.3.7]{Jahnke-Kartas}). In \cite{JvdS26}, Jahnke and van der Schaaf generalise Theorem~\ref{thm:JK_main} even further using AKE principles for separably tame
valued fields due to Kuhlmann-Pal \cite{Kuhlmann-Pal} and Anscombe \cite{Anscombe25}.
\end{rem}

%% file: part5_main.tex
\section{Proof of Main Theorem}
\label{chap:main}

We are finally ready to prove Theorem~\ref{thm:main theorem}. One direction is straightforward:

\begin{prop}
    Let $K \equiv \IQ_p$ in the language of rings. Then $G_{K} \cong G_{\IQ_p}$.
\end{prop}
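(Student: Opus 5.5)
The quickest route is Klingen's Lemma~\ref{lem:small}: it says that if $K$ has small absolute Galois group and $K \equiv F$, then $G_K \cong G_F$. We apply it with $\IQ_p$ in the role of the small field. By Corollary~\ref{cor:finitelyManyExtensionsQ_p}, $\IQ_p$ has only finitely many extensions of each degree $n$, so $G_{\IQ_p}$ is small. Since $\equiv$ is symmetric, $K \equiv \IQ_p$ gives $\IQ_p \equiv K$, and Lemma~\ref{lem:small} yields $G_{\IQ_p} \cong G_K$.

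The main point to verify is that Klingen's sketch really transfers the full isomorphism type of the profinite group, rather than only the finite quotients $G_n$. I would make this explicit as follows.
\begin{itemize}
\item For each $n$, the number $N_n$ of extensions of $\IQ_p$ of degree $\le n$ inside a fixed algebraic closure is finite. The statement ``there are exactly $N_n$ such extensions'' is first-order in $\Ll_{\ring}$: extensions of degree $\le n$ are coded by coefficient tuples of irreducible polynomials, and coincidence of the generated fields is expressible by quantifying over polynomial expressions of bounded degree.
\item Similarly, the isomorphism type of $\Gal(K_n/K)$, together with the restriction maps $G_n \to G_{n-1}$, can be written as a single first-order sentence $\varphi_n$ by coding the compositum $K_n$ through a primitive element and its minimal polynomial.
\end{itemize}
Hence $K$ also has exactly $N_n$ extensions of degree $\le n$; in particular $G_K$ is small. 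Moreover, for every $n$ the finite systems $(G_m)_{m \le n}$ for $K$ and for $\IQ_p$ are isomorphic as inverse systems.

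The remaining obstacle is to pass from these levelwise compatible isomorphisms to a single isomorphism of the inverse limits. For each $n$ the set of isomorphisms between the length-$n$ truncated systems is finite and nonempty, and these sets form an inverse system under restriction. By König's lemma (compactness), this inverse system has a compatible thread. Taking the limit along that thread gives a continuous isomorphism $G_K = \varprojlim_n \Gal(K_n/K) \cong \varprojlim_n \Gal((\IQ_p)_n/\IQ_p) = G_{\IQ_p}$. We use here that every finite separable extension lies in some $K_n$, so the $K_n$ exhaust $K^{\sepc}$.
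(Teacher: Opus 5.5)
Your proposal is correct and follows the paper's proof exactly: $G_{\IQ_p}$ is small by Corollary~\ref{cor:finitelyManyExtensionsQ_p}, and you then apply Klingen's Lemma~\ref{lem:small} with $\IQ_p$ as the small field. Your K\"onig's-lemma step makes rigorous a passage that the paper's sketch of Klingen's lemma only asserts (``an isomorphism of inverse systems can be recovered''), and it works because each truncated system $(G_m)_{m\le n}$ is already determined by the abstract group $G_n$: the subgroup $\Gal(K_n/K_m)$ is the intersection of all subgroups of $G_n$ of index at most $m$.
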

\begin{proof}
Recall that $\IQ_p$ has small absolute Galois group (Corollary~\ref{cor:finitelyManyExtensionsQ_p}). Small absolute Galois groups are encoded in the theory of a field, so the conclusion follows (see Lemma~\ref{lem:small}).
\end{proof}

For the other direction, we now assume $G_K \cong G_{\IQ_p}$. Recall that by Theorem~\ref{thm:char_Qp}, to prove $K \equiv \IQ_p$, it suffices to find a valuation $v$ on $K$ satisfying the following properties:
\begin{enumerate}[(i)]
    \item $(K,v)$ is henselian of mixed characteristic $(0,p)$;
    \item $vK \equiv \IZ$;
    \item $v(p)$ is minimal positive;
    \item $Kv=\IF_p$.
\end{enumerate}

Additionally, note that (ii) admits the following well-known algebraic description:
\begin{fact}[{\cite[Chap. 3]{Marker02}}] \label{fact:Z-groups}
    For an ordered abelian group $\Gamma$, the following are equivalent:
    \begin{enumerate}[(i)]
        \item $\Gamma \equiv \IZ$;
        \item $\Gamma$ has a convex subgroup $\Delta \cong \IZ$ such that $\Gamma/\Delta$ is divisible;
        \item $\Gamma$ has a minimal positive element and $[\Gamma : n\Gamma] = n$ for all $n \in \IN$.
    \end{enumerate}
\end{fact}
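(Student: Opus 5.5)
The plan is to prove the cycle (i) $\Rightarrow$ (iii) $\Rightarrow$ (ii) $\Rightarrow$ (iii) $\Rightarrow$ (i). Only one of these steps has real model-theoretic content, namely (iii) $\Rightarrow$ (i), and it will rest on the completeness of Presburger arithmetic.

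\textbf{(i) $\Rightarrow$ (iii).} Both properties in (iii) are first-order in $\Ll_{\oag}$.
\begin{itemize}
    \item ``$\exists x\,(x>0 \wedge \forall y\,(y>0 \to x \le y))$'' expresses the existence of a minimal positive element.
    \item For fixed $n$, the condition $[\Gamma:n\Gamma]=n$ is expressed by a sentence saying that there exist $x_1,\dots,x_n$ which are pairwise incongruent modulo $n\Gamma$, and that every $y$ is congruent modulo $n\Gamma$ to one of them. Here ``$a \equiv b \pmod{n\Gamma}$'' abbreviates $\exists z\,(a-b = z+\dots+z)$ with $n$ summands.
\end{itemize}
Both properties hold in $\IZ$, so they transfer to any $\Gamma \equiv \IZ$.

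\textbf{(iii) $\Rightarrow$ (ii).} Let $1$ denote the minimal positive element and set $\Delta \coloneqq \IZ\cdot 1$.
\begin{itemize}
    \item $\Delta$ is convex, since no element of $\Gamma$ lies strictly between $k$ and $k+1$. Indeed, otherwise subtracting $k$ would produce an element strictly between $0$ and $1$.
    \item Fix $n$. The elements $0,1,\dots,n-1$ lie in distinct cosets of $n\Gamma$. If $0<k<n$ and $k=n\delta$, then $0<\delta<1$, which is impossible; the same argument applied to differences gives pairwise distinctness.
    \item Since $[\Gamma:n\Gamma]=n$, these $n$ cosets exhaust $\Gamma/n\Gamma$. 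Hence every $\gamma$ can be written as $\gamma = n\delta + k$ with $k\in\{0,\dots,n-1\}\subseteq\Delta$, so $\gamma+\Delta = n(\delta+\Delta)$.
\end{itemize}
This shows that $\Gamma/\Delta$ is divisible.

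\textbf{(ii) $\Rightarrow$ (iii).} By convexity of $\Delta \cong \IZ$, its positive generator is minimal positive in $\Gamma$. Given $\gamma$ and $n$, divisibility of $\Gamma/\Delta$ yields $\delta$ with $\gamma - n\delta \in \Delta = \IZ\cdot 1$. Reducing this integer modulo $n$ puts $\gamma$ in one of the cosets of $0,\dots,n-1$. These cosets are distinct by the argument of the previous step, so $[\Gamma:n\Gamma]=n$.

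\textbf{(iii) $\Rightarrow$ (i).} This is the main obstacle. Conditions (iii), together with the axioms of ordered abelian groups, form precisely the axioms of Presburger arithmetic (discretely ordered $\IZ$-groups). Naming the minimal element by a constant $1$ is harmless, since it is $\emptyset$-definable. I will invoke the classical theorem that this theory admits quantifier elimination in the language $\Ll_{\oag}\cup\{1\}\cup\{\equiv_n\}_{n\ge 2}$ (\cite[Chap.~3]{Marker02}). Consequently every sentence is equivalent to a quantifier-free sentence built from the constants $0$ and $1$, and the truth value of such a sentence is the same in every model. So the theory is complete, and since $\IZ$ is a model, $\Gamma \equiv \IZ$. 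Rather than reprove quantifier elimination, I will cite it.
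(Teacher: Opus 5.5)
Your proposal is correct and follows essentially the route of the cited source: the paper gives no proof of this Fact and defers to \cite[Chap.~3]{Marker02}, where the equivalence comes from the same elementary (ii)$\Leftrightarrow$(iii) manipulation together with completeness of the theory of $\IZ$-groups, via quantifier elimination in $\Ll_{\oag}\cup\{1\}\cup\{\equiv_n\}_{n\ge 2}$. The only points to make explicit are that $n$ ranges over positive integers, and that your (iii)$\Rightarrow$(ii) computation is exactly what shows models of (iii) satisfy the axioms $\forall x\,\exists y\,\bigvee_{i<n}(x = ny+i)$ and decide every closed atomic formula $k\cdot 1 \equiv_n l\cdot 1$ as in $\IZ$.
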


We will show that the canonical henselian valuation $v_K$, introduced in Section~\ref{sec:can_hens_val}, satisfies all the required properties.
The strategy is organised into three main steps:
\begin{itemize}[wide, labelwidth=!, labelindent=0pt]
    \item[\ref{main_step:1}.] Show that the canonical henselian valuation $v_K$ has mixed characteristic $(0,p)$.
    \begin{itemize}[labelwidth=!, labelindent=5em, leftmargin=*]
        \item[\ref{main_step:1.1}.] Find henselian valuations on $K$ using the Galois characterisation of henselianity (Theorem~\ref{thm:Galois code henselianity}). Infer that the canonical henselian valuation $v_K$ is non-trivial and satisfies $Kv_K \ne (Kv_K)^{\sepc}$ and $(v_K K : q v_K K) = q$ for all $q \ne p$.
        \item[\ref{main_step:1.2}.] Deduce that $\charK K = 0$ using Idea~\ref{idea:char} (Lemma~\ref{lem:C_pInC_p^2Charp} and Lemma~\ref{lem:C_pNotInC_p^2}).
        \item[\ref{main_step:1.3}.] Prove that the prime-to-$p$ roots of unity of $K$ and $\IQ_p$ coincide using Lemma~\ref{lem:prime-to-p roots}.
        \item[\ref{main_step:1.4}.] Show that $\charK Kv_K \ne q$ for all primes $q \ne p$ via cyclotomic extensions.
        \item[\ref{main_step:1.5}.] Deduce that $\charK Kv_K = p$ using the previous steps and ramification-theoretic considerations (Section~\ref{sec:ram_theory}).
    \end{itemize}
    
    \item[\ref{main_step:2}.] Apply the Standard Decomposition (Section~\ref{sec:stand_decomp})
    \begin{diagram}
        v_K : K \ar[r,"v_0","{(0,\:\!0)}"'] & Kv_0 \ar[r,"\overline{v_p}","{(0,\,p)}"'] & Kv_p \ar[r,"\dbloverline{v}","{(p,\,p)}"'] & Kv_K
    \end{diagram}
    to decompose $v_K$ into three places that can be treated separately. Show that $(Kv_0, \overline{v_p})$ has value group $\IZ$ and finite residue field.
    \begin{itemize}[labelwidth=!, labelindent=5em, leftmargin=*]
        \item[\ref{main_step:2.1}.] Show $G_{Kv_0}$ cannot be realised as an absolute Galois group in characteristic $p$. Use the Transfer Lemma~\ref{cor:transfer_lemma1} to conclude that $\overline{v_p}(Kv_0)$ is not $p$-divisible.
        \item[\ref{main_step:2.2}.]
        Deduce $\overline{v_p}(Kv_0) \cong \IZ$ and $Kv_p$ is finite using Pop's Lemma~\ref{lem:Pop}.
    \end{itemize}
    \item[\ref{main_step:3}.] Conclude that $(K,v_K)$ is $p$-adically closed.
    \begin{itemize}[labelwidth=!, labelindent=5em, leftmargin=*]
        \item[\ref{main_step:3.1}.] Observe that $Kv_K = \IF_p$.
        \item[\ref{main_step:3.2}.] Prove that $v_KK \equiv \IZ$ using the elementary characterisation of $\IZ$ (Fact~\ref{fact:Z-groups}).
        \item[\ref{main_step:3.3}.] Prove that $v_K(p)$ is minimal positive using a counting argument (Lemma~\ref{lem:dimOfPowerQuotient} and Remark~\ref{rem:general_dim_formula}).
    \end{itemize}
\end{itemize}

\begin{itemize}[wide, labelwidth=!, labelindent=0pt]
\stepitem{1}{main_step:1}
To say anything about the canonical henselian valuation $v_K$, we first need to find at least one sufficiently non-trivial henselian valuation on $K$.

\stepitem{1.1}{main_step:1.1}
For primes $q \ne p$, any Sylow $q$-subgroups of $G_K \cong G_{\IQ_p}$ is isomorphic to a semi-direct product $\IZ_q \rtimes \IZ_q$ (Proposition~\ref{prop:Syl G_Q_p}). The Galois characterisation of henselianity (Theorem~\ref{thm:Galois code henselianity}) gives rise to henselian valuations $v_q$ on $K$, tamely branching at $q$. In particular, $\charK K v_q \ne q$ and $v_q K \ne qv_q K$, i.e.,
\[
    r_q \coloneqq \dim_{\IF_q}(v_q K/q v_q K) \ge 1.
\]
For any $q \ne p$, let $K_q$ be the fixed field of some Sylow $q$-subgroup of $G_K$. By henselianity, every $v_q$ has a unique prolongation $w_q$ to $K_q$. The $q$-rank $r_q$ does not change when we pass from $K$ to $K_q$. By Fact~\ref{fact:ramification group}, the ramification subgroup $R_{K_q}$ is trivial (since $\charK K_q w_q \ne q$), and
\[
    \IZ_q^{r_q} \cong I_{K_q} \vartriangleleft G_{K_q} \cong \IZ_q \rtimes \IZ_q
\]
by Theorem~\ref{thm:Splitting of D/R}. Using the explicit description in Remark~\ref{rem:Z_q-action}, one can check that the only non-trivial closed normal subgroups of $\IZ_q \rtimes \IZ_q$ are of the shape $q^s\IZ_q \times \{1\}$ and $q^{s_1}\IZ_q \rtimes q^{s_2}\IZ_q$. Since the latter are non-abelian, we conclude $r_q = 1$, $I_{K_q} \ne G_{K_q}$, and $K_q w_q$ is not separably closed.

We can further see that $G_{K_qw_q} \cong \IZ_q$ as follows. If
$I_{K_q} \cong q^s\IZ_q \times \{1\}$, then
\[
    G_{K_qw_q} \cong G_{K_q}/I_{K_q} \cong C_{q^s} \rtimes \IZ_q.
\]
We are done if $s = 0$, so assume for a contradiction that $s \ge 1$. By Artin-Schreier, $C_{q^s} \rtimes \IZ_q$ can only occur as an absolute Galois group of a field if $q = 2$ and $s = 1$. Moreover, as $\Aut(C_2) =1$, $C_2 \rtimes \IZ_2$ is a direct product in this case, contradicting Lemma~\ref{lem:no_normal_C2}. We conclude $I_{K_q} \cong \IZ_q \cong G_{K_qw_q}$ in all cases.

While we will continue working with $(K,v_q)$ and $(K_q,w_q)$ until \ref{main_step:1.4}, let us already make some observations about the canonical henselian valuation $v_K$. As the fields $K_q w_q$ are not separably closed, we have $\Oo_{v_K} \subseteq \Oo_{v_q}$ for all $q \ne p$ by Lemma~\ref{lem:tree}. In particular, $v_q K$ is a quotient of $v_K K$ by a convex subgroup, so $v_K K$ cannot be $q$-divisible. By repeating the above reasoning for $v_K$, we deduce that
\[
    r_q = \dim_{\IF_q}(v_K K/q v_K K) = 1,
\]
so $(v_K K: q v_K K) = q$ and $Kv_K$ is not separably closed.

\stepitem{1.2}{main_step:1.2}
From $\charK K v_q \ne q$, it follows that $\charK K \ne q$ for all primes $q \ne p$. Therefore, to show $\charK K = 0$, it only remains to exclude the case $\charK K = p$.

By Lemma~\ref{lem:C_pNotInC_p^2}, the field $\IQ_p$ satisfies the following property: it has a $C_{p - 1}$-extension (namely, $\IQ_p(\zeta_p)/\IQ_p$), which itself admits a $C_p$-extension that is not embeddable into a $C_{p^2}$-extension. As $G_K \cong G_{\IQ_p}$, the same property holds for $K$. However, this cannot happen in characteristic~$p$ by Lemma~\ref{lem:C_pInC_p^2Charp}.

\stepitem{1.3}{main_step:1.3}
Determining the prime-to-$p$ roots of unity of $K$ (from Galois theory alone) is a somewhat subtle affair. We would like to use Lemma~\ref{lem:prime-to-p roots}, which gives a Galois characterisation of the roots of unity of certain valued fields $(K,v)$. Condition \ref{item:unitroots_assum3} in this lemma requires knowledge of the Galois group of the maximal pro-$q$ Galois extension of $Kv$, which we would typically construct (say, over $\IF_p$) by looking at the appropriate cyclotomic extension. Such a direct argument is not available to us due to circularity (we still do not know the prime-to-$p$ roots of unity of $K$ yet). So instead, we will use an auxiliary construction.

Consider the field
\[
    \IQ_{p,q} \coloneqq \IQ_p\bigl(\zeta_{q^{\infty}},\!\!\sqrt[q^{\infty}]{p}\bigr) = \bigcup_{n \ge 1} \IQ_p\bigl(\zeta_{q^n},\!\!\sqrt[q^n]{p}\bigr).
\]
We compute its Galois group as
\[
   \Gal(\IQ_{p,q}/\IQ_p) = \Gal(\IQ_{p,q}/\IQ_p(\zeta_{q^{\infty}})) \rtimes \Gal(\IQ_p(\zeta_{q^{\infty}})/\IQ_p) \cong \IZ_q \rtimes (\IZ_q \times C),
\]
where $C = \Gal(\IQ_p(\zeta_q)/\IQ_p)$ is a cyclic group of order dividing $q - 1$ (the precise order depends on $p$ and $q$) and the semi-direct product is again given by the Frobenius action as in Remark~\ref{rem:Z_q-action}. Hence, there exists a Galois extension $L/K$ with
\[
    \Gal(L/K) \cong \IZ_q \rtimes (\IZ_q \times C).
\]
Let $u_q$ be the unique prolongation of $v_q$ to $L$. By considering possible closed normal abelian subgroups isomorphic to $I_{u_q}(L/K)$ as in
\ref{main_step:1.1}, we obtain
\[
    \Gal(Lu_q/Kv_q) \cong C_{q^s} \rtimes (\IZ_q \times C).
\]
Therefore, $Kv_q$ admits a Galois extension with Galois group $\IZ_q$, which must be maximal pro-$q$ since $G_{K_qw_q} \cong \IZ_q$ (this also implies $s = 0$). Similarly, one shows that the same holds true for the residue field $Kv_K$.

These considerations show that $(K,v_q)$ satisfies the conditions of Lemma~\ref{lem:prime-to-p roots}, and hence,
\begin{align*}
    \zeta_{q^r} \in \IQ_p & \iff \text{$\IQ_p$ admits a $C_{q^r} \times C_{q^r}$-extension} \\
    & \iff \text{$K$ admits a $C_{q^r} \times C_{q^r}$-extension} \iff \zeta_{q^r} \in K,
\end{align*}
for any prime $q \ne p$ and $r \ge 1$. In particular, the set of prime-to-$p$ roots of unity in $K$ is $\mu_{p - 1}$ for $p \ne 2$ and $\mu_2$ for $p = 2$ by Corollary~\ref{cor:rootsOfUnity}.

\stepitem{1.4}{main_step:1.4}
Assume that $\charK K v_K = q$ for some prime $q \ne p$. As $v_K$ is a refinement of $v_q$, there exists an induced valuation $\overline{v_K}$ on $Kv_q$ with residue field $ K v_K$ and of mixed characteristic $(\charK Kv_q, q)$.
\[
    \begin{tikzcd}
        K \ar[rr,bend left=30,"v_K"] \ar[r, "v_q"]& Kv_q \ar[r, "\overline{v_K}"] & Kv_K
    \end{tikzcd}
\]
Clearly, $\charK Kv_q \ne q$ implies $\charK K v_q = 0$. We now derive a contradiction by considering the cyclotomic extension $K(\zeta_{q^\infty})/K$. By Lemma~\ref{lem:inf_cyclotom}, such a cyclotomic extension can be finite only in one of two exceptional cases, namely $K(\zeta_{q^{\infty}}) = K(\zeta_q)$ or $K(\zeta_{2^{\infty}}) = K(\zeta_4)$. Neither of these occur: indeed, this follows by applying Lemma~\ref{lem:prime-to-p roots} to $K(\zeta_q)$ and $K(\zeta_4)$; see also Remark~\ref{rem:prime-to-p roots finite extensions}. Hence $K(\zeta_{q^{\infty}})/K$ is an infinite cyclotomic extension. By henselianity, this extension is unramified with respect to $v_q$. Therefore, by Lemma~\ref{lem:inf_cyclotom},
\[
    \Gal(K(\zeta_{q^{\infty}})/K) \cong \Gal(Kv_q(\zeta_{q^{\infty}})/Kv_q) \cong \IZ_q \times C.
\]
Thus, $Kv_q(\zeta_{q^{\infty}})$ must be $q$-closed (recall that the maximal pro-$q$ Galois extension of $Kv_q$ has Galois group $\IZ_q$).

From $Kv_q(\zeta_{q^{\infty}})$ being $q$-closed and $\charK Kv_K = q$, it follows that the residue field of $Kv_q(\zeta_{q^{\infty}})$ with respect to the unique prolongation of $\overline{v_K}$ is equal to $Kv_K$ and must be $q$-closed as well. In positive characteristic $q$, cyclotomic extensions by $\zeta_{q^{q^n} - 1}$ have degree dividing $q^n$. It follows that $\zeta_{q^{q^n} - 1} \in Kv_K$ for all $n \in \IN$.
However, note that we have
\[
    \gcd\biggl(\frac{q^{q^{n + 1}} - 1}{q^{q^n} - 1},\, q^{q^n} - 1\biggr) = 1,
\]
which is an instance of the well-known identity $\gcd((a^m - 1)/(a - 1), a - 1) = \gcd(m, a - 1)$. In conjunction with the above, this implies that there are infinitely many primes $q'$ with $\zeta_{q'} \in Kv_K$.
This contradicts \ref{main_step:1.3}, where we showed that $Kv_K$ contains a $q'$-th primitive root of unity for primes $q' \ne p, q$ if and only if $q' \mid (p - 1)$. We conclude $\charK Kv_K \ne q$ for all primes $q \ne p$.

\stepitem{1.5}{main_step:1.5}
To exclude the remaining case, assume that $\charK Kv_K = 0$. Here, we will eventually arrive at a contradiction by considering the ramification data for explicit extensions of $(K,v_K)$ implied by this assumption.\footnote{For this purpose, Pop uses a different type of argument. In the proof of \cite[(1.4)]{Pop88}, he shows that the Sylow $p$-subgroups of any non-trivial normal subgroup of $G_{\IQ_p}$ are not pro-cyclic. However, if $\charK Kv_K = 0$, then considering the action in the semi-direct product of Theorem~\ref{thm:Splitting of D/R} shows that $G_K$ has a normal pro-cyclic subgroup of the form $\prod_{r_q \ne 1} \IZ_q$, which has a pro-cyclic Sylow $p$-subgroup.}

(1) We first show that $v_KK$ is $p$-divisible. Assume towards a contradiction that
\[
    r_p \coloneqq \dim_{\IF_p}(v_K K/p v_K K) \ge 1,
\]
and let $K_p$ be the fixed field of some Sylow $p$-subgroup of $G_K$.
By ramification theory (Facts~\ref{fact:inertia group}, \ref{fact:ramification group}, \ref{thm:Splitting of D/R}), the absolute ramification subgroup of $K_p$ with respect to the unique prolongation of $v_K$ to $K^{\sepc}$ is trivial, so $I_{K_p} \cong \IZ_p^{r_p}$ is a non-trivial closed normal abelian subgroup of $G_{K_p}$. Moreover, $G_{K_p} \not\cong \IZ_p, \IZ_2 \rtimes C_2$; for instance, $K_p$ admits $C_p^3$-extensions by the Degree Lemma~\ref{lem:dimOfPowerQuotient}.
Hence, the absolute Galois groups $G_K \cong G_{\IQ_p}$ satisfy the hypotheses of the Galois characterisation of henselianity (Theorem~\ref{thm:Galois code henselianity}).
In particular, both $K$ and $\IQ_p$ must admit a henselian valuation tamely branching at $p$.
However, as we have seen in Example~\ref{ex:valuations on Qp}, the only henselian valuations on $\IQ_p$ are the trivial valuation and the standard $p$-adic valuation (which has residue characteristic $p$), neither of which is tamely branching at $p$. Therefore, $v_K K$ is $p$-divisible.

(2) Next, we claim that $K$ admits a unique $C_{p - 1} \times C_{p - 1}$-extension $L$. The existence follows from the fact that $\IQ_p$ admits a $C_{p - 1} \times C_{p - 1}$-extension, namely $\IQ_p(\zeta_{(p-1)^2}, \zeta_p)/\IQ_p$. To show uniqueness, observe that any such extension is the compositum of its $C_{q^r} \times C_{q^r}$-subextensions, where $q^r$ ranges over the prime powers in the factorisation of $p - 1$. At the same time, each $C_{q^r} \times C_{q^r}$-extension of $K$ is unique, for otherwise, the Galois group of the compositum of two distinct $C_{q^r} \times C_{q^r}$-extensions would have $q$-rank at least 3. For primes $q \mid (p - 1)$ this is incompatible with $G_{K_q} \cong \IZ_q \rtimes \IZ_q$ having $q$-rank 2.

(3) We claim that $K$ admits a unique $C_{p - 1}$-extension unramified at $v_K$, which can moreover be characterised uniquely in Galois-theoretic terms.
From \ref{main_step:1.3}, we know that
\[
    \Gal(Kv_K(q)/Kv_K) \cong \IZ_q.
\]
In particular, $Kv_K$ admits a unique $C_{q^r}$-extension for each prime power $q^r | (p - 1)$. Arguing again via composita, we infer that $Kv_K$ admits a unique $C_{p - 1}$-extension that induces a unique unramified $C_{p - 1}$-extension $L_1/K$. Using our description of the prime-to-$p$ roots in \ref{main_step:1.3} and the proof of Lemma~\ref{lem:inf_cyclotom}, one can explicitly write $L_1 = K(\zeta_{(p - 1)^2})$. By Lemma~\ref{lem:prime-to-p roots} and Remark~\ref{rem:prime-to-p roots finite extensions}, this gives rise to a Galois-theoretic description of $L_1$: it is the unique $C_{p - 1}$-extension that itself admits a $C_{q^{2r}} \times C_{q^{2r}}$-extension for every prime power $q^r \mid (p - 1)$.
Moreover, we can identify totally ramified extensions: any $C_{p - 1}$-extension of $K$ that is linearly disjoint from $L_1$ over $K$ must be totally ramified by the uniqueness of $L_1$.

(4.1) We now derive the contradiction for $p \ne 2$. Consider the $C_p$-extension of $\IQ_p(\zeta_p)$ obtained by adjoining $\sqrt[p]{p}$. Over $\IQ_p$, it has a non-abelian Galois group
\[
    \Gal(\IQ_p(\zeta_p,\!\sqrt[p]{p})/\IQ_p) \cong C_p \rtimes C_{p - 1}.
\]
Being totally ramified, it is linearly disjoint from $\IQ_p(\zeta_{(p - 1)^2})$ over $\IQ_p$. Translating over to $K$, we find a Galois extension $L_3/K$ with Galois group $C_p \rtimes C_{p - 1}$ and $C_{p - 1}$-subextension $L_2/K$, such that $L_1$ and $L_3$ are linearly disjoint over $K$ (with $L_1$ uniquely determined by the above Galois property).
\begin{diagram}[cramped, column sep=1.75em]
    & L \ar[dl, no head] \ar[dr, no head] & L_3 \ar[d, "C_p", no head, pos=0.55] \\
    L_1 \ar[dr, "C_{p - 1}"', no head] & & L_2 \ar[dl, "C_{p - 1}", no head] \\
    & K &
\end{diagram}
As noted before, $L_2/K$ is totally ramified. Since $v_KK$ is $p$-divisible, the $C_p$-extension $L_3/L_2$ must be unramified. But then, $L_3$ is the compositum of $L_2$ and the inertia subfield of $L_3/K$. Therefore, $\Gal(L_3/K) \cong C_{p - 1} \times C_p$ is abelian, contrary to our assumption.

(4.2) Finally, consider the case $p = 2$. We need to modify the above constructions slightly to work around the triviality of the $C_{p - 1} \times C_{p - 1}$-extension. Let us instead start with the $C_2$-extension $\IQ_2(\zeta_3)/\IQ_2$. It admits a unique $C_3 \times C_3$-extension given by $\IQ_2(\zeta_9, \!\sqrt[3]{2})/\IQ_2(\zeta_3)$, where $\IQ_2(\zeta_9)/\IQ_2(\zeta_3)$ is the unique unramified $C_3$-extension and $\IQ_2(\zeta_3, \!\sqrt[3]{2})/\IQ_2(\zeta_3)$ is a totally ramified $C_3$-extension. By Kummer Theory and the Degree Lemma~\ref{lem:dimOfPowerQuotient}, the field $\IQ_2(\zeta_3, \!\sqrt[3]{2})$ admits a $C_2^8$-extension.
\begin{diagram}[cramped, column sep=small]
    & & L_3 \ar[d, "C_2^8", no head] \\
    L_1 \ar[dr, "C_3"', no head] & & L_2 \ar[dl, "C_3", no head] \\
    & K(\zeta_3) \ar[d, "C_2", no head] & \\
    & K &
\end{diagram}

Let $L_1 \coloneqq K(\zeta_9)$ be the unramified $C_3$-extension of $K(\zeta_3)$, uniquely identified as the only one that admits a $C_9 \times C_9$-extension. Furthermore, let $L_2/K(\zeta_3)$ be a $C_3$-extension such that $L_1$ and $L_2$ are linearly disjoint over $K(\zeta_3)$, $L_3/K(\zeta_3)$ is Galois, and $L_3/L_2$ has Galois group $C_2^8$. As before, $L_2/K(\zeta_3)$ is totally ramified and $L_3/L_2$ is unramified by the $2$-divisibility of $v_KK$. Therefore, the inertia subfield of $L_3/K(\zeta_3)$ is a $C_2^8$-extension of $K(\zeta_3)$. This is, however, impossible: the degree of the maximal abelian Galois extension of exponent 2 of $K(\zeta_3)$ and $\IQ_2(\zeta_3)$ must coincide. This degree is equal to $16$, as can be seen by Kummer theory and the Degree Lemma~\ref{lem:dimOfPowerQuotient},
\[
    \dim_{\IF_2} \left(K(\zeta_3)^{\times}/K(\zeta_3)^{\times 2}\right) = \dim_{\IF_2}\left(\IQ_2(\zeta_3)^{\times}/\IQ_2(\zeta_3)^{\times 2}\right)= 4.
\]
This concludes our proof that $\charK Kv_K = p$.

\stepitem{2}{main_step:2}
As now $(\charK K, \charK Kv_K) = (0,p)$, we may consider the Standard Decomposition
\begin{diagram}
    v_K : K \ar[r,"v_0"] & Kv_0 \ar[r,"\overline{v_p}"] & Kv_p \ar[r,"\dbloverline{v}"] & Kv_K.
\end{diagram}
Each place in the decomposition is henselian (Fact~\ref{fact:hensel_preserve}); by Remark~\ref{rem:stand_decomp_mixed}, the valuation $\overline{v_p}$ is of mixed characteristic $(0,p)$ and rank 1.

\stepitem{2.1}{main_step:2.1}
We first show that $G_{Kv_0}$ cannot be realised as the absolute Galois group of a field of characteristic $p$. To rule this out, we argue along the same lines as in \ref{main_step:1.2}.
By Lemma~\ref{lem:C_pNotInC_p^2}, there exists a $C_{p - 1}$-extension $M_1$ of $K$ with the property that it admits a $C_p$-extension $M_2$ that does not embed into any $C_{p^2}$-extension of $M_1$. Let $v'_0$ and $v''_0$ denote the unique prolongations of $v_0$ to $M_1$ and $M_2$, respectively. Since $\charK Kv_0 = 0$, the value group $v_0K$ is $p$-divisible by the same argument as in the first part of \ref{main_step:1.5}. In particular, $v'_0M_1$ is $p$-divisible, so $M_2/M_1$ must be unramified and $M_2v''_0$ is a $C_p$-extension of $M_1v'_0$ that does not embed into any $C_{p^2}$-extension, as we could lift any such embedding to $M_1$ by henselianity.
This is a property of the absolute Galois group $G_{M_1v'_0}$; by Lemma~\ref{lem:C_pInC_p^2Charp}, this profinite group cannot be realised in characteristic $p$.

\begin{diagram}[cramped]
    M_2 \ar[d, no head, "C_p"', pos=0.53] \ar[r, "v''_0"] & M_2v''_0 \ar[d, no head, "C_p", pos=0.53] \\
    M_1 \ar[d, no head, "C_{p - 1}"', pos=0.53] \ar[r, "v'_0"] & M_1v'_0 \ar[d, no head] \\
    K \ar[r, "v_0"] & Kv_0
\end{diagram}

At the same time, note that the absolute Galois group $G_K = D_K$ of $K$ projects onto the absolute Galois group $G_{Kv_0}$ of the residue field; in particular, $G_K$ is small implies that $G_{Kv_0}$ is small as well.
Assume that $\overline{v_p}(Kv_0)$ is $p$-divisible. By the Transfer Lemma~\ref{cor:transfer_lemma1}, the profinite group $G_{Kv_0}$ can be realised as the absolute Galois group of a field of positive characteristic $p$. The same is true for the finite index open subgroup $G_{M_1v'_0}$. We arrive at a contradiction, proving that $\overline{v_p}(Kv_0)$ is not $p$-divisible.

\stepitem{2.2}{main_step:2.2}
The smallness of $G_{Kv_0}$ implies that $(Kv_0)^\times/(Kv_0)^{\times p}$ is finite (Lemma~\ref{lem:small_Gal->finite_dim}). By Pop's Lemma~\ref{lem:Pop}, $\overline{v_p}(Kv_0)$ is either discrete or $p$-divisible. We have shown the latter to be impossible. By the last part of Pop's Lemma, $Kv_p$ is finite.

\stepitem{3}{main_step:3}
As finite fields only admit trivial valuations, it follows that $\dbloverline{v}$ on $Kv_p$ is trivial and $Kv_p = Kv_K$. In particular, the rank 1 valuation $\overline{v_p}$ is the same as the valuation $\overline{v_K}$ on $Kv_0$ induced by $v_K$.

\stepitem{3.1}{main_step:3.1}
Using the finiteness of $Kv_K$, we can now identify the residue field $Kv_K$ as $\IF_p$. From~\ref{main_step:1.3}, we know that the prime-to-$p$ roots of unity of $K$ are $\mu_{p - 1}$. This implies $Kv_K = \IF_p$ by Hensel's Lemma.

\stepitem{3.2}{main_step:3.2}
To prove $v_KK \equiv \IZ$, it suffices to show that $v_KK$ has a convex subgroup $\Delta \cong \IZ$ such that $v_KK/\Delta$ is divisible by Fact~\ref{fact:Z-groups}. By~\ref{main_step:2.2}, the convex subgroup $\Delta \coloneqq \overline{v_K}(Kv_0)$ is discrete. It remains to verify that $v_KK/\Delta \cong v_0K$ is divisible.

We have already shown that $v_0K$ is $p$-divisible in~\ref{main_step:2.1}. We established $(v_KK : qv_KK) = q$ in~\ref{main_step:1.1} for all primes $q \ne p$, so the minimal positive element of $\Delta$ already generates $v_KK/qv_KK$. Therefore, $v_0K$ is divisible.

\stepitem{3.3}{main_step:3.3}
Let $L$ again denote the unique $C_{p - 1} \times C_{p - 1}$-extension of $K$. The extension $K(\zeta_p)/K$ is abelian of degree dividing $p - 1$, so it is contained in $L$, i.e., $\zeta_p \in L$.
Write $v_L$ for the unique extension of $v_K$ to $L$, and assume $v_K$ and $v_L$ are normalised such that the minimal positive elements are both $1 \in \IZ$. It follows from our analysis in \ref{main_step:1.5} that $L/K$ has ramification index $e(L/K) = p - 1$ and degree of inertia $f \coloneqq f(L/K) = p - 1$. Therefore, the ``absolute ramification index'' of $L$ is given by $v_L(p) = (p - 1)v_K(p)$.

Additionally, write $F \coloneqq \IQ_p(\zeta_{(p - 1)^2}, \zeta_p)$ for the unique $C_{p - 1} \times C_{p - 1}$-extensions of $\IQ_p$. By the Degree Lemma~\ref{lem:dimOfPowerQuotient} and Remark~\ref{rem:general_dim_formula}, we obtain
\begin{align*}
    \dim_{\IF_p} \left(F^\times/F^{\times p}\right) & = (p - 1)^2 + 2 \\
    \dim_{\IF_p} \left(L^\times/L^{\times p}\right) & = (p - 1)^2 v_K(p) + 2. 
\end{align*}
Kummer theory says that these two quantities express the dimension of the maximal abelian Galois extension of exponent $p$ of the unique $C_{p - 1} \times C_{p - 1}$-extensions of $F$ and $L$.
These dimensions are determined by the isomorphic absolute Galois groups $G_K$ and $G_{\IQ_p}$. Since they must therefore coincide, it follows that $v_K(p) = 1$ is minimal positive, as desired.
\end{itemize}